\newcommand{\keyword}[1]{\textbf{#1}}
\newcommand{\innerprod}[3]{\langle #1,#2 \rangle_{#3}}
\newcommand{\pinnerprod}[3]{\big( #1,#2 \big)_{#3}}
\newcommand{\tangentbd}[1]{\mathrm{T}#1}
\newcommand{\TM}[1]{\mathrm{T}_{#1} M}
\newcommand{\TMC}[1]{\mathrm{T}_{#1}M^{\mathbb{C}}}
\newcommand{\TstarM}[1]{\mathrm{T}_{#1}^{*} M}
\newcommand{\g}{\mathfrak{g}}
\newcommand{\gC}{\mathfrak{g}^{\mathbb{C}}}
\newcommand{\gk}[1]{\mathfrak{g}^{#1}}
\newcommand{\gpq}[2]{\mathfrak{g}^{#1,#2}}
\newcommand{\gstar}{\mathfrak{g}^{*}}
\newcommand{\gstarC}{\mathfrak{g}^{*\mathbb{C}}}
\newcommand{\gstarpq}[2]{{\mathfrak{g}^*}^{#1,#2}}
\newcommand{\secsp}[2]{\Gamma(#1,#2)}
\newcommand{\smoothfuncsp}[1]{C^{\infty}(#1)}
\newcommand{\Ak}[1]{\mathcal{A}^{#1}_{\mathfrak{g}}}
\newcommand{\Apq}[2]{\mathcal{A}^{#1,#2}_{\mathfrak{g}}}
\newcommand{\cohomo}[2]{\mathrm{H}^{#1}_{#2}}
\newcommand{\diff}{\mathrm{d}}
\newcommand{\LieagbdDiff}{\mathrm{d}_{\mathfrak{g}}}
\newcommand{\LiegpDiff}{\mathrm{d}_{G}}
\newcommand{\LieagbdConn}[2]{\prescript{\mathfrak{g}}{}{\nabla^{#1}_{#2}}}
\newcommand{\LieagbdChernConn}[2]{\prescript{\mathfrak{g}}{}{\nabla^{#1}_{#2}}}
\newcommand{\LieagbdLpls}[2]{\prescript{\mathfrak{g}}{}{\Delta^{#1}_{#2}}}
\newcommand{\LieagbdPartial}{\partial_{\mathfrak{g}}}
\newcommand{\partialbar}{\bar{\partial}}
\newcommand{\LieagbdPartialbar}{\bar{\partial}_{\mathfrak{g}}}
\newcommand{\norm}[2]{\|#1\|_{#2}}
\newcommand{\ext}[1]{\epsilon_{#1}}
\newcommand{\contract}[1]{\iota_{#1}}
\newcommand{\C}{\mathbb{C}}
\newcommand{\R}{\mathbb{R}}
\newcommand{\rank}{\mathrm{rank}}
\newcommand{\kernel}{\mathrm{ker}}
\newcommand{\id}{\mathrm{id}}
\newcommand{\anchor}{\rho}
\newcommand{\End}{\mathrm{End}}
\newcommand{\Ind}{\mathrm{Ind}}
\newcommand{\Th}{\mathrm{Th}}
\newcommand{\chg}[1]{\mathrm{ch}^{#1}}
\newcommand{\cg}[1]{\mathrm{c}^{#1}}
\newcommand{\Tdg}[1]{\mathrm{Td}^{#1}}
\newcommand{\euler}[1]{\mathrm{eu}^{#1}}
\newcommand{\unitarygp}[1]{\mathrm{U}(#1)}
\newcommand{\unitaryLalgb}[1]{\mathfrak{u}(#1)}
\newcommand{\basexi}[1]{\xi_{#1}}
\newcommand{\basev}[1]{v_{#1}}
\newcommand{\basedualxi}[1]{\xi^{'}_{#1}}
\newcommand{\basedualv}[1]{v^{'}_{#1}}
\newcommand{\indexi}[1]{i_{#1}}
\newcommand{\indexip}[1]{i^{'}_{#1}}
\newcommand{\indexj}[1]{j_{#1}}
\newcommand{\indexjp}[1]{j^{'}_{#1}}
\newcommand{\indexI}{\mathbf{I}}
\newcommand{\indexJ}{\mathbf{J}}
\newcommand{\indexIp}{\mathbf{I}^{'}}
\newcommand{\indexJp}{\mathbf{J}^{'}}
\newcommand{\indexic}[1]{i^{c}_{#1}}
\newcommand{\indexjpc}[1]{{j^{'}_{#1}}^{c}}
\newcommand{\indexIc}{\mathbf{I}^{C}}
\newcommand{\indexJc}{\mathbf{J}^{C}}
\newcommand{\indexJpc}{{\mathbf{J}^{'}}^{C}}
\newcommand{\coefA}[2]{A^{#1}_{#2}}
\newcommand{\coefB}[2]{B^{#1}_{#2}}
\newcommand{\coefC}[2]{C^{#1}_{#2}}
\newcommand{\coefD}[2]{D^{#1}_{#2}}
\newcommand{\iotaW}{\iota_{\omega}}
\newcommand{\order}[1]{\mathrm{order}\big( #1 \big)}
\newcommand{\sgn}[2]{\mathrm{sgn} \Big(\begin{smallmatrix} #1 \\#2\end{smallmatrix}\Big)}
\newcommand{\tgdiff}{\mathrm{d}_{\tau}}
\newcommand{\tgAk}[1]{\mathcal{A}^{#1}_{\tau}}
\newcommand{\tgcohomo}[1]{\mathrm{H}^{#1}_{\tau}}
\newcommand{\tgApq}[2]{\mathcal{A}^{#1,#2}_{\tau}}
\newcommand{\tgpartialbar}{\bar{\partial}_{\tau}}
\newcommand{\tgconn}[2]{\prescript{\tau}{}{\nabla_{#2}#1}}
\newcommand{\tgomega}{\prescript{\tau}{}{\omega}}
\newcommand{\tgcherncharact}[1]{\mathrm{ch}^{\tau}(#1)}
\newcommand{\tgchernclass}[1]{\mathrm{c}^{\tau}(#1)}
\newcommand{\tgTdclass}[1]{\mathrm{Td}^{\tau}(#1)}
\newcommand{\tgEulerclass}[1]{\mathrm{eu}^{\tau}(#1)}
\newcommand{\btgbd}[1]{\prescript{b}{}{\mathrm{T}#1}}
\newcommand{\bcotgbd}[1]{\prescript{b}{}{\mathrm{T}^{*} #1}}
\newcommand{\btgbdC}[1]{\prescript{b}{}{\mathrm{T}#1^{\mathbb{C}}}}
\newcommand{\btgbdpq}[3]{\prescript{b}{}{\mathrm{T}#1^{#2,#3}}}
\newcommand{\bApq}[2]{\mathcal{A}^{#1,#2}_{b}}
\newcommand{\bAk}[1]{\mathcal{A}^{#1}_{b}}
\newcommand{\bDiff}{\mathrm{d}_{b}}
\newcommand{\bPartial}{\partial_{b}}
\newcommand{\bPartialbar}{\bar{\partial_{b}}}
\newcommand{\bConn}[1]{\prescript{b}{}{\nabla_{#1}}}
\newcommand{\ddx}[1]{\frac{\partial}{\partial #1}}
\newcommand{\CPk}[1]{\mathbb{C}\mathrm{P}^{#1}}
\newcommand{\bomega}{\prescript{b}{}{\omega}}
\newtheorem{theorem}{Theorem}
\newtheorem*{theorem*}{Theorem}
\newtheorem{definition}{Definition}
\newtheorem{proposition}{Proposition}
\newtheorem*{proposition*}{Proposition}
\newtheorem{lemma}{Lemma}
\newtheorem{corollary}{Corollary}
\title{Kodaira vanishing theorems for K\"{a}hler Lie algebroids}
\author{ \href{https://orcid.org/0000-0000-0000-0000}{
        \includegraphics[scale=0.06]{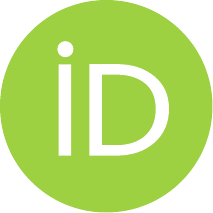}\hspace{1mm}Tengzhou~Hu}
        \thanks{} \\ 
	Department of mathematics\\
	Washington university in St. Louis\\
	Missouri, MO 63130 \\
	\texttt{thu24@wustl.edu} \\
}
\begin{document}
\maketitle

\begin{abstract}
    A Lie algebroid is a generalization of Lie algebra that provides a general framework to describe the symmetries of a manifold. In this paper, we generalize the Kodaira vanishing theorem, which is a basic result in complex geometry, to K\"{a}hler Lie algebroids. The generalization of the Kodaira vanishing theorem states that the kernel of the Lie algebroid Laplace operator on Lie algebroid positive line bundle-valued $(p,q)$-forms vanishes when $p+q$ is sufficiently large. The most difficult part of the proof of the generalized Kodaira vanishing theorem is the generalization of the K\"{a}hler identities to Lie algebroids. In this paper, we provide an approach by using local coordinate calculation.
\end{abstract}

\keywords{K\"{a}hler Lie algebroid \and Kodaira vanishing theorem \and K\"{a}hler b-manifold}


\section{Introduction}
The notion of groupoid, first introduced by Brandt in 1926, refers to a small category in which every morphism is invertible. A groupoid is very much like a group except that the product is only partially defined. As a groupoid can be thought of as a "many-object generalization" of a group, a similar object such as a many-object generalization of a Lie group is called a Lie groupoid. Lie groupoids were introduced by Charles Ehresmann as "differentiable groupoids" \cite{E1959,E1963} in 1959. A Lie groupoid can also be understood as a groupoid equipped with a smooth differential structure.

Recall that Lie algebra is associated with a Lie group as the associated infinitesimal object. Similarly, the infinitesimal object associated with a Lie groupoid is called a Lie algebroid. This concept was first introduced by Jean Pradines \cite{P1963} as the infinitesimal counterpart of Ehresmann's differentiable groupoids in 1967. It was also independently introduced by Lichnerowicz in the study of Poisson manifolds \cite{L1977} in 1977. In the later decades, Lie algebroids appeared in the study of many geometric areas. In Poisson geometry, Weinstein, Coste, and Dazord proved that the Lie algebroid of a symplectic groupoid is the cotangent bundle of the base manifold equipped with a bracket of 1-forms \cite{CDW1987} in 1987. This demonstrated that there is a Lie algebroid structure on the cotangent bundle for any Poisson manifold. The books written by Mackenzie \cite{M1987,M2005} are a good reference for Lie algebroids in the theory of group actions and connection theory. The study of Lie algebroids in foliation theory can be seen in \cite{M2005-2}.

After decades of development, a Lie algebroid has been defined as a triple consisting of a vector bundle $\g\to M$, a Lie bracket $[\,,\,]$ on the space of sections, and a morphism of the vector bundle $\rho:\g\to\TM{}$ preserving the Lie bracket. A natural example of Lie algebroids is $\g=\TM{}$. Thus, we can view a Lie algebroid as a generalization of a tangent bundle. Then, it is natural to generalize notations including differentials, differential forms, connections, and curvatures to Lie algebroids through the same algebraic formal definitions. Analogous to a complex manifold, Chemla defined a complex Lie algebroid on the sheaf of a local section \cite{C1999} in 1999. Instead of using the language of sheaf theory to define complex Lie algebroids, we use the integrable almost complex structure from Weinstein's definition of a complex Lie algebroid \cite{CW1999}.

In this paper, the main object we focus on is the K\"{a}hler Lie algebroid. The concept of a K\"{a}hler manifold was introduced by K\"{a}hler in 1933, where he defined a complex manifold that carries a Hermitian metric which is locally given by $\sqrt{-1}\partial\,\partialbar \phi$ for a pluri-subharmonic function $\phi$. This original definition is close to the current definition of a K\"{a}hler manifold. Through the viewpoint of complex geometry, the K\"{a}hler manifold is a complex manifold with a Hermitian metric whose associated 2-form is closed. This 2-form is called the K\"{a}hler form. Its cohomology class is topological invariant, meaning that it is independent of the metric. Later, people found that this class also represents the first Chern class of the manifold. An important basic result associated with a K\"{a}hler form is the K\"{a}hler identities, which are a collection of identities between operators on a K\"{a}hler manifold that relate $\partialbar$ to their adjoints, contractions, and wedge operators of the K\"{a}hler form. It was first proven by Hodge and appeared in his book on harmonic integrals in 1941. The K\"{a}hler identities, combined with Hodge theory, can produce many results in algebraic geometry. Based on the K\"{a}hler identities, the Bochner–Kodaira–Nakano identity shows the relation between the antiholomorphic Laplacian of a vector bundle over a K\"{a}hler manifold with the K\"{a}hler form and the torsion of the Hermitian metric. Following the Bochner–Kodaira–Nakano identity and Hodge theory, Kodaira proved the famous Kodaira vanishing theorem, which is a basic result in complex geometry and algebraic geometry that describes the cohomology group of positive line bundles over a K\"{a}hler manifold when its degree is sufficiently large.

K\"{a}hler Lie algebroids have not been completely studied. Some mathematicians studied the para-K\"{a}hler structures on Lie algebroids \cite{LTW2007,BB2016,PNM2018}. Benayadi and Boucettaa studied exact para-K\"{a}hler Lie algebroids as a generalization of exact para-K\"{a}hler Lie algebras \cite{BB2016}, which leads to a natural generalization of pseudo-Hessian manifolds. Leichtnam, Tang, and Weinstein introduced the notion of a para-K\"{a}hler structure on a complex Lie algebroid in their study of Poisson geometry and deformation quantization near a strictly pseudoconvex boundary \cite{LTW2007}. Peyghan, Nourmohammadifar, and Makhlouf introduced and studied para-K\"{a}hler hom-Lie algebroids \cite{PNM2018}. Some mathematicians also tried to generalize the Kodaira vanishing theorem on Lie algebroids. In \cite{PR2019,PR2020}, Pirbodaghi and Rezaii defined and studied Hermitian and K\"{a}hler structures on transitive Lie algebroids, which are Lie algebroids whose anchor maps are surjective. They defined the integration of top degree Lie algebroid by introducing an "inner operator" which maps Lie algebroid forms to differential forms. They then generalized the notations from complex manifolds to complex Lie algebroids. Using the generalized Lefschetz decomposition to avoid local coordinate calculations, they obtained generalized Hodge identities on transitive Lie algebroids. At the end of their paper, they presented a generalized Kodaira vanishing theorem on transitive Lie algebroids. The theorem states that if a line bundle is Lie algebroid positive (they did not define positive in their thesis, but the statement is a generalization of a positive line bundle as it requires the first Lie algebroid Chern class to be "positive"), then the space of Lie algebroid harmonic $(p,q)$-forms is trivial when $p+q>n$, where $n$ is the complex dimension of the transitive Lie algebroid.

In this paper, we study and give a proof of the generalization of K\"{a}hler identities on Lie algebroids without any restriction on anchor maps. The proof in our work is based on a local unitary frame. However, since Hodge theory on Lie algebroids is not fully developed, our generalization of the Kodaira vanishing theorem is not as strong as the classical Kodaira vanishing theorem. The vanishing object in our theorem is the kernel of the Laplace operator. The theorem is stated as follows.
\begin{theorem*}[Lie algebroid Kodaira–Nakano vanishing theorem]
    Let $(M,\g,\omega)$ be a K\"{a}hler Lie algebroid and $E\to M$ be a positive line bundle. The kernel of 
    \begin{align*}
        \LieagbdLpls{}{\LieagbdPartialbar}
        :\Apq{p}{q}(M,E)\to\Apq{p}{q}(M,E)
    \end{align*}
    vanishes when $p+q>n$. The same result holds on a negative line bundle under the condition that $p+q<n$.
\end{theorem*}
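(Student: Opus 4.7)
The plan is to adapt the classical Bochner-Kodaira-Nakano proof of the Kodaira vanishing theorem to the Lie algebroid setting. Three ingredients are needed: (i) a Lie algebroid analogue of the Kähler identities relating $\LieagbdPartial$, $\LieagbdPartialbar$, their formal adjoints, and the Lefschetz operators $L = \omega \wedge \cdot$ and $\Lambda$; (ii) a Lie algebroid Bochner-Kodaira-Nakano identity expressing the difference of the two twisted Laplacians in terms of the curvature of the Chern connection on $E$; and (iii) a pointwise positivity argument from the assumption that $E$ is positive. Once these are available, pairing the Bochner-Kodaira-Nakano identity with a form in $\ker \LieagbdLpls{}{\LieagbdPartialbar}$ and integrating against the transversal density $\Omega$ produces a sign-definite identity that forces the form to vanish.

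The hard part, and the main technical contribution flagged in the abstract, is the Kähler identities. Because a complex Lie algebroid need not admit holomorphic coordinates compatible with the anchor $\rho$, I would abandon the classical coordinate-based proof and instead work in a local unitary frame $\basev{i}, \basedualv{i}$ of $\gpq{1}{0}, \gpq{0}{1}$ with dual coframes $\basexi{i}, \basedualxi{i}$, chosen at a fixed point $x \in M$ so that $\omega = \sqrt{-1}\sum_{i}\basexi{i}\wedge\basedualxi{i}$ at $x$ and the connection one-forms of the Chern connection on $\gpq{1}{0}$ vanish to first order at $x$ (a Lie algebroid version of a Kähler normal frame). In such a frame, $L$, $\Lambda$, $\ext{\basexi{i}}$, and $\contract{\basev{i}}$ become explicit combinatorial operators on $\Apq{\bullet}{\bullet}(M,E)$, and the purely algebraic brackets $[\Lambda,\ext{\basexi{i}}]$, $[L,\contract{\basev{i}}]$, and their conjugates reduce to the classical identities, since they concern only the exterior algebra structure. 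The Lie algebroid differentials $\LieagbdPartial$ and $\LieagbdPartialbar$ decompose in the frame into a covariant-derivative part (built from the Chern connection) and an algebraic torsion part involving the structure functions of $[\,,\,]$ and $\rho$; the torsion part vanishes at $x$ by the normal-frame choice. Assembling these pieces yields
\begin{equation*}
    [\Lambda, \LieagbdPartialbar] = -\sqrt{-1}\,\LieagbdPartial^{*}
\end{equation*}
at $x$, and since $x$ is arbitrary and both sides are first-order differential operators, the identity is global. The conjugate identity is obtained by the same argument or by complex conjugation.

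With the Kähler identities established, the Bochner-Kodaira-Nakano formula follows by a standard algebraic manipulation: substituting $\LieagbdPartial^{*} = \sqrt{-1}[\Lambda, \LieagbdPartialbar]$ into $\LieagbdLpls{}{\LieagbdPartial}$ and using Leibniz to commute $\Lambda$ past the differentials, the torsion-type terms cancel because $\LieagbdDiff\omega = 0$ (the Kähler condition), leaving
\begin{equation*}
    \LieagbdLpls{}{\LieagbdPartialbar} - \LieagbdLpls{}{\LieagbdPartial} = [\sqrt{-1}\,R(\nabla^{E}),\Lambda].
\end{equation*}
For $E$ positive, normalise the Hermitian metric on $E$ so that $\sqrt{-1}R(\nabla^{E}) = \omega$; the Lefschetz identity $[L,\Lambda]\alpha = (p+q-n)\alpha$ on $\Apq{p}{q}$ then gives $[\sqrt{-1}R(\nabla^{E}),\Lambda]\alpha = (p+q-n)\alpha$. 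If $\alpha \in \Apq{p}{q}(M,E)$ lies in $\ker \LieagbdLpls{}{\LieagbdPartialbar}$, pairing with $\alpha$ and integrating against $\Omega$ yields
\begin{equation*}
    (p+q-n)\,\|\alpha\|^{2} = -\|\LieagbdPartial\alpha\|^{2} - \|\LieagbdPartial^{*}\alpha\|^{2} \le 0,
\end{equation*}
so $\alpha = 0$ whenever $p+q > n$. The negative line bundle case follows by applying the same argument to $E^{*}$, which is then positive, and translating back through duality to yield vanishing when $p+q < n$.
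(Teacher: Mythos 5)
Your overall architecture (K\"{a}hler identities $\Rightarrow$ Bochner--Kodaira--Nakano $\Rightarrow$ Lefschetz commutator $\Rightarrow$ positivity) is exactly the paper's, and your steps (ii) and (iii) are essentially right: the paper also derives the Bochner--Kodaira--Nakano identity from the super-Jacobi identity and proves $[L,\Lambda]=(p+q-n)\,\id$ on $\Apq{p}{q}$ by a direct frame computation, and then reads off the vanishing from $\LieagbdLpls{}{\LieagbdPartialbar}=\LieagbdLpls{1,0}{}+(p+q-n)\,\id$. (For the negative case it is simpler to flip the sign of the curvature term, giving $(n-p-q)\,\id$, than to pass to $E^{*}$, which would require a Serre-duality statement relating harmonic $E$-valued $(p,q)$-forms to harmonic $E^{*}$-valued $(n-p,n-q)$-forms that is not established in this setting.)

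The gap is in step (i). You dispose of the torsion part of $\LieagbdPartial$ and $\LieagbdPartialbar$ by choosing a ``K\"{a}hler normal frame'' at $x$ in which the structure functions of the bracket vanish at $x$. No such frame exists for a general complex Lie algebroid: at a point where the anchor $\rho$ has nontrivial kernel, the bracket restricts to an honest Lie algebra structure on $\ker\rho_x$, whose structure constants transform tensorially under a change of frame and hence cannot be made to vanish at $x$ unless the isotropy algebra there is abelian. This is precisely the obstruction the paper flags when it notes that one cannot in general find frames with $\LieagbdDiff^{2}\basexi{i}=0$ without restricting the anchor. Consequently the zeroth-order part of $[\Lambda,\LieagbdPartialbar]-\sqrt{-1}\,\LieagbdPartial^{*}$ does \emph{not} vanish termwise at a well-chosen point; what actually happens is that it equals a specific combination of the structure functions $A,B,C,D$ of the bracket, and that combination vanishes only because $\LieagbdDiff\omega=0$ forces the two constraints $\coefB{k}{ij}-\coefB{j}{ik}=\sgn{\order{j,k}}{j,k}\coefD{i}{\order{j,k}}$ and $\sgn{\order{j,k}}{j,k}\coefA{i}{\order{j,k}}=\coefC{j}{ki}-\coefC{k}{ji}$. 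The paper's long computation consists exactly of isolating that combination and cancelling it against these constraints; your normal-frame shortcut skips the step at which the K\"{a}hler hypothesis enters the identities. To repair the argument you must either carry out the cancellation in an arbitrary unitary frame, as the paper does, or restrict to Lie algebroids with injective anchor, which weakens the theorem.
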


This paper is organized as follows. 
\begin{itemize}
    \item The main topic of Section \ref{sec:preliminaries} is K\"{a}hler Lie algebroids. After giving the definition of K\"{a}hler Lie algebroids, we generalize notations on manifolds, like differential, connection and formal adjoint, to Lie algebroid. Then, we will define the sign of permutation together with some basic formulas which will be repeatedly used in the calculations of the Lie algebroid K\"{a}hler identities. The rest of the section presents the Lie bracket and the Lie algebroid differential in terms of a local frame, thereby generating four crucial coefficients. The condition of the closedness of the Lie algebroid K\"{a}hler form produces two constraints on these coefficients, which play an essential role in the proof of the Lie algebroid K\"{a}hler identities.
    \item  Section \ref{sec:main section} aims to present a generalized Kodaira vanishing theorem for Lie algebroids. It covers the Lie algebroid K\"{a}hler identities, the Lie algebroid Bochner–Kodaira–Nakano identity and the Lie algebroid Kodaira vanishing theorem. Since the proof of the Lie algebroid K\"{a}hler identities is very long, we put the corresponding calculations into the appendices. At the end of this section, we introduce a K\"{a}hler b-manifold as an example of a K\"{a}hler Lie algebroid. We explain the equivalence of the notations in the b-category and the Lie algebroid category. We then rewrite the vanishing theorem under the terminologies in the b-category. Finally, we show an example of a Kähler b-manifold.

\end{itemize}

\section{Preliminaries}
\label{sec:preliminaries}
\subsection{K\"{a}hler Lie algebroid}
\label{sec: K\"{a}hler Lie algebroid}

To discuss the complex structure of a Lie algebroid, we mainly follow the definition of a complex Lie algebroid by Weinstein in \cite{W2006} and associate this definition with the notion of almost complex Lie algebroids over almost complex manifolds \cite{IP2016}. The almost complex structure of a Lie algebroid is effectively the natural extension of an almost complex manifold when defining an endomorphism $J$ of the Lie algebroid. This is integrable if its Nijenhuis tensor of $J$ is zero.

\begin{definition}
 A \keyword{complex Lie algebroid} $\g\to M$ is a complex vector bundle over a smooth manifold $M$ with the following structures:
\begin{enumerate}
    \item A \keyword{Lie bracket} $
    [\bullet,\bullet]:
    \secsp{M}{\g}\times\secsp{M}{\g}\to\secsp{M}{\g}
    $, where $\secsp{M}{\g}$ is the space of sections of $\g$. We can naturally extend the Lie bracket to sections of $\g^{\C}$ by linearity.
    
    \item A vector bundle morphism $\rho: \g \to \TMC{}$ which is called an \keyword{anchor map}.
    
    \item \keyword{Leibniz rule} $[v_1,f v_2]=f[v_1,v_2]+ (\rho(v_1) f)v_2$ for all $v_1,v_2 \in \secsp{M}{\g}$ and $f \in \smoothfuncsp{M}$, where $\rho(v_1)f$ is the action of a vector field on a smooth function.
    
    \item The almost complex structure $J:\g \to \g$ from the complex vector bundle structure is integrable. That is saying the Nijenhuis tensor 
    \begin{align*}
        \mathcal{N}(\cdot,\cdot):
        \secsp{M}{\gC}\times\secsp{M}{\gC}
        \to
        \C
    \end{align*}
    such that
    $
        \mathcal{N}(\basev{1},\basev{2}):=
        [J\,\basev{1},J\,\basev{2}]
        -J[J\,\basev{1},\basev{2}]
        -J[\basev{1},J\,\basev{2}]
        -[\basev{1},\basev{2}]
    $ vanishes.
\end{enumerate}
\end{definition}

We use $\g$ to denote a complex Lie algebroid. Lie algebroids in this paper are all complex Lie algebroids unless specified otherwise. Analogous to the decomposition of the complexified tangent bundle $\TMC{}$ on a complex manifold, we decompose the complexified Lie algebroid into 
\begin{align*}
    \gC:=\g\otimes_{\R}\C=\gpq{1}{0}\oplus\gpq{0}{1},
\end{align*}
where $\gpq{1}{0}$ is the eigenspace of $J$ for the eigenvalue $\sqrt{-1}$ and $\gpq{0}{1}$ is the eigenspace of $J$ for the eigenvalue $-\sqrt{-1}$.

Recall that the K\"{a}hler manifold is a complex manifold with a Hermitian metric and the corresponding K\"{a}hler form. The smoothly varying positive-definite Hermitian form on each fiber determined by the Hermitian metric is called the \keyword{canonical $(1,1)$-form}, which is called the K\"{a}hler form if it is closed. For the generalization of the K\"{a}hler manifold to Lie algebroids, let us observe a complex Lie algebroid $\g$ with a Hermitian metric
\begin{align*}
    h:\g^{1,0}\otimes\g^{0,1}\to \C
\end{align*}
which determines a smoothly varying positive-definite Hermitian Lie algebroid form on each fiber. We call this form \keyword{the canonical $(1,1)$-Lie algebroid form} (again, corresponding to the Hermitian metric).

\begin{definition}
    A \keyword{K\"{a}hler Lie algebroid} $(\g,M,\rho)$ is a complex Lie algebroid carrying a Hermitian metric on $\g$ whose associated canonical $(1,1)$-Lie algebroid form $\omega\in\Apq{1}{1}(M)$ is closed. We call $\omega$ a \keyword{K\"{a}hler Lie algebroid form}.
\end{definition}

On an open chart $U$ of $M$, the unitary frame $\{\basexi{i}\}$ spans $\gstarpq{1}{0}|_{U}$ with respect to the Hermitian metric, where the length of this frame is $\norm{\basexi{i}}{h}=2$. The following frames for $\gstarpq{0}{1}|_{U}$, $\gpq{1}{0}|_{U}$ and $\gpq{0}{1}|_{U}$ are induced by $\{\basexi{i}\}$.
\begin{itemize}
    \item $\{\basedualxi{i}\}$ spanning $\gstarpq{0}{1}|_{U}$, which is conjugate to $\{\basexi{i}\}$.
    \item $\{\basev{i}\} \in\secsp{M}{\gpq{1}{0}|_{U}}$ is the dual of $\{\basexi{i}\}$, which spans $\gpq{1}{0}|_{U}$.
    \item $\{\basedualv{i}\} \in \secsp{M}{\gpq{0}{1}|_{U}}$ is the dual of $\{\basedualxi{i}\}$, which spans $\gpq{0}{1}|_{U}$.
\end{itemize}

The expression of the K\"{a}hler Lie algebroid form on an open chart $U$ is
\begin{align*}
    \omega|_{U}=\frac{\sqrt{-1}}{2}\sum \basexi{i}\wedge\basedualxi{i},
\end{align*}
where $\{\basexi{i}\}$ and $\{\basedualxi{i}\}$ are the frames of $\gstarpq{1}{0}|_{U}$ and $\gstarpq{0}{1}|_{U}$, respectively. This expression is formally the same as the standard K\"{a}hler form on $\C^{n}$ when replacing $\basexi{i}$ with $\diff z_{i}$ and $\basedualxi{i}$ with $\diff \Bar{z}_{i}$.

Some notations on differential manifolds can be generalized to K\"{a}hler Lie algebroid. We define the notations of the exterior algebra $\g^k:=\bigwedge^{k}\gC$ and $\gpq{p}{q}:=\bigwedge^{p}\gpq{1}{0} \otimes \bigwedge^{q}\gpq{0}{1}$. The natural properties of $\otimes$ and $\oplus$ in the exterior algebra give the equality $\gk{k}=\bigoplus_{p+q=k}\gpq{p}{q}$. A similar construction can be made on the dual of a complex Lie algebroid $\gstar$ as long as this gives a complex structure on $\gstar$ through the pullback of $J$. By using the decompositions $\gC=\gpq{1}{0}\oplus\gpq{0}{1}$ and $\gstarC=\gstarpq{1}{0}\oplus\gstarpq{0}{1}$, we define the conjugate map
\begin{align*}
    \bar{}:\gC  &\to\gC
    \\
    v+\sqrt{-1}J(v)&\mapsto v-\sqrt{-1}J(v)
    \\
    v-\sqrt{-1}J(v)&\mapsto v+\sqrt{-1}J(v)
\end{align*}
for $v\in\g$, and
\begin{align*}
    \bar{}:\gstarC  &\to\gstarC
    \\
    \xi+\sqrt{-1}J(\xi)&\mapsto \xi-\sqrt{-1}J(\xi)
    \\
    \xi-\sqrt{-1}J(\xi)&\mapsto \xi+\sqrt{-1}J(\xi)
\end{align*}
for $\xi\in\gstar$. The definition implies $\bar{v}=v$ for any $v\in\g\subset\gC$ and $\bar{\xi}=\xi$ for any $\xi\in\gstar\subset\gstarC$.

Let $\Ak{k}(M):=\secsp{M}{{\gstar}^{k}}$ be the space of \keyword{Lie algebroid $k$-forms}. Then, the Lie algebroid differential is defined as the generalization of the de Rham differential.
\begin{definition}
The Lie algebroid differential $\LieagbdDiff:\Ak{k}(M) \to \Ak{k+1}(M) $ is defined as
\begin{align*}
    \LieagbdDiff \phi(v_1,\cdots,v_{k+1})
    =
    \begin{pmatrix*}[l]
        \displaystyle\sum_{i=1}^{k+1}
        (-1)^{i}
        \rho(v_i) (\phi(v_1,\cdots,\widehat{v_i},\cdots,v_{k+1}))
        \\
        +
        \\
        \displaystyle\sum_{i<j}
        (-1)^{i+j-1}
        \phi([v_i,v_j],v_{1},\cdots,\widehat{v_i},\cdots,\widehat{v_j},\cdots,v_{k+1})
    \end{pmatrix*}.
\end{align*}
\end{definition}

The definition of $\LieagbdDiff$ is formally same as the definition of the classical de Rham differential, so its application to the exterior product of local frames is formally the same as the case for the classical de Rham. That is

\begin{align*}
    \LieagbdDiff 
    (f\,\zeta_{\indexi{1}}\wedge\cdots\wedge\zeta_{\indexi{k}})
    =
    (\LieagbdDiff f)
    \wedge\zeta_{\indexi{1}}\wedge\cdots\wedge\zeta_{\indexi{k}}
    +
    \displaystyle\sum_{j=1}^{k}
    (-1)^{j}
    f\, \zeta_{\indexi{1}}
    \wedge\cdots\wedge
    (\LieagbdDiff \zeta_{\indexi{j}})
    \wedge\cdots\wedge
    \zeta_{\indexi{k}}
    ,
\end{align*}
where $\{\zeta_{\indexi{}}\}$ is a local frame, and $f$ is a smooth function on $U$. Its definition can also naturally be generalized and applied to the Lie algebroid $(p,q)$-forms $\Apq{p}{q}(M):=\secsp{M}{\gstarpq{p}{q}}$. To achieve the definition of $\LieagbdPartial$ and $\LieagbdPartialbar$ on any degree of $(p,q)$ Lie algebroid differential form, we can check the image of 
\begin{align*}
    \LieagbdDiff:
    \Apq{1}{0}(M)\to\Ak{2}(M)
    =
    \Apq{2}{0}(M)\oplus\Apq{1}{1}(M)\oplus\Apq{0}{2}(M)
\end{align*}
does not actually contain any components in $\Apq{0}{2}(M)$. This is promised since
\begin{align*}
    [\basev{1},\basev{2}]\subset\secsp{M}{\g^{1,0}}
    \qquad\text{and}\qquad
    [\basedualv{1},\basedualv{2}]\subset\secsp{M}{\g^{0,1}}
\end{align*}
for any  $\basev{1},\basev{2}\in\secsp{M}{\g^{1,0}}$ and $\basedualv{1},\basedualv{2}\in\secsp{M}{\g^{0,1}}$ since the almost complex structure J is integrable. Then, using the natural projections
\begin{align*}
    \pi_1:
    \Apq{2}{0}(M)\oplus\Apq{0}{1}(M)
    \to
    \Apq{2}{0}(M)
\qquad\text{and}\qquad
    \pi_2:
    \Apq{2}{0}(M)\oplus\Apq{0}{1}(M)
    \to
    \Apq{0}{1}(M)
    ,
\end{align*}
we define $\LieagbdPartial$ and $\LieagbdPartialbar$ on $\Apq{1}{0}(M)$ by setting
\begin{align*}
    \LieagbdPartial:=
    \pi_{1}\, \LieagbdDiff
    :\Apq{1}{0}(M) \to \Apq{2}{0}(M)
\qquad\text{and}\qquad
    \LieagbdPartialbar:=
    \pi_{2}\, \LieagbdDiff
    :\Apq{1}{0}(M) \to \Apq{1}{1}(M)
    .
\end{align*}
Therefore, we can conclude the decomposition of $\LieagbdDiff$ on $\Apq{1}{0}(M)$:
\begin{align*}
    \LieagbdDiff=
    \LieagbdPartial+\LieagbdPartialbar:
    \Apq{1}{0}(M)
    \to
    \Apq{2}{0}(M)\oplus\Apq{1}{1}(M)
    .
\end{align*}
Analogous to the case of $\Apq{1}{1}(M)$, the similar decomposition of the Lie algebroid differential
\begin{align*}
    \LieagbdDiff=
    \LieagbdPartial+\LieagbdPartialbar:
    \Apq{0}{1}(M)
    \to
    \Apq{1}{1}(M)\oplus\Apq{0}{2}(M)
\end{align*}
also holds on $\Apq{0}{1}(M)$. By checking $
\LieagbdDiff 
    (
        f\,
        \basexi{1}\wedge\cdots\wedge\basexi{p}
        \wedge
        \basedualxi{1}\wedge\cdots\wedge\basedualxi{q}
    )
$, the decomposition $\LieagbdDiff=\LieagbdPartial+\LieagbdPartialbar$ can be generalized to
\begin{align*}
    \LieagbdDiff=
    \LieagbdPartial+\LieagbdPartialbar:
    \Apq{p}{q}(M) \to \Apq{p+1}{q}(M)\oplus\Apq{p}{q+1}(M).
\end{align*}
for any $(p,q)$-degree Lie algebroid differential form.

\begin{definition}
    Suppose $(\g,M,\anchor)$ is a Lie algebroid and $E\to M$ is a vector bundle. A \keyword{Lie algebroid connection} on $E$ is a linear operator
    \begin{align*}
        \LieagbdConn{}{}:\Ak{0}(M,E)\to\Ak{1}(M,E)
    \end{align*}
satisfying the Leibniz rule
\begin{align*}
    \LieagbdConn{}{v}(f\,s)=f\,\LieagbdConn{}{v}(s)+f_{*}\big(\anchor(v)\big)(s),
\end{align*}
where  $v\in\secsp{M}{\g},f\in C^{\infty}(M),s\in\secsp{M}{E}$.
\end{definition}

Analogous to the classical connection construction, we can generalize the Lie algebroid connection to the higher order:
\begin{align*}
    \LieagbdConn{}{}:\Ak{k}(M,E)\to\Ak{k+1}(M,E),
\end{align*}
and define its corresponding \keyword{Lie algebroid curvature tensor} as 
\begin{align*}
    R(\LieagbdConn{}{}):=\LieagbdConn{}{}^2\in\Ak{2}(M,\End(E)).
\end{align*}

In the classical case of integration on a manifold, a non-vanishing section of $\wedge^{top}\TstarM{}$ gives a volume form. However, in the case of a Lie algebroid, any section of $\wedge^{top}\gstar$ does not automatically generate a volume form directly. The solution is reached by introducing \keyword{transversal densities}, which are sections of $L_{\g}:=\bigwedge^{top}\TstarM{} \otimes \bigwedge^{top}\g$.

\begin{definition}
Suppose $\g\to M$ is the Lie algebroid associated with the Lie groupoid $G\rightrightarrows M$. We say the Lie groupoid $G$ is unimodular if there exists a global nonvanishing section $\Omega$ of $L_{\g}$.
\end{definition}
For any $s\in\secsp{M}{\wedge^{top}\gstar}$, its integral over M is 
\begin{align*}
    \int_{M}
    \langle s,\Omega \rangle,
\end{align*}
where $\langle \cdot,\cdot\rangle$ pairs the $\wedge^{top}\g$ and $\wedge^{top}\gstar$ components and produces $\innerprod{s}{\Omega}{}$, a volume form on M.


Recall that the top degree wedge product of a K\"{a}hler form is a canonical volume form. Following this idea, the top degree wedge product of the Lie algebroid K\"{a}hler form $\omega^n\in\secsp{M}{\bigwedge^{top}\gstar}$ gives rise to a Lie algebroid volume form since its combination with the transversal density $\Omega\in \secsp{M}{\bigwedge^{top}\TstarM{} \otimes \bigwedge^{top}\g}$ produces an actual volume form:
\begin{align*}
    \innerprod{\omega^n}{\Omega}{}\in\secsp{M}{\bigwedge^{top}\TstarM{}}.
\end{align*}
We use this canonical Lie algebroid volume form to define the inner product
\begin{align*}
    \pinnerprod{\basexi}{\phi}{\Ak{k}}
    :=
    \int_{M}
    \innerprod
        {\pinnerprod{\basexi}{\phi}{h}\frac{\omega^n}{n!}}
        {\Omega}
        {}
\end{align*}
for $\basexi,\phi \in \Ak{k}(M)$, where $\pinnerprod{}{}{h}$ is the Hermitian metric on $\wedge^{k}\gstar$. The inner product leads to the introduction of an adjoint operator. The formal adjoint of a Lie algebroid differential $\LieagbdDiff$ is an operator
\begin{align*}
    \LieagbdDiff^{*}:\Ak{k+1}(M)\to\Ak{k}(M)
\end{align*}
satisfying $\pinnerprod{\LieagbdDiff^{*}\basexi}{\phi}{\Ak{k}}=\pinnerprod{\basexi}{\LieagbdDiff\phi}{\Ak{k}}$. Similarly, $\LieagbdPartial^{*}$ and $\LieagbdPartialbar^{*}$ are the formal adjoints of $\LieagbdPartial$ and $\LieagbdPartialbar$. Let $\ext{\phi}(\cdot):=\phi \wedge(\cdot)$ be the exterior product operator. The contraction operator is $\contract{\phi}=\ext{\phi}^*$.

The Hodge star operator on Lie algebroid differential forms plays a critical role in the proof of the K\"{a}hler identities lemma. Our definition is slightly different from the standard reference \cite{GH1994}. This leads to some sign differences in the properties of the Hodge star but does not change the main results of the theory.
\begin{definition}
The Hodge star $\star$ is an operator mapping Lie algebroid $(p,q)$-forms to Lie algebroid $(n-q,n-p)$-forms, 
\begin{align*}
    \star:
        \mathscr{A}^{(p,q)}(M)
        &\to
        \mathscr{A}^{(n-q,n-p)}(M),
\end{align*}
such that
\begin{align*}
    \pinnerprod{\alpha}{\beta}{\Apq{p}{q}}
    =
    \int_M
    \innerprod{\alpha \wedge \overline{\star\beta}}{\Omega}{}.
\end{align*}
\end{definition}
In other words, we require $\alpha \wedge \overline{\star\beta}=\pinnerprod{\alpha}{\beta}{h}\frac{\omega^{n}}{n!}$, which is the exact same as the ordinary definition of the Hodge star on a complex manifold if we omit the extra $\Omega$ in the integration. This leads to the local expression of the Hodge star $\star$ under the frames $\{\basexi{}\}$ and $\{\basedualxi{}\}$, which is formally the same as the classical expression:
\begin{align*}
    \star:
    f\,
    \basexi{\indexI}\wedge\basedualxi{\indexJp}
    \mapsto
    2^{p+q-n}
    f\,
    \sgn{1,\cdots,n,1^{'},\cdots,n^{'}}{\indexI,\indexJp,\indexIc,\indexJpc}
    \basedualxi{\indexIc}\wedge
    \basexi{\indexJpc},
\end{align*}
where $\indexI=\{\indexi{1},\dots,\indexi{p}\}$ and $\indexJp=\{\indexjp{1},\dots,\indexjp{q}\}$ are the index sets, $\indexIc$ and $\indexJpc$ are their complements in $\{1,\cdots,n\}$ and $\{1^{'},\cdots,n^{'}\}$, and the sign function gives the sign of permutation from $(1,\cdots,n,1^{'},\cdots,n^{'})$ to $(\indexI,\indexJp,\indexIc,\indexJpc)$.

In the remaining of this section, we state some properties of the Hodge star and the Lie algebroid differential, which are similar as the cases on differential manifolds.


\begin{proposition}
\label{prop: a list of prop for Hodge star and Lie algebroid diff}
The following equalities for the Hodge star and the Lie algebroid differential hold:
\begin{enumerate}
    \item $
            \overline
                {\star(f\,\basexi{\indexI}\wedge\basedualxi{\indexJp})}
            =
            \star
                (\overline{f\,\basexi{\indexI}\wedge\basedualxi{\indexJp}}).
        $

    \item $
            \overline
                {
                \LieagbdDiff
                (f\,\basexi{\indexI}\wedge\basedualxi{\indexJp})
                }
            =
            \LieagbdDiff
            \overline
                {
                (f\,\basexi{\indexI}\wedge\basedualxi{\indexJp})
                }
            .
        $

    \item $\star^{2}=(-1)^{n^2}(-1)^{p+q}\,\id:\Apq{p}{q}(M)\to \Apq{p}{q}(M)$.

    \item (Stoke's theorem on Lie algebroids)$\int_{M}
    \innerprod{\LieagbdDiff\phi}{\Omega}{}
    =0$.

    \item $-(-1)^{n^2}\star\,\LieagbdDiff\,\star$. This leads $\LieagbdPartial^{*}=-(-1)^{n^2}\star\,\LieagbdPartialbar\,\star$ and $\LieagbdPartialbar^{*}=-(-1)^{n^2}\star\,\LieagbdPartial\,\star$.
    
\end{enumerate}
\end{proposition}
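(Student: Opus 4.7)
The plan is to treat items 1--3 by explicit computation in the unitary frame $\{\basexi{i},\basedualxi{i}\}$, deduce item 4 from the existence and invariance of the transversal density $\Omega$, and obtain item 5 by combining items 1--4 with a Leibniz calculation.

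For item 1, I would evaluate both sides on a basic monomial $f\,\basexi{\indexI}\wedge\basedualxi{\indexJp}$. The local formula for $\star$ given just before the proposition produces a term proportional to $\basedualxi{\indexIc}\wedge\basexi{\indexJpc}$ with a permutation sign, and conjugating swaps primed versus unprimed indices and replaces $f$ by $\bar f$. Conjugating first gives $\bar{f}\,\basedualxi{\indexI}\wedge\basexi{\indexJp}$ and then applying $\star$ produces the same monomial, provided the sign of the permutation $(1,\ldots,n,1',\ldots,n')\mapsto(\indexI,\indexJp,\indexIc,\indexJpc)$ is unchanged under swapping the two halves $\{1,\ldots,n\}\leftrightarrow\{1',\ldots,n'\}$, which is immediate from the defining formula. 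Item 2 is a direct consequence of the fact that the anchor $\anchor$ is a real bundle map and the bracket $[\bullet,\bullet]$ was extended $\C$-linearly from the real sections, so $\overline{[v_1,v_2]}=[\bar v_1,\bar v_2]$; plugging this termwise into the explicit formula for $\LieagbdDiff$ shows that $\LieagbdDiff$ commutes with bar.

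Item 3 is a sign-bookkeeping exercise: applying $\star$ twice returns the starting monomial up to a scalar, and the two permutation signs together with the reorderings needed to recover the original ordering of the $2n$ indices combine to $(-1)^{n^{2}}(-1)^{p+q}$. I would verify the constant on the single monomial $\basexi{1}\wedge\cdots\wedge\basexi{p}\wedge\basedualxi{1}\wedge\cdots\wedge\basedualxi{q}$ and extend by $\C$-linearity.

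The main obstacle is item 4. The integral $\int_{M}\innerprod{\LieagbdDiff\phi}{\Omega}{}$ must be shown to vanish, and this is precisely where the transversal density enters: the unimodularity hypothesis of the preceding definition allows $\Omega$ to be chosen so that pairing with it converts $\innerprod{\LieagbdDiff\phi}{\Omega}{}$ into an ordinary exact form $\diff\eta$ on $M$, after which classical Stokes (on compact $M$, or for compactly supported $\phi$) closes the argument. With item 4 available, item 5 follows in the usual way: apply $\LieagbdDiff$ to $\phi\wedge\overline{\star\psi}$ using the Leibniz rule, integrate against $\Omega$ so that the left-hand side vanishes by item 4, and use items 1--3 to rewrite $\LieagbdDiff\overline{\star\psi}=\overline{\LieagbdDiff\star\psi}$ and to undo the Hodge star on the appropriate bidegree, yielding $\LieagbdDiff^{*}=-(-1)^{n^{2}}\star\,\LieagbdDiff\,\star$. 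Splitting $\LieagbdDiff=\LieagbdPartial+\LieagbdPartialbar$ into its $(1,0)$- and $(0,1)$-components and matching bidegrees then produces the stated expressions for $\LieagbdPartial^{*}$ and $\LieagbdPartialbar^{*}$.
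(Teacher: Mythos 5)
Your proposal follows essentially the same route as the paper: items 1--3 by direct computation in the unitary frame (the paper's item 2 is carried out via the conjugation constraints $\overline{\coefA{k}{ij}}=\coefD{k}{ij}$ and $\overline{\coefB{k}{ij}}=-\coefC{k}{ji}$, which is exactly your reality-of-the-bracket observation in coordinates), item 4 via the transversal density (the paper simply cites a reference here), and item 5 by the Leibniz/Stokes integration-by-parts argument on $\phi\wedge\overline{\star\psi}$ combined with $\star^{2}=(-1)^{n^{2}}(-1)^{p+q}\,\id$ and items 1--2. One small caution on item 3: checking the constant on a single monomial and ``extending by $\C$-linearity'' is not by itself sufficient, since a priori the scalar could depend on the index sets --- you must carry out the permutation-sign computation for arbitrary $\indexI,\indexJp$ (as the paper does) to see that the two signs always combine to the same constant $(-1)^{n^{2}}(-1)^{p+q}$.
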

\begin{proof}
    See Section \ref{sec:proofs of some short properties}.
\end{proof}

\subsection{The Lie bracket and the Lie algebroid differential in terms of a local frame}
We will express the Lie bracket and the Lie algebroid differential by the frames $\{\basexi{i}\}$ and $\{\basedualxi{i}\}$ in this subsection. Their expression are useful in the proof of the Lie algebroid Hodge identities through local charts.

Firstly, We introduce the function giving the sign of a permutation and the function which sorts the array by size. 
\begin{definition}
    Suppose $\indexI=(\indexi{1},\cdots,\indexi{n})$ is an array consisting of a sequence of numbers and $\tilde{\indexI}=(\tilde{i}_{1},\cdots,\tilde{i}_n)$ is an array obtained by permuting $\indexI$. We then define
    \begin{align*}
        \sgn
            {\indexi{1},\cdots,\indexi{n}}
            {\tilde{i}_{1},\cdots,\tilde{i}_{n}}
        :=
        \text{the sign of the permutation}.
    \end{align*}
\end{definition}

We introduce some propositions for the sign of a permutation which are frequently used in calculations.

\begin{proposition}
    \text{}
    \begin{enumerate}
    \item Swapping the two rows in a sign of permutation does not change the sign. For example,
    \begin{align*}
        \sgn
            {\indexi{1},\cdots,\indexi{n}}
            {\tilde{i}_{1},\cdots,\tilde{i}_{n}}
        =
        \sgn
            {\tilde{i}_{1},\cdots,\tilde{i}_{n}}
            {\indexi{1},\cdots,\indexi{n}}
        .
    \end{align*}    
    \item Suppose $\indexJ$ is an array such that $\indexJ\cap(\tilde{i}_{1},\cdots,\tilde{i}_n)=\emptyset$ and $\sgn{\indexi{1},\cdots,\indexi{n}}{\tilde{i}_{1},\cdots,\tilde{i}_{n}}$ is a sign of the permutation. We can insert $\indexJ$ in the same position inside the two rows without changing the sign of the permutation:
    \begin{align*}
        \sgn
            {\indexi{1},\cdots,\indexi{n}}
            {\tilde{i}_{1},\cdots,\tilde{i}_{n}}
        =
        \sgn
            {\indexJ,\indexi{1},\cdots,\indexi{n}}
            {\indexJ,\tilde{i}_{1},\cdots,\tilde{i}_{n}}
        =
        \sgn
            {\indexi{1},\cdots,\indexi{k},\indexJ,\indexi{k+1},\cdots,\indexi{n}}
            {\tilde{i}_{1},\cdots,\tilde{i}_{k}\indexJ,\indexi{k+1},\cdots,\tilde{i}_{n}}
        =
        \sgn
            {\indexi{1},\cdots,\indexi{n},\indexJ}
            {\tilde{i}_{1},\cdots,\tilde{i}_{n},\indexJ}.
    \end{align*}
    \item Let $\indexI=(\indexi{1},\cdots,\indexi{n})$ be an array. Suppose that $\tilde{\indexI}=(\tilde{\indexi{1}},\cdots,\tilde{\indexi{n}})$ and $\hat{\indexI}=(\hat{\indexi{1}},\cdots,\hat{\indexi{n}})$ are two arrays obtained by permuting $\indexI$. The following signs of permutations can be merged into one:
    \begin{align*}
        \sgn
            {\indexi{1},\cdots,\indexi{n}}
            {\tilde{\indexi{1}},\cdots,\tilde{\indexi{n}}}
        \sgn
            {\indexi{1},\cdots,\indexi{n}}
            {\hat{\indexi{1}},\cdots,\hat{\indexi{n}}}
        =
        \sgn
            {\hat{\indexi{1}},\cdots,\hat{\indexi{n}}}
            {\tilde{\indexi{1}},\cdots,\tilde{\indexi{n}}}.
    \end{align*}
    In other words, we can merge two signs of permutations by cancelling the rows whose arrays are the same. Furthermore, if a row of a sign of permutation agrees with part of a row in another sign of permutation, we can merge the two signs of permutations into one. For example,
    \begin{align*}
        \sgn
            {\indexJ,\indexi{1},\cdots,\indexi{n},K}
            {\text{an array by permuting }\indexJ,\indexi{1},\cdots,\indexi{n},K}
        \sgn
            {\indexi{1},\cdots,\indexi{n}}
            {\tilde{\indexi{1}},\cdots,\tilde{\indexi{n}}}
        =
        \sgn
            {\indexJ,\tilde{\indexi{1}},\cdots,\tilde{\indexi{n}},K}
            {\text{an array by permuting }\indexJ,\indexi{1},\cdots,\indexi{n},K}.
    \end{align*}
    We can also split one sign of permutation into two signs of permutations by reversing the process in the above example:
    \begin{align*}
        \sgn
            {\indexJ,\tilde{\indexi{1}},\cdots,\tilde{\indexi{n}},K}
            {\text{an array by permuting }\indexJ,\indexi{1},\cdots,\indexi{n},K}
        =
        \sgn
            {\indexJ,\indexi{1},\cdots,\indexi{n},K}
            {\text{an array by permuting }\indexJ,\indexi{1},\cdots,\indexi{n},K}
        \sgn
            {\indexi{1},\cdots,\indexi{n}}
            {\tilde{\indexi{1}},\cdots,\tilde{\indexi{n}}}.
    \end{align*}
    \item If $\indexI$ and $\indexJ$ are two arrays inside the same row of a sign of permutation, then we multiply $(-1)^{|\indexI||\indexJ|}$ after switching $\indexI$ and $\indexJ$. For example,
    \begin{align*}
        \sgn
            {K,\indexI,\indexJ,L}
            {\text{a permutation of } K,\indexI,\indexJ,L}
        =
        (-1)^{|\indexI||\indexJ|}
        \sgn
            {K,\indexJ,\indexI,L}
            {\text{a permutation of } K,\indexI,\indexJ,L}.
    \end{align*}
    A particular application is that we can simultaneously move two adjacent indices to any position without changing the sign. For example,
    \begin{align*}
        \sgn
            {\indexI,\alpha,\beta,\indexJ}
            {\text{a permutation of }\indexI,\alpha,\beta,\indexJ }
        =
        \sgn
            {\alpha,\beta,\indexI,\indexJ}
            {\text{a permutation of }\indexI,\alpha,\beta,\indexJ }
        =
        \sgn
            {\indexI,\indexJ,\alpha,\beta,}
            {\text{a permutation of }\indexI,\alpha,\beta,\indexJ }.
    \end{align*}
\end{enumerate}
\end{proposition}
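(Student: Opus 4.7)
The plan is to interpret the symbol
\[
\sgn{\indexi{1},\cdots,\indexi{n}}{\tilde{i}_{1},\cdots,\tilde{i}_{n}}
\]
as $\sgn(\sigma)$, where $\sigma\in S_{n}$ is the unique permutation satisfying $\sigma(\indexi{k})=\tilde{i}_{k}$ for $1\leq k\leq n$ (which implicitly requires the two rows to be permutations of a common multiset), and then to derive every claim from the fact that $\sgn\colon S_{n}\to\{\pm 1\}$ is a group homomorphism. With this dictionary in place, all four items become essentially one-line verifications.

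For item (1), swapping the two rows replaces $\sigma$ by $\sigma^{-1}$; since $\{\pm 1\}$ is abelian of exponent two, $\sgn(\sigma^{-1})=\sgn(\sigma)^{-1}=\sgn(\sigma)$. For item (2), inserting the same block $\indexJ$ at the same position in both rows extends $\sigma$ by the identity on the new indices, adding $|\indexJ|$ fixed points; these leave the cycle structure unchanged and hence do not affect the sign.

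For item (3), I let $\sigma_{1},\sigma_{2}$ be the permutations realizing the two factors on the left. The permutation sending $\hat{\indexI}$ to $\tilde{\indexI}$ is $\sigma_{1}\circ\sigma_{2}^{-1}$, whose sign is $\sgn(\sigma_{1})\sgn(\sigma_{2})^{-1}=\sgn(\sigma_{1})\sgn(\sigma_{2})$. The partial-overlap variants, in which only part of a row matches, reduce to this basic case by first invoking item (2) to pad both factors with the unmatched blocks $\indexJ$ and $K$ as fixed blocks on both sides. For item (4), moving a contiguous block of length $|\indexI|$ past an adjacent contiguous block of length $|\indexJ|$ is the shuffle realized by exactly $|\indexI|\cdot|\indexJ|$ adjacent transpositions (each element of $\indexI$ must cross each element of $\indexJ$ exactly once), so its sign is $(-1)^{|\indexI||\indexJ|}$. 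The stated corollary on simultaneously moving two adjacent indices $\alpha,\beta$ through any intervening block of length $k$ is the special case $|\indexI|=2$, giving $(-1)^{2k}=+1$.

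No serious analytic obstacle arises here; the proposition is entirely combinatorial bookkeeping. The only point requiring care is making the decoding above fully precise when the statement uses the informal phrase ``an array obtained by permuting'', and, in (3), verifying that $\sigma_{1}\circ\sigma_{2}^{-1}$ acts on the correct common index set before one can pass to the partial-overlap formulations by means of (2).
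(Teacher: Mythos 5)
Your argument is correct. Note that the paper itself offers no proof of this proposition; it is stated as a collection of standard combinatorial facts and used freely thereafter, so there is no paper argument to compare against. Your reading of the symbol as the sign of the unique value-permutation $\sigma$ with $\sigma(\indexi{k})=\tilde{i}_{k}$, together with the homomorphism property of $\sgn{\indexi{1},\cdots,\indexi{n}}{\tilde{i}_{1},\cdots,\tilde{i}_{n}}\mapsto\pm1$, cleanly yields all four items: (1) is $\mathrm{sgn}(\sigma^{-1})=\mathrm{sgn}(\sigma)$, (2) is invariance under adjoining fixed points (the value convention is the right one here, since it sidesteps the positional reindexing caused by inserting $\indexJ$ in the middle), (3) is $\mathrm{sgn}(\sigma_{1}\sigma_{2}^{-1})=\mathrm{sgn}(\sigma_{1})\mathrm{sgn}(\sigma_{2})$ with the partial-overlap forms reduced to it via (2), and (4) is the count of $|\indexI|\,|\indexJ|$ adjacent transpositions needed to shuffle two adjacent blocks. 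This is a complete and appropriately economical justification of what the paper treats as background.
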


Then, we define the notation of an ordered array is as follows.
\begin{definition}
    Suppose $\indexI=(\indexi{1},\cdots,\indexi{n})$ is an array. We define
    \begin{align*}
        \order{\indexi{1},\cdots,\indexi{n}}
        :=
        (\tilde{i}_{1},\cdots,\tilde{i}_{n})
    \end{align*}
    by requiring $\tilde{i}_{1}<\tilde{i}_{2}<\cdots<\tilde{i}_{n-1}<\tilde{i}_{n}$ while $
    \{\indexi{1},\cdots,\indexi{n}\}
    =
    \{\tilde{i}_{1},\cdots,\tilde{i}_{n}\}
    $.
\end{definition}

 Now, let $\basev{i} \in\secsp{M}{\gpq{1}{0}|_{U}}$ be the dual of $\basexi{i}$ and $\basedualv{i} \in \secsp{M}{\gpq{0}{1}|_{U}}$ be the dual of $\basedualxi{i}$. Then, the Lie bracket of $\g$ can be expressed as
\begin{align*}
    [\basev{\indexi{}},\basev{\indexj{}}]
        &=  \sum_{k=1}^{n}
            \coefA{k}{\indexi{},\indexj{}} \basev{k}
    ,
    \\
    [\basev{\indexi{}},\basedualv{\indexjp{}}]
        &=  \sum_{k=1}^{n}
            \coefB{k}{\indexi{} \indexjp{}} \basev{k}
            +
            \coefC{k}{\indexi{} \indexjp{}}\basedualv{k}
    ,
    \\
    [\basedualv{\indexi{}},\basedualv{\indexjp{}}]
        &=  \sum_{k=1}^n
            \coefD{k}{\indexi{} \indexjp{}} \basedualv{k},
\end{align*}
where the coefficients $\coefA{k}{i j}, \coefB{k}{i j}$, and $\coefC{k}{i j}$ are smooth functions on $U$. These coefficients naturally carry constraints. The first one is 
\begin{align}
    \overline{\coefA{k}{i j}}=\coefD{k}{i j}
\label{eq-kahler lie agbd-bar Akij= Dkij}
\end{align}
since
$
    \overline{[\basev{\indexi{}},\basev{\indexj{}}]}
    =
    [\basedualv{\indexi{}},\basedualv{\indexj{}}]
$
. The second one is
\begin{align}
    \overline{\coefB{k}{i j}}=-\coefC{k}{j i}
\label{eq-kahler lie agbd-bar Bkij=-Ckji}
\end{align}
since $
    \overline{[\basev{\indexi{}},\basedualv{\indexj{}}]}
    =
    [\basedualv{\indexi{}},\basev{\indexj{}}]
    =
    -[\basev{\indexj{}},\basedualv{\indexi{}}]
$. The coefficients in the local formula of the Lie bracket are also the coefficients of the Lie algebroid cohomology differential. We compute the differential of the frame $\{\basexi{i}\}$:
\begin{align*}
    \LieagbdDiff \basexi{i}(\basev{j},\basev{k})
    =
    \begin{pmatrix*}[l]
        -\rho(\basev{j}) \big( \basexi{i}(\basev{k}) \big)
        +\rho(\basev{k}) \big( \basexi{i}(\basev{j}) \big)
        \\
        +
        \\
        \basexi{i}([\basev{j},\basev{k}])
    \end{pmatrix*}
    =
    \coefA{i}{j k},
\end{align*}
and similarly
\begin{align*}
    \begin{matrix*}[l]
        &
        \LieagbdDiff \basexi{i}(\basev{j},\basedualv{k})=\coefB{i}{j k},
        &
        \LieagbdDiff \basexi{i}(\basedualv{j},\basedualv{k})=0,
        \\
        \LieagbdDiff \basedualxi{i}(\basev{j},\basev{k})=0,
        &
        \LieagbdDiff \basedualxi{i}(\basev{j},\basedualv{k})=\coefC{i}{j k},
        &
        \LieagbdDiff \basedualxi{i}(\basedualv{j},\basedualv{k})=\coefD{i}{j k} .
    \end{matrix*}
\end{align*}
Thus, we summarize the formulas of $\LieagbdDiff$ on the frames $\{\basexi{i}\}$ and $\{\basedualxi{j}\}$ as follows:
\begin{align}
    \LieagbdDiff \basexi{i}
    =&
        \sum_{j<k}^n
        \coefA{i}{jk}
        \basexi{j} \wedge \basexi{k} 
        +
        \sum_{j,k}^n
        \coefB{i}{jk}
        \basexi{j} \wedge \basedualxi{k}
    ,
\label{eq-kahler Lie agbd-d xi i}
\\
    \LieagbdDiff \basedualxi{}
    =&
        \sum_{j,k}^n
        \coefC{\indexi{}}{j k}
        \xi_j \wedge \basedualxi{k}
        +
        \sum_{j<k}^n
        \coefD{i}{jk}
        \basedualxi{j} \wedge \basedualxi{k}
    ,
\label{eq-kahler Lie agbd-d xi bar i}
\end{align}
and the formulas of $\LieagbdPartial$ and $\LieagbdPartialbar$ on the frames $\{\basexi{i}\}$ and $\{\basedualxi{j}\}$ as follows:
\begin{align*}
    \LieagbdPartial \xi_i
    &
        =
        \sum_{j<k}^n
        \coefA{i}{jk}
        \basexi{j} \wedge \basexi{k} 
        =
        \sum_{j,k=1}^n
        \frac{1}{2}
        \coefA{i}{j k}
        \basexi{j}\wedge\basexi{k}
        =
        \sum_{j,k=1}^n
        \frac{1}{2}
        \sgn
            {\order{j,k}}
            {j,k}
        \coefA{i}{\order{j,k}}
        \basexi{j}\wedge\basexi{k}
    ,
    \\
    \LieagbdPartialbar \xi_i
    &
        =
        \sum_{j,k}^n
        \coefB{i}{jk}
        \basexi{j} \wedge \basedualxi{k}
    ,
    \\
    \LieagbdPartialbar \basedualxi{i}
    &   
        =
        \sum_{j<k}^n
        \coefD{i}{jk}
        \basedualxi{j} \wedge \basedualxi{k}
        =
        \sum_{j,k=1}^n
        \frac{1}{2}
        \coefD{i}{jk}
        \basedualxi{j} \wedge \basedualxi{k}
        =
        \sum_{j,k=1}^n
        \frac{1}{2}
        \sgn
            {\order{j,k}}
            {j,k}
        \coefD{i}{\order{j,k}}
        \basedualxi{j}\wedge\basedualxi{k}
    ,
    \\
    \LieagbdPartial \basedualxi{i}
        &= \sum_{j,k}^n
            \coefC{\indexi{}}{j k}
            \xi_j \wedge \basedualxi{k},
\end{align*}
where the order function is as follows:
\begin{align*}
    \order{j,k}=
    \begin{cases}
        (j,k),          &   j<k,\\
        (k,j),          &   j>k,\\
        (j,k)=(k,j),    &   j=k,
    \end{cases}
\end{align*}
and the sign function is as follows:
\begin{align*}
    \sgn{j,k}{j,k}=1
    \qquad\text{and}\qquad
    \sgn{k,j}{j,k}=-1.
\end{align*}
The condition that $\omega$ is closed applies another constraint on the coefficients $A,B,C$, and $D$ through the local formulas of $\LieagbdPartial$ and $\LieagbdPartialbar$.

\begin{lemma}
\label{lemma-Kahler Lie algebroid-trick for ABCD}
In a local chart, the canonical $(1,1)$-Lie algebroid form is closed if and only if the constraints
\begin{align*}
    \coefB{k}{i j}-\coefB{j}{i k}-\coefD{i}{j k}=0,
    \qquad j<k,
    \qquad\text{and}\qquad
    \coefA{i}{j k}-\coefC{j}{k i}+\coefC{k}{j i}=0,
    \qquad j<k,
\end{align*}
on the coefficients hold.

Applying the order function and the sign of permutation, we obtain the constraints 
\begin{align}
    \coefB{k}{i j}-\coefB{j}{i k}
    =
    \sgn{\order{j,k}}{j,k}\coefD{i}{\order{j,k}}
    \label{EQ-Part 1-(1)-equation for coef B,D}
\end{align}
and
\begin{align}
    \sgn{\order{j,k}}{j,k}\coefA{i}{\order{j,k}}
    =
    \coefC{j}{k i}-\coefC{k}{j i}
    \label{EQ-Part 1-(1)-equation for coef A,C}
\end{align}
without considering the order of j and k.
\end{lemma}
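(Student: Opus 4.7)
The plan is to evaluate $\LieagbdDiff\omega$ directly in the local chart using the formulas (\ref{eq-kahler Lie agbd-d xi i}) and (\ref{eq-kahler Lie agbd-d xi bar i}), and to read off the two desired constraints as the vanishing of its bidegree components.

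First, I would start from the local expression $\omega|_U=\tfrac{\sqrt{-1}}{2}\sum_i \basexi{i}\wedge\basedualxi{i}$ and apply $\LieagbdDiff$ term by term using the Leibniz rule that was recorded just after the definition of $\LieagbdDiff$, obtaining
\begin{equation*}
    \LieagbdDiff\omega
    =\frac{\sqrt{-1}}{2}\sum_{i=1}^{n}
    \bigl[(\LieagbdDiff\basexi{i})\wedge\basedualxi{i}
    - \basexi{i}\wedge(\LieagbdDiff\basedualxi{i})\bigr].
\end{equation*}
Next I would plug in the expressions (\ref{eq-kahler Lie agbd-d xi i}) and (\ref{eq-kahler Lie agbd-d xi bar i}), and observe that, because integrability of $J$ kills the $(0,2)$ part of $\LieagbdDiff\basexi{i}$ and the $(2,0)$ part of $\LieagbdDiff\basedualxi{i}$, the 3-form $\LieagbdDiff\omega$ is automatically concentrated in bidegrees $(2,1)$ and $(1,2)$. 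Therefore $\LieagbdDiff\omega=0$ is equivalent to the separate vanishing of its $(2,1)$- and $(1,2)$-components.

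Then I would collect coefficients against an ordered basis. For the $(2,1)$-part, the only contributions come from the $\coefA{}{}$-piece of $\LieagbdDiff\basexi{i}\wedge\basedualxi{i}$ and the $\coefC{}{}$-piece of $\basexi{i}\wedge\LieagbdDiff\basedualxi{i}$; extracting the coefficient of $\basexi{a}\wedge\basexi{b}\wedge\basedualxi{c}$ with $a<b$ (keeping track of the sign produced when the unordered pair in the $\coefC{}{}$-sum is swapped) yields $\coefA{c}{ab}-\coefC{a}{bc}+\coefC{b}{ac}=0$, which is the second constraint after relabeling $(j,k,i)=(a,b,c)$. For the $(1,2)$-part, the relevant contributions come from the $\coefB{}{}$-piece wedged with $\basedualxi{i}$ and the $\coefD{}{}$-piece wedged with $\basexi{i}$; extracting the coefficient of $\basexi{a}\wedge\basedualxi{b}\wedge\basedualxi{c}$ with $b<c$ (again accounting for the sign from swapping the antiholomorphic slots) gives $\coefB{c}{ab}-\coefB{b}{ac}-\coefD{a}{bc}=0$, which is the first constraint after relabeling $(k,j,i)=(c,b,a)$. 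Running both implications in both directions establishes the equivalence.

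Finally, I would rewrite these constraints using $\order{\cdot,\cdot}$ and the sign-of-permutation notation. The antisymmetry of the Lie bracket forces $\coefA{i}{jk}=-\coefA{i}{kj}$ and $\coefD{i}{jk}=-\coefD{i}{kj}$, and the constraints remain valid without the assumption $j<k$ once we insert the factor $\sgn{\order{j,k}}{j,k}$ in front of $\coefA{i}{\order{j,k}}$ and $\coefD{i}{\order{j,k}}$, giving (\ref{EQ-Part 1-(1)-equation for coef B,D}) and (\ref{EQ-Part 1-(1)-equation for coef A,C}). The only real obstacle is bookkeeping: making sure the sign produced by permuting indices inside a wedge exactly matches the sign produced by swapping the lower indices of $\coefC{}{}$ or $\coefB{}{}$, so that no stray factor of $-1$ is introduced. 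There is no conceptual difficulty; the lemma is a direct unwinding of $\LieagbdDiff\omega=0$ in the chosen local unitary frame.
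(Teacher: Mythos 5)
Your proposal is correct and follows essentially the same route as the paper: expand $\LieagbdDiff\omega$ in the local unitary frame via the Leibniz rule, substitute the local formulas for $\LieagbdDiff\basexi{i}$ and $\LieagbdDiff\basedualxi{i}$, and read off the two constraints as the vanishing of the $(2,1)$- and $(1,2)$-components after reindexing. The coefficient identities you extract match the paper's exactly under your relabelings, so there is nothing to add.
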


\begin{proof}
The desired identities follow from the following computation of $\LieagbdDiff \omega$:
\begin{align*}
    \LieagbdDiff \omega
    &=  
        \frac{\sqrt{-1}}{2}
        \sum_{i}^{n}
        \LieagbdDiff \basexi{i}\wedge\basedualxi{i}
        -
        \basexi{i}\wedge \LieagbdDiff\basedualxi{i}
    \\
    &=
        \frac{\sqrt{-1}}{2}
        \begin{pmatrix*}[l]
            \displaystyle\sum_{i=1}^{n}
            \displaystyle\sum_{j<k}^{n}
            \coefA{i}{j k}
            \basexi{j}\wedge\basexi{k}\wedge\basedualxi{i}
            \\
            +
            \\
            \displaystyle\sum_{j=1}^{n}
            \displaystyle\sum_{k<i}^{n}
            (\coefB{i}{j k}-\coefB{k}{j i})
            \basexi{j}\wedge\basedualxi{k}\wedge\basedualxi{i}
            \\
            -
            \\
            \displaystyle\sum_{k=1}^{n}
            \displaystyle\sum_{i<j}^{n}
            (\coefC{i}{j k}-\coefC{j}{i k})
            \basexi{i}\wedge\basexi{j}\wedge\basedualxi{k}
            \\
            -
            \\
            \displaystyle\sum_{i=1}^{n}
            \displaystyle\sum_{j<k}^{n}
            \coefD{i}{j k}
            \basexi{i}\wedge\basedualxi{j}\wedge\basedualxi{k}
        \end{pmatrix*}
    \\
    &\downarrow(reindex)
    \\
    &=
        \frac{\sqrt{-1}}{2}
        \displaystyle\sum_{i=1}^{n}
        \displaystyle\sum_{j<k}^{n}
        \begin{pmatrix*}[l]
            (\coefB{k}{i j}-\coefB{j}{i k}-\coefD{i}{j k})
            \basexi{i}\wedge\basedualxi{j}\wedge\basedualxi{k}
            \\
            +
            \\
            (\coefA{i}{j k}-\coefC{j}{k i}+\coefC{k}{j i})
            \basexi{j}\wedge\basexi{k}\wedge\basedualxi{i}
        \end{pmatrix*}.
    \end{align*}
\end{proof}

These two equations are key to the proof of the \keyword{Lie algebroid K\"{a}hler identities} through local chart. This is not unexpected as we see that the closeness of the classical canonical $(1,1)$-form deduces the classical K\"{a}hler identities. Notably, if we use the unitary frame of $\TM{}$ to represent the classical $(1,1)$-form and the Lie bracket, then the corresponding coefficients $A, B, C$, and $D$ are in the exact same form discussed above for a K\"{a}hler manifold. Thus, we claim that equations
\begin{align*}
    \coefB{k}{i j}-\coefB{j}{i k}
    =
    \sgn{\order{j,k}}{j,k}\coefD{i}{\order{j,k}}
\end{align*}
and
\begin{align*}
    \sgn{\order{j,k}}{j,k}\coefA{i}{\order{j,k}}
    =
    \coefC{j}{k i}-\coefC{k}{j i}
\end{align*}
are the sufficient conditions for the Lie algebroid K\"{a}hler identities, which are a generalization of the classical K\"{a}hler identities.


\section{Lie algebroid Kodaira vanishing theorem}
\label{sec:main section}
The statement of the classical Kodaira vanishing theorem is that for a compact K\"{a}hler manifold of complex dimension $n$ with a holomorphic vector bundle, the sheaf cohomology $\cohomo{q}{}(M,\Omega^{r}(E))$ vanishes if $q+r>n$, where $\Omega^{r}(E)$ denotes the sheaf of holomorphic $(n,0)$-forms on $M$ with values on $E$. Our generalized theorem on the Lie algebroid is slightly weaker than the classical version, which states that the kernel of Lie algebroid $\LieagbdPartialbar$-Laplace operator
\begin{align*}
    \LieagbdLpls{\LieagbdPartialbar}{}:
    \Apq{p}{q}(M,E)
    \to
    \Apq{p}{q}(M,E)
\end{align*}
vanishes if $p+q>n$, where $\g\to M$ is a K\"{a}hler Lie algebroid and $E\to M$ is a K\"{a}hler holomorphic vector bundle. We present this theorem in Section \ref{section-Lie algebroid Kodaira vanishing theorem} as the main result of this paper. Many ingredients are required to prove the vanishing theorem. In Section \ref{sec:Lie algebroid Kahler identities}, we give the statement of the Lie algebroid K\"{a}hler identities, which is the key to the proof of the Lie algebroid Bochner–Kodaira–Nakano identity. The proofs of these identities use several lemmas. Some short lemmas are directly given in Section \ref{sec:Lie algebroid Kahler identities}, while those that involve long calculations are given in Section \ref{sec:very long proof for Lie algebroid Kahler identity} and \ref{sec:extra long proof for lemmas of Lie algebroid Kahler identity}. In Section \ref{section-Lie algebroid Bochner–Kodaira-Nakano identity}, we define the Lie algebroid Chern connection and its associated Lie algebroid Laplacians. We then use the Lie algebroid K\"{a}hler identities to prove the Lie algebroid Bochner–Kodaira–Nakano identity, which is the key to the proof of the vanishing theorem. In Section \ref{section-Lie algebroid Kodaira vanishing theorem}, we state the generalized vanishing theorem. The Section \ref{sec: app of b manifold} discuss the vanishing theorem on b-manifold, which has a canonical Lie algebroid structure. We define K\"{a}hler b-manifold and give an example showing the existence of K\"{a}hler Lie algebroid.
\subsection{Lie algebroid K\"{a}hler identities}
\label{sec:Lie algebroid Kahler identities}
The classical K\"{a}hler identities on a K\"{a}hler manifold are
\begin{align*}
    [\Lambda,\partial]&=\sqrt{-1}{\partialbar}^{*},
    \\
    [\Lambda,\partialbar]&=\sqrt{-1}{\partial}^{*},
    \\
    [{\partial}^*,L]&=\sqrt{-1}\partialbar,
    \\
    [{\partialbar}^*,L]&=\sqrt{-1}\partial,
\end{align*}
where $L$ and $\Lambda$ are the exterior and interior multiplications on the classical K\"{a}hler form. For the proof of the classical K\"{a}hler identities, it is enough to prove $[\partialbar,\iotaW]=\sqrt{-1}\partial^{*}$ as the remaining three identities automatically hold by taking conjugation and dualization. More details can be found in \cite{M2007,GH1994}. Our aim is to generalize these identities to the K\"{a}hler Lie algebroid. The strategies for the proof of the Lie algebroid K\"{a}hler identities are the same, and we only need to check that one equation among these four holds for the Lie algebroid versions of operators, like $\LieagbdPartialbar$, $\iotaW$, and $\LieagbdPartial^{*}$. In the proof of the classical K\"{a}hler identities on the K\"{a}hler manifold, the local expressions of operators such as $\partial$ and $\iotaW$ are all written in terms of the frame $\{dz_{i},d\Bar{z}_{i}\}$, whose advantages are $\diff^{2}z_{i}=0$ and $\diff^{2}\Bar{z}_{i}=0$. Unfortunately, finding the frame $\{\basexi{i}\}$ on a Lie algebroid such that $\LieagbdDiff^2\basexi{}=0$ may not be possible unless we add extra conditions (e.g., the anchor map is injective) that restrict the generality of our theorem. Therefore, we apply the left-hand side and the right-hand side of the equation on the Lie algebroid form $f\,\basexi{\indexI}\wedge\basedualxi{\indexJp}$ directly and see that the constraints (\ref{EQ-Part 1-(1)-equation for coef B,D}) and (\ref{EQ-Part 1-(1)-equation for coef A,C}) are sufficient, which means that the K\"{a}hler Lie algebroid naturally leads to the Lie algebroid K\"{a}hler identities.

\begin{theorem}
\label{thm-Lie algebroid K\"{a}hler identities}
Suppose $(M,\g,\omega)$ is a K\"{a}hler Lie algebroid, then the following equations 
\begin{align*}
    [\Lambda,\LieagbdPartial]&=\sqrt{-1}{\LieagbdPartialbar}^{*},
    \\
    [\Lambda,\LieagbdPartialbar]&=\sqrt{-1}{\LieagbdPartial}^{*},
    \\
    [{\LieagbdPartial}^*,L]&=\sqrt{-1}\LieagbdPartialbar,
    \\
    [{\LieagbdPartialbar}^*,L]&=\sqrt{-1}\LieagbdPartial
\end{align*}
hold, where $L:=\omega\wedge:\Apq{p}{q}(M)\to\Apq{p+1}{q+1}(M)$ and $\Lambda:=L^*:\Apq{p}{q}(M)\to\Apq{p-1}{q-1}(M)$. We call these equations the \keyword{Lie algebroid K\"{a}hler identities}.
\end{theorem}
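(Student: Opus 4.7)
The plan is to reduce the four identities to a single one and then verify it by a local calculation in the unitary frame $\{\basexi{i},\basedualxi{i}\}$, using the two structure constraints (\ref{EQ-Part 1-(1)-equation for coef B,D}) and (\ref{EQ-Part 1-(1)-equation for coef A,C}) from Lemma \ref{lemma-Kahler Lie algebroid-trick for ABCD} as the replacement for the classical flatness $\diff^{2}z_{i}=0$. First I would observe that the conjugation map $\bar{\ }$ intertwines $\LieagbdPartial$ and $\LieagbdPartialbar$ and sends $\omega$ to $\omega$, so it exchanges $L$ and $L$ and therefore also $\Lambda$ and $\Lambda$; consequently the second identity follows from the first by conjugation. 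Taking formal adjoints and using $L^{*}=\Lambda$ then deduces the third and fourth identities from the first two. Thus the task reduces to proving, say, $[\Lambda,\LieagbdPartialbar]=\sqrt{-1}\LieagbdPartial^{*}$ on a generic local form.

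Next I would localize: since the operators on both sides are pointwise-tensorial in the top-degree symbol part and local in the first-order part, it suffices to fix a chart $U$ with the unitary frame $\{\basexi{i},\basedualxi{i}\}$ and check the identity on a test form $\phi=f\,\basexi{\indexI}\wedge\basedualxi{\indexJp}$ with $f\in\smoothfuncsp{U}$. Using the adjoint formula $\LieagbdPartial^{*}=-(-1)^{n^{2}}\star\,\LieagbdPartialbar\,\star$ from Proposition \ref{prop: a list of prop for Hodge star and Lie algebroid diff}, together with the local expression of $\star$ and of $\LieagbdPartialbar$ in terms of the coefficients $\coefC{}{}$ and $\coefD{}{}$, one obtains a sum of three types of terms on the right-hand side: a term with $\rho(\basedualv{k})f$ and two ``connection terms'' linear in the structure constants $\coefC{}{}$ and $\coefD{}{}$. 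On the left-hand side, $\LieagbdPartialbar\phi$ uses the coefficients $\coefB{}{}$ and $\coefD{}{}$ together with $\rho(\basedualv{k})f$, and then contracting with $\Lambda=L^{*}=\iota_{\omega}$ produces symmetric expressions in $\indexI$ and $\indexJp$. After carefully bookkeeping the sign-of-permutation factors using the rules collected in the preceding subsection, the $\rho(\basedualv{k})f$ pieces match exactly (this is the part that is formally identical to the classical case), and what remains are purely algebraic identities among the $A,B,C,D$ coefficients.

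The key reduction is that these algebraic identities are precisely (\ref{EQ-Part 1-(1)-equation for coef B,D}) and (\ref{EQ-Part 1-(1)-equation for coef A,C}): the residual $B,D$-terms combine through $\coefB{k}{ij}-\coefB{j}{ik}=\sgn{\order{j,k}}{j,k}\coefD{i}{\order{j,k}}$, and the residual $A,C$-terms combine through $\sgn{\order{j,k}}{j,k}\coefA{i}{\order{j,k}}=\coefC{j}{ki}-\coefC{k}{ji}$. Thus closedness of $\omega$ is exactly what is needed to close the calculation, which is the Lie algebroid analogue of the fact that in the classical setting the K\"{a}hler identities follow from $\diff\omega=0$.

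The main obstacle, and the reason the detailed computation is deferred to the appendices, is purely combinatorial: expanding $[\Lambda,\LieagbdPartialbar]\phi$ and $\sqrt{-1}\LieagbdPartial^{*}\phi$ on $\phi=f\,\basexi{\indexI}\wedge\basedualxi{\indexJp}$ generates many terms indexed by the insertion and deletion positions of the single index coming from $\iota_{\omega}$ and from the wedge with $\basexi{j}\wedge\basedualxi{k}$, each carrying a sign coming from $\sgn{1,\cdots,n,1',\cdots,n'}{\indexI,\indexJp,\indexIc,\indexJpc}$ and from reordering the resulting multi-indices. Matching these sign factors, and repeatedly applying the merging/splitting rules for $\sgn{}{}$ to reduce both sides to the same canonical form before invoking (\ref{EQ-Part 1-(1)-equation for coef B,D}) and (\ref{EQ-Part 1-(1)-equation for coef A,C}), is where the bulk of the work lies; the underlying strategy, however, is the straightforward one outlined above.
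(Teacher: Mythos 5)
Your proposal is correct and follows essentially the same route as the paper: reduce the four identities to the single identity $[\LieagbdPartialbar,\iotaW]=\sqrt{-1}\LieagbdPartial^{*}$ by conjugation and adjunction, then verify it on $f\,\basexi{\indexI}\wedge\basedualxi{\indexJp}$ in a local unitary frame via the adjoint formula $\LieagbdPartial^{*}=-(-1)^{n^{2}}\star\,\LieagbdPartialbar\,\star$, with the closedness constraints on the coefficients $A,B,C,D$ closing the computation. The only (immaterial) difference is organizational: the paper runs the local verification as an induction in three steps (base case $\basexi{i}\wedge\basedualxi{j}$, adjoining one frame element, then multiplying by $f$), whereas you propose to treat a general test form in one pass.
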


\begin{proof}
Recall that the unitary frame $\{\basexi{i}\}_{i=1}^{n}$ spans $\gstarpq{1}{0}|_{U}$ and its dual $\{\basedualxi{i}\}_{i=1}^{n}$ spans $\gstarpq{0}{1}|_{U}$ over a small neighborhood $U$. We defined the coefficients $\coefA{}{}$, $\coefB{}{}$, $\coefC{}{}$, and $\coefD{}{}$ in the local expression of the Lie bracket and obtained two constraints on these coefficients from the condition that $\omega$ is closed. Our goal is to use these operators to check that the equation
\begin{align*}
    [\LieagbdPartialbar,\iotaW]
    (f\,\basexi{\indexI}\wedge\basedualxi{\indexJp})
    &=
    \sqrt{-1}\LieagbdPartial^{*}
    (f\,\basexi{\indexI}\wedge\basedualxi{\indexJp})
\end{align*}
holds for any $(f\,\basexi{\indexI}\wedge\basedualxi{\indexJp})\in\Apq{p}{q}(U)$, where $\indexI$ and $\indexJp$ are index number sets such that $|\indexI|=p$ and $|\indexJp|=q$ and $f$ is a smooth function. We take three steps to complete this check. 
\begin{enumerate}
    \item Prove
\begin{align}
    [\LieagbdPartialbar,\iotaW]
    (\basexi{i}\wedge\basedualxi{j})
    =
    \sqrt{-1}\LieagbdPartial^{*}
    (\basexi{i}\wedge\basedualxi{j})
    .
\label{EQ-Kahler identity on Lie agbd-induction-formula 1}
\end{align}
    We prove this equation in Lemma \ref{Lemma-Kahler identity on Lie algebroid-the first lemma}.

    \item Assume the equation
$
    [\LieagbdPartialbar,\iotaW]
    (\basexi{\indexI}\wedge\basedualxi{\indexJp})
    =
    \sqrt{-1}\LieagbdPartial^{*}
    (\basexi{\indexI}\wedge\basedualxi{\indexJp})
$
    holds for the given index number sets $\indexI=\{\indexi{1},\cdots,\indexi{p}\}$ and $\indexJp=\{\indexjp{1},\cdots,\indexjp{q}\}$. Prove that the equations
\begin{align}
    [\LieagbdPartialbar,\iotaW]
    (\basexi{\alpha}\wedge\basexi{\indexI}\wedge\basedualxi{\indexJp})
    &=
    \sqrt{-1}\LieagbdPartial^{*}
    (\basexi{\alpha}\wedge\basexi{\indexI}\wedge\basedualxi{\indexJp})
    \label{EQ-Kahler identity on Lie agbd-induction-formula 2}
\end{align}
and
\begin{align}
    [\LieagbdPartialbar,\iotaW]
    (\basedualxi{\alpha}\wedge\basexi{\indexI}\wedge\basedualxi{\indexJp})
    &=
    \sqrt{-1}\LieagbdPartial^{*}
    (\basedualxi{\alpha}\wedge\basexi{\indexI}\wedge\basedualxi{\indexJp})
    \label{EQ-Kahler identity on Lie agbd-induction-formula 2.5}
\end{align}
    hold. Without loss of generality, we can assume $\alpha<\indexi{1}$. The proof is complicated and long, so we use Section \ref{sec:very long proof for Lie algebroid Kahler identity} to present it.

    \item Assume that the equation
$
    [\LieagbdPartialbar,\iotaW]
    (\basexi{\indexI}\wedge\basedualxi{\indexJp})
    =
    \sqrt{-1}\LieagbdPartial^{*}
    (\basexi{\indexI}\wedge\basedualxi{\indexJp})
$ holds. Prove
\begin{align}
    [\LieagbdPartialbar,\iotaW]
    (f \basexi{\indexI}\wedge\basedualxi{\indexJp})
    =
    \sqrt{-1}\LieagbdPartial^{*}
    (f \basexi{\indexI}\wedge\basedualxi{\indexJp})
    ,
    \label{EQ-Kahler identity on Lie agbd-induction-formula 3}
\end{align}
     where $f$ is a smooth function. We prove this equation in Lemma \ref{Lemma-Kahler identity on Lie algebroid-the third lemma}.
\end{enumerate}

\end{proof}

\begin{lemma}
\label{Lemma-Kahler identity on Lie algebroid-the first lemma}
    $
    [\LieagbdPartialbar,\iotaW]
    (\basexi{\indexI}\wedge\basedualxi{\indexJp})
    =
    \sqrt{-1}\LieagbdPartial^{*}
    (\basexi{\indexI}\wedge\basedualxi{\indexJp})
    $.
\end{lemma}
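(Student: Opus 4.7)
The plan is to verify the identity by direct expansion in the local unitary frame, using the fact that both sides live in $\Apq{p-1}{q}(U)$ with $p=|\indexI|$ and $q=|\indexJp|$ and depend polynomially on the structure coefficients $\coefA{}{},\coefB{}{},\coefC{}{},\coefD{}{}$. The three main ingredients are: the anti-derivation extension of $\LieagbdPartialbar$ across wedge products, combined with the explicit formulas (\ref{eq-kahler Lie agbd-d xi i}) and (\ref{eq-kahler Lie agbd-d xi bar i}) for $\LieagbdPartialbar\basexi{i}$ and $\LieagbdPartialbar\basedualxi{i}$; the dual decomposition $\iotaW=\tfrac{-\sqrt{-1}}{2}\sum_{k}\contract{\basedualv{k}}\contract{\basev{k}}$, obtained from $\iotaW=\ext{\omega}^{*}$ by unitarity of the frame; and the Hodge-dual identity $\LieagbdPartial^{*}=-(-1)^{n^{2}}\star\,\LieagbdPartialbar\,\star$ from Proposition \ref{prop: a list of prop for Hodge star and Lie algebroid diff}.

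For the left-hand side, I would apply $\iotaW$ first, which contracts over each paired position $k\in\indexI\cap\indexJp$ and produces a $(p-1,q-1)$-form carrying a sign recording the location of $k$ in each multi-index. Distributing $\LieagbdPartialbar$ anti-derivation-wise across the remaining factors then produces two families of terms: $\coefB{i}{jk}$-contributions from each $\basexi{i}$-factor and $\coefD{i}{jk}$-contributions from each $\basedualxi{i}$-factor. The reverse composition $\iotaW\LieagbdPartialbar$ is expanded in the opposite order, first applying $\LieagbdPartialbar$ across every factor and then contracting with the $\basev{k},\basedualv{k}$-pairs. After subtracting, cancellations eliminate the terms in which the contracted index is not the one produced by the derivative; what survives is a sum of basis $(p-1,q)$-forms whose coefficients are linear combinations such as $\coefB{k}{ij}-\coefB{j}{ik}$ together with isolated $\coefD{i}{jk}$-type entries.

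For the right-hand side, I would use the Hodge-dual identity and compute $\sqrt{-1}\LieagbdPartial^{*}(\basexi{\indexI}\wedge\basedualxi{\indexJp})=-\sqrt{-1}(-1)^{n^{2}}\star\LieagbdPartialbar\star(\basexi{\indexI}\wedge\basedualxi{\indexJp})$. The first $\star$ sends the form to a constant multiple of $\basedualxi{\indexIc}\wedge\basexi{\indexJpc}$ with a permutation sign; then $\LieagbdPartialbar$ produces $\coefC$- and $\coefD$-type terms now indexed on the complementary sets $\indexIc,\indexJpc$; the second $\star$ returns the result to $\Apq{p-1}{q}(U)$ with another permutation sign, and the factor $(-1)^{n^{2}}$ absorbs the global sign from $\star^{2}$. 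The outcome is a second expansion in basis $(p-1,q)$-forms whose coefficients involve $\coefA{i}{jk}$ and $\coefC{j}{ki}-\coefC{k}{ji}$. Matching the two expansions basis-form by basis-form then reduces to the two closedness identities (\ref{EQ-Part 1-(1)-equation for coef B,D}) and (\ref{EQ-Part 1-(1)-equation for coef A,C}) of Lemma \ref{lemma-Kahler Lie algebroid-trick for ABCD}, which convert the $\coefB$-difference into a $\coefD$-expression and the $\coefA$ into a $\coefC$-difference, making the two sums coincide term by term.

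The main obstacle is the permutation-sign bookkeeping. On the LHS the Leibniz expansion of $\LieagbdPartialbar$ across $p+q$ wedge factors, together with the contraction signs produced by $\iotaW$, generates nested $\sgn{\cdots}{\cdots}$-symbols that must be merged using the splitting and merging rules for signs of permutations. On the RHS, the double Hodge star contributes complementary-index signs together with the global $(-1)^{n^{2}}(-1)^{p+q}$ from $\star^{2}$, and isolating the coefficient of a prescribed basis output form requires repeatedly moving adjacent index pairs via the $(-1)^{|\indexI||\indexJ|}$ swap rule. Once all signs are aligned, the argument collapses onto the two scalar constraints recorded in Lemma \ref{lemma-Kahler Lie algebroid-trick for ABCD}, which is why the K\"ahler hypothesis is precisely what the identity demands.
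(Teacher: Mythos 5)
Your overall route---expand both sides in the local unitary frame, represent $\iotaW$ by paired contractions, use $\LieagbdPartial^{*}=-(-1)^{n^{2}}\star\,\LieagbdPartialbar\,\star$, and close the coefficient matching with the closedness constraints of Lemma \ref{lemma-Kahler Lie algebroid-trick for ABCD}---is exactly the method the paper uses. The organizational difference is that the paper's proof of this lemma, despite the general multi-indices in its statement, only carries out the computation for the base case $\basexi{i}\wedge\basedualxi{j^{'}}$, and reaches arbitrary $\indexI,\indexJp$ by the induction of Appendix I (appending a single $\basexi{\alpha}$ or $\basedualxi{\alpha}$, Lemma \ref{Lemma-Kahler identity on Lie algebroid-the second lemma}) together with the function step of Lemma \ref{Lemma-Kahler identity on Lie algebroid-the third lemma}. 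Your plan to do one monolithic expansion for general $\indexI,\indexJp$ is viable in principle, but it is precisely the permutation-sign bookkeeping that the paper's induction is designed to tame---each single inductive step already fills an appendix---so you should expect the direct version to be substantially heavier, not lighter.

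There is, however, one concrete error in your plan: you predict that the right-hand expansion involves $\coefA{i}{jk}$ and $\coefC{j}{ki}-\coefC{k}{ji}$, and that the matching needs both identities (\ref{EQ-Part 1-(1)-equation for coef B,D}) and (\ref{EQ-Part 1-(1)-equation for coef A,C}). This cannot happen. The only differential entering either side of the identity is $\LieagbdPartialbar$ (on the right conjugated by $\star$), and $\LieagbdPartialbar\basexi{i}$ and $\LieagbdPartialbar\basedualxi{i}$ produce only $\coefB$- and $\coefD$-coefficients; $\coefA$ and $\coefC$ arise from $\LieagbdPartial$, which never acts here. After $\star\,\LieagbdPartialbar\,\star$ the right-hand side therefore carries $\coefB$- and $\coefD$-terms indexed on the complementary sets $\indexIc,\indexJpc$, and the matching closes using the single constraint (\ref{EQ-Part 1-(1)-equation for coef B,D}), exactly as in the paper; the $A$--$C$ constraint is its conjugate and is reserved for the companion K\"{a}hler identities obtained by conjugation. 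A second, minor slip: if you contract against the vectors $\basev{k},\basedualv{k}$ rather than the covectors, the constant in $\iotaW$ is $-2\sqrt{-1}$, not $-\frac{\sqrt{-1}}{2}$, since the paper's normalization $\contract{\basexi{h}}\basexi{k}=2\delta_{h,k}$ contributes a factor of $4$. Both slips are self-correcting once you actually compute, but as written the final step of your plan matches against coefficients that are not present.
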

\begin{proof}
    Let us look at the left-hand side for $\basexi{\indexI},\basedualxi{\indexJp}$. The Lie bracket is $\LieagbdPartialbar\,\iotaW-\iotaW\,\LieagbdPartialbar$, where the first term $\LieagbdPartialbar\,\iotaW(\basexi{i}\wedge\basedualxi{j})=0$ since 
\begin{align*}
    \iotaW(\basexi{i}\wedge\basedualxi{j})
    =
    \begin{cases}
        0           & i\neq j
    \\
        2\sqrt{-1}  & i=j
    \end{cases}
\end{align*}
is a constant function vanishing under $\LieagbdPartialbar$. This leads to the remaining part, $-\iotaW\,\LieagbdPartialbar(\basexi{i}\wedge\basedualxi{j})$. The expression of $\LieagbdPartialbar(\basexi{i}\wedge\basedualxi{j})$ is
\begin{align}
    \LieagbdPartialbar
    (\basexi{i}\wedge\basedualxi{j^{'}})
    =&
        \displaystyle\sum_{k=1}^{n}
        \displaystyle\sum_{l^{'}=1}^{n}
        (\LieagbdPartialbar\basexi{i})
        \wedge
        \basedualxi{j^{'}}
        -
        \displaystyle\sum_{l_{1}^{'}=1}^{n}
        \displaystyle\sum_{l_{2}^{'}=1}^{n}
        \basexi{i}
        \wedge
        (\LieagbdPartialbar\basedualxi{j^{'}})
    \notag
    \\
    =&
        \displaystyle\sum_{k=1}^{n}
        \displaystyle\sum_{l^{'}=1}^{n}
        \coefB{i}{k l^{'}}
        \basexi{k}\wedge\basedualxi{l^{'}}
        \wedge
        \basedualxi{j^{'}}
        -
        \displaystyle\sum_{l_{1}^{'}=1}^{n}
        \displaystyle\sum_{l_{2}^{'}=1}^{n}
        \basexi{i}
        \wedge
        \frac{1}{2}
        \sgn
            {\order{l_{1}^{'},l_{2}^{'}}}
            {l_{1}^{'},l_{2}^{'}}
        \coefD{j^{'}}{\order{l_{1}^{'},l_{2}^{'}}}
        \basedualxi{l_{1}^{'}}
        \wedge
        \basedualxi{l_{2}^{'}}.
    \label{EQ-Kahler identity on Lie agbd-calculation for LHS of (i,j)-no1}
\end{align}
We plug $\iotaW$ into the front of (\ref{EQ-Kahler identity on Lie agbd-calculation for LHS of (i,j)-no1}) and obtain
\begin{align}
    \iotaW\,\LieagbdPartialbar
    (\basexi{i}\wedge\basedualxi{j^{'}})
    &
        =
        -\frac{\sqrt{-1}}{2}
        \displaystyle\sum_{h=1}^{n}
        \iota_{\basedualxi{h^{'}}}\,\iota_{\basexi{h}}
        \begin{pmatrix*}[l]
            \displaystyle\sum_{k=1}^{n}
            \displaystyle\sum_{l^{'}=1}^{n}
            \coefB{i}{k l^{'}}
            \basexi{k}\wedge\basedualxi{l^{'}}
            \wedge
            \basedualxi{j^{'}}
            \\
            -
            \\
            \displaystyle\sum_{l_{1}^{'}=1}^{n}
            \displaystyle\sum_{l_{2}^{'}=1}^{n}
            \basexi{i}
            \wedge
            \frac{1}{2}
            \sgn
                {\order{l_{1}^{'},l_{2}^{'}}}
                {l_{1}^{'},l_{2}^{'}}
            \coefD{j^{'}}{\order{l_{1}^{'},l_{2}^{'}}}
            \basedualxi{l_{1}^{'}}
            \wedge
            \basedualxi{l_{2}^{'}}
        \end{pmatrix*}.
    \label{EQ-Kahler identity on Lie agbd-calculation for LHS of (i,j)-no2}
\end{align}
Using the contractions $\contract{\basexi{h}}(\basexi{k})=2\delta_{h,k}$ and $\contract{\basedualxi{h}}(\basedualxi{k})=2\delta_{h,k}$, we simplify (\ref{EQ-Kahler identity on Lie agbd-calculation for LHS of (i,j)-no2}) as follows:
\begin{align*}
    \iotaW\,\LieagbdPartialbar
    (\basexi{i}\wedge\basedualxi{j^{'}})
    &
        =
        -\frac{\sqrt{-1}}{2}
        \begin{pmatrix*}[l]
            \displaystyle\sum_{h=1}^{n}
            \displaystyle\sum_{l^{'}=1}^{n}
            \iota_{\basedualxi{h^{'}}}
            (
            2
            \coefB{i}{h l^{'}}
            \basedualxi{l^{'}}
            \wedge
            \basedualxi{j^{'}}
            )
            \\
            -
            \\
            \displaystyle\sum_{l_{1}^{'}=1}^{n}
            \displaystyle\sum_{l_{2}^{'}=1}^{n}
            \frac{1}{2}
            \sgn
                {\order{l_{1}^{'},l_{2}^{'}}}
                {l_{1}^{'},l_{2}^{'}}
            \iota_{\basedualxi{i^{'}}}
            (
            2
            \coefD{j^{'}}{\order{l_{1}^{'},l_{2}^{'}}}
            \basedualxi{l_{1}^{'}}
            \wedge
            \basedualxi{l_{2}^{'}}
            )
        \end{pmatrix*}
    \\
    &
        =
        -\sqrt{-1}
        \begin{pmatrix*}[l]
            \displaystyle\sum_{l^{'}=1}^{n}
            2\coefB{i}{l l^{'}}
            \basedualxi{j^{'}}
            \\
            -
            \\
            \displaystyle\sum_{l^{'}=1}^{n}
            2\coefB{i}{j l^{'}}
            \basedualxi{l^{'}}
            \\
            -
            \\
            \displaystyle\sum_{l_{2}^{'}=1}^{n}
            \frac{1}{2}
            \sgn
                {\order{i^{'},l_{2}^{'}}}
                {i^{'},l_{2}^{'}}
            2\coefD{j^{'}}{\order{i^{'},l_{2}^{'}}}
            \basedualxi{l_{2}^{'}}
            \\
            +
            \\
            \displaystyle\sum_{l_{1}^{'}=1}^{n}
            \frac{1}{2}
            \sgn
                {\order{l_{1}^{'},i^{'}}}
                {l_{1}^{'},i^{'}}
            2\coefD{j^{'}}{\order{l_{1}^{'},i^{'}}}
            \basedualxi{l_{1}^{'}}
        \end{pmatrix*}.
\end{align*}
We use the constraint $\coefB{l}{j i} -\coefB{i}{j l}=\sgn{\order{i,l}}{i,l}\coefD{j}{\order{i,l}}$ to simplify the term
\begin{align*}
    \iotaW\,\LieagbdPartialbar
    (\basexi{i}\wedge\basedualxi{j^{'}})
    &=
        -2\sqrt{-1}
        \begin{pmatrix*}[l]
            \displaystyle\sum_{l^{'}=1}^{n}
            \coefB{i}{l l^{'}}
            \basedualxi{j^{'}}
            \\
            -
            \\
            \displaystyle\sum_{l^{'}=1}^{n}
            \coefB{i}{j l^{'}}
            \basedualxi{l^{'}}
            \\
            -
            \\
            \displaystyle\sum_{l^{'}=1}^{n}
            \sgn
                {\order{i^{'},l^{'}}}
                {i^{'},l^{'}}
            \coefD{j^{'}}{\order{i^{'},l^{'}}}
            \basedualxi{l^{'}}
        \end{pmatrix*}
    \\
    &=
        -2\sqrt{-1}
        \begin{pmatrix*}[l]
            \displaystyle\sum_{l^{'}=1}^{n}
            \coefB{i}{l l^{'}}
            \basedualxi{j^{'}}
            \\
            -
            \\
            \displaystyle\sum_{l^{'}=1}^{n}
            (
            \coefB{i}{j l^{'}}
            +
            \sgn
                {\order{i^{'},l^{'}}}
                {i^{'},l^{'}}
            \coefD{j^{'}}{\order{i^{'},l^{'}}}
            )
            \basedualxi{l^{'}}
        \end{pmatrix*}
    \\
    &
        =
        -2\sqrt{-1}
        \begin{pmatrix*}[l]
            \displaystyle\sum_{l^{'}=1}^{n}
            \coefB{i}{l l^{'}}
            \basedualxi{j^{'}}
            -
            \displaystyle\sum_{l^{'}=1}^{n}
            \coefB{l^{'}}{j i}
            \basedualxi{l^{'}}
        \end{pmatrix*}.
\end{align*}
We conclude the left-hand side:
\begin{align}
    [\LieagbdPartialbar,\iotaW]
    (\basexi{i}\wedge\basedualxi{j})
    =
    -2\sqrt{-1}
    \begin{pmatrix*}[l]
        \displaystyle\sum_{l^{'}=1}^{n}
        \coefB{i}{l l^{'}}
        \basedualxi{j^{'}}
        -
        \displaystyle\sum_{l^{'}=1}^{n}
        \coefB{l^{'}}{j i}
        \basedualxi{l^{'}}
    \end{pmatrix*}.
    \label{EQ-Kahler identity on Lie agbd-calculation for LHS of (i,j)-result}
\end{align}
    Next, let us look at the right-hand side. Recall that $\LieagbdPartial^{*}=-(-1)^{n^2}\star\,\LieagbdPartialbar\,\star$, so the right-hand side can be written as
\begin{align}
    \sqrt{-1}\LieagbdPartial^{*}
    (\basexi{i}\wedge\basedualxi{j^{'}})
    =
    -\sqrt{-1}(-1)^{n^2}\star\,\LieagbdPartialbar\,\star
    (\basexi{i}\wedge\basedualxi{j^{'}}).
    \label{EQ-Kahler identity on Lie agbd-calculation for RHS of (i,j)-no1}
\end{align}
Calculating (\ref{EQ-Kahler identity on Lie agbd-calculation for RHS of (i,j)-no1}) step-by-step, we first apply the Hodge star to $(\basexi{i}\wedge\basedualxi{j^{'}})$:
\begin{align}
    \star(\basexi{i}\wedge\basedualxi{j^{'}})
    =
    2^{2-n}
        \sgn
            {1,\cdots,n,1^{'},\cdots,n^{'}}
            {i,j^{'},1,\cdots,\widehat{i},\cdots,n,1^{'},\cdots,\widehat{j^{'}},\cdots,n^{'}}
        \basedualxi{1}
        \wedge\cdots\wedge
        \widehat{\basedualxi{i}}
        \wedge\cdots\wedge
        \basedualxi{n}
        \wedge
        \basexi{1^{'}}
        \wedge\cdots\wedge
        \widehat{\basexi{j^{'}}}
        \wedge\cdots\wedge
        \basexi{n^{'}}.
    \label{EQ-Kahler identity on Lie agbd-calculation for RHS of (i,j)-no2}
\end{align}
We then apply $\LieagbdPartialbar$ to (\ref{EQ-Kahler identity on Lie agbd-calculation for RHS of (i,j)-no2}) and obtain
\begin{align}
    &
        \LieagbdPartialbar\,\star(\basexi{i}\wedge\basedualxi{j^{'}})
    \notag
    \\
    =&
        2^{2-n}
        \begin{pmatrix*}[l]
            \displaystyle\sum_{k\neq i}
            \begin{matrix*}[l]
                \sgn
                    {1,\cdots,n,1^{'},c\dots,n^{'}}
                    {i,j^{'},1,\dots,\widehat{i},\cdots,n,1^{'},\cdots,\widehat{j^{'}},\cdots,n^{'}}
                \sgn
                    {1,\cdots,\widehat{i},\cdots,n}
                    {k,1,\cdots,\widehat{i},\cdots,\widehat{k},\cdots,n}
                \\
                \basedualxi{1}
                \wedge\cdots\wedge
                \widehat{\basedualxi{i}}
                \wedge\cdots\wedge
                (\LieagbdPartialbar\basedualxi{k})
                \wedge\cdots\wedge
                \basedualxi{n}
                \wedge
                \basexi{1^{'}}
                \wedge\cdots\wedge
                \widehat{\basexi{j^{'}}}
                \wedge\cdots\wedge
                \basexi{n^{'}}
            \end{matrix*}
            \\
            +
            \\
            \displaystyle\sum_{l^{'}\neq j^{'}}
            \begin{matrix*}[l]
                \sgn
                    {1,\cdots,n,1^{'},\cdots,n^{'}}
                    {i,j^{'},1,\cdots,\widehat{i},\cdots,n,1^{'},\cdots,\widehat{j^{'}},\cdots,n^{'}}
                \sgn
                    {1,\cdots,\widehat{i},\cdots,n,1^{'},\cdots,\widehat{j^{'}},\cdots,n^{'}}
                    {l^{'},1,\cdots,\widehat{i},\cdots,n,1^{'},\cdots,\widehat{j^{'}},\cdots,\widehat{l^{'}},\cdots,n^{'}}
                \\
                \basedualxi{1}
                \wedge\cdots\wedge
                \widehat{\basedualxi{i}}
                \wedge\cdots\wedge
                \basedualxi{n}
                \wedge
                \basexi{1^{'}}
                \wedge\cdots\wedge
                \widehat{\basexi{j^{'}}}
                \wedge\cdots\wedge
                (\LieagbdPartialbar\basexi{l^{'}})
                \wedge\cdots\wedge
                \basexi{n^{'}}
            \end{matrix*}
        \end{pmatrix*}
    \notag
    \\
    =&
        2^{2-n}
        \begin{pmatrix*}[l]
            \displaystyle\sum_{k\neq i}
            \begin{pmatrix*}[l]
                \sgn
                    {1,\cdots,n,1^{'},\cdots,n^{'}}
                    {i,j^{'},1,\cdots,\widehat{i},\cdots,n,1^{'},\cdots,\widehat{j^{'}},\cdots,n^{'}}
                \sgn
                    {1,\cdots,\widehat{i},\cdots,n}
                    {k,1,\cdots,\widehat{i},\cdots,\widehat{k},\cdots,n}
                \\
                \basedualxi{1}
                \wedge\cdots\wedge
                \widehat{\basedualxi{i}}
                \wedge\cdots\wedge
                \basedualxi{k-1}
                \wedge
                \begin{pmatrix*}[l]
                    \frac{1}{2}
                    \coefD{k}{i k}
                    \basedualxi{i}\wedge\basedualxi{k}
                    \\
                    +
                    \\
                    \frac{1}{2}
                    \coefD{k}{k i}
                    \basedualxi{k}\wedge\basedualxi{i}
                \end{pmatrix*}
                \wedge
                \basedualxi{k+1}
                \wedge\cdots\wedge
                \basedualxi{n}
                \wedge
                \\
                \basexi{1^{'}}
                \wedge\cdots\wedge
                \widehat{\basexi{j^{'}}}
                \wedge\cdots\wedge
                \basexi{n^{'}}
            \end{pmatrix*}
            \\
            +
            \\
            \displaystyle\sum_{l^{'}\neq j^{'}}
            \begin{pmatrix*}[l]
                \sgn
                    {1,\cdots,n,1^{'},\cdots,n^{'}}
                    {i,j^{'},1,\cdots,\widehat{i},\cdots,n,1^{'},\cdots,\widehat{j^{'}},\cdots,n^{'}}
                \sgn
                    {1,\cdots,\widehat{i},\cdots,n,1^{'},\cdots,\widehat{j^{'}},\cdots,n^{'}}
                    {l^{'},1,\cdots,\widehat{i},\cdots,n,1^{'},\cdots,\widehat{j^{'}},\cdots,\widehat{l^{'}},\cdots,n^{'}}
                \\
                \basedualxi{1}
                \wedge\cdots\wedge
                \widehat{\basedualxi{i}}
                \wedge\cdots\wedge
                \basedualxi{n}
                \wedge
                \\
                \basexi{1^{'}}
                \wedge\cdots\wedge
                \widehat{\basexi{j^{'}}}
                \wedge\cdots\wedge
                \basexi{(l-1)^{'}}
                \wedge
                \begin{pmatrix*}[l]
                    \coefB{l^{'}}{l^{'} i}
                    \basexi{l^{'}}\wedge\basedualxi{i}
                    \\
                    +
                    \\
                    \coefB{l^{'}}{j^{'} i}
                    \basexi{j^{'}}\wedge\basedualxi{i}
                \end{pmatrix*}
                \wedge
                \basexi{(l+1)^{'}}
                \wedge\cdots\wedge
                \basexi{n^{'}}
            \end{pmatrix*}
        \end{pmatrix*}.
    \label{EQ-Kahler identity on Lie agbd-calculation for RHS of (i,j)-no3}
\end{align}
Looking at the component with $
    \begin{pmatrix*}[l]
    \frac{1}{2}
    \coefD{k}{i k}
    \basedualxi{i}\wedge\basedualxi{k}
    \\
    +
    \\
    \frac{1}{2}
    \coefD{k}{k i}
    \basedualxi{k}\wedge\basedualxi{i}
    \end{pmatrix*}
$, it can be simplified to 
\begin{align*}
    \sgn{k,i}{\order{k,i}}\coefD{k}{k i}\basedualxi{k}\wedge\basedualxi{i}.
\end{align*}
For the component with coefficient $B$, we break the sum and rearrange the terms in (\ref{EQ-Kahler identity on Lie agbd-calculation for RHS of (i,j)-no3}):
\begin{align}
    &
        \LieagbdPartialbar\,\star(\basexi{i}\wedge\basedualxi{j^{'}})
    \notag
    \\
    =&
        2^{2-n}
        \begin{pmatrix*}[l]
            \displaystyle\sum_{k\neq i}
            \begin{pmatrix*}[l]
                \sgn
                    {1,\cdots,n,1^{'},\cdots,n^{'}}
                    {i,j^{'},1,\cdots,\widehat{i},\cdots,n,1^{'},\cdots,\widehat{j^{'}},\cdots,n^{'}}
                \sgn
                    {1,\cdots,\widehat{i},\cdots,n}
                    {k,1,\cdots,\widehat{i},\cdots,\widehat{k},\cdots,n}
                \\
                \basedualxi{1}
                \wedge\cdots\wedge
                \widehat{\basedualxi{i}}
                \wedge\cdots\wedge
                \basedualxi{k-1}
                \wedge
                \sgn
                    {k,i}
                    {\order{k,i}}
                \coefD{k}{k i}
                \basedualxi{k}\wedge\basedualxi{i}
                \wedge
                \basedualxi{k+1}
                \wedge\cdots\wedge
                \basedualxi{n}
                \\
                \wedge\basexi{1^{'}}
                \wedge\cdots\wedge
                \widehat{\basexi{j^{'}}}
                \wedge\cdots\wedge
                \basexi{n^{'}}
            \end{pmatrix*}
            \\
            +
            \\
            \displaystyle\sum_{l^{'}\neq j^{'}}
            \begin{pmatrix*}[l]
                \sgn
                    {1,\cdots,n,1^{'},\cdots,n^{'}}
                    {i,j^{'},1,\cdots,\widehat{i},\cdots,n,1^{'},\cdots,\widehat{j^{'}},\cdots,n^{'}}
                \sgn
                    {1,\cdots,\widehat{i},\cdots,n,1^{'},\cdots,\widehat{j^{'}},\cdots,n^{'}}
                    {l^{'},1,\cdots,\widehat{i},\cdots,n,1^{'},\cdots,\widehat{j^{'}},\cdots,\widehat{l^{'}},\cdots,n^{'}}
                \\
                \basedualxi{1}
                \wedge\cdots\wedge
                \widehat{\basedualxi{i}}
                \wedge\cdots\wedge
                \basedualxi{n}
                \\
                \wedge\basexi{1^{'}}
                \wedge\cdots\wedge
                \widehat{\basexi{j^{'}}}
                \wedge\cdots\wedge
                \basexi{(l-1)^{'}}
                \wedge
                \coefB{l^{'}}{l^{'} i}
                \basexi{l^{'}}\wedge\basedualxi{i}
                \wedge
                \basexi{(l+1)^{'}}
                \wedge\cdots\wedge
                \basexi{n^{'}}
            \end{pmatrix*}
            \\
            +
            \\
            \displaystyle\sum_{l^{'}\neq j^{'}}
            \begin{pmatrix*}[l]
                \sgn
                    {1,\cdots,n,1^{'},\cdots,n^{'}}
                    {i,j^{'},1,\cdots,\widehat{i},\cdots,n,1^{'},\cdots,\widehat{j^{'}},\cdots,n^{'}}
                \sgn
                    {1,\cdots,\widehat{i},\cdots,n,1^{'},\cdots,\widehat{j^{'}},\cdots,n^{'}}
                    {l^{'},1,\cdots,\widehat{i},\cdots,n,1^{'},\cdots,\widehat{j^{'}},\cdots,\widehat{l^{'}},\cdots,n^{'}}
                \\
                \basedualxi{1}
                \wedge\cdots\wedge
                \widehat{\basedualxi{i}}
                \wedge\cdots\wedge
                \basedualxi{n}
                \\
                \wedge\basexi{1^{'}}
                \wedge\cdots\wedge
                \widehat{\basexi{j^{'}}}
                \wedge\cdots\wedge
                \basexi{(l-1)^{'}}
                \wedge
                \coefB{l^{'}}{j^{'} i}
                \basexi{j^{'}}\wedge\basedualxi{i}
                \wedge
                \basexi{(l+1)^{'}}
                \wedge\cdots\wedge
                \basexi{n^{'}}
            \end{pmatrix*}
        \end{pmatrix*}
    \notag
    \\
    =&
        2^{2-n}
        \begin{pmatrix*}[l]
            \displaystyle\sum_{k\neq i}
            \begin{pmatrix*}[l]
                \sgn
                    {1,\cdots,n,1^{'},\cdots,n^{'}}
                    {i,j^{'},1,\cdots,\widehat{i},\cdots,n,1^{'},\cdots,\widehat{j^{'}},\cdots,n^{'}}
                \sgn
                    {1,\cdots,\widehat{i},\cdots,n}
                    {k,1,\cdots,\widehat{i},\cdots,\widehat{k},\cdots,n}
                \sgn
                    {1,\cdots,\widehat{i},\cdots,k-1,k,i,k+1,\cdots,n}
                    {1,\cdots,n}
                \\
                \sgn
                    {k,i}
                    {\order{k,i}}
                \\
                \coefD{k}{\order{k,i}}
                \basedualxi{1}
                \wedge\cdots\wedge
                \basedualxi{n}
                \wedge
                \basexi{1^{'}}
                \wedge\cdots\wedge
                \widehat{\basexi{j^{'}}}
                \wedge\cdots\wedge
                \basexi{n^{'}}
            \end{pmatrix*}
            \\
            +
            \\
            \displaystyle\sum_{l^{'}\neq j^{'}}
            \begin{pmatrix*}[l]
                \sgn
                    {1,\cdots,n,1^{'},\cdots,n^{'}}
                    {i,j^{'},1,\cdots,\widehat{i},\cdots,n,1^{'},\cdots,\widehat{j^{'}},\cdots,n^{'}}
                \sgn
                    {1,\cdots,\widehat{i},\cdots,n,1^{'},\cdots,\widehat{j^{'}},\cdots,n^{'}}
                    {l^{'},1,\cdots,\widehat{i},\cdots,n,1^{'},\cdots,\widehat{j^{'}},\cdots,\widehat{l^{'}},\cdots,n^{'}}
                \\
                \sgn
                    {1,\cdots,\widehat{i}\cdots,n,1^{'},\cdots,\widehat{j^{'}},\cdots,(l-1)^{'},l^{'},i,(l+1)^{'},\cdots,n^{'}}
                    {1,\cdots,n,1^{'},\cdots,\widehat{j^{'}},\cdots,n^{'}}
                \\
                \coefB{l^{'}}{l^{'} i}
                \basedualxi{1}
                \wedge\cdots\wedge
                \basedualxi{n}
                \wedge
                \basexi{1^{'}}
                \wedge\cdots\wedge
                \widehat{\basexi{j^{'}}}
                \wedge\cdots\wedge
                \basexi{n^{'}}
            \end{pmatrix*}
            \\
            +
            \\
            \displaystyle\sum_{l^{'}\neq j^{'}}
            \begin{pmatrix*}[l]
                \sgn
                    {1,\cdots,n,1^{'},\cdots,n^{'}}
                    {i,j^{'},1,\cdots,\widehat{i},\cdots,n,1^{'},\cdots,\widehat{j^{'}},\cdots,n^{'}}
                \sgn
                    {1,\cdots,\widehat{i},\cdots,n,1^{'},\cdots,\widehat{j^{'}},\cdots,n^{'}}
                    {l^{'},1,\cdots,\widehat{i},\cdots,n,1^{'},\cdots,\widehat{j^{'}},\cdots,\widehat{l^{'}},\cdots,n^{'}}
                \\
                \sgn
                    {1,\cdots,\widehat{i}\cdots,n,1^{'},\cdots,\widehat{j^{'}},\cdots,(l-1)^{'},j^{'},i,(l+1)^{'},\cdots,n^{'}}
                    {1,\cdots,n,1^{'},\cdots,\widehat{l^{'}},\cdots,n^{'}}
                \\
                \coefB{l^{'}}{j^{'} i}
                \basedualxi{1}
                \wedge\cdots\wedge
                \basedualxi{n}
                \wedge
                \basexi{1^{'}}
                \wedge\cdots\wedge
                \widehat{\basexi{l^{'}}}
                \wedge\cdots\wedge
                \basexi{n^{'}}
            \end{pmatrix*}
        \end{pmatrix*}.
    \label{EQ-Kahler identity on Lie agbd-calculation for RHS of (i,j)-no4}
\end{align}
The next step is applying $\star$ to (\ref{EQ-Kahler identity on Lie agbd-calculation for RHS of (i,j)-no4}). Observe that the exterior product of the frames in (\ref{EQ-Kahler identity on Lie agbd-calculation for RHS of (i,j)-no4}), like $\basedualxi{1}\wedge\cdots\wedge\basedualxi{n}\wedge\basexi{1^{'}}\wedge\cdots\wedge\widehat{\basexi{j^{'}}}\wedge\cdots\wedge\basexi{n^{'}}$, is not in the standard form where the holomorphic frames are in front of the anti-holomorphic frames. The expression of the Hodge star applying to this type of local frames is
\begin{align*}
    \star
    (\basedualxi{\indexIc}\wedge\basexi{\indexJpc})
    =
    2^{(|\indexIc|+|\indexJpc|-n)}
    \sgn
        {1^{'},\cdots,n^{'},1,\cdots,n}{\indexIc,\indexJpc,\indexI,\indexJp}\basexi{\indexI}\wedge\basedualxi{\indexJp},
\end{align*}
where $\indexI$ and $\indexJp$ are the arrays of the index numbers. We use this formula and obtain the following expression: 
\begin{align}
    &
        \star\,\LieagbdPartialbar\,\star(\basexi{i}\wedge\basedualxi{j^{'}})
    \notag
    \\
    =&
        2^{2-n}
        \begin{pmatrix*}[l]
            \displaystyle\sum_{k\neq i}
            \begin{pmatrix*}[l]
                \sgn
                    {1,\cdots,n,1^{'},\cdots,n^{'}}
                    {i,j^{'},1,\cdots,\widehat{i},\cdots,n,1^{'},\cdots,\widehat{j^{'}},\cdots,n^{'}}
                \sgn
                    {1,\cdots,\widehat{i},\cdots,n}
                    {k,1,\cdots,\widehat{i},\cdots,\widehat{k},\cdots,n}
                \sgn
                    {1,\cdots,\widehat{i},\cdots,k-1,k,i,k+1,\cdots,n}
                    {1,\cdots,n}
                \\
                \sgn
                    {k,i}
                    {\order{k,i}}
                \\
                \coefD{k}{\order{k,i}}
                \star
                (
                \basedualxi{1}
                \wedge\cdots\wedge
                \basedualxi{n}
                \wedge
                \basexi{1^{'}}
                \wedge\cdots\wedge
                \widehat{\basexi{j^{'}}}
                \wedge\cdots\wedge
                \basexi{n^{'}}
                )
            \end{pmatrix*}
            \\
            +
            \\
            \displaystyle\sum_{l^{'}\neq j^{'}}
            \begin{pmatrix*}[l]
                \sgn
                    {1,\cdots,n,1^{'},\cdots,n^{'}}
                    {i,j^{'},1,\cdots,\widehat{i},\cdots,n,1^{'},\cdots,\widehat{j^{'}},\cdots,n^{'}}
                \sgn
                    {1,\cdots,\widehat{i},\cdots,n,1^{'},\cdots,\widehat{j^{'}},\cdots,n^{'}}
                    {l^{'},1,\cdots,\widehat{i},\cdots,n,1^{'},\cdots,\widehat{j^{'}},\cdots,\widehat{l^{'}},\cdots,n^{'}}
                \\
                \sgn
                    {1,\cdots,\widehat{i}\cdots,n,1^{'},\cdots,\widehat{j^{'}},\cdots,(l-1)^{'},l^{'},i,(l+1)^{'},\cdots,n^{'}}
                    {1,\cdots,n,1^{'},\cdots,\widehat{j^{'}},\cdots,n^{'}}
                \\
                \coefB{l^{'}}{l^{'} i}
                \star
                (
                \basedualxi{1}
                \wedge\cdots\wedge
                \basedualxi{n}
                \wedge
                \basexi{1^{'}}
                \wedge\cdots\wedge
                \widehat{\basexi{j^{'}}}
                \wedge\cdots\wedge
                \basexi{n^{'}}
                )
            \end{pmatrix*}
            \\
            +
            \\
            \displaystyle\sum_{l^{'}\neq j^{'}}
            \begin{pmatrix*}[l]
                \sgn
                    {1,\cdots,n,1^{'},\cdots,n^{'}}
                    {i,j^{'},1,\cdots,\widehat{i},\cdots,n,1^{'},\cdots,\widehat{j^{'}},\cdots,n^{'}}
                \sgn
                    {1,\cdots,\widehat{i},\cdots,n,1^{'},\cdots,\widehat{j^{'}},\cdots,n^{'}}
                    {l^{'},1,\cdots,\widehat{i},\cdots,n,1^{'},\cdots,\widehat{j^{'}},\cdots,\widehat{l^{'}},\cdots,n^{'}}
                \\
                \sgn
                    {1,\cdots,\widehat{i}\cdots,n,1^{'},\cdots,\widehat{j^{'}},\cdots,(l-1)^{'},j^{'},i,(l+1)^{'},\cdots,n^{'}}
                    {1,\cdots,n,1^{'},\cdots,\widehat{l^{'}},\cdots,n^{'}}
                \\
                \coefB{l^{'}}{j^{'} i}
                \star
                (
                \basedualxi{1}
                \wedge\cdots\wedge
                \basedualxi{n}
                \wedge
                \basexi{1^{'}}
                \wedge\cdots\wedge
                \widehat{\basexi{l^{'}}}
                \wedge\cdots\wedge
                \basexi{n^{'}}
                )
            \end{pmatrix*}
        \end{pmatrix*}
    \notag
    \\
    =&
        2^{2-n}
        \begin{pmatrix*}[l]
            \displaystyle\sum_{k\neq i}
            \begin{pmatrix*}[l]
                \sgn
                    {1,\cdots,n,1^{'},\cdots,n^{'}}
                    {i,j^{'},1,\cdots,\widehat{i},\cdots,n,1^{'},\cdots,\widehat{j^{'}},\cdots,n^{'}}
                \sgn
                    {1,\cdots,\widehat{i},\cdots,n}
                    {k,1,\cdots,\widehat{i},\cdots,\widehat{k},\cdots,n}
                \sgn
                    {1,\cdots,\widehat{i},\cdots,k-1,k,i,k+1,\cdots,n}
                    {1,\cdots,n}
                \\
                \sgn
                    {k,i}
                    {\order{k,i}}
                \\
                \coefD{k}{\order{k,i}}
                2^{n+n-1-n}
                \sgn
                    {1^{'},\cdots,n^{'},1,\cdots,n}
                    {j^{'},1,\cdots,n,1^{'},\cdots,\widehat{j^{'}},\cdots,n^{'}}
                \basexi{j^{'}}
            \end{pmatrix*}
            \\
            +
            \\
            \displaystyle\sum_{l^{'}\neq j^{'}}
            \begin{pmatrix*}[l]
                \sgn
                    {1,\cdots,n,1^{'},\cdots,n^{'}}
                    {i,j^{'},1,\cdots,\widehat{i},\cdots,n,1^{'},\cdots,\widehat{j^{'}},\cdots,n^{'}}
                \sgn
                    {1,\cdots,\widehat{i},\cdots,n,1^{'},\cdots,\widehat{j^{'}},\cdots,n^{'}}                       {l^{'},1,\cdots,\widehat{i},\cdots,n,1^{'},\cdots,\widehat{j^{'}},\cdots,\widehat{l^{'}},\cdots,n^{'}}
                \\
                \sgn
                    {1,\cdots,\widehat{i}\cdots,n,1^{'},\cdots,\widehat{j^{'}},\cdots,(l-1)^{'},l^{'},i,(l+1)^{'},\cdots,n^{'}}
                    {1,\cdots,n,1^{'},\cdots,\widehat{j^{'}},\cdots,n^{'}}
                \\
                \coefB{l^{'}}{l^{'} i}
                2^{n+n-1-n}
                \sgn
                    {1^{'},\cdots,n^{'},1,\cdots,n}
                    {j^{'},1,\cdots,n,1^{'},\cdots,\widehat{j^{'}},\cdots,n^{'}}
                \basexi{j^{'}}
            \end{pmatrix*}
            \\
            +
            \\
            \displaystyle\sum_{l^{'}\neq j^{'}}
            \begin{pmatrix*}[l]
                \sgn
                    {1,\cdots,n,1^{'},\cdots,n^{'}}
                    {i,j^{'},1,\cdots,\widehat{i},\cdots,n,1^{'},\cdots,\widehat{j^{'}},\cdots,n^{'}}
                \sgn
                    {1,\cdots,\widehat{i},\cdots,n,1^{'},\cdots,\widehat{j^{'}},\cdots,n^{'}}
                    {l^{'},1,\cdots,\widehat{i},\cdots,n,1^{'},\cdots,\widehat{j^{'}},\cdots,\widehat{l^{'}},\cdots,n^{'}}
                \\
                \sgn
                    {1,\cdots,\widehat{i}\cdots,n,1^{'},\cdots,\widehat{j^{'}},\cdots,(l-1)^{'},j^{'},i,(l+1)^{'},\cdots,n^{'}}
                    {1,\cdots,n,1^{'},\cdots,\widehat{l^{'}},\cdots,n^{'}}
                \\
                \coefB{l^{'}}{j^{'} i}
                2^{n+n-1-n}
                \sgn
                    {1^{'},\cdots,n^{'},1,\cdots,n}
                    {l^{'},1,\cdots,n,1^{'},\cdots,\widehat{l^{'}},\cdots,n^{'}}
                \basexi{l^{'}}
            \end{pmatrix*}
        \end{pmatrix*}
    \nonumber\\
    =&
        2
        \begin{pmatrix*}[l]
            \displaystyle\sum_{k\neq i}
            \begin{pmatrix*}[l]
                \sgn
                    {1,\cdots,n,1^{'},\cdots,n^{'}}
                    {i,j^{'},1,\cdots,\widehat{i},\cdots,n,1^{'},\cdots,\widehat{j^{'}},\cdots,n^{'}}
                \sgn
                    {1,\cdots,\widehat{i},\cdots,n}
                    {k,1,\cdots,\widehat{i},\cdots,\widehat{k},\cdots,n}
                \sgn
                    {1,\cdots,\widehat{i},\cdots,k-1,k,i,k+1,\cdots,n}
                    {1,\cdots,n}
                \\
                \sgn
                    {1^{'},\cdots,n^{'},1,\cdots,n}
                    {j^{'},1,\cdots,n,1^{'},\cdots,\widehat{j^{'}},\cdots,n^{'}}
                \\
                \sgn
                    {k,i}
                    {\order{k,i}}
                \\
                \coefD{k}{\order{k,i}}
                \basexi{j^{'}}
            \end{pmatrix*}
            \\
            +
            \\
            \displaystyle\sum_{l^{'}\neq j^{'}}
            \begin{pmatrix*}[l]
                \sgn
                    {1,\cdots,n,1^{'},\cdots,n^{'}}
                    {i,j^{'},1,\cdots,\widehat{i},\cdots,n,1^{'},\cdots,\widehat{j^{'}},\cdots,n^{'}}
                \sgn
                    {1,\cdots,\widehat{i},\cdots,n,1^{'},\cdots,\widehat{j^{'}},\cdots,n^{'}}
                    {l^{'},1,\cdots,\widehat{i},\cdots,n,1^{'},\cdots,\widehat{j^{'}},\cdots,\widehat{l^{'}},\cdots,n^{'}}
                \\
                \sgn
                    {1,\cdots,\widehat{i}\cdots,n,1^{'},\cdots,\widehat{j^{'}},\cdots,(l-1)^{'},l^{'},i,(l+1)^{'},\cdots,n^{'}}
                    {1,\cdots,n,1^{'},\cdots,\widehat{j^{'}},\cdots,n^{'}}
                \sgn
                    {1^{'},\cdots,n^{'},1,\cdots,n}
                    {j^{'},1,\cdots,n,1^{'},\cdots,\widehat{j^{'}},\cdots,n^{'}}
                \\
                \coefB{l^{'}}{l^{'} i}
                \basexi{j^{'}}
            \end{pmatrix*}
            \\
            +
            \\
            \displaystyle\sum_{l^{'}\neq j^{'}}
            \begin{pmatrix*}[l]
                \sgn
                    {1,\cdots,n,1^{'},\cdots,n^{'}}
                    {i,j^{'},1,\cdots,\widehat{i},\cdots,n,1^{'},\cdots,\widehat{j^{'}},\cdots,n^{'}}
                \sgn
                    {1,\cdots,\widehat{i},\cdots,n,1^{'},\cdots,\widehat{j^{'}},\cdots,n^{'}}
                    {l^{'},1,\cdots,\widehat{i},\cdots,n,1^{'},\cdots,\widehat{j^{'}},\cdots,\widehat{l^{'}},\cdots,n^{'}}
                \\
                \sgn
                    {1,\cdots,\widehat{i}\cdots,n,1^{'},\cdots,\widehat{j^{'}},\cdots,(l-1)^{'},j^{'},i,(l+1)^{'},\cdots,n^{'}}
                    {1,\cdots,n,1^{'},\cdots,\widehat{l^{'}},\cdots,n^{'}}
                \sgn
                    {1^{'},\cdots,n^{'},1,\cdots,n}
                    {l^{'},1,\cdots,n,1^{'},\cdots,\widehat{l^{'}},\cdots,n^{'}}
                \\
                \coefB{l^{'}}{j^{'} i}
                \basexi{l^{'}}
            \end{pmatrix*}
        \end{pmatrix*}.
    \label{EQ-Kahler identity on Lie agbd-calculation for RHS of (i,j)-no5}
\end{align}
The signs of permutations in (\ref{EQ-Kahler identity on Lie agbd-calculation for RHS of (i,j)-no5}) can be simplified. The simplification can be found in Section \ref{apex-simplification for RHS of (i,j)}. We obtain the simplification of (\ref{EQ-Kahler identity on Lie agbd-calculation for RHS of (i,j)-no5}):
\begin{align}
        \star\,\LieagbdPartialbar\,\star(\basexi{i}\wedge\basedualxi{j^{'}})
    =
        2
        \begin{pmatrix*}[l]
            \displaystyle\sum_{k\neq i}
            (-1)^{n^2}
            \coefD{k}{\order{k,i}}
            \basexi{j^{'}}
            \\
            +
            \\
            \displaystyle\sum_{l^{'}\neq j^{'}}
            (-1)^{n^2}
            \coefB{l^{'}}{l^{'} i}
            \basexi{j^{'}}
            \\
            +
            \\
            \displaystyle\sum_{l^{'}\neq j^{'}}
            -(-1)^{n^2}
            \coefB{l^{'}}{j^{'} i}
            \basexi{l^{'}}
        \end{pmatrix*}
    =
        2(-1)^{n^2}
        \begin{pmatrix*}[l]
            \displaystyle\sum_{k\neq i}
            \coefD{k}{\order{k,i}}
            \basexi{j^{'}}
            \\
            +
            \\
            \displaystyle\sum_{l^{'}\neq j^{'}}
            \coefB{l^{'}}{l^{'} i}
            \basexi{j^{'}}
            \\
            -
            \\
            \displaystyle\sum_{l^{'}\neq j^{'}}
            \coefB{l^{'}}{j^{'} i}
            \basexi{l^{'}}
        \end{pmatrix*}
    .
\label{EQ-Kahler identity on Lie agbd-calculation for RHS of (i,j)-no6}
\end{align}
We rewrite 
$
\displaystyle\sum_{l^{'}\neq j^{'}}
\coefB{l^{'}}{l^{'} i}
\basexi{j^{'}}
=
\begin{pmatrix*}[l]
    \displaystyle\sum_{l^{'}=1}^{n}
    \coefB{l^{'}}{l^{'} i}
    \basexi{j^{'}}
\end{pmatrix*}
-
\coefB{j^{'}}{j^{'} i}
\basexi{j^{'}}
$
and
$
\displaystyle\sum_{l^{'}\neq j^{'}}
\coefB{l^{'}}{j^{'} i}
\basexi{l^{'}}
=
\begin{pmatrix*}[l]
                \displaystyle\sum_{l^{'}=1}^{n}
                \coefB{l^{'}}{j^{'} i}
                \basexi{l^{'}}
            \end{pmatrix*}
            -
                \coefB{j^{'}}{j^{'} i}
                \basexi{j^{'}}
$
. Then, (\ref{EQ-Kahler identity on Lie agbd-calculation for RHS of (i,j)-no6}) can be written into
\begin{align}
        2(-1)^{n^2}
        \begin{pmatrix*}[l]
            \displaystyle\sum_{l=1}^{n}
            \sgn
                {l,i}
                {\order{l,i}}
            \coefD{l}{\order{l,i}}
            \basexi{j^{'}}
            \\
            +
            \\
            \begin{pmatrix*}[l]
                \displaystyle\sum_{l^{'}=1}^{n}
                \coefB{l^{'}}{l^{'} i}
                \basexi{j^{'}}
            \end{pmatrix*}
            -
            \coefB{j^{'}}{j^{'} i}
            \basexi{j^{'}}
            \\
            -
            \\
            (
            \begin{pmatrix*}[l]
                \displaystyle\sum_{l^{'}=1}^{n}
                \coefB{l^{'}}{j^{'} i}
                \basexi{l^{'}}
            \end{pmatrix*}
            -
            \coefB{j^{'}}{j^{'} i}
            \basexi{j^{'}}
            )
        \end{pmatrix*}
    =
        2(-1)^{n^2}
        \begin{pmatrix*}[l]
            \displaystyle\sum_{l=1}^{n}
            \sgn
                {l,i}
                {\order{l,i}}
            \coefD{l}{\order{l,i}}
            \basexi{j^{'}}
            \\
            +
            \\
            \displaystyle\sum_{l^{'}=1}^{n}
            \coefB{l^{'}}{l^{'} i}
            \basexi{j^{'}}
            \\
            -
            \\
            \displaystyle\sum_{l^{'}=1}^{n}
            \coefB{l^{'}}{j^{'} i}
            \basexi{l^{'}}
        \end{pmatrix*}
    .
\label{EQ-Kahler identity on Lie agbd-calculation for RHS of (i,j)-no7}
\end{align}
Using $l$ to replace $l^{'}$ in $\displaystyle\sum_{l^{'}=1}^{n}\coefB{l^{'}}{l^{'} i}\basexi{j^{'}}$, we further simplify (\ref{EQ-Kahler identity on Lie agbd-calculation for RHS of (i,j)-no7}) and obtain
\begin{align}
        2(-1)^{n^2}
        \begin{pmatrix*}[l]
            \displaystyle\sum_{l=1}^{n}
            \sgn
                {l,i}
                {\order{l,i}}
            \coefD{l}{\order{l,i}}
            \basexi{j^{'}}
            \\
            +
            \\
            \displaystyle\sum_{l=1}^{n}
            \coefB{l}{l i}
            \basexi{j^{'}}
            \\
            -
            \\
            \displaystyle\sum_{l^{'}=1}^{n}
            \coefB{l^{'}}{j^{'} i}
            \basexi{l^{'}}
        \end{pmatrix*}
    =
        2(-1)^{n^2}
        \begin{pmatrix*}[l]
            \displaystyle\sum_{l=1}^{n}
            \sgn
                {l,i}
                {\order{l,i}}
            \coefD{l}{\order{l,i}}
            \basexi{j^{'}}
            +
            \coefB{l}{l i}
            \basexi{j^{'}}
            \\
            -
            \\
            \displaystyle\sum_{l^{'}=1}^{n}
            \coefB{l^{'}}{j^{'} i}
            \basexi{l^{'}}
        \end{pmatrix*},
\end{align}
then we use the constraint for coefficients $B$ and $D$ to achieve the final simplification of 
$\star\,\LieagbdPartialbar\,\star(\basexi{i}\wedge\basedualxi{j^{'}})$:
\begin{align*}
        \star\,\LieagbdPartialbar\,\star(\basexi{i}\wedge\basedualxi{j^{'}})
    =
        2(-1)^{n^2}
        \begin{pmatrix*}[l]
            \displaystyle\sum_{l=1}^{n}
            \coefB{i}{l l}
            \basexi{j^{'}}
            \\
            -
            \\
            \displaystyle\sum_{l^{'}=1}^{n}
            \coefB{l^{'}}{j^{'} i}
            \basexi{l^{'}}
        \end{pmatrix*}.
\end{align*}
Recall that $\partial^{*}=-(-1)^{n^2}\star\,\partialbar\,\star$. Thus, we can conclude that the right-hand side
\begin{align*}
    \sqrt{-1}\partial^{*}(\basexi{i}\wedge\basedualxi{j})
    =
    -2\sqrt{-1}
    \begin{pmatrix*}[l]
        \displaystyle\sum_{l=1}^{n}
        \coefB{i}{l l}
        \basexi{j^{'}}
        -
        \displaystyle\sum_{l^{'}=1}^{n}
        \coefB{l^{'}}{j^{'} i}
        \basexi{l^{'}}
     \end{pmatrix*}
\end{align*}
is equal to the left-hand side (\ref{EQ-Kahler identity on Lie agbd-calculation for LHS of (i,j)-result}).

\end{proof}


\begin{lemma}
\label{Lemma-Kahler identity on Lie algebroid-the third lemma}
$
    [\LieagbdPartialbar,\iotaW]
    (f \basexi{\indexI}\wedge\basedualxi{\indexJp})
    =
    \sqrt{-1}\LieagbdPartial^{*}
    (f \basexi{\indexI}\wedge\basedualxi{\indexJp}).
$
\end{lemma}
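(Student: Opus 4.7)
My plan is to exploit the Leibniz rule for $\LieagbdPartialbar$ together with the $C^{\infty}(M)$-linearity of $\iotaW$ and $\star$ in order to isolate the dependence on $f$, and then apply the inductive hypothesis (Lemma~\ref{Lemma-Kahler identity on Lie algebroid-the first lemma} combined with step~2 of Theorem~\ref{thm-Lie algebroid K\"{a}hler identities}) on the pure basis form $\alpha := \basexi{\indexI}\wedge\basedualxi{\indexJp}$ in order to cancel the $f$-linear contribution, leaving a pointwise algebraic identity on $\gstar$ to check.

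First I would expand the left-hand side. Since $\iotaW$ is tensorial and $\LieagbdPartialbar$ is a derivation, a direct commutator expansion yields
\begin{align*}
    [\LieagbdPartialbar,\iotaW](f\alpha)
    = f\,[\LieagbdPartialbar,\iotaW]\alpha
    + (\LieagbdPartialbar f)\wedge\iotaW\alpha
    - \iotaW\bigl((\LieagbdPartialbar f)\wedge\alpha\bigr).
\end{align*}
I would then do the analogous expansion on the right, using $\LieagbdPartial^{*} = -(-1)^{n^{2}}\star\,\LieagbdPartialbar\,\star$ from Proposition~\ref{prop: a list of prop for Hodge star and Lie algebroid diff} and the $C^{\infty}(M)$-linearity of $\star$:
\begin{align*}
    \sqrt{-1}\LieagbdPartial^{*}(f\alpha)
    = f\,\sqrt{-1}\LieagbdPartial^{*}\alpha
    - \sqrt{-1}(-1)^{n^{2}}\star\bigl((\LieagbdPartialbar f)\wedge\star\alpha\bigr).
\end{align*}
By the inductive hypothesis one has $f\,[\LieagbdPartialbar,\iotaW]\alpha = f\,\sqrt{-1}\LieagbdPartial^{*}\alpha$, so the lemma is equivalent to the residual identity
\begin{align*}
    (\LieagbdPartialbar f)\wedge\iotaW\alpha
    - \iotaW\bigl((\LieagbdPartialbar f)\wedge\alpha\bigr)
    = -\sqrt{-1}(-1)^{n^{2}}\star\bigl((\LieagbdPartialbar f)\wedge\star\alpha\bigr).
\end{align*}

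This residual statement contains no Lie-bracket coefficients and no higher derivatives of $f$; it is a purely pointwise identity on the Hermitian exterior algebra of $\gstar$. My plan is to write $\LieagbdPartialbar f = \sum_{k} g_{k}\,\basedualxi{k}$ and use $\C$-linearity to reduce to the single-index case $\LieagbdPartialbar f = \basedualxi{k}$, splitting further into the subcases $k\in\indexJp$ and $k\notin\indexJp$, each handled directly from the local expressions for $\iotaW$ and $\star$ recorded in Section~\ref{sec: K\"{a}hler Lie algebroid}. The hard part will be the combinatorial sign bookkeeping: the triple composition $\star\,\epsilon_{\basedualxi{k}}\star$ produces a product of three sign-of-permutation factors that must be reconciled against the single-step action of $\iotaW$, paralleling the simplification already carried out in Lemma~\ref{Lemma-Kahler identity on Lie algebroid-the first lemma} (see Section~\ref{apex-simplification for RHS of (i,j)}). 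Since the operators $\epsilon$, $\iota$, $\star$ on the unitary frame $\{\basexi{i},\basedualxi{j}\}$ obey formulas formally identical to those on $\C^{n}$ with its standard K\"{a}hler form, I would not expect any obstruction beyond this sign-matching, and the residual identity follows from the same algebraic fact that underlies the corresponding classical Leibniz-difference step.
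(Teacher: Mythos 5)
Your proposal follows essentially the same route as the paper's proof: the paper's Steps 1–2 perform exactly your Leibniz/linearity decomposition of both sides, cancel the $f$-linear parts via the inductive hypothesis, and its Steps 3–5 verify precisely your residual identity $(\LieagbdPartialbar f)\wedge\iotaW\alpha-\iotaW\bigl((\LieagbdPartialbar f)\wedge\alpha\bigr)=-\sqrt{-1}(-1)^{n^{2}}\star\bigl((\LieagbdPartialbar f)\wedge\star\alpha\bigr)$ by expanding $\LieagbdPartialbar f=\sum_{j}\rho(\basedualv{j^{'}})(f)\,\basedualxi{j^{'}}$ and doing the same index case analysis and sign bookkeeping you describe, with both sides reducing to $-2\sqrt{-1}\sum_{r\in\indexI}\sgn{\indexI}{r,\indexI\backslash\{r\}}\rho(\basedualv{r^{'}})(f)\,\basexi{\indexI\backslash\{r\}}\wedge\basedualxi{\indexJp}$. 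The plan is correct and no new idea is needed beyond carrying out the combinatorics you have already flagged.
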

\begin{proof}
Our proof comprises the following steps:
\begin{enumerate}
    \item Calculate $
    [\LieagbdPartialbar,\iotaW]
    (f\basexi{\indexI}\wedge\basedualxi{\indexJp})$.

    \item Calculate $
    \sqrt{-1}\LieagbdPartial^{*}
    (f\basexi{\indexI}\wedge\basedualxi{\indexJp})
    $.

    \item Calculate $
    (\LieagbdPartialbar f)
    \wedge
    \iotaW(\basexi{\indexI}\wedge\basedualxi{\indexJp})
    -
    (\iotaW\,\LieagbdPartialbar\,f)
    \basexi{\indexI}\wedge\basedualxi{\indexJp}
    $, which is obtained from Step 1.

    \item Calculate $
    -\sqrt{-1}(-1)^{n^{2}}
    \star
    \bigg(
        (\LieagbdPartialbar f)
        \wedge
        \star(\basexi{\indexI}\wedge\basedualxi{\indexJp})
    \bigg)
    $, which is obtained from Step 2.

    \item Assemble the results from Steps 3 and 4 and obtain the expression of $
    \sqrt{-1}\LieagbdPartial^{*}
    (f\basexi{\indexI}\wedge\basedualxi{\indexJp})
    $, then check this is equal to the expression of $
    [\LieagbdPartialbar,\iotaW]
    (f\basexi{\indexI}\wedge\basedualxi{\indexJp})$.
\end{enumerate}
\keyword{Step 1: } Applying $
[\LieagbdPartialbar,\iotaW]
=
\LieagbdPartialbar\,\iotaW
-
\iotaW\,\LieagbdPartialbar
$ to $(f\basexi{\indexI}\wedge\basedualxi{\indexJp})$,
\begin{align*}
    [\LieagbdPartialbar,\iotaW](f\basexi{\indexI}\wedge\basedualxi{\indexJp})
    =&
        \LieagbdPartialbar\,\iotaW
        (f\basexi{\indexI}\wedge\basedualxi{\indexJp})
        -
        \iotaW\,\LieagbdPartialbar
        (f\basexi{\indexI}\wedge\basedualxi{\indexJp})
    \\
    =&
        \LieagbdPartialbar
        \begin{pmatrix*}[l]
            (\iotaW f)
            \wedge
            \basexi{\indexI}\wedge\basedualxi{\indexJp}
            +
            f\,\iotaW(\basexi{\indexI}\wedge\basedualxi{\indexJp})
        \end{pmatrix*}
        -
        \iotaW
        \begin{pmatrix*}[l]
            (\LieagbdPartialbar f)
            \wedge\basexi{\indexI}\wedge\basedualxi{\indexJp}
            +
            f\,
            \LieagbdPartialbar(\basexi{\indexI}\wedge\basedualxi{\indexJp})
        \end{pmatrix*},
\end{align*}
where $\iotaW f=0$. Continuing the calculation,
\begin{align}
    &
        \LieagbdPartialbar
        \begin{pmatrix*}[l]
            f\,\iotaW(\basexi{\indexI}\wedge\basedualxi{\indexJp})
        \end{pmatrix*}
        -
        \iotaW\begin{pmatrix*}[l]
            (\LieagbdPartialbar f)
            \wedge\basexi{\indexI}\wedge\basedualxi{\indexJp}
            +
            f\,
            \LieagbdPartialbar(\basexi{\indexI}\wedge\basedualxi{\indexJp})
        \end{pmatrix*}
\notag
    \\
    =&
        (\LieagbdPartialbar f)
        \wedge
        \iotaW(\basexi{\indexI}\wedge\basedualxi{\indexJp})
        +
        f\,\LieagbdPartialbar\,\iotaW
        (\basexi{\indexI}\wedge\basedualxi{\indexJp})
        -
        \begin{pmatrix*}
            (\iotaW\,\LieagbdPartialbar\,f)
            \basexi{\indexI}\wedge\basedualxi{\indexJp}
            +
            f\cdot\iotaW\,\LieagbdPartialbar
            (\basexi{\indexI}\wedge\basedualxi{\indexJp})
        \end{pmatrix*} 
\notag
    \\
    =&
        f\,(\LieagbdPartialbar\,\iotaW-\iotaW\,\LieagbdPartialbar)
        (\basexi{\indexI}\wedge\basedualxi{\indexJp})
        +
        (\LieagbdPartialbar f)
        \wedge
        \iotaW(\basexi{\indexI}\wedge\basedualxi{\indexJp})
        -
        (\iotaW\,\LieagbdPartialbar\,f)
        \basexi{\indexI}\wedge\basedualxi{\indexJp}
\notag
    \\
    =&
        f\,[\LieagbdPartialbar,\iotaW]
        (\basexi{\indexI}\wedge\basedualxi{\indexJp})
        +
        (\LieagbdPartialbar f)
        \wedge
        \iotaW(\basexi{\indexI}\wedge\basedualxi{\indexJp})
        -
        (\iotaW\,\LieagbdPartialbar\,f)
        \basexi{\indexI}\wedge\basedualxi{\indexJp}
    .
\label{EQ-For f (I,J), calculation both side-expression of LHS}
\end{align}
We can separate (\ref{EQ-For f (I,J), calculation both side-expression of LHS}) into two parts:
\begin{itemize}
    \item The first part is $
    f\,[\LieagbdPartialbar,\iotaW]
    (\basexi{\indexI}\wedge\basedualxi{\indexJp})
    $.

    \item The second part is
    \begin{align}
        (\LieagbdPartialbar f)
        \wedge
        \iotaW(\basexi{\indexI}\wedge\basedualxi{\indexJp})
        -
        (\iotaW\,\LieagbdPartialbar\,f)
        \basexi{\indexI}\wedge\basedualxi{\indexJp}
        ,
    \label{EQ-For f (I,J), calculation both side-expression of LHS, the second part}
    \end{align}
    which is equal to a corresponding term in the expression of $
    \sqrt{-1}\LieagbdPartial^{*}
    (f\basexi{\indexI}\wedge\basedualxi{\indexJp})
    $. We return to this later.
\end{itemize}
\keyword{Step 2:} Recall that $
\partial^{*}=-(-1)^{n^2}\star\,\LieagbdPartialbar\,\star
$. Thus, we complete the following calculation:
\begin{align}
    &
        -\sqrt{-1}(-1)^{n^{2}}
        \star\,\LieagbdPartialbar\,\star
        (f\basexi{\indexI}\wedge\basedualxi{\indexJp})
\notag
    \\
    =&
        -\sqrt{-1}(-1)^{n^{2}}
        \star\,\LieagbdPartialbar
        \big(
            f\,\star
            (\basexi{\indexI}\wedge\basedualxi{\indexJp})
        \big)
\notag
    \\
    =&
        -\sqrt{-1}(-1)^{n^{2}}
        \star
        \bigg(
            (\LieagbdPartialbar f)
            \wedge
            \star(\basexi{\indexI}\wedge\basedualxi{\indexJp})
            +
            f\, \LieagbdPartialbar\,\star
            (\basexi{\indexI}\wedge\basedualxi{\indexJp})
        \bigg)
\notag
    \\
    =&
        -\sqrt{-1}(-1)^{n^{2}}
        \star
        \bigg(
            (\LieagbdPartialbar f)
            \wedge
            \star(\basexi{\indexI}\wedge\basedualxi{\indexJp})
        \bigg)
        +
        \sqrt{-1}f\,
        \bigg(
            -(-1)^{n^{2}}
            \star\,\LieagbdPartialbar\,\star
            (\basexi{\indexI}\wedge\basedualxi{\indexJp})
        \bigg)
\notag
    \\
    =&
        -\sqrt{-1}(-1)^{n^{2}}
        \star
        \bigg(
            (\LieagbdPartialbar f)
            \wedge
            \star(\basexi{\indexI}\wedge\basedualxi{\indexJp})
        \bigg)
        +
        f\,\sqrt{-1}\LieagbdPartial^{*}
        (\basexi{\indexI}\wedge\basedualxi{\indexJp})
    .
\label{EQ-For f (I,J), calculation both side-expression of RHS}
\end{align}
Summarizing the above computation, we can separate (\ref{EQ-For f (I,J), calculation both side-expression of LHS}) into two parts:
\begin{itemize}
    \item The first part is $
    f\,\sqrt{-1}\LieagbdPartial^{*}
    (\basexi{\indexI}\wedge\basedualxi{\indexJp})
    $.

    \item The second part is 
    \begin{align}
        -\sqrt{-1}(-1)^{n^{2}}
        \star
        \bigg(
            (\LieagbdPartialbar f)
            \wedge
            \star(\basexi{\indexI}\wedge\basedualxi{\indexJp})
        \bigg)
        ,
    \label{EQ-For f (I,J), calculation both side-expression of RHS, the second part}
    \end{align}
     which we check in the next step and is equal to (\ref{EQ-For f (I,J), calculation both side-expression of LHS, the second part}).
\end{itemize}
\keyword{Step 3:} Using $
    \iotaW
    =
    -\frac{\sqrt{-1}}{2}\displaystyle\sum_{r=1}^{n}\contract{\basedualxi{r^{'}}}\,\contract{\basexi{r}}
$ and the definition of $\LieagbdPartialbar$, we can write
\begin{align}
    &
        (\LieagbdPartialbar f)
        \wedge
        \iotaW(\basexi{\indexI}\wedge\basedualxi{\indexJp})
        -
        (\iotaW\,\LieagbdPartialbar\,f)
        \basexi{\indexI}\wedge\basedualxi{\indexJp}
\notag
    \\
    =&
        (
        \displaystyle\sum_{j^{'}=1}^{n}
        \rho(\basedualv{j^{'}})(f) 
        \basedualxi{j^{'}}
        )
        \wedge
        (-\frac{\sqrt{-1}}{2})
        \displaystyle\sum_{r=1}^{n}
        \contract{\basedualxi{r^{'}}}\,\contract{\basexi{r}}
        (
        \basexi{\indexI}\wedge\basedualxi{\indexJp}
        )
\label{EQ-For f (I,J), calculation both side-step3, first part in LHS}
    \\
    &   
        -
\notag
    \\
    &
        (-\frac{\sqrt{-1}}{2})
        \displaystyle\sum_{r=1}^{n}
        \contract{\basedualxi{r^{'}}}\,\contract{\basexi{r}}
        \big(
        \displaystyle\sum_{j=1}^{n}
        \rho(\basedualv{j^{'}})(f)\,
        \basedualxi{j^{'}}
        \wedge
        \basexi{\indexI}
        \wedge
        \basedualxi{\indexJp}
        \big)
    .
\label{EQ-For f (I,J), calculation both side-step3, second part in LHS}
\end{align}


To simplify (\ref{EQ-For f (I,J), calculation both side-step3, first part in LHS}), we write 
\begin{align*}
    \contract{\basedualxi{r^{'}}}\,\contract{\basexi{r}}
    (\basexi{\indexI}\wedge\basedualxi{\indexJp})
    =& 
        \contract{\basedualxi{r^{'}}}
        \big(
            2\,
            \sgn
                {\indexI,\indexJp}
                {r,\indexI\backslash\{r\},\indexJp}
            \basexi{\indexI\backslash\{r\}}\wedge\basedualxi{\indexJp}
        \big)
    \\
    =&
        4\,
        \sgn
            {\indexI,\indexJp}
            {r,\indexI\backslash\{r\},\indexJp}
        \sgn
            {\indexI\backslash\{r\},\indexJp}
            {r^{'},\indexI\backslash\{r\},\indexJp\backslash\{r'\}}
        \basexi{\indexI\backslash\{r\}}\wedge\basedualxi{\indexJp\backslash\{r'\}}
    \\
    =&
        4\,
        \sgn
            {\indexI,\indexJp}
            {r,r^{'},\indexI\backslash\{r\},\indexJp\backslash\{r'\}}
        \basexi{\indexI\backslash\{r\}}\wedge\basedualxi{\indexJp\backslash\{r'\}}
\end{align*}
for $r\in\indexI\cap\indexJ$ and $
\contract{\basedualxi{r^{'}}}\,\contract{\basexi{r}}
(\basexi{\indexI}\wedge\basedualxi{\indexJp})
=0
$ for $r\in(\indexI\cup\indexJ)^{C}$. We then plug this into (\ref{EQ-For f (I,J), calculation both side-step3, first part in LHS}):
\begin{align}
    &
        \displaystyle\sum_{j^{'}=1}^{n}
        \rho(\basedualv{j^{'}})(f)\,
        \basedualxi{j^{'}}
        \wedge
        (-\frac{\sqrt{-1}}{2})
        \displaystyle\sum_{r\in\indexI\cap\indexJ}
        4\, \sgn
            {\indexI,\indexJp}
            {r,r^{'},\indexI\backslash\{r\},\indexJp\backslash\{r^{'}\}}
        \basexi{\indexI\backslash\{r\}}
        \wedge
        \basedualxi{\indexJp\backslash\{r^{'}\}}
    .
\label{EQ-For f (I,J), calculation both side-step3, first part in LHS, simplifying no1}
\end{align}
The terms such that $j^{'}\in\indexJp\backslash\{r^{'}\}$ are zero due to the repeated frames in the wedge product. We separate the sum into two cases, $j^{'}\in\indexJpc$ and $j^{'}=r^{'}$, and rewrite (\ref{EQ-For f (I,J), calculation both side-step3, first part in LHS, simplifying no1}):
\begin{align*}
    &
        \begin{pmatrix*}[l]
            \displaystyle\sum_{j^{'}\in\indexJpc}
            \rho(\basedualv{j^{'}})(f)\,
            \basedualxi{j^{'}}
            \wedge
            (-\frac{\sqrt{-1}}{2})
            \displaystyle\sum_{r\in\indexI\cap\indexJ}
            4\, \sgn
                {\indexI,\indexJp}
                {r,r^{'},\indexI\backslash\{r\},\indexJp\backslash\{r^{'}\}}
            \basexi{\indexI\backslash\{r\}}
            \wedge
            \basedualxi{\indexJp\backslash\{r^{'}\}}
            \\
            +
            \\
            \displaystyle\sum_{j^{'}=r^{'}}
            \rho(\basedualv{j^{'}})(f)\,
            \basedualxi{j^{'}}
            \wedge
            (-\frac{\sqrt{-1}}{2})
            \displaystyle\sum_{r\in\indexI\cap\indexJ}
            4\, \sgn
                {\indexI,\indexJp}
                {r,r^{'},\indexI\backslash\{r\},\indexJp\backslash\{r^{'}\}}
            \basexi{\indexI\backslash\{r\}}
            \wedge
            \basedualxi{\indexJp\backslash\{r^{'}\}}
        \end{pmatrix*}
    \\
    =&
        \begin{pmatrix*}[l]
            -2\sqrt{-1}
            \displaystyle\sum_{j^{'}\in\indexJpc}
            \displaystyle\sum_{r\in\indexI\cap\indexJ}
            \sgn
                {\indexI,\indexJp}
                {r,r^{'},\indexI\backslash\{r\},\indexJp\backslash\{r^{'}\}}
            \rho(\basedualv{j^{'}})(f)\,
            \basedualxi{j^{'}}
            \wedge
            \basexi{\indexI\backslash\{r\}}
            \wedge
            \basedualxi{\indexJp\backslash\{r^{'}\}}
            \\
            +
            \\
            -2\sqrt{-1}
            \displaystyle\sum_{r\in\indexI\cap\indexJ}
            \sgn
                {\indexI,\indexJp}
                {r,r^{'},\indexI\backslash\{r\},\indexJp\backslash\{r^{'}\}}
            \rho(\basedualv{r^{'}})(f)\,
            \basedualxi{r^{'}}
            \wedge
            \basexi{\indexI\backslash\{r\}}
            \wedge
            \basedualxi{\indexJp\backslash\{r^{'}\}}
        \end{pmatrix*}
    .
\end{align*}
Checking that
\begin{itemize}
    \item 
$
        \basedualxi{r^{'}}
        \wedge
        \basexi{\indexI\backslash\{r\}}
        \wedge
        \basedualxi{\indexJp\backslash\{r^{'}\}}
    =
        \sgn
            {r^{'},\indexI\backslash\{r\},\indexJp\backslash\{r^{'}\}}
            {\indexI\backslash\{r\},\indexJp}
        \basexi{\indexI\backslash\{r\}}
        \wedge
        \basedualxi{\indexJp}
$,
    \item 
$
        \sgn
            {\indexI,\indexJp}
            {r,r^{'},\indexI\backslash\{r\},\indexJp\backslash\{r^{'}\}}
        \sgn
            {r^{'},\indexI\backslash\{r\},\indexJp\backslash\{r^{'}\}}
            {\indexI\backslash\{r\},\indexJp}
        =
        \sgn
            {\indexI,\indexJp}
            {r,\indexI\backslash\{r\},\indexJp}
$,
\end{itemize}
we conclude that the expression of $
(\LieagbdPartialbar f)
\wedge
\iotaW(\basexi{\indexI}\wedge\basedualxi{\indexJp})
$ is 
\begin{align}
    \begin{pmatrix*}[l]
        (-2\sqrt{-1})
        \displaystyle\sum_{j^{'}\in{\indexJpc}}
        \displaystyle\sum_{r\in\indexI\cap\indexJ}
        \sgn
            {\indexI,\indexJp}
            {r,r^{'},\indexI\backslash\{r\},\indexJp\backslash\{r^{'}\}}
        \rho(\basedualv{j^{'}})(f)\,
        \basedualxi{j^{'}}
        \wedge
        \basexi{\indexI\backslash\{r\}}
        \wedge
        \basedualxi{\indexJp\backslash\{r^{'}\}}
        \\
        +
        \\(-2\sqrt{-1})
        \displaystyle\sum_{r\in\indexI\cap\indexJ}
        \sgn
            {\indexI}
            {r,\indexI\backslash\{r\}}
        \rho(\basedualv{r^{'}})(f)\,
        \basexi{\indexI\backslash\{r\}}
        \wedge
        \basedualxi{\indexJp}
    \end{pmatrix*}
    .
\label{EQ-For f (I,J), calculation both side-step 3, before final result, the simplified first term}
\end{align}

Next, we simplify (\ref{EQ-For f (I,J), calculation both side-step3, second part in LHS}). Observing that $\basedualxi{j^{'}}
\wedge
\basexi{\indexI}
\wedge
\basedualxi{\indexJp}=0
$ when $j^{'}\in\indexJp$, we first simplify (\ref{EQ-For f (I,J), calculation both side-step3, second part in LHS}) to
\begin{align}
    (-\frac{\sqrt{-1}}{2})
    \displaystyle\sum_{r=1}^{n}
    \contract{\basedualxi{r^{'}}}\,\contract{\basexi{r}}
    \big(
        \displaystyle\sum_{j\in\indexJpc}
        \rho(\basedualv{j^{'}})(f)\,
        \basedualxi{j^{'}}
        \wedge
        \basexi{\indexI}
        \wedge
        \basedualxi{\indexJp}
    \big)
    .
\label{EQ-For f (I,J), calculation both side-step3, second part in LHS,simplying no1}
\end{align}
We then observe that $
\contract{\basedualxi{r^{'}}}\,\contract{\basexi{r}}
(
    \basedualxi{j^{'}}
    \wedge
    \basexi{\indexI}
    \wedge
    \basedualxi{\indexJp}
)
$ is equal to zero except for $r\in\indexI\cap\indexJ$ or $r=j\in\indexI\cap\indexJc$. Thus, (\ref{EQ-For f (I,J), calculation both side-step3, second part in LHS,simplying no1}) can be split into
\begin{align}
    &
        \begin{pmatrix*}[l]
            (-\frac{\sqrt{-1}}{2})
            \displaystyle\sum_{r\in\indexI\cap\indexJ}
            \contract{\basedualxi{r^{'}}}\,\contract{\basexi{r}}
            \big(
                \displaystyle\sum_{j\in\indexJpc}
                \rho(\basedualv{j^{'}})(f)\,
                \basedualxi{j^{'}}
                \wedge
                \basexi{\indexI}
                \wedge
                \basedualxi{\indexJp}
            \big)
        \\
        +
        \\
            (-\frac{\sqrt{-1}}{2})
            \displaystyle\sum_{r\in\indexI\cap\indexJc}
            \contract{\basedualxi{r^{'}}}\,\contract{\basexi{r}}
            \big(
                \rho(\basedualv{r^{'}})(f)\,
                \basedualxi{r^{'}}
                \wedge
                \basexi{\indexI}
                \wedge
                \basedualxi{\indexJp}
            \big)
        \end{pmatrix*}
\notag
    \\
    =&
        \begin{pmatrix*}[l]
            (-\frac{\sqrt{-1}}{2})
            \displaystyle\sum_{r\in\indexI\cap\indexJ}
            \displaystyle\sum_{j\in\indexJpc}
            \rho(\basedualv{j^{'}})(f)\,
            \contract{\basedualxi{r^{'}}}\,\contract{\basexi{r}}
            \big(
                \basedualxi{j^{'}}
                \wedge
                \basexi{\indexI}
                \wedge
                \basedualxi{\indexJp}
            \big)
        \\
        +
        \\
            (-\frac{\sqrt{-1}}{2})
            \displaystyle\sum_{r\in\indexI\cap\indexJc}
            \rho(\basedualv{r^{'}})(f)\,
            \contract{\basedualxi{r^{'}}}\,\contract{\basexi{r}}
            \big(
                \basedualxi{r^{'}}
                \wedge
                \basexi{\indexI}
                \wedge
                \basedualxi{\indexJp}
            \big)
        \end{pmatrix*},
\label{EQ-For f (I,J), calculation both side-step3, second part in LHS,simplying no2}
\end{align}
in which
\begin{align}
    \contract{\basedualxi{r^{'}}}\,\contract{\basexi{r}}
    (
        \basedualxi{j^{'}}
        \wedge
        \basexi{\indexI}
        \wedge
        \basedualxi{\indexJp}
    )
    =&
        \contract{\basedualxi{r^{'}}}
        \big(
            2\,
            \sgn
                {j^{'},\indexI,\indexJp}
                {r,j^{'},\indexI\backslash\{r\},\indexJp}
            \basedualxi{j^{'}}
            \wedge
            \basexi{\indexI\backslash\{r\}}
            \wedge
            \basedualxi{\indexJp}
        \big)
\notag
    \\
    =&
        4\,
        \sgn
            {j^{'},\indexI,\indexJp}
            {r,j^{'},\indexI\backslash\{r\},\indexJp}
        \sgn
            {j^{'},\indexI\backslash\{r\},\indexJp}
            {r^{'},j^{'},\indexI\backslash\{r\},\indexJp\backslash\{r^{'}\}}
        \basedualxi{j^{'}}
        \wedge
        \basexi{\indexI\backslash\{r\}}
        \wedge
        \basedualxi{\indexJp\backslash\{r^{'}\}}
\notag
    \\
    =&
        4\,
        \sgn
            {j^{'},\indexI,\indexJp}
            {r,r^{'},j^{'},\indexI\backslash\{r\},\indexJp\backslash\{r^{'}\}}
        \basedualxi{j^{'}}
        \wedge
        \basexi{\indexI\backslash\{r\}}
        \wedge
        \basedualxi{\indexJp\backslash\{r^{'}\}}
    \\
    =&
        4\,
        \sgn
            {\indexI,\indexJp}
            {r,r^{'},\indexI\backslash\{r\},\indexJp\backslash\{r^{'}\}}
        \basedualxi{j^{'}}
        \wedge
        \basexi{\indexI\backslash\{r\}}
        \wedge
        \basedualxi{\indexJp\backslash\{r^{'}\}},
\label{EQ-For f (I,J), calculation both side-step3, second part in LHS,calculate contraction no1}
\end{align}
where the number $2$ in the first row is obtained from $\contract{\basexi{r}}(\basexi{r})=2$ since $\norm{\basexi{r}}{h}=2$, and
\begin{align}
    \contract{\basedualxi{r^{'}}}\,\contract{\basexi{r}}
    (
        \basedualxi{r^{'}}
        \wedge
        \basexi{\indexI}
        \wedge
        \basedualxi{\indexJp}
    )
    =&
        \contract{\basedualxi{r^{'}}}
        \big(
            2\,
            \sgn
                {r^{'},\indexI,\indexJp}
                {r,r^{'},\indexI\backslash\{r\},\indexJp}
            \basedualxi{r^{'}}
            \wedge
            \basexi{\indexI\backslash\{r\}}
            \wedge
            \basedualxi{\indexJp}
        \big)
\notag
    \\
    =&
        4\,
        \sgn
            {r^{'},\indexI,\indexJp}
            {r,r^{'},\indexI\backslash\{r\},\indexJp}
        \basexi{\indexI\backslash\{r\}}
        \wedge
        \basedualxi{\indexJp}
\notag
    \\
    =&
        -4\,
        \sgn
            {\indexI,\indexJp}
            {r,\indexI\backslash\{r\},\indexJp}
        \basexi{\indexI\backslash\{r\}}
        \wedge
        \basedualxi{\indexJp}
    .
\label{EQ-For f (I,J), calculation both side-step3, second part in LHS,calculate contraction no2}
\end{align}
Plugging (\ref{EQ-For f (I,J), calculation both side-step3, second part in LHS,calculate contraction no1}) and (\ref{EQ-For f (I,J), calculation both side-step3, second part in LHS,calculate contraction no2}) into (\ref{EQ-For f (I,J), calculation both side-step3, second part in LHS,simplying no2}), we conclude that the expression of $
(\iotaW\,\LieagbdPartialbar\,f)
\basexi{\indexI}\wedge\basedualxi{\indexJp}
$ is
\begin{align}
    &
        \begin{pmatrix*}[l]
        (-\frac{\sqrt{-1}}{2})
        \displaystyle\sum_{r\in\indexI\cap\indexJ}
        \displaystyle\sum_{j\in\indexJpc}
        \rho(\basedualv{j^{'}})(f)\,
        4\,
        \sgn
            {\indexI,\indexJp}
            {r,r^{'},\indexI\backslash\{r\},\indexJp\backslash\{r^{'}\}}
        \basedualxi{j^{'}}
        \wedge
        \basexi{\indexI\backslash\{r\}}
        \wedge
        \basedualxi{\indexJp\backslash\{r^{'}\}}
        \\
        +
        \\
        (-\frac{\sqrt{-1}}{2})
        \displaystyle\sum_{r\in\indexI\cap\indexJc}
        \rho(\basedualv{r^{'}})(f)\,
        (-4)\,
        \sgn
            {\indexI}
            {r,\indexI\backslash\{r\}}
        \basexi{\indexI\backslash\{r\}}
        \wedge
        \basedualxi{\indexJp}
    \end{pmatrix*}
\notag
    \\
    =&
        \begin{pmatrix*}[l]
        (-2\sqrt{-1})
        \displaystyle\sum_{r\in\indexI\cap\indexJ}
        \displaystyle\sum_{j\in\indexJpc}
        \sgn
            {\indexI,\indexJp}
            {r,r^{'},\indexI\backslash\{r\},\indexJp\backslash\{r^{'}\}}
        \rho(\basedualv{j^{'}})(f)\,
        \basedualxi{j^{'}}
        \wedge
        \basexi{\indexI\backslash\{r\}}
        \wedge
        \basedualxi{\indexJp\backslash\{r^{'}\}}
        \\
        +
        \\
        (-2\sqrt{-1})
        \displaystyle\sum_{r\in\indexI\cap\indexJc}
        (-1)
        \sgn
            {\indexI}
            {r,\indexI\backslash\{r\}}
        \rho(\basedualv{r^{'}})(f)\,
        \basexi{\indexI\backslash\{r\}}
        \wedge
        \basedualxi{\indexJp}
    \end{pmatrix*}
    .
\label{EQ-For f (I,J), calculation both side-step 3, before final result, the simplified second term}
\end{align}

Finally, we conclude that $
(\LieagbdPartialbar f)
\wedge
\iotaW(\basexi{\indexI}\wedge\basedualxi{\indexJp})
-
(\iotaW\,\LieagbdPartialbar\,f)
\basexi{\indexI}\wedge\basedualxi{\indexJp}
$ is equal to (\ref{EQ-For f (I,J), calculation both side-step 3, before final result, the simplified first term}) \(-\) (\ref{EQ-For f (I,J), calculation both side-step 3, before final result, the simplified second term}), which is
\begin{align*}
    &
    \begin{pmatrix*}[l]
        (-2\sqrt{-1})
        \displaystyle\sum_{j^{'}\in{\indexJpc}}
        \displaystyle\sum_{r\in\indexI\cap\indexJ}
        \sgn
            {\indexI,\indexJp}
            {r,r^{'},\indexI\backslash\{r\},\indexJp\backslash\{r^{'}\}}
        \rho(\basedualv{j^{'}})(f)\,
        \basedualxi{j^{'}}
        \wedge
        \basexi{\indexI\backslash\{r\}}
        \wedge
        \basedualxi{\indexJp\backslash\{r^{'}\}}
        \\
        +
        \\
        (-2\sqrt{-1})
        \displaystyle\sum_{r\in\indexI\cap\indexJ}
        \sgn
            {\indexI}
            {r,\indexI\backslash\{r\}}
        \rho(\basedualv{r^{'}})(f)\,
        \basexi{\indexI\backslash\{r\}}
        \wedge
        \basedualxi{\indexJp}
    \end{pmatrix*}
    \\
    &-
    \\
    &
    \begin{pmatrix*}[l]
        (-2\sqrt{-1})
        \displaystyle\sum_{r\in\indexI\cap\indexJ}
        \displaystyle\sum_{j\in\indexJpc}
        \sgn
            {\indexI,\indexJp}
            {r,r^{'},\indexI\backslash\{r\},\indexJp\backslash\{r^{'}\}}
        \rho(\basedualv{j^{'}})(f)\,
        \basedualxi{j^{'}}
        \wedge
        \basexi{\indexI\backslash\{r\}}
        \wedge
        \basedualxi{\indexJp\backslash\{r^{'}\}}
        \\
        +
        \\
        (-2\sqrt{-1})
        \displaystyle\sum_{r\in\indexI\cap\indexJc}
        (-1)
        \sgn
            {\indexI}
            {r,\indexI\backslash\{r\}}
        \rho(\basedualv{r^{'}})(f)\,
        \basexi{\indexI\backslash\{r\}}
        \wedge
        \basedualxi{\indexJp}
    \end{pmatrix*}
    .
\end{align*}
The cancellation is clear and results in the final simplified expression:
\begin{align}
    &
        \begin{pmatrix*}[l]
            (-2\sqrt{-1})
            \displaystyle\sum_{r\in\indexI\cap\indexJ}
            \sgn
                {\indexI}
                {r,\indexI\backslash\{r\}}
            \rho(\basedualv{r^{'}})(f)\,
            \basexi{\indexI\backslash\{r\}}
            \wedge
            \basedualxi{\indexJp}
            \\
            +
            \\
            (-2\sqrt{-1})
            \displaystyle\sum_{r\in\indexI\cap\indexJc}
            (-1)
            \sgn
                {\indexI}
                {r,\indexI\backslash\{r\}}
            \rho(\basedualv{r^{'}})(f)\,
            \basexi{\indexI\backslash\{r\}}
            \wedge
            \basedualxi{\indexJp}
        \end{pmatrix*}
\notag
    \\
    =&
        (-2\sqrt{-1})
        \displaystyle\sum_{r\in\indexI}
        \sgn
            {\indexI}
            {r,\indexI\backslash\{r\}}
        \rho(\basedualv{r^{'}})(f)\,
        \basexi{\indexI\backslash\{r\}}
        \wedge
        \basedualxi{\indexJp}.
\label{EQ-For f (I,J), calculation both side-step 3, result}
\end{align}

\keyword{Step 4:} Recalling the definitions of $(\LieagbdPartialbar f)$ and $\star$, we obtain
\begin{align}
    (\LieagbdPartialbar f)
    \wedge
    \star(\basexi{\indexI}\wedge\basedualxi{\indexJp})
    =
    \bigg(
        \displaystyle\sum_{i=1}^{n}
        \rho(\basedualv{i^{'}})(f)\,\basedualxi{i}
    \bigg)
    \wedge
    \bigg(
        2^{p+q-n}
        \sgn
            {1,\cdots,n,1^{'},1^{'},\cdots,n^{'}}
            {\indexI,\indexJp,\indexIc,\indexJpc}
        \basedualxi{\indexIc}\wedge\basexi{\indexJpc}
    \bigg)
    .
\label{EQ-For f (I,J), calculation both side-step4,equation no1}
\end{align}
It can be observed that the terms such that $i\in\indexIc$ are zero due to the repeated $\basedualxi{i}$ in the wedge product of frames. Thus, we simplify (\ref{EQ-For f (I,J), calculation both side-step4,equation no1}) to
\begin{align}
    &
    \bigg(
        \displaystyle\sum_{i\in\indexI}
        \rho(\basedualv{i^{'}})(f)\,\basedualxi{i}
    \bigg)
    \wedge
    \bigg(
        2^{p+q-n}
        \sgn
            {1,\cdots,n,1^{'},1^{'},\cdots,n^{'}}
            {\indexI,\indexJp,\indexIc,\indexJpc}
        \basedualxi{\indexIc}\wedge\basexi{\indexJpc}
    \bigg)
\notag
    \\
    =&
        \displaystyle\sum_{i\in\indexI}
        2^{p+q-n}
        \rho(\basedualv{i^{'}})(f)\,
        \sgn
            {1,\cdots,n,1^{'},1^{'},\cdots,n^{'}}
            {\indexI,\indexJp,\indexIc,\indexJpc}
        \basedualxi{i}
        \wedge
        \basedualxi{\indexIc}\wedge\basexi{\indexJpc}
    .
\label{EQ-For f (I,J), calculation both side-step4,equation no2}
\end{align}
We rearrange (\ref{EQ-For f (I,J), calculation both side-step4,equation no2}) by taking 
\begin{align*}
    \basedualxi{i}
    \wedge
    \basedualxi{\indexIc}\wedge\basexi{\indexJpc}
    =
    \sgn
        {i,\indexIc,\indexJpc}
        {\order{\indexIc\cup\{i\}},\indexJpc}
    \basedualxi{\order{\indexIc\cup\{i\}}}
    \wedge
    \basexi{\indexJpc}
,
\end{align*}
then the simplified expression of $
(\LieagbdPartialbar f)
\wedge
\star(\basexi{\indexI}\wedge\basedualxi{\indexJp})
$ is
\begin{align}
    \displaystyle\sum_{i\in\indexI}
    2^{p+q-n}
    \rho(\basedualv{i^{'}})(f)\,
    \sgn
        {1,\cdots,n,1^{'},1^{'},\cdots,n^{'}}
        {\indexI,\indexJp,\indexIc,\indexJpc}
    \sgn
        {i,\indexIc,\indexJpc}
        {\order{\indexIc\cup\{i\}},\indexJpc}
    \basedualxi{\order{\indexIc\cup\{i\}}}
    \wedge
    \basexi{\indexJpc}
    .
\label{EQ-For f (I,J), calculation both side-step4,equation no3}
\end{align}

Finally, we apply $\star$ to (\ref{EQ-For f (I,J), calculation both side-step4,equation no3}):
\begin{align*}
    \displaystyle\sum_{i\in\indexI}
    2^{p+q-n}
    \rho(\basedualv{i^{'}})(f)\,
    \sgn
        {1,\cdots,n,1^{'},1^{'},\cdots,n^{'}}
        {\indexI,\indexJp,\indexIc,\indexJpc}
    \sgn
        {i,\indexIc,\indexJpc}
        {\order{\indexIc\cup\{i\}},\indexJpc}
    \star
    (
        \basedualxi{\order{\indexIc\cup\{i\}}}
        \wedge
        \basexi{\indexJpc}
    )
    ,
\end{align*}
in which
\begin{align*}
    \star
    (
        \basedualxi{\order{\indexIc\cup\{i\}}}
        \wedge
        \basexi{\indexJpc}
    )
    =&
        2^{(n-p+1)+(n-q)-n}
        \sgn
            {1^{'},\cdots,n^{'},1,\cdots,n}
            {\order{\indexIc\cup\{i\}},\indexJpc,\indexI\backslash\{i\},\indexJp}
        \basexi{\indexI\backslash\{i\}}
        \wedge
        \basedualxi{\indexJp}
    \\
    =&
        2^{n-p-q+1}
        \sgn
            {1^{'},\cdots,n^{'},1,\cdots,n}
            {\order{\indexIc\cup\{i\}},\indexJpc,\indexI\backslash\{i\},\indexJp}
        \basexi{\indexI\backslash\{i\}}
        \wedge
        \basedualxi{\indexJp}
    ,
\end{align*}
so the expression of $
\star
\bigg(
    (\LieagbdPartialbar f)
    \wedge
    \star(\basexi{\indexI}\wedge\basedualxi{\indexJp})
\bigg)
$ is
\begin{align}
    \displaystyle\sum_{j\in\indexJ}
    2\,
    \rho(\basedualv{i^{'}})(f)\,
    \begin{pmatrix*}[l]
        \sgn
            {1,\cdots,n,1^{'},1^{'},\cdots,n^{'}}
            {\indexI,\indexJp,\indexIc,\indexJpc}
        \\
        \sgn
            {i,\indexIc,\indexJpc}
            {\order{\indexIc\cup\{i\}},\indexJpc}
        \\
        \sgn
            {1^{'},\cdots,n^{'},1,\cdots,n}
            {\order{\indexIc\cup\{i\}},\indexJpc,\indexI\backslash\{i\},\indexJp}
    \end{pmatrix*}
    \basexi{\indexI\backslash\{i\}}
    \wedge
    \basedualxi{\indexJp}
    .
\label{EQ-For f (I,J), calculation both side-step4,close to finish, unsimplified}
\end{align}
We simplify the signs of permutations through
\begin{itemize}
    \item 
$
    \sgn
        {i,\indexIc,\indexJpc}
        {\order{\indexIc\cup\{i\}},\indexJpc}
    \sgn
        {1^{'},\cdots,n^{'},1,\cdots,n}
        {\order{\indexIc\cup\{i\}},\indexJpc,\indexI\backslash\{i\},\indexJp}
    =
    \sgn
        {1^{'},\cdots,n^{'},1,\cdots,n}
        {i,\indexIc,\indexJpc,\indexI\backslash\{i\},\indexJp}
$,
    \item 
$
    \sgn
        {1^{'},\cdots,n^{'},1,\cdots,n}
        {i,\indexIc,\indexJpc,\indexI\backslash\{i\},\indexJp}
    =
    (-1)^{(n-p+n-q)(p-1+q)}
    \sgn
        {1^{'},\cdots,n^{'},1,\cdots,n}
        {i,\indexI\backslash\{i\},\indexJp,\indexIc,\indexJpc}
    \\
    =
    (-1)^{(p+q)(p-1+q)}
    \sgn
        {1^{'},\cdots,n^{'},1,\cdots,n}
        {i,\indexI\backslash\{i\},\indexJp,\indexIc,\indexJpc}
    =
    \sgn
        {1^{'},\cdots,n^{'},1,\cdots,n}
        {i,\indexI\backslash\{i\},\indexJp,\indexIc,\indexJpc}
    \\
    =
    \sgn
        {1^{'},\cdots,n^{'},1,\cdots,n}
        {\indexI,\indexJp,\indexIc,\indexJpc}
    \sgn
        {\indexI}
        {i,\indexI\backslash\{i\}}
$,
    \item 
$
    \sgn
        {1^{'},\cdots,n^{'},1,\cdots,n}
        {\indexI,\indexJp,\indexIc,\indexJpc}
    \sgn
        {1,\cdots,n,1^{'},1^{'},\cdots,n^{'}}
        {\indexI,\indexJp,\indexIc,\indexJpc}
    =
    \sgn
        {1^{'},\cdots,n^{'},1,\cdots,n}
        {1,\cdots,n,1^{'},1^{'},\cdots,n^{'}}
    =
    (-1)^{n^2}
$
\end{itemize}
and obtain
\begin{align*}
    \begin{pmatrix*}[l]
        \sgn
            {1,\cdots,n,1^{'},1^{'},\cdots,n^{'}}
            {\indexI,\indexJp,\indexIc,\indexJpc}
        \\
        \sgn
            {i,\indexIc,\indexJpc}
            {\order{\indexIc\cup\{i\}},\indexJpc}
        \\
        \sgn
            {1^{'},\cdots,n^{'},1,\cdots,n}
            {\order{\indexIc\cup\{i\}},\indexJpc,\indexI\backslash\{i\},\indexJp}
    \end{pmatrix*}
    =
    (-1)^{n^2}
    \sgn
        {\indexI}
        {i,\indexI\backslash\{i\}}
    .
\end{align*}

We can now conclude that the expression of $
-\sqrt{-1}(-1)^{n^2}
\star
\bigg(
    (\LieagbdPartialbar f)
    \wedge
    \star(\basexi{\indexI}\wedge\basedualxi{\indexJp})
\bigg)
$ is
\begin{align}
    &
        -\sqrt{-1}(-1)^{n^2}
        \displaystyle\sum_{i\in\indexI}
        2\,
        \rho(\basedualv{i^{'}})(f)\,
        (-1)^{n^2}
        \sgn
            {\indexI}
            {i,\indexI\backslash\{i\}}
        \basexi{\indexI\backslash\{i\}}
        \wedge
        \basedualxi{\indexJp}
\notag
    \\
    =&
        -2\sqrt{-1}
        \displaystyle\sum_{i\in\indexI}
        \sgn
            {\indexI}
            {i,\indexI\backslash\{i\}}
        \rho(\basedualv{i^{'}})(f)\,
        \basexi{\indexI\backslash\{i\}}
        \wedge
        \basedualxi{\indexJp}.
\label{EQ-For f (I,J), calculation both side-step4,result}
\end{align}
\keyword{Step 5:} Recalling Step 1, the left-hand side $[\LieagbdPartialbar,\iotaW]
(f\,\basexi{\indexI}\wedge\basedualxi{\indexJp})$ is equal to 
\begin{align*}
    f\,[\LieagbdPartialbar,\iotaW]
    (\basexi{\indexI}\wedge\basedualxi{\indexJp})
    +
    (\LieagbdPartialbar f)
    \wedge
    \iotaW(\basexi{\indexI}\wedge\basedualxi{\indexJp})
    -
    (\iotaW\,\LieagbdPartialbar\,f)
    \basexi{\indexI}\wedge\basedualxi{\indexJp}
    .
\end{align*}
The second term in the above expression, $
(\LieagbdPartialbar f)
\wedge
\iotaW(\basexi{\indexI}\wedge\basedualxi{\indexJp})
-
(\iotaW\,\LieagbdPartialbar\,f)
\basexi{\indexI}\wedge\basedualxi{\indexJp}
$, is equal to (\ref{EQ-For f (I,J), calculation both side-step 3, result}). Therefore,
\begin{align*}
    [\LieagbdPartialbar,\iotaW]
    (f\,\basexi{\indexI}\wedge\basedualxi{\indexJp})
    =
        \begin{pmatrix*}[l]
            f\,[\LieagbdPartialbar,\iotaW]
            (\basexi{\indexI}\wedge\basedualxi{\indexJp})
            \\
            +
            \\
            (-2\sqrt{-1})
            \displaystyle\sum_{r\in\indexI}
            \sgn
                {\indexI}
                {r,\indexI\backslash\{r\}}
            \rho(\basedualv{r^{'}})(f)\,
            \basexi{\indexI\backslash\{r\}}
            \wedge
            \basedualxi{\indexJp}
        \end{pmatrix*}
    .
\end{align*}

Recalling Step 2, the right-hand side $
\sqrt{-1}\LieagbdPartial^{*}
(f\basexi{\indexI}\wedge\basedualxi{\indexJp})
$ is equal to
\begin{align*}
    f\,\sqrt{-1}\LieagbdPartial^{*}
    (\basexi{\indexI}\wedge\basedualxi{\indexJp})
    +
    (-\sqrt{-1})(-1)^{n^{2}}
    \star
    \bigg(
        (\LieagbdPartialbar f)
        \wedge
        \star(\basexi{\indexI}\wedge\basedualxi{\indexJp})
    \bigg)
    .
\end{align*}
The second term in the above expression, $(-\sqrt{-1})(-1)^{n^{2}}
\star
\bigg(
    (\LieagbdPartialbar f)
    \wedge
    \star(\basexi{\indexI}\wedge\basedualxi{\indexJp})
\bigg)$, is equal to (\ref{EQ-For f (I,J), calculation both side-step4,result}). Therefore,
\begin{align*}
    \sqrt{-1}\LieagbdPartial^{*}
    (f\basexi{\indexI}\wedge\basedualxi{\indexJp})
    =
    \begin{pmatrix*}[l]
        f\,\sqrt{-1}\LieagbdPartial^{*}
        (\basexi{\indexI}\wedge\basedualxi{\indexJp})
        \\
        +
        \\
        (-2\sqrt{-1})
        \displaystyle\sum_{i\in\indexI}
        \sgn
            {\indexI}
            {i,\indexI\backslash\{i\}}
        \rho(\basedualv{i^{'}})(f)\,
        \basexi{\indexI\backslash\{i\}}
        \wedge
        \basedualxi{\indexJp}
    \end{pmatrix*}
    .
\end{align*}
We complete the proof by observing that the left-hand side and the right-hand side are the same.


\end{proof}

\subsection{Lie algebroid Bochner–Kodaira–Nakano identity}
\label{section-Lie algebroid Bochner–Kodaira-Nakano identity}

In classical differential geometry, a holomorphic vector bundle over a complex manifold is a vector bundle compatible with the complex structure of the base manifold. That is to say, all entries of the transition matrices are holomorphic functions. Furthermore, recall that a holomorphic vector bundle equipped with a Hermitian metric is called a Hermitian vector bundle. We generalize these concepts to the K\"{a}hler Lie algebroid.

\begin{definition}
For a K\"{a}hler Lie algebroid $\g \to M$, we say a complex vector bundle $E \to M$ is a \keyword{Lie algebroid holomorphic vector bundle} if its transition matrices 
\begin{align*}
    \{\phi_{\alpha,\beta}:\C^{\rank_{\C}(E)}\to\C^{\rank_{\C}(E)}\}
\end{align*}
associated with an open covering of $M$ are Lie algebroid holomorphic. That is to say, $\LieagbdPartialbar (\phi_{\alpha,\beta})_{i,j}=0$ for all entries of the transition matrix. We say a complex vector bundle is a \keyword{Lie algebroid Hermitian vector bundle} if it is equipped with a Hermitian metric.
\end{definition}
As the entries of the transition matrix are all Lie algebroid holomorphic, the operator $\LieagbdPartialbar$ can act on $E$-valued differential forms. That is to say,
\begin{align*}
    \LieagbdPartialbar:\Apq{p}{q}(M,E)\to\Apq{p}{q+1}(M,E)
\end{align*}
is well defined when $E$ is a Lie algebroid holomorphic vector bundle.

Next, we generalize the Chern connection. Before giving its formal definition, we define the splitting of a connection.
\begin{definition}
Suppose that 
$
    \LieagbdConn{}{}:
    \secsp{M}{E}
    \to
    \Apq{1}{0}(M,E)\oplus\Apq{0}{1}(M,E)
$, then we define the holomorphic part as
\begin{align*}
    \LieagbdConn{1,0}{}:\secsp{M}{E}\to\Apq{1}{0}(M,E)
\end{align*}
and the anti-holomorphic part as
\begin{align*}
    \LieagbdConn{0,1}{}:\secsp{M}{E}\to\Apq{0}{1}(M,E).
\end{align*}
\end{definition}
Using the splitting $\LieagbdConn{}{}=\LieagbdConn{1,0}{}+\LieagbdConn{0,1}{}$, we define the Lie algebroid Chern connection.
\begin{definition}[Lie algebroid Chern connection]
For a Lie algebroid Hermitian vector bundle $E$, a Lie algebroid Chern connection on $E$ is a Lie algebroid connection 
\begin{align*}
    \LieagbdConn{}{}:
    \secsp{M}{E}
    \to
    \Apq{1}{0}(M,E)\oplus\Apq{0}{1}(M,E),
\end{align*}
which is compatible with the Hermitian metric, i.e.,
\begin{align*}
    \LieagbdDiff 
    \langle \alpha,\beta \rangle
    (\gamma)
    =
    \langle \LieagbdChernConn{}{\gamma}\alpha,\beta \rangle
    +
    \langle \alpha,\LieagbdChernConn{}{\gamma}\beta \rangle
\end{align*}
for any $\alpha,\beta,\gamma\in\secsp{M}{E}$, and its anti-holomorphic part $\LieagbdChernConn{0,1}{}=\LieagbdPartialbar:\secsp{M}{E} \to  \Apq{0}{1}(M,E)$.
\end{definition}

\begin{proposition}
There exists a unique Lie algebroid Chern connection for a Lie algebroid Hermitian vector bundle.
\end{proposition}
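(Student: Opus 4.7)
The plan is to mimic the classical existence-and-uniqueness proof of the Chern connection, working on a chart $U$ equipped with a Lie algebroid holomorphic local frame $\{e_\alpha\}$ of $E$. Let $h_{\alpha\bar\beta} := \langle e_\alpha, e_\beta \rangle$ denote the Hermitian matrix of the metric in this frame and write the connection locally as $\LieagbdConn{}{} e_\alpha = \sum_\gamma \theta^\gamma_\alpha \otimes e_\gamma$ with connection one-forms $\theta^\gamma_\alpha \in \Ak{1}(U)$. The uniqueness and existence of $\theta$ will then reduce to a formal linear-algebraic identity, exactly as in the classical case.

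For uniqueness, the constraint $\LieagbdConn{0,1}{} = \LieagbdPartialbar$ combined with $\LieagbdPartialbar e_\alpha = 0$ (which holds because $\{e_\alpha\}$ is Lie algebroid holomorphic) forces each $\theta^\gamma_\alpha$ to lie in $\Apq{1}{0}(U)$. Expanding the Hermitian compatibility identity in this frame yields
\begin{align*}
    \LieagbdDiff h_{\alpha\bar\beta}
    =
    \sum_\gamma \theta^\gamma_\alpha\, h_{\gamma\bar\beta}
    +
    \sum_\gamma \overline{\theta^\gamma_\beta}\, h_{\alpha\bar\gamma},
\end{align*}
and its $(1,0)$-component reads $\LieagbdPartial h_{\alpha\bar\beta} = \sum_\gamma \theta^\gamma_\alpha\, h_{\gamma\bar\beta}$, which uniquely determines $\theta = (\LieagbdPartial h)\, h^{-1}$; the $(0,1)$-component is the conjugate statement and carries no additional information.

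For existence, I would define $\LieagbdConn{}{}$ on $U$ by the local formula $\theta := (\LieagbdPartial h)\, h^{-1}$ and extend to all of $\secsp{U}{E}$ by the Leibniz rule. By construction the connection forms are of type $(1,0)$, so $\LieagbdConn{0,1}{} e_\alpha = 0 = \LieagbdPartialbar e_\alpha$, and the defining matrix identity gives compatibility with $h$ on frame elements, whence on all sections via the Leibniz rule. Globalization requires checking that the locally defined $\theta$'s glue under a change of Lie algebroid holomorphic frame $\tilde e = e\cdot g$, which reduces to the classical transformation law $\tilde{\theta} = g^{-1}\theta g + g^{-1} \LieagbdPartial g$; this holds because $\LieagbdPartialbar g = 0$ (definition of a Lie algebroid holomorphic transition) implies $\LieagbdDiff g = \LieagbdPartial g$, so the computation is formally identical to the classical one.

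The only potential subtlety is confirming that the graded Leibniz rule for $\LieagbdDiff$ and its decomposition $\LieagbdDiff = \LieagbdPartial + \LieagbdPartialbar$ act on $E$-valued expressions built from the local frame exactly as in the classical setting. This is immediate from the formal definitions of $\LieagbdDiff$, $\LieagbdPartial$, and $\LieagbdPartialbar$ recorded in Section \ref{sec: K\"{a}hler Lie algebroid}, so no essential new obstacle appears beyond the classical argument, and a unique global Lie algebroid Chern connection is produced.
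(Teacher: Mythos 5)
Your proof follows essentially the same route as the paper's: fix a Lie algebroid holomorphic local frame, use $\LieagbdPartialbar e_\alpha = 0$ together with $\LieagbdConn{0,1}{}=\LieagbdPartialbar$ to force the connection matrix into type $(1,0)$, and solve the metric-compatibility identity to obtain $\theta = (\LieagbdPartial h)\,h^{-1}$. Your extra verification that the local $\theta$'s glue under a change of holomorphic frame is a welcome refinement that the paper leaves implicit, but it does not change the underlying argument.
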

\begin{proof}
The proof is similar to the proof of the existence and uniqueness of the classical Chern connection. Suppose $\{e_{i}\}$ is a Lie algebroid holomorphic frame for $E$ and set $h_{ij}=\langle e_i,e_j\rangle$. Using the holomorphic frame, we consider the $\Ak{1}(M)$-valued matrix $(\theta_{ij})$ such that $\LieagbdChernConn{}{}(e_{i})=\sum_{j}\theta_{ij}e_j$. Observing that $\LieagbdChernConn{}{}(e_i)\in\Apq{1}{0}(M,E)$ since $\LieagbdPartialbar(e_i)=0$, then
\begin{align*}
    (\LieagbdPartial+\LieagbdPartialbar)h_{i,j}
    =
    \LieagbdDiff h_{i,j}
    &=
    \LieagbdDiff
    \langle e_i,e_j \rangle
    \\
    &=
    \langle \LieagbdChernConn{}{} e_i,e_j \rangle
    +
    \langle e_i,\LieagbdChernConn{}{} e_j \rangle
    \\
    &=
    \langle \sum_{k}\theta_{ik}e_k,e_j \rangle
    +
    \langle e_i,\sum_{k}\theta_{jk}e_k \rangle
    \\
    &=
    \sum_{k}
    \theta_{ik} h_{kj}
    +
    \sum_{k}
    \overline{\theta_{jk}} h_{ik},
\end{align*}
where $\theta_{ik} h_{kj}\in \Apq{1}{0}(M)$ and $\overline{\theta_{jk}} h_{ik}\in \Apq{0}{1}(M)$. This illustrates the relation between the Hermitian metric matrix $(h_{ij})$ and the structure matrix $(\theta_{ij})$ for $\LieagbdChernConn{}{}$ such that the Chern connection is uniquely determined by the Hermitian metric. Moreover, the type $(1,0)$-part $\LieagbdPartial h_{i,j}=\sum_{k}\theta_{ik} h_{kj}$ in the equation gives $\theta=\LieagbdPartial h \cdot h^{-1}$.
\end{proof}

The curvature tensor $R(\LieagbdChernConn{}{})=\LieagbdChernConn{}{}^2$ for the Lie algebroid Chern connection is $(1,1)$-type since $(\LieagbdChernConn{0,1}{})^2=\LieagbdPartialbar^{2}=0$ and the curvature matrix $\Theta^{2,0}=-(\overline{(\Theta)^{0,2}})^{tr}=0$ by the property that the connection matrix $\theta$ is skew-Hermitian. Thus, the expansion of the curvature tensor in terms of $\LieagbdChernConn{1,0}{}$ and $\LieagbdChernConn{0,1}{}$ is
\begin{align*}
    \LieagbdChernConn{1,0}{}
    \,
    \LieagbdChernConn{0,1}{}
    +
    \LieagbdChernConn{0,1}{}
    \,
    \LieagbdChernConn{1,0}{}
    .
\end{align*}

We can further simplify the notations on the right-hand side using the superbracket $[\LieagbdChernConn{1,0}{},\LieagbdChernConn{0,1}{}]$. This involves the concept called superbundle \cite{BGV}.
\begin{definition}
A \keyword{superspace} $V$ is a $\mathbb{Z}_2$-graded vector space $V=V^{+}\oplus V^{-}$. A \keyword{superbundle} $\mathcal{E}\to M$ is a vector bundle $\mathcal{E}\to M$ admitting a splitting $\mathcal{E}=\mathcal{E}^{+}\oplus\mathcal{E}^{-}$ such that $\mathcal{E}_{x}=\mathcal{E}^{+}_{x}\oplus \mathcal{E}^{-}_{x}$ is a superspace for all $x\in M$. 
\end{definition}
An operator $D$ is called even (degree $(D)=0$) if it preserves the degree of the element in $\mathcal{E}$:
\begin{align*}
    D:\mathcal{E}^{+}\to\mathcal{E}^{+}
    \qquad\text{and}\qquad
    D:\mathcal{E}^{-}\to\mathcal{E}^{-},
\end{align*}
 and is called odd (degree $(D)=1$) if it swamps the degree of the elements in $\mathcal{E}$:
 \begin{align*}
     D:\mathcal{E}^{+}\to\mathcal{E}^{-}
     \qquad\text{and}\qquad
     D:\mathcal{E}^{+}\to\mathcal{E}^{-}.
 \end{align*}
With the degree of superoperators, the \keyword{superbracket} is defined by 
\begin{align*}
    [D_1,D_2]
    =
    D_{1}D_{2}
    -(-1)^{|(D_1)\cdot(D_2)}
    D_{2}D_{1}.
\end{align*}
Looking at the space of $E$-valued Lie algebroid forms $\mathcal{E}=\bigoplus_{p,q}\Apq{p}{q}(M,E)$, we define the $\mathbb{Z}_2$ grading
\begin{align*}
    \mathcal{E}^{+}
    =
    \bigoplus_{\text{p+q are even}}
    \Apq{p}{q}(M,E)
    \qquad\text{and}\qquad
    \mathcal{E}^{-}
    =
    \bigoplus_{\text{p+q are odd}}
    \Apq{p}{q}(M,E).
\end{align*}
Following the definitions, we can see that $\LieagbdChernConn{}{}$, $\LieagbdChernConn{1,0}{}$, and $\LieagbdChernConn{0,1}{}$ are all odd-degree operators. Consequently, the superbracket between $\LieagbdChernConn{1,0}{}$ and $\LieagbdChernConn{0,1}{}$ yields
\begin{align*}
    R(\LieagbdChernConn{}{})
    =
    \LieagbdChernConn{1,0}{}\,\LieagbdChernConn{0,1}{}
    -
    (-1)^{(-1)(-1)}
    \LieagbdChernConn{0,1}{}\,\LieagbdChernConn{1,0}{}
    =
    [\LieagbdChernConn{1,0}{},\LieagbdChernConn{0,1}{}]
    .
\end{align*}

Let us define and generalize some notations and equations that help us achieve the aim: exploring a generalization of the Bochner–Kodaira–Nakano identity on the Lie algebroid. The first one is \keyword{Lie algebroid Laplacians} induced by the Lie algebroid Chern connection:
\begin{align*}
    \LieagbdLpls{1,0}{}&
    :=
        {\LieagbdChernConn{1,0}{}}^{*}
        \,
        \LieagbdChernConn{1,0}{}
    +
        \LieagbdChernConn{1,0}{}
        \,
        {\LieagbdChernConn{1,0}{}}^{*}
    =
    [\LieagbdChernConn{1,0}{},{\LieagbdChernConn{1,0}{}}^*]
    \\
    \LieagbdLpls{0,1}{}&
    :=
    {\LieagbdChernConn{0,1}{}}^{*}\,\LieagbdChernConn{0,1}{} 
    +
    \LieagbdChernConn{0,1}{}\,{\LieagbdChernConn{0,1}{}}^{*}
    =\LieagbdPartialbar^{*}\,\LieagbdPartialbar+\LieagbdPartialbar\,\LieagbdPartialbar^{*}.
\end{align*}
We can use the notation \keyword{$\LieagbdPartialbar$-Laplacian} or $\LieagbdLpls{}{\LieagbdPartialbar}$ for $\LieagbdLpls{0,1}{}$. Then, let us generalize the exterior and interior multiplications of the Lie algebroid K\"{a}hler form. In Section \ref{sec:Lie algebroid Kahler identities}, we have already defined $L$ and $\Lambda$ on the space of Lie algebroid forms in Theorem \ref{thm-Lie algebroid K\"{a}hler identities}. We generalize these to the bundle-valued Lie algebroid forms,
\begin{align*}
    L:=\Apq{p}{q}(M,E)
    &\to\Apq{p+1}{q+1}(M,E)
    \\
    \phi\otimes e
    &\mapsto \omega\wedge\phi\otimes e,
\end{align*}
and the formal adjoint of $L$,
\begin{align*}
    \Lambda:=L^*:\Apq{p}{q}(M,E)\to\Apq{p-1}{q-1}(M,E),
\end{align*}
on the space of $E$-valued Lie algebroid differential forms. Another generalization concerns the Lie algebroid K\"{a}hler identities on bundle-valued $(p,q)$-forms.
\begin{proposition}
For a K\"{a}hler Lie algebroid $\g\to M$ and a Lie algebroid Hermitian vector bundle $E\to M$, the Lie algebroid K\"{a}hler identities on $E$-valued $(p,q)$-forms are:
\begin{align*}
    [\Lambda,\LieagbdChernConn{1,0}{}]&=\sqrt{-1}{\LieagbdChernConn{0,1}{}}^{*},
    \\
    [\Lambda,\LieagbdChernConn{0,1}{}]&=\sqrt{-1}{\LieagbdChernConn{1,0}{}}^{*},
    \\
    [{\LieagbdChernConn{1,0}{}}^*,L]&=\sqrt{-1}\LieagbdChernConn{0,1}{},
    \\
    [{\LieagbdChernConn{0,1}{}}^*,L]&=\sqrt{-1}\LieagbdChernConn{1,0}{}.
\end{align*}
\end{proposition}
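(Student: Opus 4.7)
The plan is to reduce the bundle-valued K\"ahler identities to the scalar version established in Theorem \ref{thm-Lie algebroid K\"{a}hler identities}. First I observe that it suffices to prove only one of the four identities, say $[\Lambda, \LieagbdChernConn{0,1}{}] = \sqrt{-1}\,(\LieagbdChernConn{1,0}{})^{*}$; the remaining three follow by complex conjugation (which swaps $\LieagbdChernConn{1,0}{}\leftrightarrow \LieagbdChernConn{0,1}{}$ and preserves $\Lambda$, since $\omega$ is real under the conjugation defined in Section \ref{sec: K\"{a}hler Lie algebroid}) together with formal-adjoint duality with respect to the Hermitian pairing on $E$-valued forms (which swaps $L\leftrightarrow\Lambda$), exactly as in the scalar case.

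The core of the argument is pointwise. For a fixed $x_{0}\in M$, I will choose a Lie algebroid holomorphic local frame $\{e_{i}\}$ for $E$ in a neighborhood of $x_{0}$ which is \emph{normal} at $x_{0}$, meaning $h_{ij}(x_{0})=\delta_{ij}$ and $\LieagbdPartial h_{ij}(x_{0})=0$. Since the Chern connection matrix in a holomorphic frame is $\theta = \LieagbdPartial h \cdot h^{-1}$, normality forces $\theta(x_{0})=0$, so in this frame $\LieagbdChernConn{1,0}{} = \LieagbdPartial + \theta\wedge$ coincides with $\LieagbdPartial$ at $x_{0}$, and correspondingly $(\LieagbdChernConn{1,0}{})^{*}$ agrees with $\LieagbdPartial^{*}$ at $x_{0}$, because the difference is a zeroth-order operator built from $\theta$ and contractions with the metric $h$, all of whose ingredients vanish at $x_{0}$. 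Writing a generic $E$-valued form locally as $\sum_{j}\alpha_{j}\otimes e_{j}$, and using $\LieagbdPartialbar e_{j}=0$ together with the fact that $\Lambda$ ignores the $E$-factor, the left-hand side becomes
\[
    [\Lambda, \LieagbdChernConn{0,1}{}]\left(\sum_{j}\alpha_{j}\otimes e_{j}\right)
    = \sum_{j}[\Lambda, \LieagbdPartialbar]\alpha_{j}\otimes e_{j}
    = \sqrt{-1}\sum_{j}\LieagbdPartial^{*}\alpha_{j}\otimes e_{j}
\]
by the scalar identity of Theorem \ref{thm-Lie algebroid K\"{a}hler identities}, while the right-hand side $\sqrt{-1}(\LieagbdChernConn{1,0}{})^{*}(\sum_{j}\alpha_{j}\otimes e_{j})$ evaluates to the same expression at $x_{0}$ by the reduction above. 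Since $x_{0}$ was arbitrary, the identity holds globally.

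The main obstacle will be establishing the existence of a normal holomorphic frame in the Lie algebroid setting. Classically this is produced by Gram-Schmidt on the Taylor expansion of an initial holomorphic frame together with the holomorphic Poincar\'e lemma; for a general Lie algebroid the analogous local triviality of $\LieagbdPartialbar$ on $E$ may require additional hypotheses (e.g.\ sufficient regularity of $\anchor$ near $x_{0}$ to guarantee enough Lie algebroid holomorphic sections). A robust alternative that entirely sidesteps this subtlety is to mimic the direct local-frame computation carried out in Lemmas \ref{Lemma-Kahler identity on Lie algebroid-the first lemma} and \ref{Lemma-Kahler identity on Lie algebroid-the third lemma}: expand both $[\Lambda, \LieagbdPartialbar]$ and $(\LieagbdChernConn{1,0}{})^{*} = (\LieagbdPartial + \theta\wedge)^{*}$ on a general $E$-valued form written in the unitary frame $\{\basexi{i},\basedualxi{i}\}$, and verify the match by invoking the structure constraints \eqref{EQ-Part 1-(1)-equation for coef B,D} and \eqref{EQ-Part 1-(1)-equation for coef A,C} on the coefficients $A,B,C,D$ together with the explicit expression $\theta = \LieagbdPartial h \cdot h^{-1}$ for the connection matrix; the new ingredients beyond the scalar lemmas are the $\theta$-dependent contractions, which pair up cleanly because $\theta$ is of pure type $(1,0)$.
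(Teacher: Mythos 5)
Your proposal is correct and follows essentially the same route as the paper: choose a local normal frame in which the Chern connection matrix vanishes at the base point, so that $\LieagbdChernConn{1,0}{}$ and $\LieagbdChernConn{0,1}{}$ reduce pointwise to $\LieagbdPartial$ and $\LieagbdPartialbar$, and then invoke Theorem \ref{thm-Lie algebroid K\"{a}hler identities}. You are in fact more careful than the paper, which simply asserts the existence of such a normal frame without addressing the local $\LieagbdPartialbar$-triviality issue you flag; your fallback via the direct frame computation with $\theta=\LieagbdPartial h\cdot h^{-1}$ is a reasonable way to close that gap.
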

\begin{proof}
We can pick a local normal frame on $E$ such that the connection $\LieagbdChernConn{1,0}{}=\LieagbdDiff+\phi$, where $\phi$ vanishes at the origin of the local coordinate. Then, we use $\LieagbdChernConn{1,0}{}$ and $\LieagbdChernConn{0,1}{}$ to replace $\LieagbdPartial$ and $\LieagbdPartialbar$ in the Lie algebroid K\"{a}hler identities in Theorem \ref{thm-Lie algebroid K\"{a}hler identities}.
\end{proof}

Using all the ingredients we have defined and constructed, we can obtain the Lie algebroid version of the Bochner–Kodaira–Nakano identity, which formally agrees with the identity on a complex manifold (e.g., \cite{GH1994}), and the proof is similar as well.
\begin{lemma}
    For a K\"{a}hler Lie algebroid, the equation
    \begin{align}
        \LieagbdLpls{}{\LieagbdPartialbar}
        =
        \LieagbdLpls{1,0}{}
        +
        \sqrt{-1}[\LieagbdChernConn{2}{},\Lambda]
        \label{EQ-Part 1-(3)-lie agbd BK identity}
    \end{align}
    holds and is called the \keyword{Lie algebroid Bochner–Kodaira–Nakano identity}.
\end{lemma}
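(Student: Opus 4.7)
The plan is to mimic the classical proof of the Bochner--Kodaira--Nakano identity, working inside the super Lie algebra of endomorphisms of the $\mathbb{Z}_{2}$-graded space $\bigoplus_{p,q}\Apq{p}{q}(M,E)$, using only the bundle-valued Lie algebroid K\"ahler identities (the proposition immediately preceding this lemma) together with purely formal super-bracket manipulations. The analytic content has already been absorbed into Theorem \ref{thm-Lie algebroid K\"{a}hler identities} and its extension to Lie algebroid Hermitian bundles, so no further local-coordinate calculation should be needed.

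First I would rewrite the $\LieagbdPartialbar$-Laplacian as the superbracket
\[
    \LieagbdLpls{}{\LieagbdPartialbar}
    =[\LieagbdChernConn{0,1}{},\LieagbdChernConn{0,1}{}^{*}]
\]
and eliminate the adjoint by the K\"ahler identity $\LieagbdChernConn{0,1}{}^{*}=-\sqrt{-1}\,[\Lambda,\LieagbdChernConn{1,0}{}]$, which is a rearrangement of $[\Lambda,\LieagbdChernConn{1,0}{}]=\sqrt{-1}\,\LieagbdChernConn{0,1}{}^{*}$. Substituting this yields
\[
    \LieagbdLpls{}{\LieagbdPartialbar}
    =-\sqrt{-1}\,\bigl[\LieagbdChernConn{0,1}{},[\Lambda,\LieagbdChernConn{1,0}{}]\bigr].
\]

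Second, I would expand the nested super-commutator by the graded Jacobi identity
\[
    [a,[b,c]]=[[a,b],c]+(-1)^{|a||b|}[b,[a,c]]
\]
with $a=\LieagbdChernConn{0,1}{}$ (odd), $b=\Lambda$ (even), $c=\LieagbdChernConn{1,0}{}$ (odd). The inner bracket $[\LieagbdChernConn{0,1}{},\Lambda]=-[\Lambda,\LieagbdChernConn{0,1}{}]=-\sqrt{-1}\,\LieagbdChernConn{1,0}{}^{*}$ is provided by the second K\"ahler identity, so reinserting it gives $[[\LieagbdChernConn{0,1}{},\Lambda],\LieagbdChernConn{1,0}{}]=-\sqrt{-1}\,[\LieagbdChernConn{1,0}{}^{*},\LieagbdChernConn{1,0}{}]=-\sqrt{-1}\,\LieagbdLpls{1,0}{}$. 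For the other summand one has $[\Lambda,[\LieagbdChernConn{0,1}{},\LieagbdChernConn{1,0}{}]]$, and the crucial observation is that since $(\LieagbdChernConn{0,1}{})^{2}=\LieagbdPartialbar^{2}=0$ and $(\LieagbdChernConn{1,0}{})^{2}=0$ (from the skew-Hermiticity of the Chern connection matrix, as already noted in the paper), the superbracket $[\LieagbdChernConn{0,1}{},\LieagbdChernConn{1,0}{}]=\LieagbdChernConn{0,1}{}\LieagbdChernConn{1,0}{}+\LieagbdChernConn{1,0}{}\LieagbdChernConn{0,1}{}$ coincides with the full Lie algebroid curvature $\LieagbdChernConn{}{}^{2}=\LieagbdChernConn{2}{}$. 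Reassembling then produces a $\LieagbdLpls{1,0}{}$ term and a $[\LieagbdChernConn{2}{},\Lambda]$ term, which together deliver equation~(\ref{EQ-Part 1-(3)-lie agbd BK identity}).

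The hard part is not conceptual but combinatorial: one has to keep the signs straight in the super Lie algebra, in particular distinguishing the antisymmetry $[\LieagbdChernConn{0,1}{},\Lambda]=-[\Lambda,\LieagbdChernConn{0,1}{}]$ for an odd--even pair from the symmetry $[\LieagbdChernConn{0,1}{},\LieagbdChernConn{1,0}{}]=[\LieagbdChernConn{1,0}{},\LieagbdChernConn{0,1}{}]$ for an odd--odd pair, and to track the factors of $\sqrt{-1}$ coming from the two invocations of the K\"ahler identities. A parallel computation starting from $\LieagbdLpls{1,0}{}=[\LieagbdChernConn{1,0}{},\LieagbdChernConn{1,0}{}^{*}]$ and substituting $\LieagbdChernConn{1,0}{}^{*}=-\sqrt{-1}\,[\Lambda,\LieagbdChernConn{0,1}{}]$ provides a convenient consistency check, since the $[\Lambda,\LieagbdChernConn{2}{}]$ term should appear symmetrically. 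No extra structural hypothesis on the Lie algebroid (transitivity, injective anchor, etc.) is required, since all the curvature-type cancellations have already been packaged into the definition of the Chern connection.
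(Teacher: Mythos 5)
Your proposal is correct and takes essentially the same route as the paper's own proof: both rewrite $\LieagbdLpls{}{\LieagbdPartialbar}$ as the superbracket $[\LieagbdChernConn{0,1}{},{\LieagbdChernConn{0,1}{}}^{*}]$, substitute the K\"ahler identity $[\Lambda,\LieagbdChernConn{1,0}{}]=\sqrt{-1}\,{\LieagbdChernConn{0,1}{}}^{*}$, expand via the graded Jacobi identity, invoke the second identity $[\Lambda,\LieagbdChernConn{0,1}{}]=\sqrt{-1}\,{\LieagbdChernConn{1,0}{}}^{*}$, and identify $[\LieagbdChernConn{1,0}{},\LieagbdChernConn{0,1}{}]$ with the curvature $\LieagbdChernConn{2}{}$. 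The only cosmetic difference is that you phrase the Jacobi identity in its non-cyclic form and add a symmetric consistency check, neither of which changes the argument.
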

\begin{proof}
Following the definition of $\LieagbdLpls{}{\LieagbdPartialbar}$,
\begin{align*}
    \LieagbdLpls{}{\LieagbdPartialbar}
    &=
    [\LieagbdPartialbar,\LieagbdPartialbar^{*}]
    \\
    &=
    [\LieagbdChernConn{0,1}{},{\LieagbdChernConn{0,1}{}}^*]
    .
\end{align*}
We use the identity $[\Lambda,\LieagbdChernConn{1,0}{}]=\sqrt{-1}{\LieagbdChernConn{0,1}{}}^{*}$ and plug it into the expansion of the above Laplacian, obtaining the following formula:
\begin{align}
    \LieagbdLpls{}{\LieagbdPartialbar}
    =
    -\sqrt{-1}
    \big[\LieagbdChernConn{0,1}{},[\Lambda,\LieagbdChernConn{1,0}{}]\big].
    \label{EQ-Part 1-(3)-eq 1 in the pf of lie agbd BK identity}
\end{align}
We now look at the Jacobi identity:
\begin{align*}
    \big[ \LieagbdChernConn{0,1}{},[\Lambda,\LieagbdChernConn{1,0}{}]\big]
    +
    \big[ \Lambda,[\LieagbdChernConn{1,0}{},\LieagbdChernConn{0,1}{}]\big]
    +
    \big[ \LieagbdChernConn{1,0}{},[\LieagbdChernConn{0,1}{},\Lambda]\big]
    =0.
\end{align*}
We replace the right-hand side of (\ref{EQ-Part 1-(3)-eq 1 in the pf of lie agbd BK identity}) with the last two terms in the Jacobi identity and deduce
\begin{align}
    \LieagbdLpls{}{\LieagbdPartialbar}
    &=
    \sqrt{-1}
    \big[ \Lambda,[\LieagbdChernConn{1,0}{},\LieagbdChernConn{0,1}{}]\big]
    +
    \sqrt{-1}
    \big[ \LieagbdChernConn{1,0}{},[\LieagbdChernConn{0,1}{},\Lambda]\big]
    \nonumber\\
    &=
    \sqrt{-1}
    \big[ \Lambda,[\LieagbdChernConn{1,0}{},\LieagbdChernConn{0,1}{}]\big]
    +
    \sqrt{-1}
    \big[ \LieagbdChernConn{1,0}{},-\sqrt{-1}{\LieagbdChernConn{1,0}{}}^*\big]
    \nonumber\\
    &=
    \sqrt{-1}
    \big[ \Lambda,[\LieagbdChernConn{1,0}{},\LieagbdChernConn{0,1}{}]\big]
    +
    [\LieagbdChernConn{1,0}{},{\LieagbdChernConn{1,0}{}}^{*}],
\label{EQ-Part 1-(3)-eq 2 in the pf of lie agbd BK identity}
\end{align}
where we use the Lie algebroid K\"{a}hler identity $
[\Lambda,\LieagbdChernConn{0,1}{}]
=
\sqrt{-1}{\LieagbdChernConn{1,0}{}}^*
$ in the second equal sign. 

Recalling that $\LieagbdLpls{1,0}{}=[\LieagbdChernConn{1,0}{},{\LieagbdChernConn{1,0}{}}^{*}]$ and $\LieagbdChernConn{2}{}=[\LieagbdChernConn{1,0}{},\LieagbdChernConn{0,1}{}]$, we conclude the Lie algebroid Bochner–Kodaira–Nakano identity from (\ref{EQ-Part 1-(3)-eq 2 in the pf of lie agbd BK identity}).
\end{proof}

\subsection{Main theory}
\label{section-Lie algebroid Kodaira vanishing theorem}
Applying the Lie algebroid Bochner–Kodaira–Nakano identity, we can establish the Lie algebroid Kodaira vanishing theorem. The original statement of the theorem on the complex manifold is that the cohomology group of a positive line bundle vanishes when its degree is large enough. For a Lie algebroid, however, the Hodge theory between the space of harmonic E-valued Lie algebroid $(p,q)$-forms, i.e., $\kernel(\Delta_{\LieagbdPartialbar}|_{\Apq{p}{q}(M,E)})$, and the sheaf cohomology of $E$ for a Lie algebroid holomorphic vector bundle $E$ over $M$, i.e., $H^p(M,\Omega^q(E))$, is yet to be developed. Our vanishing theorem is slightly weaker than the classical case and gives some sufficient conditions: the type $(p,q)$ such that $p+q$ is large enough, which implies the kernel of $\nabla_{\LieagbdPartialbar}$ is trivial. First, let us define the Lie algebroid positive line bundle.


\begin{definition}
    A Lie algebroid holomorphic line bundle $E\to M$ is \keyword{positive}, if there is a hermitian metric on $E$ such that the curvature $R\in\Apq{1}{1}(M)$ of the Chern connection satisfies that $\sqrt{-1}R$ is positive definite. Meanwhile, we say $E\to M$ is \keyword{negative} if $-\sqrt{-1}R$ is positive definite.
\end{definition}

Recall that the operator $L$ is the exterior multiplication of $\omega$ and $\Lambda$ is its dual. Both operators are even-degree operators. In the classical case, their commutator $[L,\Lambda]=L\Lambda-\Lambda L$ is a scalar multiplication, and this result can be generalized to the K\"{a}hler Lie algebroid.
\begin{lemma}
For a K\"{a}hler Lie algebroid $(M,\g,\omega)$ and the vector bundle $E\to M$, the commutator $[L,\Lambda]$ on $\Apq{p}{q}(M,E)$ is equal to $(p+q-n)\id$, where $\id:\Apq{p}{q}(M,E)\to\Apq{p}{q}(M,E)$ is an identity map.
\end{lemma}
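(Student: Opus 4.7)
The plan is to reduce the identity to a purely algebraic, pointwise statement and then to a combinatorial calculation on basis monomials, exploiting the fact that $L$ and $\Lambda$ are zeroth-order operators. First I would note that the $E$-factor plays no role: since both $L$ and $\Lambda$ act trivially on $\secsp{M}{E}$ in the decomposition $\Apq{p}{q}(M,E)=\Apq{p}{q}(M)\otimes\secsp{M}{E}$, it suffices to prove the identity on scalar Lie algebroid $(p,q)$-forms. Moreover, because $L$ is multiplication by $\omega$ and $\Lambda$ is its formal adjoint, both are $\smoothfuncsp{M}$-linear, so the assertion can be checked fiberwise on a small open chart $U$.

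Next I would fix a unitary local frame $\{\basexi{i}\}$ of $\gstarpq{1}{0}|_U$ with dual anti-holomorphic frame $\{\basedualxi{i}\}$. Using the local expression $\omega|_U=\frac{\sqrt{-1}}{2}\sum_i \basexi{i}\wedge\basedualxi{i}$ from Section \ref{sec: K\"{a}hler Lie algebroid}, I rewrite
\begin{align*}
    L=\frac{\sqrt{-1}}{2}\sum_{i=1}^{n}\ext{\basexi{i}}\ext{\basedualxi{i}},
    \qquad
    \Lambda=-\frac{\sqrt{-1}}{2}\sum_{i=1}^{n}\contract{\basedualxi{i}}\contract{\basexi{i}},
\end{align*}
the formula for $\Lambda$ being forced by taking the adjoint with respect to the Hermitian pairing on $\gstar$. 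The key algebraic input is the set of canonical anticommutation relations $\ext{\basexi{i}}\contract{\basexi{j}}+\contract{\basexi{j}}\ext{\basexi{i}}=2\delta_{ij}\,\id$ and likewise for $\basedualxi{}$, together with $\ext{\basexi{i}}\contract{\basedualxi{j}}+\contract{\basedualxi{j}}\ext{\basexi{i}}=0$ (and the analogous mixed relations). The factors of $2$ here come from the normalization $\norm{\basexi{i}}{h}=\norm{\basedualxi{i}}{h}=2$ used throughout the paper.

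Then I would evaluate $[L,\Lambda]$ on a monomial $\basexi{\indexI}\wedge\basedualxi{\indexJp}$ with $|\indexI|=p$, $|\indexJp|=q$ by plugging in the formulas above and expanding using the anticommutation relations. Each index pair $(i,i)$ contributes in two ways: either both $\ext{\basexi{i}}\ext{\basedualxi{i}}$ and $\contract{\basedualxi{i}}\contract{\basexi{i}}$ act on a slot already present in $\basexi{\indexI}\wedge\basedualxi{\indexJp}$, or they act on a ``free'' slot. A bookkeeping of signs and of the factor $2$ from each contraction/exterior pair shows that the diagonal terms contribute $|\indexI|+|\indexJp|=p+q$ while the universal term gives $-n$, yielding $[L,\Lambda](\basexi{\indexI}\wedge\basedualxi{\indexJp})=(p+q-n)\basexi{\indexI}\wedge\basedualxi{\indexJp}$.

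There is no real conceptual obstacle here: the computation is formally the classical one on $\C^n$, since the local formulas for $\omega$, $L$, and $\Lambda$ in a unitary frame are the same as in the classical K\"{a}hler case. In particular, the Lie algebroid structure enters only through the anchor map when one takes derivatives, and since $L$ and $\Lambda$ are tensorial, neither the bracket coefficients $A,B,C,D$ nor the closedness condition $\LieagbdDiff\omega=0$ is used in this lemma. The main bookkeeping challenge is therefore only to keep track of the factor $2$ from the chosen normalization, which propagates consistently through both $L$ and $\Lambda$ and cancels in the final count, leaving $p+q-n$ as expected.
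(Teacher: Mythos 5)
Your proposal is correct and follows essentially the same route as the paper: localize to a unitary frame, write $L=\frac{\sqrt{-1}}{2}\sum_i\ext{\basexi{i}}\ext{\basedualxi{i}}$ and $\Lambda=-\frac{\sqrt{-1}}{2}\sum_i\contract{\basedualxi{i}}\contract{\basexi{i}}$, and evaluate the commutator on basis monomials, with the paper obtaining the count in the equivalent form $|\indexI\cap\indexJ|-|\indexIc\cap\indexJc|=p+q-n$. Your organization of the bookkeeping via the anticommutation relations (normal-ordering to number operators contributing $p+q$ plus the constant $-n$) is a tidier packaging of the same computation, and your observations that the $E$-factor and the closedness of $\omega$ play no role are both accurate.
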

\begin{proof}
    It is enough to verify the identity on a local neighborhood. Recall that locally 
    $\omega=
        \frac{\sqrt{-1}}{2}
        \displaystyle\sum_{k=1}^{n}
        \basexi{k}\wedge\basedualxi{k^{'}}
    $. Using the notations that $e_{\basexi{k}}:=\basexi{k}\wedge$ and $\contract{\basexi{k}}:=(e_{\basexi{k}})^{*}$, we can compute the expressions of $L$ and $\Lambda$:
    \begin{align*}
        L=e_{\omega}=
        \frac{\sqrt{-1}}{2}
        \displaystyle\sum_{k=1}^{n}
        e_{\basexi{k}}
        \,
        e_{\basedualxi{k^{'}}}
        \qquad\text{and}\qquad
        \Lambda=\iotaW=
        -\frac{\sqrt{-1}}{2}
        \displaystyle\sum_{k=1}^{n}
        \contract{\basedualxi{k^{'}}}
        \,
        \contract{\basexi{k}}.
    \end{align*}
    Then, for any $(p,q)$-Lie algebroid form $\basexi{\indexI}\wedge\basedualxi{\indexJp}$, we can expect that the applications of $L$ and $\Lambda$ on $\basexi{\indexI}\wedge\basedualxi{\indexJp}$ will be similar to the classical case. 
    
    We first compute $
L\,\Lambda
(\basexi{\indexI}\wedge\basedualxi{\indexJp})
$. The following details the calculation:
\begin{align*}
   \Lambda(\basexi{\indexI}\wedge\basedualxi{\indexJp})
   &=
    -\frac{\sqrt{-1}}{2}
    \displaystyle\sum_{k=1}^{n}
    \contract{\basedualxi{k^{'}}}
    \,
    \contract{\basexi{k}}
    (\basexi{\indexI}\wedge\basedualxi{\indexJp})
   \\
   &=
    -\frac{\sqrt{-1}}{2}
    \displaystyle\sum_{k\in\indexI}
    \contract{\basedualxi{k}}
    (
    2\,
    \sgn
         {\indexI}
         {k,\indexI\backslash{\{k\}}}
    \basexi{\indexI\backslash\{k\}}\wedge\basedualxi{\indexJp}
    )
   \\
   &=
    -\frac{\sqrt{-1}}{2}
    \displaystyle\sum_{k\in\indexI\cap\indexJ}
    4\,
    \sgn
         {\indexI}
         {k,\indexI\backslash{\{k\}}}
    \sgn
        {\indexI\backslash{\{k\}},\indexJp}
        {k^{'},\indexI\backslash{\{k\}},\indexJp\backslash{\{k^{'}\}}}
    \basexi{\indexI\backslash\{k\}}
    \wedge
    \basedualxi{\indexJp\backslash{\{k^{'}\}}}
   \\
   &=
    -2\sqrt{-1}
    \displaystyle\sum_{k\in\indexI\cap\indexJ}
    \sgn
        {\indexI,\indexJp}
        {k,k^{'},\indexI\backslash{\{k\}},\indexJp\backslash{\{k^{'}\}}}
    \basexi{\indexI\backslash\{k\}}
    \wedge
    \basedualxi{\indexJp\backslash{\{k^{'}\}}},
\end{align*}
    
\begin{align*}
    L\,\Lambda
    (\basexi{\indexI}\wedge\basedualxi{\indexJp})
    &=
        \frac{\sqrt{-1}}{2}
        \displaystyle\sum_{h=1}^{n}
        e_{\basexi{h}}
        \,
        e_{\basedualxi{h^{'}}}
        (-2\sqrt{-1})
        \displaystyle\sum_{k\in\indexI\cap\indexJ}
        \sgn
            {\indexI,\indexJp}
            {k,k^{'},\indexI\backslash{\{k\}},\indexJp\backslash{\{k^{'}\}}}
        \basexi{\indexI\backslash\{k\}}
        \wedge
        \basedualxi{\indexJp\backslash{\{k^{'}\}}}
    \\
    &=
        \begin{pmatrix*}[l]
            \displaystyle\sum_{k\in\indexI\cap\indexJ}
            \basexi{k}\wedge\basedualxi{k^{'}}\wedge
            \sgn
                {\indexI,\indexJp}
                {k,k^{'},\indexI\backslash{\{k\}},\indexJp\backslash{\{k^{'}\}}}
            \basexi{\indexI\backslash\{k\}}
            \wedge
            \basedualxi{\indexJp\backslash{\{k^{'}\}}}
            \\
            +
            \\
            \displaystyle\sum_{h\in\indexIc\cap\indexJc}
            \basexi{h}\wedge\basedualxi{h^{'}}\wedge
            \displaystyle\sum_{k\in\indexI\cap\indexJ}
            \sgn
                {\indexI,\indexJp}
                {k,k^{'},\indexI\backslash{\{k\}},\indexJp\backslash{\{k^{'}\}}}
            \basexi{\indexI\backslash\{k\}}
            \wedge
            \basedualxi{\indexJp\backslash{\{k^{'}\}}}
        \end{pmatrix*}
    \\
    &=
        \begin{pmatrix*}[l]
            \displaystyle\sum_{k\in\indexI\cap\indexJ}
            \basexi{\indexI}
            \wedge
            \basedualxi{\indexJp}
            \\
            +
            \\
            \displaystyle\sum_{h\in\indexIc\cap\indexJc}
            \basexi{h}\wedge\basedualxi{h^{'}}\wedge
            \displaystyle\sum_{k\in\indexI\cap\indexJ}
            \sgn
                {\indexI,\indexJp}
                {k,k^{'},\indexI\backslash{\{k\}},\indexJp\backslash{\{k^{'}\}}}
            \basexi{\indexI\backslash\{k\}}
            \wedge
            \basedualxi{\indexJp\backslash{\{k^{'}\}}}
        \end{pmatrix*}
        .
\end{align*}
Next, we compute $
\Lambda\,L
(\basexi{\indexI}\wedge\basedualxi{\indexJp})
$. The following details the calculation:
\begin{align*}
    L(\basexi{\indexI}\wedge\basedualxi{\indexJp})
    =&
        \frac{\sqrt{-1}}{2}
        \displaystyle\sum_{k\in\indexIc\cap\indexJc}
        \basexi{k}\wedge\basedualxi{k^{'}}\wedge
        \basexi{\indexI}\wedge\basedualxi{\indexJp}
        ,
\end{align*}

\begin{align*}
    \Lambda\, L
    (\basexi{\indexI}\wedge\basedualxi{\indexJp})
    =&
        -\frac{\sqrt{-1}}{2}
        \displaystyle\sum_{h=1}^{n}
        \contract{\basedualxi{h^{'}}}
        \,
        \contract{\basexi{h}}
        \big(
        \frac{\sqrt{-1}}{2}
        \displaystyle\sum_{k\in\indexIc\cap\indexJc}
        \basexi{k}\wedge\basedualxi{k^{'}}\wedge
        \basexi{\indexI}\wedge\basedualxi{\indexJp}
        \big)
    \\
    =&
        \frac{1}{4}
        \begin{pmatrix*}[l]
            \displaystyle\sum_{h\in\indexIc\cap\indexJc}
            \contract{\basedualxi{h^{'}}}
            \,
            \contract{\basexi{h}}
            (
            \displaystyle\sum_{k\in\indexIc\cap\indexJc}
            \basexi{k}\wedge\basedualxi{k^{'}}\wedge
            \basexi{\indexI}\wedge\basedualxi{\indexJp}
            )
            \\
            +
            \\
            \displaystyle\sum_{h\in\indexI\cap\indexJ}
            \contract{\basedualxi{h^{'}}}
            \,
            \contract{\basexi{h}}
            (
            \displaystyle\sum_{k\in\indexIc\cap\indexJc}
            \basexi{k}\wedge\basedualxi{k^{'}}\wedge
            \basexi{\indexI}\wedge\basedualxi{\indexJp}
            )
        \end{pmatrix*}
    \\
    =&
        \frac{1}{4}
        \begin{pmatrix*}[l]
            \displaystyle\sum_{k\in\indexIc\cap\indexJc}
            4\,
            \basexi{\indexI}\wedge\basedualxi{\indexJp}
            \\
            +
            \\
            \displaystyle\sum_{h\in\indexI\cap\indexJ}
            4\,
            \sgn
                {k,k^{'},\indexI\backslash\{h\},\indexJp}
                {h^{'},k,k^{'},\indexI\backslash\{h\},\indexJp\backslash\{h^{'}\}}
            \sgn
                {k,k^{'},\indexI}
                {h,k,k^{'},\indexI\backslash\{h\}}
            \displaystyle\sum_{k\in\indexIc\cap\indexJc}
            \basexi{k}\wedge\basedualxi{k^{'}}\wedge
            \basexi{\indexI\backslash\{h\}}
            \wedge
            \basedualxi{\indexJp\backslash\{h^{'}\}}
        \end{pmatrix*}
    \\
    =&
        \begin{pmatrix*}[l]
            \displaystyle\sum_{k\in\indexIc\cap\indexJc}
            \basexi{\indexI}\wedge\basedualxi{\indexJp}
            \\
            +
            \\
            \displaystyle\sum_{h\in\indexI\cap\indexJ}
            \sgn
                {\indexI,\indexJp}
                {h,h^{'},\indexI\backslash\{h\},\indexJp\backslash\{h^{'}\}}
            \displaystyle\sum_{k\in\indexIc\cap\indexJc}
            \basexi{k}\wedge\basedualxi{k^{'}}\wedge
            \basexi{\indexI\backslash\{h\}}
            \wedge
            \basedualxi{\indexJp\backslash\{h^{'}\}}
        \end{pmatrix*}.
\end{align*}
    From the calculations above, we cancel the repeated terms and obtain $[L,\Lambda]=e_{\omega}\,\contract{\omega}-\contract{\omega}\, e_{\omega}=(|\indexI\cap\indexJ|-|\indexIc\cap\indexJc|)\mathrm{\id}=(p+q-n)\mathrm{\id}$.
\end{proof}

We are now ready to state and prove the vanishing theorem.
\begin{theorem}[Lie algebroid Kodaira–Nakano vanishing theorem]
    Give a K\"{a}hler Lie algebroid $\g\to M$ and a positive line bundle $E\to M$. The kernel of 
    \begin{align*}
        \LieagbdLpls{}{\LieagbdPartialbar}
        :\Apq{p}{q}(M,E)\to\Apq{p}{q}(M,E)
    \end{align*}
    vanishes when $p+q>n$. The same result holds on a negative line bundle under the change of the condition that $p+q<n$.
\end{theorem}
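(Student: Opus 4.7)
The plan is to mimic the classical Kodaira--Nakano argument using the Lie algebroid Bochner--Kodaira--Nakano identity (\ref{EQ-Part 1-(3)-lie agbd BK identity}) together with the commutator identity $[L,\Lambda]=(p+q-n)\,\id$ just proved. Given $\alpha\in\kernel(\LieagbdLpls{}{\LieagbdPartialbar})$ with $\alpha\in\Apq{p}{q}(M,E)$, I will feed the BKN identity into the global inner product against $\alpha$ and use positivity of $E$ to force $\alpha=0$.

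My first move would be to normalize the K\"ahler Lie algebroid form so that it coincides with the curvature of $E$. Because $E$ is a Lie algebroid positive line bundle, the curvature $R=\LieagbdChernConn{2}{}$ is a $(1,1)$-form with $\sqrt{-1}R$ positive definite; and for a line bundle the Chern connection is locally $\LieagbdDiff+\LieagbdPartial\log h$, so $R=\LieagbdPartialbar\LieagbdPartial\log h$ and hence $\LieagbdDiff R=0$ by the graded commutation of $\LieagbdPartial,\LieagbdPartialbar$. Thus $\omega_{E}:=\sqrt{-1}R$ is itself a K\"ahler Lie algebroid form, and by replacing $\omega$ with $\omega_{E}$ (equivalently, by choosing the Hermitian metric on $\g$ induced by $\omega_{E}$) we may assume $\omega=\sqrt{-1}R$ throughout. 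Since Hodge theory on Lie algebroids is not developed, the theorem should be interpreted as asserting the vanishing of $\kernel(\LieagbdLpls{}{\LieagbdPartialbar})$ for this natural K\"ahler Lie algebroid structure.

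Under this normalization the commutator lemma gives $\sqrt{-1}[R,\Lambda]=[L,\Lambda]=(p+q-n)\,\id$ on $\Apq{p}{q}(M,E)$, so the BKN identity collapses to the operator equation $\LieagbdLpls{}{\LieagbdPartialbar}=\LieagbdLpls{1,0}{}+(p+q-n)\,\id$. Pairing with $\alpha$ under $\pinnerprod{\cdot}{\cdot}{\Apq{p}{q}}$ and using $\pinnerprod{\LieagbdLpls{1,0}{}\alpha}{\alpha}{\Apq{p}{q}}=\|\LieagbdChernConn{1,0}{}\alpha\|^{2}+\|{\LieagbdChernConn{1,0}{}}^{*}\alpha\|^{2}\geq 0$ then yields
\begin{align*}
    0=\|\LieagbdChernConn{1,0}{}\alpha\|^{2}+\|{\LieagbdChernConn{1,0}{}}^{*}\alpha\|^{2}+(p+q-n)\|\alpha\|^{2},
\end{align*}
which forces $\alpha=0$ as soon as $p+q>n$. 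For a negative line bundle one instead normalizes $-\sqrt{-1}R=\omega$; the curvature term flips sign and the same pairing gives $(n-p-q)\|\alpha\|^{2}=-\|\LieagbdChernConn{1,0}{}\alpha\|^{2}-\|{\LieagbdChernConn{1,0}{}}^{*}\alpha\|^{2}\leq 0$, forcing $\alpha=0$ whenever $p+q<n$.

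The main obstacle is the normalization step $\omega=\sqrt{-1}R$. In the complex-manifold case it is harmless because Hodge theory identifies $\kernel(\LieagbdLpls{}{\LieagbdPartialbar})$ with a metric-independent sheaf cohomology group; in the Lie algebroid setting this identification is not yet available, so the argument only delivers vanishing for the K\"ahler Lie algebroid structure built from $\sqrt{-1}R$. An alternative route that would keep the original $\omega$ is to work in a local unitary frame for $\gstarpq{1}{0}$ simultaneously diagonalizing $\omega$ and $\sqrt{-1}R$, obtaining the pointwise eigenvalue $\sum_{i\in\indexI}\mu_{i}+\sum_{j\in\indexJp}\mu_{j}-\sum_{i=1}^{n}\mu_{i}$ of $\sqrt{-1}[R,\Lambda]$ on monomials $\basexi{\indexI}\wedge\basedualxi{\indexJp}$, with $\mu_{i}>0$ the eigenvalues of $\sqrt{-1}R$ against $\omega$; these eigenvalues are not uniformly positive when $p+q>n$ for arbitrary $\omega$, so making this route rigorous would require either a tensor-power argument on $E$ or a finer local analysis than in the normalized case.
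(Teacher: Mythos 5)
Your proposal follows essentially the same route as the paper: replace $\omega$ by the K\"ahler Lie algebroid form $\sqrt{-1}R$ determined by the positive curvature, insert the Bochner--Kodaira--Nakano identity, use $[L,\Lambda]=(p+q-n)\,\id$ and the nonnegativity of $\LieagbdLpls{1,0}{}$ to force the kernel to vanish for $p+q>n$, and flip signs for the negative case. Your explicit caveat that, absent a Lie algebroid Hodge theory, the conclusion is tied to the normalized structure $\omega=\sqrt{-1}R$ is a point the paper's proof performs silently, so you have if anything been more careful than the source.
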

\begin{proof}
By the definition of positivity, the curvature of the Lie algebroid Chern connection for $E$ is a closed positive $(1,1)$-form. Thus, it determines a Lie algebroid K\"{a}hler form $\omega$. Use this Lie algebroid K\"{a}hler form in the Lie algebroid Bochner–Kodaira–Nakano identity
\begin{align*}
    \LieagbdLpls{}{\LieagbdPartialbar}
    =
    \LieagbdLpls{1,0}{}+\sqrt{-1}[\LieagbdChernConn{2}{},\Lambda]
    .
\end{align*}
The $\LieagbdLpls{1,0}{}$ in the equation is a non-negative operator (i.e., $\innerprod{\LieagbdLpls{1,0}{}\phi}{\phi}{}\geq 0, \forall \phi\in \Apq{p}{q}(M,E)$) and $\LieagbdChernConn{2}{}$ is equal to $\sqrt{-1}L$ from the definition of a positive line bundle. Thus, we obtain
\begin{align*}
    \LieagbdLpls{}{\LieagbdPartialbar}
    =
    \LieagbdLpls{1,0}{}-[L,\Lambda]
    =
    \LieagbdLpls{1,0}{}+(p+q-n)\mathrm{\id},
\end{align*}
which this implies that $\LieagbdLpls{}{\LieagbdPartialbar}$ has a trivial kernel if $p+q>n$.
\end{proof}


The following corollary will be applied to the kernel of the (Lie algebroid) Dolbeault operator.
\begin{corollary}
\label{cor-vanishign theorem for Dolbeault}
Suppose $M\to\g$ is a K\"{a}hler Lie algebroid and $E\to M$ a positive Lie algebroid holomorphic line bundle. The kernel of 
\begin{align*}
    \LieagbdPartialbar+\LieagbdPartialbar^{*}
    :\Apq{p}{q}(M,E)
    \to
    \Apq{p}{q+1}(M,E)
    \oplus
    \Apq{p}{q-1}(M,E)
\end{align*}
vanishes when $p+q>n$. The same result holds on a negative line bundle under the change of the condition that $p+q<n$.
\end{corollary}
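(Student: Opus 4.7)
The plan is to reduce the vanishing of the kernel of the Dolbeault operator $D := \LieagbdPartialbar + \LieagbdPartialbar^{*}$ to the vanishing of the kernel of the $\LieagbdPartialbar$-Laplacian, after which the preceding Lie algebroid Kodaira--Nakano vanishing theorem applies directly. The key observation is that although $D$ does not preserve bidegree, for $\phi \in \Apq{p}{q}(M,E)$ the two summands $\LieagbdPartialbar\phi \in \Apq{p}{q+1}(M,E)$ and $\LieagbdPartialbar^{*}\phi \in \Apq{p}{q-1}(M,E)$ land in distinct components of the direct-sum codomain, so the single equation $D\phi = 0$ decouples into $\LieagbdPartialbar\phi = 0$ and $\LieagbdPartialbar^{*}\phi = 0$ separately.

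First I would record the algebraic identity $D^{2} = \LieagbdLpls{}{\LieagbdPartialbar}$, which follows by expanding $(\LieagbdPartialbar + \LieagbdPartialbar^{*})^{2}$ and using $\LieagbdPartialbar^{2} = 0$ together with $(\LieagbdPartialbar^{*})^{2} = 0$. The cross terms are exactly $\LieagbdPartialbar\LieagbdPartialbar^{*} + \LieagbdPartialbar^{*}\LieagbdPartialbar = \LieagbdLpls{}{\LieagbdPartialbar}$. Next, if $\phi \in \Apq{p}{q}(M,E)$ satisfies $D\phi = 0$, then applying $D$ once more yields $\LieagbdLpls{}{\LieagbdPartialbar}\phi = D^{2}\phi = 0$. (Equivalently, by the bidegree decoupling noted above, both $\LieagbdPartialbar\phi$ and $\LieagbdPartialbar^{*}\phi$ vanish individually, and the Laplacian vanishes on $\phi$ term by term.)

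Under the assumption that $E$ is positive and $p+q > n$, the Lie algebroid Kodaira--Nakano vanishing theorem forces $\phi = 0$, which establishes triviality of the kernel of $D$ on $\Apq{p}{q}(M,E)$. For the negative line bundle case, the reduction $D^{2} = \LieagbdLpls{}{\LieagbdPartialbar}$ applies verbatim, and one invokes the second assertion of the theorem under the condition $p + q < n$.

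The argument has essentially no technical obstacle beyond what has already been built: the corollary is a purely formal algebraic consequence of the fact that $\LieagbdPartialbar$ and its formal adjoint square to zero and shift the anti-holomorphic degree by $\pm 1$. The only point requiring mild care is to verify that the codomain is taken as the genuine direct sum $\Apq{p}{q+1}(M,E) \oplus \Apq{p}{q-1}(M,E)$ so that $D\phi = 0$ really does split into two independent vanishing conditions, rather than permitting cancellation between components of different bidegree.
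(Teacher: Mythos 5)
Your proposal is correct and follows essentially the same route as the paper: both rest on the identity $(\LieagbdPartialbar+\LieagbdPartialbar^{*})^{2}=\LieagbdLpls{}{\LieagbdPartialbar}$ to get $\ker(\LieagbdPartialbar+\LieagbdPartialbar^{*})\subset\ker(\LieagbdLpls{}{\LieagbdPartialbar})$ and then invoke the Kodaira--Nakano vanishing theorem. The paper additionally proves the reverse inclusion via $0=\pinnerprod{\LieagbdLpls{}{\LieagbdPartialbar}s}{s}{}=\norm{(\LieagbdPartialbar+\LieagbdPartialbar^{*})s}{}^{2}$, but that direction is not needed for the corollary, so your streamlined version suffices.
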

\begin{proof}
The Laplace operator $
\LieagbdLpls{}{\LieagbdPartialbar}
=
\LieagbdPartialbar^{*}\LieagbdPartialbar
+
\LieagbdPartialbar\LieagbdPartialbar^{*}
=
(\LieagbdPartialbar+\LieagbdPartialbar^{*})^{2}
$, so 
\begin{align*}
    \ker(\LieagbdPartialbar+\LieagbdPartialbar^{*})
    \subset
    \ker(\LieagbdLpls{}{\LieagbdPartialbar})
\end{align*}
is clear. On the other side, if $s\in\ker(\LieagbdLpls{}{\LieagbdPartialbar})$, then
\begin{align*}
    0=
    \pinnerprod
        {\LieagbdLpls{}{\LieagbdPartialbar}s}
        {s}
        {\Apq{p}{q}}
    &=
    \pinnerprod
        {(\LieagbdPartialbar+\LieagbdPartialbar^{*})^{2} s}
        {t}
        {\Apq{p}{q}}
    \\
    &=
    \pinnerprod
        {(\LieagbdPartialbar+\LieagbdPartialbar^{*}) s}
        {(\LieagbdPartialbar+\LieagbdPartialbar^{*}) s}
        {\Apq{p}{q+1}(M,E)
        \oplus
        \Apq{p}{q-1}(M,E)}
    \\
    &=
    ||(\LieagbdPartialbar+\LieagbdPartialbar^{*}) s||^{2}
    ,
\end{align*}
which implies that $s\in\ker(\LieagbdPartialbar+\LieagbdPartialbar^{*})$ as well, thereby completing the proof of Corollary \ref{cor-vanishign theorem for Dolbeault}.
\end{proof}


\subsection{Application of a b-manifold}
\label{sec: app of b manifold}

The notion of b-manifold was introduced by Melrose \cite{M1993} as the framework for studying differential calculus and differential operators on the manifold with boundary. However, our vanishing theorem is based on manifolds without boundary. Thus, our notation of b-manifold follows \cite{GMP2014, GZ2020}, making it a manifold with a distinguished hypersurface.

\begin{definition}
A \keyword{b-manifold} is a pair $(M,Z)$ consisting of a manifold $M$ and a codimension-one submanifold $Z\subset M$. The Serre–Swan theorem assumes that there is a vector bundle $\btgbd{M}\to M$ such that the space of its sections is equal to the space of vector fields on $M$ that are tangent to $Z$. We denote $\btgbd{M}$ the \keyword{b-tangent bundle}.
\end{definition}

The space of vector fields which are tangent to $Z$ is a locally free $C^{\infty}(M)$-module with generators $\{ x_1\ddx{x_1},\ddx{x_2},\cdots,\ddx{x_n} \}$ in a coordinate chart $(x_1,\cdots,x_n)$ that is adapted to $Z=\{ x_1=0 \}$. The natural inclusion $\secsp{M}{\btgbd{M}}\xhookrightarrow{}\secsp{M}{TM}$ induces a vector bundle morphism $\rho:\btgbd{M}\to\tangentbd{M}$, which is an isomorphism on $\btgbd{M}|_{M\backslash Z}$ and an epimorphism on $\btgbd{M}|_{Z}$. Using the vector bundle morphism $\rho$, we can construct a local frame $X_1,\cdots,X_n$ of $\btgbd{M}$ near the coordinate chart adapted to $Z$ which is determined by
\begin{align}
    \rho(X_1)=x_1 \ddx{x_1}
    \qquad
    \rho(X_2)=\ddx{x_2}
    \qquad\cdots\qquad
    \rho(X_n)=\ddx{x_n}.
\label{eq-b manifold-list of rho for the frame on the neighbourhood of Z}
\end{align}

The space of sections of $\btgbd{M}$ is also equipped with a Lie bracket inheriting from the Lie bracket of vector fields on $M$. To describe it more precisely, the Lie bracket on $\btgbd{M}|_{M\backslash Z}$ is entirely determined by the Lie bracket of vector fields on $\tangentbd{M}|_{M\backslash Z}$ through the isomorphism $\rho:\btgbd{M}|_{M\backslash Z}\to\tangentbd{M}|_{M\backslash Z}$, and the neighborhood containing $Z$ is determined through (\ref{eq-b manifold-list of rho for the frame on the neighbourhood of Z}). Then, $(\btgbd{M},\rho,[,])$ forms a Lie algebroid.

The complex b-manifold is introduced by Mendoza \cite{M2013}. Its definition is as follows.
\begin{definition}
Let $(M,Z)$ be a b-manifold of even real dimension. An \keyword{almost CR b-structure} on b-manifold is a subbundle $\mathcal{W}$ of the complexification $\btgbdC{M}$ such that $\mathcal{W}\cap\overline{\mathcal{W}}=0$, where $\mathcal{W}$ is the conjugation of $\mathcal{W}$. The "almost" can be dropped if $\overline{\mathcal{W}}$ is involutive, i.e., $[s_1,s_2] \subset \secsp{M}{\overline{\mathcal{W}}}$ for any $s_1,s_2\in\secsp{M}{\overline{\mathcal{W}}}$.
\end{definition}

An involutive almost CR b-structure on $\btgbd{M}$ is equivalent to the existence of an almost complex structure $J$ on $\btgbd{M}$ whose Nijenhuis tensor vanishes. Therefore, the b-tangent bundle $\btgbd{M}$ is a complex Lie algebroid. The vector bundle morphism $\rho:\btgbd{M}\to\tangentbd{M}$ can be extended to an anchor map $\rho:\btgbd{M}\to\tangentbd{M}^{\mathbb{C}}$ through the natural inclusion $\tangentbd{M}\subset\tangentbd{M}^{\mathbb{C}}$.

Many notations in the b-category agree with the corresponding notations in the Lie algebroid category. Let $\g=\btgbd{M}$. The subbundles $\mathcal{W}$ and $\overline{\mathcal{W}}$ are denoted by $\btgbdpq{M}{1}{0}$ and $\btgbdpq{M}{0}{1}$, which are equal to $\g^{1,0}$ and $\g^{0,1}$ in the Lie algebroid category. Their dual bundles $\bcotgbd{M}^{1,0}$ and $\bcotgbd{M}^{0,1}$ are equal to ${\gstar}^{1,0}$ and ${\gstar}^{0,1}$. B-forms are sections of exterior power of $\btgbd{M}$. The set of k-degree b-forms is denoted by $\bAk{k}(M)$, where $\bAk{k}(M)=\Ak{k}(M)$. A similar definition for the space of sections of $(p,q)$-b-forms is $\bApq{p}{q}(M)$, where $\bApq{p}{q}(M)=\Apq{p}{q}(M)$.

The definition of the b-differential, $\bDiff:\bAk{k}(M)\to\bAk{k+1}(M)$, agrees with the Lie algebroid differential. Thus, the splitting $\bDiff=\bPartial+\bPartialbar$ on $(p,q)$-b-forms also holds where
\begin{align*}
    \bPartial:\bApq{p}{q}(M)\to\bApq{p+1}{q}(M)
    \qquad\text{and}\qquad
    \bPartialbar:\bApq{p}{q}(M)\to\bApq{p}{q+1}(M).
\end{align*}

A Hermitian metric on $\btgbd{M}$ has a canonical $(1,1)$-b-form $\omega$ which is obtained by identifying $\Apq{1}{1}(M)$ and $\bApq{1}{1}(M)$. The definition of a K\"{a}hler b-manifold is similar to the classical case and the Lie algebroid case, which is stated as follows.
\begin{definition}
A complex b-tangent bundle $\btgbd{M}$ is \keyword{K\"{a}hler} if it carries a Hermitian metric whose associated canonical $(1,1)$-b-form $\omega\in\bApq{1}{1}(M)$ is closed. We call $\omega$ a \keyword{K\"{a}hler b-form}.
\end{definition}

The connection in b-category 
\begin{align*}
    \bConn{}:\secsp{M}{E}\to\bAk{1}(M,E)
\end{align*}
also agrees with the Lie algebroid connection. Mendoza used a b-connection to define a holomorphic vector bundle in b-category. This definition is equivalent to the definition that transition matrices of vector bundles are $\bPartialbar$-holomorphic. Since the connections in Lie algebroid category and b-category are equivalent, we denote the Lie algebroid Chern connection by the \keyword{b-Chern connection} in the b-category. Using the b-Chern connection, we define a positive line bundle on a b-manifold as follows.
\begin{definition}
    A b-holomorphic line bundle $E\to M$ is \keyword{positive} if there is a Hermitian metric on $E$ such that the curvature $\Theta$ of the corresponding  b-Chern connection satisfying that $\sqrt{-1}\Omega$ is positive definite.
\end{definition} 

We now apply the Lie algebroid Kodaira–Nakano vanishing theorem to the positive line bundle over a K\"{a}hler b-manifold. Using Corollary \ref{cor-vanishign theorem for Dolbeault}, we obtain the following result.
\begin{proposition}
Let $(M,Z)$ be a K\"{a}hler b-manifold and $E\to M$ be a positive line bundle. Then, the kernel of $\bPartialbar$-Laplace
\begin{align*}
    \Delta_{\bPartialbar}:=
    \bPartialbar\,\bPartialbar^{*}+\bPartialbar^{*}\,\bPartialbar
    :\bApq{p}{q}(M,E)\to\bApq{p}{q}
    (M,E)
\end{align*}
vanishes when $p+q>n$.
\end{proposition}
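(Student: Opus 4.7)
The plan is to reduce the statement to the Lie algebroid Kodaira--Nakano vanishing theorem already established in Section~\ref{section-Lie algebroid Kodaira vanishing theorem}. The entire infrastructure needed for this reduction is laid out in the paragraphs preceding the proposition, namely the identification $\g=\btgbd{M}$ as a complex Lie algebroid and the term-by-term equivalence $\bApq{p}{q}(M,E)=\Apq{p}{q}(M,E)$, $\bPartialbar=\LieagbdPartialbar$, $\bConn{}=\LieagbdConn{}{}$, and $\Delta_{\bPartialbar}=\LieagbdLpls{}{\LieagbdPartialbar}$. So the real content of the proof is organizing these identifications and then invoking the theorem.

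First, I would fix a Hermitian metric on $E$ realizing positivity in the b-category sense, and observe that under the identification of the b-Chern connection with the Lie algebroid Chern connection its curvature $\Theta\in\bApq{1}{1}(M,\End(E))=\Apq{1}{1}(M,\End(E))$ is exactly the Lie algebroid Chern curvature $R(\LieagbdChernConn{}{})$. Positivity of $\sqrt{-1}\Theta$ in the b-category is then by definition positivity of $E\to M$ as a Lie algebroid holomorphic line bundle over the Kähler Lie algebroid $(M,\g,\omega)$, where $\omega$ is the Kähler b-form viewed as a Kähler Lie algebroid form.

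Next, I would invoke the Lie algebroid Kodaira--Nakano vanishing theorem in Section~\ref{section-Lie algebroid Kodaira vanishing theorem}, which, under the assumption $p+q>n$, gives that the kernel of
\begin{equation*}
    \LieagbdLpls{}{\LieagbdPartialbar}\colon \Apq{p}{q}(M,E)\to\Apq{p}{q}(M,E)
\end{equation*}
is trivial. Translating back along the category equivalence (which in particular identifies adjoints, since the Hermitian pairing, the transversal density $\Omega$, and the volume form $\omega^n/n!$ all agree on the two sides), this is exactly the statement that $\ker\Delta_{\bPartialbar}=0$ on $\bApq{p}{q}(M,E)$ when $p+q>n$.

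I do not anticipate a genuine obstacle here; the only point that warrants care is checking that the formal adjoint $\bPartialbar^{*}$, and hence $\Delta_{\bPartialbar}$, truly coincides with $\LieagbdPartialbar^{*}$ and $\LieagbdLpls{}{\LieagbdPartialbar}$ under the identification. This amounts to observing that the inner product on $\bApq{p}{q}(M,E)$ is defined via the same pairing $\langle\cdot,\cdot\rangle_h$ together with the canonical Lie algebroid volume form $\langle \omega^n/n!,\Omega\rangle$, so the adjoint constructions agree on the nose. With that verified the proof is complete by direct appeal to the Lie algebroid theorem.
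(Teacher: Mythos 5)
Your proposal is correct and follows essentially the same route as the paper: the paper likewise obtains this proposition by identifying the b-category objects ($\bApq{p}{q}$, $\bPartialbar$, the b-Chern connection, $\Delta_{\bPartialbar}$) with their Lie algebroid counterparts and then citing the Lie algebroid Kodaira--Nakano vanishing theorem. The only cosmetic difference is that the paper phrases the deduction via Corollary~\ref{cor-vanishign theorem for Dolbeault}, whereas you appeal to the vanishing theorem directly, which is equally valid for the statement as given.
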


Before finishing this section, we give an example of a K\"{a}hler b-manifold. Let $M$ be the Riemann sphere
\begin{align*}
    \CPk{1}=
    \big\{
        [z_0:z_1]
        \big|
        \, (z_0,z_1)\neq(0,0)
    \big\}.
\end{align*}
Let 
$
    U_0=\big\{ [z_0:1]\big\}
$ and $
    U_1=\big\{ [1:z_1]\big\}
$
be the Riemann sphere without the south and north poles, which naturally carry the coordinate charts maps
$
[z_0:1]\mapsto z_0
$ and 
$
[1:z_1]\mapsto z_1
$. Then, $U_0$ and $U_1$ form a collection of coordinate charts covering $\CPk{1}$ and their transition function on the overlap is $\phi_{0,1}:z_0\mapsto\frac{1}{z_0}$. Let $Z$ be the equator of the Riemann sphere, whose precise definition is the unit circle in the coordinate charts of $U_0$ and $U_1$.

We look at the b-tangent bundle $\btgbd{\CPk{1}}$. To describe it precisely, we need to rewrite the coordinate charts $U_0$ and $U_1$ into polar coordinates. For $z=r e^{\sqrt{-1}\theta}\neq 0$, its polar coordinate is $(r,\theta)$. Thus, we redefine the coordinate charts on the Riemann sphere without the north and south poles as follows:
\begin{align*}
    U_0\backslash\{\text{south pole}\}
    &\to
    [0,\infty)\times [0,2\pi)
    \\
    [z_0=r_0 e^{\sqrt{-1}\theta_0}:1]
    &\mapsto
    (r_0,\theta_0)
\end{align*}
and
\begin{align*}
    U_1\backslash\{\text{north pole}\}
    &\to
    [0,\infty)\times [0,2\pi)
    \\
    [1:z_1=r_1 e^{\sqrt{-1}\theta_1}]
    &\mapsto
    (r_1,\theta_1)
    .
\end{align*}
The coordinate transformation is: 
\begin{align*}
    \phi_{0.1}:
    (r_0,\theta_0)
    \mapsto
    (\frac{1}{r_0},-\theta_0).
\end{align*}
These coordinates naturally induce $\{\ddx{r_0},\ddx{\theta_0}\}$ and $\{\ddx{r_1},\ddx{\theta_1}\}$ as the frame of tangent bundle over $U_0\backslash\{\text{south pole}\}$ and $U_1\backslash\{\text{north pole}\}$.

We define a smooth function $f(r)$ on $(0,\infty)$ with constant value 1 on $(0,\frac{1}{2}]$ and $[2,\infty)$, and 0 at $r=1$. Moreover, we require $f(r)=f(\frac{1}{r})$. Using $f(r)$, we define $\btgbd{\CPk{1}}$ on $U_0\backslash\{\text{north pole}\}$ as the span of the frame $f(r)\ddx{r_0}$ and $\ddx{\theta_0}$, and use $R_0$ and $\Theta_0$ to denote these frames with the anchor maps
\begin{align*}
    \rho(R_0)=f(r_0)\ddx{r_0}
    \qquad\text{and}\qquad
    \rho(\Theta_0)=\ddx{\theta_0}.
\end{align*}
The definition of $\btgbd{\CPk{1}}$ on $U_1\backslash\{\text{north pole}\}$ is similar as it is spanned by the frame $R_1$ and $\Theta_1$, where $\rho(R_1)=f(r_1)\ddx{r_1}$
and
$\rho(\Theta_1)=\ddx{\theta_1}$. The assumption that $f(r)=f(\frac{1}{r})$ means that $R_0,\Theta_0$ and $R_1\Theta_1$ are well defined.

The complex structure of $\btgbd{\CPk{1}}$ is formally the same as the tangent bundle of $\CPk{1}$. The almost complex structure on $\tangentbd{\CPk{1}}$ is
\begin{align*}
    J
    \big(
        \ddx{r_0}
        \bigg|_{(r_0,\theta_0)}
    \big)
    =
    \frac{1}{r_0}
    \ddx{\theta_0}
    \bigg|_{(r_0,\theta_0)}
    \qquad\text{and}\qquad
    J
    \big(
        \ddx{\theta_0}
        \bigg|_{(r_0,\theta_0)}
    \big)
    =
    -r_0
    \ddx{r_0}
    \bigg|_{(r_0,\theta_0)}
\end{align*}
on $U_0\backslash\{\text{south pole}\}$ and the same on $U_1\backslash\{\text{north pole}\}$. Following this idea, we define the almost complex structure of $\btgbd{\CPk{1}}$, which is 
\begin{align*}
    J
    \big(
        R_0
        \big|_{(r_0,\theta_0)}
    \big)
    =
    \frac{1}{r_0}
    \Theta_0
    \big|_{(r_0,\theta_0)}
    \qquad\text{and}\qquad
    J
    \big(
        \Theta_0
        \big|_{(r_0,\theta_0)}
    \big)
    =
    -r_0
    R_0
    \big|_{(r_0,\theta_0)}
\end{align*}
on $U_0\backslash\{\text{south pole}\}$ and the same on $U_1\backslash\{\text{north pole}\}$. We can easily check that $J$ is well defined under the assumption that $f(r)=f(\frac{1}{r})$.

Recall the Fubini–Study form on $\CPk{1}$, which is equal to
\begin{align*}
    \frac{\sqrt{-1}}{2}
    \frac{1}{1+z_0 \bar{z_0}}
    dz_0\wedge d \bar{z_0}
    =
    \frac{1}{(1+r_{0}^{2})^2}
    d\theta_0 \wedge dr_0
\end{align*}
on the coordinate chart $U_0$ and the same on the coordinate $U_1$. To generalize this to $\btgbd{\CPk{1}}$, we first define the frame $R_{0}^*,\Theta_{0}^*$ for the dual bundle $\btgbd{\CPk{1}}^{*}$ on $U_0\backslash\{\text{south pole}\}$ by setting 
\begin{align*}
    R_{0}^*(R_{0})=1,
    \quad\quad
    R_{0}^*(\Theta_{0})=0,
    \quad\quad
    \Theta_{0}^*(R_{0})=0,
    \quad\text{and}\quad
    \Theta_{0}^*(\Theta_{0})=1.
\end{align*}
The same definition is applied to $R_{1}^*,\Theta_{1}^*$ on $U_1\backslash\{\text{north pole}\}$. Then, we construct a b-tangent K\"{a}hler form $\bomega$  which is equal to
\begin{align*}
    \frac{1}{(1+r_{0}^{2})^2}
    \Theta_{0}^* \wedge R_{0}^*
\end{align*}
on $U_0\backslash\{\text{south pole}\}$ and the same on $U_1\backslash\{\text{north pole}\}$. It is easy to check that this is well defined: it is a closed form since it is a top degree form.

Everything we defined above is naturally extended to the two poles since $R_0,\Theta_0$, and $R_1,\Theta_1$ are in total agreement with $\ddx{r_0},\ddx{\theta_0}$ and $\ddx{r_1},\ddx{\theta_1}$ on the region that the radial coordinate is in: $(0,\frac{1}{2})$ and $(2,\infty)$.


\section{Appendix I}
\label{sec:very long proof for Lie algebroid Kahler identity}
\begin{lemma}
\label{Lemma-Kahler identity on Lie algebroid-the second lemma}
The equations
\begin{align}
    [\LieagbdPartialbar,\iotaW]
    (\basexi{\alpha}\wedge\basexi{\indexI}\wedge\basedualxi{\indexJp})
    =
    \sqrt{-1}\LieagbdPartial^{*}
    (\basexi{\alpha}\wedge\basexi{\indexI}\wedge\basedualxi{\indexJp})
\label{eq-Kahler identity on Lie algebroid-the second lemma-target 1, main}
\end{align}  
and 
\begin{align}
    [\LieagbdPartialbar,\iotaW]
    (\basedualxi{\alpha}\wedge\basexi{\indexI}\wedge\basedualxi{\indexJp})
    =
    \sqrt{-1}\LieagbdPartial^{*}
    (\basedualxi{\alpha}\wedge\basexi{\indexI}\wedge\basedualxi{\indexJp})
\label{eq-Kahler identity on Lie algebroid-the second lemma-target 2, not main}
\end{align}
hold.
\end{lemma}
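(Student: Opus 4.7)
The plan is to prove \eqref{eq-Kahler identity on Lie algebroid-the second lemma-target 1, main} and \eqref{eq-Kahler identity on Lie algebroid-the second lemma-target 2, not main} along the same lines as the base case in Lemma \ref{Lemma-Kahler identity on Lie algebroid-the first lemma}, now bootstrapped by the inductive hypothesis applied to $\eta:=\basexi{\indexI}\wedge\basedualxi{\indexJp}$. Writing $\zeta$ for either $\basexi{\alpha}$ or $\basedualxi{\alpha}$, I would first establish the Leibniz-type expansions
\begin{align*}
    \LieagbdPartialbar(\zeta\wedge\eta)
    &=(\LieagbdPartialbar\zeta)\wedge\eta-\zeta\wedge\LieagbdPartialbar\eta,
    \\
    \iotaW(\basexi{\alpha}\wedge\eta)
    &=\basexi{\alpha}\wedge\iotaW\eta-\sqrt{-1}\,\contract{\basedualxi{\alpha^{'}}}\eta,
    \\
    \iotaW(\basedualxi{\alpha}\wedge\eta)
    &=\basedualxi{\alpha}\wedge\iotaW\eta+\sqrt{-1}\,\contract{\basexi{\alpha}}\eta,
\end{align*}
the last two of which follow from a direct computation using the antiderivation property of each $\contract{}$. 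Expanding $\LieagbdPartialbar\,\iotaW$ and $\iotaW\,\LieagbdPartialbar$ on $\zeta\wedge\eta$ with these identities (together with the analogous Leibniz rule for $\iotaW$ acting on the $(1,1)$-form $(\LieagbdPartialbar\zeta)\wedge\eta$), the commutator $[\LieagbdPartialbar,\iotaW](\zeta\wedge\eta)$ splits as $\pm\,\zeta\wedge[\LieagbdPartialbar,\iotaW]\eta$ plus boundary terms in which $\LieagbdPartialbar\zeta$ appears wedged against various contractions of $\eta$. The inductive hypothesis turns the principal piece into $\pm\sqrt{-1}\,\zeta\wedge\LieagbdPartial^{*}\eta$.

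For the right-hand side, I would use $\sqrt{-1}\LieagbdPartial^{*}=-\sqrt{-1}(-1)^{n^{2}}\star\,\LieagbdPartialbar\,\star$, first applying the local expression of $\star$ from Proposition \ref{prop: a list of prop for Hodge star and Lie algebroid diff} to $\zeta\wedge\eta$ and then differentiating term by term. The $\LieagbdPartialbar$ contributions partition naturally into two groups: those that hit a frame indexed by the complement of $\{\alpha\}\cup\indexI\cup\indexJp$, which after $\star$ is applied again recombine into $\pm\sqrt{-1}\,\zeta\wedge\LieagbdPartial^{*}\eta$, and those that hit a frame associated with the index $\alpha$ itself, which yield the right-hand boundary contribution. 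After cancelling the principal pieces on both sides, the lemma reduces to a purely algebraic matching of the two sets of boundary terms.

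The main obstacle is this matching, which is structurally the same bookkeeping performed in Lemma \ref{Lemma-Kahler identity on Lie algebroid-the first lemma} but now carried out against the larger index data $\indexI,\indexJp$. After organizing each side by basis monomial and simplifying the accumulated signs of permutation using the identities from Section \ref{sec:preliminaries}, the required coefficient equality on each monomial reduces precisely to the closedness constraints
\[
    \coefB{k}{ij}-\coefB{j}{ik}=\sgn{\order{j,k}}{j,k}\coefD{i}{\order{j,k}},
    \qquad
    \sgn{\order{j,k}}{j,k}\coefA{i}{\order{j,k}}=\coefC{j}{ki}-\coefC{k}{ji}
\]
of Lemma \ref{lemma-Kahler Lie algebroid-trick for ABCD}: the first suffices for \eqref{eq-Kahler identity on Lie algebroid-the second lemma-target 1, main}, while both enter in \eqref{eq-Kahler identity on Lie algebroid-the second lemma-target 2, not main} via the conjugation relations $\overline{\coefA{k}{ij}}=\coefD{k}{ij}$ and $\overline{\coefB{k}{ij}}=-\coefC{k}{ji}$ that appear when $\basedualxi{\alpha}$ is the inserted frame. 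Because the verification is long but mechanical once the Leibniz decomposition and the $\star$-splitting are in hand, the detailed computation occupies the remainder of this appendix.
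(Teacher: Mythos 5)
Your proposal follows essentially the same route as the paper: induction on $|\indexI|+|\indexJp|$, a Leibniz splitting of $[\LieagbdPartialbar,\iotaW](\zeta\wedge\eta)$ into $\pm\,\zeta\wedge[\LieagbdPartialbar,\iotaW]\eta$ plus correction terms (your two $\iotaW$ identities are exactly what the paper derives, and they produce the anticommutator term $-\sqrt{-1}\{\LieagbdPartialbar,\contract{\basedualxi{\alpha^{'}}}\}\eta$ alongside the $\LieagbdPartialbar\zeta$-against-contractions terms), a parallel $\star\,\LieagbdPartialbar\,\star$ decomposition of the right-hand side into a principal piece plus $\alpha$-boundary terms, and a monomial-by-monomial matching that reduces to the closedness constraints of Lemma \ref{lemma-Kahler Lie algebroid-trick for ABCD}. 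The only cosmetic difference is that the paper disposes of \eqref{eq-Kahler identity on Lie algebroid-the second lemma-target 2, not main} by conjugating \eqref{eq-Kahler identity on Lie algebroid-the second lemma-target 1, main} rather than rerunning the argument with $\zeta=\basedualxi{\alpha}$ and the $A$--$C$ constraint, as you propose.
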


It is enough to verify (\ref{eq-Kahler identity on Lie algebroid-the second lemma-target 1, main}) since (\ref{eq-Kahler identity on Lie algebroid-the second lemma-target 2, not main}) is equal to the conjugation of (\ref{eq-Kahler identity on Lie algebroid-the second lemma-target 1, main}). We prove the lemma by induction. Our proof is based on the assumption that the equation
\begin{align*}
    [\LieagbdPartialbar,\iotaW]
    (\basexi{\indexI}\wedge\basedualxi{\indexJp})
    =
    \sqrt{-1}\LieagbdPartial^{*}
    (\basexi{\indexI}\wedge\basedualxi{\indexJp})
\end{align*}
holds from the induction condition. We separate the proof of Lemma \ref{Lemma-Kahler identity on Lie algebroid-the second lemma} into four parts:
\begin{enumerate}
    \item Calculate $
    [\LieagbdPartialbar,\iotaW]
    (\basexi{\alpha}\wedge\basexi{\indexI}\wedge\basedualxi{\indexJp})
    $ in Section \ref{subsection-For (alpha, I, J), calculation for LHS}.

    \item Calculate $
    (\sqrt{-1}\LieagbdPartial^{*}\basedualxi{\alpha})
    \wedge\basexi{\indexI}\wedge\basedualxi{\indexJp}
    $ in Section \ref{subsection-For (alpha), I, J, calculation for RHS}.

    \item Calculate $
    \basedualxi{\alpha}\wedge\sqrt{-1}\LieagbdPartial^{*}(\basexi{\indexI}\wedge\basedualxi{\indexJp})
    $ in Section \ref{subsection-For alpha, (I, J), calculation for RHS}.

    \item Calculate $
    \sqrt{-1}\LieagbdPartial^{*}(\basexi{\alpha}\wedge\basexi{\indexI}\wedge\basedualxi{\indexJp})
    $ in Section \ref{subsection-For (alpha, I, J), calculation for RHS}, where we use the expression of $
    (\sqrt{-1}\LieagbdPartial^{*}\basedualxi{\alpha})
    \wedge\basexi{\indexI}\wedge\basedualxi{\indexJp}
    $ and $
    \basedualxi{\alpha}\wedge\sqrt{-1}\LieagbdPartial^{*}(\basexi{\indexI}\wedge\basedualxi{\indexJp})
    $.
    
    \item Prove $
    [\LieagbdPartialbar,\iotaW]
    (\basexi{\alpha}\wedge\basexi{\indexI}\wedge\basedualxi{\indexJp})
    =
    \sqrt{-1}\LieagbdPartial^{*}
    (\basexi{\alpha}\wedge\basexi{\indexI}\wedge\basedualxi{\indexJp})
    $ in Section \ref{subsection-prove LHS is equal to RHS} by checking that the left-hand side and the right-hand side agree.
\end{enumerate}

\subsection{
        Calculation of \texorpdfstring{$
        [\LieagbdPartialbar,\iotaW]
        (\basexi{\alpha}\wedge\basexi{\indexI}\wedge\basedualxi{\indexJp})
        $}{}
    }
\label{subsection-For (alpha, I, J), calculation for LHS}
In this subsection, our goal is to express $
    [\LieagbdPartialbar,\iotaW]
    (\basexi{\alpha}\wedge\basexi{\indexI}\wedge\basedualxi{\indexJp})
$. The proof comprises the following steps.
\begin{enumerate}
    \item Calculate $[\LieagbdPartialbar,\iotaW]
    (\basexi{\alpha}\wedge\basexi{\indexI}\wedge\basedualxi{\indexJp})
    $ in Lemma \ref{lemma-For (alpha, I, J), calculation for LHS-initial resutl]}. This is not the final version of the expression as the next two steps continue computing two terms in the result of Lemma \ref{lemma-For (alpha, I, J), calculation for LHS-initial resutl]}.

    \item Calculate $
    (-\sqrt{-1})
    (\LieagbdPartialbar \, \iota_{\basedualxi{\alpha^{'}}}
    +
    \iota_{\basedualxi{\alpha^{'}}}\,\LieagbdPartialbar)
    (\basexi{\indexI}\wedge\basedualxi{\indexJp})
    $ in Lemma \ref{lemma-For (alpha, I, J), calculation for LHS-extra 1]}.

    \item Calculate $
    (-\frac{\sqrt{-1}}{2})
    \displaystyle\sum_{h=1}^{n}
    \big(
        (\iota_{\basexi{h}} \, \LieagbdPartialbar \basexi{\alpha})
        \wedge
        \iota_{\basedualxi{h^{'}}}(\basexi{\indexI}\wedge\basedualxi{\indexJp})
        -
        (\iota_{\basedualxi{h^{'}}} \, \LieagbdPartialbar \basexi{\alpha})
        \wedge
        \iota_{\basexi{h}}(\basexi{\indexI}\wedge\basedualxi{\indexJp})
    \big)
    $ in Lemma \ref{lemma-For (alpha, I, J), calculation for LHS-extra 2]}.

    \item Summarize Steps 1–3 and give the final version of the expression of $
    [\LieagbdPartialbar,\iotaW]
    (\basexi{\alpha}\wedge\basexi{\indexI}\wedge\basedualxi{\indexJp})
    $ in Lemma \ref{lemma-the final version of the LHS}.
\end{enumerate}

\begin{lemma}
\label{lemma-For (alpha, I, J), calculation for LHS-initial resutl]}
    The expression of $
    [\LieagbdPartialbar,\iotaW]
    (\basexi{\alpha}\wedge\basexi{\indexI}\wedge\basedualxi{\indexJp})
    $ is    
\begin{align}
    \begin{pmatrix*}[l]
        [\LieagbdPartialbar,\iotaW]
        (\basexi{\alpha})
        \wedge
        \basexi{\indexI}\wedge\basedualxi{\indexJp}
        \\
        -
        \\
        \basexi{\alpha}
        \wedge
        [\LieagbdPartialbar,\iotaW]
        (\basexi{\indexI}\wedge\basedualxi{\indexJp})
        \\
        +
        \\
        (-\sqrt{-1})
        (\LieagbdPartialbar \, \iota_{\basedualxi{\alpha^{'}}}
        +
        \iota_{\basedualxi{\alpha^{'}}}\,\LieagbdPartialbar)
        (\basexi{\indexI}\wedge\basedualxi{\indexJp})
        \\
        +
        \\
        (-\frac{\sqrt{-1}}{2})
        \displaystyle\sum_{h=1}^{n}
        \big(
            (\iota_{\basexi{h}} \, \LieagbdPartialbar \basexi{\alpha})
            \wedge
            \iota_{\basedualxi{h^{'}}}(\basexi{\indexI}\wedge\basedualxi{\indexJp})
            -
            (\iota_{\basedualxi{h^{'}}} \, \LieagbdPartialbar \basexi{\alpha})
            \wedge
            \iota_{\basexi{h}}(\basexi{\indexI}\wedge\basedualxi{\indexJp})
        \big)
    \end{pmatrix*}
.
\label{EQ-For (alpha, I, J), calculation for LHS-half summary}
\end{align}
\end{lemma}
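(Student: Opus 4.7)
The plan is a direct computation using the graded Leibniz rules for $\LieagbdPartialbar$ and $\iotaW$. The operator $\LieagbdPartialbar$ is a graded derivation of degree $+1$, while the local formula $\iotaW = -\tfrac{\sqrt{-1}}{2}\sum_{h}\iota_{\basedualxi{h^{'}}}\iota_{\basexi{h}}$ expresses $\iotaW$ as a composite of two antiderivations of degree $-1$. Writing $\eta := \basexi{\indexI}\wedge\basedualxi{\indexJp}$, I would first establish the elementary identity
\begin{align*}
    \iotaW(\basexi{\alpha}\wedge\eta)
    =
    -\sqrt{-1}\,\iota_{\basedualxi{\alpha^{'}}}\eta
    +
    \basexi{\alpha}\wedge\iotaW\eta,
\end{align*}
which follows from the antiderivation property of the single contractions together with $\iota_{\basexi{h}}\basexi{\alpha}=2\delta_{h,\alpha}$ and $\iota_{\basedualxi{h^{'}}}\basexi{\alpha}=0$.

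Applying $\LieagbdPartialbar$ to this identity and using its Leibniz rule gives an expansion of $\LieagbdPartialbar\,\iotaW(\basexi{\alpha}\wedge\eta)$ in four terms. For the other half of the commutator, I would expand
\begin{align*}
    \iotaW\,\LieagbdPartialbar(\basexi{\alpha}\wedge\eta)
    =
    \iotaW(\LieagbdPartialbar\basexi{\alpha}\wedge\eta)
    -
    \iotaW(\basexi{\alpha}\wedge\LieagbdPartialbar\eta),
\end{align*}
where the second summand is handled by the elementary identity above. For the first summand, $\LieagbdPartialbar\basexi{\alpha}$ is a $2$-form, so pushing the two contractions across $\LieagbdPartialbar\basexi{\alpha}\wedge\eta$ produces the ``internal'' pieces $\iotaW(\LieagbdPartialbar\basexi{\alpha})\wedge\eta + \LieagbdPartialbar\basexi{\alpha}\wedge\iotaW\eta$ together with precisely the cross terms $\tfrac{\sqrt{-1}}{2}\sum_{h}\bigl[\iota_{\basexi{h}}\LieagbdPartialbar\basexi{\alpha}\wedge\iota_{\basedualxi{h^{'}}}\eta - \iota_{\basedualxi{h^{'}}}\LieagbdPartialbar\basexi{\alpha}\wedge\iota_{\basexi{h}}\eta\bigr]$ that ultimately appear in the last summand of the claim.

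Subtracting the two expansions, the occurrences of $\LieagbdPartialbar\basexi{\alpha}\wedge\iotaW\eta$ cancel, and it remains to reorganise what is left into the four summands in the claim. Using $\iotaW\basexi{\alpha}=0$ one has $[\LieagbdPartialbar,\iotaW]\basexi{\alpha} = -\iotaW\LieagbdPartialbar\basexi{\alpha}$, which accounts for the first summand, while a direct rewriting gives $-\basexi{\alpha}\wedge[\LieagbdPartialbar,\iotaW]\eta = -\basexi{\alpha}\wedge\LieagbdPartialbar\iotaW\eta + \basexi{\alpha}\wedge\iotaW\LieagbdPartialbar\eta$, which accounts for the second; the residual terms are exactly the anticommutator $-\sqrt{-1}(\LieagbdPartialbar\,\iota_{\basedualxi{\alpha^{'}}} + \iota_{\basedualxi{\alpha^{'}}}\,\LieagbdPartialbar)\eta$ and the cross-term sum above. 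The whole argument is a bookkeeping exercise with no conceptual obstacle; the only delicate point is tracking the signs and the factors of $2$ coming from $\norm{\basexi{i}}{h}=2$ through the nested Leibniz rules, so I would carry the algebra out in a single block rather than splitting it into auxiliary lemmas.
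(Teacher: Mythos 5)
Your proposal is correct and follows essentially the same route as the paper: expand the commutator, use the local formula $\iotaW=-\tfrac{\sqrt{-1}}{2}\sum_h\iota_{\basedualxi{h^{'}}}\iota_{\basexi{h}}$ together with $\iota_{\basexi{h}}\basexi{\alpha}=2\delta_{h,\alpha}$ and $\iota_{\basedualxi{h^{'}}}\basexi{\alpha}=0$ to derive the identity $\iotaW(\basexi{\alpha}\wedge\eta)=-\sqrt{-1}\,\iota_{\basedualxi{\alpha^{'}}}\eta+\basexi{\alpha}\wedge\iotaW\eta$, expand both halves of the commutator by the Leibniz rules, cancel the $\LieagbdPartialbar\basexi{\alpha}\wedge\iotaW\eta$ terms, and regroup using $[\LieagbdPartialbar,\iotaW]\basexi{\alpha}=-\iotaW\LieagbdPartialbar\basexi{\alpha}$. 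This is exactly the paper's computation.
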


\begin{proof}
Observe that
\begin{align*}
    [\LieagbdPartialbar,\iotaW]
    (
        \basexi{\alpha}\wedge\basexi{\indexI}\wedge\basedualxi{\indexJp}
    )
    =
    \LieagbdPartialbar\,\iotaW
    (
        \basexi{\alpha}\wedge\basexi{\indexI}\wedge\basedualxi{\indexJp}
    )
    -
    \iotaW\,\LieagbdPartialbar
    (
        \basexi{\alpha}\wedge\basexi{\indexI}\wedge\basedualxi{\indexJp}
    ).
\end{align*}
Recalling that $
\iotaW
=
-\frac{\sqrt{-1}}{2}
\displaystyle\sum_{h=1}^{n}
\iota_{\basedualxi{h^{'}}} \, \iota_{\basexi{h}}
$, we apply this to $\basexi{\alpha}\wedge\basexi{\indexI}\wedge\basedualxi{\indexJp}$ and obtain
\begin{align}
        \iotaW
        (\basexi{\alpha}\wedge\basexi{\indexI}\wedge\basedualxi{\indexJp})
    =&
        -\frac{\sqrt{-1}}{2}
        \displaystyle\sum_{h=1}^{n}
        \iota_{\basedualxi{h^{'}}} \, \iota_{\basexi{h}}
        (\basexi{\alpha}\wedge\basexi{\indexI}\wedge\basedualxi{\indexJp})
\notag
    \\
    =&
        -\frac{\sqrt{-1}}{2}
        \displaystyle\sum_{h=1}^{n}
        \iota_{\basedualxi{h^{'}}}
        \begin{pmatrix*}[l]
            (\iota_{\basexi{h}}\basexi{\alpha})
            \wedge\basexi{\indexI}\wedge\basedualxi{\indexJp}
            \\
            -
            \\
            \basexi{\alpha}
            \wedge
            \iota_{\basexi{h}}(\basexi{\indexI}\wedge\basedualxi{\indexJp})
        \end{pmatrix*}
\notag
    \\
    =&
        -\frac{\sqrt{-1}}{2}
        \LieagbdPartialbar
        \displaystyle\sum_{h=1}^{n}
        \begin{pmatrix*}[l]
            \begin{pmatrix*}[l]
                (\iota_{\basedualxi{h^{'}}} \, \iota_{\basexi{h}}\basexi{\alpha})
                \wedge
                \basexi{\indexI}\wedge\basedualxi{\indexJp}
                \\
                +
                \\
                (\iota_{\basexi{h}}\basexi{\alpha})
                \wedge
                \iota_{\basedualxi{h^{'}}} (\basexi{\indexI}\wedge\basedualxi{\indexJp})
            \end{pmatrix*}
            \\
            -
            \\
            \begin{pmatrix*}[l]
                (\iota_{\basedualxi{h^{'}}} \basexi{\alpha})
                \wedge
                \iota_{\basexi{h}}(\basexi{\indexI}\wedge\basedualxi{\indexJp})
                \\
                -
                \\
                \basexi{\alpha}
                \wedge
                \iota_{\basedualxi{h^{'}}} \, \iota_{\basexi{h}}
                (\basexi{\indexI}\wedge\basedualxi{\indexJp})
            \end{pmatrix*}
        \end{pmatrix*}.
\label{EQ-For (alpha, I, J), calculation for LHS-No 1}
\end{align}
We note that $\contract{\basexi{h}}\basexi{\alpha}=2 \delta_{\alpha,h}$, $\contract{h}\basexi{\alpha}=2 \delta_{\alpha,h}$, and $\contract{\basedualxi{h^{'}}}\basexi{\alpha}=0$ are constant functions. Thus, the formula 
$\contract{\basedualxi{h^{'}}}\,\contract{\basexi{h}}\basexi{\alpha}=0$ as the contraction on a constant function is zero. Plugging these into (\ref{EQ-For (alpha, I, J), calculation for LHS-No 1}), we obtain
\begin{align}
        -\frac{\sqrt{-1}}{2}
        \displaystyle\sum_{h=1}^{n}
        \begin{pmatrix*}[l]
            \begin{pmatrix*}[l]
                (0)
                \wedge
                \basexi{\indexI}\wedge\basedualxi{\indexJp}
                \\
                +
                \\
                (2\delta_{\alpha, h})
                \wedge
                \iota_{\basedualxi{h^{'}}} (\basexi{\indexI}\wedge\basedualxi{\indexJp})
            \end{pmatrix*}
            \\
            -
            \\
            \begin{pmatrix*}[l]
                (0)
                \wedge
                \iota_{\basexi{h}}(\basexi{\indexI}\wedge\basedualxi{\indexJp})
                \\
                -
                \\
                \basexi{\alpha}
                \wedge
                \iota_{\basedualxi{h^{'}}} \, \iota_{\basexi{h}}
                (\basexi{\indexI}\wedge\basedualxi{\indexJp})
            \end{pmatrix*}
        \end{pmatrix*}
    =
        -\frac{\sqrt{-1}}{2}
        \begin{pmatrix*}[l]
            2\iota_{\basedualxi{\alpha^{'}}} (\basexi{\indexI}\wedge\basedualxi{\indexJp})
            \\
            +
            \\
            \displaystyle\sum_{h=1}^{n}
            \basexi{\alpha}
            \wedge
            \iota_{\basedualxi{h}} \, \iota_{\basexi{h}}
            (\basexi{\indexI}\wedge\basedualxi{\indexJp})
        \end{pmatrix*}.
\label{EQ-For (alpha, I, J), calculation for LHS-No 2}
\end{align}
The next step is to apply $\LieagbdPartialbar$ on $\basexi{\alpha}\wedge\basexi{\indexI}\wedge\basedualxi{\indexJp}$: 
\begin{align}
        \LieagbdPartialbar\,\iotaW
        (\basexi{\alpha}\wedge\basexi{\indexI}\wedge\basedualxi{\indexJp})
    =
        -\frac{\sqrt{-1}}{2}
        \begin{pmatrix*}[l]
            2\LieagbdPartialbar \, \iota_{\basedualxi{\alpha^{'}}}
            (\basexi{\indexI}\wedge\basedualxi{\indexJp})
            \\
            +
            \\
            \displaystyle\sum_{h=1}^{n}
            (\LieagbdPartialbar \basexi{\alpha})
            \wedge
            \iota_{\basedualxi{h}} \, \iota_{\basexi{h}}
            (\basexi{\indexI}\wedge\basedualxi{\indexJp})
            \\
            -
            \\
            \displaystyle\sum_{h=1}^{n}
            \basexi{\alpha}
            \wedge
            \LieagbdPartialbar \, \iota_{\basedualxi{h}} \, \iota_{\basexi{h}}
            (\basexi{\indexI}\wedge\basedualxi{\indexJp})
        \end{pmatrix*}.
    \label{EQ-For (alpha, I, J), calculation for LHS-No 3}
\end{align}
It is not necessary to calculate the expressions of $
\LieagbdPartialbar \, \iota_{\basedualxi{h}} \, \iota_{\basexi{h}}(\basexi{\indexI}\wedge\basedualxi{\indexJp})
$
and 
$
(\LieagbdPartialbar \basexi{\alpha})\wedge \iota_{\basedualxi{h}} \, \iota_{\basexi{h}}(\basexi{\indexI}\wedge\basedualxi{\indexJp})
$ 
as they may represent 
$
[\LieagbdPartialbar,\iotaW]
(\basexi{\alpha}\wedge\basexi{\indexI}\wedge\basedualxi{\indexJp})
$.

Next, we compute $\iotaW\,\LieagbdPartialbar(\basexi{\alpha}\wedge\basexi{\indexI}\wedge\basedualxi{\indexJp})$. Applying $\LieagbdPartialbar$ to $\basexi{\alpha}\wedge\basexi{\indexI}\wedge\basedualxi{\indexJp}$, we obtain
\begin{align*}
    (\LieagbdPartialbar \basexi{\alpha})
    \wedge
    \basexi{\indexI}\wedge\basedualxi{\indexJp}
    -
    \basexi{\alpha}
    \wedge
    \LieagbdPartialbar(\basexi{\indexI}\wedge\basedualxi{\indexJp}).
\end{align*}
We then apply $
\iotaW
=
-\frac{\sqrt{-1}}{2}
\displaystyle\sum_{h=1}^{n}
\iota_{\basedualxi{h^{'}}} \, \iota_{\basexi{h}}
$ to $\LieagbdPartialbar(\basexi{\alpha}\wedge\basexi{\indexI}\wedge\basedualxi{\indexJp})$ and obtain 
\begin{align}
    &
        \iotaW\,\LieagbdPartialbar
        (\basexi{\alpha}\wedge\basexi{\indexI}\wedge\basedualxi{\indexJp})
    \notag
    \\
    =&
        -\frac{\sqrt{-1}}{2}
        \displaystyle\sum_{h=1}^{n}
        \iota_{\basedualxi{h^{'}}} \, \iota_{\basexi{h}}
        \begin{pmatrix*}[l]
            (\LieagbdPartialbar \basexi{\alpha})
            \wedge
            \basexi{\indexI}\wedge\basedualxi{\indexJp}
            -
            \basexi{\alpha}
            \wedge
            \LieagbdPartialbar(\basexi{\indexI}\wedge\basedualxi{\indexJp})
        \end{pmatrix*}
\notag
    \\
    =&
        -\frac{\sqrt{-1}}{2}
        \displaystyle\sum_{h=1}^{n}
        \iota_{\basedualxi{h^{'}}}
        \begin{pmatrix*}[l]
            \begin{pmatrix*}
                (\iota_{\basexi{h}} \, \LieagbdPartialbar \basexi{\alpha})
                \wedge
                \basexi{\indexI}\wedge\basedualxi{\indexJp}
                \\
                +
                \\
                (\LieagbdPartialbar \basexi{\alpha})
                \wedge
                \iota_{\basexi{h}}(\basexi{\indexI}\wedge\basedualxi{\indexJp})
            \end{pmatrix*}   
            -
            \begin{pmatrix*}
                (\iota_{\basexi{h}}\basexi{\alpha})
                \wedge
                \LieagbdPartialbar(\basexi{\indexI}\wedge\basedualxi{\indexJp})
                \\
                -
                \\
                \basexi{\alpha}
                \wedge
                \iota_{\basexi{h}}\,\LieagbdPartialbar
                (\basexi{\indexI}\wedge\basedualxi{\indexJp})
            \end{pmatrix*} 
        \end{pmatrix*}
\notag
    \\
    =&
        -\frac{\sqrt{-1}}{2}
        \displaystyle\sum_{h=1}^{n}
        \begin{pmatrix*}[l]
            \begin{pmatrix*}
                \begin{pmatrix*}                                    
                    (\iota_{\basedualxi{h^{'}}}\,\iota_{\basexi{h}} \, \LieagbdPartialbar \basexi{\alpha})
                    \wedge
                    \basexi{\indexI}\wedge\basedualxi{\indexJp}
                    \\
                    -
                    \\
                    (\iota_{\basexi{h}} \, \LieagbdPartialbar \basexi{\alpha})
                    \wedge
                    \iota_{\basedualxi{h^{'}}}(\basexi{\indexI}\wedge\basedualxi{\indexJp})
                \end{pmatrix*}
                \\
                +
                \\
                \begin{pmatrix*}
                    (\iota_{\basedualxi{h^{'}}} \, \LieagbdPartialbar \basexi{\alpha})
                    \wedge
                    \iota_{\basexi{h}}(\basexi{\indexI}\wedge\basedualxi{\indexJp})
                    \\
                    +
                    \\
                    (\LieagbdPartialbar \basexi{\alpha})
                    \wedge
                    \iota_{\basedualxi{h^{'}}} \, \iota_{\basexi{h}}
                    (\basexi{\indexI}\wedge\basedualxi{\indexJp})
                \end{pmatrix*}
            \end{pmatrix*}
            -
            \begin{pmatrix*}
                \begin{pmatrix*}
                    (\iota_{\basedualxi{h^{'}}}\,\iota_{\basexi{h}}\basexi{\alpha})
                    \wedge
                    \LieagbdPartialbar
                    (\basexi{\indexI}\wedge\basedualxi{\indexJp})
                    \\
                    +
                    \\
                    (\iota_{\basexi{h}}\basexi{\alpha})
                    \wedge
                    \iota_{\basedualxi{h^{'}}}\,\LieagbdPartialbar
                    (\basexi{\indexI}\wedge\basedualxi{\indexJp})
                \end{pmatrix*}
                \\
                -
                \\
                \begin{pmatrix*}  
                    (\iota_{\basedualxi{h^{'}}}\basexi{\alpha})
                    \wedge
                    \iota_{\basexi{h}}\,\LieagbdPartialbar
                    (\basexi{\indexI}\wedge\basedualxi{\indexJp})
                    \\
                    -
                    \\
                    \basexi{\alpha}
                    \wedge
                    \iota_{\basedualxi{h^{'}}}\,\iota_{\basexi{h}}\,\LieagbdPartialbar
                    (\basexi{\indexI}\wedge\basedualxi{\indexJp})
                \end{pmatrix*}
            \end{pmatrix*}
        \end{pmatrix*}.
\label{EQ-For (alpha, I, J), calculation for LHS-No 4}
\end{align}
Using the property that $\contract{\basexi{h}}\basexi{\alpha}=2\delta_{h,\alpha}$, $\contract{\basedualxi{h^{'}}}\basexi{\alpha}=0$ and $(\iota_{\basedualxi{h^{'}}}\,\iota_{\basexi{h}}\basexi{\alpha})=0$, we can simplify the terms in (\ref{EQ-For (alpha, I, J), calculation for LHS-No 4}) to
\begin{align*}
    &
        -\frac{\sqrt{-1}}{2}
        \displaystyle\sum_{h=1}^{n}
        \begin{pmatrix*}[l]
            \begin{pmatrix*}
                \begin{pmatrix*}                                    
                    (\iota_{\basedualxi{h^{'}}}\,\iota_{\basexi{h}} \, \LieagbdPartialbar \basexi{\alpha})
                    \wedge
                    \basexi{\indexI}\wedge\basedualxi{\indexJp}
                    \\
                    -
                    \\
                    (\iota_{\basexi{h}} \, \LieagbdPartialbar \basexi{\alpha})
                    \wedge
                    \iota_{\basedualxi{h^{'}}}(\basexi{\indexI}\wedge\basedualxi{\indexJp})
                \end{pmatrix*}
                \\
                +
                \\
                \begin{pmatrix*}
                    (\iota_{\basedualxi{h^{'}}} \, \LieagbdPartialbar \basexi{\alpha})
                    \wedge
                    \iota_{\basexi{h}}(\basexi{\indexI}\wedge\basedualxi{\indexJp})
                    \\
                    +
                    \\
                    (\LieagbdPartialbar \basexi{\alpha})
                    \wedge
                    \iota_{\basedualxi{h^{'}}} \, \iota_{\basexi{h}}
                    (\basexi{\indexI}\wedge\basedualxi{\indexJp})
                \end{pmatrix*}
            \end{pmatrix*}
            -
            \begin{pmatrix*}
                \begin{pmatrix*}
                    0
                    \wedge
                    \LieagbdPartialbar
                    (\basexi{\indexI}\wedge\basedualxi{\indexJp})
                    \\
                    +
                    \\
                    2\delta_{h,\alpha}
                    \wedge
                    \iota_{\basedualxi{h^{'}}}\,\LieagbdPartialbar
                    (\basexi{\indexI}\wedge\basedualxi{\indexJp})
                \end{pmatrix*}
                \\
                -
                \\
                \begin{pmatrix*}  
                    0
                    \wedge
                    \iota_{\basexi{h}}\,\LieagbdPartialbar
                    (\basexi{\indexI}\wedge\basedualxi{\indexJp})
                    \\
                    -
                    \\
                    \basexi{\alpha}
                    \wedge
                    \iota_{\basedualxi{h^{'}}}\,\iota_{\basexi{h}}\,\LieagbdPartialbar
                    (\basexi{\indexI}\wedge\basedualxi{\indexJp})
                \end{pmatrix*}
            \end{pmatrix*}
        \end{pmatrix*}
    \\
    =&
        -\frac{\sqrt{-1}}{2}
        \displaystyle\sum_{h=1}^{n}
        \begin{pmatrix*}[l]
            (\iota_{\basedualxi{h^{'}}}\,\iota_{\basexi{h}} \, \LieagbdPartialbar \basexi{\alpha})
            \wedge
            \basexi{\indexI}\wedge\basedualxi{\indexJp}
            \\
            -
            \\
            (\iota_{\basexi{h}} \, \LieagbdPartialbar \basexi{\alpha})
            \wedge
            \iota_{\basedualxi{h^{'}}}(\basexi{\indexI}\wedge\basedualxi{\indexJp})
            \\
            +
            \\
            (\iota_{\basedualxi{h^{'}}} \, \LieagbdPartialbar \basexi{\alpha})
            \wedge
            \iota_{\basexi{h}}(\basexi{\indexI}\wedge\basedualxi{\indexJp})
            \\
            +
            \\
            (\LieagbdPartialbar \basexi{\alpha})
            \wedge
            \iota_{\basedualxi{h^{'}}} \, \iota_{\basexi{h}}
            (\basexi{\indexI}\wedge\basedualxi{\indexJp})
            \\
            -
            \\
            2\delta_{h,\alpha}
            \wedge
            \iota_{\basedualxi{h^{'}}}\,\LieagbdPartialbar
            (\basexi{\indexI}\wedge\basedualxi{\indexJp})
            \\
            -
            \\
            \basexi{\alpha}
            \wedge
            \iota_{\basedualxi{h^{'}}}\,\iota_{\basexi{h}}\,\LieagbdPartialbar
            (\basexi{\indexI}\wedge\basedualxi{\indexJp})
        \end{pmatrix*}.
\end{align*}
Taking $
-2\delta_{h,\alpha}\wedge\iota_{\basedualxi{h^{'}}}
\,\LieagbdPartialbar
(\basexi{\indexI}\wedge\basedualxi{\indexJp})
=-2\iota_{\basedualxi{\alpha^{'}}}
\,\LieagbdPartialbar
(\basexi{\indexI}\wedge\basedualxi{\indexJp})
$, we conclude that
\begin{align}
    \iotaW \, \LieagbdPartialbar
    (\basexi{\alpha}\wedge\basexi{\indexI}\wedge\basedualxi{\indexJp})
    =&
        -\frac{\sqrt{-1}}{2}
        (
        \displaystyle\sum_{h=1}^{n}
        \begin{pmatrix*}[l]
            (\iota_{\basedualxi{h^{'}}}\iota_{\basexi{h}} \, \LieagbdPartialbar \basexi{\alpha})
            \wedge
            \basexi{\indexI}\wedge\basedualxi{\indexJp}
            \\
            -
            \\
            (\iota_{\basexi{h}} \, \LieagbdPartialbar \basexi{\alpha})
            \wedge
            \iota_{\basedualxi{h^{'}}}(\basexi{\indexI}\wedge\basedualxi{\indexJp})
            \\
            +
            \\
            (\iota_{\basedualxi{h^{'}}} \, \LieagbdPartialbar \basexi{\alpha})
            \wedge
            \iota_{\basexi{h}}(\basexi{\indexI}\wedge\basedualxi{\indexJp})
            \\
            +
            \\
            (\LieagbdPartialbar \basexi{\alpha})
            \wedge
            \iota_{\basedualxi{h^{'}}} \, \iota_{\basexi{h}}
            (\basexi{\indexI}\wedge\basedualxi{\indexJp})
            \\
            -
            \\
            2\iota_{\basedualxi{\alpha^{'}}}\,\LieagbdPartialbar
            (\basexi{\indexI}\wedge\basedualxi{\indexJp})
            \\
            -
            \\
            \basexi{\alpha}
            \wedge
            \iota_{\basedualxi{h^{'}}}\,\iota_{\basexi{h}}\,\LieagbdPartialbar
            (\basexi{\indexI}\wedge\basedualxi{\indexJp})
        \end{pmatrix*}
        .
\label{EQ-For (alpha, I, J), calculation for LHS-No 5}
\end{align}

Combining (\ref{EQ-For (alpha, I, J), calculation for LHS-No 3}) and (\ref{EQ-For (alpha, I, J), calculation for LHS-No 5}), we find that 
\begin{align*}
    [\LieagbdPartialbar,\iotaW]
    (\basexi{\alpha}\wedge\basexi{\indexI}\wedge\basedualxi{\indexJp})
    =
        \LieagbdPartialbar\,\iotaW
        (\basexi{\alpha}\wedge\basexi{\indexI}\wedge\basedualxi{\indexJp})
        -
        \iotaW\,\LieagbdPartialbar
        (\basexi{\alpha}\wedge\basexi{\indexI}\wedge\basedualxi{\indexJp})
\end{align*}
is equal to
\begin{align*}
        (-\frac{\sqrt{-1}}{2})
        \Big(
        \begin{pmatrix*}[l]
            2\LieagbdPartialbar \, \iota_{\basedualxi{\alpha^{'}}}
            (\basexi{\indexI}\wedge\basedualxi{\indexJp})
            \\
            +
            \\
            \displaystyle\sum_{h=1}^{n}
            (\LieagbdPartialbar \basexi{\alpha})
            \wedge
            \iota_{\basedualxi{h}} \, \iota_{\basexi{h}}
            (\basexi{\indexI}\wedge\basedualxi{\indexJp})
            \\
            -
            \\
            \displaystyle\sum_{h=1}^{n}
            \basexi{\alpha}
            \wedge
            \LieagbdPartialbar \, \iota_{\basedualxi{h}} \, \iota_{\basexi{h}}
            (\basexi{\indexI}\wedge\basedualxi{\indexJp})
        \end{pmatrix*}
        -
        \begin{pmatrix*}[l]
            (\iota_{\basedualxi{h^{'}}}\iota_{\basexi{h}} \, \LieagbdPartialbar \basexi{\alpha})
            \wedge
            \basexi{\indexI}\wedge\basedualxi{\indexJp}
            \\
            -
            \\
            (\iota_{\basexi{h}} \, \LieagbdPartialbar \basexi{\alpha})
            \wedge
            \iota_{\basedualxi{h^{'}}}(\basexi{\indexI}\wedge\basedualxi{\indexJp})
            \\
            +
            \\
            (\iota_{\basedualxi{h^{'}}} \, \LieagbdPartialbar \basexi{\alpha})
            \wedge
            \iota_{\basexi{h}}(\basexi{\indexI}\wedge\basedualxi{\indexJp})
            \\
            +
            \\
            (\LieagbdPartialbar \basexi{\alpha})
            \wedge
            \iota_{\basedualxi{h^{'}}} \, \iota_{\basexi{h}}
            (\basexi{\indexI}\wedge\basedualxi{\indexJp})
            \\
            -
            \\
            2\iota_{\basedualxi{\alpha^{'}}}\,\LieagbdPartialbar
            (\basexi{\indexI}\wedge\basedualxi{\indexJp})
            \\
            -
            \\
            \basexi{\alpha}
            \wedge
            \iota_{\basedualxi{h^{'}}}\,\iota_{\basexi{h}}\,\LieagbdPartialbar
            (\basexi{\indexI}\wedge\basedualxi{\indexJp})
        \end{pmatrix*}
        \Big)
    ,
    \notag\\
\end{align*}
which can be simplified to
\begin{align*}
        (-\frac{\sqrt{-1}}{2})
        \begin{pmatrix*}[l]
            (0-\iota_{\basedualxi{h^{'}}}\iota_{\basexi{h}} \, \LieagbdPartialbar \basexi{\alpha})
            \wedge
            \basexi{\indexI}\wedge\basedualxi{\indexJp}
            \\
            -
            \\
            \basexi{\alpha}
            \wedge
            (\LieagbdPartialbar \, \iota_{\basedualxi{h}} \, \iota_{\basexi{h}}
            -
            \iota_{\basedualxi{h^{'}}}\,\iota_{\basexi{h}}\,\LieagbdPartialbar)
            (\basexi{\indexI}\wedge\basedualxi{\indexJp})
            \\
            +
            \\
            2(
            \LieagbdPartialbar \, \iota_{\basedualxi{\alpha^{'}}}
            +
            \iota_{\basedualxi{\alpha^{'}}}\,\LieagbdPartialbar
            )
            (\basexi{\indexI}\wedge\basedualxi{\indexJp})
            \\
            +
            \\
            \displaystyle\sum_{h=1}^{n}
            \big(
                (\iota_{\basexi{h}} \, \LieagbdPartialbar \basexi{\alpha})
                \wedge
                \iota_{\basedualxi{h^{'}}}(\basexi{\indexI}\wedge\basedualxi{\indexJp})
                -
                (\iota_{\basedualxi{h^{'}}} \, \LieagbdPartialbar \basexi{\alpha})
                \wedge
                \iota_{\basexi{h}}(\basexi{\indexI}\wedge\basedualxi{\indexJp})
            \big)
        \end{pmatrix*}
    .
\end{align*}
Observing that $
(0-\iota_{\basedualxi{h^{'}}}\iota_{\basexi{h}} \, \LieagbdPartialbar \basexi{\alpha})
=
[\LieagbdPartialbar,\iotaW](\basexi{\alpha})
$ and $
(\LieagbdPartialbar \, \iota_{\basedualxi{h}} \, \iota_{\basexi{h}}
-
\iota_{\basedualxi{h^{'}}}\,\iota_{\basexi{h}}\,\LieagbdPartialbar)
=
[\LieagbdPartialbar,\iotaW]
$, we have completed the proof.

\end{proof}

\newpage
\begin{lemma}
\label{lemma-For (alpha, I, J), calculation for LHS-extra 1]}
\begin{align*}
    (-\sqrt{-1})(\LieagbdPartialbar\,\contract{\basedualxi{\alpha^{'}}}
    +
    \contract{\basedualxi{\alpha^{'}}}\,\LieagbdPartialbar)
    (\basexi{\indexI}\wedge\basedualxi{\indexJp})
\end{align*}
is equal to
\begin{align}
    -2\sqrt{-1}
    \begin{pmatrix*}[l]
        \displaystyle\sum_{\indexi{k}\in\indexI}
        \begin{pmatrix*}[l]
            (-1)
            \coefB{\indexi{k}}{\indexi{k} \alpha^{'}}
            \basexi{\indexI}
            \wedge
            \basedualxi{\indexJp}
        \end{pmatrix*}
    \\
    +
    \\
        \displaystyle\sum_{\indexi{k}\in\indexI}
        \displaystyle\sum_{s\in\indexIc}
        \begin{pmatrix*}[l]
            \sgn
                {s,\indexI,\indexJp}
                {\indexi{k},\order{(\indexI\backslash\{\indexi{k}\})\cup\{s\}},\indexJp}
            \coefB{\indexi{k}}{s \alpha^{'}}
            \basexi{\order{(\indexI\backslash\{\indexi{k}\})\cup\{s\}}}
            \wedge
            \basedualxi{\indexJp}
        \end{pmatrix*}
    \\
    +
    \\
        \displaystyle\sum_{\indexj{l}\in\indexJp\backslash\{\alpha^{'}\}}
        \begin{pmatrix*}[l]
            \sgn
                {\order{\indexjp{l},\alpha^{'}}}
                {\alpha^{'},\indexjp{l}}
            \coefD{\indexjp{l}}{\order{\indexjp{l} \alpha^{'}}}
            \basexi{\indexI}
            \wedge
            \basedualxi{\indexJp}
        \end{pmatrix*}
    \\
    +
    \\
        \displaystyle\sum_{\indexj{l}\in\indexJp}
        \displaystyle\sum_{t^{'}\in\indexJpc}
        \begin{pmatrix*}[l]
            \sgn
                {\order{\alpha^{'},t^{'}}}
                {t^{'},\alpha^{'}}
            \sgn
                {t^{'},\indexI,\indexJp}
                {\indexjp{l},\order{(\indexJp\backslash\{\indexjp{l}\})\cup\{t^{'}\}}}
            \\
            \coefD{\indexjp{l}}{\order{\alpha^{'} t^{'}}}
            \basexi{\indexI}
            \wedge
            \basedualxi{\order{(\indexJp\backslash\{\indexjp{l}\})\cup\{t^{'}\}}}
        \end{pmatrix*}
    \end{pmatrix*}
    .
\label{EQ-For (alpha, I, J), calculation for LHS, extra 1-result}
\end{align}
\end{lemma}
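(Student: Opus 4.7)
The plan is to recognize the operator $\{\LieagbdPartialbar,\contract{\basedualxi{\alpha^{'}}}\} = \LieagbdPartialbar\,\contract{\basedualxi{\alpha^{'}}} + \contract{\basedualxi{\alpha^{'}}}\,\LieagbdPartialbar$ as the anticommutator of two odd graded derivations, so it is itself a graded derivation of even degree on the full exterior algebra. I can therefore reduce the computation on $\basexi{\indexI}\wedge\basedualxi{\indexJp}$ to a sum of Leibniz terms, each of which sees the anticommutator applied to exactly one generator $\basexi{\indexi{k}}$ (for $\indexi{k}\in\indexI$) or $\basedualxi{\indexjp{l}}$ (for $\indexjp{l}\in\indexJp$), with the remaining frame factors passively wedged back in.

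For each generator, I would directly apply (\ref{eq-kahler Lie agbd-d xi i}) and (\ref{eq-kahler Lie agbd-d xi bar i}). Since $\contract{\basedualxi{\alpha^{'}}}\basexi{\indexi{k}}=0$ and $\contract{\basedualxi{\alpha^{'}}}\basedualxi{\indexjp{l}}=2\delta_{\indexjp{l},\alpha^{'}}$, the anticommutator on $\basexi{\indexi{k}}$ collapses to $\contract{\basedualxi{\alpha^{'}}}\LieagbdPartialbar\basexi{\indexi{k}}$, which by $\LieagbdPartialbar\basexi{\indexi{k}}=\sum_{j,l^{'}}\coefB{\indexi{k}}{j l^{'}}\basexi{j}\wedge\basedualxi{l^{'}}$ yields contributions proportional to $\coefB{\indexi{k}}{j \alpha^{'}}\basexi{j}$, which after wedging back with $\basexi{\indexI\setminus\{\indexi{k}\}}\wedge\basedualxi{\indexJp}$ split naturally into the case $j=\indexi{k}$ (the first sum of the target) and $j=s\in\indexIc$ (the second sum). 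On $\basedualxi{\indexjp{l}}$, the anticommutator reads $\LieagbdPartialbar(2\delta_{\indexjp{l},\alpha^{'}}) + \contract{\basedualxi{\alpha^{'}}}\LieagbdPartialbar\basedualxi{\indexjp{l}}$; the first term vanishes, and the second pulls out $\coefD{\indexjp{l}}{\cdot\cdot}$ via (\ref{eq-kahler Lie agbd-d xi bar i}), producing contributions either of the form $\coefD{\indexjp{l}}{\order{\indexjp{l},\alpha^{'}}}\basedualxi{\indexjp{l}}$ when $\alpha^{'}\in\indexJp$ (the third sum) or $\coefD{\indexjp{l}}{\order{\alpha^{'},t^{'}}}\basedualxi{t^{'}}$ for $t^{'}\in\indexJpc$ (the fourth sum).

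The remaining work is bookkeeping: I need to compose the sign of the permutation that places the resulting generator back into the ordered index set with the signs produced by the contraction. Concretely, moving $\basexi{\indexi{k}}$ out in front of $\basexi{\indexI}$ produces $\sgn{\indexI}{\indexi{k},\indexI\setminus\{\indexi{k}\}}$, reinserting $\basexi{s}$ to give the ordered wedge produces $\sgn{s,\indexI\setminus\{\indexi{k}\}}{\order{(\indexI\setminus\{\indexi{k}\})\cup\{s\}}}$, and these multiply out to the sign $\sgn{s,\indexI,\indexJp}{\indexi{k},\order{(\indexI\setminus\{\indexi{k}\})\cup\{s\}},\indexJp}$ appearing in the statement via the sign-merging identities in the preliminaries. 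The same kind of merging handles the two $\basedualxi{}$-sums, using in particular that $\coefD{\indexjp{l}}{jk}=-\coefD{\indexjp{l}}{kj}$ so that only the ordered coefficient $\coefD{\indexjp{l}}{\order{\cdot}}$ survives with its appropriate $\sgn{\order{\cdot}}{\cdot}$ factor.

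The main obstacle I anticipate is not conceptual but combinatorial: keeping all sign conventions consistent while splitting the ``diagonal'' contributions (where the index produced by the derivative coincides with one already in $\indexI$ or $\indexJp$) from the ``off-diagonal'' contributions (where it lands in the complementary set). The diagonal case $j=\indexi{k}$ in the $\coefB{}{}$-term and $t^{'}=\indexjp{l}$ cancellations in the $\coefD{}{}$-term must be tracked carefully, because here the wedge product with $\basexi{\indexI\setminus\{\indexi{k}\}}$ (respectively $\basedualxi{\indexJp\setminus\{\indexjp{l}\}}$) reassembles into $\basexi{\indexI}\wedge\basedualxi{\indexJp}$ with a sign exactly cancelling the rearrangement sign; this gives the bare terms without ordered-complement signs in the first and third sums of the target. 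Assuming this bookkeeping, multiplying through by the prefactor $(-\sqrt{-1})\cdot 2 = -2\sqrt{-1}$ (the $2$ arising from $\norm{\basexi{i}}{h}=\norm{\basedualxi{i}}{h}=2$ in the contraction) reproduces (\ref{EQ-For (alpha, I, J), calculation for LHS, extra 1-result}). Notably, no use of the K\"{a}hler constraints (\ref{EQ-Part 1-(1)-equation for coef B,D})--(\ref{EQ-Part 1-(1)-equation for coef A,C}) is required at this stage; those constraints enter only later, when the present expression is combined with the other pieces of $[\LieagbdPartialbar,\iotaW](\basexi{\alpha}\wedge\basexi{\indexI}\wedge\basedualxi{\indexJp})$.
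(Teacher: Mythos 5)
Your proposal is correct, and it takes a genuinely different route from the paper. The paper computes the two compositions $\contract{\basedualxi{\alpha^{'}}}\,\LieagbdPartialbar(\basexi{\indexI}\wedge\basedualxi{\indexJp})$ and $\LieagbdPartialbar\,\contract{\basedualxi{\alpha^{'}}}(\basexi{\indexI}\wedge\basedualxi{\indexJp})$ separately, each on the full wedge product, splits into the cases $\alpha^{'}\in\indexJp$ and $\alpha^{'}\notin\indexJp$, and only at the end observes that large blocks of terms cancel between the two pieces (and that the two cases give the same answer). Your observation that $\LieagbdPartialbar\,\contract{\basedualxi{\alpha^{'}}}+\contract{\basedualxi{\alpha^{'}}}\,\LieagbdPartialbar$ is the anticommutator of two odd antiderivations, hence an even derivation of degree zero, explains those cancellations a priori: the Leibniz expansion over single generators is exactly what survives, the case distinction on whether $\alpha^{'}\in\indexJp$ disappears (the only $\indexjp{l}$-dependence enters through $\contract{\basedualxi{\alpha^{'}}}\basedualxi{\indexjp{l}}=2\delta_{\indexjp{l},\alpha^{'}}$, which is a constant killed by $\LieagbdPartialbar$), and the bulk of the paper's Appendix~I computation for this lemma reduces to the four generator-level evaluations you list plus the reinsertion signs. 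What you give up is that the remaining sign bookkeeping (merging the extraction sign $\sgn{\indexI}{\indexi{k},\indexI\backslash\{\indexi{k}\}}$ with the reinsertion sign into the single permutation signs of the statement, and tracking the factors of $2$ and the antisymmetry $\coefD{i}{jk}=-\coefD{i}{kj}$) is still exactly the content of the paper's Lemmas in Section~\ref{apex-simplification for LHS, extra 1, part 1}--\ref{apex-simplification for LHS, extra 1, part 2}, so a complete write-up would still need those identities; but your framing localizes them to one generator at a time rather than to the full $(p,q)$-wedge, which is both shorter and less error-prone. Your closing remark is also consistent with the paper: the K\"{a}hler constraints on $\coefB{}{}$ and $\coefD{}{}$ are not used in this lemma and enter only when this expression is combined with the output of Lemma~\ref{lemma-For (alpha, I, J), calculation for LHS-extra 2]}.
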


\begin{proof}

The proof comprises the following steps.
\begin{enumerate}
    \item Calculate $
    \contract{\basedualxi{\alpha^{'}}}\,\LieagbdPartialbar
    (\basexi{\indexI}\wedge\basedualxi{\indexJp})
    $.

    \item Calculate $
    \LieagbdPartialbar\,\contract{\basedualxi{\alpha^{'}}}
    (\basexi{\indexI}\wedge\basedualxi{\indexJp})
    $.

    \item Combine the results from Steps 1–2.
\end{enumerate}
\keyword{Step 1:} The expression of $\LieagbdPartialbar(\basexi{\indexI} \wedge \basedualxi{\indexJp})$ is 
\begin{align}
    \begin{pmatrix*}[l]
        \displaystyle\sum_{\indexi{k}\in\indexI}
        \sgn
            {\indexI}
            {\indexi{k},\indexI\backslash\{\indexi{k}\}}
        \basexi{\indexi{1}}
        \wedge\cdots\wedge
        \LieagbdPartialbar \basexi{\indexi{k}}
        \wedge\cdots\wedge
        \basexi{\indexi{p}}
        \wedge
        \basedualxi{\indexJp}
        \\
        +
        \\
        \displaystyle\sum_{\indexjp{k}\in\indexJp}
        \sgn
            {\indexI,\indexJp}
            {\indexjp{k},\indexI,\indexJp\backslash\{\indexjp{k}\}}
        \basexi{\indexIp}
        \wedge
        \basedualxi{\indexjp{1}}
        \wedge\cdots\wedge
        \LieagbdPartialbar \basedualxi{\indexjp{k}}
        \wedge\cdots\wedge
        \basedualxi{\indexjp{q}}
    \end{pmatrix*}
    .
\label{EQ-For (alpha, I, J), calculation for LHS, extra 1-no1}
\end{align}
Recalling that $
    \LieagbdPartialbar \xi_i
    =
        \sum_{j,k}^n
        \coefB{i}{jk}
        \basexi{j} \wedge \basedualxi{k}
$
and
\begin{align*}
    \LieagbdPartialbar \basedualxi{i}
    =
        \sum_{j,k=1}^n
        \frac{1}{2}
        \coefD{i}{jk}
        \basedualxi{j} \wedge \basedualxi{k}
    =
        \sum_{j,k=1}^n
        \frac{1}{2}
        \sgn
            {\order{j,k}}
            {j,k}
        \coefD{i}{\order{j,k}}
        \basedualxi{j}\wedge\basedualxi{k}
    ,
\end{align*}
we plug these into (\ref{EQ-For (alpha, I, J), calculation for LHS, extra 1-no1}) and cancel out the terms containing repeated frame bases, then obtain
\begin{align}
    \begin{pmatrix*}[l]
        \displaystyle\sum_{\indexi{k}\in\indexI}
        \sgn
            {\indexI}
            {\indexi{k},\indexI\backslash\{\indexi{k}\}}
        \basexi{\indexi{1}}
        \wedge\cdots\wedge
        \basexi{\indexi{k-1}}
        \wedge
        \begin{pmatrix*}[l]
            \displaystyle\sum_{t^{'}\in\indexJpc}
            \coefB{\indexi{k}}{\indexi{k} t^{'}}
            \basexi{\indexi{k}}\wedge\basedualxi{t^{'}}
            \\
            +
            \\
            \displaystyle\sum_{s\in\indexIc}
            \displaystyle\sum_{t^{'}\in\indexJpc}
            \coefB{\indexi{k}}{s t^{'}}
            \basexi{s}\wedge\basedualxi{t^{'}}
        \end{pmatrix*}
        \wedge
        \basexi{\indexi{k+1}}
        \wedge\cdots\wedge
        \basexi{\indexi{p}}
        \wedge
        \basedualxi{\indexJp}
        \\
        +
        \\
        \displaystyle\sum_{\indexjp{l}\in\indexJp}
        \begin{pmatrix*}[l]
            \sgn
                {\indexI,\indexJp}
                {\indexjp{l},\indexI,\indexJp\backslash\{\indexjp{l}\}}
            \basexi{\indexI}
            \wedge
            \basexi{\indexjp{1}}
            \wedge\cdots\wedge
            \basedualxi{\indexjp{l-1}}
            \wedge
            \begin{pmatrix*}[l]
                \displaystyle\sum_{t^{'}\in\indexJpc}
                \frac{1}{2}
                (
                \coefD{\indexjp{l}}{\indexjp{l} t^{'}}
                \basedualxi{\indexjp{l}}\wedge\basedualxi{t^{'}}
                +
                \coefD{\indexjp{l}}{t^{'} \indexjp{l}}
                \basedualxi{t^{'}}\wedge\basedualxi{\indexjp{l}}
                )
                \\
                +
                \\
                \displaystyle\sum_{t^{'}\in\indexJpc}
                \displaystyle\sum_{u^{'}\in\indexJpc}
                \frac{1}{2}
                \coefD{\indexjp{l}}{u^{'} t^{'}}
                \basedualxi{u^{'}}\wedge\basedualxi{t^{'}}
            \end{pmatrix*}
            \\
            \wedge
            \basedualxi{\indexjp{l+1}}
            \wedge\cdots\wedge
            \basedualxi{\indexjp{q}}
        \end{pmatrix*}
    \end{pmatrix*}
    .
\label{EQ-For (alpha, I, J), calculation for LHS, extra 1-no2}
\end{align}
We reorder the above wedge product of frames as follows:
\begin{itemize}
    \item 
$   
        \qquad
        \basexi{\indexi{1}}
        \wedge\cdots\wedge
        \basexi{\indexi{k-1}}\wedge
        \bigg(
            \displaystyle\sum_{t^{'}\in\indexJpc}
            \coefB{\indexi{k}}{\indexi{k} t^{'}}
            \basexi{\indexi{k}}\wedge\basedualxi{t^{'}}
        \bigg)
        \wedge\basexi{\indexi{k+1}}
        \wedge\cdots\wedge
        \basexi{\indexi{p}}
        \wedge
        \basedualxi{\indexJp}
$
\\
$
        =
            \displaystyle\sum_{t^{'}\in\indexJpc}
            \sgn
                {\indexi{1},\cdots,\indexi{k},t^{'},\indexi{k+1},\cdots,\indexi{p},\indexJp}
                {\indexI,\order{\indexJp\cup\{t^{'}\}}}
            \coefB{\indexi{k}}{\indexi{k} t^{'}}
            \basexi{\indexI}
            \wedge
            \basedualxi{\order{\indexJp\cup\{t^{'}\}}}
$,
    \item 
$
        \qquad
        \basexi{\indexi{1}}
        \wedge\cdots\wedge
        \basexi{\indexi{k-1}}\wedge
        \bigg(
            \displaystyle\sum_{s\in\indexIc}
            \displaystyle\sum_{t^{'}\in\indexJpc}
            \coefB{\indexi{k}}{s t^{'}}
            \basexi{s}\wedge\basedualxi{t^{'}}
        \bigg)
        \wedge\basexi{\indexi{k+1}}
        \wedge\cdots\wedge
        \basexi{\indexi{p}}
        \wedge
        \basedualxi{\indexJp}
$
\\
$
        =
        \displaystyle\sum_{s\in\indexIc}
        \displaystyle\sum_{t^{'}\in\indexJpc}
        \sgn
            {\indexi{1},\cdots,\indexi{k},t^{'},\indexi{k+1},\cdots,\indexi{p},\indexJp}
            {\indexI,\order{\indexJp\cup\{t^{'}\}}}
        \coefB{\indexi{k}}{\indexi{k} t^{'}}
        \basexi{\indexI}
        \wedge
        \basedualxi{\order{\indexJp\cup\{t^{'}\}}}
$,
    \item
$
        \qquad
        \basexi{\indexI}
        \wedge
        \basexi{\indexjp{1}}
        \wedge\cdots\wedge
        \basedualxi{\indexjp{l-1}}
        \wedge
        \bigg(
            \displaystyle\sum_{t^{'}\in\indexJpc}
            \frac{1}{2}
            (
            \coefD{\indexjp{l}}{\indexjp{l} t^{'}}
            \basedualxi{\indexjp{l}}\wedge\basedualxi{t^{'}}
            +
            \coefD{\indexjp{l}}{t^{'} \indexjp{l}}
            \basedualxi{t^{'}}\wedge\basedualxi{\indexjp{l}}
            )
        \bigg)
        \wedge
        \basedualxi{\indexjp{l+1}}
        \wedge\cdots\wedge
        \basedualxi{\indexjp{q}}
$
\\
$
        =
        \basexi{\indexI}
        \wedge
        \basexi{\indexjp{1}}
        \wedge\cdots\wedge
        \basedualxi{\indexjp{l-1}}
        \wedge
        \bigg(
            \displaystyle\sum_{t^{'}\in\indexJpc}
            \coefD{\indexjp{l}}{\order{\indexjp{l} t^{'}}}
            \basedualxi{\order{ \indexjp{l} t^{'}}}
        \bigg)
        \wedge
        \basedualxi{\indexjp{l+1}}
        \wedge\cdots\wedge
        \basedualxi{\indexjp{q}}  
$
\\
$
        =
        \sgn
                {\indexjp{1},\cdots,\indexjp{l-1},\order{\indexjp{l},t^{'}},\indexjp{l+1},\cdots,\indexjp{q}}
                {\order{\indexJp\cup\{t^{'}\}}}
            \coefD{\indexjp{l}}{\order{\indexjp{l},t^{'}}}
            \basexi{\indexI}
            \wedge
            \basedualxi{\order{\indexJp\cup\{t^{'}\}}}
$,
    \item
$
        \qquad
        \basexi{\indexI}
        \wedge
        \basexi{\indexjp{1}}
        \wedge\cdots\wedge
        \basedualxi{\indexjp{l-1}}
        \wedge
        \bigg(
            \displaystyle\sum_{t^{'}\in\indexJpc}
            \displaystyle\sum_{u^{'}\in\indexJpc}
            \frac{1}{2}
            \coefD{\indexjp{l}}{u^{'} t^{'}}
            \basedualxi{u^{'}}\wedge\basedualxi{t^{'}}
        \bigg)
        \wedge
        \basedualxi{\indexjp{l+1}}
        \wedge\cdots\wedge
        \basedualxi{\indexjp{q}}
$
\\
$
        =
        \basexi{\indexI}
        \wedge
        \basexi{\indexjp{1}}
        \wedge\cdots\wedge
        \basedualxi{\indexjp{l-1}}
        \wedge
        \bigg(
            \displaystyle\sum_{t^{'}\in\indexJpc}
            \displaystyle\sum_{u^{'}\in\indexJpc}
            \frac{1}{2}
            \coefD{\indexjp{l}}{\order{u^{'} t^{'}}}
            \basedualxi{\order{u^{'} t^{'}}}
        \bigg)
        \wedge
        \basedualxi{\indexjp{l+1}}
        \wedge\cdots\wedge
        \basedualxi{\indexjp{q}}
$
\\
$
        =
        \sgn
            {\indexjp{1},\cdots,\indexjp{l-1},\order{t^{'},u^{'}},\indexjp{l+1},\cdots,\indexjp{q}}
            {\order{(\indexJp\backslash\{\indexjp{l}\})\cup\{t^{'},u^{'}\}}}
        \frac{1}{2}
        \coefD{\indexjp{l}}{\order{t^{'},u^{'}}}
        \basexi{\indexI}
        \wedge
        \basedualxi{\order{(\indexJp\backslash\{\indexjp{l}\})\cup\{t^{'},u^{'}\}}}
$.
\end{itemize}
We then simplify (\ref{EQ-For (alpha, I, J), calculation for LHS, extra 1-no2}) to
\begin{align}
    \begin{pmatrix*}[l]
        \displaystyle\sum_{\indexi{k}\in\indexI}
        \displaystyle\sum_{t^{'}\in\indexJpc}
        \sgn
            {\indexI}
            {\indexi{k},\indexI\backslash\{\indexi{k}\}}
        \sgn
            {\indexi{1},\cdots,\indexi{k},t^{'},\indexi{k+1},\cdots,\indexi{p},\indexJp}
            {\indexI,\order{\indexJp\cup\{t^{'}\}}}
        \coefB{\indexi{k}}{\indexi{k} t^{'}}
        \basexi{\indexI}
        \wedge
        \basedualxi{\order{\indexJp\cup\{t^{'}\}}}
        \\
        +
        \\
        \displaystyle\sum_{\indexi{k}\in\indexI}
        \displaystyle\sum_{s\in\indexIc}
        \displaystyle\sum_{t^{'}\in\indexJpc}
        \begin{pmatrix*}[l]
            \sgn
                {\indexI}
                {\indexi{k},\indexI\backslash\{\indexi{k}\}}
            \sgn
                {\indexi{1},\cdots,\indexi{k-1},s,t^{'},\indexi{k+1},\cdots,\indexi{p},\indexJp}
                {\order{(\indexI\backslash\{\indexi{k}\})\cup\{s\}},\order{\indexJp\cup\{t^{'}\}}}
            \\
            \coefB{\indexi{k}}{s t^{'}}
            \basexi{\order{(\indexI\backslash\{\indexi{k}\})\cup\{s\}}}
            \wedge
            \basedualxi{\order{\indexJp\cup\{t^{'}\}}}
        \end{pmatrix*}
        \\
        +
        \\
        \displaystyle\sum_{\indexjp{l}\in\indexJp}
        \displaystyle\sum_{t^{'}\in\indexJpc}
        \sgn
            {\indexI,\indexJp}
            {\indexjp{l},\indexI,\indexJp\backslash\{\indexjp{l}\}}
        \sgn
            {\indexjp{1},\cdots,\indexjp{l-1},\order{\indexjp{l},t^{'}},\indexjp{l+1},\cdots,\indexjp{q}}
            {\order{\indexJp\cup\{t^{'}\}}}
        \coefD{\indexjp{l}}{\order{\indexjp{l},t^{'}}}
        \basexi{\indexI}
        \wedge
        \basedualxi{\order{\indexJp\cup\{t^{'}\}}}
        \\
        +
        \\
        \displaystyle\sum_{\indexjp{l}\in\indexJp}
        \displaystyle\sum_{t^{'}\in\indexJpc}
        \displaystyle\sum_{u^{'}\in\indexJpc}
        \begin{pmatrix*}[l]
            \sgn
                {\indexI,\indexJp}
                {\indexjp{l},\indexI,\indexJp\backslash\{\indexjp{l}\}}
            \sgn
                {\indexjp{1},\cdots,\indexjp{l-1},\order{t^{'},u^{'}},\indexjp{l+1},\cdots,\indexjp{q}}
                {\order{(\indexJp\backslash\{\indexjp{l}\})\cup\{t^{'},u^{'}\}}}
            \\
            \frac{1}{2}
            \coefD{\indexjp{l}}{\order{t^{'},u^{'}}}
            \basexi{\indexI}
            \wedge
            \basedualxi{\order{(\indexJp\backslash\{\indexjp{l}\})\cup\{t^{'},u^{'}\}}}
        \end{pmatrix*}
    \end{pmatrix*}
    ,
\label{EQ-For (alpha, I, J), calculation for LHS, extra 1-no3}
\end{align}
which is the expression of $\LieagbdPartialbar(\basexi{\indexI}\wedge\basedualxi{\indexJp})$.

Next, we apply $\contract{\basedualxi{\alpha^{'}}}$ to (\ref{EQ-For (alpha, I, J), calculation for LHS, extra 1-no3}). Two cases, $\alpha\in\indexJ$ and $\alpha\in\indexJc$, must be considered. We observe that if $\alpha\in\indexJ$, then

\begin{itemize}
    \item 
$
    \contract{\basedualxi{\alpha^{'}}}
    \big(
    \basexi{\indexI}
    \wedge
    \basedualxi{\order{\indexJp\cup\{t^{'}\}}}
    \big)
    =
    \sgn
        {\indexI,\order{\indexJp\cup\{t^{'}\}}}
        {\alpha,\indexI,\order{(\indexJp\backslash\{\alpha^{'}\})\cup\{t^{'}\}}}
    \basexi{\indexI}
    \wedge
    \basedualxi{\order{(\indexJp\backslash\{\alpha^{'}\})\cup\{t^{'}\}}}
$,\\
    \item 
$
    \qquad
    \contract{\basedualxi{\alpha^{'}}}
    \big(
    \basexi{\order{(\indexI\backslash\{\indexi{k}\})\cup\{s\}}}
    \wedge
    \basedualxi{\order{\indexJp\cup\{t^{'}\}}}
    \big)
$
\\
$
    =
    \sgn
        {\order{(\indexI\backslash\{\indexi{k}\})\cup\{s\}},\order{\indexJp\cup\{t^{'}\}}}
        {\alpha^{'},\order{(\indexI\backslash\{\indexi{k}\})\cup\{s\}},\order{(\indexJp\backslash\{\alpha^{'}\})\cup\{t^{'}\}}}
    \basexi{\order{(\indexI\backslash\{\indexi{k}\})\cup\{s\}}}
    \wedge
    \basedualxi{\order{(\indexJp\backslash\{\alpha^{'}\})\cup\{t^{'}\}}}
$,\\
    \item 
$
    \contract{\basedualxi{\alpha^{'}}}
    \big(
    \basexi{\indexI}
    \wedge
    \basedualxi{\order{\indexJp\cup\{t^{'}\}}}
    \big)
    =
    \sgn
        {\indexI,\order{\indexJp\cup\{t^{'}\}}}
        {\alpha^{'},\indexI,\order{(\indexJp\backslash\{\alpha^{'}\})\cup\{t^{'}\}}}
    \basexi{\indexI}
    \wedge
    \basedualxi{\order{(\indexJp\backslash\{\alpha^{'}\})\cup\{t^{'}\}}}
$,\\
    \item 
$
    \qquad
    \contract{\basedualxi{\alpha^{'}}}
    \big(
    \basexi{\indexI}
    \wedge
    \basedualxi{\order{(\indexJp\backslash\{\indexjp{l}\})\cup\{t^{'},u^{'}\}}}
    \big)
$
\\
$
    =
    \begin{cases}
        0       
        &
        \text{if }\alpha=\indexj{l}
        \\
        \sgn
            {\indexI,\order{(\indexJp\backslash\{\indexjp{l}\})\cup\{t^{'},u^{'}\}}}
            {\alpha^{'},\indexI,\order{(\indexJp\backslash\{\indexjp{l},\alpha^{'}\})\cup\{t^{'},u^{'}\}}}
        \basexi{\indexI}
        \wedge
        \basedualxi{\order{(\indexJp\backslash\{\indexjp{l},\alpha^{'}\})\cup\{t^{'},u^{'}\}}}
        &
        \text{if }\alpha\neq\indexj{l}
    \end{cases}
$.
\end{itemize}
The above computation leads to
\begin{align}
    &
        \iota_{\basedualxi{\alpha}}\,\LieagbdPartialbar
        (\basexi{\indexI} \wedge \basedualxi{\indexJp})
    \notag\\
    =&
        2
        \begin{pmatrix*}[l]
            \displaystyle\sum_{\indexi{k}\in\indexI}
            \displaystyle\sum_{t^{'}\in\indexJpc}
            \begin{pmatrix*}[l]
                \sgn
                    {\indexI}
                    {\indexi{k},\indexI\backslash\{\indexi{k}\}}
                \sgn
                    {\indexi{1},\cdots,\indexi{k},t^{'},\indexi{k+1},\cdots,\indexi{p},\indexJp}
                    {\indexI,\order{\indexJp\cup\{t^{'}\}}}
                \sgn
                    {\indexI,\order{\indexJp\cup\{t^{'}\}}}
                    {\alpha,\indexI,\order{(\indexJp\backslash\{\alpha^{'}\})\cup\{t^{'}\}}}
                \\
                \coefB{\indexi{k}}{\indexi{k} t^{'}}
                \basexi{\indexI}
                \wedge
                \basedualxi{\order{(\indexJp\backslash\{\alpha^{'}\})\cup\{t^{'}\}}}
            \end{pmatrix*}
            \\
            +
            \\
            \displaystyle\sum_{\indexi{k}\in\indexI}
            \displaystyle\sum_{s\in\indexIc}
            \displaystyle\sum_{t^{'}\in\indexJpc}
            \begin{pmatrix*}[l]
                \sgn
                    {\indexI}
                    {\indexi{k},\indexI\backslash\{\indexi{k}\}}
                \sgn
                    {\indexi{1},\cdots,\indexi{k-1},s,t^{'},\indexi{k+1},\cdots,\indexi{p},\indexJp}
                    {\order{(\indexI\backslash\{\indexi{k}\})\cup\{s\}},\order{\indexJp\cup\{t^{'}\}}}
                \\
                \sgn
                    {\order{(\indexI\backslash\{\indexi{k}\})\cup\{s\}},\order{\indexJp\cup\{t^{'}\}}}
                    {\alpha^{'},\order{(\indexI\backslash\{\indexi{k}\})\cup\{s\}},\order{(\indexJp\backslash\{\alpha^{'}\})\cup\{t^{'}\}}}
                \\
                \coefB{\indexi{k}}{s t^{'}}
                \basexi{\order{(\indexI\backslash\{\indexi{k}\})\cup\{s\}}}
                \wedge
                \basedualxi{\order{(\indexJp\backslash\{\alpha^{'}\})\cup\{t^{'}\}}}
            \end{pmatrix*}
            \\
            +
            \\
            \displaystyle\sum_{\indexjp{l}\in\indexJp}
            \displaystyle\sum_{t^{'}\in\indexJpc}
            \begin{pmatrix*}[l]
                \sgn
                    {\indexI,\indexJp}
                    {\indexjp{l},\indexI,\indexJp\backslash\{\indexjp{l}\}}
                \sgn
                    {\indexjp{1},\cdots,\indexjp{l-1},\order{\indexjp{l},t^{'}},\indexjp{l+1},\cdots,\indexjp{q}}
                    {\order{\indexJp\cup\{t^{'}\}}}
                \\
                \sgn
                    {\indexI,\order{\indexJp\cup\{t^{'}\}}}
                    {\alpha^{'},\indexI,\order{(\indexJp\backslash\{\alpha^{'}\})\cup\{t^{'}\}}}
                \\
                \coefD{\indexjp{l}}{\order{\indexjp{l},t^{'}}}
                \basexi{\indexI}
                \wedge
                \basedualxi{\order{(\indexJp\backslash\{\alpha^{'}\})\cup\{t^{'}\}}}
            \end{pmatrix*}
            \\
            +
            \\
            \displaystyle\sum_{\indexjp{l}\in\indexJp\backslash\{\alpha^{'}\}}
            \displaystyle\sum_{t^{'}\in\indexJpc}
            \displaystyle\sum_{u^{'}\in\indexJpc}
            \begin{pmatrix*}[l]
                \sgn
                    {\indexI,\indexJp}
                    {\indexjp{l},\indexI,\indexJp\backslash\{\indexjp{l}\}}
                \sgn
                    {\indexjp{1},\cdots,\indexjp{l-1},\order{t^{'},u^{'}},\indexjp{l+1},\cdots,\indexjp{q}}
                    {\order{(\indexJp\backslash\{\indexjp{l}\})\cup\{t^{'},u^{'}\}}}
                \\
                \sgn
                    {\indexI,\order{(\indexJp\backslash\{\indexjp{l}\})\cup\{t^{'},u^{'}\}}}
                    {\alpha^{'},\indexI,\order{(\indexJp\backslash\{\indexjp{l},\alpha^{'}\})\cup\{t^{'},u^{'}\}}}
                \\
                \frac{1}{2}
                \coefD{\indexjp{l}}{\order{t^{'},u^{'}}}
                \basexi{\indexI}
                \wedge
                \basedualxi{\order{(\indexJp\backslash\{\indexjp{l},\alpha^{'}\})\cup\{t^{'},u^{'}\}}}
            \end{pmatrix*}
        \end{pmatrix*}
\label{EQ-For (alpha, I, J), calculation for LHS, extra 1-unsimplified the first half, alpha in J}
\end{align}
for the case that $\alpha\in\indexJ$.

The reader can find Lemmas \ref{lemma-simplification for LHS, extra 1, part 1-lemma 1}–\ref{lemma-simplification for LHS, extra 1, part 1-lemma 4} in Section \ref{apex-simplification for LHS, extra 1, part 1} for the calculation of the simplification of the signs of permutations. Thus, we conclude that when $\alpha\in\indexJ$ the expression of $
\iota_{\basedualxi{\alpha}}\,\LieagbdPartialbar
(\basexi{\indexI} \wedge \basedualxi{\indexJp})
$ is
\begin{align}
    2
    \begin{pmatrix*}[l]
        \displaystyle\sum_{\indexi{k}\in\indexI}
        \displaystyle\sum_{t^{'}\in\indexJpc}
        \begin{pmatrix*}[l]
            (-1)^{k}
            \sgn
                {\indexI}
                {\indexi{k},\indexI\backslash\{\indexi{k}\}}
            \sgn
                {t^{'},\indexI,\indexJp}
                {\alpha,\indexI,\order{(\indexJp\backslash\{\alpha^{'}\})\cup\{t^{'}\}}}
            \\
            \coefB{\indexi{k}}{\indexi{k} t^{'}}
            \basexi{\indexI}
            \wedge
            \basedualxi{\order{(\indexJp\backslash\{\alpha^{'}\})\cup\{t^{'}\}}}
        \end{pmatrix*}
        \\
        +
        \\
        \displaystyle\sum_{\indexi{k}\in\indexI}
        \displaystyle\sum_{s\in\indexIc}
        \displaystyle\sum_{t^{'}\in\indexJpc}
        \begin{pmatrix*}[l]
            \sgn
                {s,t^{'},\indexI,\indexJp}
                {\indexi{k},\alpha^{'},\order{(\indexI\backslash\{\indexi{k}\})\cup\{s\}},\order{(\indexJp\backslash\{\alpha^{'}\})\cup\{t^{'}\}}}
            \\
            \coefB{\indexi{k}}{s t^{'}}
            \basexi{\order{(\indexI\backslash\{\indexi{k}\})\cup\{s\}}}
            \wedge
            \basedualxi{\order{(\indexJp\backslash\{\alpha^{'}\})\cup\{t^{'}\}}}
        \end{pmatrix*}
        \\
        +
        \\
        \displaystyle\sum_{\indexjp{l}\in\indexJp}
        \displaystyle\sum_{t^{'}\in\indexJpc}
        \begin{pmatrix*}[l]
            \sgn
                {\order{\indexjp{l},t^{'}}}
                {t^{'},\indexjp{l}}
            \sgn
                {t^{'},\indexI,\indexJp}
                {\alpha^{'},\indexI,\order{(\indexJp\backslash\{\alpha^{'}\})\cup\    {t^{'}\}}}}
            \\
            \coefD{\indexjp{l}}{\order{\indexjp{l},t^{'}}}
            \basexi{\indexI}
            \wedge
            \basedualxi{\order{(\indexJp\backslash\{\alpha^{'}\})\cup\{t^{'}\}}}
        \end{pmatrix*}
        \\
        +
        \\
        \displaystyle\sum_{\indexjp{l}\in\indexJp\backslash\{\alpha^{'}\}}
        \displaystyle\sum_{t^{'}\in\indexJpc}
        \displaystyle\sum_{u^{'}\in\indexJpc}
        \begin{pmatrix*}[l]
            \sgn
                {\order{t^{'},u^{'}},\indexI,\indexJp}
                {\indexjp{l},\alpha^{'},\indexI,\order{(\indexJp\backslash\{\indexjp{l},\alpha^{'}\})\cup\{t^{'},u^{'}\}}}
            \\
            \frac{1}{2}
            \coefD{\indexjp{l}}{\order{t^{'},u^{'}}}
            \basexi{\indexI}
            \wedge
            \basedualxi{\order{(\indexJp\backslash\{\indexjp{l},\alpha^{'}\})\cup\{t^{'},u^{'}\}}}
        \end{pmatrix*}
    \end{pmatrix*}
    .
\label{EQ-For (alpha, I, J), calculation for LHS, extra 1-simplified result of part 1, alpha in J}
\end{align}



Next, we observe that if $\alpha\notin\indexJ$, then the terms in (\ref{EQ-For (alpha, I, J), calculation for LHS, extra 1-no3}) are calculated as follows.
\begin{itemize}
    \item 
$
    \contract{\basedualxi{\alpha^{'}}}
    \big(
    \basexi{\indexI}
    \wedge
    \basedualxi{\order{\indexJp\cup\{t^{'}\}}}
    \big)
    =
    \begin{cases}
            0
            &
            \text{if }\alpha\neq t
        \\
            \sgn
                {\indexI,\order{\indexJp\cup\{\alpha^{'}\}}}
                {\alpha^{'},\indexI,\indexJp}
            \basexi{\indexI}
            \wedge
            \basedualxi{\indexJp}
            &
            \text{if }\alpha=t
    \end{cases}
$.
\\
    \item 
$
    \qquad
    \contract{\basedualxi{\alpha^{'}}}
    \big(
    \basexi{\order{(\indexI\backslash\{\indexi{k}\})\cup\{s\}}}
    \wedge
    \basedualxi{\order{\indexJp\cup\{t^{'}\}}}
    \big)
$
\\
$
    =
    \begin{cases}
            0
            &
            \text{if }\alpha\neq t
        \\
            \sgn
                {\order{(\indexI\backslash\{\indexi{k}\})\cup\{s\}},\order{\indexJp\cup\{\alpha^{'}\}}}
                {\alpha^{'},\order{(\indexI\backslash\{\indexi{k}\})\cup\{s\}},\indexJp}
            \basexi{\order{(\indexI\backslash\{\indexi{k}\})\cup\{s\}}}
            \wedge
            \basedualxi{\indexJp}
            &
            \text{if }\alpha=t
    \end{cases}
$.
\\
    \item 
$
    \contract{\basedualxi{\alpha^{'}}}
    \big(
    \basexi{\indexI}
    \wedge
    \basedualxi{\order{\indexJp\cup\{t^{'}\}}}
    \big)
    =
    \begin{cases}
            0
        &
            \text{if }\alpha\neq t
        \\
            \sgn
                {\indexI,\order{\indexJp\cup\{\alpha^{'}\}}}
                {\alpha^{'},\indexI,\indexJp}
            \basexi{\indexI}
            \wedge
            \basedualxi{\indexJp}
        &
            \text{if }\alpha=t
    \end{cases}
$.
    \item 
$
    \qquad
    \contract{\basedualxi{\alpha^{'}}}
    \big(
    \basexi{\indexI}
    \wedge
    \basedualxi{\order{(\indexJp\backslash\{\indexjp{l}\})\cup\{t^{'},u^{'}\}}}
    \big)
$
\\
$
    =
    \begin{cases}
            0       
        &
            \text{if }\alpha\neq t,u
        \\
            \sgn
                {\indexI,\order{(\indexJp\backslash\{\indexjp{l}\})\cup\{t^{'},\alpha^{'}\}}}
                {\alpha^{'},\indexI,\order{(\indexJp\backslash\{\indexjp{l}\})\cup\{t^{'}\}}}
            \basexi{\indexI}
            \wedge
            \basedualxi{\order{(\indexJp\backslash\{\indexjp{l}\})\cup\{t^{'}\}}}
        &
            \text{if }\alpha=u
        \\
            \sgn
                {\indexI,\order{(\indexJp\backslash\{\indexjp{l}\})\cup\{\alpha^{'},u^{'}\}}}
                {\alpha^{'},\indexI,\order{(\indexJp\backslash\{\indexjp{l}\})\cup\{u^{'}\}}}
            \basexi{\indexI}
            \wedge
            \basedualxi{\order{(\indexJp\backslash\{\indexjp{l}\})\cup\{u^{'}\}}}
        &
            \text{if }\alpha=t
    \end{cases}
$ .
\end{itemize}
We use the above observation to calculate the expression of $
    \iota_{\basedualxi{\alpha}}\,\LieagbdPartialbar
    (\basexi{\indexI}\wedge\basedualxi{\indexJp})
$:
\begin{align}
        2
        \begin{pmatrix*}[l]
            \displaystyle\sum_{\indexi{k}\in\indexI}
            \sgn
                {\indexI}
                {\indexi{k},\indexI\backslash\{\indexi{k}\}}
            \sgn
                {\indexi{1},\cdots,\indexi{k},\alpha^{'},\indexi{k+1},\cdots,\indexi{p},\indexJp}
                {\indexI,\order{\indexJp\cup\{\alpha^{'}\}}}
            \sgn
                {\indexI,\order{\indexJp\cup\{\alpha^{'}\}}}
                {\alpha^{'},\indexI,\indexJp}
            \coefB{\indexi{k}}{\indexi{k} \alpha^{'}}
            \basexi{\indexI}
            \wedge
            \basedualxi{\indexJp}
            \\
            +
            \\
            \displaystyle\sum_{\indexi{k}\in\indexI}
            \displaystyle\sum_{s\in\indexIc}
            \begin{pmatrix*}[l]
                \sgn
                    {\indexI}
                    {\indexi{k},\indexI\backslash\{\indexi{k}\}}
                \sgn
                    {\indexi{1},\cdots,\indexi{k-1},s,\alpha^{'},\indexi{k+1},\cdots,\indexi{p},\indexJp}
                    {\order{(\indexI\backslash\{\indexi{k}\})\cup\{s\}},\order{\indexJp\cup\{\alpha^{'}\}}}
                \\
                \sgn
                    {\order{(\indexI\backslash\{\indexi{k}\})\cup\{s\}},\order{\indexJp\cup\{\alpha^{'}\}}}
                    {\alpha^{'},\order{(\indexI\backslash\{\indexi{k}\})\cup\{s\}},\indexJp}
                \\
                \coefB{\indexi{k}}{s \alpha^{'}}
                \basexi{\order{(\indexI\backslash\{\indexi{k}\})\cup\{s\}}}
                \wedge
                \basedualxi{\indexJp}
            \end{pmatrix*}
            \\
            +
            \\
            \displaystyle\sum_{\indexjp{l}\in\indexJp}
                \begin{pmatrix*}[l]
                \sgn
                    {\indexI,\indexJp}
                    {\indexjp{l},\indexI,\indexJp\backslash\{\indexjp{l}\}}
                \sgn
                    {\indexjp{1},\cdots,\indexjp{l-1},\order{\indexjp{l},\alpha^{'}},\indexjp{l+1},\cdots,\indexjp{q}}
                    {\order{\indexJp\cup\{\alpha^{'}\}}}
                \sgn
                    {\indexI,\order{\indexJp\cup\{\alpha^{'}\}}}
                    {\alpha^{'},\indexI,\indexJp}
                \\
                \coefD{\indexjp{l}}{\order{\indexjp{l},\alpha^{'}}}
                \basexi{\indexI}
                \wedge
                \basedualxi{\indexJp}
            \end{pmatrix*}
            \\
            +
            \\
            \displaystyle\sum_{\indexjp{l}\in\indexJp}
            \displaystyle\sum_{t^{'}\in\indexJpc\backslash\{\alpha^{'}\}}
            \begin{pmatrix*}[l]
                \sgn
                    {\indexI,\indexJp}
                    {\indexjp{l},\indexI,\indexJp\backslash\{\indexjp{l}\}}
                \sgn
                    {\indexjp{1},\cdots,\indexjp{l-1},\order{t^{'},\alpha^{'}},\indexjp{l+1},\cdots,\indexjp{q}}
                    {\order{(\indexJp\backslash\{\indexjp{l}\})\cup\{t^{'},\alpha^{'}\}}}
                \\
                \sgn
                    {\indexI,\order{(\indexJp\backslash\{\indexjp{l}\})\cup\{t^{'},\alpha^{'}\}}}
                    {\alpha^{'},\indexI,\order{(\indexJp\backslash\{\indexjp{l}\})\cup\{t^{'}\}}}
                \\
                \frac{1}{2}
                \coefD{\indexjp{l}}{\order{t^{'},\alpha^{'}}}
                \basexi{\indexI}
                \wedge
                \basedualxi{\order{(\indexJp\backslash\{\indexjp{l}\})\cup\{t^{'}\}}}
            \end{pmatrix*}
            \\
            +
            \\
            \displaystyle\sum_{\indexjp{l}\in\indexJp}
            \displaystyle\sum_{u^{'}\in\indexJpc\backslash\{\alpha^{'}\}}
            \begin{pmatrix*}[l]
                \sgn
                    {\indexI,\indexJp}
                    {\indexjp{l},\indexI,\indexJp\backslash\{\indexjp{l}\}}
                \sgn
                    {\indexjp{1},\cdots,\indexjp{l-1},\order{\alpha^{'},u^{'}},\indexjp{l+1},\cdots,\indexjp{q}}
                    {\order{(\indexJp\backslash\{\indexjp{l}\})\cup\{\alpha^{'},u^{'}\}}}
                \\
                \sgn
                    {\indexI,\order{(\indexJp\backslash\{\indexjp{l}\})\cup\{\alpha^{'},u^{'}\}}}
                {\alpha^{'},\indexI,\order{(\indexJp\backslash\{\indexjp{l}\})\cup\{u^{'}\}}}
                \\
                \frac{1}{2}
                \coefD{\indexjp{l}}{\order{\alpha^{'},u^{'}}}
                \basexi{\indexI}
                \wedge
                \basedualxi{\order{(\indexJp\backslash\{\indexjp{l}\})\cup\{u^{'}\}}}
            \end{pmatrix*}
        \end{pmatrix*}
    .
\label{EQ-For (alpha, I, J), calculation for LHS, extra 1-no4}
\end{align}
The last two terms in (\ref{EQ-For (alpha, I, J), calculation for LHS, extra 1-no4}) are same if we identify $u$ with $t$. Therefore, we simplify (\ref{EQ-For (alpha, I, J), calculation for LHS, extra 1-no4}) to
\begin{align}
    &
        \iota_{\basedualxi{\alpha}}\,\LieagbdPartialbar
        (\basexi{\indexI}\wedge\basedualxi{\indexJp})
    \notag\\
    =&
        2
        \begin{pmatrix*}[l]
            \displaystyle\sum_{\indexi{k}\in\indexI}
            \begin{pmatrix*}[l]
                \sgn
                    {\indexI}
                    {\indexi{k},\indexI\backslash\{\indexi{k}\}}
                \sgn
                    {\indexi{1},\cdots,\indexi{k},\alpha^{'},\indexi{k+1},\cdots,\indexi{p},\indexJp}
                    {\indexI,\order{\indexJp\cup\{\alpha^{'}\}}}
                \sgn
                    {\indexI,\order{\indexJp\cup\{\alpha^{'}\}}}
                    {\alpha^{'},\indexI,\indexJp}
                \coefB{\indexi{k}}{\indexi{k} \alpha^{'}}
                \basexi{\indexI}
                \wedge
                \basedualxi{\indexJp}
            \end{pmatrix*}
            \\
            +
            \\
            \displaystyle\sum_{\indexi{k}\in\indexI}
            \displaystyle\sum_{s\in\indexIc}
            \begin{pmatrix*}[l]
                \sgn
                    {\indexI}
                    {\indexi{k},\indexI\backslash\{\indexi{k}\}}
                \sgn
                    {\indexi{1},\cdots,\indexi{k-1},s,\alpha^{'},\indexi{k+1},\cdots,\indexi{p},\indexJp}
                    {\order{(\indexI\backslash\{\indexi{k}\})\cup\{s\}},\order{\indexJp\cup\{\alpha^{'}\}}}
                \\
                \sgn
                    {\order{(\indexI\backslash\{\indexi{k}\})\cup\{s\}},\order{\indexJp\cup\{\alpha^{'}\}}}
                    {\alpha^{'},\order{(\indexI\backslash\{\indexi{k}\})\cup\{s\}},\indexJp}
                \\
                \coefB{\indexi{k}}{s \alpha^{'}}
                \basexi{\order{(\indexI\backslash\{\indexi{k}\})\cup\{s\}}}
                \wedge
                \basedualxi{\indexJp}
            \end{pmatrix*}
            \\
            +
            \\
            \displaystyle\sum_{\indexjp{l}\in\indexJp}
                \begin{pmatrix*}[l]
                \sgn
                    {\indexI,\indexJp}
                    {\indexjp{l},\indexI,\indexJp\backslash\{\indexjp{l}\}}
                \sgn
                    {\indexjp{1},\cdots,\indexjp{l-1},\order{\indexjp{l},\alpha^{'}},\indexjp{l+1},\cdots,\indexjp{q}}
                    {\order{\indexJp\cup\{\alpha^{'}\}}}
                \\
                \sgn
                    {\indexI,\order{\indexJp\cup\{\alpha^{'}\}}}
                    {\alpha^{'},\indexI,\indexJp}
                \\
                \coefD{\indexjp{l}}{\order{\indexjp{l},\alpha^{'}}}
                \basexi{\indexI}
                \wedge
                \basedualxi{\indexJp}
            \end{pmatrix*}
            \\
            +
            \\
            \displaystyle\sum_{\indexjp{l}\in\indexJp}
            \displaystyle\sum_{t^{'}\in\indexJpc}
            \begin{pmatrix*}[l]
                \sgn
                    {\indexI,\indexJp}
                    {\indexjp{l},\indexI,\indexJp\backslash\{\indexjp{l}\}}
                \sgn
                    {\indexjp{1},\cdots,\indexjp{l-1},\order{t^{'},\alpha^{'}},\indexjp{l+1},\cdots,\indexjp{q}}
                    {\order{(\indexJp\backslash\{\indexjp{l}\})\cup\{t^{'},\alpha^{'}\}}}
                \\
                \sgn
                    {\indexI,\order{(\indexJp\backslash\{\indexjp{l}\})\cup\{t^{'},\alpha^{'}\}}}
                    {\alpha^{'},\indexI,\order{(\indexJp\backslash\{\indexjp{l}\})\cup\{t^{'}\}}}
                \\
                \coefD{\indexjp{l}}{\order{t^{'},\alpha^{'}}}
                \basexi{\indexI}
                \wedge
                \basedualxi{\order{(\indexJp\backslash\{\indexjp{l}\})\cup\{t^{'}\}}}
            \end{pmatrix*}
        \end{pmatrix*}.
\label{EQ-For (alpha, I, J), calculation for LHS, extra 1-unsimplified the first half, alpha not in J}
\end{align}
The reader can find the simplification of the signs of permutations in Lemmas \ref{lemma-simplification for LHS, extra 1, part 1-lemma 5}–\ref{lemma-simplification for LHS, extra 1, part 1-lemma 8} in Section \ref{apex-simplification for LHS, extra 1, part 1}. We conclude that when $\alpha\notin\indexJ$ the expression of $
\iota_{\basedualxi{\alpha}}\,\LieagbdPartialbar
(\basexi{\indexI} \wedge \basedualxi{\indexJp})
$ is
\begin{align}
    &
        \iota_{\basedualxi{\alpha}}\,\LieagbdPartialbar
        (\basexi{\indexI}\wedge\basedualxi{\indexJp})
    \notag\\
    =&
        2
        \begin{pmatrix*}[l]
            \displaystyle\sum_{\indexi{k}\in\indexI}
                (-1)
                \coefB{\indexi{k}}{\indexi{k} \alpha^{'}}
                \basexi{\indexI}
                \wedge
                \basedualxi{\indexJp}
            \\
            +
            \\
            \displaystyle\sum_{\indexi{k}\in\indexI}
            \displaystyle\sum_{s\in\indexIc}
                \sgn
                    {s,\indexI,\indexJp}
                    {\indexi{k},\order{(\indexI\backslash\{\indexi{k}\})\cup\{s\}},\indexJp}
                \coefB{\indexi{k}}{s \alpha^{'}}
                \basexi{\order{(\indexI\backslash\{\indexi{k}\})\cup\{s\}}}
                \wedge
                \basedualxi{\indexJp}
            \\
            +
            \\
            \displaystyle\sum_{\indexjp{l}\in\indexJp}
                \sgn
                    {\order{\indexjp{l},\alpha^{'}}}
                    {\alpha^{'},\indexjp{l}}
                \coefD{\indexjp{l}}{\order{\indexjp{l},\alpha^{'}}}
                \basexi{\indexI}
                \wedge
                \basedualxi{\indexJp}
            \\
            +
            \\
            \displaystyle\sum_{\indexjp{l}\in\indexJp}
            \displaystyle\sum_{t^{'}\in\indexJpc}
            \begin{pmatrix*}[l]
                \sgn
                    {\order{\alpha^{'},t^{'}}}
                    {t^{'},\alpha^{'}}
                \sgn
                    {t^{'},\indexI,\indexJp}
                    {\indexjp{l},\indexI,\order{(\indexJp\backslash\{\indexjp{l}\})\cup\{t^{'}\}}}
            \\
                \coefD{\indexjp{l}}{\order{t^{'},\alpha^{'}}}
                \basexi{\indexI}
                \wedge
                \basedualxi{\order{(\indexJp\backslash\{\indexjp{l}\})\cup\{t^{'}\}}}
            \end{pmatrix*}
        \end{pmatrix*}
        .
\label{EQ-For (alpha, I, J), calculation for LHS, extra 1-simplified the first half, alpha not in J}
\end{align}
\keyword{Step 2:} Looking at $\contract{\basedualxi{\alpha}}(\basexi{\indexI}\wedge\basedualxi{\indexJp})$,
\begin{align}
    \contract{\basedualxi{\alpha}}
    (\basexi{\indexI}\wedge\basedualxi{\indexJp})
    =
    \begin{cases}
            0
        &
            \alpha\notin\indexJ
        \\
            2\sgn
                {\indexI,\indexJp}
                {\alpha^{'},\indexI,\indexJp\backslash\{\alpha^{'}\}}
            \basexi{\indexI}
            \wedge
            \basedualxi{\indexJp\backslash\{\alpha^{'}\}}
        &
            \alpha\in\indexJ
    \end{cases}.
\label{EQ-For (alpha, I, J), calculation for LHS, extra 1-no5}
\end{align}
We just need to consider the case that $\alpha\in\indexJ$. Applying $\LieagbdPartialbar$ to the formula in (\ref{EQ-For (alpha, I, J), calculation for LHS, extra 1-no5}) such that $\alpha\in\indexJ$, we find that $
\LieagbdPartialbar\,\iota_{\basedualxi{\alpha}}(\basexi{\indexI}\wedge\basedualxi{\indexJp})
$ is equal to
\begin{align}
        2\sgn
            {\indexI,\indexJp}
            {\alpha^{'},\indexI,\indexJp\backslash\{\alpha^{'}\}}
        \begin{pmatrix*}[l]
            \displaystyle\sum_{\indexi{k}\in\indexI}
            \sgn
                {\indexI}
                {\indexi{k},\indexI\backslash\{\indexi{k}\}}
            \basexi{\indexi{1}}
            \wedge\cdots\wedge
            \LieagbdPartialbar\basexi{\indexi{k}}
            \wedge\cdots\wedge
            \basexi{\indexi{p}}
            \wedge
            \basedualxi{\indexJp\backslash\{\alpha^{'}\}}
            \\
            +
            \\
            \displaystyle\sum_{\indexj{l}\in\indexJp\backslash\{\alpha^{'}\}}
            \sgn
                {\indexI,\indexJp\backslash\{\alpha^{'}\}}
                {\indexjp{l},\indexI,\indexJp\backslash\{\alpha^{'},\indexjp{l}\}}
            \basexi{\indexI}
            \wedge
            \basedualxi{\indexjp{1}}
            \wedge\cdots\wedge
            \LieagbdPartialbar\basedualxi{\indexjp{l}}
            \wedge\cdots\wedge
            \widehat{\basedualxi{\alpha^{'}}}
            \wedge\cdots\wedge
            \basedualxi{\indexjp{q}}
        \end{pmatrix*}
\label{EQ-For (alpha, I, J), calculation for LHS, extra 1-no6}
\end{align}
for $\alpha\in\indexJ$. Recalling that $
    \LieagbdPartialbar \xi_i
    =
        \sum_{j,k}^n
        \coefB{i}{jk}
        \basexi{j} \wedge \basedualxi{k}
$
and
\begin{align*}
    \LieagbdPartialbar \basedualxi{i}
    =
        \sum_{j,k=1}^n
        \frac{1}{2}
        \coefD{i}{jk}
        \basedualxi{j} \wedge \basedualxi{k}
    =
        \sum_{j,k=1}^n
        \frac{1}{2}
        \sgn
            {\order{j,k}}
            {j,k}
        \coefD{i}{\order{j,k}}
        \basedualxi{j}\wedge\basedualxi{k}
    ,
\end{align*}
we plug these into (\ref{EQ-For (alpha, I, J), calculation for LHS, extra 1-no6}) and cancel out the terms containing repeated frames. Thus, $
\LieagbdPartialbar\,\iota_{\basedualxi{\alpha}}(\basexi{\indexI}\wedge\basedualxi{\indexJp})
$ for $\alpha\in\indexJ$ has the following expression:
\begin{align*}
    2
    \begin{pmatrix*}[l]
        \displaystyle\sum_{\indexi{k}\in\indexI}
        \begin{pmatrix*}[l]
            \sgn
                {\indexI,\indexJp}
                {\alpha^{'},\indexI,\indexJp\backslash\{\alpha^{'}\}}
            \sgn
                {\indexI}
                {\indexi{k},\indexI\backslash\{\indexi{k}\}}
            \\
            \basexi{\indexi{1}}
            \wedge\cdots\wedge
            \basexi{\indexi{k-1}}\wedge
            \begin{pmatrix*}[l]
                \coefB{\indexi{k}}{\indexi{k} \alpha^{'}}
                \basexi{\indexi{k}}
                \wedge
                \basedualxi{\alpha^{'}}
                \\
                +
                \\
                \displaystyle\sum_{t^{'}\in \indexJpc}
                \coefB{\indexi{k}}{\indexi{k} t^{'}}
                \basexi{\indexi{k}}
                \wedge
                \basedualxi{t^{'}}
                \\
                +
                \\
                \displaystyle\sum_{s\in \indexIc}
                \coefB{\indexi{k}}{s \alpha^{'}}
                \basexi{s}
                \wedge
                \basedualxi{\alpha^{'}}
                \\
                +
                \\
                \displaystyle\sum_{s\in \indexIc}
                \displaystyle\sum_{t^{'}\in \indexJpc}
                \coefB{\indexi{k}}{s t^{'}}
                \basexi{s}
                \wedge
                \basedualxi{t^{'}}
            \end{pmatrix*}
            \wedge\basexi{\indexi{k+1}}
            \wedge\cdots\wedge
            \basexi{\indexi{p}}
            \wedge
            \basedualxi{\indexJp\backslash\{\alpha^{'}\}}
        \end{pmatrix*}
        \\
        +
        \\
        \displaystyle\sum_{\indexjp{l}\in\indexJp\backslash\{\alpha^{'}\}}
        \begin{pmatrix*}[l]
            \sgn
                {\indexI,\indexJp}
                {\alpha^{'},\indexI,\indexJp\backslash\{\alpha^{'}\}}
            \sgn
                {\indexI,\indexJp\backslash\{\alpha^{'}\}}
                {\indexjp{l},\indexI,\indexJp\backslash\{\alpha^{'},\indexjp{l}\}}
            \\
            \basexi{\indexI}
            \wedge
            \basedualxi{\indexjp{1}}
            \wedge\cdots\wedge
            \basedualxi{\indexjp{l-1}}\wedge
            \begin{pmatrix*}[l]
                \frac{1}{2}
                (
                \coefD{\indexjp{l}}{\indexjp{l} \alpha^{'}}
                \basedualxi{\indexjp{l}}
                \wedge
                \basedualxi{\alpha^{'}}
                +
                \coefD{\indexjp{l}}{\alpha^{'} \indexjp{l}}
                \basedualxi{\alpha^{'}}
                \wedge
                \basedualxi{\indexjp{l}}
                )
                \\
                +
                \\
                \frac{1}{2}
                \displaystyle\sum_{t^{'}\in\indexJpc}
                \coefD{\indexjp{l}}{\indexjp{l} t^{'}}
                \basedualxi{\indexjp{l}}
                \wedge
                \basedualxi{t^{'}}
                +
                \coefD{\indexjp{l}}{t^{'} \indexjp{l}}
                \basedualxi{t^{'}}
                \wedge
                \basedualxi{\indexjp{l}}
                \\
                +
                \\
                \frac{1}{2}
                \displaystyle\sum_{t^{'}\in\indexJpc}
                \coefD{\indexjp{l}}{\alpha^{'} t^{'}}
                \basedualxi{\alpha^{'}}
                \wedge
                \basedualxi{t^{'}}
                +
                \coefD{\indexjp{l}}{t^{'} \alpha^{'}}
                \basedualxi{t^{'}}
                \wedge
                \basedualxi{\alpha^{'}}
                \\
                +
                \\
                \frac{1}{2}
                \displaystyle\sum_{t^{'}\in\indexJpc}
                \displaystyle\sum_{u^{'}\in\indexJpc}
                \coefD{\indexjp{l}}{u^{'} t^{'}}
                \basedualxi{u^{'}}
                \wedge
                \basedualxi{t^{'}}
            \end{pmatrix*}
            \wedge\basedualxi{\indexjp{l+1}}
            \\
            \wedge\cdots\wedge
            \widehat{\basedualxi{\alpha^{'}}}
            \wedge\cdots\wedge
            \basedualxi{\indexjp{q}}
        \end{pmatrix*}
    \end{pmatrix*}.
\end{align*}

We reorder the wedge product of frames as follows:
\begin{itemize}
    \item 
$
        \qquad
        \basexi{\indexi{1}}
        \wedge\cdots\wedge
        \basexi{\indexi{k-1}}\wedge
        \bigg(
            \coefB{\indexi{k}}{\indexi{k} \alpha^{'}}
            \basexi{\indexi{k}}
            \wedge
            \basedualxi{\alpha^{'}}
        \bigg)
        \wedge\basexi{\indexi{k+1}}
        \wedge\cdots\wedge
        \basexi{\indexi{p}}
        \wedge
        \basedualxi{\indexJp\backslash\{\alpha^{'}\}}
$
\\
$
    =
        \sgn        
            {\indexi{1},\cdots,\indexi{k},\alpha^{'},\indexi{k+1},\cdots,\indexi{p},\indexJp\backslash\{\alpha^{'}\}}
            {\indexI,\indexJp}
        \coefB{\indexi{k}}{\indexi{k} \alpha^{'}}
        \basexi{\indexI}
        \wedge
        \basedualxi{\indexJp}
$,
    \\
    \item 
$
        \qquad
        \basexi{\indexi{1}}
        \wedge\cdots\wedge
        \basexi{\indexi{k-1}}\wedge
        \bigg(
            \displaystyle\sum_{t^{'}\in \indexJpc}
            \coefB{\indexi{k}}{\indexi{k} t^{'}}
            \basexi{\indexi{k}}
            \wedge
            \basedualxi{t^{'}}
        \bigg)
        \wedge\basexi{\indexi{k+1}}
        \wedge\cdots\wedge
        \basexi{\indexi{p}}
        \wedge
        \basedualxi{\indexJp\backslash\{\alpha^{'}\}}
$
\\
$
        =
        \displaystyle\sum_{t^{'}\in \indexJpc}
        \sgn
            {\indexi{1},\cdots,\indexi{k},t^{'},\indexi{k+1},\cdots,\indexi{p},\indexJp\backslash\{\alpha^{'}\}}
            {\indexI,\order{(\indexJp\backslash\{\alpha^{'}\})\cup\{t^{'}\}}}
        \coefB{\indexi{k}}{\indexi{k} t^{'}}
        \basexi{\indexI}
        \wedge
        \basedualxi{\order{(\indexJp\backslash\{\alpha^{'}\})\cup\{t^{'}\}}} 
$,
    \\
    \item 
$
        \qquad
        \basexi{\indexi{1}}
        \wedge\cdots\wedge
        \basexi{\indexi{k-1}}\wedge
        \bigg(
            \displaystyle\sum_{s\in \indexIc}
            \coefB{\indexi{k}}{s \alpha^{'}}
            \basexi{s}
            \wedge
            \basedualxi{\alpha^{'}}
        \bigg)
        \wedge\basexi{\indexi{k+1}}
        \wedge\cdots\wedge
        \basexi{\indexi{p}}
        \wedge
        \basedualxi{\indexJp\backslash\{\alpha^{'}\}}
$
\\
$
        =
        \displaystyle\sum_{s\in \indexIc}
        \sgn
            {\indexi{1},\cdots,\indexi{k-1},s,\alpha^{'},\indexi{k+1},\cdots,\indexi{p},\indexJp\backslash\{\alpha^{'}\}}
            {\order{(\indexI\backslash\{\indexi{k}\})\cup\{s\}},\indexJp}
        \coefB{\indexi{k}}{s \alpha^{'}}
        \basexi{\order{(\indexI\backslash\{\indexi{k}\})\cup\{s\}}}
        \wedge
        \basedualxi{\indexJp}
$,
    \\
    \item  
$
        \qquad
        \basexi{\indexi{1}}
            \wedge\cdots\wedge
            \basexi{\indexi{k-1}}\wedge
            \bigg(
                \displaystyle\sum_{s\in \indexIc}
                \displaystyle\sum_{t^{'}\in \indexJpc}
                \coefB{\indexi{k}}{s t^{'}}
                \basexi{s}
                \wedge
                \basedualxi{t^{'}}
            \bigg)
            \wedge\basexi{\indexi{k+1}}
            \wedge\cdots\wedge
            \basexi{\indexi{p}}
            \wedge
            \basedualxi{\indexJp\backslash\{\alpha^{'}\}}
$
\\
$
        =
        \displaystyle\sum_{s\in \indexIc}
        \displaystyle\sum_{t^{'}\in \indexJpc}
        \begin{pmatrix*}[l]
            \sgn
                {\indexi{1},\cdots,\indexi{k-1},s,t^{'},\indexi{k+1},\cdots,\indexi{p},\indexJp\backslash\{\alpha^{'}\}}
                {\order{(\indexI\backslash\{\indexi{k}\})\cup\{s\}},\order{(\indexJp\backslash\alpha^{'})\cup{t^{'}}}}
            \\
            \coefB{\indexi{k}}{s t^{'}}
            \basexi{\order{(\indexI\backslash\{\indexi{k}\})\cup\{s\}}}
            \wedge
            \basedualxi{\order{(\indexJp\backslash\alpha^{'})\cup{t^{'}}}}
        \end{pmatrix*}   
$,
    \item 
$
        \qquad
        \basexi{\indexI}
        \wedge
        \basedualxi{\indexjp{1}}
        \wedge\cdots\wedge
        \basedualxi{\indexjp{l-1}}\wedge
        \frac{1}{2}
        \begin{pmatrix*}[l]
            \coefD{\indexjp{l}}{\indexjp{l} \alpha^{'}}
            \basedualxi{\indexjp{l}}
            \wedge
            \basedualxi{\alpha^{'}}
            \\
            +
            \\
            \coefD{\indexjp{l}}{\alpha^{'} \indexjp{l}}
            \basedualxi{\alpha^{'}}
            \wedge
            \basedualxi{\indexjp{l}}
        \end{pmatrix*}
        \wedge\basedualxi{\indexjp{l+1}}
        \wedge\cdots\wedge
        \widehat{\basedualxi{\alpha^{'}}}
        \wedge\cdots\wedge
        \basedualxi{\indexjp{q}}
$
\\
$
        =
        \basexi{\indexI}
        \wedge
        \basedualxi{\indexjp{1}}
        \wedge\cdots\wedge
        \basedualxi{\indexjp{l-1}}\wedge
            \coefD{\indexjp{l}}{\order{\indexjp{l} \alpha^{'}}}
            \basedualxi{\order{\indexjp{l} \alpha^{'}}}
        \wedge\basedualxi{\indexjp{l+1}}
        \wedge\cdots\wedge
        \widehat{\basedualxi{\alpha^{'}}}
        \wedge\cdots\wedge
        \basedualxi{\indexjp{q}}
$
\\
$
        =
        \sgn
            {\indexjp{1},\cdots,\indexjp{l-1},\order{\indexjp{l},\alpha^{'}},\indexjp{l+1},\cdots,\widehat{\alpha^{'}},\cdots,\indexjp{q}}
            {\indexJp}
        \coefD{\indexjp{l}}{\order{\indexjp{l} \alpha^{'}}}
        \basexi{\indexI}
        \wedge
        \basedualxi{\indexJp}
$,
    \\
    \item 
$
        \qquad
        \basexi{\indexI}
        \wedge
        \basedualxi{\indexjp{1}}
        \wedge\cdots\wedge
        \basedualxi{\indexjp{l-1}}\wedge
        \frac{1}{2}
        \displaystyle\sum_{t^{'}\in\indexJpc}
        \begin{pmatrix*}[l]
            \coefD{\indexjp{l}}{\indexjp{l} t^{'}}
            \basedualxi{\indexjp{l}}
            \wedge
            \basedualxi{t^{'}}
            \\
            +
            \\
            \coefD{\indexjp{l}}{t^{'} \indexjp{l}}
            \basedualxi{t^{'}}
            \wedge
            \basedualxi{\indexjp{l}}
        \end{pmatrix*}
        \wedge\basedualxi{\indexjp{l+1}}
        \wedge\cdots\wedge
        \widehat{\basedualxi{\alpha^{'}}}
        \wedge\cdots\wedge
        \basedualxi{\indexjp{q}}
$
\\
$
        =
        \basexi{\indexI}
            \wedge
            \basedualxi{\indexjp{1}}
            \wedge\cdots\wedge
            \basedualxi{\indexjp{l-1}}\wedge
                \displaystyle\sum_{t^{'}\in\indexJpc}
                \coefD{\indexjp{l}}{\order{ \indexjp{l} t^{'}} }
                \basedualxi{\order {\indexjp{l} t^{'}}}
            \wedge\basedualxi{\indexjp{l+1}}
            \wedge\cdots\wedge
            \widehat{\basedualxi{\alpha^{'}}}
            \wedge\cdots\wedge
            \basedualxi{\indexjp{q}}
$
\\
$
        =
        \displaystyle\sum_{t^{'}\in\indexJpc}
        \sgn
            {\indexjp{1},\cdots,\indexjp{l-1},\order{\indexjp{l},t^{'}},\indexjp{l+1},\cdots,\widehat{\alpha^{'}},\cdots,\indexjp{q}}
            {\order{(\indexJp\backslash\{\alpha^{'}\})\cup\{t^{'}\}}}
        \coefD{\indexjp{l}}{\order{\indexjp{l} t^{'}}}
        \basexi{\indexI}
        \wedge
        \basedualxi{\order{(\indexJp\backslash\{\alpha^{'}\})\cup\{t^{'}\}}}
$,
    \\
    \item 
$
        \qquad
        \basexi{\indexI}
        \wedge
        \basedualxi{\indexjp{1}}
        \wedge\cdots\wedge
        \basedualxi{\indexjp{l-1}}\wedge
        \frac{1}{2}
        \displaystyle\sum_{t^{'}\in\indexJpc}
        \begin{pmatrix*}[l]
            \coefD{\indexjp{l}}{\alpha^{'} t^{'}}
            \basedualxi{\alpha^{'}}
            \wedge
            \basedualxi{t^{'}}
            \\
            +
            \\
            \coefD{\indexjp{l}}{t^{'} \alpha^{'}}
            \basedualxi{t^{'}}
            \wedge
            \basedualxi{\alpha^{'}}
        \end{pmatrix*}
        \wedge\basedualxi{\indexjp{l+1}}
        \wedge\cdots\wedge
        \widehat{\basedualxi{\alpha^{'}}}
        \wedge\cdots\wedge
        \basedualxi{\indexjp{q}}
$
\\
$
        =
        \basexi{\indexI}
        \wedge
        \basedualxi{\indexjp{1}}
        \wedge\cdots\wedge
        \basedualxi{\indexjp{l-1}}\wedge
            \displaystyle\sum_{t^{'}\in\indexJpc}
            \coefD{\indexjp{l}}{\order{\alpha^{'} t^{'}}}
            \basedualxi{\order{ \alpha^{'}  t^{'}}}
        \wedge\basedualxi{\indexjp{l+1}}
        \wedge\cdots\wedge
        \widehat{\basedualxi{\alpha^{'}}}
        \wedge\cdots\wedge
        \basedualxi{\indexjp{q}}
$
\\
$
        =
        \displaystyle\sum_{t^{'}\in\indexJpc}
        \sgn
            {\indexjp{1},\cdots,\indexjp{l-1},\order{\alpha^{'},t^{'}},\indexjp{l+1},\cdots,\widehat{\alpha^{'}},\cdots,\indexjp{q}}
            {\order{(\indexJp\backslash\{\indexjp{l}\})\cup\{t^{'}\}}}
        \coefD{\indexjp{l}}{\order{\alpha^{'} t^{'}}}
        \basexi{\indexI}
        \wedge
        \basedualxi{\order{(\indexJp\backslash\{\indexjp{l}\})\cup\{t^{'}\}}}
$,
    \\
    \item 
$
        \qquad
        \begin{matrix*}[l]
            \basexi{\indexI}
            \wedge
            \basedualxi{\indexjp{1}}
            \wedge\cdots\wedge
            \basedualxi{\indexjp{l-1}}\wedge
            \frac{1}{2}
            \bigg(
                \displaystyle\sum_{t^{'}\in\indexJpc}
                \displaystyle\sum_{u^{'}\in\indexJpc}
                \coefD{\indexjp{l}}{u^{'} t^{'}}
                \basedualxi{u^{'}}
                \wedge
                \basedualxi{t^{'}}
            \bigg)
            \wedge\basedualxi{\indexjp{l+1}}
            \wedge\cdots\wedge
            \widehat{\basedualxi{\alpha^{'}}}
            \wedge\cdots
            \\
            \wedge\basedualxi{\indexjp{q}}
        \end{matrix*}
$
\\
$
        =
        \displaystyle\sum_{t^{'}\in\indexJpc}
        \displaystyle\sum_{u^{'}\in\indexJpc}
        \begin{pmatrix*}[l]
            \sgn
                {\indexjp{1},\cdots,\indexjp{l-1},\order{t^{'},u^{'}},\indexjp{l+1},\cdots,\widehat{\alpha^{'}},\cdots,\indexjp{q}}
                {\order{(\indexJp\backslash\{\indexjp{l},\alpha^{'}\})\cup\{t^{'},u^{'}\}}}
            \\
            \frac{1}{2}
            \coefD{\indexjp{l}}{\order{t^{'} u^{'}}}
            \basexi{\indexI}
            \wedge
            \basedualxi{\order{(\indexJp\backslash\{\indexjp{l}, \alpha^{'}\})\cup\{t^{'},u^{'}\}}}
        \end{pmatrix*}
$.
    \\
\end{itemize}
Using the above properties, we calculate $\LieagbdPartialbar\,\contract{\basedualxi{\alpha}}(\basexi{\indexI}\wedge\basedualxi{\indexJp})$ for $\alpha\in\indexJ$ as a sum of eight groups of terms:
\begin{flalign}
    &
        \displaystyle\sum_{\indexi{k}\in\indexI}
        2
        \begin{pmatrix*}[l]
            \sgn
                {\indexI,\indexJp}
                {\alpha^{'},\indexI,\indexJp\backslash\{\alpha^{'}\}}
            \sgn
                {\indexI}
                {\indexi{k},\indexI\backslash\{\indexi{k}\}}
            \sgn
                {\indexi{1},\cdots,\indexi{k},\alpha^{'},\indexi{k+1},\cdots,\indexi{p},\indexJp\backslash\{\alpha^{'}\}}
                {\indexI,\indexJp}
            \\
            \coefB{\indexi{k}}{\indexi{k} \alpha^{'}}
            \basexi{\indexI}
            \wedge
            \basedualxi{\indexJp}
        \end{pmatrix*}
        &&
        \notag\\
    &   +&&
        \notag\\
    &       
        \displaystyle\sum_{\indexi{k}\in\indexI}
        \displaystyle\sum_{t^{'}\in\indexJpc}
        2
        \begin{pmatrix*}[l]
            \sgn
                {\indexI,\indexJp}
                {\alpha^{'},\indexI,\indexJp\backslash\{\alpha^{'}\}}
            \sgn
                {\indexI}
                {\indexi{k},\indexI\backslash\{\indexi{k}\}}
            \sgn
                {\indexi{1},\cdots,\indexi{k},t^{'},\indexi{k+1},\cdots,\indexi{p},\indexJp\backslash\{\alpha^{'}\}}
                {\indexI,\order{(\indexJp\backslash\{\alpha^{'}\})\cup\{t^{'}\}}}
            \\
            \coefB{\indexi{k}}{\indexi{k} t^{'}}
            \basexi{\indexI}
            \wedge
            \basedualxi{\order{(\indexJp\backslash\{\alpha^{'}\})\cup\{t^{'}\}}}
        \end{pmatrix*}
        &&
        \notag\\
    &   +&&
        \notag\\
    &       
        \displaystyle\sum_{\indexi{k}\in\indexI}
        \displaystyle\sum_{s\in\indexIc}
        2
        \begin{pmatrix*}[l]
            \sgn
                {\indexI,\indexJp}
                {\alpha^{'},\indexI,\indexJp\backslash\{\alpha^{'}\}}
            \sgn
                {\indexI}
                {\indexi{k},\indexI\backslash\{\indexi{k}\}}
            \sgn
                {\indexi{1},\cdots,\indexi{k-1},s,\alpha^{'},\indexi{k+1},\cdots,\indexi{p},\indexJp\backslash\{\alpha^{'}\}}
                {\order{(\indexI\backslash\{\indexi{k}\})\cup\{s\}},\indexJp}
            \\
            \coefB{\indexi{k}}{s \alpha^{'}}
            \basexi{\order{(\indexI\backslash\{\indexi{k}\})\cup\{s\}}}
            \wedge
            \basedualxi{\indexJp}
        \end{pmatrix*}
        &&
        \notag\\
    &   +&&
        \notag\\
    &   
        \displaystyle\sum_{\indexi{k}\in\indexI}
        \displaystyle\sum_{s\in\indexIc}
        \displaystyle\sum_{t^{'}\in\indexJpc}
        2
        \begin{pmatrix*}[l]
            \sgn
                {\indexI,\indexJp}
                {\alpha^{'},\indexI,\indexJp\backslash\{\alpha^{'}\}}
            \sgn
                {\indexI}
                {\indexi{k},\indexI\backslash\{\indexi{k}\}}
            \sgn
                {\indexi{1},\cdots,\indexi{k-1},s,t^{'},\indexi{k+1},\cdots,\indexi{p},\indexJp\backslash\{\alpha^{'}\}}
                {\order{(\indexI\backslash\{\indexi{k}\})\cup\{s\}},\order{(\indexJp\backslash\alpha^{'})\cup{t^{'}}}}
            \\
            \coefB{\indexi{k}}{s t^{'}}
            \basexi{\order{(\indexI\backslash\{\indexi{k}\})\cup\{s\}}}
            \wedge
            \basedualxi{\order{(\indexJp\backslash\alpha^{'})\cup{t^{'}}}}
        \end{pmatrix*}
        &&
        \notag\\
    &   +&&
        \notag\\
    &        
        \displaystyle\sum_{\indexjp{l}\in\indexJp\backslash\{\alpha^{'}\}}
        2
        \begin{pmatrix*}[l]
            \sgn
                {\indexI,\indexJp}
                {\alpha^{'},\indexI,\indexJp\backslash\{\alpha^{'}\}}
            \sgn
                {\indexI,\indexJp\backslash\{\alpha^{'}\}}
                {\indexjp{l},\indexI,\indexJp\backslash\{\alpha^{'},\indexjp{l}\}}
            \sgn
                {\indexjp{1},\cdots,\indexjp{l-1},\order{\indexjp{l},\alpha^{'}},\indexjp{l+1},\cdots,\widehat{\alpha^{'}},\cdots,\indexjp{q}}
                {\indexJp}
            \\
            \coefD{\indexjp{l}}{\order{\indexjp{l} \alpha^{'}}}
            \basexi{\indexI}
            \wedge
            \basedualxi{\indexJp}
        \end{pmatrix*}
        &&
        \notag\\
    &   +&&
        \notag\\
    &        
        \displaystyle\sum_{\indexjp{l}\in\indexJp\backslash\{\alpha^{'}\}}
        \displaystyle\sum_{t^{'}\in\indexJpc}
        2
        \begin{pmatrix*}[l]
            \sgn
                {\indexI,\indexJp}
                {\alpha^{'},\indexI,\indexJp\backslash\{\alpha^{'}\}}
            \sgn
                {\indexI,\indexJp\backslash\{\alpha^{'}\}}
                {\indexjp{l},\indexI,\indexJp\backslash\{\alpha^{'},\indexjp{l}\}}
            \sgn
                {\indexjp{1},\cdots,\indexjp{l-1},\order{\indexjp{l},t^{'}},\indexjp{l+1},\cdots,\widehat{\alpha^{'}},\cdots,\indexjp{q}}
                {\order{(\indexJp\backslash\{\alpha^{'}\})\cup\{t^{'}\}}}
            \\
            \coefD{\indexjp{l}}{\order{\indexjp{l} t^{'}}}
            \basexi{\indexI}
            \wedge
            \basedualxi{\order{(\indexJp\backslash\{\alpha^{'}\})\cup\{t^{'}\}}}
        \end{pmatrix*}
        &&
        \notag\\
    &   +&&
        \notag\\
    &       
        \displaystyle\sum_{\indexjp{l}\in\indexJp\backslash\{\alpha^{'}\}}
        \displaystyle\sum_{t^{'}\in\indexJpc}
        2
        \begin{pmatrix*}[l]
            \sgn
                {\indexI,\indexJp}
                {\alpha^{'},\indexI,\indexJp\backslash\{\alpha^{'}\}}
            \sgn
                {\indexI,\indexJp\backslash\{\alpha^{'}\}}
                {\indexjp{l},\indexI,\indexJp\backslash\{\alpha^{'},\indexjp{l}\}}
            \sgn
                {\indexjp{1},\cdots,\indexjp{l-1},\order{\alpha^{'},t^{'}},\indexjp{l+1},\cdots,\widehat{\alpha^{'}},\cdots,\indexjp{q}}
                {\order{(\indexJp\backslash\{\indexjp{l}\})\cup\{t^{'}\}}}
            \\
            \coefD{\indexjp{l}}{\order{\alpha^{'} t^{'}}}
            \basexi{\indexI}
            \wedge
            \basedualxi{\order{(\indexJp\backslash\{\indexjp{l}\})\cup\{t^{'}\}}}
        \end{pmatrix*}
        &&
        \notag\\
    &   +&&
        \notag\\
    &        
        \displaystyle\sum_{\indexjp{l}\in\indexJp\backslash\{\alpha^{'}\}}
        \displaystyle\sum_{t^{'}\in\indexJpc}
        \displaystyle\sum_{u^{'}\in\indexJpc}
        2
        \begin{pmatrix*}[l]
            \sgn
                {\indexI,\indexJp}
                {\alpha^{'},\indexI,\indexJp\backslash\{\alpha^{'}\}}
            \sgn
                {\indexI,\indexJp\backslash\{\alpha^{'}\}}
                {\indexjp{l},\indexI,\indexJp\backslash\{\alpha^{'},\indexjp{l}\}}
            \\
            \sgn
                {\indexjp{1},\cdots,\indexjp{l-1},\order{t^{'},u^{'}},\indexjp{l+1},\cdots,\widehat{\alpha^{'}},\cdots,\indexjp{q}}
                {\order{(\indexJp\backslash\{\indexjp{l},\alpha^{'}\})\cup\{t^{'},u^{'}\}}}
            \\
            \frac{1}{2}
            \coefD{\indexjp{l}}{\order{t^{'} u^{'}}}
            \basexi{\indexI}
            \wedge
            \basedualxi{\order{(\indexJp\backslash\{\indexjp{l}, \alpha^{'}\})\cup\{t^{'},u^{'}\}}}
        \end{pmatrix*}.
        &&
    \label{EQ-For (alpha, I, J), calculation for LHS, extra 1-unsimplified result of part 2, alpha in J}
\end{flalign}

We simplify the coefficients in Lemmas \ref{lemma-simplification for LHS, extra 1, part 2-lemma 1}–\ref{lemma-simplification for LHS, extra 1, part 2-lemma 8} in Section $\ref{apex-simplification for LHS, extra 1, part 2}$. The simplified expression of $\LieagbdPartialbar\,\contract{\basedualxi{\alpha}}(\basexi{\indexI}\wedge\basedualxi{\indexJp})$ for $\alpha\in\indexJ$ is:

\begin{flalign}
    &
        \displaystyle\sum_{\indexi{k}\in\indexI}
        2
        \begin{pmatrix*}[l]
            (-1)
            \coefB{\indexi{k}}{\indexi{k} \alpha^{'}}
            \basexi{\indexI}
            \wedge
            \basedualxi{\indexJp}
        \end{pmatrix*}
        &&
        \notag\\
    &   +&&
        \notag\\
    &       
        \displaystyle\sum_{\indexi{k}\in\indexI}
        \displaystyle\sum_{t^{'}\in\indexJpc}
        2
        \begin{pmatrix*}[l]
            -(-1)^{k}
            \sgn
                {\indexI}
                {\indexi{k},\indexI\backslash\{\indexi{k}\}}
            \sgn
                {t^{'},\indexI,\indexJp}
                {\alpha,\indexI,\order{(\indexJp\backslash\{\alpha^{'}\})\cup\{t^{'}\}}}
            \coefB{\indexi{k}}{\indexi{k} t^{'}}
            \basexi{\indexI}
            \wedge
            \basedualxi{\order{(\indexJp\backslash\{\alpha^{'}\})\cup\{t^{'}\}}}
        \end{pmatrix*}
        &&
        \notag\\
    &   +&&
        \notag\\
    &       
        \displaystyle\sum_{\indexi{k}\in\indexI}
        \displaystyle\sum_{s\in\indexIc}
        2
        \begin{pmatrix*}[l]
            \sgn
                {s,\indexI,\indexJp}
                {\indexi{k},\order{(\indexI\backslash\{\indexi{k}\})\cup\{s\}},\indexJp}
            \coefB{\indexi{k}}{s \alpha^{'}}
            \basexi{\order{(\indexI\backslash\{\indexi{k}\})\cup\{s\}}}
            \wedge
            \basedualxi{\indexJp}
        \end{pmatrix*}
        &&
        \notag\\
    &   +&&
        \notag\\
    &   
        \displaystyle\sum_{\indexi{k}\in\indexI}
        \displaystyle\sum_{s\in\indexIc}
        \displaystyle\sum_{t^{'}\in\indexJpc}
        2
        \begin{pmatrix*}[l]
            (-1)
            \sgn
                {s,t^{'},\indexI,\indexJp}
                {\indexi{k},\alpha^{'},\order{(\indexI\backslash\{\indexi{k}\})\cup\{s\}},\order{(\indexJp\backslash\alpha^{'})\cup{t^{'}}}}
            \\
            \coefB{\indexi{k}}{s t^{'}}
            \basexi{\order{(\indexI\backslash\{\indexi{k}\})\cup\{s\}}}
            \wedge
            \basedualxi{\order{(\indexJp\backslash\alpha^{'})\cup{t^{'}}}}
        \end{pmatrix*}
        &&
        \notag\\
    &   +&&
        \notag\\
    &        
        \displaystyle\sum_{\indexjp{l}\in\indexJp\backslash\{\alpha^{'}\}}
        2
        \begin{pmatrix*}[l]
            \sgn
                {\order{\indexjp{l},\alpha^{'}}}
                {\alpha^{'},\indexjp{l}}
            \coefD{\indexjp{l}}{\order{\indexjp{l} \alpha^{'}}}
            \basexi{\indexI}
            \wedge
            \basedualxi{\indexJp}
        \end{pmatrix*}
        &&
        \notag\\
    &   +&&
        \notag\\
    &        
        \displaystyle\sum_{\indexjp{l}\in\indexJp\backslash\{\alpha^{'}\}}
        \displaystyle\sum_{t^{'}\in\indexJpc}
        2
        \begin{pmatrix*}[l]
            (-1)
            \sgn
                {\order{\indexjp{l},t^{'}}}
                {t^{'},\indexjp{l}}
            \sgn
                {t^{'},\indexI,\indexJp}
                {\alpha^{'},\indexI,\order{(\indexJp\backslash\{\alpha^{'}\})\cup\{t^{'}\}}}
            \\
            \coefD{\indexjp{l}}{\order{\indexjp{l} t^{'}}}
            \basexi{\indexI}
            \wedge
            \basedualxi{\order{(\indexJp\backslash\{\alpha^{'}\})\cup\{t^{'}\}}}
        \end{pmatrix*}
        &&
        \notag\\
    &   +&&
        \notag\\
    &       
        \displaystyle\sum_{\indexjp{l}\in\indexJp\backslash\{\alpha^{'}\}}
        \displaystyle\sum_{t^{'}\in\indexJpc}
        2
        \begin{pmatrix*}[l]
            \sgn
                {\order{\alpha^{'},t^{'}}}
                {t^{'},\alpha^{'}}
            \sgn
                {t^{'},\indexI,\indexJp}
                {\indexjp{l},\indexI,\order{(\indexJp\backslash\{\indexjp{l}\})\cup\{t^{'}\}}}
            \\
            \coefD{\indexjp{l}}{\order{\alpha^{'} t^{'}}}
            \basexi{\indexI}
            \wedge
            \basedualxi{\order{(\indexJp\backslash\{\indexjp{l}\})\cup\{t^{'}\}}}
        \end{pmatrix*}
        &&
        \notag\\
    &   +&&
        \notag\\
    &        
        \displaystyle\sum_{\indexjp{l}\in\indexJp\backslash\{\alpha^{'}\}}
        \displaystyle\sum_{t^{'}\in\indexJpc}
        \displaystyle\sum_{u^{'}\in\indexJpc}
        2
        \begin{pmatrix*}[l]
            (-1)
            \sgn
                {\order{t^{'},u^{'}},\indexI,\indexJp}
                {\indexjp{l},\alpha^{'},\indexI,\order{(\indexJp\backslash\{\indexjp{l},\alpha^{'}\})\cup\{t^{'},u^{'}\}}}
            \\
            \frac{1}{2}
            \coefD{\indexjp{l}}{\order{t^{'} u^{'}}}
            \basexi{\indexI}
            \wedge
            \basedualxi{\order{(\indexJp\backslash\{\indexjp{l}, \alpha^{'}\})\cup\{t^{'},u^{'}\}}}
        \end{pmatrix*}.
        &&
    \label{EQ-For (alpha, I, J), calculation for LHS, extra 1-simplified result of part 2, alpha in J}
\end{flalign}
\keyword{Step 3:} We combine the computations of $
\LieagbdPartialbar\,\contract{\basedualxi{\alpha^{'}}}
(\basexi{\indexI}\wedge\basedualxi{\indexJp})
$ and $
\contract{\basedualxi{\alpha^{'}}}\,\LieagbdPartialbar
(\basexi{\indexI}\wedge\basedualxi{\indexJp})
$. The first step is to consider the case that $\alpha\in\indexJ$. The expression of $
(
\LieagbdPartialbar\,\contract{\basedualxi{\alpha^{'}}}
+
\contract{\basedualxi{\alpha^{'}}}\,\LieagbdPartialbar
)
(\basexi{\indexI}\wedge\basedualxi{\indexJp})
$ is equal to (\ref{EQ-For (alpha, I, J), calculation for LHS, extra 1-simplified result of part 1, alpha in J}) + (\ref{EQ-For (alpha, I, J), calculation for LHS, extra 1-simplified result of part 2, alpha in J}):

\begin{align*}
    &2
    \begin{pmatrix*}[l]
        \displaystyle\sum_{\indexi{k}\in\indexI}
        \displaystyle\sum_{t^{'}\in\indexJpc}
        \begin{pmatrix*}[l]
            (-1)^{k}
            \sgn
                {\indexI}
                {\indexi{k},\indexI\backslash\{\indexi{k}\}}
            \sgn
                {t^{'},\indexI,\indexJp}
                {\alpha,\indexI,\order{(\indexJp\backslash\{\alpha^{'}\})\cup\{t^{'}\}}}
            \\
            \coefB{\indexi{k}}{\indexi{k} t^{'}}
            \basexi{\indexI}
            \wedge
            \basedualxi{\order{(\indexJp\backslash\{\alpha^{'}\})\cup\{t^{'}\}}}
        \end{pmatrix*}
        \\
        +
        \\
        \displaystyle\sum_{\indexi{k}\in\indexI}
        \displaystyle\sum_{s\in\indexIc}
        \displaystyle\sum_{t^{'}\in\indexJpc}
        \begin{pmatrix*}[l]
            \sgn
                {s,t^{'},\indexI,\indexJp}
                {\indexi{k},\alpha^{'},\order{(\indexI\backslash\{\indexi{k}\})\cup\{s\}},\order{(\indexJp\backslash\{\alpha^{'}\})\cup\{t^{'}\}}}
            \\
            \coefB{\indexi{k}}{s t^{'}}
            \basexi{\order{(\indexI\backslash\{\indexi{k}\})\cup\{s\}}}
            \wedge
            \basedualxi{\order{(\indexJp\backslash\{\alpha^{'}\})\cup\{t^{'}\}}}
        \end{pmatrix*}
        \\
        +
        \\
        \displaystyle\sum_{\indexjp{l}\in\indexJp}
        \displaystyle\sum_{t^{'}\in\indexJpc}
        \begin{pmatrix*}[l]
            \sgn
                {\order{\indexjp{l},t^{'}}}
                {t^{'},\indexjp{l}}
            \sgn
                {t^{'},\indexI,\indexJp}
                {\alpha^{'},\indexI,\order{(\indexJp\backslash\{\alpha^{'}\})\cup\    {t^{'}\}}}}
            \\
            \coefD{\indexjp{l}}{\order{\indexjp{l},t^{'}}}
            \basexi{\indexI}
            \wedge
            \basedualxi{\order{(\indexJp\backslash\{\alpha^{'}\})\cup\{t^{'}\}}}
        \end{pmatrix*}
        \\
        +
        \\
        \displaystyle\sum_{\indexjp{l}\in\indexJp\backslash\{\alpha^{'}\}}
        \displaystyle\sum_{t^{'}\in\indexJpc}
        \displaystyle\sum_{u^{'}\in\indexJpc}
        \begin{pmatrix*}[l]
            \sgn
                {\order{t^{'},u^{'}},\indexI,\indexJp}
                {\indexjp{l},\alpha^{'},\indexI,\order{(\indexJp\backslash\{\indexjp{l},\alpha^{'}\})\cup\{t^{'},u^{'}\}}}
            \\
            \frac{1}{2}
            \coefD{\indexjp{l}}{\order{t^{'},u^{'}}}
            \basexi{\indexI}
            \wedge
            \basedualxi{\order{(\indexJp\backslash\{\indexjp{l},\alpha^{'}\})\cup\{t^{'},u^{'}\}}}
        \end{pmatrix*}
    \end{pmatrix*}
    \\
    +&
    \\
        &
        \displaystyle\sum_{\indexi{k}\in\indexI}
        2
        \begin{pmatrix*}[l]
            (-1)
            \coefB{\indexi{k}}{\indexi{k} \alpha^{'}}
            \basexi{\indexI}
            \wedge
            \basedualxi{\indexJp}
        \end{pmatrix*}
    \\
    +&
    \\
    &   
        \displaystyle\sum_{\indexi{k}\in\indexI}
        \displaystyle\sum_{t^{'}\in\indexJpc}
        2
        \begin{pmatrix*}[l]
            -(-1)^{k}
            \sgn
                {\indexI}
                {\indexi{k},\indexI\backslash\{\indexi{k}\}}
            \sgn
                {t^{'},\indexI,\indexJp}
                {\alpha,\indexI,\order{(\indexJp\backslash\{\alpha^{'}\})\cup\{t^{'}\}}}
            \coefB{\indexi{k}}{\indexi{k} t^{'}}
            \basexi{\indexI}
            \wedge
            \basedualxi{\order{(\indexJp\backslash\{\alpha^{'}\})\cup\{t^{'}\}}}
        \end{pmatrix*}
    \\
    +&
    \\
    &   
        \displaystyle\sum_{\indexi{k}\in\indexI}
        \displaystyle\sum_{s\in\indexIc}
        2
        \begin{pmatrix*}[l]
            \sgn
                {s,\indexI,\indexJp}
                {\indexi{k},\order{(\indexI\backslash\{\indexi{k}\})\cup\{s\}},\indexJp}
            \coefB{\indexi{k}}{s \alpha^{'}}
            \basexi{\order{(\indexI\backslash\{\indexi{k}\})\cup\{s\}}}
            \wedge
            \basedualxi{\indexJp}
        \end{pmatrix*}
    \\
    +&
    \\
    &   
        \displaystyle\sum_{\indexi{k}\in\indexI}
        \displaystyle\sum_{s\in\indexIc}
        \displaystyle\sum_{t^{'}\in\indexJpc}
        2
        \begin{pmatrix*}[l]
            (-1)
            \sgn
                {s,t^{'},\indexI,\indexJp}
                {\indexi{k},\alpha^{'},\order{(\indexI\backslash\{\indexi{k}\})\cup\{s\}},\order{(\indexJp\backslash\alpha^{'})\cup{t^{'}}}}
            \\
            \coefB{\indexi{k}}{s t^{'}}
            \basexi{\order{(\indexI\backslash\{\indexi{k}\})\cup\{s\}}}
            \wedge
            \basedualxi{\order{(\indexJp\backslash\alpha^{'})\cup{t^{'}}}}
        \end{pmatrix*}
    \\
    +&
    \\
    &   
        \displaystyle\sum_{\indexjp{l}\in\indexJp\backslash\{\alpha^{'}\}}
        2
        \begin{pmatrix*}[l]
            \sgn
                {\order{\indexjp{l},\alpha^{'}}}
                {\alpha^{'},\indexjp{l}}
            \coefD{\indexjp{l}}{\order{\indexjp{l} \alpha^{'}}}
            \basexi{\indexI}
            \wedge
            \basedualxi{\indexJp}
        \end{pmatrix*}
    \\
    +&
    \\
    &   
        \displaystyle\sum_{\indexjp{l}\in\indexJp\backslash\{\alpha^{'}\}}
        \displaystyle\sum_{t^{'}\in\indexJpc}
        2
        \begin{pmatrix*}[l]
            (-1)
            \sgn
                {\order{\indexjp{l},t^{'}}}
                {t^{'},\indexjp{l}}
            \sgn
                {t^{'},\indexI,\indexJp}
                {\alpha^{'},\indexI,\order{(\indexJp\backslash\{\alpha^{'}\})\cup\{t^{'}\}}}
            \\
            \coefD{\indexjp{l}}{\order{\indexjp{l} t^{'}}}
            \basexi{\indexI}
            \wedge
            \basedualxi{\order{(\indexJp\backslash\{\alpha^{'}\})\cup\{t^{'}\}}}
        \end{pmatrix*}
    \\
    +&
    \\
    &   
        \displaystyle\sum_{\indexjp{l}\in\indexJp\backslash\{\alpha^{'}\}}
        \displaystyle\sum_{t^{'}\in\indexJpc}
        2
        \begin{pmatrix*}[l]
            \sgn
                {\order{\alpha^{'},t^{'}}}
                {t^{'},\alpha^{'}}
            \sgn
                {t^{'},\indexI,\indexJp}
                {\indexjp{l},\indexI,\order{(\indexJp\backslash\{\indexjp{l}\})\cup\{t^{'}\}}}
            \\
            \coefD{\indexjp{l}}{\order{\alpha^{'} t^{'}}}
            \basexi{\indexI}
            \wedge
            \basedualxi{\order{(\indexJp\backslash\{\indexjp{l}\})\cup\{t^{'}\}}}
        \end{pmatrix*}
    \\
    +&
    \\
    &   
        \displaystyle\sum_{\indexjp{l}\in\indexJp\backslash\{\alpha^{'}\}}
        \displaystyle\sum_{t^{'}\in\indexJpc}
        \displaystyle\sum_{u^{'}\in\indexJpc}
        2
        \begin{pmatrix*}[l]
            (-1)
            \sgn
                {\order{t^{'},u^{'}},\indexI,\indexJp}
                {\indexjp{l},\alpha^{'},\indexI,\order{(\indexJp\backslash\{\indexjp{l},\alpha^{'}\})\cup\{t^{'},u^{'}\}}}
            \\
            \frac{1}{2}
            \coefD{\indexjp{l}}{\order{t^{'} u^{'}}}
            \basexi{\indexI}
            \wedge
            \basedualxi{\order{(\indexJp\backslash\{\indexjp{l}, \alpha^{'}\})\cup\{t^{'},u^{'}\}}}
        \end{pmatrix*}
    .
\end{align*}
We merge the terms that have the same wedge product of frames and simplify the expression of $
(
\LieagbdPartialbar\,\contract{\basedualxi{\alpha^{'}}}
+
\contract{\basedualxi{\alpha^{'}}}\,\LieagbdPartialbar
)
(\basexi{\indexI}\wedge\basedualxi{\indexJp})
$ for $\alpha\in\indexJ$ to the following one:
\begin{align}
    &
        2
        \begin{pmatrix*}[l]
            \displaystyle\sum_{\indexi{k}\in\indexI}
            \displaystyle\sum_{t^{'}\in\indexJpc}
            \begin{pmatrix*}[l]
                (-1)^{k}
                \sgn
                    {\indexI}
                    {\indexi{k},\indexI\backslash\{\indexi{k}\}}
                \sgn
                    {t^{'},\indexI,\indexJp}
                    {\alpha,\indexI,\order{(\indexJp\backslash\{\alpha^{'}\})\cup\{t^{'}\}}}
                \\
                \coefB{\indexi{k}}{\indexi{k} t^{'}}
                \basexi{\indexI}
                \wedge
                \basedualxi{\order{(\indexJp\backslash\{\alpha^{'}\})\cup\{t^{'}\}}}
            \end{pmatrix*}
        \\
        +
        \\
            \displaystyle\sum_{\indexi{k}\in\indexI}
            \displaystyle\sum_{t^{'}\in\indexJpc}
            \begin{pmatrix*}[l]
                -1(-1)^{k}
                \sgn
                    {\indexI}
                    {\indexi{k},\indexI\backslash\{\indexi{k}\}}
                \sgn
                    {t^{'},\indexI,\indexJp}
                    {\alpha,\indexI,\order{(\indexJp\backslash\{\alpha^{'}\})\cup\{t^{'}\}}}
                \\
                \coefB{\indexi{k}}{\indexi{k} t^{'}}
                \basexi{\indexI}
                \wedge
                \basedualxi{\order{(\indexJp\backslash\{\alpha^{'}\})\cup\{t^{'}\}}}
            \end{pmatrix*}
        \end{pmatrix*}
\label{EQ-For (alpha, I, J), calculation for LHS-together-block 1}
    \\
    +&\notag
    \\
    &
        2
        \begin{pmatrix*}[l]
            \displaystyle\sum_{\indexi{k}\in\indexI}
            \displaystyle\sum_{s\in\indexIc}
            \displaystyle\sum_{t^{'}\in\indexJpc}
            \begin{pmatrix*}[l]
                \sgn
                    {s,t^{'},\indexI,\indexJp}
                    {\indexi{k},\alpha^{'},\order{(\indexI\backslash\{\indexi{k}\})\cup\{s\}},\order{(\indexJp\backslash\{\alpha^{'}\})\cup\{t^{'}\}}}
                \\
                \coefB{\indexi{k}}{s t^{'}}
                \basexi{\order{(\indexI\backslash\{\indexi{k}\})\cup\{s\}}}
                \wedge
                \basedualxi{\order{(\indexJp\backslash\{\alpha^{'}\})\cup\{t^{'}\}}}
            \end{pmatrix*}
        \\
        +
        \\
            \displaystyle\sum_{\indexi{k}\in\indexI}
            \displaystyle\sum_{s\in\indexIc}
            \displaystyle\sum_{t^{'}\in\indexJpc}
            \begin{pmatrix*}[l]
                (-1)
                \sgn
                    {s,t^{'},\indexI,\indexJp}
                    {\indexi{k},\alpha^{'},\order{(\indexI\backslash\{\indexi{k}\})\cup\{s\}},\order{(\indexJp\backslash\alpha^{'})\cup{t^{'}}}}
                \\
                \coefB{\indexi{k}}{s t^{'}}
                \basexi{\order{(\indexI\backslash\{\indexi{k}\})\cup\{s\}}}
                \wedge
                \basedualxi{\order{(\indexJp\backslash\alpha^{'})\cup{t^{'}}}}
            \end{pmatrix*}
        \end{pmatrix*}
\label{EQ-For (alpha, I, J), calculation for LHS-together-block 2}
    \\
    +&\notag
    \\
    &
        2
        \begin{pmatrix*}[l] 
            \displaystyle\sum_{\indexjp{l}\in\indexJp}
            \displaystyle\sum_{t^{'}\in\indexJpc}
            \begin{pmatrix*}[l]
                \sgn
                    {\order{\indexjp{l},t^{'}}}
                    {t^{'},\indexjp{l}}
                \sgn
                    {t^{'},\indexI,\indexJp}
                    {\alpha^{'},\indexI,\order{(\indexJp\backslash\{\alpha^{'}\})\cup\    {t^{'}\}}}}
                \\
                \coefD{\indexjp{l}}{\order{\indexjp{l},t^{'}}}
                \basexi{\indexI}
                \wedge
                \basedualxi{\order{(\indexJp\backslash\{\alpha^{'}\})\cup\{t^{'}\}}}
            \end{pmatrix*}
        \\
        +
        \\
            \displaystyle\sum_{\indexjp{l}\in\indexJp\backslash\{\alpha^{'}\}}
            \displaystyle\sum_{t^{'}\in\indexJpc}
            \begin{pmatrix*}[l]
                (-1)
                \sgn
                    {\order{\indexjp{l},t^{'}}}
                    {t^{'},\indexjp{l}}
                \sgn
                    {t^{'},\indexI,\indexJp}
                    {\alpha^{'},\indexI,\order{(\indexJp\backslash\{\alpha^{'}\})\cup\{t^{'}\}}}
                \\
                \coefD{\indexjp{l}}{\order{\indexjp{l} t^{'}}}
                \basexi{\indexI}
                \wedge
                \basedualxi{\order{(\indexJp\backslash\{\alpha^{'}\})\cup\{t^{'}\}}}
            \end{pmatrix*}
        \end{pmatrix*}
\label{EQ-For (alpha, I, J), calculation for LHS-together-block 3}
    \\
    +&\notag
    \\
    &
        2
        \begin{pmatrix*}[l] 
            \displaystyle\sum_{\indexjp{l}\in\indexJp\backslash\{\alpha^{'}\}}
            \displaystyle\sum_{t^{'}\in\indexJpc}
            \displaystyle\sum_{u^{'}\in\indexJpc}
            \begin{pmatrix*}[l]
                \sgn
                    {\order{t^{'},u^{'}},\indexI,\indexJp}
                    {\indexjp{l},\alpha^{'},\indexI,\order{(\indexJp\backslash\{\indexjp{l},\alpha^{'}\})\cup\{t^{'},u^{'}\}}}
                \\
                \frac{1}{2}
                \coefD{\indexjp{l}}{\order{t^{'},u^{'}}}
                \basexi{\indexI}
                \wedge
                \basedualxi{\order{(\indexJp\backslash\{\indexjp{l},\alpha^{'}\})\cup\{t^{'},u^{'}\}}}
            \end{pmatrix*}
        \\
        +
        \\
            \displaystyle\sum_{\indexjp{l}\in\indexJp\backslash\{\alpha^{'}\}}
            \displaystyle\sum_{t^{'}\in\indexJpc}
            \displaystyle\sum_{u^{'}\in\indexJpc}
            \begin{pmatrix*}[l]
                (-1)
                \sgn
                    {\order{t^{'},u^{'}},\indexI,\indexJp}
                    {\indexjp{l},\alpha^{'},\indexI,\order{(\indexJp\backslash\{\indexjp{l},\alpha^{'}\})\cup\{t^{'},u^{'}\}}}
                \\
                \frac{1}{2}
                \coefD{\indexjp{l}}{\order{t^{'} u^{'}}}
                \basexi{\indexI}
                \wedge
                \basedualxi{\order{(\indexJp\backslash\{\indexjp{l}, \alpha^{'}\})\cup\{t^{'},u^{'}\}}}
            \end{pmatrix*}
        \end{pmatrix*}
\label{EQ-For (alpha, I, J), calculation for LHS-together-block 4}
    \\
    +&\notag
    \\
    &
        2
        \begin{pmatrix*}[l]
            \displaystyle\sum_{\indexi{k}\in\indexI}
            \begin{pmatrix*}[l]
                (-1)
                \coefB{\indexi{k}}{\indexi{k} \alpha^{'}}
                \basexi{\indexI}
                \wedge
                \basedualxi{\indexJp}
            \end{pmatrix*}
        \\
        +
        \\
            \displaystyle\sum_{\indexi{k}\in\indexI}
            \displaystyle\sum_{s\in\indexIc}
            \begin{pmatrix*}[l]
                \sgn
                    {s,\indexI}
                    {\indexi{k},\order{(\indexI\backslash\{\indexi{k}\})\cup\{s\}}}
                \coefB{\indexi{k}}{s \alpha^{'}}
                \basexi{\order{(\indexI\backslash\{\indexi{k}\})\cup\{s\}}}
                \wedge
                \basedualxi{\indexJp}
            \end{pmatrix*}
        \\
        +
        \\
            \displaystyle\sum_{\indexjp{l}\in\indexJp\backslash\{\alpha^{'}\}}
            \begin{pmatrix*}[l]
                \sgn
                    {\order{\indexjp{l},\alpha^{'}}}
                    {\alpha^{'},\indexjp{l}}
                \coefD{\indexjp{l}}{\order{\indexjp{l} \alpha^{'}}}
                \basexi{\indexI}
                \wedge
                \basedualxi{\indexJp}
            \end{pmatrix*}
        \\
        +
        \\
            \displaystyle\sum_{\indexjp{l}\in\indexJp\backslash\{\alpha^{'}\}}
            \displaystyle\sum_{t^{'}\in\indexJpc}
            \begin{pmatrix*}[l]
                \sgn
                    {\order{\alpha^{'},t^{'}}}
                    {t^{'},\alpha^{'}}
                \sgn
                    {t^{'},\indexI,\indexJp}
                    {\indexjp{l},\indexI,\order{(\indexJp\backslash\{\indexjp{l}\})\cup\{t^{'}\}}}
                \\
                \coefD{\indexjp{l}}{\order{\alpha^{'} t^{'}}}
                \basexi{\indexI}
                \wedge
                \basedualxi{\order{(\indexJp\backslash\{\indexjp{l}\})\cup\{t^{'}\}}}
            \end{pmatrix*}
        \end{pmatrix*}
\label{EQ-For (alpha, I, J), calculation for LHS-together-block 5}
    .
\end{align}
We can easily check that:
\begin{itemize}
    \item (\ref{EQ-For (alpha, I, J), calculation for LHS-together-block 1}) is equal to 0.
    \item (\ref{EQ-For (alpha, I, J), calculation for LHS-together-block 2}) is equal to 0.
    \item (\ref{EQ-For (alpha, I, J), calculation for LHS-together-block 3}) is equal to
\begin{align}
        2
        \displaystyle\sum_{t^{'}\in\indexJpc}
        \begin{pmatrix*}[l]
            \sgn
                {\order{\alpha^{'},t^{'}}}
                {t^{'},\alpha^{'}}
            \sgn
                {t^{'},\indexI,\indexJp}
                {\alpha^{'},\indexI,\order{(\indexJp\backslash\{\alpha^{'}\})\cup\    {t^{'}\}}}}
            \\
            \coefD{\alpha^{'}}{\order{\alpha^{'},t^{'}}}
            \basexi{\indexI}
            \wedge
            \basedualxi{\order{(\indexJp\backslash\{\alpha^{'}\})\cup\{t^{'}\}}}
        \end{pmatrix*}
    ,
\label{EQ-For (alpha, I, J), calculation for LHS-together-simplified block 3}
\end{align}
    since it can be viewed as 
\begin{align*}
        2
        \displaystyle\sum_{\indexjp{l}=\alpha^{'}}
        \displaystyle\sum_{t^{'}\in\indexJpc}
        \begin{pmatrix*}[l]
            \sgn
                {\order{\alpha^{'},t^{'}}}
                {t^{'},\alpha^{'}}
            \sgn
                {t^{'},\indexI,\indexJp}
                {\indexjp{l},\indexI,\order{(\indexJp\backslash\{\indexjp{l}\})\cup\    {t^{'}\}}}}
            \\
            \coefD{\alpha^{'}}{\order{\alpha^{'},t^{'}}}
            \basexi{\indexI}
            \wedge
            \basedualxi{\order{(\indexJp\backslash\{\indexjp{l}\})\cup\{t^{'}\}}}
        \end{pmatrix*}
    .
\end{align*}
    \item (\ref{EQ-For (alpha, I, J), calculation for LHS-together-block 4}) is equal to 0.
\end{itemize}
Thus, (\ref{EQ-For (alpha, I, J), calculation for LHS-together-simplified block 3}) + (\ref{EQ-For (alpha, I, J), calculation for LHS-together-block 5}) is the simplified expression of $
(
\LieagbdPartialbar\,\contract{\basedualxi{\alpha^{'}}}
+
\contract{\basedualxi{\alpha^{'}}}\,\LieagbdPartialbar
)
(\basexi{\indexI}\wedge\basedualxi{\indexJp})
$ for $\alpha\in\indexJ$. Multiplying this by $-\sqrt{-1}$, we conclude that the expression of $
-\sqrt{-1}
(
\LieagbdPartialbar\,\contract{\basedualxi{\alpha^{'}}}
+
\contract{\basedualxi{\alpha^{'}}}\,\LieagbdPartialbar
)
(\basexi{\indexI}\wedge\basedualxi{\indexJp})
$ for $\alpha\in\indexJ$ is equal to
\begin{align}
    -2\sqrt{-1}
    \begin{pmatrix*}[l]
        \displaystyle\sum_{\indexi{k}\in\indexI}
        \begin{pmatrix*}[l]
            (-1)
            \coefB{\indexi{k}}{\indexi{k} \alpha^{'}}
            \basexi{\indexI}
            \wedge
            \basedualxi{\indexJp}
        \end{pmatrix*}
    \\
    +
    \\
        \displaystyle\sum_{\indexi{k}\in\indexI}
        \displaystyle\sum_{s\in\indexIc}
        \begin{pmatrix*}[l]
            \sgn
                {s,\indexI,}
                {\indexi{k},\order{(\indexI\backslash\{\indexi{k}\})\cup\{s\}},}
            \coefB{\indexi{k}}{s \alpha^{'}}
            \basexi{\order{(\indexI\backslash\{\indexi{k}\})\cup\{s\}}}
            \wedge
            \basedualxi{\indexJp}
        \end{pmatrix*}
    \\
    +
    \\
        \displaystyle\sum_{\indexj{l}\in\indexJp\backslash\{\alpha^{'}\}}
        \begin{pmatrix*}[l]
            \sgn
                {\order{\indexjp{l},\alpha^{'}}}
                {\alpha^{'},\indexjp{l}}
            \coefD{\indexjp{l}}{\order{\indexjp{l} \alpha^{'}}}
            \basexi{\indexI}
            \wedge
            \basedualxi{\indexJp}
        \end{pmatrix*}
    \\
    +
    \\
        \displaystyle\sum_{\indexj{l}\in\indexJp}
        \displaystyle\sum_{t^{'}\in\indexJpc}
        \begin{pmatrix*}[l]
            \sgn
                {\order{\alpha^{'},t^{'}}}
                {t^{'},\alpha^{'}}
            \sgn
                {t^{'},\indexI,\indexJp}
                {\indexjp{l},\indexI,\order{(\indexJp\backslash\{\indexjp{l}\})\cup\{t^{'}\}}}
            \\
            \coefD{\indexjp{l}}{\order{\alpha^{'} t^{'}}}
            \basexi{\indexI}
            \wedge
            \basedualxi{\order{(\indexJp\backslash\{\indexjp{l}\})\cup\{t^{'}\}}}
        \end{pmatrix*}
    \end{pmatrix*}
    .
\label{EQ-For (alpha, I, J), calculation for LHS, extra 1-result, alpha in J}
\end{align}

Next, we look at the case that $\alpha\notin\indexJ$. We observe that $
-\sqrt{-1}(
\LieagbdPartialbar\,\contract{\basedualxi{\alpha^{'}}}
+
\contract{\basedualxi{\alpha^{'}}}\,\LieagbdPartialbar
)
(\basexi{\indexI}\wedge\basedualxi{\indexJp})
$ is equal to 0 + (\ref{EQ-For (alpha, I, J), calculation for LHS, extra 1-simplified the first half, alpha not in J}) multiplied by $-\sqrt{-1}$, which is computed as follows:
\begin{align}
    -2\sqrt{-1}
    \begin{pmatrix*}[l]
        \displaystyle\sum_{\indexi{k}\in\indexI}
            (-1)
            \coefB{\indexi{k}}{\indexi{k} \alpha^{'}}
            \basexi{\indexI}
            \wedge
            \basedualxi{\indexJp}
        \\
        +
        \\
        \displaystyle\sum_{\indexi{k}\in\indexI}
        \displaystyle\sum_{s\in\indexIc}
            \sgn
                {s,\indexI}
                {\indexi{k},\order{(\indexI\backslash\{\indexi{k}\})\cup\{s\}}}
            \coefB{\indexi{k}}{s \alpha^{'}}
            \basexi{\order{(\indexI\backslash\{\indexi{k}\})\cup\{s\}}}
            \wedge
            \basedualxi{\indexJp}
        \\
        +
        \\
        \displaystyle\sum_{\indexjp{l}\in\indexJp}
            \sgn
                {\order{\indexjp{l},\alpha^{'}}}
                {\alpha^{'},\indexjp{l}}
            \coefD{\indexjp{l}}{\order{\indexjp{l},\alpha^{'}}}
            \basexi{\indexI}
            \wedge
            \basedualxi{\indexJp}
        \\
        +
        \\
        \displaystyle\sum_{\indexjp{l}\in\indexJp}
        \displaystyle\sum_{t^{'}\in\indexJpc}
        \begin{pmatrix*}
            \sgn
                {\order{\alpha^{'},t^{'}}}
                {t^{'},\alpha^{'}}
            \sgn
                {t^{'},\indexI,\indexJp}
                {\indexjp{l},\indexI,\order{(\indexJp\backslash\{\indexjp{l}\})\cup\{t^{'}\}}}
            \\
            \coefD{\indexjp{l}}{\order{t^{'},\alpha^{'}}}
            \basexi{\indexI}
            \wedge
            \basedualxi{\order{(\indexJp\backslash\{\indexjp{l}\})\cup\{t^{'}\}}}
        \end{pmatrix*}
    \end{pmatrix*}
    ,
\label{EQ-For (alpha, I, J), calculation for LHS, extra 1-result, alpha not in J}
\end{align}
since $\LieagbdPartialbar\,\contract{\basedualxi{\alpha^{'}}}=0$ when $\alpha\notin\indexJ$.


We see that (\ref{EQ-For (alpha, I, J), calculation for LHS, extra 1-result, alpha in J}) and (\ref{EQ-For (alpha, I, J), calculation for LHS, extra 1-result, alpha not in J}) are same, which implies that the expression of $
(-\sqrt{-1})(\LieagbdPartialbar\,\contract{\basedualxi{\alpha^{'}}}
+
\contract{\basedualxi{\alpha^{'}}}\,\LieagbdPartialbar)
(\basexi{\indexI}\wedge\basedualxi{\indexJp})
$ does not depend on $\alpha\in\indexJ$ or $\alpha\notin\indexJ$.

\end{proof}

\begin{lemma}
\label{lemma-For (alpha, I, J), calculation for LHS-extra 2]}
\begin{align}
&
    (-\frac{\sqrt{-1}}{2})
    \displaystyle\sum_{h=1}^{n}
    \big(
        (\iota_{\basexi{h}} \, \LieagbdPartialbar \basexi{\alpha})
        \wedge
        \iota_{\basedualxi{h^{'}}}(\basexi{\indexI}\wedge\basedualxi{\indexJp})
        -
        (\iota_{\basedualxi{h^{'}}} \, \LieagbdPartialbar \basexi{\alpha})
        \wedge
        \iota_{\basexi{h}}
        (\basexi{\indexI}\wedge\basedualxi{\indexJp})
    \big)
\notag    
\\
=&
    -2\sqrt{-1}
    \begin{pmatrix*}[l]
        \displaystyle\sum_{\indexjp{l}\in\indexJp}
        \begin{matrix*}[l]
            \coefB{\alpha}{\indexj{l} \indexjp{l}}
            \basexi{\indexI}
            \wedge
            \basedualxi{\indexJp}
        \end{matrix*}
        \\
        +
        \\
        \displaystyle\sum_{\indexjp{l}\in\indexJp}
        \displaystyle\sum_{t^{'}\in\indexJpc}
        \begin{matrix*}[l]
            (-1)
            \sgn
                {t^{'},\indexI,\indexJp}
                {\indexjp{l},\indexI,\order{(\indexJp\backslash\{\indexjp{l}\})\cup\{t^{'}\}}}
            \coefB{\alpha}{\indexj{l} t^{'}}
            \basexi{\indexI}
            \wedge
            \basedualxi{\order{(\indexJp\backslash\{\indexjp{l}\})\cup\{t^{'}\}}}
        \end{matrix*}
        \\
        +
        \\
        \displaystyle\sum_{\indexi{k}\in\indexI}
        \begin{matrix*}[l]
            \coefB{\alpha}{\indexi{k} \indexip{k}}
            \basexi{\indexI}
            \wedge
            \basedualxi{\indexJp}
        \end{matrix*}
        \\
        +
        \\
        \displaystyle\sum_{\indexi{k}\in\indexI}
        \displaystyle\sum_{s\in\indexIc}
        \begin{matrix*}[l]
            (-1)
            \sgn
                {s,\indexI}
                {\indexi{k},\order{(\indexI\backslash\{\indexi{k}\})\cup\{s\}}}
            \coefB{\alpha}{s \indexip{k}}
            \basexi{\order{(\indexI\backslash\{\indexi{k}\})\cup\{s\}}}
            \wedge
            \basedualxi{\indexJp}
        \end{matrix*}
    \end{pmatrix*}
    .
\label{EQ-For (alpha, I, J), calculation for LHS, extra 2-result}
\end{align}
\end{lemma}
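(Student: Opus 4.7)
}]
The plan is to expand $\LieagbdPartialbar \basexi{\alpha}$ in the unitary frame, push the two outer contractions inside, compute the wedge products with the two partial contractions of $\basexi{\indexI}\wedge\basedualxi{\indexJp}$, and then sum in $h$. Using the identity
\begin{align*}
    \LieagbdPartialbar \basexi{\alpha}
    =
    \sum_{j,k=1}^{n} \coefB{\alpha}{jk}\,\basexi{j}\wedge\basedualxi{k},
\end{align*}
together with $\iota_{\basexi{h}}\basexi{j}=2\delta_{h,j}$, $\iota_{\basedualxi{h^{'}}}\basedualxi{k}=2\delta_{h,k}$, and $\iota_{\basexi{h}}\basedualxi{k}=\iota_{\basedualxi{h^{'}}}\basexi{j}=0$, I would first observe
\begin{align*}
    \iota_{\basexi{h}}\,\LieagbdPartialbar \basexi{\alpha}
    =
    2\sum_{k=1}^{n}\coefB{\alpha}{h k}\,\basedualxi{k},
    \qquad
    \iota_{\basedualxi{h^{'}}}\,\LieagbdPartialbar \basexi{\alpha}
    =
    -2\sum_{j=1}^{n}\coefB{\alpha}{j h}\,\basexi{j},
\end{align*}
where the sign in the second identity comes from moving $\iota_{\basedualxi{h^{'}}}$ past the $(1,0)$-factor $\basexi{j}$.

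Next I would handle the two partial contractions of $\basexi{\indexI}\wedge\basedualxi{\indexJp}$. Only $h\in\indexJ$ contributes to $\iota_{\basedualxi{h^{'}}}$, giving a sum over $\indexjp{l}\in\indexJp$ with the sign $\sgn{\indexI,\indexJp}{\indexI,\indexjp{l},\indexJp\backslash\{\indexjp{l}\}}$, and only $h\in\indexI$ contributes to $\iota_{\basexi{h}}$, giving a sum over $\indexi{k}\in\indexI$ with the sign $\sgn{\indexI}{\indexi{k},\indexI\backslash\{\indexi{k}\}}$. Substituting everything into the original expression, the sum over $h$ collapses to sums over $\indexjp{l}\in\indexJp$ (from the first term) and over $\indexi{k}\in\indexI$ (from the second term), each of which is paired with an inner sum over the remaining free index $k$ or $j$.

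The third step is to split each inner sum according to whether the free index lies inside or outside the complementary index set. For the first term, splitting $k\in\indexJ$ versus $k\in\indexJc$ and reordering the wedge product produces either $\basexi{\indexI}\wedge\basedualxi{\indexJp}$ (when $k=\indexjp{l}$, giving the coefficient $\coefB{\alpha}{\indexj{l}\indexjp{l}}$) or $\basexi{\indexI}\wedge\basedualxi{\order{(\indexJp\backslash\{\indexjp{l}\})\cup\{t^{'}\}}}$ with the sign $\sgn{t^{'},\indexI,\indexJp}{\indexjp{l},\indexI,\order{(\indexJp\backslash\{\indexjp{l}\})\cup\{t^{'}\}}}$. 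The analogous split $j\in\indexI$ versus $j\in\indexIc$ in the second term yields the remaining two blocks in (\ref{EQ-For (alpha, I, J), calculation for LHS, extra 2-result}).

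The main obstacle is purely bookkeeping: keeping track of the signs after reinserting a contracted index into its complement and reordering the resulting frame, and verifying that the factor $-\tfrac{\sqrt{-1}}{2}\cdot 2\cdot 2 = -2\sqrt{-1}$ produced by the two contractions (each contributing a factor $2$) matches the prefactor on the right-hand side, together with the extra minus sign from $\iota_{\basedualxi{h^{'}}}\,\LieagbdPartialbar \basexi{\alpha}$ in the second summand, which flips the sign on the two ``off-diagonal'' blocks. Because no closedness constraint on $\omega$ is invoked here, the argument is a direct combinatorial simplification modelled on the analogous calculations already carried out in Lemma \ref{lemma-For (alpha, I, J), calculation for LHS-extra 1]}, and the signs of permutations can be simplified using the same rules collected in Section \ref{apex-simplification for LHS, extra 1, part 1}.
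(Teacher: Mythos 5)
Your proposal is correct and follows essentially the same route as the paper: expand $\LieagbdPartialbar\basexi{\alpha}$ in the unitary frame, observe that only $h\in\indexJ$ (resp.\ $h\in\indexI$) survives the contraction $\iota_{\basedualxi{h^{'}}}$ (resp.\ $\iota_{\basexi{h}}$), keep the factor $2$ from each contraction together with the minus sign in $\iota_{\basedualxi{h^{'}}}\,\LieagbdPartialbar\basexi{\alpha}$, and split the remaining inner sum according to whether the free index lies in $\indexJ$ (resp.\ $\indexI$) or its complement. The only imprecision is that the sign of $\iota_{\basedualxi{h^{'}}}(\basexi{\indexI}\wedge\basedualxi{\indexJp})$ is $\sgn{\indexI,\indexJp}{\indexjp{l},\indexI,\indexJp\backslash\{\indexjp{l}\}}$ rather than $\sgn{\indexI,\indexJp}{\indexI,\indexjp{l},\indexJp\backslash\{\indexjp{l}\}}$ (the contracted index must also move past all of $\indexI$), but this extra $(-1)^{p}$ cancels against the identical sign produced when the resulting wedge product is reordered back to standard form, so the stated conclusion is unaffected.
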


\begin{proof}
Let us calculate the first half,
\begin{align}
    (-\frac{\sqrt{-1}}{2})
    \displaystyle\sum_{h=1}^{n}
    (\iota_{\basexi{h}}\,\LieagbdPartialbar\basexi{\alpha})
    \wedge
    \iota_{\basedualxi{h^{'}}}(\basexi{\indexI}\wedge\basedualxi{\indexJp})
    .
\label{EQ-For (alpha, I, J), calculation for LHS, extra 2-first part, target}
\end{align}
With the contraction computation
\begin{align*}
    \iota_{\basedualxi{h}}(\basexi{\indexI}\wedge\basedualxi{\indexJp})
    =
    \begin{cases}
        0       & \,h\notin\indexJ \\
        2
        \sgn
            {\indexI,\indexJp}
            {h^{'},\indexI,\indexJp\backslash\{h^{'}\}}
        \basexi{\indexI}
        \wedge
        \basedualxi{\indexJp\backslash\{h^{'}\}}
                & \,h\in\indexJ
    \end{cases},
\end{align*}
we use $\indexj{l}$ to replace $h$ and simplify (\ref{EQ-For (alpha, I, J), calculation for LHS, extra 2-first part, target}) to
\begin{align}
    (-\frac{\sqrt{-1}}{2})
    \displaystyle\sum_{\indexj{l}\in\indexJ}
    (\iota_{\basexi{\indexj{l}}}\,\LieagbdPartialbar\basexi{\alpha})
    \wedge
    \bigg(
    2\,
        \sgn
            {\indexI,\indexJp}
            {\indexjp{l},\indexI,\indexJp\backslash\{\indexjp{l}\}}
        \basexi{\indexI}
        \wedge
        \basedualxi{\indexJp\backslash\{\indexjp{l}\}}
    \bigg).
\label{EQ-For (alpha, I, J), calculation for LHS, extra 2-first part, no 1}
\end{align}

Recall that $
\LieagbdPartialbar \basexi{\alpha}
=
\displaystyle\sum_{s,t}
\coefB{\alpha}{s t^{'}}
\basexi{s}\wedge\basedualxi{t^{'}}
$. We plug this into (\ref{EQ-For (alpha, I, J), calculation for LHS, extra 2-first part, no 1}) and cancel out the terms containing repeated wedge product of frames, obtaining
\begin{align}
    (-\frac{\sqrt{-1}}{2})
    \displaystyle\sum_{\indexjp{l}\in\indexJp}
    (
    \iota_{\basexi{\indexj{l}}}
    \begin{pmatrix*}[l]
        \coefB{\alpha}{\indexj{l} \indexjp{l}}
        \basexi{\indexj{l}}
        \wedge
        \basedualxi{\indexjp{l}}
        \\
        +
        \\
        \displaystyle\sum_{t^{'}\in\indexJpc}
        \coefB{\alpha}{\indexj{l} t^{'}}
        \basexi{\indexj{l}}
        \wedge
        \basedualxi{t^{'}}
    \end{pmatrix*}
    )
    \wedge
    \bigg(
    2
        \sgn
            {\indexI,\indexJp}
            {\indexjp{l},\indexI,\indexJp\backslash\{\indexjp{l}\}}
        \basexi{\indexI}
        \wedge
        \basedualxi{\indexJp\backslash\{\indexjp{l}\}}
    \bigg)
    .
\label{EQ-For (alpha, I, J), calculation for LHS, extra 2-first part, no 2}
\end{align}
The contractions in the first parenthesis are computed as follows:
\begin{align*}
    \iota_{\basexi{\indexj{l}}}
    (\basexi{\indexj{l}}\wedge\basedualxi{\indexjp{l}})
    =
    2\basedualxi{\indexjp{l}}
    \qquad \text{and} \qquad 
    \iota_{\basexi{\indexj{l}}}
    (\basexi{\indexj{l}}\wedge\basedualxi{t^{'}})
    =
    2\basedualxi{t^{'}}
    .
\end{align*}
Thus, we simplify (\ref{EQ-For (alpha, I, J), calculation for LHS, extra 2-first part, no 2}) to
\begin{align}
    (-\frac{\sqrt{-1}}{2})
    \displaystyle\sum_{\indexjp{l}\in\indexJp}
    2
    \begin{pmatrix*}[l]
        \coefB{\alpha}{\indexj{l} \indexjp{l}}
        \basedualxi{\indexjp{l}}
        \\
        +
        \\
        \displaystyle\sum_{t^{'}\in\indexJpc}
        \coefB{\alpha}{\indexj{l} t^{'}}
        \basedualxi{t^{'}}
    \end{pmatrix*}
    \wedge
    \big(
        2
        \sgn
            {\indexI,\indexJp}
            {\indexjp{l},\indexI,\indexJp\backslash\{\indexjp{l}\}}
        \basexi{\indexI}
        \wedge
        \basedualxi{\indexJp\backslash\{\indexjp{l}\}}
    \big)
    .
\label{EQ-For (alpha, I, J), calculation for LHS, extra 2-first part, no 3}
\end{align}
We observe that
\begin{itemize}
    \item 
$
    \basedualxi{\indexjp{l}}
    \wedge
    \basexi{\indexI}\wedge\basedualxi{\indexJp\backslash\{\indexjp{l}\}}
    =
    \sgn
        {\indexjp{l},\indexI,\indexJp\backslash\{\indexjp{l}\}}
        {\indexI,\indexJp}
    \basexi{\indexI}\wedge\basedualxi{\indexJp}    
$,
    \item 
$
    \basedualxi{t^{'}}
    \wedge
    \basexi{\indexI}\wedge\basedualxi{\indexJp\backslash\{\indexjp{l}\}}
    =
    \sgn
        {t^{'},\indexI,\indexJp\backslash\{\indexjp{l}\}}
        {\indexI,\order{(\indexJp\backslash\{\indexjp{l}\})\cup\{t^{'}\}}}
    \basexi{\indexI}
    \wedge
    \basedualxi{\order{(\indexJp\backslash\{\indexjp{l}\})\cup\{t^{'}\}}}
$.
\end{itemize}
After ordering the wedge product of frames, we can simplify (\ref{EQ-For (alpha, I, J), calculation for LHS, extra 2-first part, no 3}) to
\begin{align}
    -2\sqrt{-1}
    \begin{pmatrix*}[l]
        \displaystyle\sum_{\indexjp{l}\in\indexJp}
        \sgn
            {\indexjp{l},\indexI,\indexJp\backslash\{\indexjp{l}\}}
            {\indexI,\indexJp}
        \sgn
            {\indexI,\indexJp}
            {\indexjp{l},\indexI,\indexJp\backslash\{\indexjp{l}\}}
        \coefB{\alpha}{\indexj{l} \indexjp{l}}
        \basexi{\indexI}
        \wedge
        \basedualxi{\indexJp}
        \\
        +
        \\
        \displaystyle\sum_{\indexjp{l}\in\indexJp}
        \displaystyle\sum_{t^{'}\in\indexJpc}
        \begin{matrix*}[l]
            \sgn
                {\indexI,\indexJp}
                {\indexjp{l},\indexI,\indexJp\backslash\{\indexjp{l}\}}
            \sgn
                {t^{'},\indexI,\indexJp\backslash\{\indexjp{l}\}}
                {\indexI,\order{(\indexJp\backslash\{\indexjp{l}\})\cup\{t^{'}\}}}
            \\
            \coefB{\alpha}{\indexj{l} t^{'}}
            \basexi{\indexI}
            \wedge
            \basedualxi{\order{(\indexJp\backslash\{\indexjp{l}\})\cup\{t^{'}\}}}
        \end{matrix*}
    \end{pmatrix*}.
\label{EQ-For (alpha, I, J), calculation for LHS, extra 2-first part, no 4}
\end{align}
We then calculate the sign of permutations:
\begin{itemize}
    \item 
$
    \sgn
        {\indexjp{l},\indexI,\indexJp\backslash\{\indexjp{l}\}}
        {\indexI,\indexJp}
    \sgn
        {\indexI,\indexJp}
        {\indexjp{l},\indexI,\indexJp\backslash\{\indexjp{l}\}}
    =
    1,
$
    \item 
$
    \sgn
        {\indexI,\indexJp}
        {\indexjp{l},\indexI,\indexJp\backslash\{\indexjp{l}\}}
    \sgn
        {t^{'},\indexI,\indexJp\backslash\{\indexjp{l}\}}
        {\indexI,\order{(\indexJp\backslash\{\indexjp{l}\})\cup\{t^{'}\}}}
    =
    \sgn
        {\indexI,\indexJp}
        {\indexjp{l},\indexI,\indexJp\backslash\{\indexjp{l}\}}
    \sgn
        {\indexjp{l},t^{'},\indexI,\indexJp\backslash\{\indexjp{l}\}}
        {\indexjp{l},\indexI,\order{(\indexJp\backslash\{\indexjp{l}\})\cup\{t^{'}\}}}
    \\
    =
    \sgn
        {\indexI,\indexJp}
        {\indexjp{l},\indexI,\indexJp\backslash\{\indexjp{l}\}}
    (-1)
    \sgn
        {t^{'},\indexjp{l},\indexI,\indexJp\backslash\{\indexjp{l}\}}
        {\indexjp{l},\indexI,\order{(\indexJp\backslash\{\indexjp{l}\})\cup\{t^{'}\}}}
    =
    (-1)
    \sgn
        {t^{'},\indexI,\indexJp}
        {\indexjp{l},\indexI,\order{(\indexJp\backslash\{\indexjp{l}\})\cup\{t^{'}\}}}.
$
\end{itemize}

Rearranging (\ref{EQ-For (alpha, I, J), calculation for LHS, extra 2-first part, no 4}), we conclude that the simplified expression of the first half is:
\begin{align}
    -2\sqrt{-1}
    \begin{pmatrix*}[l]
        \displaystyle\sum_{\indexjp{l}\in\indexJp}
        \coefB{\alpha}{\indexj{l} \indexjp{l}}
        \basexi{\indexI}
        \wedge
        \basedualxi{\indexJp}
        \\
        +
        \\
        \displaystyle\sum_{\indexjp{l}\in\indexJp}
        \displaystyle\sum_{t^{'}\in\indexJpc}
        \begin{matrix*}[l]
            (-1)
            \sgn
                {t^{'},\indexI,\indexJp}
                {\indexjp{l},\indexI,\order{(\indexJp\backslash\{\indexjp{l}\})\cup\{t^{'}\}}}
            \coefB{\alpha}{\indexj{l} t^{'}}
            \basexi{\indexI}
            \wedge
            \basedualxi{\order{(\indexJp\backslash\{\indexjp{l}\})\cup\{t^{'}\}}}
        \end{matrix*}
    \end{pmatrix*}.
\label{EQ-For (alpha, I, J), calculation for LHS, extra 2-first part, simplified result}
\end{align}

We then look at the second part,
\begin{align}
    -\frac{\sqrt{-1}}{2}
    \displaystyle\sum_{h=1}^{n}
    (\iota_{\basedualxi{h}}\,\LieagbdPartialbar\basexi{\alpha})
    \wedge
    \iota_{\basexi{h}}(\basexi{\indexI}\wedge\basedualxi{\indexJp})
.
\label{EQ-For (alpha, I, J), calculation for LHS, extra 2-second part, target}
\end{align}
Using the contraction computation
\begin{align}
    \iota_{\basexi{h}}(\basexi{\indexI}\wedge\basedualxi{\indexJp})
    =
    \begin{cases}
        0       & \,h\notin\indexI \\
        2
        \sgn
            {\indexI}
            {h,\indexI\backslash\{h\}}
        \basexi{\indexI\backslash\{h\}}
        \wedge
        \basedualxi{\indexJp}
                & \,h\in\indexI
    \end{cases},
\end{align}
we use $\indexi{k}$ to replace h and simplify (\ref{EQ-For (alpha, I, J), calculation for LHS, extra 2-second part, target}) to
\begin{align}
    -\frac{\sqrt{-1}}{2}
    \displaystyle\sum_{\indexi{k}\in\indexI}
    (\iota_{\basedualxi{k^{'}}}\,\LieagbdPartialbar\basexi{\alpha})
    \wedge
    \bigg(
        2
        \sgn
            {\indexI}
            {\indexi{k},\indexI\backslash\{\indexi{k}\}}
        \basexi{\indexI\backslash\{\indexi{k}\}}
        \wedge
        \basedualxi{\indexJp}
    \bigg)
    .
\label{EQ-For (alpha, I, J), calculation for LHS, extra 2-second part, no 1}
\end{align}
We plug $
\LieagbdPartialbar \basexi{\alpha}
=
\displaystyle\sum_{s,t}
\coefB{\alpha}{s t^{'}}
\basexi{s}\wedge\basedualxi{t^{'}}
$ into (\ref{EQ-For (alpha, I, J), calculation for LHS, extra 2-second part, no 1}) and cancel out the terms containing repeated wedge product of frames, obtaining
\begin{align}
    -\frac{\sqrt{-1}}{2}
    \displaystyle\sum_{\indexi{k}\in\indexI}
    (
    \iota_{\basedualxi{k^{'}}}
    \begin{pmatrix*}[l]
        \coefB{\alpha}{\indexi{k} \indexip{k}}
        \basexi{\indexi{k}}
        \wedge
        \basedualxi{\indexip{k}}
        \\
        +
        \\
        \displaystyle\sum_{s\in\indexIc}
        \coefB{\alpha}{s \indexip{k}}
        \basexi{s}
        \wedge
        \basedualxi{\indexip{k}}
    \end{pmatrix*}
    )
    \wedge
    \bigg(
        2
        \sgn
            {\indexI}
            {\indexi{k},\indexI\backslash\{\indexi{k}\}}
        \basexi{\indexI\backslash\{\indexi{k}\}}
        \wedge
        \basedualxi{\indexJp}
    \bigg)
    .
\label{EQ-For (alpha, I, J), calculation for LHS, extra 2-second part, no 2}
\end{align}
The contractions in the first parenthesis are computed as follows:
\begin{align*}
    \iota_{\basedualxi{\indexip{k}}}
    (\basexi{\indexi{k}}\wedge\basedualxi{\indexip{k}})
    =
    -2\basexi{\indexi{k}}
    \qquad \text{and} \qquad 
    \iota_{\basedualxi{\indexip{k}}}
    (\basexi{s}\wedge\basedualxi{\indexip{k}})
    =
    -2\basexi{s}
    .
\end{align*}
Thus, we simplify (\ref{EQ-For (alpha, I, J), calculation for LHS, extra 2-second part, no 2}) to
\begin{align}
    -\frac{\sqrt{-1}}{2}
    \displaystyle\sum_{\indexi{k}\in\indexI}
    \begin{pmatrix*}[l]
        -2
        \coefB{\alpha}{\indexi{k} \indexip{k}}
        \basexi{\indexi{k}}
        \\
        +
        \\
        \displaystyle\sum_{s\in\indexIc}
        -2
        \coefB{\alpha}{s \indexip{k}}
        \basexi{s}
    \end{pmatrix*}
    \wedge
    \bigg(
        2
        \sgn
            {\indexI}
            {\indexi{k},\indexI\backslash\{\indexi{k}\}}
        \basexi{\indexI\backslash\{\indexi{k}\}}
        \wedge
        \basedualxi{\indexJp}
    \bigg)
    ,
\label{EQ-For (alpha, I, J), calculation for LHS, extra 2-second part, no 3}
\end{align}
in which
\begin{itemize}
    \item 
$
    \basexi{\indexi{k}}
    \wedge\basexi{\indexI\backslash\{\indexi{k}\}}    
    \wedge\basedualxi{\indexJp}
    =
    \sgn
        {\indexI}
        {\indexi{k},\indexI\backslash\{\indexi{k}\}}
    \basexi{\indexI}\wedge\basedualxi{\indexJp}
$,
    \item 
$
    \basexi{s}
    \wedge\basexi{\indexI\backslash\{\indexi{k}\}}    
    \wedge\basedualxi{\indexJp}
    =
    \sgn
        {s,\indexI\backslash\{\indexi{k}\}}
        {\order{(\indexI\backslash\{\indexi{k}\})\cup\{s\}}}
    \basexi{\order{(\indexI\backslash\{\indexi{k}\})\cup\{s\}}}
    \wedge
    \basedualxi{\indexJp}
$.
\end{itemize}
Thus, after ordering the wedge product of frames, we simplify (\ref{EQ-For (alpha, I, J), calculation for LHS, extra 2-second part, no 3}) to
\begin{align}
    -2\sqrt{-1}
    \begin{pmatrix*}[l]
        \displaystyle\sum_{\indexi{k}\in\indexI}
        (-1)
        \sgn
            {\indexI}
            {\indexi{k},\indexI\backslash\{\indexi{k}\}}
        \sgn
            {\indexI}
            {\indexi{k},\indexI\backslash\{\indexi{k}\}}
        \coefB{\alpha}{\indexi{k} \indexip{k}}
        \basexi{\indexI}
        \wedge
        \basedualxi{\indexJp}
        \\
        +
        \\
        \displaystyle\sum_{\indexi{k}\in\indexI}
        \displaystyle\sum_{s\in\indexIc}
        \begin{matrix*}[l]
            \sgn
                {\indexI}
                {\indexi{k},\indexI\backslash\{\indexi{k}\}}
            \sgn
                {s,\indexI\backslash\{\indexi{k}\}}
                {\order{(\indexI\backslash\{\indexi{k}\})\cup\{s\}}}
            (-1)
            \coefB{\alpha}{s \indexip{k}}
            \basexi{\order{(\indexI\backslash\{\indexi{k}\})\cup\{s\}}}
            \wedge
            \basedualxi{\indexJp}
        \end{matrix*}
    \end{pmatrix*}
    .
\label{EQ-For (alpha, I, J), calculation for LHS, extra 2-second part, no 4}
\end{align}
We calculate the sign of permutations:
\begin{itemize}
    \item 
$
    \sgn
        {\indexI}
        {\indexi{k},\indexI\backslash\{\indexi{k}\}}
    \sgn
        {\indexI}
        {\indexi{k},\indexI\backslash\{\indexi{k}\}}
    =
    1
$,
    \item 
$
    \sgn
        {\indexI}
        {\indexi{k},\indexI\backslash\{\indexi{k}\}}
    \sgn
        {s,\indexI\backslash\{\indexi{k}\}}
        {\order{(\indexI\backslash\{\indexi{k}\})\cup\{s\}}}
    =
    \sgn
        {\indexI}
        {\indexi{k},\indexI\backslash\{\indexi{k}\}}
    \sgn
        {\indexi{k},s,\indexI\backslash\{\indexi{k}\}}
        {\indexi{k},\order{(\indexI\backslash\{\indexi{k}\})\cup\{s\}}}
    \\
    =
    \sgn
        {\indexI}
        {\indexi{k},\indexI\backslash\{\indexi{k}\}}
    (-1)
    \sgn
        {s,\indexi{k},\indexI\backslash\{\indexi{k}\}}
        {\indexi{k},\order{(\indexI\backslash\{\indexi{k}\})\cup\{s\}}}
    =
    (-1)
    \sgn
        {s,\indexI}
        {\indexi{k},\order{(\indexI\backslash\{\indexi{k}\})\cup\{s\}}}
$.
\end{itemize}
We then calculate (\ref{EQ-For (alpha, I, J), calculation for LHS, extra 2-second part, no 4}):
\begin{align}
    -2\sqrt{-1}
    \begin{pmatrix*}[l]
        \displaystyle\sum_{\indexi{k}\in\indexI}
        (-1)
        \coefB{\alpha}{\indexi{k} \indexip{k}}
        \basexi{\indexI}
        \wedge
        \basedualxi{\indexJp}
        \\
        +
        \\
        \displaystyle\sum_{\indexi{k}\in\indexI}
        \displaystyle\sum_{s\in\indexIc}
        \begin{matrix*}[l]
            \sgn
                {s,\indexI}
                {\indexi{k},\order{(\indexI\backslash\{\indexi{k}\})\cup\{s\}}}
            \coefB{\alpha}{s \indexip{k}}
            \basexi{\order{(\indexI\backslash\{\indexi{k}\})\cup\{s\}}}
            \wedge
            \basedualxi{\indexJp}
        \end{matrix*}
    \end{pmatrix*}.
\label{EQ-For (alpha, I, J), calculation for LHS, extra 2-second part, simplified result}
\end{align}

Finally, we assemble (\ref{EQ-For (alpha, I, J), calculation for LHS, extra 2-first part, simplified result}) and (\ref{EQ-For (alpha, I, J), calculation for LHS, extra 2-second part, simplified result}) and complete the proof of Lemma \ref{lemma-For (alpha, I, J), calculation for LHS-extra 2]}:
\begin{align*}
    &
    (-\frac{\sqrt{-1}}{2})
    \displaystyle\sum_{k=1}^{n}
    \bigg(
        (\iota_{\basexi{k}}\,\LieagbdPartialbar\basexi{\alpha})
        \wedge
        \iota_{\basedualxi{k}}(\basexi{\indexI}\wedge\basedualxi{\indexJp})
        -
        (\iota_{\basedualxi{k}}\,\LieagbdPartialbar\basexi{\alpha})
        \wedge
        \iota_{\basexi{k}}(\basexi{\indexI}\wedge\basedualxi{\indexJp})
    \bigg)
    \\
    =&
    -2\sqrt{-1}
    \begin{pmatrix*}[l]
        \displaystyle\sum_{\indexjp{l}\in\indexJp}
        \begin{matrix*}[l]
            \coefB{\alpha}{\indexj{l} \indexjp{l}}
            \basexi{\indexI}
            \wedge
            \basedualxi{\indexJp}
        \end{matrix*}
        \\
        +
        \\
        \displaystyle\sum_{\indexjp{l}\in\indexJp}
        \displaystyle\sum_{t^{'}\in\indexJpc}
        \begin{matrix*}[l]
            (-1)
            \sgn
                {t^{'},\indexI,\indexJp}
                {\indexjp{l},\indexI,\order{(\indexJp\backslash\{\indexjp{l}\})\cup\{t^{'}\}}}
            \coefB{\alpha}{\indexj{l} t^{'}}
            \basexi{\indexI}
            \wedge
            \basedualxi{\order{(\indexJp\backslash\{\indexjp{l}\})\cup\{t^{'}\}}}
        \end{matrix*}
        \\
        +
        \\
        \displaystyle\sum_{\indexi{k}\in\indexI}
        \begin{matrix*}[l]
            \coefB{\alpha}{\indexi{k} \indexip{k}}
            \basexi{\indexI}
            \wedge
            \basedualxi{\indexJp}
        \end{matrix*}
        \\
        +
        \\
        \displaystyle\sum_{\indexi{k}\in\indexI}
        \displaystyle\sum_{s\in\indexIc}
        \begin{matrix*}[l]
            (-1)
            \sgn
                {s,\indexI}
                {\indexi{k},\order{(\indexI\backslash\{\indexi{k}\})\cup\{s\}}}
            \coefB{\alpha}{s \indexip{k}}
            \basexi{\order{(\indexI\backslash\{\indexi{k}\})\cup\{s\}}}
            \wedge
            \basedualxi{\indexJp}
        \end{matrix*}
    \end{pmatrix*}.
\end{align*}

\end{proof}

\begin{lemma}
\label{lemma-the final version of the LHS}
\begin{align}
    &
    [\LieagbdPartialbar,\iotaW]
    (\basexi{\alpha}\wedge\basexi{\indexI}\wedge\basedualxi{\indexJp})
\notag
\\
    =&
    \begin{pmatrix*}[l]
        [\partialbar,\iotaW]
        \wedge
        \basexi{\indexI}\wedge\basedualxi{\indexJp}
        -
        \basexi{\alpha}
        \wedge
        [\partialbar,\iotaW]
        (\basexi{\indexI}\wedge\basedualxi{\indexJp})
        \\
        +
        \\
        (-2\sqrt{-1})
        \begin{pmatrix*}[l]
            \displaystyle\sum_{\indexi{k}\in\indexI}
            \begin{matrix*}[l]
                \sgn
                    {\order{\indexi{k},\alpha}}
                    {\indexi{k},\alpha}
                \coefD{\indexi{k}}{\order{\indexi{k},\alpha}}
                \basexi{\indexI}
                \wedge
                \basedualxi{\indexJp}
            \end{matrix*}
            \\
            +
            \\
            \displaystyle\sum_{\indexj{l}\in\indexJp}
            \begin{matrix*}[l]
                \coefB{\indexjp{l}}{\indexjp{l} \alpha}
                \basexi{\indexI}
                \wedge
                \basedualxi{\indexJp}
            \end{matrix*}
            \\
            +
            \\
            \displaystyle\sum_{\indexi{k}\in\indexI}
            \begin{matrix*}[l]
                \sgn
                    {\indexI}
                    {\indexi{k},\indexI\backslash\{\indexi{k}\}}
                \sgn
                    {\order{\indexi{k},\alpha}}
                    {\indexi{k},\alpha}
                \\
                \coefD{\alpha}{\order{\indexi{k},\alpha}}
                \basexi{\alpha}
                \wedge
                \basexi{(\indexI\backslash\{\indexi{k}\})}
                \wedge
                \basedualxi{\indexJp}
            \end{matrix*}
            \\
            +
            \\
            \displaystyle\sum_{\indexi{k}\in\indexI}
            \displaystyle\sum_{s\in\indexIc\backslash\{\alpha\}}
            \begin{matrix*}[l]
                \sgn
                    {s,\indexI}
                    {\indexi{k},\order{(\indexI\backslash\{\indexi{k}\})\cup\{s\}}}
                \sgn
                    {\order{\alpha,\indexi{k}}}
                    {\alpha,\indexi{k}}  
                \\
                \coefD{s}{\order{\indexi{k},\alpha}}
                \basexi{\order{(\indexI\backslash\{\indexi{k}\})\cup\{s\}}}
                \wedge
                \basedualxi{\indexJp}
            \end{matrix*}
            \\
            +
            \\
            \displaystyle\sum_{\indexj{l}\in\indexJp}
            \displaystyle\sum_{t^{'}\in\indexJpc}
            \begin{matrix*}[l]
                \sgn
                    {t^{'},\indexI,\indexJp}
                    {\indexjp{l},\indexI,\order{(\indexJp\backslash\{\indexjp{l}\})\cup\{t^{'}\}}}
                \\
                (-1)
                \coefB{t^{'}}{\indexjp{l} \alpha}
                \basexi{\indexI}
                \wedge
                \basedualxi{\order{(\indexJp\backslash\{\indexjp{l}\})\cup\{t^{'}\}}}
            \end{matrix*}
        \end{pmatrix*}
    \end{pmatrix*}.
\label{EQ-For (alpha, I, J), calculation for LHS-total summary}
\end{align}
\end{lemma}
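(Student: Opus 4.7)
The plan is direct assembly: start from Lemma \ref{lemma-For (alpha, I, J), calculation for LHS-initial resutl]}, which already decomposes $[\LieagbdPartialbar,\iotaW](\basexi{\alpha}\wedge\basexi{\indexI}\wedge\basedualxi{\indexJp})$ into four pieces, and substitute the explicit formulas supplied by Lemmas \ref{lemma-For (alpha, I, J), calculation for LHS-extra 1]} and \ref{lemma-For (alpha, I, J), calculation for LHS-extra 2]} for the third and fourth pieces. The first two pieces, namely $[\LieagbdPartialbar,\iotaW](\basexi{\alpha})\wedge\basexi{\indexI}\wedge\basedualxi{\indexJp}$ and $-\basexi{\alpha}\wedge[\LieagbdPartialbar,\iotaW](\basexi{\indexI}\wedge\basedualxi{\indexJp})$, already appear verbatim at the top of (\ref{EQ-For (alpha, I, J), calculation for LHS-total summary}), so the task reduces entirely to showing that the sum of (\ref{EQ-For (alpha, I, J), calculation for LHS, extra 1-result}) and (\ref{EQ-For (alpha, I, J), calculation for LHS, extra 2-result}) equals the five-sum $(-2\sqrt{-1})$-prefactored block at the bottom of (\ref{EQ-For (alpha, I, J), calculation for LHS-total summary}).

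The key algebraic input is the K\"{a}hler closure constraint (\ref{EQ-Part 1-(1)-equation for coef B,D}) from Lemma \ref{lemma-Kahler Lie algebroid-trick for ABCD}, which rewrites an antisymmetrization of $B$-coefficients as a single $D$-coefficient. I would group the combined sum according to the shape of its wedge product. The diagonal $\coefB{\indexi{k}}{\indexi{k}\alpha^{'}}$ contributions from (\ref{EQ-For (alpha, I, J), calculation for LHS, extra 1-result}) pair with the $\coefB{\alpha}{\indexi{k}\indexip{k}}$ contributions from (\ref{EQ-For (alpha, I, J), calculation for LHS, extra 2-result}) into a difference that (\ref{EQ-Part 1-(1)-equation for coef B,D}) collapses to $\sgn{\order{\indexi{k},\alpha}}{\indexi{k},\alpha}\coefD{\indexi{k}}{\order{\indexi{k},\alpha}}$, producing target sum~(1). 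The off-diagonal pairing of $\coefB{\indexi{k}}{s\alpha^{'}}$ with $\coefB{\alpha}{s\indexip{k}}$ is formally identical and yields target sum~(4) over $s\in\indexIc\backslash\{\alpha\}$; the excluded boundary case $s=\alpha$ reorders, via the running assumption $\alpha<\indexi{1}$, into the $\basexi{\alpha}\wedge\basexi{\indexI\backslash\{\indexi{k}\}}\wedge\basedualxi{\indexJp}$-piece that constitutes target sum~(3). The analogous pairings of $\coefB{\alpha}{\indexj{l}\indexjp{l}}$ with $\coefD{\indexjp{l}}{\order{\indexjp{l}\alpha^{'}}}$, and of $\coefB{\alpha}{\indexj{l}t^{'}}$ with $\coefD{\indexjp{l}}{\order{\alpha^{'}t^{'}}}$, again invoking (\ref{EQ-Part 1-(1)-equation for coef B,D}), produce target sums~(2) and~(5).

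The main obstacle is the combinatorial bookkeeping of signs. Each pairing forces a manipulation of $\sgn$-symbols and $\order$-functions (swapping the two rows, inserting shared index blocks into both rows, and merging two factors into one via the rules from the proposition on signs of permutations) so that the sign produced on the right-hand side of (\ref{EQ-Part 1-(1)-equation for coef B,D}) matches exactly the sign attached to the corresponding $D$-term in (\ref{EQ-For (alpha, I, J), calculation for LHS-total summary}). This is the same flavor of sign chasing already carried out in the proofs of Lemmas \ref{lemma-For (alpha, I, J), calculation for LHS-extra 1]} and \ref{lemma-For (alpha, I, J), calculation for LHS-extra 2]}, so it is mechanical once the correct pairing of $B$-terms has been identified; after the signs are normalized, the five target sums of (\ref{EQ-For (alpha, I, J), calculation for LHS-total summary}) read off term by term, completing the proof.
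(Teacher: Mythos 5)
Your plan coincides with the paper's own proof: it starts from the decomposition of Lemma \ref{lemma-For (alpha, I, J), calculation for LHS-initial resutl]}, substitutes the results of Lemmas \ref{lemma-For (alpha, I, J), calculation for LHS-extra 1]} and \ref{lemma-For (alpha, I, J), calculation for LHS-extra 2]}, and collapses the paired $B$-terms (and the $B$/$D$ pairs) via the closure constraint (\ref{EQ-Part 1-(1)-equation for coef B,D}), including the same split of the $s$-sum at $s=\alpha$ using $\alpha<\indexi{1}$ to produce the $\basexi{\alpha}\wedge\basexi{\indexI\backslash\{\indexi{k}\}}\wedge\basedualxi{\indexJp}$ block. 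The pairings you identify are exactly the ones the paper uses, so the argument is correct and essentially identical.
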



\begin{proof}
Recall that (\ref{EQ-For (alpha, I, J), calculation for LHS-half summary}) is 
\begin{align*}
    \begin{pmatrix*}[l]
        [\LieagbdPartialbar,\iotaW]
        (\basexi{\alpha})
        \wedge
        \basexi{\indexI}\wedge\basedualxi{\indexJp}
        \\
        -
        \\
        \basexi{\alpha}
        \wedge
        [\LieagbdPartialbar,\iotaW]
        (\basexi{\indexI}\wedge\basedualxi{\indexJp})
        \\
        +
        \\
        (-\sqrt{-1})
        (\LieagbdPartialbar \, \iota_{\basedualxi{\alpha^{'}}}
        +
        \iota_{\basedualxi{\alpha^{'}}}\,\LieagbdPartialbar)
        (\basexi{\indexI}\wedge\basedualxi{\indexJp})
        \\
        +
        \\
        (-\frac{\sqrt{-1}}{2})
        \displaystyle\sum_{h=1}^{n}
        \big(
            (\iota_{\basexi{h}} \, \LieagbdPartialbar \basexi{\alpha})
            \wedge
            \iota_{\basedualxi{h^{'}}}(\basexi{\indexI}\wedge\basedualxi{\indexJp})
            -
            (\iota_{\basedualxi{h^{'}}} \, \LieagbdPartialbar \basexi{\alpha})
            \wedge
            \iota_{\basexi{h}}(\basexi{\indexI}\wedge\basedualxi{\indexJp})
        \big)
    \end{pmatrix*}
    .
\end{align*}
We prove the lemma by substituting the expressions of $
(-\sqrt{-1})
(\LieagbdPartialbar \, \iota_{\basedualxi{\alpha^{'}}}
+
\iota_{\basedualxi{\alpha^{'}}}\,\LieagbdPartialbar)
(\basexi{\indexI}\wedge\basedualxi{\indexJp})
$ and $
(-\frac{\sqrt{-1}}{2})
\displaystyle\sum_{h=1}^{n}
\big(
    (\iota_{\basexi{h}} \, \LieagbdPartialbar \basexi{\alpha})
    \wedge
    \iota_{\basedualxi{h^{'}}}(\basexi{\indexI}\wedge\basedualxi{\indexJp})
    -
    (\iota_{\basedualxi{h^{'}}} \, \LieagbdPartialbar \basexi{\alpha})
    \wedge
    \iota_{\basexi{h}}(\basexi{\indexI}\wedge\basedualxi{\indexJp})
\big)
$ into the above formula.

Recall that Lemma \ref{lemma-For (alpha, I, J), calculation for LHS-extra 1]} states that $(-\sqrt{-1})
(\LieagbdPartialbar \, \iota_{\basedualxi{\alpha^{'}}}
+
\iota_{\basedualxi{\alpha^{'}}}\,\LieagbdPartialbar)
(\basexi{\indexI}\wedge\basedualxi{\indexJp})
$ is equal to
\begin{align}
    &
    (-2\sqrt{-1})
        \displaystyle\sum_{\indexi{k}\in\indexI}
        \begin{pmatrix*}[l]
            (-1)
            \coefB{\indexi{k}}{\indexi{k} \alpha^{'}}
            \basexi{\indexI}
            \wedge
            \basedualxi{\indexJp}
        \end{pmatrix*}
\label{EQ-For (alpha, I, J), calculation for LHS, conclude-ref extra 1,1}
    \\
    +&\notag
    \\
    &
        (-2\sqrt{-1})
        \displaystyle\sum_{\indexi{k}\in\indexI}
        \displaystyle\sum_{s\in\indexIc}
        \begin{pmatrix*}[l]
            \sgn
                {s,\indexI}
                {\indexi{k},\order{(\indexI\backslash\{\indexi{k}\})\cup\{s\}}}
            \coefB{\indexi{k}}{s \alpha^{'}}
            \basexi{\order{(\indexI\backslash\{\indexi{k}\})\cup\{s\}}}
            \wedge
            \basedualxi{\indexJp}
        \end{pmatrix*}
\label{EQ-For (alpha, I, J), calculation for LHS, conclude-ref extra 1,2}
    \\
    +&\notag
    \\
    &
        (-2\sqrt{-1})
        \displaystyle\sum_{\indexj{l}\in\indexJp\backslash\{\alpha^{'}\}}
        \begin{pmatrix*}[l]
            \sgn
                {\order{\indexjp{l},\alpha^{'}}}
                {\alpha^{'},\indexjp{l}}
            \coefD{\indexjp{l}}{\order{\indexjp{l},\alpha^{'}}}
            \basexi{\indexI}
            \wedge
            \basedualxi{\indexJp}
        \end{pmatrix*}
\label{EQ-For (alpha, I, J), calculation for LHS, conclude-ref extra 1,3}
    \\
    +&\notag
    \\
    &
        (-2\sqrt{-1})
        \displaystyle\sum_{\indexj{l}\in\indexJp}
        \displaystyle\sum_{t^{'}\in\indexJpc}
        \begin{pmatrix*}[l]
            \sgn
                {\order{\alpha^{'},t^{'}}}
                {t^{'},\alpha^{'}}
            \sgn
                {t^{'},\indexI,\indexJp}
                {\indexjp{l},\order{(\indexJp\backslash\{\indexjp{l}\})\cup\{t^{'}\}}}
            \\
            \coefD{\indexjp{l}}{\order{\alpha^{'},t^{'}}}
            \basexi{\indexI}
            \wedge
            \basedualxi{\order{(\indexJp\backslash\{\indexjp{l}\})\cup\{t^{'}\}}}
        \end{pmatrix*}
    .
\label{EQ-For (alpha, I, J), calculation for LHS, conclude-ref extra 1,4}
\end{align}
Likewise, recall that Lemma \ref{lemma-For (alpha, I, J), calculation for LHS-extra 2]} states that $
(-\frac{\sqrt{-1}}{2})
\displaystyle\sum_{h=1}^{n}
\big(
    (\iota_{\basexi{h}} \, \LieagbdPartialbar \basexi{\alpha})
    \wedge
    \iota_{\basedualxi{h^{'}}}(\basexi{\indexI}\wedge\basedualxi{\indexJp})
    -
    (\iota_{\basedualxi{h^{'}}} \, \LieagbdPartialbar \basexi{\alpha})
    \wedge
    \iota_{\basexi{h}}(\basexi{\indexI}\wedge\basedualxi{\indexJp})
\big)
$  is equal to
\begin{align}
    &
        (-2\sqrt{-1})
        \displaystyle\sum_{\indexjp{l}\in\indexJp}
        \begin{matrix*}[l]
            \coefB{\alpha}{\indexj{l} \indexjp{l}}
            \basexi{\indexI}
            \wedge
            \basedualxi{\indexJp}
        \end{matrix*}
\label{EQ-For (alpha, I, J), calculation for LHS, conclude-ref extra 2,1}
    \\
    +&\notag
    \\
    &
        (-2\sqrt{-1})
        \displaystyle\sum_{\indexjp{l}\in\indexJp}
        \displaystyle\sum_{t^{'}\in\indexJpc}
        \begin{matrix*}[l]
            (-1)
            \sgn
                {t^{'},\indexI,\indexJp}
                {\indexjp{l},\indexI,\order{(\indexJp\backslash\{\indexjp{l}\})\cup\{t^{'}\}}}
            \coefB{\alpha}{\indexj{l} t^{'}}
            \basexi{\indexI}
            \wedge
            \basedualxi{\order{(\indexJp\backslash\{\indexjp{l}\})\cup\{t^{'}\}}}
        \end{matrix*}
\label{EQ-For (alpha, I, J), calculation for LHS, conclude-ref extra 2,2}
    \\
    +&\notag
    \\
    &
        (-2\sqrt{-1})
        \displaystyle\sum_{\indexi{k}\in\indexI}
        \begin{matrix*}[l]
            \coefB{\alpha}{\indexi{k} \indexip{k}}
            \basexi{\indexI}
            \wedge
            \basedualxi{\indexJp}
        \end{matrix*}
\label{EQ-For (alpha, I, J), calculation for LHS, conclude-ref extra 2,3}
    \\
    +&\notag
    \\
    &
        (-2\sqrt{-1})
        \displaystyle\sum_{\indexi{k}\in\indexI}
        \displaystyle\sum_{s\in\indexIc}
        \begin{matrix*}[l]
            (-1)
            \sgn
                {s,\indexI}
                {\indexi{k},\order{(\indexI\backslash\{\indexi{k}\})\cup\{s\}}}
            \coefB{\alpha}{s \indexip{k}}
            \basexi{\order{(\indexI\backslash\{\indexi{k}\})\cup\{s\}}}
            \wedge
            \basedualxi{\indexJp}
        \end{matrix*}
    .
\label{EQ-For (alpha, I, J), calculation for LHS, conclude-ref extra 2,4}
\end{align}
We combine the similar terms in the above sums:
\begin{itemize}
    \item (\ref{EQ-For (alpha, I, J), calculation for LHS, conclude-ref extra 2,3}) + (\ref{EQ-For (alpha, I, J), calculation for LHS, conclude-ref extra 1,1}) is equal to
    \begin{align}
        (-2\sqrt{-1})
        \displaystyle\sum_{\indexi{k}\in\indexI}
            \bigg(
                \coefB{\alpha}{\indexi{k} \indexip{k}}
                -
                \coefB{\indexi{k}}{\indexi{k} \alpha^{'}}
            \bigg)
            \basexi{\indexI}
            \wedge
            \basedualxi{\indexJp}
        .
    \label{EQ-For (alpha, I, J), calculation for LHS, conclude-classified term 1}
    \end{align}
    Consider the constraint
    \begin{align*}
        \coefB{\alpha}{\indexi{k} \indexip{k}}
        -
        \coefB{\indexi{k}}{\indexi{k} \alpha}
        =
        \sgn
            {\order{\indexi{k}, \alpha}}
            {\indexi{k}, \alpha}
        \coefD{\indexi{k}}{\order{\indexi{k},\alpha}}
        .
    \end{align*}
    Applying it to the terms in (\ref{EQ-For (alpha, I, J), calculation for LHS, conclude-classified term 1}), we obtain the following simplified expression:
    \begin{align}
        (-2\sqrt{-1})
        \displaystyle\sum_{\indexi{k}\in\indexI}
            \sgn
                {\order{\indexi{k}, \alpha}}
                {\indexi{k}, \alpha}
            \coefD{\indexi{k}}{\order{\indexi{k},\alpha}}
            \basexi{\indexI}
            \wedge
            \basedualxi{\indexJp}
        .
    \label{EQ-For (alpha, I, J), calculation for LHS, conclude-simplified term 1}   
    \end{align}

    \item We can add the case that $\indexjp{l}=\alpha^{'}$ to the summation inside (\ref{EQ-For (alpha, I, J), calculation for LHS, conclude-ref extra 1,3}) since $\coefD{\indexjp{l}}{\alpha^{'},\alpha^{'}}=0$. Then, 
    (\ref{EQ-For (alpha, I, J), calculation for LHS, conclude-ref extra 1,3}) + (\ref{EQ-For (alpha, I, J), calculation for LHS, conclude-ref extra 2,1}) is equal to
    \begin{align}
        (-2\sqrt{-1})
        \displaystyle\sum_{\indexjp{l}\in\indexJp}
        \begin{pmatrix*}[l]
            \sgn
                {\order{\indexjp{l},\alpha^{'}}}
                {\alpha^{'},\indexjp{l}}
            \coefD{\indexjp{l}}{\order{\indexjp{l},\alpha^{'}}}
            +
            \coefB{\alpha}{\indexj{l} \indexjp{l}}
        \end{pmatrix*}
        \basexi{\indexI}
        \wedge
        \basedualxi{\indexJp}
        .
    \label{EQ-For (alpha, I, J), calculation for LHS, conclude-classified term 2}
    \end{align}
    Recall the constraint
    \begin{align*}
        \coefB{\indexj{l}}{\indexj{l} \alpha}
        -
        \coefB{\alpha}{\indexj{l} \indexjp{l}}
        =
        \sgn
            {\order{\alpha,\indexjp{l}}}
            {\alpha,\indexj{l}}
        \coefD{\indexj{l}}{\order{\alpha,\indexjp{l}}}
        .
    \end{align*}
    Applying it to the terms in (\ref{EQ-For (alpha, I, J), calculation for LHS, conclude-classified term 2}), we obtain the following simplified expression:
    \begin{align}
        (-2\sqrt{-1})
        \displaystyle\sum_{\indexjp{l}\in\indexJp}
        \coefB{\indexj{l}}{\indexj{l} \alpha}
        \basexi{\indexI}
        \wedge
        \basedualxi{\indexJp}
        .
    \label{EQ-For (alpha, I, J), calculation for LHS, conclude-simplified term 2}
    \end{align}

    \item (\ref{EQ-For (alpha, I, J), calculation for LHS, conclude-ref extra 1,2}) + (\ref{EQ-For (alpha, I, J), calculation for LHS, conclude-ref extra 2,4}) is equal to
    \begin{align}
        (-2\sqrt{-1})
        \displaystyle\sum_{\indexi{k}\in\indexI}
        \displaystyle\sum_{s\in\indexIc}
        \begin{matrix*}[l]
            \sgn
                {s,\indexI}
                {\indexi{k},\order{(\indexI\backslash\{\indexi{k}\})\cup\{s\}}}
            \big(
                \coefB{\indexi{k}}{s \alpha^{'}}
                -
                \coefB{\alpha}{s \indexip{k}}
            \big)
            \basexi{\order{(\indexI\backslash\{\indexi{k}\})\cup\{s\}}}
            \wedge
            \basedualxi{\indexJp}
        \end{matrix*}
    .
    \label{EQ-For (alpha, I, J), calculation for LHS, conclude-classified term 3}
    \end{align}
    Recall the constraint
    \begin{align*}
        \coefB{\indexi{k}}{s \alpha^{'}}
        -
        \coefB{\alpha}{s \indexi{k}}
        =
        \sgn
            {\order{\alpha,\indexi{k}}}
            {\alpha,\indexi{k}}
        \coefD{s}{\order{\alpha,\indexi{k}}}
        .
    \end{align*}
    Applying it to the terms in (\ref{EQ-For (alpha, I, J), calculation for LHS, conclude-classified term 3}), we obtain the following simplified expression:
    \begin{align}
        (-2\sqrt{-1})
        \displaystyle\sum_{\indexi{k}\in\indexI}
        \displaystyle\sum_{s\in\indexIc}
        \begin{pmatrix*}[l]
            \sgn
                {s,\indexI}
                {\indexi{k},\order{(\indexI\backslash\{\indexi{k}\})\cup\{s\}}}
            \sgn
                {\order{\alpha,\indexi{k}}}
                {\alpha,\indexi{k}}
            \\
            \coefD{s}{\order{\alpha,\indexi{k}}}
            \basexi{\order{(\indexI\backslash\{\indexi{k}\})\cup\{s\}}}
            \wedge
            \basedualxi{\indexJp}
        \end{pmatrix*}
    .
    \label{EQ-For (alpha, I, J), calculation for LHS, conclude-simplified term 3}
    \end{align}
    To match the right-hand side, which we discuss later, we split the sum in (\ref{EQ-For (alpha, I, J), calculation for LHS, conclude-simplified term 3}) into two parts:
    \begin{align}
        &
        (-2\sqrt{-1})
        \displaystyle\sum_{\indexi{k}\in\indexI}
        \begin{pmatrix*}[l]
            (-1)
            \sgn
                {\indexI}
                {\indexi{k},\indexI\backslash\{\indexi{k}\}}
            \sgn
                {\order{\alpha,\indexi{k}}}
                {\alpha,\indexi{k}}
            \\
            \coefD{\alpha}{\order{\alpha,\indexi{k}}}
            \basexi{\alpha}
            \wedge
            \basexi{\indexI\backslash\{\indexi{k}\}}
            \wedge
            \basedualxi{\indexJp}
        \end{pmatrix*}
    \label{EQ-For (alpha, I, J), calculation for LHS, conclude-simplified term 3.1}
        \\
        +&\notag
        \\
        &
        (-2\sqrt{-1})
        \displaystyle\sum_{\indexi{k}\in\indexI}
        \displaystyle\sum_{s\in\indexIc\backslash\{\alpha\}}
        \begin{pmatrix*}[l]
            \sgn
                {s,\indexI}
                {\indexi{k},\order{(\indexI\backslash\{\indexi{k}\})\cup\{s\}}}
            \sgn
                {\order{\alpha,\indexi{k}}}
                {\alpha,\indexi{k}}
            \\
            \coefD{s}{\order{\alpha,\indexi{k}}}
            \basexi{\order{(\indexI\backslash\{\indexi{k}\})\cup\{s\}}}
            \wedge
            \basedualxi{\indexJp}
        \end{pmatrix*}
    ,
    \label{EQ-For (alpha, I, J), calculation for LHS, conclude-simplified term 3.2}
    \end{align}
    where (\ref{EQ-For (alpha, I, J), calculation for LHS, conclude-simplified term 3.1}) is obtained from
    \begin{align*}
        (-2\sqrt{-1})
        \displaystyle\sum_{\indexi{k}\in\indexI}
        \displaystyle\sum_{s=\alpha}
        \begin{pmatrix*}[l]
            \sgn
                {s,\indexI}
                {\indexi{k},\order{(\indexI\backslash\{\indexi{k}\})\cup\{s\}}}
            \sgn
                {\order{\alpha,\indexi{k}}}
                {\alpha,\indexi{k}}
            \\
            \coefD{s}{\order{\alpha,\indexi{k}}}
            \basexi{\order{(\indexI\backslash\{\indexi{k}\})\cup\{s\}}}
            \wedge
            \basedualxi{\indexJp}
        \end{pmatrix*}
    ,
    \end{align*}
    where $
    \order{(\indexI\backslash\{\indexi{k}\})\cup\{\alpha\}}
    =
    \{ \alpha,\indexI\backslash\{\indexi{k}\} \}
    $ since we assumed $\alpha<\indexi{1}$.

    \item (\ref{EQ-For (alpha, I, J), calculation for LHS, conclude-ref extra 1,4}) + (\ref{EQ-For (alpha, I, J), calculation for LHS, conclude-ref extra 2,2}) is equal to
    \begin{align}
        (-2\sqrt{-1})
        \displaystyle\sum_{\indexj{l}\in\indexJp}
        \displaystyle\sum_{t^{'}\in\indexJpc}
        \begin{pmatrix*}[l]
            \sgn
                {t^{'},\indexI,\indexJp}
                {\indexjp{l},\indexI,\order{(\indexJp\backslash\{\indexjp{l}\})\cup\{t^{'}\}}}
            \\
            \bigg(
                \sgn
                    {\order{\alpha^{'},t^{'}}}
                    {t^{'},\alpha^{'}}
                \coefD{\indexjp{l}}{\order{\alpha^{'},t^{'}}}
                -
                \coefB{\alpha}{\indexj{l} t^{'}}
            \bigg)
            \\
            \basexi{\indexI}
            \wedge
            \basedualxi{\order{(\indexJp\backslash\{\indexjp{l}\})\cup\{t^{'}\}}}
        \end{pmatrix*}
        .
    \label{EQ-For (alpha, I, J), calculation for LHS, conclude-classified term 4}
    \end{align}
    Recall the constraint
    \begin{align*}
        \coefB{\alpha}{\indexjp{l} t^{'}}
        -
        \coefB{t^{'}}{ \indexjp{l} \alpha }
        =
        \sgn
            {\order{t^{'},\alpha^{'}}}
            {t^{'},\alpha^{'}}
        \coefD{\indexjp{l}}{\order{t^{'},\alpha^{'}}}
        .
    \end{align*}
    Applying it to the terms in (\ref{EQ-For (alpha, I, J), calculation for LHS, conclude-classified term 4}), we obtain the following simplified expression:
    \begin{align}
        (-2\sqrt{-1})
        \displaystyle\sum_{\indexj{l}\in\indexJp}
        \displaystyle\sum_{t^{'}\in\indexJpc}
        \begin{pmatrix*}[l]
            \sgn
                {t^{'},\indexI,\indexJp}
                {\indexjp{l},\indexI,\order{(\indexJp\backslash\{\indexjp{l}\})\cup\{t^{'}\}}}
            \\
            (-1)
            \coefB{t^{'}}{\indexjp{l} \alpha}
            \basexi{\indexI}
            \wedge
            \basedualxi{\order{(\indexJp\backslash\{\indexjp{l}\})\cup\{t^{'}\}}}
        \end{pmatrix*}
        .
    \label{EQ-For (alpha, I, J), calculation for LHS, conclude-simplified term 4}
    \end{align}
\end{itemize}

We can now summarize the expression of $
[\LieagbdPartialbar,\iotaW]
(\basexi{\alpha}\wedge\basexi{\indexI}\wedge\basedualxi{\indexJp})
$ by summing up 
\begin{align*}
    \begin{pmatrix*}[l]
        [\LieagbdPartialbar,\iotaW]
        (\basexi{\alpha})
        \wedge
        \basexi{\indexI}\wedge\basedualxi{\indexJp}
        -
        \basexi{\alpha}
        \wedge
        [\LieagbdPartialbar,\iotaW]
        (\basexi{\indexI}\wedge\basedualxi{\indexJp})
    \end{pmatrix*}
\end{align*}
 with (\ref{EQ-For (alpha, I, J), calculation for LHS, conclude-simplified term 1}), (\ref{EQ-For (alpha, I, J), calculation for LHS, conclude-simplified term 2}), (\ref{EQ-For (alpha, I, J), calculation for LHS, conclude-simplified term 3.1}), (\ref{EQ-For (alpha, I, J), calculation for LHS, conclude-simplified term 3.2}), and (\ref{EQ-For (alpha, I, J), calculation for LHS, conclude-simplified term 4}):

\begin{align*}
    \begin{pmatrix*}[l]
            [\partialbar,\iotaW]
            \wedge
            \basexi{\indexI}\wedge\basedualxi{\indexJp}
            -
            \basexi{\alpha}
            \wedge
            [\partialbar,\iotaW]
            (\basexi{\indexI}\wedge\basedualxi{\indexJp})
            \\
            +
            \\
            (-2\sqrt{-1})
            \begin{pmatrix*}[l]
                \displaystyle\sum_{\indexi{k}\in\indexI}
                \begin{matrix*}[l]
                    \sgn
                        {\order{\indexi{k},\alpha}}
                        {\indexi{k},\alpha}
                    \coefD{\indexi{k}}{\order{\indexi{k},\alpha}}
                    \basexi{\indexI}
                    \wedge
                    \basedualxi{\indexJp}
                \end{matrix*}
                \\
                +
                \\
                \displaystyle\sum_{\indexj{l}\in\indexJp}
                \begin{matrix*}[l]
                    \coefB{\indexjp{l}}{\indexjp{l} \alpha}
                    \basexi{\indexI}
                    \wedge
                    \basedualxi{\indexJp}
                \end{matrix*}
                \\
                +
                \\
                \displaystyle\sum_{\indexi{k}\in\indexI}
                \begin{matrix*}[l]
                    \sgn
                        {\indexI}
                        {\indexi{k},\indexI\backslash\{\indexi{k}\}}
                    \sgn
                        {\order{\indexi{k},\alpha}}
                        {\indexi{k},\alpha}
                    \\
                    \coefD{\alpha}{\order{\indexi{k},\alpha}}
                    \basexi{\alpha}
                    \wedge
                    \basexi{(\indexI\backslash\{\indexi{k}\})}
                    \wedge
                    \basedualxi{\indexJp}
                \end{matrix*}
                \\
                +
                \\
                \displaystyle\sum_{\indexi{k}\in\indexI}
                \displaystyle\sum_{s\in\indexIc\backslash\{\alpha\}}
                \begin{matrix*}[l]
                    \sgn
                        {s,\indexI}
                        {\indexi{k},\order{(\indexI\backslash\{\indexi{k}\})\cup\{s\}}}
                    \sgn
                        {\order{\alpha,\indexi{k}}}
                        {\alpha,\indexi{k}}  
                    \\
                    \coefD{s}{\order{\indexi{k},\alpha}}
                    \basexi{\order{(\indexI\backslash\{\indexi{k}\})\cup\{s\}}}
                    \wedge
                    \basedualxi{\indexJp}
                \end{matrix*}
                \\
                +
                \\
                \displaystyle\sum_{\indexj{l}\in\indexJp}
                \displaystyle\sum_{t^{'}\in\indexJpc}
                \begin{matrix*}[l]
                    \sgn
                        {t^{'},\indexI,\indexJp}
                        {\indexjp{l},\indexI,\order{(\indexJp\backslash\{\indexjp{l}\})\cup\{t^{'}\}}}
                    \\
                    (-1)
                    \coefB{t^{'}}{\indexjp{l} \alpha}
                    \basexi{\indexI}
                    \wedge
                    \basedualxi{\order{(\indexJp\backslash\{\indexjp{l}\})\cup\{t^{'}\}}}
                \end{matrix*}
            \end{pmatrix*}
        \end{pmatrix*},
\end{align*}
which is the right-hand side in the statement of Lemma \ref{lemma-the final version of the LHS}.
\end{proof}

\subsection{
        Calculation of 
        \texorpdfstring{       $(\sqrt{-1}\LieagbdPartial^{*}\basexi{\alpha})\wedge\basexi{\indexI}\wedge\basedualxi{\indexJp}$}{}
    }
\label{subsection-For (alpha), I, J, calculation for RHS}
\begin{lemma}
\label{lemma-Kahler identity on Lie agbd-For (alpha),I,J, calculation for RHS}
    \begin{align}                                                       
        (\sqrt{-1}\LieagbdPartial^{*}\basexi{\alpha})
        \wedge\basexi{\indexI}\wedge\basedualxi{\indexJp}
        =
        -2\sqrt{-1}
        \begin{pmatrix*}[l]
            \displaystyle\sum_{k\neq\alpha}
            \begin{matrix*}[l]
                \sgn
                    {\order{k,\alpha}}
                    {\alpha,k}
                \coefD{k}{\order{k,\alpha}}
                \basexi{\indexI}\wedge\basedualxi{\indexJp}
            \end{matrix*}
            \\
            +
            \\
            \displaystyle\sum_{l^{'}=1}^{n}
            \begin{matrix*}[l]
                (-1)\coefB{l^{'}}{l^{'} \alpha}
                \basexi{\indexI}\wedge\basedualxi{\indexJp}
            \end{matrix*}
        \end{pmatrix*}.
    \label{EQ-Kahler identity on Lie agbd-For (alpha),I,J, calculation for RHS-target}
    \end{align}
\end{lemma}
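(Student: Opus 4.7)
Since $\basexi{\alpha}\in\Apq{1}{0}(U)$, the operator $\LieagbdPartial^{*}:\Apq{1}{0}(M)\to\Apq{0}{0}(M)$ sends $\basexi{\alpha}$ to a smooth function, and the wedge with $\basexi{\indexI}\wedge\basedualxi{\indexJp}$ is just scalar multiplication. So the plan is to compute the scalar $\sqrt{-1}\LieagbdPartial^{*}\basexi{\alpha}$ directly, using the identity $\LieagbdPartial^{*}=-(-1)^{n^{2}}\star\LieagbdPartialbar\star$ from Proposition~\ref{prop: a list of prop for Hodge star and Lie algebroid diff}, in three steps: apply $\star$, then $\LieagbdPartialbar$, then $\star$ again.

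First, from the local formula of the Hodge star with $p=1$, $q=0$ and $\indexI=\{\alpha\}$, $\indexJp=\emptyset$, we obtain
\begin{align*}
    \star\basexi{\alpha}
    =
    2^{1-n}\sgn{1,\cdots,n,1^{'},\cdots,n^{'}}{\alpha,1,\cdots,\widehat{\alpha},\cdots,n,1^{'},\cdots,n^{'}}
    \basedualxi{1}\wedge\cdots\wedge\widehat{\basedualxi{\alpha}}\wedge\cdots\wedge\basedualxi{n}\wedge\basexi{1^{'}}\wedge\cdots\wedge\basexi{n^{'}}.
\end{align*}
Next I would expand $\LieagbdPartialbar$ on this wedge product using the local formulas~(\ref{eq-kahler Lie agbd-d xi i}) and~(\ref{eq-kahler Lie agbd-d xi bar i}) for $\LieagbdPartialbar\basedualxi{k}$ and $\LieagbdPartialbar\basexi{l^{'}}$. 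The key simplification is that, due to the missing $\basedualxi{\alpha}$ and the presence of every $\basexi{l^{'}}$, only two families of terms survive after cancelling repeated frames: (i) when $\LieagbdPartialbar$ hits a factor $\basedualxi{k}$ with $k\neq\alpha$, only the part of $\LieagbdPartialbar\basedualxi{k}$ proportional to $\basedualxi{\order{k,\alpha}}$ survives, contributing $\sgn{k,\alpha}{\order{k,\alpha}}\coefD{k}{\order{k,\alpha}}$ times the volume form; (ii) when $\LieagbdPartialbar$ hits a factor $\basexi{l^{'}}$, only the term $\coefB{l^{'}}{l^{'}\alpha}\basexi{l^{'}}\wedge\basedualxi{\alpha}$ of $\LieagbdPartialbar\basexi{l^{'}}$ restores the volume form. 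This is exactly the same reduction that appeared in the proof of Lemma~\ref{Lemma-Kahler identity on Lie algebroid-the first lemma}; the present computation is in fact a strict simplification of that one, with the $\basedualxi{j^{'}}$ factor absent.

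Finally I would reorder the resulting wedge product into the standard form and apply the Hodge star once more using
\begin{align*}
    \star(\basedualxi{1}\wedge\cdots\wedge\basedualxi{n}\wedge\basexi{1^{'}}\wedge\cdots\wedge\basexi{n^{'}})
    =
    2^{n}\sgn{1^{'},\cdots,n^{'},1,\cdots,n}{1,\cdots,n,1^{'},\cdots,n^{'}}
    =
    2^{n}(-1)^{n^{2}},
\end{align*}
so that each of the two types of surviving terms becomes a scalar. Multiplying by $\sqrt{-1}\cdot(-(-1)^{n^{2}})$ and collecting signs yields
\begin{align*}
    \sqrt{-1}\LieagbdPartial^{*}\basexi{\alpha}
    =
    -2\sqrt{-1}\Big(\sum_{k\neq\alpha}\sgn{\order{k,\alpha}}{\alpha,k}\coefD{k}{\order{k,\alpha}}-\sum_{l^{'}=1}^{n}\coefB{l^{'}}{l^{'}\alpha}\Big),
\end{align*}
and wedging with $\basexi{\indexI}\wedge\basedualxi{\indexJp}$ gives the claimed formula.

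The only real obstacle is bookkeeping the signs of the three consecutive permutations arising from the two applications of $\star$ and the reordering after $\LieagbdPartialbar$; this is exactly the type of simplification that was carried out in Section~\ref{apex-simplification for RHS of (i,j)} for the analogous computation in Lemma~\ref{Lemma-Kahler identity on Lie algebroid-the first lemma}, so I would reuse those identities rather than redo them from scratch.
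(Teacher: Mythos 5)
Your proposal is correct and follows essentially the same route as the paper's own proof: it computes $\LieagbdPartial^{*}\basexi{\alpha}$ via $-(-1)^{n^2}\star\LieagbdPartialbar\star$, identifies the same two surviving families of terms (the $\coefD{k}{\order{k,\alpha}}$ terms from the anti-holomorphic factors and the $\coefB{l^{'}}{l^{'}\alpha}$ terms from the holomorphic ones), and assembles the constants in the same way. The only cosmetic difference is that the paper carries out the permutation-sign bookkeeping inline (equations (\ref{EQ-Kahler identity on Lie agbd-For (alpha),I,J, calculation for RHS-No7})--(\ref{EQ-Kahler identity on Lie agbd-For (alpha),I,J, calculation for RHS-No8})) rather than citing the appendix for Lemma \ref{Lemma-Kahler identity on Lie algebroid-the first lemma}, but the identities used are the analogous ones.
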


\begin{proof}
Recall that $\LieagbdPartial^{*}=-(-1)^{n^2}\star\,\LieagbdPartialbar\,\star$. Our proof comprises the following steps:
\begin{enumerate}
    \item Calculate $\star(\basexi{\alpha})$.
    \item Calculate $
    \LieagbdPartialbar\,\star
    (\basexi{\alpha})
    $.
    \item Calculate $
    \star\,\LieagbdPartialbar\,\star
    (\basexi{\alpha})
    $ and conclude the expression of $
    (\sqrt{-1}\LieagbdPartial^{*}\basexi{\alpha})
    \wedge\basexi{\indexI}\wedge\basedualxi{\indexJp}
    $.
\end{enumerate}
\keyword{Step 1:} We take $\star$ on $\basexi{\alpha}$ and obtain
\begin{align}
    \star(\basexi{\alpha})
    =
    2^{1+0-n}
    \sgn
        {1,\cdots,n}
        {\alpha,1,\cdots,\widehat{\alpha},\cdots,n}
    \basedualxi{1,\cdots,\widehat{\alpha},\cdots,n}
    \wedge
    \basexi{1^{'},\cdots,n^{'}}
    .
    \label{EQ-Kahler identity on Lie agbd-For (alpha),I,J, calculation for RHS-No1}
\end{align}
\keyword{Step 2:} We apply $\LieagbdPartialbar$ to (\ref{EQ-Kahler identity on Lie agbd-For (alpha),I,J, calculation for RHS-No1}) and obtain
\begin{align}
    &
    2^{1-n}
    \begin{pmatrix*}[l]
        \displaystyle\sum_{k\neq\alpha}
        \begin{matrix*}[l]
            \sgn
                {1,\cdots,n}
                {\alpha,1,\cdots,\widehat{\alpha},\cdots,n}
            \sgn
                {1,\cdots,\widehat{\alpha},\cdots,n}
                {k,1,\cdots,\widehat{k},\cdots,\widehat{\alpha},\cdots,n}
            \\
            \basedualxi{1}
            \wedge\cdots\wedge
            \LieagbdPartialbar\basedualxi{k}
            \wedge\cdots\wedge
            \widehat{\basedualxi{\alpha}}
            \wedge\cdots\wedge
            \basedualxi{n}
            \wedge
            \basexi{1^{'},\cdots,n^{'}}
        \end{matrix*}
        \\
        +
        \\
        \displaystyle\sum_{l^{'}=1}^{n}
        \begin{matrix*}[l]
            \sgn
                {1,\cdots,n}
                {\alpha,1,\cdots,\widehat{\alpha},\cdots,n}
            \sgn
                {1,\cdots,\widehat{\alpha},\cdots,n,1^{'},\cdots,n^{'}}
                {l^{'},1,\cdots,\widehat{\alpha},\cdots,n,1^{'},\cdots,\widehat{l^{'}},\cdots,n^{'}}
            \\
            \basedualxi{1}
            \wedge\cdots\wedge
            \widehat{\basedualxi{\alpha}}
            \wedge\cdots\wedge
            \basedualxi{n}
            \wedge
            \basexi{1^{'}}
            \wedge\cdots\wedge
            \LieagbdPartialbar\basexi{l^{'}}
            \wedge\cdots\wedge
            \basexi{n^{'}}
        \end{matrix*}
    \end{pmatrix*}
    .
\label{EQ-Kahler identity on Lie agbd-For (alpha),I,J, calculation for RHS-No1.5}
\end{align}
Recalling that $
\LieagbdPartialbar\basedualxi{k}
=
\frac{1}{2}\displaystyle\sum_{s,t}
\coefD{k}{s t}\basedualxi{s}\wedge\basedualxi{t}
$ and $
\LieagbdPartialbar\basexi{l^{'}}
=
\displaystyle\sum_{s,t}
\coefB{l^{'}}{s t}\basexi{s}\wedge\basedualxi{t}
$, we cancel out the terms containing repeated frame bases. The formula of (\ref{EQ-Kahler identity on Lie agbd-For (alpha),I,J, calculation for RHS-No1.5}) is simplified to
\begin{align}
    2^{1-n}
    \begin{pmatrix*}[l]
        \displaystyle\sum_{k\neq\alpha}
        \begin{matrix*}[l]
            \sgn
                {1,\cdots,n}
                {\alpha,1,\cdots,\widehat{\alpha},\cdots,n}
            \sgn
                {1,\cdots,\widehat{\alpha},\cdots,n}
                {k,1,\cdots,\widehat{k},\cdots,\widehat{\alpha},\cdots,n}
            \\
            \basedualxi{1}
            \wedge\cdots\wedge
            \begin{pmatrix*}[l]
                \frac{1}{2}
                (
                \coefD{k}{k \alpha}
                \basedualxi{k}\wedge\basedualxi{\alpha}
                +
                \coefD{k}{\alpha k}
                \basedualxi{\alpha}\wedge\basedualxi{k}
                )
            \end{pmatrix*}
            \wedge\cdots\wedge
            \widehat{\basedualxi{\alpha}}
            \wedge\cdots\wedge
            \basedualxi{n}
            \wedge
            \basexi{1^{'},\cdots,n^{'}}
        \end{matrix*}
        \\
        +
        \\
        \displaystyle\sum_{l^{'}=1}^{n}
        \begin{matrix*}[l]
            \sgn
                {1,\cdots,n}
                {\alpha,1,\cdots,\widehat{\alpha},\cdots,n}
            \sgn
                {1,\cdots,\widehat{\alpha},\cdots,n,1^{'},\cdots,n^{'}}
                {l^{'},1,\cdots,\widehat{\alpha},\cdots,n,1^{'},\cdots,\widehat{l^{'}},\cdots,n^{'}}
            \\
            \basedualxi{1}
            \wedge\cdots\wedge
            \widehat{\basedualxi{\alpha}}
            \wedge\cdots\wedge
            \basedualxi{n}
            \wedge
            \basexi{1^{'}}
            \wedge\cdots\wedge
            \begin{pmatrix*}[l]
                \coefB{l^{'}}{l^{'} \alpha}
                \basexi{l^{'}}\wedge\basedualxi{\alpha}
            \end{pmatrix*}
            \wedge\cdots\wedge
            \basexi{n^{'}}
        \end{matrix*}
    \end{pmatrix*}.
\label{EQ-Kahler identity on Lie agbd-For (alpha),I,J, calculation for RHS-No2}
\end{align}

We use the notation
\begin{align*}
    \basedualxi{\order{j k}}
    =
    \begin{cases}
        \basedualxi{j}\wedge\basedualxi{k}
        \qquad\text{if }j<k
        \\
        \basedualxi{k}\wedge\basedualxi{j}
        \qquad\text{if }k<j
    \end{cases}
\end{align*}
to simplify the formula
\begin{align*}
        \coefD{i}{j k}
        \basedualxi{j}\wedge\basedualxi{k}
    =
        \coefD{i}{k j}
        \basedualxi{k}\wedge\basedualxi{j}
    =  
        \coefD{i}{\order{j k}}
        \basedualxi{\order{j k}},
\end{align*}
where $\coefD{i}{j k}=-\coefD{i}{k j}$. Thus, the wedge product of frames in (\ref{EQ-Kahler identity on Lie agbd-For (alpha),I,J, calculation for RHS-No2}) can be simplified to
\begin{align}
    &
        \basedualxi{1}
        \wedge\cdots\wedge
        \begin{pmatrix*}[l]
            \frac{1}{2}
            (
            \coefD{k}{k \alpha}
            \basedualxi{k}\wedge\basedualxi{\alpha}
            +
            \coefD{k}{\alpha k}
            \basedualxi{\alpha}\wedge\basedualxi{k}
            )
        \end{pmatrix*}
        \wedge\cdots\wedge
        \widehat{\basedualxi{\alpha}}
        \wedge\cdots\wedge
        \basedualxi{n}
        \wedge
        \basexi{1^{'},\cdots,n^{'}}
    \notag
    \\
    =&
        \basedualxi{1}
        \wedge\cdots\wedge  
            \coefD{k}{\order{k \alpha}}
            \basedualxi{\order{k \alpha}}
        \wedge\cdots\wedge
        \widehat{\basedualxi{\alpha}}
        \wedge\cdots\wedge
        \basedualxi{n}
        \wedge
        \basexi{1^{'},\cdots,n^{'}}
    \notag
    \\
    =&
        \sgn
            {1,\cdots,k-1,\order{k,\alpha},k+1,\cdots,\widehat{\alpha},\cdots,n}
            {1,\cdots,n}
        \coefD{k}{\order{k,\alpha}}
        \basedualxi{1,\cdots,n}
        \wedge
        \basexi{1^{'},\cdots,n^{'}}
\label{EQ-Kahler identity on Lie agbd-For (alpha),I,J, calculation for RHS-No3}
\end{align}
and
\begin{align}
    &
        \basedualxi{1}
        \wedge\cdots\wedge
        \widehat{\basedualxi{\alpha}}
        \wedge\cdots\wedge
        \basedualxi{n}
        \wedge
        \basexi{1^{'}}
        \wedge\cdots\wedge
        \begin{pmatrix*}[l]
            \coefB{l^{'}}{l^{'} \alpha}
            \basexi{l^{'}}\wedge\basedualxi{\alpha}
        \end{pmatrix*}
        \wedge\cdots\wedge
        \basexi{n^{'}}
    \notag
    \\
    =&
        \sgn
            {1,\cdots,\widehat{\alpha},\cdots,n,1^{'},\cdots,l^{'},\alpha,l+1^{'},\cdots,n^{'}}
            {1,\cdots,n,1^{'},\cdots,n^{'}}
        \coefB{l^{'}}{l^{'} \alpha}
        \basedualxi{1,\cdots,n}
        \wedge
        \basexi{1^{'},\cdots,n^{'}}
    .
    \label{EQ-Kahler identity on Lie agbd-For (alpha),I,J, calculation for RHS-No4}
\end{align}
Using (\ref{EQ-Kahler identity on Lie agbd-For (alpha),I,J, calculation for RHS-No3}) and
(\ref{EQ-Kahler identity on Lie agbd-For (alpha),I,J, calculation for RHS-No4}) to replace the corresponding terms in (\ref{EQ-Kahler identity on Lie agbd-For (alpha),I,J, calculation for RHS-No2}), we obtain the expression of $\LieagbdPartialbar\,\star(\basexi{\alpha})$:
\begin{align}
    2^{1-n}
    \begin{pmatrix*}[l]
        \displaystyle\sum_{k\neq\alpha}
        \begin{matrix*}[l]
            \sgn
                {1,\cdots,n}
                {\alpha,1,\cdots,\widehat{\alpha},\cdots,n}
            \sgn
                {1,\cdots,\widehat{\alpha},\cdots,n}
                {k,1,\cdots,\widehat{k},\cdots,\widehat{\alpha},\cdots,n}
            \sgn
                {1,\cdots,k-1,\order{k,\alpha},k+1,\cdots,\widehat{\alpha},\cdots,n}
                {1,\cdots,n}
            \\
            \coefD{k}{\order{k,\alpha}}
            \basedualxi{1,\cdots,n}
            \wedge
            \basexi{1^{'},\cdots,n^{'}}
        \end{matrix*}
        \\
        +
        \\
        \displaystyle\sum_{l^{'}=1}^{n}
        \begin{matrix*}[l]
            \sgn
                {1,\cdots,n}
                {\alpha,1,\cdots,\widehat{\alpha},\cdots,n}
            \sgn
                {1,\cdots,\widehat{\alpha},\cdots,n,1^{'},\cdots,n^{'}}
                {l^{'},1,\cdots,\widehat{\alpha},\cdots,n,1^{'},\cdots,\widehat{l^{'}},\cdots,n^{'}}
            \sgn
                {1,\cdots,\widehat{\alpha},\cdots,n,1^{'},\cdots,l^{'},\alpha,l+1^{'},\cdots,n^{'}}
                {1,\cdots,n,1^{'},\cdots,n^{'}}
            \\
            \coefB{l^{'}}{l^{'} \alpha}
            \basedualxi{1,\cdots,n}
            \wedge
            \basexi{1^{'},\cdots,n^{'}}
        \end{matrix*}
    \end{pmatrix*}.
\label{EQ-Kahler identity on Lie agbd-For (alpha),I,J, calculation for RHS-No5}
\end{align}
\keyword{Step 3:} Take $\star$ on (\ref{EQ-Kahler identity on Lie agbd-For (alpha),I,J, calculation for RHS-No5}). Using the formula
\begin{align*}
    \star
    (
    \basedualxi{1,\cdots,n}
    \wedge
    \basexi{1^{'},\cdots,n^{'}}
    )
    =
    2^{n+n-n}
    \sgn
        {1^{'},\cdots,n^{'},1,\cdots,n}
        {1,\cdots,n,1^{'},\cdots,n^{'}}
    =
    2^{n}(-1)^{n^2}
    ,
\end{align*}
we compute $\star\,\LieagbdPartialbar\,\star(\basexi{\alpha})$ as
\begin{align}
    &
        2^{1-n}
        \begin{pmatrix*}[l]
            2^{n}(-1)^{n^2}
            \displaystyle\sum_{k\neq\alpha}
            \begin{pmatrix*}[l]
                \sgn
                    {1,\cdots,n}
                    {\alpha,1,\cdots,\widehat{\alpha},\cdots,n}
                \sgn
                    {1,\cdots,\widehat{\alpha},\cdots,n}
                    {k,1,\cdots,\widehat{k},\cdots,\widehat{\alpha},\cdots,n}
                \\
                \sgn
                    {1,\cdots,k-1,\order{k,\alpha},k+1,\cdots,\widehat{\alpha},\cdots,n}
                    {1,\cdots,n}
                \coefD{k}{\order{k,\alpha}}
            \end{pmatrix*}
            \\
            +
            \\
            2^{n}(-1)^{n^2}
            \displaystyle\sum_{l^{'}=1}^{n}
            \begin{pmatrix*}[l]
                \sgn
                    {1,\cdots,n}
                    {\alpha,1,\cdots,\widehat{\alpha},\cdots,n}
                \sgn
                    {1,\cdots,\widehat{\alpha},\cdots,n,1^{'},\cdots,n^{'}}
                    {l^{'},1,\cdots,\widehat{\alpha},\cdots,n,1^{'},\cdots,\widehat{l^{'}},\cdots,n^{'}}
                \\
                \sgn
                    {1,\cdots,\widehat{\alpha},\cdots,n,1^{'},\cdots,l^{'},\alpha,l+1^{'},\cdots,n^{'}}
                    {1,\cdots,n,1^{'},\cdots,n^{'}}
                \coefB{l^{'}}{l^{'} \alpha}
            \end{pmatrix*}
        \end{pmatrix*}
    \notag
    \\
    =&
        (-1)^{n^2}2
        \begin{pmatrix*}[l]
            \displaystyle\sum_{k\neq\alpha}
            \begin{pmatrix*}[l]
                \sgn
                    {1,\cdots,n}
                    {\alpha,1,\cdots,\widehat{\alpha},\cdots,n}
                \sgn
                    {1,\cdots,\widehat{\alpha},\cdots,n}
                    {k,1,\cdots,\widehat{k},\cdots,\widehat{\alpha},\cdots,n}
                \\
                \sgn
                    {1,\cdots,k-1,\order{k,\alpha},k+1,\cdots,\widehat{\alpha},\cdots,n}
                    {1,\cdots,n}
                \coefD{k}{\order{k,\alpha}}
            \end{pmatrix*}
            \\
            +
            \\
            \displaystyle\sum_{l^{'}=1}^{n}
            \begin{pmatrix*}[l]
                \sgn
                    {1,\cdots,n}
                    {\alpha,1,\cdots,\widehat{\alpha},\cdots,n}
                \sgn
                    {1,\cdots,\widehat{\alpha},\cdots,n,1^{'},\cdots,n^{'}}
                    {l^{'},1,\cdots,\widehat{\alpha},\cdots,n,1^{'},\cdots,\widehat{l^{'}},\cdots,n^{'}}]
                \\
                \sgn
                    {1,\cdots,\widehat{\alpha},\cdots,n,1^{'},\cdots,l^{'},\alpha,l+1^{'},\cdots,n^{'}}
                    {1,\cdots,n,1^{'},\cdots,n^{'}}
                \coefB{l^{'}}{l^{'} \alpha}
            \end{pmatrix*}
        \end{pmatrix*}
    .
\label{EQ-Kahler identity on Lie agbd-For (alpha),I,J, calculation for RHS-No6}
\end{align}
The signs of permutation in (\ref{EQ-Kahler identity on Lie agbd-For (alpha),I,J, calculation for RHS-No6}) can be simplified as follows:
\begin{align}
    &
        \sgn
            {1,\cdots,n}
            {\alpha,1,\cdots,\widehat{\alpha},\cdots,n}
        \sgn
            {1,\cdots,\widehat{\alpha},\cdots,n}
            {k,1,\cdots,\widehat{k},\cdots,\widehat{\alpha},\cdots,n}
        \sgn
            {1,\cdots,k-1,\order{k,\alpha},k+1,\cdots,\widehat{\alpha},\cdots,n}
            {1,\cdots,n}
    \notag
    \\
    =&
        \sgn
            {1,\cdots,n}
            {\alpha,k,1,\cdots,\widehat{k},\cdots,\widehat{\alpha},\cdots,z}
        \sgn
            {1,\cdots,k-1,\order{k,\alpha},k+1,\cdots,\widehat{\alpha},\cdots,n}
            {1,\cdots,n}
    \notag
    \\
    =&
        \sgn
            {1,\cdots,n}
            {\alpha,k,1,\cdots,\widehat{k},\cdots,\widehat{\alpha},\cdots,z}
        \sgn
            {\order{k,\alpha},1,\cdots,\widehat{k},\cdots,\widehat{\alpha},\cdots,n}
            {1,\cdots,n}
    \notag
    \\
    =&
        \sgn
            {1,\cdots,n}
            {\alpha,k,1,\cdots,\widehat{k},\cdots,\widehat{\alpha},\cdots,z}
        \sgn
            {\order{k,\alpha}}
            {\alpha,k}
        \sgn
            {\alpha,k,1,\cdots,\widehat{k},\cdots,\widehat{\alpha},\cdots,n}
            {1,\cdots,n}
    \notag
    \\
    =&
        \sgn
            {\order{k,\alpha}}
            {\alpha,k}
\label{EQ-Kahler identity on Lie agbd-For (alpha),I,J, calculation for RHS-No7}
\end{align}
and
\begin{align}
    &
        \sgn
            {1,\cdots,n}
            {\alpha,1,\cdots,\widehat{\alpha},\cdots,n}
        \sgn
            {1,\cdots,\widehat{\alpha},\cdots,n,1^{'},\cdots,n^{'}}
            {l^{'},1,\cdots,\widehat{\alpha},\cdots,n,1^{'},\cdots,\widehat{l^{'}},\cdots,n^{'}}
        \sgn
            {1,\cdots,\widehat{\alpha},\cdots,n,1^{'},\cdots,l^{'},\alpha,l+1^{'},\cdots,n^{'}}
            {1,\cdots,n,1^{'},\cdots,n^{'}}
    \notag
    \\
    =&
        \sgn
            {1,\cdots,n}
            {\alpha,1,\cdots,\widehat{\alpha},\cdots,n}
        \sgn
            {1,\cdots,\widehat{\alpha},\cdots,n,1^{'},\cdots,n^{'}}
            {l^{'},1,\cdots,\widehat{\alpha},\cdots,n,1^{'},\cdots,\widehat{l^{'}},\cdots,n^{'}}
        \sgn
            {l^{'},\alpha,1,\cdots,\widehat{\alpha},\cdots,n,1^{'},\cdots,\widehat{l},\cdots,n^{'}}
            {1,\cdots,n,1^{'},\cdots,n^{'}}
    \notag
    \\
    =&
        \sgn
            {1,\cdots,n}
            {\alpha,1,\cdots,\widehat{\alpha},\cdots,n}
        (-1)
        \sgn
            {\alpha,1,\cdots,\widehat{\alpha},\cdots,n,1^{'},\cdots,n^{'}}
            {l^{'},\alpha,1,\cdots,\widehat{\alpha},\cdots,n,1^{'},\cdots,\widehat{l^{'}},\cdots,n^{'}}
        \sgn
            {l^{'},\alpha,1,\cdots,\widehat{\alpha},\cdots,n,1^{'},\cdots,\widehat{l},\cdots,n^{'}}
            {1,\cdots,n,1^{'},\cdots,n^{'}}
    \notag
    \\
    =&
        (-1)
        \sgn
            {1,\cdots,n,1^{'},\cdots,n^{'}}
            {l^{'},\alpha,1,\cdots,\widehat{\alpha},\cdots,n,1^{'},\cdots,\widehat{l^{'}},\cdots,n^{'}}
        \sgn
            {l^{'},\alpha,1,\cdots,\widehat{\alpha},\cdots,n,1^{'},\cdots,\widehat{l},\cdots,n^{'}}
            {1,\cdots,n,1^{'},\cdots,n^{'}}
    \notag
    \\
    =&
        (-1)
        \sgn
            {1,\cdots,n,1^{'},\cdots,n^{'}}
            {1,\cdots,n,1^{'},\cdots,n^{'}}
    \notag
    \\
    =&
        (-1)
    .
\label{EQ-Kahler identity on Lie agbd-For (alpha),I,J, calculation for RHS-No8}
\end{align}
We plug (\ref{EQ-Kahler identity on Lie agbd-For (alpha),I,J, calculation for RHS-No7}) and (\ref{EQ-Kahler identity on Lie agbd-For (alpha),I,J, calculation for RHS-No8}) into (\ref{EQ-Kahler identity on Lie agbd-For (alpha),I,J, calculation for RHS-No6}) and simplify the expression of $\star\,\LieagbdPartial\,\star(\basexi{\alpha})$ to
\begin{align*}
    2(-1)^{n^2}
    \begin{pmatrix*}[l]
        \displaystyle\sum_{k\neq\alpha}
        \begin{matrix*}[l]
            \sgn
                {\order{k,\alpha}}
                {\alpha,k}
            \coefD{k}{\order{k,\alpha}}
        \end{matrix*}
        \\
        +
        \\
        \displaystyle\sum_{l^{'}=1}^{n}
        \begin{matrix*}[l]
            (-1)\coefB{l^{'}}{l^{'} \alpha}
        \end{matrix*}
    \end{pmatrix*}.
\end{align*}
Thus, we arrive at the final expression:
\begin{align*}
    (\sqrt{-1}\LieagbdPartial^{*}\basexi{\alpha})
    \wedge
    \basexi{\indexI}\wedge\basedualxi{\indexJp}
    =&
        \sqrt{-1}
        \big(
        -(-1)^{n^2}\star\,\LieagbdPartialbar\star
        (\basexi{\alpha})
        \big)
        \wedge
        \basexi{\indexI}\wedge\basedualxi{\indexJp}
    \\
    =&
    -2\sqrt{-1}
    \begin{pmatrix*}[l]
        \displaystyle\sum_{k\neq\alpha}
        \begin{matrix*}[l]
            \sgn
                {\order{k,\alpha}}
                {\alpha,k}
            \coefD{k}{\order{k,\alpha}}
            \basexi{\indexI}\wedge\basedualxi{\indexJp}
        \end{matrix*}
        \\
        +
        \\
        \displaystyle\sum_{l^{'}=1}^{n}
        \begin{matrix*}[l]
            (-1)\coefB{l^{'}}{l^{'} \alpha}
            \basexi{\indexI}\wedge\basedualxi{\indexJp}
        \end{matrix*}
    \end{pmatrix*}.
\end{align*}

\end{proof}

\newpage
\subsection{
        Calculation of \texorpdfstring{
        $\basexi{\alpha}\wedge\sqrt{-1}\LieagbdPartial^{*}(\basexi{\indexI}\wedge\basedualxi{\indexJp})$}{}
    }
\label{subsection-For alpha, (I, J), calculation for RHS}
\begin{lemma}
\label{lemma-Kahler identity on Lie agbd-For alpha,(I,J), calculation for RHS}
    \begin{align}
        &
        \basexi{\alpha}\wedge
        \sqrt{-1}\LieagbdPartial^{*}
        (\basexi{\indexI}\wedge\basedualxi{\indexJp})
    \\
    =&
        -2\sqrt{-1}
        \begin{pmatrix*}[l]
            \displaystyle\sum_{\indexic{k}\in\indexIc}
            \displaystyle\sum_{r\in\indexI}
            \begin{matrix*}[l]
                \sgn
                    {r,\indexic{k}}
                    {\order{\indexic{k},r}}
                \sgn
                    {r,\indexI\backslash\{r\}}                    
                    {\indexI}
                \coefD{\indexic{k}}{\order{\indexic{k},r}}
                \basexi{\alpha}
                \wedge
                \basexi{\indexI\backslash\{r\}}
                \wedge
                \basedualxi{\indexJp} 
            \end{matrix*}
            \\
            +
            \\
            \displaystyle\sum_{\indexic{k}\in\indexIc\backslash\{\alpha\}}
            \displaystyle\sum_{r_1\in\indexI}
            \displaystyle\sum_{r_2\in\indexI}
            \begin{pmatrix*}[l]
                (-1)
                \sgn
                    {\order{r_1,r_2},\order{(\indexI\backslash\{r_1,r_2\})}}
                    {\indexI}
                \\
                \sgn
                    {\order{(\indexI\backslash\{r_1,r_2\})\cup\{\indexic{k}\}}}
                    {\indexic{k},\indexI\backslash\{r_1,r_2\})}
                \\
                \frac{1}{2}
                \coefD{\indexic{k}}{\order{r_1,r_2}}
                \basexi{\alpha}
                \wedge
                \basexi{\order{(\indexI\backslash\{r_1,r_2\})\cup\{\indexic{k}\}}}
                \wedge
                \basedualxi{\indexJp} 
            \end{pmatrix*}
            \\
            +
            \\
            \displaystyle\sum_{\indexjpc{l}\in\indexJpc}
            \displaystyle\sum_{r\in\indexI}
            \begin{matrix*}[l]
                (-1)
                \sgn
                    {r,\indexI\backslash\{r\}}
                    {\indexI}
                \coefB{\indexjpc{l}}{\indexjpc{l} r}
                \basexi{\alpha}
                \wedge
                \basexi{\indexI\backslash\{r\}}
                \wedge
                \basedualxi{\indexJp} 
            \end{matrix*}
            \\
            +
            \\
            \displaystyle\sum_{\indexjpc{l}\in\indexJpc}
            \displaystyle\sum_{r\in\indexI}
            \displaystyle\sum_{u^{'}\in\indexJp}
            \begin{pmatrix*}[l]
                (-1)^{p}
                \sgn
                    {u^{'},r,\indexI\backslash\{r\},\order{(\indexJp\backslash\{u^{'}\})}}
                    {\indexI,\indexJp}
                \\
                \sgn
                    {\order{(\indexJp\backslash\{u^{'}\})\cup\{\indexjpc{l}\}}}
                    {\indexjpc{l},(\indexJp\backslash\{u^{'}\})}
                \\
                \coefB{\indexjpc{l}}{u^{'} r}
                \basexi{\alpha}
                \wedge
                \basexi{\indexI\backslash\{r\}}
                \wedge
                \basedualxi{\order{(\indexJp\backslash\{u^{'}\})\cup\{\indexjpc{l}\}}} 
            \end{pmatrix*}
        \end{pmatrix*}.
    \label{EQ-Kahler identity on Lie agbd-For alpha,(I,J), calculation for RHS-target}
    \end{align}
\end{lemma}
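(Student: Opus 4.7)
The plan is to compute the right-hand side directly using the identity $\LieagbdPartial^{*}=-(-1)^{n^{2}}\star\,\LieagbdPartialbar\,\star$ established in Proposition \ref{prop: a list of prop for Hodge star and Lie algebroid diff}, mirroring the three-step pattern used in the proof of Lemma \ref{Lemma-Kahler identity on Lie algebroid-the first lemma}. The computation is mechanical: each of the three operators $\star$, $\LieagbdPartialbar$, $\star$ acts either on index sets (producing sign-of-permutation factors and powers of $2$) or on the local frames via the coefficients $\coefB{}{}$ and $\coefD{}{}$, so the issue is bookkeeping, not new ideas.

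First I would apply $\star$ to $\basexi{\indexI}\wedge\basedualxi{\indexJp}$, which by definition gives
\begin{align*}
    \star(\basexi{\indexI}\wedge\basedualxi{\indexJp})
    =
    2^{p+q-n}\sgn{1,\cdots,n,1^{'},\cdots,n^{'}}{\indexI,\indexJp,\indexIc,\indexJpc}
    \basedualxi{\indexIc}\wedge\basexi{\indexJpc}.
\end{align*}
Next I would apply $\LieagbdPartialbar$ via the Leibniz rule, using the local formulas
$\LieagbdPartialbar\basedualxi{i}=\frac{1}{2}\sum_{j,k}\coefD{i}{jk}\basedualxi{j}\wedge\basedualxi{k}$ and $\LieagbdPartialbar\basexi{l^{'}}=\sum_{j,k}\coefB{l^{'}}{jk}\basexi{j}\wedge\basedualxi{k}$. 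Repeated-frame cancellation will kill most terms: on a factor $\basedualxi{\indexic{k}}$ only the summands involving an index from $\indexI$ survive (yielding a $\coefD{\indexic{k}}{}$ contribution of two sub-types: one indexed by a single $r\in\indexI$ corresponding to a wedge $\basedualxi{\indexIc}\cup\basedualxi{r}$ that reconstitutes $\basedualxi{1,\cdots,n}$, and one indexed by $r_{1},r_{2}\in\indexI$ with an extra surviving $\basedualxi{\indexic{k}}$), while on a factor $\basexi{\indexjpc{l}}$ only $\coefB{\indexjpc{l}}{\indexjpc{l}\,r}$ and $\coefB{\indexjpc{l}}{u^{'}\,r}$ with $r\in\indexI$, $u^{'}\in\indexJp$ survive. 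This is exactly the four-term block structure of the target expression.

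Then I would apply $\star$ again to each surviving term, using the expression
\begin{align*}
    \star(\basedualxi{\indexAc}\wedge\basexi{\indexBpc})
    =2^{|\indexAc|+|\indexBpc|-n}\sgn{1^{'},\cdots,n^{'},1,\cdots,n}{\indexAc,\indexBpc,\indexA,\indexBp}\basexi{\indexA}\wedge\basedualxi{\indexBp},
\end{align*}
to convert everything back to the standard $\basexi{}\wedge\basedualxi{}$ form. Collecting the powers of $2$ gives the overall prefactor $2$, and multiplying by $-(-1)^{n^{2}}\sqrt{-1}$ and wedging with $\basexi{\alpha}$ on the left produces $-2\sqrt{-1}\,\basexi{\alpha}\wedge(\cdots)$. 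A case split $\indexic{k}=\alpha$ versus $\indexic{k}\neq\alpha$ will be needed in the $\coefD{\indexic{k}}{\order{r_{1},r_{2}}}$ block, because the wedge with $\basexi{\alpha}$ annihilates the $\indexic{k}=\alpha$ sub-summand; this matches the restriction $\indexic{k}\in\indexIc\backslash\{\alpha\}$ in the stated formula.

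The main obstacle, exactly as in Lemma \ref{Lemma-Kahler identity on Lie algebroid-the first lemma}, will be the simplification of the triple product of signs of permutations appearing at each stage (the sign from the initial $\star$, the sign from reordering into $\basedualxi{1,\cdots,n}$ after acting by $\LieagbdPartialbar$, and the sign from the second $\star$). Using the manipulation rules for $\sgn{}{}$ recorded after Lemma \ref{lemma-For (alpha, I, J), calculation for LHS-extra 1]} (insertion/removal of common indices, splitting and merging across a shared row, and the $(-1)^{|\indexI||\indexJ|}$ rule for swapping adjacent blocks), the triple product collapses in each of the four cases to exactly the sign factor listed in (\ref{EQ-Kahler identity on Lie agbd-For alpha,(I,J), calculation for RHS-target}); in particular, the overall $(-1)^{n^{2}}$ from the second $\star$ cancels the $-(-1)^{n^{2}}$ in $\LieagbdPartial^{*}$, and the $(-1)^{p}$ in the fourth block arises from moving $\basexi{\indexjpc{l}}$ across the $p-1$ remaining holomorphic frames. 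I would isolate these sign identities as short sublemmas, following the pattern of Section \ref{apex-simplification for LHS, extra 1, part 1}, so that the main computation reduces to assembling the four blocks.
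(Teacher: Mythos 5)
Your plan is correct and follows essentially the same route as the paper's proof: apply $\star$, then $\LieagbdPartialbar$ via the Leibniz rule with repeated-frame cancellation producing exactly the four blocks (two $D$-type, two $B$-type), then $\star$ again, and collapse the resulting products of permutation signs in separate sublemmas, with the restriction to $\indexIc\backslash\{\alpha\}$ in the second block arising precisely from the final wedge with $\basexi{\alpha}$ as you observe. The power-of-two bookkeeping ($2^{p+q-n}\cdot 2^{n-p-q+1}=2$) and the cancellation of $(-1)^{n^2}$ against the prefactor in $\LieagbdPartial^{*}$ also match the paper's computation.
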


\begin{proof}
Recall that $\LieagbdPartial^{\star}=-(-1)^{n^2}\star\,\LieagbdPartialbar\,\star$. Our proof comprises the following steps:
\begin{enumerate}
    \item Calculate $\star(\basexi{\indexI}\wedge\basedualxi{\indexJp})$.
    \item Calculate $
    \LieagbdPartialbar\,\star
    (\basexi{\indexI}\wedge\basedualxi{\indexJp})
    $.
    \item Calculate $
    \star\,\LieagbdPartialbar\,\star
    (\basexi{\indexI}\wedge\basedualxi{\indexJp})
    $ and conclude the expression of $
    \basexi{\alpha}\wedge
    \sqrt{-1}\LieagbdPartial^{*}
    (\basexi{\indexI}\wedge\basedualxi{\indexJp})
    $.
\end{enumerate}
\keyword{Step 1:} We take $\star$ on $\basexi{\indexI}\wedge\basedualxi{\indexJp}$ and obtain
\begin{align}
    \star(\basexi{\indexI}\wedge\basedualxi{\indexJp})
    =
    2^{p+q-n}
    \sgn
        {1,\cdots,n,1^{'},\cdots,n^{'}}
        {\indexI,\indexJp,\indexIc,\indexJpc}
    \basedualxi{\indexIc}
    \wedge
    \basexi{\indexJpc}
    .
    \label{EQ-Kahler identity on Lie agbd-For alpha,(I,J), calculation for RHS-no1}
\end{align}
\keyword{Step 2:} We apply $\LieagbdPartialbar$ to (\ref{EQ-Kahler identity on Lie agbd-For alpha,(I,J), calculation for RHS-no1}):
\begin{align}
    2^{p+q-n}
    \begin{pmatrix*}[l]
        \displaystyle\sum_{\indexic{k}\in\indexIc}
        \sgn
            {1,\cdots,n,1^{'},\cdots,n^{'}}
            {\indexI,\indexJp,\indexIc,\indexJpc}
        \sgn
            {\indexIc}
            {\indexic{k},\indexIc\backslash\{\indexic{k}\}}
        \basedualxi{\indexic{1}}
        \wedge\cdots\wedge
        \LieagbdPartialbar\basedualxi{\indexic{k}}
        \wedge\cdots\wedge
        \basedualxi{\indexic{n-p}}
        \wedge
        \basexi{\indexJpc} 
        \\
        +
        \\
        \displaystyle\sum_{\indexjpc{l}\in\indexJpc}
        \sgn
            {1,\cdots,n,1^{'},\cdots,n^{'}}
            {\indexI,\indexJp,\indexIc,\indexJpc}
        \sgn
            {\indexIc,\indexJpc}
            {\indexjpc{l},\indexIc,\indexJpc\backslash\{\indexjpc{l}\}}
        \basedualxi{\indexIc}
        \wedge
        \basexi{\indexjpc{1}} 
        \wedge\cdots\wedge
        \LieagbdPartialbar\basexi{\indexjpc{l}}
        \wedge\cdots\wedge
        \basexi{\indexjpc{n-q}} 
        \\
    \end{pmatrix*}
    .
    \label{EQ-Kahler identity on Lie agbd-For alpha,(I,J), calculation for RHS-no2}
\end{align}
Recall that
\begin{align*}
    \LieagbdPartialbar\basedualxi{k}
    =
    \frac{1}{2}\displaystyle\sum_{s,t}
    \coefD{k}{s t}\basedualxi{s}\wedge\basedualxi{t}
    \qquad\text{and}\qquad
    \LieagbdPartialbar\basexi{l^{'}}
    =
    \displaystyle\sum_{s,t}
    \coefB{l^{'}}{s t}\basexi{s}\wedge\basedualxi{t}
    .
\end{align*} 
We plug these into (\ref{EQ-Kahler identity on Lie agbd-For alpha,(I,J), calculation for RHS-no2}) and cancel out the terms containing repeated frames in the wedge product, obtaining the following expression of $\LieagbdPartialbar\star(\basexi{\indexI}\wedge\basedualxi{\indexJp})$:
\begin{align}
     2^{p+q-n}
    \begin{pmatrix*}[l]
        \displaystyle\sum_{\indexic{k}\in\indexIc}
        \begin{pmatrix*}[l]
            \sgn
                {1,\cdots,n,1^{'},\cdots,n^{'}}
                {\indexI,\indexJp,\indexIc,\indexJpc}
            \sgn
                {\indexIc}
                {\indexic{k},\indexIc\backslash\{\indexic{k}\}}
            \\
            \basedualxi{\indexic{1}}
            \wedge\cdots\wedge
            \basedualxi{\indexic{k-1}}
            \wedge
            \begin{pmatrix*}[l]
                \frac{1}{2}
                \displaystyle\sum_{r\in\indexI}
                \coefD{\indexic{k}}{\indexic{k} r}
                \basedualxi{\indexic{k}}
                \wedge
                \basedualxi{r}
                +
                \coefD{\indexic{k}}{r \indexic{k}}
                \basedualxi{r}
                \wedge
                \basedualxi{\indexic{k}}
                \\
                +
                \\
                \frac{1}{2}
                \displaystyle\sum_{r_1\in\indexI}
                \displaystyle\sum_{r_2\in\indexI}
                \coefD{\indexic{k}}{r_1 r_2}
                \basedualxi{r_1}
                \wedge
                \basedualxi{r_2}
            \end{pmatrix*}
            \wedge
            \basedualxi{\indexic{k+1}}
            \wedge\cdots\wedge
            \\
            \basedualxi{\indexic{n-p}}
            \wedge
            \basexi{\indexJpc} 
        \end{pmatrix*}
        \\
        +
        \\
        \displaystyle\sum_{\indexjpc{l}\in\indexJpc}
        \begin{pmatrix*}[l]
            \sgn
                {1,\cdots,n,1^{'},\cdots,n^{'}}
            {\indexI,\indexJp,\indexIc,\indexJpc}
            \sgn
                {\indexIc,\indexJpc}
                {\indexjpc{l},\indexIc,\indexJpc\backslash\{\indexjpc{l}\}}
            \\
            \basedualxi{\indexIc}
            \wedge
            \basexi{\indexjpc{1}} 
            \wedge\cdots\wedge
            \basexi{\indexjpc{l-1}}
            \wedge
            \begin{pmatrix*}
                \displaystyle\sum_{r\in\indexI}
                \coefB{\indexjpc{l}}{\indexjpc{l} r}
                \basexi{\indexjpc{l}}
                \wedge
                \basedualxi{r}
                \\
                +
                \\
                \displaystyle\sum_{r\in\indexI}
                \displaystyle\sum_{u^{'}\in\indexJp}
                \coefB{\indexjpc{l}}{u^{'} r}
                \basexi{u^{'}}
                \wedge
                \basedualxi{r}
            \end{pmatrix*}
            \wedge
            \basexi{\indexjpc{l+1}}
            \wedge\cdots\wedge
            \basexi{\indexjpc{n-q}} 
        \end{pmatrix*}
    \end{pmatrix*}.
\label{EQ-Kahler identity on Lie agbd-For alpha,(I,J), calculation for RHS-no3}
\end{align}
We use the notation
$
    \basedualxi{\order{j k}}=
    \begin{cases}
        \basedualxi{j}\wedge\basedualxi{k}
        \qquad\text{if }j<k
        \\
        \basedualxi{k}\wedge\basedualxi{j}
        \qquad\text{if }k<j
    \end{cases}
$ 
to rewrite the formula
\begin{align*}
        \coefD{i}{j k}
        \basedualxi{j}\wedge\basedualxi{k}
    =
        \coefD{i}{k j}
        \basedualxi{k}\wedge\basedualxi{j}
    =  
        \coefD{i}{\order{j k}}
        \basedualxi{\order{j k}}
    ,
\end{align*}
where $\coefD{i}{j k}=-\coefD{i}{k j}$. Thus, the wedge product of frames in (\ref{EQ-Kahler identity on Lie agbd-For alpha,(I,J), calculation for RHS-no3}) can be simplified as follows.
\begin{itemize}
    \item 
\begin{align}
    &
        \basedualxi{\indexic{1}}
        \wedge\cdots\wedge
        \basedualxi{\indexic{k-1}}
        \wedge
        \frac{1}{2}
        \bigg(
            \displaystyle\sum_{r\in\indexI}
            \coefD{\indexic{k}}{\indexic{k} r}
            \basedualxi{\indexic{k}}
            \wedge
            \basedualxi{r}
            +
            \coefD{\indexic{k}}{r \indexic{k}}
            \basedualxi{r}
            \wedge
            \basedualxi{\indexic{k}}
        \bigg)
        \wedge
        \basedualxi{\indexic{k+1}}
        \wedge\cdots\wedge
        \basedualxi{\indexic{n-p}}
        \wedge
        \basexi{\indexJpc} 
\notag
    \\
    =&
        \basedualxi{\indexic{1}}
        \wedge\cdots\wedge
        \basedualxi{\indexic{k-1}}
        \wedge
        \bigg(
            \displaystyle\sum_{r\in\indexI}
            \coefD{\indexic{k}}{\order{\indexic{k} r}}
            \basedualxi{\order{\indexic{k} r}}
            \bigg)
        \wedge
        \basedualxi{\indexic{k+1}}
        \wedge\cdots\wedge
        \basedualxi{\indexic{n-p}}
        \wedge
        \basexi{\indexJpc} 
\notag
    \\
    =&
        \displaystyle\sum_{r\in\indexI}
        \sgn
            {\indexic{1},\cdots,\indexic{k-1},\order{\indexic{k},r},\indexic{k+1},\cdots,\indexic{n-p}}
            {\order{\indexIc\cup\{r\}}}
        \coefD{\indexic{k}}{\order{\indexic{k},r}}
        \basedualxi{\order{\indexIc\cup\{r\}}}
        \wedge
        \basexi{\indexJpc} 
    .
\label{EQ-Kahler identity on Lie agbd-For alpha,(I,J), calculation for RHS-no4}
\end{align}

\item
\begin{align}
    &
        \basedualxi{\indexic{1}}
        \wedge\cdots\wedge
        \basedualxi{\indexic{k-1}}
        \wedge
        \bigg(
            \frac{1}{2}
            \displaystyle\sum_{r_1\in\indexI}
            \displaystyle\sum_{r_2\in\indexI}
            \coefD{\indexic{k}}{r_1 r_2}
            \basedualxi{r_1}
            \wedge
            \basedualxi{r_2}
        \bigg)
        \wedge
        \basedualxi{\indexic{k+1}}
        \wedge\cdots\wedge
        \basedualxi{\indexic{n-p}}
        \wedge
        \basexi{\indexJpc} 
\notag
    \\
    =&
        \basedualxi{\indexic{1}}
        \wedge\cdots\wedge
        \basedualxi{\indexic{k-1}}
        \wedge
        \bigg(
            \frac{1}{2}
            \displaystyle\sum_{r_1\in\indexI}
            \displaystyle\sum_{r_2\in\indexI}
            \coefD{\indexic{k}}{\order{r_1 r_2}}
            \basedualxi{\order{r_1 r_2}}
        \bigg)
        \wedge
        \basedualxi{\indexic{k+1}}
        \wedge\cdots\wedge
        \basedualxi{\indexic{n-p}}
        \wedge
        \basexi{\indexJpc} 
\notag
    \\
    =&
        \displaystyle\sum_{r_1\in\indexI}
        \displaystyle\sum_{r_2\in\indexI}
        \frac{1}{2}
        \sgn
            {\indexic{1},\cdots,\indexic{k-1},\order{r_1,r_2},\indexic{k+1},\cdots,\indexic{n-p}}
            {\order{(\indexIc\backslash\{\indexic{k}\})\cup\{r_1,r_2\}}}
        \coefD{\indexic{k}}{\order{r_1,r_2}}
        \basedualxi{\order{(\indexIc\backslash\{\indexic{k}\})\cup\{r_1,r_2\}}}
        \wedge
        \basexi{\indexJpc}
    .
\label{EQ-Kahler identity on Lie agbd-For alpha,(I,J), calculation for RHS-no5}
\end{align}

\item
\begin{align}
    &
        \basedualxi{\indexIc}
        \wedge
        \basexi{\indexjpc{1}} 
        \wedge\cdots\wedge
        \basexi{\indexjpc{l-1}}
        \wedge
        \bigg(
            \displaystyle\sum_{r\in\indexI}
            \coefB{\indexjpc{l}}{\indexjpc{l} r}
            \basexi{\indexjpc{l}}
            \wedge
            \basedualxi{r}
        \bigg)
        \wedge
        \basexi{\indexjpc{l+1}}
        \wedge\cdots\wedge
        \basexi{\indexjpc{n-q}} 
\notag
    \\
    =&
        \displaystyle\sum_{r\in\indexI}
        \sgn
            {\indexIc,\indexjpc{1},\cdots,\indexjpc{l-1},\indexjpc{l},r,\indexjpc{l+1},\cdots,\indexjpc{n-1}}
            {\order{\indexIc\cup\{r\}},\indexJpc}
        \coefB{\indexjpc{l}}{\indexjpc{l} r}
        \basedualxi{\order{\indexIc\cup\{r\}}}
        \wedge
        \basexi{\indexJpc} 
    .
\label{EQ-Kahler identity on Lie agbd-For alpha,(I,J), calculation for RHS-no6}
\end{align}

\item
\begin{align}
    &
        \basedualxi{\indexIc}
        \wedge
        \basexi{\indexjpc{1}} 
        \wedge\cdots\wedge
        \basexi{\indexjpc{l-1}}
        \wedge
        \bigg(
            \displaystyle\sum_{r\in\indexI}
            \displaystyle\sum_{u^{'}\in\indexJp}
            \coefB{\indexjpc{l}}{u^{'} r}
            \basexi{u^{'}}
            \wedge
            \basedualxi{r}
        \bigg)
        \wedge
        \basexi{\indexjpc{l+1}}
        \wedge\cdots\wedge
        \basexi{\indexjpc{n-q}} 
\notag
    \\
    =&
        \displaystyle\sum_{r\in\indexI}
        \displaystyle\sum_{u^{'}\in\indexJp}
        \begin{matrix*}[l]
            \sgn
                {\indexIc,\indexjpc{1},\cdots,\indexjpc{l-1},u^{'},r,\indexjpc{l+1},\cdots,\indexjpc{n-1}}
                {\order{\indexIc\cup\{r\}},\order{(\indexJpc\backslash\{\indexjpc{l}\})\cup\{u^{'}\}}}
            \\
            \coefB{\indexjpc{l}}{u^{'} r}
            \basedualxi{\order{\indexIc\cup\{r\}}}
            \wedge
            \basexi{\order{(\indexJpc\backslash\{\indexjpc{l}\})\cup\{u^{'}\}}} 
        \end{matrix*}
    .
\label{EQ-Kahler identity on Lie agbd-For alpha,(I,J), calculation for RHS-no7}
\end{align}

\end{itemize}
Using (\ref{EQ-Kahler identity on Lie agbd-For alpha,(I,J), calculation for RHS-no4}), (\ref{EQ-Kahler identity on Lie agbd-For alpha,(I,J), calculation for RHS-no5}), (\ref{EQ-Kahler identity on Lie agbd-For alpha,(I,J), calculation for RHS-no6}), and (\ref{EQ-Kahler identity on Lie agbd-For alpha,(I,J), calculation for RHS-no7}) to simplify the corresponding terms in (\ref{EQ-Kahler identity on Lie agbd-For alpha,(I,J), calculation for RHS-no3}), we obtain the expression of $\LieagbdPartialbar\,\star(\basexi{\indexI}\wedge\basedualxi{\indexJp})$:
\begin{align}
    2^{p+q-n}
    \begin{pmatrix*}[l]
        \displaystyle\sum_{\indexic{k}\in\indexIc}
        \displaystyle\sum_{r\in\indexI}
        \begin{pmatrix*}[l]
            \sgn
                {1,\cdots,n,1^{'},\cdots,n^{'}}
                {\indexI,\indexJp,\indexIc,\indexJpc}                
            \sgn
                {\indexIc}
                {\indexic{k},\indexIc\backslash\{\indexic{k}\}}
            \\
            \sgn
                {\indexic{1},\cdots,\indexic{k-1},\order{\indexic{k},r},\indexic{k+1},\cdots,\indexic{n-p}}
                {\order{\indexIc\cup\{r\}}}
            \\
            \coefD{\indexic{k}}{\order{\indexic{k},r}}
            \basedualxi{\order{\indexIc\cup\{r\}}}
            \wedge
            \basexi{\indexJpc} 
        \end{pmatrix*}
        \\
        +
        \\
        \displaystyle\sum_{\indexic{k}\in\indexIc}
        \displaystyle\sum_{r_1\in\indexI}
        \displaystyle\sum_{r_2\in\indexI}
        \begin{pmatrix*}[l]
            \sgn
                {1,\cdots,n,1^{'},\cdots,n^{'}}
                {\indexI,\indexJp,\indexIc,\indexJpc}
            \sgn
                {\indexIc}
                {\indexic{k},\indexIc\backslash\{\indexic{k}\}}
            \\
            \sgn
                {\indexic{1},\cdots,\indexic{k-1},\order{r_1,r_2},\indexic{k+1},\cdots,\indexic{n-p}}
                {\order{(\indexIc\backslash\{\indexic{k}\})\cup\{r_1,r_2\}}}
            \\
            \frac{1}{2}
            \coefD{\indexic{k}}{\order{r_1,r_2}}
            \basedualxi{\order{(\indexIc\backslash\{\indexic{k}\})\cup\{r_1,r_2\}}}
            \wedge
            \basexi{\indexJpc} 
        \end{pmatrix*}
        \\
        +
        \\
        \displaystyle\sum_{\indexjpc{l}\in\indexJpc}
        \displaystyle\sum_{r\in\indexI}
        \begin{pmatrix*}[l]
            \sgn
                {1,\cdots,n,1^{'},\cdots,n^{'}}
                {\indexI,\indexJp,\indexIc,\indexJpc}
            \sgn
                {\indexIc,\indexJpc}
                {\indexjpc{l},\indexIc,\indexJpc\backslash\{\indexjpc{l}\}}
            \\
            \sgn
                {\indexIc,\indexjpc{1},\cdots,\indexjpc{l-1},\indexjpc{l},r,\indexjpc{l+1},\cdots,\indexjpc{n-1}}
                {\order{\indexIc\cup\{r\}},\indexJpc}
            \\
            \coefB{\indexjpc{l}}{\indexjpc{l} r}
            \basedualxi{\order{\indexIc\cup\{r\}}}
            \wedge
            \basexi{\indexJpc} 
        \end{pmatrix*}
        \\
        +
        \\
        \displaystyle\sum_{\indexjpc{l}\in\indexJpc}
        \displaystyle\sum_{r\in\indexI}
        \displaystyle\sum_{u^{'}\in\indexJp}
        \begin{pmatrix*}[l]
            \sgn
                {1,\cdots,n,1^{'},\cdots,n^{'}}
                {\indexI,\indexJp,\indexIc,\indexJpc}
            \sgn
                {\indexIc,\indexJpc}
                {\indexjpc{l},\indexIc,\indexJpc\backslash\{\indexjpc{l}\}}
            \\
            \sgn
                {\indexIc,\indexjpc{1},\cdots,\indexjpc{l-1},u^{'},r,\indexjpc{l+1},\cdots,\indexjpc{n-1}}
                {\order{\indexIc\cup\{r\}},\order{(\indexJpc\backslash\{\indexjpc{l}\})\cup\{u^{'}\}}}
            \\
            \coefB{\indexjpc{l}}{u^{'} r}
            \basedualxi{\order{\indexIc\cup\{r\}}}
            \wedge
            \basexi{\order{(\indexJpc\backslash\{\indexjpc{l}\})\cup\{u^{'}\}}} 
        \end{pmatrix*}
    \end{pmatrix*}.
\label{EQ-Kahler identity on Lie agbd-For alpha,(I,J), calculation for RHS-no8}
\end{align}
\keyword{Step 3:} We apply $\star$ to (\ref{EQ-Kahler identity on Lie agbd-For alpha,(I,J), calculation for RHS-no8}). Recall that
\begin{align*}
    \star
    (\basedualxi{\indexIc}\wedge\basexi{\indexJpc})
    =
    2^{(|\indexIc|+|\indexJpc|-n)}
    \sgn
        {1^{'},\cdots,n^{'},1,\cdots,n}{\indexIc,\indexJpc,\indexI,\indexJp}\basexi{\indexI}\wedge\basedualxi{\indexJp}.
\end{align*}
The expression of $
\star\,\LieagbdPartialbar\,\star
(\basexi{\indexI}\wedge\basedualxi{\indexJp})
$ is:
\begin{align}
    2
    \begin{pmatrix*}[l]
            \displaystyle\sum_{\indexic{k}\in\indexIc}
            \displaystyle\sum_{r\in\indexI}
            \begin{pmatrix*}[l]
                \sgn
                    {1,\cdots,n,1^{'},\cdots,n^{'}}
                    {\indexI,\indexJp,\indexIc,\indexJpc}
                \sgn
                    {\indexIc}
                    {\indexic{k},\indexIc\backslash\{\indexic{k}\}}
                \sgn
                    {\indexic{1},\cdots,\indexic{k-1},\order{\indexic{k},r},\indexic{k+1},\cdots,\indexic{n-p}}
                    {\order{\indexIc\cup\{r\}}}
                \\
                \sgn
                    {1^{'},\cdots,n^{'},1,\cdots,n}
                    {\order{\indexIc\cup\{r\}},\indexJpc,\indexI\backslash\{r\},\indexJp}
                \\
                \coefD{\indexic{k}}{\order{\indexic{k},r}}
                \basexi{\indexI\backslash\{r\}}
                \wedge
                \basedualxi{\indexJp} 
            \end{pmatrix*}
            \\
            +
            \\
            \displaystyle\sum_{\indexic{k}\in\indexIc}
            \displaystyle\sum_{r_1\in\indexI}
            \displaystyle\sum_{r_2\in\indexI}
            \begin{pmatrix*}[l]
                \sgn
                    {1,\cdots,n,1^{'},\cdots,n^{'}}
                    {\indexI,\indexJp,\indexIc,\indexJpc}
                \sgn
                    {\indexIc}
                    {\indexic{k},\indexIc\backslash\{\indexic{k}\}}
                \sgn
                    {\indexic{1},\cdots,\indexic{k-1},\order{r_1,r_2},\indexic{k+1},\cdots,\indexic{n-p}}
                    {\order{(\indexIc\backslash\{\indexic{k}\})\cup\{r_1,r_2\}}}
                \\
                \sgn
                    {1^{'},\cdots,n^{'},1,\cdots,n}
                    {\order{(\indexIc\backslash\{\indexic{k}\})\cup\{r_1,r_2\}},\indexJpc,\order{(\indexI\backslash\{r_1,r_2\})\cup\{\indexic{k}\}},\indexJp}
                \\
                \frac{1}{2}
                \coefD{\indexic{k}}{\order{r_1,r_2}}
                \basexi{\order{(\indexI\backslash\{r_1,r_2\})\cup\{\indexic{k}\}}}
                \wedge
                \basedualxi{\indexJp} 
            \end{pmatrix*}
            \\
            +
            \\
            \displaystyle\sum_{\indexjpc{l}\in\indexJpc}
            \displaystyle\sum_{r\in\indexI}
            \begin{pmatrix*}[l]
                \sgn
                    {1,\cdots,n,1^{'},\cdots,n^{'}}
                    {\indexI,\indexJp,\indexIc,\indexJpc}
                \sgn
                    {\indexIc,\indexJpc}
                    {\indexjpc{l},\indexIc,\indexJpc\backslash\{\indexjpc{l}\}}
                \\
                \sgn
                    {\indexIc,\indexjpc{1},\cdots,\indexjpc{l-1},\indexjpc{l},r,\indexjpc{l+1},\cdots,\indexjpc{n-1}}
                    {\order{\indexIc\cup\{r\}},\indexJpc}
                \sgn
                    {1^{'},\cdots,n^{'},1,\cdots,n}
                    {\order{\indexIc\cup\{r\}},\indexJpc,\indexI\backslash\{r\},\indexJp}
                \\
                \coefB{\indexjpc{l}}{\indexjpc{l} r}
                \basexi{\indexI\backslash\{r\}}
                \wedge
                \basedualxi{\indexJp} 
            \end{pmatrix*}
            \\
            +
            \\
            \displaystyle\sum_{\indexjpc{l}\in\indexJpc}
            \displaystyle\sum_{r\in\indexI}
            \displaystyle\sum_{u^{'}\in\indexJp}
            \begin{pmatrix*}[l]
                \sgn
                    {1,\cdots,n,1^{'},\cdots,n^{'}}
                    {\indexI,\indexJp,\indexIc,\indexJpc}
                \sgn
                    {\indexIc,\indexJpc}
                    {\indexjpc{l},\indexIc,\indexJpc\backslash\{\indexjpc{l}\}}
                \\
                \sgn
                    {\indexIc,\indexjpc{1},\cdots,\indexjpc{l-1},u^{'},r,\indexjpc{l+1},\cdots,\indexjpc{n-1}}
                    {\order{\indexIc\cup\{r\}},\order{(\indexJpc\backslash\{\indexjpc{l}\})\cup\{u^{'}\}}}
                \\
                \sgn
                    {1^{'},\cdots,n^{'},1,\cdots,n}
                    {\order{\indexIc\cup\{r\}},\order{(\indexJpc\backslash\{\indexjpc{l}\})\cup\{u^{'}\}},\indexI\backslash\{r\},\order{(\indexJp\backslash\{u^{'}\})\cup\{\indexjpc{l}\}}}
                \\
                \coefB{\indexjpc{l}}{u^{'} r}
                \basexi{\indexI\backslash\{r\}}
                \wedge
                \basedualxi{\order{(\indexJp\backslash\{u^{'}\})\cup\{\indexjpc{l}\}}} 
            \end{pmatrix*}
        \end{pmatrix*}.
\label{EQ-For alpha, (I ,J), calculation for RHS-step 3,unsimplified close to the target}
\end{align}

We simplify the signs of permutation in Section \ref{apex-simplification for RHS of alpha,(I,J)}. The details can be found in Lemmas \ref{Lemma-simplification for RHS of alpha,(I,J)-lemma 1}–\ref{Lemma-simplification for RHS of alpha,(I,J)-lemma 4}. We simplify the expression of $
\star\,\LieagbdPartialbar\,\star
(\basexi{\indexI}\wedge\basedualxi{\indexJp})
$ to
\begin{align*}
    2(-1)^{n^2}
    \begin{pmatrix*}[l]
        \displaystyle\sum_{\indexic{k}\in\indexIc}
        \displaystyle\sum_{r\in\indexI}
        \begin{matrix*}[l]
            \sgn
                {r,\indexic{k}}
                {\order{\indexic{k},r}}
            \sgn
                {r,\indexI\backslash\{r\}}                    
                {\indexI}
            \coefD{\indexic{k}}{\order{\indexic{k},r}}
            \basexi{\indexI\backslash\{r\}}
            \wedge
            \basedualxi{\indexJp} 
        \end{matrix*}
        \\
        +
        \\
        \displaystyle\sum_{\indexic{k}\in\indexIc\backslash\{\alpha\}}
        \displaystyle\sum_{r_1\in\indexI}
        \displaystyle\sum_{r_2\in\indexI}
        \begin{matrix*}[l]
            (-1)
            \sgn
                {\order{r_1,r_2},\order{(\indexI\backslash\{r_1,r_2\})}}
                {\indexI}
            \sgn
                {\order{(\indexI\backslash\{r_1,r_2\})\cup\{\indexic{k}\}}}
                {\indexic{k},\indexI\backslash\{r_1,r_2\})}
            \\
            \frac{1}{2}
            \coefD{\indexic{k}}{\order{r_1,r_2}}
            \basexi{\order{(\indexI\backslash\{r_1,r_2\})\cup\{\indexic{k}\}}}
            \wedge
            \basedualxi{\indexJp} 
        \end{matrix*}
        \\
        +
        \\
        \displaystyle\sum_{\indexjpc{l}\in\indexJpc}
        \displaystyle\sum_{r\in\indexI}
        \begin{matrix*}[l]
            (-1)
            \sgn
                {r,\indexI\backslash\{r\}}
                {\indexI}
            \coefB{\indexjpc{l}}{\indexjpc{l} r}
            \basexi{\indexI\backslash\{r\}}
            \wedge
            \basedualxi{\indexJp} 
        \end{matrix*}
        \\
        +
        \\
        \displaystyle\sum_{\indexjpc{l}\in\indexJpc}
        \displaystyle\sum_{r\in\indexI}
        \displaystyle\sum_{u^{'}\in\indexJp}
        \begin{matrix*}[l]
            (-1)^{p}
            \sgn
                {u^{'},r,\indexI\backslash\{r\},\order{(\indexJp\backslash\{u^{'}\})}}
                {\indexI,\indexJp}
            \sgn
                {\order{(\indexJp\backslash\{u^{'}\})\cup\{\indexjpc{l}\}}}
                {\indexjpc{l},(\indexJp\backslash\{u^{'}\})}
            \\
            \coefB{\indexjpc{l}}{u^{'} r}
            \basexi{\indexI\backslash\{r\}}
            \wedge
            \basedualxi{\order{(\indexJp\backslash\{u^{'}\})\cup\{\indexjpc{l}\}}} 
        \end{matrix*}
    \end{pmatrix*}.
\end{align*}
We can now calculate $
    \sqrt{-1}
    \basexi{\alpha}
    \wedge
    \big(
    -(-1)^{n^2}
    \star\,\LieagbdPartialbar\,\star
    \big)
    (\basexi{\indexI}\wedge\basedualxi{\indexJp})
$ as
\begin{align*}
    -2\sqrt{-1}
    \begin{pmatrix*}[l]
        \displaystyle\sum_{\indexic{k}\in\indexIc}
        \displaystyle\sum_{r\in\indexI}
        \begin{matrix*}[l]
            \sgn
                {r,\indexic{k}}
                {\order{\indexic{k},r}}
            \sgn
                {r,\indexI\backslash\{r\}}                    
                {\indexI}
            \coefD{\indexic{k}}{\order{\indexic{k},r}}
            \basexi{\alpha}
            \wedge
            \basexi{\indexI\backslash\{r\}}
            \wedge
            \basedualxi{\indexJp} 
        \end{matrix*}
        \\
        +
        \\
        \displaystyle\sum_{\indexic{k}\in\indexIc\backslash\{\alpha\}}
        \displaystyle\sum_{r_1\in\indexI}
        \displaystyle\sum_{r_2\in\indexI}
        \begin{matrix*}[l]
            (-1)
            \sgn
                {\order{r_1,r_2},\order{(\indexI\backslash\{r_1,r_2\})}}
                {\indexI}
            \sgn
                {\order{(\indexI\backslash\{r_1,r_2\})\cup\{\indexic{k}\}}}
                {\indexic{k},\indexI\backslash\{r_1,r_2\})}
            \\
            \frac{1}{2}
            \coefD{\indexic{k}}{\order{r_1,r_2}}
            \basexi{\alpha}
            \wedge
            \basexi{\order{(\indexI\backslash\{r_1,r_2\})\cup\{\indexic{k}\}}}
            \wedge
            \basedualxi{\indexJp} 
        \end{matrix*}
        \\
        +
        \\
        \displaystyle\sum_{\indexjpc{l}\in\indexJpc}
        \displaystyle\sum_{r\in\indexI}
        \begin{matrix*}[l]
            (-1)
            \sgn
                {r,\indexI\backslash\{r\}}
                {\indexI}
            \coefB{\indexjpc{l}}{\indexjpc{l} r}
            \basexi{\alpha}
            \wedge
            \basexi{\indexI\backslash\{r\}}
            \wedge
            \basedualxi{\indexJp} 
        \end{matrix*}
        \\
        +
        \\
        \displaystyle\sum_{\indexjpc{l}\in\indexJpc}
        \displaystyle\sum_{r\in\indexI}
        \displaystyle\sum_{u^{'}\in\indexJp}
        \begin{matrix*}[l]
            (-1)^{p}
            \sgn
                {u^{'},r,\indexI\backslash\{r\},\order{(\indexJp\backslash\{u^{'}\})}}
                {\indexI,\indexJp}
            \sgn
                {\order{(\indexJp\backslash\{u^{'}\})\cup\{\indexjpc{l}\}}}
                {\indexjpc{l},(\indexJp\backslash\{u^{'}\})}
            \\
            \coefB{\indexjpc{l}}{u^{'} r}
            \basexi{\alpha}
            \wedge
            \basexi{\indexI\backslash\{r\}}
            \wedge
            \basedualxi{\order{(\indexJp\backslash\{u^{'}\})\cup\{\indexjpc{l}\}}} 
        \end{matrix*}
    \end{pmatrix*}
    ,
\end{align*}
which completes the proof of Lemma \ref{lemma-Kahler identity on Lie agbd-For alpha,(I,J), calculation for RHS}.
\end{proof}

\subsection{
    Calculation of \texorpdfstring{$
    \sqrt{-1}\LieagbdPartial^{*}(\basexi{\alpha}\wedge\basexi{\indexI}\wedge\basedualxi{\indexJp})
    $}{}
    }
\label{subsection-For (alpha, I, J), calculation for RHS}
\begin{lemma}
\label{EQ-Kahler identity on Lie agbd-For (alpha,I,J), calculation for RHS-final target part 2}
$
\sqrt{-1}\LieagbdPartial^{*}
(\basexi{\alpha}\wedge\basexi{\indexI}\wedge\basedualxi{\indexJp})
$ is equal to (a) + (b) + (c) where
\\
    (a):
\begin{align}
    -2\sqrt{-1}
    \begin{pmatrix*}[l]
        \displaystyle\sum_{\indexic{k}\in\indexIc\backslash\{\alpha\}}
        \begin{matrix*}[l]
            \sgn
                {\order{\indexic{k},\alpha}}
                {\alpha,\indexic{k}}
            \coefD{\indexic{k}}{\order{\indexic{k}, \alpha}}
            \basexi{\indexI}
            \wedge
            \basedualxi{\indexJp}
        \end{matrix*}
        \\
        +
        \\
        \displaystyle\sum_{\indexjpc{l}\in\indexJpc}
        \begin{matrix*}[l]
            (-1)
            \coefB{\indexjpc{l}}{\indexjpc{l} \alpha}
            \basexi{\indexI}
            \wedge
            \basedualxi{\indexJp}
        \end{matrix*}
    \end{pmatrix*}
\label{EQ-Kahler identity on Lie agbd-For (alpha,I,J), calculation for RHS-final target part 1}
\end{align}
\\
    (b):
\begin{align}
    -2\sqrt{-1}
    \begin{pmatrix*}[l]
        \displaystyle\sum_{\indexic{k}\in\indexIc\backslash\{\alpha\}}
        \displaystyle\sum_{r\in\indexI}
        \begin{matrix*}[l]
            (-1)
            \sgn
                {r,\indexI\backslash\{r\}}
                {\indexI}
            \sgn
                {\order{\indexic{k},r}}
                {r,\indexic{k}}
            \coefD{\indexic{k}}{\order{\indexic{k}, r}}
            \basexi{\alpha}
            \wedge
            \basexi{\indexI\backslash\{r\}}
            \wedge
            \basedualxi{\indexJp}
        \end{matrix*}
        \\
        +
        \\
        \displaystyle\sum_{\indexic{k}\in\indexIc\backslash\{\alpha\}}
        \displaystyle\sum_{r_1\in\indexI}
        \displaystyle\sum_{r_2\in\indexI}
        \begin{matrix*}[l]
            \sgn
                {\order{r_1,r_2},\indexI\backslash\{r_1,r_2\}}
                {\indexI}
            \sgn
                {\order{(\indexI\backslash\{r_1,r_2\})\cup\{\indexic{k}\}}}
                {\indexic{k},(\indexI\backslash\{r_1,r_2\})}
        \\
            \frac{1}{2}
            \coefD{\indexic{k}}{\order{r_1,r_2}}
            \basexi{\alpha}
            \wedge
            \basexi{\order{(\indexI\backslash\{r_1,r_2\})\cup\{\indexic{k}\}}}
            \wedge
            \basedualxi{\indexJp}
        \end{matrix*}
        \\
        +
        \\
        \displaystyle\sum_{\indexjpc{l}\in\indexJpc}
        \displaystyle\sum_{r\in\indexI}
        \begin{matrix*}[l]
            \sgn
                {r,\indexI\backslash\{r\}}
                {\indexI}
            \coefB{\indexjpc{l}}{\indexjpc{l} r}
            \basexi{\alpha}
            \wedge
            \basexi{\indexI\backslash\{r\}}
            \wedge
            \basedualxi{\indexJp}
    \end{matrix*}
        \\
        +
        \\
        \displaystyle\sum_{\indexjpc{l}\in\indexJpc}
        \displaystyle\sum_{r\in\indexI}
        \displaystyle\sum_{u^{'}\in\indexJp}
        \begin{matrix*}[l]    
            (-1)^{p+1}
            \sgn
                {u^{'},r,\indexI\backslash\{r\},\indexJp\backslash\{u^{'}\}}
                {\indexI,\indexJp}
            \sgn
                {\order{(\indexJp\backslash\{u^{'}\})\cup\{\indexjpc{l}\}}}
                {\indexjpc{l},\indexJp\backslash\{u^{'}\}}
        \\
            \coefB{\indexjpc{l}}{u^{'} r}
            \basexi{\alpha}
            \wedge
            \basexi{\indexI\backslash\{r\}}
            \wedge
            \basedualxi{\order{(\indexJp\backslash\{u^{'}\})\cup\{\indexjpc{l}\}}}
        \end{matrix*}
    \end{pmatrix*}
\end{align}
\\
    (c): 
\begin{align} 
    -2\sqrt{-1}
    \begin{pmatrix*}[l]
        \displaystyle\sum_{\indexic{k}\in\indexIc\backslash\{\alpha\}}
        \displaystyle\sum_{r\in\indexI}
        \begin{matrix*}[l]
            (-1)
            \sgn
                {r,\indexI\backslash\{r\}}
                {\indexI}
            \sgn
                {\order{(\indexI\backslash\{r\})\cup\{\indexic{k}\}}}
                {\indexic{k},\indexI\backslash\{r\}}
            \sgn
                {\order{\alpha,r}}
                {\alpha,r}
        \\
            \coefD{\indexic{k}}{\order{\alpha, r}}
            \basexi{\order{(\indexI\backslash\{r\})\cup\{\indexic{k}\}}}
            \wedge
            \basedualxi{\indexJp}
        \end{matrix*}
        \\
        +
        \\   
        \displaystyle\sum_{\indexjpc{l}\in\indexJpc}
        \displaystyle\sum_{u^{'}\in\indexJp}
        \begin{matrix*}[l]
            (-1)
            \sgn
                {u^{'},\indexI,\order{(\indexJp\backslash\{u^{'}\})\cup\{\indexjpc{l}\}}}
                {\indexjpc{l},\indexI,\indexJp}
            \coefB{\indexjpc{l}}{u^{'} \alpha}
            \basexi{\indexI}
            \wedge
            \basedualxi{\order{(\indexJp\backslash\{u^{'}\})\cup\{\indexjpc{l}\}}}
        \end{matrix*}
    \end{pmatrix*}
    .
\label{EQ-Kahler identity on Lie agbd-For (alpha,I,J), calculation for RHS-final target part 3}
\end{align}
\end{lemma}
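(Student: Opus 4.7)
The plan is to compute $\sqrt{-1}\LieagbdPartial^{*}(\basexi{\alpha}\wedge\basexi{\indexI}\wedge\basedualxi{\indexJp})$ directly from the identity $\LieagbdPartial^{*}=-(-1)^{n^{2}}\star\LieagbdPartialbar\star$, mimicking the three-step structure already deployed in Lemmas \ref{lemma-Kahler identity on Lie agbd-For (alpha),I,J, calculation for RHS} and \ref{lemma-Kahler identity on Lie agbd-For alpha,(I,J), calculation for RHS}, but now with the enlarged holomorphic index set $\{\alpha\}\cup\indexI$. First I would write $\basexi{\alpha}\wedge\basexi{\indexI}\wedge\basedualxi{\indexJp}=\sgn{\alpha,\indexI}{\order{\{\alpha\}\cup\indexI}}\basexi{\order{\{\alpha\}\cup\indexI}}\wedge\basedualxi{\indexJp}$ (the assumption $\alpha<\indexi{1}$ made in the induction step lets us take this sign to be trivial), and then apply the Hodge star using the local formula, obtaining a constant multiple of $\basedualxi{(\indexI\cup\{\alpha\})^{C}}\wedge\basexi{\indexJpc}$ together with an overall sign-of-permutation factor.

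Next, I would apply $\LieagbdPartialbar$ to this expression. Using $\LieagbdPartialbar\basedualxi{i}=\frac{1}{2}\sum_{j,k}\coefD{i}{\order{j,k}}\basedualxi{\order{j,k}}$ and $\LieagbdPartialbar\basexi{i}=\sum_{j,k}\coefB{i}{jk}\basexi{j}\wedge\basedualxi{k}$, and cancelling all terms in which a repeated frame appears in the wedge product, the resulting sum splits naturally into four blocks indexed by $(\indexic{k},r)$, $(\indexic{k},r_{1},r_{2})$, $(\indexjpc{l},r)$, and $(\indexjpc{l},r,u^{'})$, exactly as in the proof of Lemma \ref{lemma-Kahler identity on Lie agbd-For alpha,(I,J), calculation for RHS}. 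The crucial new feature compared to that lemma is that the $\alpha$-slot can either (i) coincide with the free index $r$, (ii) coincide with $\indexic{k}$, or (iii) remain a spectator; these three possibilities are precisely what give rise to the partition into (a), (b), and (c). Concretely, block (a) collects all contributions in which the index $\alpha$ has been annihilated by $\LieagbdPartialbar$ against one of the $\basedualxi{\indexic{k}}$'s or $\basexi{\indexjpc{l}}$'s (so that neither $\basexi{\alpha}$ nor any new holomorphic index survives); block (b) collects the contributions in which $\alpha$ survives as $\basexi{\alpha}$ and the remaining structure parallels the pure $(\indexI,\indexJp)$ case; and block (c) collects the "mixed" contributions where $\alpha$ gets swapped into the index set while a former index of $\indexI$ or $\indexJp$ is displaced.

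After the second application of $\star$, the local formula for the Hodge star on $\basedualxi{\bullet}\wedge\basexi{\bullet}$ produces a further permutation sign that needs to be composed with the preceding ones. The main obstacle, as in the appendices for the case $(\basexi{\indexI}\wedge\basedualxi{\indexJp})$, is the bookkeeping of these products of signs of permutations: each block contains three or four $\sgn{}{}$ factors, and the desired simplified forms stated in (a)--(c) require repeated use of the rules in the Proposition on signs of permutations (row-swapping, index-insertion, merging, and the $(-1)^{|\indexI||\indexJ|}$ rule). I would handle this exactly as in Sections \ref{apex-simplification for RHS of (i,j)} and \ref{apex-simplification for RHS of alpha,(I,J)}: isolate the product of signs in each block, move all "non-$\alpha$" indices to canonical positions, and use $\sgn{1^{'},\cdots,n^{'},1,\cdots,n}{1,\cdots,n,1^{'},\cdots,n^{'}}=(-1)^{n^{2}}$ to absorb the global $(-1)^{n^{2}}$ from $\LieagbdPartial^{*}$.

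Finally, I would organize the simplified terms according to whether the $\alpha$-slot has been used up (yielding block (a), which looks like the output of Lemma \ref{lemma-Kahler identity on Lie agbd-For (alpha),I,J, calculation for RHS} restricted to indices outside $\indexI\cup\indexJp$), left untouched (yielding block (b), structurally parallel to Lemma \ref{lemma-Kahler identity on Lie agbd-For alpha,(I,J), calculation for RHS} with the $\basexi{\alpha}$ factored out in front), or swapped against an index of $\indexI\cup\indexJp$ (yielding the remaining cross-terms in block (c)). The hard part is not any single sign calculation but the sustained discipline of propagating through every case; in particular, the factor $(-1)^{p}$ in the last summand of (b) and the $\sgn{\order{\alpha,r}}{\alpha,r}$ factor in (c) are the checkpoints where a mistake in tracking $\alpha$'s position would be caught, and they should be verified first to anchor the rest of the bookkeeping.
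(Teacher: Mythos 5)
Your plan is correct and follows essentially the same route as the paper: apply $\star$, then $\LieagbdPartialbar$, then $\star$ again with the prefactor $-\sqrt{-1}(-1)^{n^{2}}$, split the surviving terms according to whether $\alpha$ is consumed by the coefficient, left as $\basexi{\alpha}$, or traded against an index of $\indexI\cup\indexJp$, and reduce the permutation signs with the appendix lemmas --- which is exactly how the paper obtains the eight terms and regroups them into (a), (b), (c). One small caution: the checkpoint you cite as $(-1)^{p}$ in the last summand of (b) is actually $(-1)^{p+1}$ here (it is $(-1)^{p}$ in Lemma \ref{lemma-Kahler identity on Lie agbd-For alpha,(I,J), calculation for RHS}), and that relative sign is precisely what later makes (b) equal to $-\basexi{\alpha}\wedge\sqrt{-1}\LieagbdPartial^{*}(\basexi{\indexI}\wedge\basedualxi{\indexJp})$ up to the $\indexic{k}=\alpha$ correction term.
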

\begin{proof}
Recall that  $\LieagbdPartial^{*}=-(-1)^{n^{2}}\star\,\LieagbdPartialbar\,\star$. Our calculation comprises four steps:
\begin{enumerate}
    \item Calculate 
$
    \star(\basexi{\alpha}\wedge\basexi{\indexI}\wedge\basedualxi{\indexJp})
$.
    \item Calculate
$
    \LieagbdPartialbar\,\star(\basexi{\alpha}\wedge\basexi{\indexI}\wedge\basedualxi{\indexJp})
$.
    \item Calculate
$
    -(-1)^{n^{2}}\sqrt{-1}\star\,\LieagbdPartialbar\,\star
    (\basexi{\alpha}\wedge\basexi{\indexI}\wedge\basedualxi{\indexJp})
$.
    \item Decompose the result from Step 3 into three parts, where the first part is related to the expression of $
    (\sqrt{-1}\LieagbdPartial^{*}\basexi{\alpha})
    \wedge\basexi{\indexI}\wedge\basedualxi{\indexJp})
    $ and the second part is related to the expression of $
    \basexi{\alpha}\wedge
    \sqrt{-1}\LieagbdPartial^{*}(\basexi{\indexI}\wedge\basedualxi{\indexJp})
    $.
\end{enumerate}
Completing Step 4 gives the formulas in the statement of Lemma \ref{EQ-Kahler identity on Lie agbd-For (alpha,I,J), calculation for RHS-final target part 2}.
\\
\keyword{Step 1:} Using the definition of the Hodge star, we obtain
\begin{align}
    \star
    (\basexi{\alpha}\wedge\basexi{\indexI}\wedge\basedualxi{\indexJp})
    =
    2^{p+1+q-n}
    \sgn
        {1,\cdots,n,1^{'},\cdots,n^{'}}
        {\alpha,\indexI,\indexJp,\indexIc\backslash\{\alpha\},\indexJc}
    \basedualxi{\indexIc\backslash\{\alpha\}}
    \wedge
    \basexi{\indexJpc},
\label{EQ-For (alpha, I, J), calculation for RHS-Step 1, No1}
\end{align}
where $\{\indexic{1},\cdots,\indexic{n-p}\}=\indexIc$ and $\{\indexjpc{1},\cdots,\indexjpc{n-q}\}=\indexJpc$ are the complement sets of $\indexI$ and $\indexJp$.
\keyword{Step 2:}
We apply $\LieagbdPartialbar$ to (\ref{EQ-For (alpha, I, J), calculation for RHS-Step 1, No1}):
\begin{align}
    \LieagbdPartialbar\,\star
    (\basexi{\alpha}\wedge\basexi{\indexI}\wedge\basedualxi{\indexJp})
    =
        2^{p+1+q-n}
        \begin{pmatrix*}[l]
        \displaystyle\sum_{\indexic{k}\in\indexIc\backslash\{\alpha\}}
        \begin{matrix*}[l]
            \sgn
                {1,\cdots,n,1^{'},\cdots,n^{'}}
                {\alpha,\indexI,\indexJp,\indexIc\backslash\{\alpha\},\indexJc}
            \sgn
                {\indexIc\backslash\{\alpha\}}
                {\indexic{k},\indexIc\backslash\{\alpha,\indexic{k}\}}
            \\
            \basedualxi{\indexic{1}}
            \wedge\cdots\wedge
            \LieagbdPartialbar\basedualxi{\indexic{k}}
            \wedge\cdots\wedge
            \widehat{\basedualxi{\alpha}}
            \wedge\cdots\wedge
            \basedualxi{\indexic{n-p}}
            \wedge
            \basexi{\indexJpc}
        \end{matrix*}
        \\
        +
        \\
        \displaystyle\sum_{\indexjpc{l}\in\indexJpc}
        \begin{matrix*}[l]
            \sgn
                {1,\cdots,n,1^{'},\cdots,n^{'}}
                {\alpha,\indexI,\indexJp,\indexIc\backslash\{\alpha\},\indexJc}
            \sgn
                {\indexIc\backslash\{\alpha\},\indexJpc}
                {\indexjpc{l},\indexIc\backslash\{\alpha\},\indexJpc\backslash\{\indexjpc{l}\}}
            \\
            \basedualxi{\indexIc\backslash\{\alpha\}}
            \wedge
            \basexi{\indexjpc{1}}
            \wedge\cdots\wedge
            \LieagbdPartialbar\basexi{\indexjpc{l}}
            \wedge\cdots\wedge
            \basexi{\indexjpc{n-q}}
        \end{matrix*}
    \end{pmatrix*}.
\label{EQ-For (alpha, I, J), calculation for RHS, calculation for RHS-Step 2, No1}
\end{align}
Recalling that 
\begin{align*}
    \LieagbdPartialbar \xi_i
    =
    \sum_{j,k}^n
    \coefB{i}{jk}
    \basexi{j} \wedge \basedualxi{k}
    \qquad\text{and}\qquad
    \LieagbdPartialbar \basedualxi{i}
    =
    \sum_{j<k}^n
    \coefD{i}{jk}
    \basedualxi{j} \wedge \basedualxi{k}
    ,
\end{align*}
we plug these into (\ref{EQ-For (alpha, I, J), calculation for RHS, calculation for RHS-Step 2, No1}) and obtain
\begin{align}
    2^{p+1+q-n}
    \begin{pmatrix*}[l]
        \displaystyle\sum_{\indexic{k}\in\indexIc\backslash\{\alpha\}}
        \begin{pmatrix*}[l]
            \sgn
                {1,\cdots,n,1^{'},\cdots,n^{'}}
                {\alpha,\indexI,\indexJp,\indexIc\backslash\{\alpha\},\indexJc}
            \sgn
                {\indexIc\backslash\{\alpha\}}
                {\indexic{k},\indexIc\backslash\{\alpha,\indexic{k}\}}
            \\
            \basedualxi{\indexic{1}}
            \wedge\cdots\wedge
            \basedualxi{\indexic{k-1}}\wedge
            \begin{pmatrix*}[l]
                \frac{1}{2}
                (
                \coefD{\indexic{k}}{\indexic{k} \alpha}
                \basedualxi{\indexic{k}}\wedge\basedualxi{\alpha}
                +
                \coefD{\indexic{k}}{\alpha \indexic{k}}
                \basedualxi{\alpha}\wedge\basedualxi{\indexic{k}}
                )
                \\
                +
                \\
                \frac{1}{2}
                \displaystyle\sum_{r \in \indexI}
                \coefD{\indexic{k}}{\indexic{k} r}
                \basedualxi{\indexic{k}}\wedge\basedualxi{r}
                +
                \coefD{\indexic{k}}{r \indexic{k}}
                \basedualxi{r}\wedge\basedualxi{\indexic{k}}
                \\
                +
                \\
                \frac{1}{2}
                \displaystyle\sum_{r \in \indexI}
                \coefD{\indexic{k}}{r \alpha}
                \basedualxi{r}\wedge\basedualxi{\alpha}
                +
                \coefD{\indexic{k}}{\alpha r}
                \basedualxi{\alpha}\wedge\basedualxi{r}
                \\
                +
                \\
                \frac{1}{2}
                \displaystyle\sum_{r_1 \in \indexI}
                \displaystyle\sum_{r_2 \in \indexI}
                \coefD{\indexic{k}}{r_1 r_2}
                \basedualxi{r_1}\wedge\basedualxi{r_2}
            \end{pmatrix*}
            \wedge\basedualxi{\indexic{k+1}}
            \\
            \wedge\cdots\wedge
            \widehat{\basedualxi{\alpha}}
            \wedge\cdots\wedge
            \basedualxi{\indexic{n-p}}
            \wedge
            \basexi{\indexJpc}
        \end{pmatrix*}
        \\
        +
        \\
        \displaystyle\sum_{\indexjpc{l}\in\indexJpc}
        \begin{pmatrix*}[l]
            \sgn
                {1,\cdots,n,1^{'},\cdots,n^{'}}
                {\alpha,\indexI,\indexJp,\indexIc\backslash\{\alpha\},\indexJc}
            \sgn
                {\indexIc\backslash\{\alpha\},\indexJpc}
                {\indexjpc{l},\indexIc\backslash\{\alpha\},\indexJpc\backslash\{\indexjpc{l}\}}
            \\
            \basedualxi{\indexIc\backslash\{\alpha\}}
            \wedge
            \basexi{\indexjpc{1}}
            \wedge\cdots\wedge
            \basexi{\indexjpc{l-1}}\wedge
            \begin{pmatrix*}[l]
                \coefB{\indexjpc{l}}{\indexjpc{l} \alpha}
                \basexi{\indexjpc{l}}\wedge\basedualxi{\alpha}
                \\
                +
                \\
                \displaystyle\sum_{r\in\indexI}
                \coefB{\indexjpc{l}}{\indexjpc{l} r}
                \basexi{\indexjpc{l}}\wedge\basedualxi{r}
                \\
                +
                \\
                \displaystyle\sum_{u^{'}\in\indexJp}
                \coefB{\indexjpc{l}}{u^{'} \alpha}
                \basexi{u^{'}}\wedge\basedualxi{\alpha}
                \\
                +
                \\
                \displaystyle\sum_{r\in\indexI}
                \displaystyle\sum_{u\in\indexJp}
                \coefB{\indexjpc{l}}{u r}
                \basexi{u}\wedge\basedualxi{r}
            \end{pmatrix*}
            \wedge\basexi{\indexjpc{l+1}}
            \\
            \wedge\cdots\wedge
            \basexi{\indexjpc{n-q}}
        \end{pmatrix*}
    \end{pmatrix*}.
\label{EQ-For (alpha, I, J), calculation for RHS, calculation for RHS-Step 2, No2}
\end{align}

Recall that $\coefD{i}{j k}=-\coefD{i}{k j}$. Let us use the notation
$
    \basedualxi{\order{j k}}=
    \begin{cases}
        \basedualxi{j}\wedge\basedualxi{k}
        \qquad\text{if }j<k
        \\
        \basedualxi{k}\wedge\basedualxi{j}
        \qquad\text{if }k<j
    \end{cases}
$ and obtain the following expression:
\begin{align*}
        \coefD{i}{j k}
        \basedualxi{j}\wedge\basedualxi{k}
    =
        \coefD{i}{k j}
        \basedualxi{k}\wedge\basedualxi{j}
    =  
        \coefD{i}{\order{j k}}
        \basedualxi{\order{j k}}
    .
\end{align*}
Using this trick, we can simplify the terms in (\ref{EQ-For (alpha, I, J), calculation for RHS, calculation for RHS-Step 2, No2}) as follows:
\begin{align*}
    \begin{pmatrix*}[l]
        \frac{1}{2}
        (
        \coefD{\indexic{k}}{\indexic{k} \alpha}
        \basedualxi{\indexic{k}}\wedge\basedualxi{\alpha}
        +
        \coefD{\indexic{k}}{\alpha \indexic{k}}
        \basedualxi{\alpha}\wedge\basedualxi{\indexic{k}}
        )
        \\
        +
        \\
        \frac{1}{2}
        \displaystyle\sum_{r \in \indexI}
        \coefD{\indexic{k}}{\indexic{k} r}
        \basedualxi{\indexic{k}}\wedge\basedualxi{r}
        +
        \coefD{\indexic{k}}{r \indexic{k}}
        \basedualxi{r}\wedge\basedualxi{\indexic{k}}
        \\
        +
        \\
        \frac{1}{2}
        \displaystyle\sum_{r \in \indexI}
        \coefD{\indexic{k}}{r \alpha}
        \basedualxi{r}\wedge\basedualxi{\alpha}
        +
        \coefD{\indexic{k}}{\alpha r}
        \basedualxi{\alpha}\wedge\basedualxi{r}
        \\
        +
        \\
        \frac{1}{2}
        \displaystyle\sum_{r_1 \in \indexI}
        \displaystyle\sum_{r_2 \in \indexI}
        \coefD{\indexic{k}}{r_1 r_2}
        \basedualxi{r_1}\wedge\basedualxi{r_2}
    \end{pmatrix*}
    =
    \begin{pmatrix*}[l]
        \coefD{\indexic{k}}{\order{\indexic{k} \alpha}}
        \basedualxi{\order{\indexic{k} \alpha}}
        \\
        +
        \\
        \displaystyle\sum_{r \in \indexI}
        \coefD{\indexic{k}}{\order{\indexic{k} r}}
        \basedualxi{\order{\indexic{k} r}}
        \\
        +
        \\
        \displaystyle\sum_{r \in \indexI}
        \coefD{\indexic{k}}{\order{r \alpha}}
        \basedualxi{\order{r \alpha}}
        \\
        +
        \\
        \frac{1}{2}
        \displaystyle\sum_{r_1 \in \indexI}
        \displaystyle\sum_{r_2 \in \indexI}
        \coefD{\indexic{k}}{\order{r_1 r_2}}
        \basedualxi{\order{r_1 r_2}}
    \end{pmatrix*}.
\end{align*}

Next, we rearrange the terms in (\ref{EQ-For (alpha, I, J), calculation for RHS, calculation for RHS-Step 2, No2}). The calculation for ordering the wedge product of frames is as follows. In the first parenthesis, we have:
\begin{itemize}
    \item 
$
    \quad
        \basedualxi{\indexic{1}}
        \wedge\cdots\wedge
        \basedualxi{\indexic{k-1}}\wedge
        \coefD{\indexic{k}}{\order{\indexic{k} \alpha}}
        \basedualxi{\order{\indexic{k} \alpha}}
        \wedge\basedualxi{\indexic{k+1}}
        \wedge\cdots\wedge
        \widehat{\basedualxi{\alpha}}
        \wedge\cdots\wedge
        \basedualxi{\indexic{n-p}}
        \wedge
        \basexi{\indexJpc}
    \\
    =
        \sgn
            {\indexic{1},\cdots,\indexic{k-1},\order{\indexic{k},\alpha},\indexic{k+1},\cdots,\indexic{n-p}}
            {\indexIc}
        \coefD{\indexic{k}}{\order{\indexic{k}, \alpha}}
        \basedualxi{\indexIc}
        \wedge
        \basexi{\indexJpc}
$,

    \item 
$
    \quad
        \basedualxi{\indexic{1}}
        \wedge\cdots\wedge
        \basedualxi{\indexic{k-1}}\wedge
            \coefD{\indexic{k}}{\order{\indexic{k} r}}
            \basedualxi{\order{\indexic{k} r}}
        \wedge\basedualxi{\indexic{k+1}}
        \wedge\cdots\wedge
        \widehat{\basedualxi{\alpha}}
        \wedge\cdots\wedge
        \basedualxi{\indexic{n-p}}
        \wedge
        \basexi{\indexJpc}
    \\
    =
        \sgn
            {\indexic{1},\cdots,\indexic{k-1},\order{\indexic{k},r},\indexic{k+1},\cdots,\widehat{\alpha},\cdots,\indexic{n-p}}
            {\order{(\indexIc\backslash\{\alpha\})\cup\{r\}}}
        \coefD{\indexic{k}}{\order{\indexic{k}, r}}
        \basedualxi{\order{(\indexIc\backslash\{\alpha\})\cup\{r\}}}
        \wedge
        \basexi{\indexJpc}
$,
    \item 
$
    \quad
        \basedualxi{\indexic{1}}
        \wedge\cdots\wedge
        \basedualxi{\indexic{k-1}}\wedge
            \coefD{\indexic{k}}{\order{r \alpha}}
            \basedualxi{\order{r \alpha}}
        \wedge\basedualxi{\indexic{k+1}}
        \wedge\cdots\wedge
        \widehat{\basedualxi{\alpha}}
        \wedge\cdots\wedge
        \basedualxi{\indexic{n-p}}
        \wedge
        \basexi{\indexJpc}
    \\
    =
        \sgn
            {\indexic{1},\cdots,\indexic{k-1},\order{\alpha,r},\indexic{k+1},\cdots,\indexic{n-p}}
            {\order{(\indexIc\backslash\{\indexic{k}\})\cup\{r\}}}
        \coefD{\indexic{k}}{\order{\alpha, r}}
        \basedualxi{\order{(\indexIc\backslash\{\indexic{k}\})\cup\{r\}}}
        \wedge
        \basexi{\indexJpc}
$,
    \item 
$
    \quad
        \basedualxi{\indexic{1}}
        \wedge\cdots\wedge
        \basedualxi{\indexic{k-1}}\wedge
        \frac{1}{2}
            \coefD{\indexic{k}}{\order{r_1 r_2}}
            \basedualxi{\order{r_1 r_2}}
        \wedge\basedualxi{\indexic{k+1}}
        \wedge\cdots\wedge
        \widehat{\basedualxi{\alpha}}
        \wedge\cdots\wedge
        \basedualxi{\indexic{n-p}}
        \wedge
        \basexi{\indexJpc}
    \\
    =
        \sgn
            {\indexic{1},\cdots,\indexic{k-1},\order{r_1,r_2},\indexic{k+1},\cdots,\widehat{\alpha},\cdots,\indexic{n-p}}
            {\order{(\indexIc\backslash\{\indexic{k},\alpha\})\cup\{r_1,r_2\}}}
        \frac{1}{2}
        \coefD{\indexic{k}}{\order{r_1,r_2}}
        \basedualxi{\order{(\indexIc\backslash\{\indexic{k},\alpha\})\cup\{r_1,r_2\}}}
        \wedge
        \basexi{\indexJpc}
$.
\end{itemize}
In the second parenthesis, we have:
\begin{itemize}
    \item 
$
    \quad
        \basedualxi{\indexIc\backslash\{\alpha\}}
        \wedge
        \basexi{\indexjpc{1}}
        \wedge\cdots\wedge
        \basexi{\indexjpc{l-1}}\wedge
            \coefB{\indexjpc{l}}{\indexjpc{l} \alpha}
            \basexi{\indexjpc{l}}\wedge\basedualxi{\alpha}
        \wedge\basexi{\indexjpc{l+1}}
        \wedge\cdots\wedge
        \basexi{\indexjpc{n-q}}
    \\
    =
        \sgn
            {\indexIc\backslash\{\alpha\},\indexjpc{1},\cdots,\indexjpc{l-1},\indexjpc{l},\alpha,\indexjpc{l+1},\cdots,\indexjpc{n-q}}
            {\indexIc,\indexJpc}
        \coefB{\indexjpc{l}}{\indexjpc{l} \alpha}
        \basedualxi{\indexIc}
        \wedge
        \basexi{\indexJpc}
$,
    \item 
$
    \quad
        \basedualxi{\indexIc\backslash\{\alpha\}}
        \wedge
        \basexi{\indexjpc{1}}
        \wedge\cdots\wedge
        \basexi{\indexjpc{l-1}}\wedge
            \coefB{\indexjpc{l}}{\indexjpc{l} r}
            \basexi{\indexjpc{l}}\wedge\basedualxi{r}
        \wedge\basexi{\indexjpc{l+1}}
        \wedge\cdots\wedge
        \basexi{\indexjpc{n-q}}
    \\
    =
        \sgn
            {\indexIc\backslash\{\alpha\},\indexjpc{1},\cdots,\indexjpc{l-1},\indexjpc{l},r,\indexjpc{l+1},\cdots,\indexjpc{n-q}}
            {\order{(\indexIc\backslash\{\alpha\})\cup\{r\}},\indexJpc}
        \coefB{\indexjpc{l}}{\indexjpc{l} r}
        \basedualxi{\order{(\indexIc\backslash\{\alpha\})\cup\{r\}}}
        \wedge
        \basexi{\indexJpc}
$,
    \item 
$
    \quad
        \basedualxi{\indexIc\backslash\{\alpha\}}
        \wedge
        \basexi{\indexjpc{1}}
        \wedge\cdots\wedge
        \basexi{\indexjpc{l-1}}\wedge
            \coefB{\indexjpc{l}}{u^{'} \alpha}
            \basexi{u^{'}}\wedge\basedualxi{\alpha}
        \wedge\basexi{\indexjpc{l+1}}
        \wedge\cdots\wedge
        \basexi{\indexjpc{n-q}}
    \\
    =   
        \sgn
            {\indexIc\backslash\{\alpha\},\indexjpc{1},\cdots,\indexjpc{l-1},u^{'},\alpha,\indexjpc{l+1},\cdots,\indexjpc{n-q}}
            {\indexIc,\order{(\indexJpc\backslash\{\indexjpc{l}\})\cup\{u^{'}\}}}
        \coefB{\indexjpc{l}}{u^{'} \alpha}
        \basedualxi{\indexIc}
        \wedge
        \basexi{\order{(\indexJpc\backslash\{\indexjpc{l}\})\cup\{u^{'}\}}}
$,
    \item 
$
    \quad
        \basedualxi{\indexIc\backslash\{\alpha\}}
        \wedge
        \basexi{\indexjpc{1}}
        \wedge\cdots\wedge
        \basexi{\indexjpc{l-1}}\wedge
            \coefB{\indexjpc{l}}{u r}
            \basexi{u}\wedge\basedualxi{r}
        \wedge\basexi{\indexjpc{l+1}}
        \wedge\cdots\wedge
        \basexi{\indexjpc{n-q}}
    \\
    =
        \sgn
            {\indexIc\backslash\{\alpha\},\indexjpc{1},\cdots,\indexjpc{l-1},u^{'},r,\indexjpc{l+1},\cdots,\indexjpc{n-q}}
            {\order{(\indexIc\backslash\{\alpha\})\cup\{r\}},\order{(\indexJpc\backslash\{\indexjpc{l}\})\cup\{u^{'}\}}}
        \coefB{\indexjpc{l}}{u^{'} r}
        \basedualxi{\order{(\indexIc\backslash\{\alpha\})\cup\{r\}}}
        \wedge
        \basexi{\order{(\indexJpc\backslash\{\indexjpc{l}\})\cup\{u^{'}\}}}
$.
\end{itemize}

We conclude that $
\LieagbdPartialbar\,\star
(\basexi{\alpha}\wedge\basexi{\indexI}\wedge\basedualxi{\indexJp})
$ is equal to
\begin{align}
    &
    2^{p+1+q-n}
    \displaystyle\sum_{\indexic{k}\in\indexIc\backslash\{\alpha\}}
    \begin{pmatrix*}[l]
        \sgn
            {1,\cdots,n,1^{'},\cdots,n^{'}}
            {\alpha,\indexI,\indexJp,\indexIc\backslash\{\alpha\},\indexJc}
        \sgn
            {\indexIc\backslash\{\alpha\}}
            {\indexic{k},\indexIc\backslash\{\alpha,\indexic{k}\}}
        \sgn
            {\indexic{1},\cdots,\indexic{k-1},\order{\indexic{k},\alpha},\indexic{k+1},\cdots,\indexic{n-p}}
            {\indexIc}
        \\
        \coefD{\indexic{k}}{\order{\indexic{k}, \alpha}}
        \basedualxi{\indexIc}
        \wedge
        \basexi{\indexJpc}
    \end{pmatrix*}
\label{EQ-For (alpha, I, J), calculation for RHS, calculation for RHS-Step 2, result-term 1}
    \\
    &+\notag
    \\
    &
    2^{p+1+q-n}
    \displaystyle\sum_{\indexic{k}\in\indexIc\backslash\{\alpha\}}
    \displaystyle\sum_{r\in\indexI}
    \begin{pmatrix*}[l]
        \sgn
            {1,\cdots,n,1^{'},\cdots,n^{'}}
            {\alpha,\indexI,\indexJp,\indexIc\backslash\{\alpha\},\indexJc}
        \sgn
            {\indexIc\backslash\{\alpha\}}
            {\indexic{k},\indexIc\backslash\{\alpha,\indexic{k}\}}
        \\
        \sgn
            {\indexic{1},\cdots,\indexic{k-1},\order{\indexic{k},r},\indexic{k+1},\cdots,\widehat{\alpha},\cdots,\indexic{n-p}}
            {\order{(\indexIc\backslash\{\alpha\})\cup\{r\}}}
        \\
        \coefD{\indexic{k}}{\order{\indexic{k}, r}}
        \basedualxi{\order{(\indexIc\backslash\{\alpha\})\cup\{r\}}}
        \wedge
        \basexi{\indexJpc}
    \end{pmatrix*}
\label{EQ-For (alpha, I, J), calculation for RHS, calculation for RHS-Step 2, result-term 2}
    \\
    &+\notag
    \\
    &
    2^{p+1+q-n}
    \displaystyle\sum_{\indexic{k}\in\indexIc\backslash\{\alpha\}}
    \displaystyle\sum_{r\in\indexI}
    \begin{pmatrix*}[l]
        \sgn
            {1,\cdots,n,1^{'},\cdots,n^{'}}
            {\alpha,\indexI,\indexJp,\indexIc\backslash\{\alpha\},\indexJc}
        \sgn
            {\indexIc\backslash\{\alpha\}}
            {\indexic{k},\indexIc\backslash\{\alpha,\indexic{k}\}}
        \\
        \sgn
            {\indexic{1},\cdots,\indexic{k-1},\order{\alpha,r},\indexic{k+1},\cdots,\indexic{n-p}}
            {\order{(\indexIc\backslash\{\indexic{k}\})\cup\{r\}}}
        \\
        \coefD{\indexic{k}}{\order{\alpha, r}}
        \basedualxi{\order{(\indexIc\backslash\{\indexic{k}\})\cup\{r\}}}
        \wedge
        \basexi{\indexJpc}
    \end{pmatrix*}
\label{EQ-For (alpha, I, J), calculation for RHS, calculation for RHS-Step 2, result-term 3}
    \\
    &+\notag
    \\
    &
    2^{p+1+q-n}
    \displaystyle\sum_{\indexic{k}\in\indexIc\backslash\{\alpha\}}
    \displaystyle\sum_{r_1\in\indexI}
    \displaystyle\sum_{r_2\in\indexI}
    \begin{pmatrix*}[l]
        \sgn
            {1,\cdots,n,1^{'},\cdots,n^{'}}
            {\alpha,\indexI,\indexJp,\indexIc\backslash\{\alpha\},\indexJc}
        \sgn
            {\indexIc\backslash\{\alpha\}}
            {\indexic{k},\indexIc\backslash\{\alpha,\indexic{k}\}}
        \\
        \sgn
            {\indexic{1},\cdots,\indexic{k-1},\order{r_1,r_2},\indexic{k+1},\cdots,\widehat{\alpha},\cdots,\indexic{n-p}}
            {\order{(\indexIc\backslash\{\indexic{k},\alpha\})\cup\{r_1,r_2\}}}
        \\
        \frac{1}{2}
        \coefD{\indexic{k}}{\order{r_1,r_2}}
        \basedualxi{\order{(\indexIc\backslash\{\indexic{k},\alpha\})\cup\{r_1,r_2\}}}
        \wedge
        \basexi{\indexJpc}
    \end{pmatrix*}
\label{EQ-For (alpha, I, J), calculation for RHS, calculation for RHS-Step 2, result-term 4}
    \\
    &+\notag
    \\
    &
    2^{p+1+q-n}
    \displaystyle\sum_{\indexjpc{l}\in\indexJpc}
    \begin{pmatrix*}[l]
        \sgn
            {1,\cdots,n,1^{'},\cdots,n^{'}}
            {\alpha,\indexI,\indexJp,\indexIc\backslash\{\alpha\},\indexJc}
        \sgn
            {\indexIc\backslash\{\alpha\},\indexJpc}
            {\indexjpc{l},\indexIc\backslash\{\alpha\},\indexJpc\backslash\{\indexjpc{l}\}}
        \\
        \sgn
            {\indexIc\backslash\{\alpha\},\indexjpc{1},\cdots,\indexjpc{l-1},\indexjpc{l},\alpha,\indexjpc{l+1},\cdots,\indexjpc{n-q}}
            {\indexIc,\indexJpc}
        \\
        \coefB{\indexjpc{l}}{\indexjpc{l} \alpha}
        \basedualxi{\indexIc}
        \wedge
        \basexi{\indexJpc}
    \end{pmatrix*}
\label{EQ-For (alpha, I, J), calculation for RHS, calculation for RHS-Step 2, result-term 5}
    \\
    &+\notag
    \\
    &
    2^{p+1+q-n}
    \displaystyle\sum_{\indexjpc{l}\in\indexJpc}
    \displaystyle\sum_{r\in\indexI}
    \begin{pmatrix*}[l]
        \sgn
            {1,\cdots,n,1^{'},\cdots,n^{'}}
            {\alpha,\indexI,\indexJp,\indexIc\backslash\{\alpha\},\indexJc}
        \sgn
            {\indexIc\backslash\{\alpha\},\indexJpc}
            {\indexjpc{l},\indexIc\backslash\{\alpha\},\indexJpc\backslash\{\indexjpc{l}\}}
        \\
        \sgn
            {\indexIc\backslash\{\alpha\},\indexjpc{1},\cdots,\indexjpc{l-1},\indexjpc{l},r,\indexjpc{l+1},\cdots,\indexjpc{n-q}}
            {\order{(\indexIc\backslash\{\alpha\})\cup\{r\}},\indexJpc}
        \\
        \coefB{\indexjpc{l}}{\indexjpc{l} r}
        \basedualxi{\order{(\indexIc\backslash\{\alpha\})\cup\{r\}}}
        \wedge
        \basexi{\indexJpc}
    \end{pmatrix*}
\label{EQ-For (alpha, I, J), calculation for RHS, calculation for RHS-Step 2, result-term 6}
    \\
    &+\notag
    \\
    &
    2^{p+1+q-n}
    \displaystyle\sum_{\indexjpc{l}\in\indexJpc}
    \displaystyle\sum_{u^{'}\in\indexJp}
    \begin{pmatrix*}[l]
        \sgn
            {1,\cdots,n,1^{'},\cdots,n^{'}}
            {\alpha,\indexI,\indexJp,\indexIc\backslash\{\alpha\},\indexJc}
        \sgn
            {\indexIc\backslash\{\alpha\},\indexJpc}
            {\indexjpc{l},\indexIc\backslash\{\alpha\},\indexJpc\backslash\{\indexjpc{l}\}}
        \\
        \sgn
            {\indexIc\backslash\{\alpha\},\indexjpc{1},\cdots,\indexjpc{l-1},u^{'},\alpha,\indexjpc{l+1},\cdots,\indexjpc{n-q}}
            {\indexIc,\order{(\indexJpc\backslash\{\indexjpc{l}\})\cup\{u^{'}\}}}
        \\
        \coefB{\indexjpc{l}}{u^{'} \alpha}
        \basedualxi{\indexIc}
        \wedge
        \basexi{\order{(\indexJpc\backslash\{\indexjpc{l}\})\cup\{u^{'}\}}}
    \end{pmatrix*}
\label{EQ-For (alpha, I, J), calculation for RHS, calculation for RHS-Step 2, result-term 7}
    \\
    &+\notag
    \\
    &
    2^{p+1+q-n}
    \displaystyle\sum_{\indexjpc{l}\in\indexJpc}
    \displaystyle\sum_{r\in\indexI}
    \displaystyle\sum_{u^{'}\in\indexJp}
    \begin{pmatrix*}[l]
        \sgn
            {1,\cdots,n,1^{'},\cdots,n^{'}}
            {\alpha,\indexI,\indexJp,\indexIc\backslash\{\alpha\},\indexJc}
        \sgn
            {\indexIc\backslash\{\alpha\},\indexJpc}
            {\indexjpc{l},\indexIc\backslash\{\alpha\},\indexJpc\backslash\{\indexjpc{l}\}}
        \\
        \sgn
            {\indexIc\backslash\{\alpha\},\indexjpc{1},\cdots,\indexjpc{l-1},u^{'},r,\indexjpc{l+1},\cdots,\indexjpc{n-q}}
            {\order{(\indexIc\backslash\{\alpha\})\cup\{r\}},\order{(\indexJpc\backslash\{\indexjpc{l}\})\cup\{u^{'}\}}}
        \\
        \coefB{\indexjpc{l}}{u^{'} r}
        \basedualxi{\order{(\indexIc\backslash\{\alpha\})\cup\{r\}}}
        \wedge
        \basexi{\order{(\indexJpc\backslash\{\indexjpc{l}\})\cup\{u^{'}\}}}
    \end{pmatrix*}
    .
\label{EQ-For (alpha, I, J), calculation for RHS, calculation for RHS-Step 2, result-term 8}
\end{align}
\keyword{Step 3:}
Recall that
\begin{align*}
    \star
    (\basedualxi{\indexIc}\wedge\basexi{\indexJpc})
    =
    2^{(|\indexIc|+|\indexJpc|-n)}
    \sgn
        {1^{'},\cdots,n^{'},1,\cdots,n}{\indexIc,\indexJpc,\indexI,\indexJp}\basexi{\indexI}\wedge\basedualxi{\indexJp}.
\end{align*}
Let us take $\star$ on $
\LieagbdPartialbar\,\star
(\basexi{\alpha}\wedge\basexi{\indexI}\wedge\basedualxi{\indexJp})
$. Each term in $
\LieagbdPartialbar\,\star
(\basexi{\alpha}\wedge\basexi{\indexI}\wedge\basedualxi{\indexJp})
$ is as follows: 
\begin{itemize}
    \item applying $\star$ to (\ref{EQ-For (alpha, I, J), calculation for RHS, calculation for RHS-Step 2, result-term 1}), we calculate
\begin{align*}
        \star
        (\basedualxi{\indexIc}\wedge\basexi{\indexJpc})
    =
        2^{n-p-q}
        \sgn
            {1^{'},\cdots,n^{'},1,\cdots,n}
            {\indexIc,\indexJpc,\indexI,\indexJp}
        \basexi{\indexI}\wedge\basedualxi{\indexJp}
    ;
\end{align*}  

    \item applying $\star$ to (\ref{EQ-For (alpha, I, J), calculation for RHS, calculation for RHS-Step 2, result-term 2}), we calculate
\begin{align*}
        \star
        (
        \basedualxi{\order{(\indexIc\backslash\{\alpha\})\cup\{r\}}}
        \wedge
        \basexi{\indexJpc}
        )
    =
        2^{n-p-q}
        \sgn
            {1^{'},\cdots,n^{'},1,\cdots,n}
            {\order{(\indexIc\backslash\{\alpha\})\cup\{r\}},\indexJpc,\alpha,\indexI\backslash\{r\},\indexJp}
        \basexi{\alpha}
        \wedge
        \basexi{\indexI\backslash\{r\}}
        \wedge
        \basedualxi{\indexJp}
    ,
\end{align*}
    where $\basexi{\alpha}$ is at the head of the wedge product of frames since we use the assumption that $\alpha<\indexi{1}$;

    \item applying $\star$ to (\ref{EQ-For (alpha, I, J), calculation for RHS, calculation for RHS-Step 2, result-term 3}), we calculate
\begin{align*}
    &
        \star
        (
        \basedualxi{\order{(\indexIc\backslash\{\indexic{k}\})\cup\{r\}}}
        \wedge
        \basexi{\indexJpc}
        )
    \\
    =&
        2^{n-p-q}
        \sgn
            {1^{'},\cdots,n^{'},1,\cdots,n}
            {\order{(\indexIc\backslash\{\indexic{k}\})\cup\{r\}},\indexJpc,\order{(\indexI\backslash\{r\})\cup\{\indexic{k}\}},\indexJp}
        \basexi{\order{(\indexI\backslash\{r\})\cup\{\indexic{k}\}}}
        \wedge
        \basedualxi{\indexJp}
    ;
\end{align*}  

    \item applying $\star$ to (\ref{EQ-For (alpha, I, J), calculation for RHS, calculation for RHS-Step 2, result-term 4}), we calculate
\begin{align*}
    &
        \star
        (
        \basedualxi{\order{(\indexIc\backslash\{\indexic{k},\alpha\})\cup\{r_1,r_2\}}}
        \wedge
        \basexi{\indexJpc}
        )
    \\
    =&
        2^{n-p-q}
        \begin{pmatrix*}[l]
            \sgn
                {1^{'},\cdots,n^{'},1,\cdots,n}
                {\order{(\indexIc\backslash\{\indexic{k},\alpha\})\cup\{r_1,r_2\}},\indexJpc,\alpha,\order{(\indexI\backslash\{r_1,r_2\})\cup\{\indexic{k}\}},\indexJp} 
            \\
            \basexi{\alpha}
            \wedge
            \basexi{\order{(\indexI\backslash\{r_1,r_2\})\cup\{\indexic{k}\}}}
            \wedge
            \basedualxi{\indexJp}
        \end{pmatrix*}
    ;
\end{align*}  
    
    \item applying $\star$ to (\ref{EQ-For (alpha, I, J), calculation for RHS, calculation for RHS-Step 2, result-term 5}), we calculate
\begin{align*}
        \star
        (
        \basedualxi{\indexIc}
        \wedge
        \basexi{\indexJpc}
        )
    =
        2^{n-p-q}
        \sgn
            {1^{'},\cdots,n^{'},1,\cdots,n}
            {\indexIc,\indexJpc,\indexI,\indexJp}
        \basexi{\indexI}
        \wedge
        \basedualxi{\indexJp}
    ;
\end{align*}  
    
    \item applying $\star$ to (\ref{EQ-For (alpha, I, J), calculation for RHS, calculation for RHS-Step 2, result-term 6}), we calculate
\begin{align*}
        \star
        (
        \basedualxi{\order{(\indexIc\backslash\{\alpha\})\cup\{r\}}}
        \wedge
        \basexi{\indexJpc}
        )
    =
        2^{n-p-q}
        \sgn
            {1^{'},\cdots,n^{'},1,\cdots,n}
            {\order{(\indexIc\backslash\{\alpha\})\cup\{r\}},\indexJpc,\alpha,\indexI\backslash\{r\},\indexJp}
        \basexi{\alpha}
        \wedge
        \basexi{\indexI\backslash\{r\}}
        \wedge
        \basedualxi{\indexJp}
    ,
\end{align*}  
    where $\basexi{\alpha}$ is at the head of the wedge product of frames since we use the assumption that $\alpha<\indexi{1}$;
    
    \item applying $\star$ to (\ref{EQ-For (alpha, I, J), calculation for RHS, calculation for RHS-Step 2, result-term 7}), we calculate
\begin{align*}
    &
        \star
        (
        \basedualxi{\indexIc}
        \wedge
        \basexi{\order{(\indexJpc\backslash\{\indexjpc{l}\})\cup\{u^{'}\}}}
        )
    \\
    =&
        2^{n-p-q}
        \sgn
            {1^{'},\cdots,n^{'},1,\cdots,n}
            {\indexIc,\order{(\indexJpc\backslash\{\indexjpc{l}\})\cup\{u^{'}\}},\indexI,\order{(\indexJp\backslash\{u^{'}\})\cup\{\indexjpc{l}\}}}
        \basexi{\indexI}
        \wedge
        \basedualxi{\order{(\indexJp\backslash\{u^{'}\})\cup\{\indexjpc{l}\}}}
    ;
\end{align*}  
    
    \item applying $\star$ to (\ref{EQ-For (alpha, I, J), calculation for RHS, calculation for RHS-Step 2, result-term 8}), we calculate
\begin{align*}
    &
        \star
        (
        \basedualxi{\order{(\indexIc\backslash\{\alpha\})\cup\{r\}}}
        \wedge
        \basexi{\order{(\indexJpc\backslash\{\indexjpc{l}\})\cup\{u^{'}\}}}
        )
    \\
    =&
        2^{n-p-q}
        \begin{pmatrix*}
            \sgn
                {1^{'},\cdots,n^{'},1,\cdots,n}
                {\order{(\indexIc\backslash\{\alpha\})\cup\{r\}},\order{(\indexJpc\backslash\{\indexjpc{l}\})\cup\{u^{'}\}},\alpha,\indexI\backslash\{r\},\order{(\indexJp\backslash\{u^{'}\})\cup\{\indexjpc{l}\}}}
            \\
            \basexi{\alpha}
            \wedge
            \basexi{\indexI\backslash\{r\}}
            \wedge
            \basedualxi{\order{(\indexJp\backslash\{u^{'}\})\cup\{\indexjpc{l}\}}}
        \end{pmatrix*}
    ,
\end{align*}  
    where $\basexi{\alpha}$ is at the head of the wedge product of frames since we use the assumption that $\alpha<\indexi{1}$.
\end{itemize}

Recall that 
\begin{align*}
    \sqrt{-1}\LieagbdPartial^{*}(\basexi{\alpha}\wedge\basexi{\indexI}\wedge\basedualxi{\indexJp})
    =
    -\sqrt{-1}(-1)^{n^2}
    \star\,\LieagbdPartialbar\,\star
    (\basexi{\alpha}\wedge\basexi{\indexI}\wedge\basedualxi{\indexJp})
    .
\end{align*}
As we have already applied $\star$ to each term in $
\LieagbdPartialbar\,\star
(\basexi{\alpha}\wedge\basexi{\indexI}\wedge\basedualxi{\indexJp})
$, we just need to multiply by $-\sqrt{-1}(-1)^{n^2}$ to obtain the expression of $
\sqrt{-1}\LieagbdPartial^{*}
(\basexi{\alpha}\wedge\basexi{\indexI}\wedge\basedualxi{\indexJp})
$:
\begin{align}
    &
    \begin{pmatrix*}[l]
        -2\sqrt{-1}(-1)^{n^2}
        \displaystyle\sum_{\indexic{k}\in\indexIc\backslash\{\alpha\}}
        \sgn
            {1,\cdots,n,1^{'},\cdots,n^{'}}
            {\alpha,\indexI,\indexJp,\indexIc\backslash\{\alpha\},\indexJc}
        \sgn
            {\indexIc\backslash\{\alpha\}}
            {\indexic{k},\indexIc\backslash\{\alpha,\indexic{k}\}}
        \\
        \sgn
            {\indexic{1},\cdots,\indexic{k-1},\order{\indexic{k},\alpha},\indexic{k+1},\cdots,\indexic{n-p}}
            {\indexIc}
        \sgn
            {1^{'},\cdots,n^{'},1,\cdots,n}
            {\indexIc,\indexJpc,\indexI,\indexJp}
        \coefD{\indexic{k}}{\order{\indexic{k}, \alpha}}
        \basexi{\indexI}
        \wedge
        \basedualxi{\indexJp}
    \end{pmatrix*}
\label{EQ-For (alpha, I, J), calculation for RHS-Step 3, unsimplfied term 1}
    \\
    &+
    \notag\\
    &
    \begin{pmatrix*}[l]
        -2\sqrt{-1}(-1)^{n^2}
        \displaystyle\sum_{\indexic{k}\in\indexIc\backslash\{\alpha\}}
        \displaystyle\sum_{r\in\indexI}
        \sgn
            {1,\cdots,n,1^{'},\cdots,n^{'}}
            {\alpha,\indexI,\indexJp,\indexIc\backslash\{\alpha\},\indexJc}
        \sgn
            {\indexIc\backslash\{\alpha\}}
            {\indexic{k},\indexIc\backslash\{\alpha,\indexic{k}\}}
        \\
        \sgn
            {\indexic{1},\cdots,\indexic{k-1},\order{\indexic{k},r},\indexic{k+1},\cdots,\widehat{\alpha},\cdots,\indexic{n-p}}
            {\order{(\indexIc\backslash\{\alpha\})\cup\{r\}}}
        \sgn
            {1^{'},\cdots,n^{'},1,\cdots,n}
            {\order{(\indexIc\backslash\{\alpha\})\cup\{r\}},\indexJpc,\alpha,\indexI\backslash\{r\},\indexJp}
        \coefD{\indexic{k}}{\order{\indexic{k}, r}}
        \\
        \basexi{\alpha}
        \wedge
        \basexi{\indexI\backslash\{r\}}
        \wedge
        \basedualxi{\indexJp}
    \end{pmatrix*}
\label{EQ-For (alpha, I, J), calculation for RHS-Step 3, unsimplfied term 2}
    \\
    &+
    \notag\\
    &
    \begin{pmatrix*}[l]
        -2\sqrt{-1}(-1)^{n^2}
        \displaystyle\sum_{\indexic{k}\in\indexIc\backslash\{\alpha\}}
        \displaystyle\sum_{r\in\indexI}
        \sgn
            {1,\cdots,n,1^{'},\cdots,n^{'}}
            {\alpha,\indexI,\indexJp,\indexIc\backslash\{\alpha\},\indexJc}
        \sgn
            {\indexIc\backslash\{\alpha\}}
            {\indexic{k},\indexIc\backslash\{\alpha,\indexic{k}\}}
        \\
        \sgn
            {\indexic{1},\cdots,\indexic{k-1},\order{\alpha,r},\indexic{k+1},\cdots,\indexic{n-p}}
            {\order{(\indexIc\backslash\{\indexic{k}\})\cup\{r\}}}
        \sgn
            {1^{'},\cdots,n^{'},1,\cdots,n}
            {\order{(\indexIc\backslash\{\indexic{k}\})\cup\{r\}},\indexJpc,\order{(\indexI\backslash\{r\})\cup\{\indexic{k}\}},\indexJp}
        \\
        \coefD{\indexic{k}}{\order{\alpha, r}}
        \basexi{\order{(\indexI\backslash\{r\})\cup\{\indexic{k}\}}}
        \wedge
        \basedualxi{\indexJp}
    \end{pmatrix*}
\label{EQ-For (alpha, I, J), calculation for RHS-Step 3, unsimplfied term 3}
    \\
    &+
    \notag\\
    &
    \begin{pmatrix*}[l]
        -2\sqrt{-1}(-1)^{n^2}
        \displaystyle\sum_{\indexic{k}\in\indexIc\backslash\{\alpha\}}
        \displaystyle\sum_{r_1\in\indexI}
        \displaystyle\sum_{r_2\in\indexI}
        \sgn
            {1,\cdots,n,1^{'},\cdots,n^{'}}
            {\alpha,\indexI,\indexJp,\indexIc\backslash\{\alpha\},\indexJc}
        \sgn
            {\indexIc\backslash\{\alpha\}}
            {\indexic{k},\indexIc\backslash\{\alpha,\indexic{k}\}}
        \\
        \sgn
            {\indexic{1},\cdots,\indexic{k-1},\order{r_1,r_2},\indexic{k+1},\cdots,\widehat{\alpha},\cdots,\indexic{n-p}}
            {\order{(\indexIc\backslash\{\indexic{k},\alpha\})\cup\{r_1,r_2\}}}
        \\
        \sgn
            {1^{'},\cdots,n^{'},1,\cdots,n}
            {\order{(\indexIc\backslash\{\indexic{k},\alpha\})\cup\{r_1,r_2\}},\indexJpc,\alpha,\order{(\indexI\backslash\{r_1,r_2\})\cup\{\indexic{k}\}},\indexJp} 
        \\
        \frac{1}{2}
        \coefD{\indexic{k}}{\order{r_1,r_2}}
        \basexi{\alpha}
        \wedge
        \basexi{\order{(\indexI\backslash\{r_1,r_2\})\cup\{\indexic{k}\}}}
        \wedge
        \basedualxi{\indexJp}
    \end{pmatrix*}
\label{EQ-For (alpha, I, J), calculation for RHS-Step 3, unsimplfied term 4}
    \\
    &+
    \notag\\
    &
    \begin{pmatrix*}[l]
        -2\sqrt{-1}(-1)^{n^2}
        \displaystyle\sum_{\indexjpc{l}\in\indexJpc}
        \sgn
            {1,\cdots,n,1^{'},\cdots,n^{'}}
            {\alpha,\indexI,\indexJp,\indexIc\backslash\{\alpha\},\indexJc}
        \sgn
            {\indexIc\backslash\{\alpha\},\indexJpc}
            {\indexjpc{l},\indexIc\backslash\{\alpha\},\indexJpc\backslash\{\indexjpc{l}\}}
        \\
        \sgn
            {\indexIc\backslash\{\alpha\},\indexjpc{1},\cdots,\indexjpc{l-1},\indexjpc{l},\alpha,\indexjpc{l+1},\cdots,\indexjpc{n-q}}
            {\indexIc,\indexJpc}
        \sgn
            {1^{'},\cdots,n^{'},1,\cdots,n}
            {\indexIc,\indexJpc,\indexI,\indexJp}
        \coefB{\indexjpc{l}}{\indexjpc{l} \alpha}
        \basexi{\indexI}
        \wedge
        \basedualxi{\indexJp}
    \end{pmatrix*}
\label{EQ-For (alpha, I, J), calculation for RHS-Step 3, unsimplfied term 5}
    \\
    &+
    \notag\\
    &
    \begin{pmatrix*}[l]
        -2\sqrt{-1}(-1)^{n^2}
        \displaystyle\sum_{\indexjpc{l}\in\indexJpc}
        \displaystyle\sum_{r\in\indexI}
        \sgn
            {1,\cdots,n,1^{'},\cdots,n^{'}}
            {\alpha,\indexI,\indexJp,\indexIc\backslash\{\alpha\},\indexJc}
        \sgn
            {\indexIc\backslash\{\alpha\},\indexJpc}
            {\indexjpc{l},\indexIc\backslash\{\alpha\},\indexJpc\backslash\{\indexjpc{l}\}}
        \\
        \sgn
            {\indexIc\backslash\{\alpha\},\indexjpc{1},\cdots,\indexjpc{l-1},\indexjpc{l},r,\indexjpc{l+1},\cdots,\indexjpc{n-q}}
            {\order{(\indexIc\backslash\{\alpha\})\cup\{r\}},\indexJpc}
        \sgn
            {1^{'},\cdots,n^{'},1,\cdots,n}
            {\order{(\indexIc\backslash\{\alpha\})\cup\{r\}},\indexJpc,\alpha,\indexI\backslash\{r\},\indexJp}
        \\
        \coefB{\indexjpc{l}}{\indexjpc{l} r}
        \basexi{\alpha}
        \wedge
        \basexi{\indexI\backslash\{r\}}
        \wedge
        \basedualxi{\indexJp}
    \end{pmatrix*}
\label{EQ-For (alpha, I, J), calculation for RHS-Step 3, unsimplfied term 6}
    \\
    &+
    \notag\\
    &
    \begin{pmatrix*}[l]
        -2\sqrt{-1}(-1)^{n^2}
        \displaystyle\sum_{\indexjpc{l}\in\indexJpc}
        \displaystyle\sum_{u^{'}\in\indexJp}
        \sgn
            {1,\cdots,n,1^{'},\cdots,n^{'}}
            {\alpha,\indexI,\indexJp,\indexIc\backslash\{\alpha\},\indexJc}
        \sgn
            {\indexIc\backslash\{\alpha\},\indexJpc}
            {\indexjpc{l},\indexIc\backslash\{\alpha\},\indexJpc\backslash\{\indexjpc{l}\}}
        \\
        \sgn
            {\indexIc\backslash\{\alpha\},\indexjpc{1},\cdots,\indexjpc{l-1},u^{'},\alpha,\indexjpc{l+1},\cdots,\indexjpc{n-q}}
            {\indexIc,\order{(\indexJpc\backslash\{\indexjpc{l}\})\cup\{u^{'}\}}}
        \sgn
            {1^{'},\cdots,n^{'},1,\cdots,n}
            {\indexIc,\order{(\indexJpc\backslash\{\indexjpc{l}\})\cup\{u^{'}\}},\indexI,\order{(\indexJp\backslash\{u^{'}\})\cup\{\indexjpc{l}\}}}
        \\
        \coefB{\indexjpc{l}}{u^{'} \alpha}
        \basexi{\indexI}
        \wedge
        \basedualxi{\order{(\indexJp\backslash\{u^{'}\})\cup\{\indexjpc{l}\}}}
    \end{pmatrix*}
\label{EQ-For (alpha, I, J), calculation for RHS-Step 3, unsimplfied term 7}
    \\
    &+
    \notag\\
    &
    \begin{pmatrix*}[l]
        -2\sqrt{-1}(-1)^{n^2}
        \displaystyle\sum_{\indexjpc{l}\in\indexJpc}
        \displaystyle\sum_{r\in\indexI}
        \displaystyle\sum_{u^{'}\in\indexJp}
        \sgn
            {1,\cdots,n,1^{'},\cdots,n^{'}}
            {\alpha,\indexI,\indexJp,\indexIc\backslash\{\alpha\},\indexJc}
        \sgn
            {\indexIc\backslash\{\alpha\},\indexJpc}
            {\indexjpc{l},\indexIc\backslash\{\alpha\},\indexJpc\backslash\{\indexjpc{l}\}}
        \\
        \sgn
            {\indexIc\backslash\{\alpha\},\indexjpc{1},\cdots,\indexjpc{l-1},u^{'},r,\indexjpc{l+1},\cdots,\indexjpc{n-q}}
            {\order{(\indexIc\backslash\{\alpha\})\cup\{r\}},\order{(\indexJpc\backslash\{\indexjpc{l}\})\cup\{u^{'}\}}}
        \\
        \sgn
            {1^{'},\cdots,n^{'},1,\cdots,n}
            {\order{(\indexIc\backslash\{\alpha\})\cup\{r\}},\order{(\indexJpc\backslash\{\indexjpc{l}\})\cup\{u^{'}\}},\alpha,\indexI\backslash\{r\},\order{(\indexJp\backslash\{u^{'}\})\cup\{\indexjpc{l}\}}}
        \\
        \coefB{\indexjpc{l}}{u^{'} r}
        \basexi{\alpha}
        \wedge
        \basexi{\indexI\backslash\{r\}}
        \wedge
        \basedualxi{\order{(\indexJp\backslash\{u^{'}\})\cup\{\indexjpc{l}\}}}
    \end{pmatrix*}.
\label{EQ-For (alpha, I, J), calculation for RHS-Step 3, unsimplfied term 8}
\end{align}
We simplify the sign of permutations in Section \ref{apex-simplification for RHs of (alpha,I,J)}. The reader can check Lemmas \ref{Lemma-simplification for RHS of (alpha,I,J)-lemma 1}–\ref{Lemma-simplification for RHS of (alpha,I,J)-lemma 8} for details. Thus, we conclude that the simplified expression of $
\sqrt{-1}\LieagbdPartial^{*}
(\basedualxi{\indexIc}\wedge\basexi{\indexJpc})
$ is
\begin{align}
    &
    -2\sqrt{-1}(-1)^{n^2}
    \displaystyle\sum_{\indexic{k}\in\indexIc\backslash\{\alpha\}}
    \begin{matrix*}[l]
        (-1)^{n^2}
        \sgn
            {\order{\indexic{k},\alpha}}
            {\alpha,\indexic{k}}
        \coefD{\indexic{k}}{\order{\indexic{k}, \alpha}}
        \basexi{\indexI}
        \wedge
        \basedualxi{\indexJp}
    \end{matrix*}
\label{EQ-Kahler identity on Lie agbd-For (alpha,I,J), calculation for RHS-simplified half target no1}
    \\
    &+
    \notag\\
    &
    \begin{pmatrix*}[l]
    -2\sqrt{-1}(-1)^{n^2}
    \displaystyle\sum_{\indexic{k}\in\indexIc\backslash\{\alpha\}}
    \displaystyle\sum_{r\in\indexI}
        -(-1)^{n^2}
        \begin{pmatrix*}[l]
            \sgn
                {r,\indexI\backslash\{r\}}
                {\indexI}
            \sgn
                {\order{\indexic{k},r}}
                {r,\indexic{k}}
        \end{pmatrix*}
        \coefD{\indexic{k}}{\order{\indexic{k}, r}}
        \\
        \basexi{\alpha}
        \wedge
        \basexi{\indexI\backslash\{r\}}
        \wedge
        \basedualxi{\indexJp}
    \end{pmatrix*}
\label{EQ-Kahler identity on Lie agbd-For (alpha,I,J), calculation for RHS-simplified half target no2}
    \\
    &+
    \notag\\
    &
    \begin{pmatrix*}[l]
    -2\sqrt{-1}(-1)^{n^2}
    \displaystyle\sum_{\indexic{k}\in\indexIc\backslash\{\alpha\}}
    \displaystyle\sum_{r\in\indexI}
        (-1)^{n^2}
        \sgn
            {r,\order{(\indexI\backslash\{r\})\cup\{\indexic{k}\}}}
            {\indexic{k},\indexI}
        \sgn
            {\order{\alpha,r}}
            {\alpha,r}
        \\
        \coefD{\indexic{k}}{\order{\alpha, r}}
        \basexi{\order{(\indexI\backslash\{r\})\cup\{\indexic{k}\}}}
        \wedge
        \basedualxi{\indexJp}
    \end{pmatrix*}
\label{EQ-Kahler identity on Lie agbd-For (alpha,I,J), calculation for RHS-simplified half target no3}
    \\
    &+
    \notag\\
    &
    \begin{pmatrix*}[l]
    -2\sqrt{-1}(-1)^{n^2}
    \displaystyle\sum_{\indexic{k}\in\indexIc\backslash\{\alpha\}}
    \displaystyle\sum_{r_1\in\indexI}
    \displaystyle\sum_{r_2\in\indexI}
        (-1)^{n^2}
        \sgn
            {\order{r_1,r_2},\indexI\backslash\{r_1,r_2\}}
            {\indexI}
        \sgn
            {\order{(\indexI\backslash\{r_1,r_2\})\cup\{\indexic{k}\}}}
            {\indexic{k},(\indexI\backslash\{r_1,r_2\})}
        \\
        \frac{1}{2}
        \coefD{\indexic{k}}{\order{r_1,r_2}}
        \basexi{\alpha}
        \wedge
        \basexi{\order{(\indexI\backslash\{r_1,r_2\})\cup\{\indexic{k}\}}}
        \wedge
        \basedualxi{\indexJp}
    \end{pmatrix*}
\label{EQ-Kahler identity on Lie agbd-For (alpha,I,J), calculation for RHS-simplified half target no4}
    \\
    &+
    \notag\\
    &
    -2\sqrt{-1}(-1)^{n^2}
    \displaystyle\sum_{\indexjpc{l}\in\indexJpc}
    \displaystyle\sum_{r\in\indexI}
    \begin{matrix*}[l]
        -(-1)^{n^2}
        \coefB{\indexjpc{l}}{\indexjpc{l} \alpha}
        \basexi{\indexI}
        \wedge
        \basedualxi{\indexJp}
    \end{matrix*}
\label{EQ-Kahler identity on Lie agbd-For (alpha,I,J), calculation for RHS-simplified half target no5}
    \\
    &+
    \notag\\
    &
    -2\sqrt{-1}(-1)^{n^2}
    \displaystyle\sum_{\indexjpc{l}\in\indexJpc}
    \displaystyle\sum_{r\in\indexI}
    \begin{matrix*}[l]
        (-1)^{n^2}
        \sgn
            {r,\indexI\backslash\{r\}}
            {\indexI}
        \coefB{\indexjpc{l}}{\indexjpc{l} r}
        \basexi{\alpha}
        \wedge
        \basexi{\indexI\backslash\{r\}}
        \wedge
        \basedualxi{\indexJp}
    \end{matrix*}
\label{EQ-Kahler identity on Lie agbd-For (alpha,I,J), calculation for RHS-simplified half target no6}
    \\
    &+
    \notag\\
    &
    -2\sqrt{-1}(-1)^{n^2}
    \displaystyle\sum_{\indexjpc{l}\in\indexJpc}
    \displaystyle\sum_{u^{'}\in\indexJp}
    \begin{matrix*}[l]
        -(-1)^{n^2}
        \sgn
            {u^{'},\indexI,\order{(\indexJp\backslash\{u^{'}\})\cup\{\indexjpc{l}\}}}
            {\indexjpc{l},\indexI,\indexJp}
        \coefB{\indexjpc{l}}{u^{'} \alpha}
        \basexi{\indexI}
        \wedge
        \basedualxi{\order{(\indexJp\backslash\{u^{'}\})\cup\{\indexjpc{l}\}}}
    \end{matrix*}
\label{EQ-Kahler identity on Lie agbd-For (alpha,I,J), calculation for RHS-simplified half target no7}
    \\
    &+
    \notag\\
    &
    \begin{pmatrix*}[l]
        -2\sqrt{-1}(-1)^{n^2}
        \displaystyle\sum_{\indexjpc{l}\in\indexJpc}
        \displaystyle\sum_{r\in\indexI}
        \displaystyle\sum_{u^{'}\in\indexJp}
        (-1)^{p+1}
        (-1)^{n^2}
        \sgn
            {u^{'},r,\indexI\backslash\{r\},\indexJp\backslash\{u^{'}\}}
            {\indexI,\indexJp}
        \sgn
            {\order{(\indexJp\backslash\{u^{'}\})\cup\{\indexjpc{l}\}}}
            {\indexjpc{l},\indexJp\backslash\{u^{'}\}}
        \\
        \coefB{\indexjpc{l}}{u^{'} r}
        \basexi{\alpha}
        \wedge
        \basexi{\indexI\backslash\{r\}}
        \wedge
        \basedualxi{\order{(\indexJp\backslash\{u^{'}\})\cup\{\indexjpc{l}\}}}
    \end{pmatrix*}
    .
\label{EQ-Kahler identity on Lie agbd-For (alpha,I,J), calculation for RHS-simplified half target no8}
\end{align}
\keyword{Step 4:}
Cancel out $(-1)^{n^2}$ and rearrange the expression according to the expressions of $
\sqrt{-1}(\LieagbdPartial^{*}\basexi{\alpha})
\wedge\basexi{\indexI}\wedge\basedualxi{\indexJp}
$ and $
\basexi{\alpha}\wedge
\sqrt{-1}\LieagbdPartial^{*}(\basexi{\indexI}\wedge\basedualxi{\indexJp})
$. The expression of $
\sqrt{-1}
\LieagbdPartial^{*}(
\basexi{\alpha}\wedge
\basexi{\indexI}\wedge\basedualxi{\indexJp})
$ is a sum of three parts.
\begin{itemize}
    \item The first part is (\ref{EQ-Kahler identity on Lie agbd-For (alpha,I,J), calculation for RHS-simplified half target no1}) + (\ref{EQ-Kahler identity on Lie agbd-For (alpha,I,J), calculation for RHS-simplified half target no5}), whose expression is
    \begin{align}
        -2\sqrt{-1}
        \begin{pmatrix*}[l]
            \displaystyle\sum_{\indexic{k}\in\indexIc\backslash\{\alpha\}}
            \begin{matrix*}[l]
                \sgn
                    {\order{\indexic{k},\alpha}}
                    {\alpha,\indexic{k}}
                \coefD{\indexic{k}}{\order{\indexic{k}, \alpha}}
                \basexi{\indexI}
                \wedge
                \basedualxi{\indexJp}
            \end{matrix*}
            \\
            +
            \\
            \displaystyle\sum_{\indexjpc{l}\in\indexJpc}
            \begin{matrix*}[l]
                (-1)
                \coefB{\indexjpc{l}}{\indexjpc{l} \alpha}
                \basexi{\indexI}
                \wedge
                \basedualxi{\indexJp}
            \end{matrix*}
        \end{pmatrix*}
        .
    \label{EQ-Kahler identity on Lie agbd-For (alpha,I,J), the first part}
    \end{align}
    This part is related to the expression of $
    \sqrt{-1}(\LieagbdPartial^{*}\basexi{\alpha})
    \wedge\basexi{\indexI}\wedge\basedualxi{\indexJp}
    $.

    \item The second part is
    (\ref{EQ-Kahler identity on Lie agbd-For (alpha,I,J), calculation for RHS-simplified half target no2})
    +
    (\ref{EQ-Kahler identity on Lie agbd-For (alpha,I,J), calculation for RHS-simplified half target no4})
    +
    (\ref{EQ-Kahler identity on Lie agbd-For (alpha,I,J), calculation for RHS-simplified half target no6})
    +
    (\ref{EQ-Kahler identity on Lie agbd-For (alpha,I,J), calculation for RHS-simplified half target no8}), which is equal to the sum of the following terms:
    \begin{align}
        -2\sqrt{-1}
        \begin{pmatrix*}[l]
            \displaystyle\sum_{\indexic{k}\in\indexIc\backslash\{\alpha\}}
            \displaystyle\sum_{r\in\indexI}
            \begin{matrix*}[l]
                (-1)
                    \sgn
                        {r,\indexI\backslash\{r\}}
                        {\indexI}
                    \sgn
                        {\order{\indexic{k},r}}
                        {r,\indexic{k}}
            \\
                \coefD{\indexic{k}}{\order{\indexic{k}, r}}
                \basexi{\alpha}
                \wedge
                \basexi{\indexI\backslash\{r\}}
                \wedge
                \basedualxi{\indexJp}
            \end{matrix*}
            \\
            +
            \\
            \displaystyle\sum_{\indexic{k}\in\indexIc\backslash\{\alpha\}}
            \displaystyle\sum_{r_1\in\indexI}
            \displaystyle\sum_{r_2\in\indexI}
            \begin{matrix*}[l]
                \sgn
                    {\order{r_1,r_2},\indexI\backslash\{r_1,r_2\}}
                    {\indexI}
                \sgn
                    {\order{(\indexI\backslash\{r_1,r_2\})\cup\{\indexic{k}\}}}
                    {\indexic{k},(\indexI\backslash\{r_1,r_2\})}
            \\
                \frac{1}{2}
                \coefD{\indexic{k}}{\order{r_1,r_2}}
                \basexi{\alpha}
                \wedge
                \basexi{\order{(\indexI\backslash\{r_1,r_2\})\cup\{\indexic{k}\}}}
                \wedge
                \basedualxi{\indexJp}
            \end{matrix*}
            \\
            +
            \\
            \displaystyle\sum_{\indexjpc{l}\in\indexJpc}
            \displaystyle\sum_{r\in\indexI}
            \begin{matrix*}[l]
                \sgn
                    {r,\indexI\backslash\{r\}}
                    {\indexI}
                \coefB{\indexjpc{l}}{\indexjpc{l} r}
                \basexi{\alpha}
                \wedge
                \basexi{\indexI\backslash\{r\}}
                \wedge
                \basedualxi{\indexJp}
        \end{matrix*}
            \\
            +
            \\
            \displaystyle\sum_{\indexjpc{l}\in\indexJpc}
            \displaystyle\sum_{r\in\indexI}
            \displaystyle\sum_{u^{'}\in\indexJp}
            \begin{matrix*}[l]    
                (-1)^{p+1}
                \sgn
                    {u^{'},r,\indexI\backslash\{r\},\indexJp\backslash\{u^{'}\}}
                    {\indexI,\indexJp}
                \sgn
                    {\order{(\indexJp\backslash\{u^{'}\})\cup\{\indexjpc{l}\}}}
                    {\indexjpc{l},\indexJp\backslash\{u^{'}\}}
            \\
                \coefB{\indexjpc{l}}{u^{'} r}
                \basexi{\alpha}
                \wedge
                \basexi{\indexI\backslash\{r\}}
                \wedge
                \basedualxi{\order{(\indexJp\backslash\{u^{'}\})\cup\{\indexjpc{l}\}}}
            \end{matrix*}
        \end{pmatrix*}
        .
    \label{EQ-Kahler identity on Lie agbd-For (alpha,I,J), the second part}
    \end{align}
    This part is related to $
    \basexi{\alpha}\wedge
    \sqrt{-1}\LieagbdPartial^{*}(\basexi{\indexI}\wedge\basedualxi{\indexJp})
    $.

    \item The third part is 
    (\ref{EQ-Kahler identity on Lie agbd-For (alpha,I,J), calculation for RHS-simplified half target no3})
    +
    (\ref{EQ-Kahler identity on Lie agbd-For (alpha,I,J), calculation for RHS-simplified half target no7}), which is equal to
    \begin{align}
        (-2\sqrt{-1})
        \begin{pmatrix*}[l]
            \displaystyle\sum_{\indexic{k}\in\indexIc\backslash\{\alpha\}}
            \displaystyle\sum_{r\in\indexI}
            \begin{matrix*}[l]
                \sgn
                    {r,\order{(\indexI\backslash\{r\})\cup\{\indexic{k}\}}}
                    {\indexic{k},\indexI}
                \sgn
                    {\order{\alpha,r}}
                    {\alpha,r}
                \\
                \coefD{\indexic{k}}{\order{\alpha, r}}
                \basexi{\order{(\indexI\backslash\{r\})\cup\{\indexic{k}\}}}
                \wedge
                \basedualxi{\indexJp}
            \end{matrix*}
            \\
            +
            \\   
            \displaystyle\sum_{\indexjpc{l}\in\indexJpc}
            \displaystyle\sum_{u^{'}\in\indexJp}
            \begin{matrix*}[l]
                (-1)
                \sgn
                    {u^{'},\indexI,\order{(\indexJp\backslash\{u^{'}\})\cup\{\indexjpc{l}\}}}
                    {\indexjpc{l},\indexI,\indexJp}
                \\
                \coefB{\indexjpc{l}}{u^{'} \alpha}
                \basexi{\indexI}
                \wedge
                \basedualxi{\order{(\indexJp\backslash\{u^{'}\})\cup\{\indexjpc{l}\}}}
            \end{matrix*}
        \end{pmatrix*}
        .
    \label{EQ-Kahler identity on Lie agbd-For (alpha,I,J), the third part}
    \end{align}
\end{itemize}

\end{proof}

\newpage
\subsection{Proof of \texorpdfstring{
    $
    [\LieagbdPartialbar,\iotaW]
    (\basexi{\alpha}\wedge\basexi{\indexI}\wedge\basedualxi{\indexJp})
    =
    \sqrt{-1}\LieagbdPartial^{*}
    (\basexi{\alpha}\wedge\basexi{\indexI}\wedge\basedualxi{\indexJp})
    $}{}
}
\label{subsection-prove LHS is equal to RHS}
\begin{proof}
Let us look at the right-hand side first. Recall that $
\sqrt{-1}\LieagbdPartial^{*}
(\basexi{\alpha}\wedge\basexi{\indexI}\wedge\basedualxi{\indexJp})
$ is a sum of three parts: \keyword{part I} is (\ref{EQ-Kahler identity on Lie agbd-For (alpha,I,J), the first part}), \keyword{part II} is (\ref{EQ-Kahler identity on Lie agbd-For (alpha,I,J), the second part}), and \keyword{part III} is  (\ref{EQ-Kahler identity on Lie agbd-For (alpha,I,J), the third part}).

The expression of \keyword{part I} is
\begin{align*}
    -2\sqrt{-1}
    \begin{pmatrix*}[l]
        \displaystyle\sum_{\indexic{k}\in\indexIc\backslash\{\alpha\}}
        \begin{matrix*}[l]
            \sgn
                {\order{\indexic{k},\alpha}}
                {\alpha,\indexic{k}}
            \coefD{\indexic{k}}{\order{\indexic{k}, \alpha}}
            \basexi{\indexI}
            \wedge
            \basedualxi{\indexJp}
        \end{matrix*}
        \\
        +
        \\
        \displaystyle\sum_{\indexjpc{l}\in\indexJpc}
        \begin{matrix*}[l]
            (-1)
            \coefB{\indexjpc{l}}{\indexjpc{l} \alpha}
            \basexi{\indexI}
            \wedge
            \basedualxi{\indexJp}
        \end{matrix*}
    \end{pmatrix*}
    .
\end{align*}
As a result of (\ref{EQ-Kahler identity on Lie agbd-For (alpha),I,J, calculation for RHS-target}) in the statement of Lemma \ref{lemma-Kahler identity on Lie agbd-For (alpha),I,J, calculation for RHS}, the expression of $
\sqrt{-1}(\LieagbdPartial^{*}\basexi{\alpha})
\wedge\basexi{\indexI}\wedge\basedualxi{\indexJp}
$ is
\begin{align}                                                       
    -2\sqrt{-1}
    \begin{pmatrix*}[l]
        \displaystyle\sum_{k\neq\alpha}
        \begin{matrix*}[l]
            \sgn
                {\order{k,\alpha}}
                {\alpha,k}
            \coefD{k}{\order{k,\alpha}}
            \basexi{\indexI}\wedge\basedualxi{\indexJp}
        \end{matrix*}
        \\
        +
        \\
        \displaystyle\sum_{l^{'}=1}^{n}
        \begin{matrix*}[l]
            (-1)\coefB{l^{'}}{l^{'} \alpha}
            \basexi{\indexI}\wedge\basedualxi{\indexJp}
        \end{matrix*}
    \end{pmatrix*}
    .
\label{EQ-Kahler identity on Lie agbd-For (alpha,I,J), aim of the first part}
\end{align}
Thus, we can split (\ref{EQ-Kahler identity on Lie agbd-For (alpha,I,J), aim of the first part}) into a sum of two parts:
\begin{align}
    &
        -2\sqrt{-1}
        \begin{pmatrix*}[l]
            \displaystyle\sum_{\indexic{k}\in\indexIc\backslash\{\alpha\}}
            \begin{matrix*}[l]
                \sgn
                    {\order{\indexic{k},\alpha}}
                    {\alpha,\indexic{k}}
                \coefD{\indexic{k}}{\order{\indexic{k}, \alpha}}
                \basexi{\indexI}
                \wedge
                \basedualxi{\indexJp}
            \end{matrix*}
            \\
            +
            \\
            \displaystyle\sum_{\indexjpc{l}\in\indexJpc}
            \begin{matrix*}[l]
                (-1)
                \coefB{\indexjpc{l}}{\indexjpc{l} \alpha}
                \basexi{\indexI}
                \wedge
                \basedualxi{\indexJp}
            \end{matrix*}
        \end{pmatrix*}
\label{EQ-prove LHS is equal to RHS-split the first part, term 1}
    \\
    +&\notag
    \\
    &
        -2\sqrt{-1}
        \begin{pmatrix*}[l]
            \displaystyle\sum_{k\in I}
            \begin{matrix*}[l]
                \sgn
                    {\order{k,\alpha}}
                    {\alpha,k}
                \coefD{k}{\order{k,\alpha}}
                \basexi{\indexI}\wedge\basedualxi{\indexJp}
            \end{matrix*}
            \\
            +
            \\
            \displaystyle\sum_{l\in\indexJp}
            \begin{matrix*}[l]
                (-1)\coefB{l^{'}}{l^{'} \alpha}
                \basexi{\indexI}\wedge\basedualxi{\indexJp}
            \end{matrix*}
        \end{pmatrix*}.
\notag
\end{align}
The term (\ref{EQ-prove LHS is equal to RHS-split the first part, term 1}) is just the expression of $
(\sqrt{-1}\partial^{*}\basexi{\alpha})
\wedge
(\basexi{\indexI}\wedge\basedualxi{\indexJp})
$. Therefore, we can conclude that \keyword{part I} is equal to
\begin{align}
    &
    \begin{pmatrix*}[l]
        \sqrt{-1}
        (\partial^{*}\basexi{\alpha})
        \wedge
        (\basexi{\indexI}\wedge\basedualxi{\indexJp})
        \\
        +
        \\
        2\sqrt{-1}
        \begin{pmatrix*}[l]
            \displaystyle\sum_{k\in I}
            \begin{matrix*}[l]
                \sgn
                    {\order{k,\alpha}}
                    {\alpha,k}
                \coefD{k}{\order{k,\alpha}}
                \basexi{\indexI}\wedge\basedualxi{\indexJp}
            \end{matrix*}
            \\
            +
            \\
            \displaystyle\sum_{l\in\indexJp}
            \begin{matrix*}[l]
                (-1)\coefB{l^{'}}{l^{'} \alpha}
                \basexi{\indexI}\wedge\basedualxi{\indexJp}
            \end{matrix*}
        \end{pmatrix*}
    \end{pmatrix*}
\notag
    \\
    =&
    \begin{pmatrix*}[l]
        \sqrt{-1}
        (\partial^{*}\basexi{\alpha})
        \wedge
        (\basexi{\indexI}\wedge\basedualxi{\indexJp})
        \\
        +
        \\
        (-2\sqrt{-1})
        \begin{pmatrix*}[l]
            \displaystyle\sum_{k\in I}
            \begin{matrix*}[l]
                \sgn
                    {\order{k,\alpha}}
                    {k,\alpha}
                \coefD{k}{\order{k,\alpha}}
                \basexi{\indexI}\wedge\basedualxi{\indexJp}
            \end{matrix*}
            \\
            +
            \\
            \displaystyle\sum_{l\in\indexJp}
            \begin{matrix*}[l]
                \coefB{l^{'}}{l^{'} \alpha}
                \basexi{\indexI}\wedge\basedualxi{\indexJp}
            \end{matrix*}
        \end{pmatrix*}
    \end{pmatrix*},
\label{EQ-prove LHS is equal to RHS, result of the first part}
\end{align}
where we use $\sgn{\order{k,\alpha}}{\alpha,k}=-\sgn{\order{k,\alpha}}{k,\alpha}$.

The expression of \keyword{part II} is
\begin{align}
    -2\sqrt{-1}
    \begin{pmatrix*}[l]
        \displaystyle\sum_{\indexic{k}\in\indexIc\backslash\{\alpha\}}
        \displaystyle\sum_{r\in\indexI}
        \begin{matrix*}[l]
            (-1)
                \sgn
                    {r,\indexI\backslash\{r\}}
                    {\indexI}
                \sgn
                    {\order{\indexic{k},r}}
                    {r,\indexic{k}}
            \coefD{\indexic{k}}{\order{\indexic{k}, r}}
            \basexi{\alpha}
            \wedge
            \basexi{\indexI\backslash\{r\}}
            \wedge
            \basedualxi{\indexJp}
        \end{matrix*}
        \\
        +
        \\
        \displaystyle\sum_{\indexic{k}\in\indexIc\backslash\{\alpha\}}
        \displaystyle\sum_{r_1\in\indexI}
        \displaystyle\sum_{r_2\in\indexI}
        \begin{matrix*}[l]
            \begin{pmatrix*}[l]
                \sgn
                    {\order{r_1,r_2},\indexI\backslash\{r_1,r_2\}}
                    {\indexI}
                \\
                \sgn
                    {\order{(\indexI\backslash\{r_1,r_2\})\cup\{\indexic{k}\}}}
                    {\indexic{k},(\indexI\backslash\{r_1,r_2\})}
            \\
                \frac{1}{2}
                \coefD{\indexic{k}}{\order{r_1,r_2}}
                \basexi{\alpha}
                \wedge
                \basexi{\order{(\indexI\backslash\{r_1,r_2\})\cup\{\indexic{k}\}}}
                \wedge
                \basedualxi{\indexJp}
            \end{pmatrix*}
        \end{matrix*}
        \\
        +
        \\
        \displaystyle\sum_{\indexjpc{l}\in\indexJpc}
        \displaystyle\sum_{r\in\indexI}
        \begin{matrix*}[l]
            \sgn
                {r,\indexI\backslash\{r\}}
                {\indexI}
            \coefB{\indexjpc{l}}{\indexjpc{l} r}
            \basexi{\alpha}
            \wedge
            \basexi{\indexI\backslash\{r\}}
            \wedge
            \basedualxi{\indexJp}
    \end{matrix*}
        \\
        +
        \\
        \displaystyle\sum_{\indexjpc{l}\in\indexJpc}
        \displaystyle\sum_{r\in\indexI}
        \displaystyle\sum_{u^{'}\in\indexJp}
        \begin{matrix*}[l]    
            (-1)^{p+1}
            \begin{pmatrix*}[l]
                \sgn
                    {u^{'},r,\indexI\backslash\{r\},\indexJp\backslash\{u^{'}\}}
                    {\indexI,\indexJp}
                \\
                \sgn
                    {\order{(\indexJp\backslash\{u^{'}\})\cup\{\indexjpc{l}\}}}
                    {\indexjpc{l},\indexJp\backslash\{u^{'}\}}
                \\
                \coefB{\indexjpc{l}}{u^{'} r}
                \basexi{\alpha}
                \wedge
                \basexi{\indexI\backslash\{r\}}
                \wedge
                \basedualxi{\order{(\indexJp\backslash\{u^{'}\})\cup\{\indexjpc{l}\}}}
            \end{pmatrix*}
        \end{matrix*}
    \end{pmatrix*}
    .
\label{EQ-prove LHS is equal to RHS, the second part}
\end{align}
Recall (\ref{EQ-Kahler identity on Lie agbd-For alpha,(I,J), calculation for RHS-target}) in Lemma \ref{lemma-Kahler identity on Lie agbd-For alpha,(I,J), calculation for RHS}, which shows that $
\basexi{\alpha}\wedge
\sqrt{-1}\LieagbdPartial^{*}(\basexi{\indexI}\wedge\basedualxi{\indexJp})
$ is
\begin{align}
    -2\sqrt{-1}
    \begin{pmatrix*}[l]
        \displaystyle\sum_{\indexic{k}\in\indexIc}
        \displaystyle\sum_{r\in\indexI}
        \begin{matrix*}[l]
            \sgn
                {r,\indexic{k}}
                {\order{\indexic{k},r}}
            \sgn
                {r,\indexI\backslash\{r\}}                    
                {\indexI}
            \coefD{\indexic{k}}{\order{\indexic{k},r}}
            \basexi{\alpha}
            \wedge
            \basexi{\indexI\backslash\{r\}}
            \wedge
            \basedualxi{\indexJp} 
        \end{matrix*}
        \\
        +
        \\
        \displaystyle\sum_{\indexic{k}\in\indexIc\backslash\{\alpha\}}
        \displaystyle\sum_{r_1\in\indexI}
        \displaystyle\sum_{r_2\in\indexI}
        \begin{pmatrix*}[l]
            (-1)
            \sgn
                {\order{r_1,r_2},\order{(\indexI\backslash\{r_1,r_2\})}}
                {\indexI}
            \\
            \sgn
                {\order{(\indexI\backslash\{r_1,r_2\})\cup\{\indexic{k}\}}}
                {\indexic{k},\indexI\backslash\{r_1,r_2\})}
            \\
            \frac{1}{2}
            \coefD{\indexic{k}}{\order{r_1,r_2}}
            \basexi{\alpha}
            \wedge
            \basexi{\order{(\indexI\backslash\{r_1,r_2\})\cup\{\indexic{k}\}}}
            \wedge
            \basedualxi{\indexJp} 
        \end{pmatrix*}
        \\
        +
        \\
        \displaystyle\sum_{\indexjpc{l}\in\indexJpc}
        \displaystyle\sum_{r\in\indexI}
        \begin{matrix*}[l]
            (-1)
            \sgn
                {r,\indexI\backslash\{r\}}
                {\indexI}
            \coefB{\indexjpc{l}}{\indexjpc{l} r}
            \basexi{\alpha}
            \wedge
            \basexi{\indexI\backslash\{r\}}
            \wedge
            \basedualxi{\indexJp} 
        \end{matrix*}
        \\
        +
        \\
        \displaystyle\sum_{\indexjpc{l}\in\indexJpc}
        \displaystyle\sum_{r\in\indexI}
        \displaystyle\sum_{u^{'}\in\indexJp}
        \begin{pmatrix*}[l]
            (-1)^{p}
            \sgn
                {u^{'},r,\indexI\backslash\{r\},\order{(\indexJp\backslash\{u^{'}\})}}
                {\indexI,\indexJp}
            \\
            \sgn
                {\order{(\indexJp\backslash\{u^{'}\})\cup\{\indexjpc{l}\}}}
                {\indexjpc{l},(\indexJp\backslash\{u^{'}\})}
            \\
            \coefB{\indexjpc{l}}{u^{'} r}
            \basexi{\alpha}
            \wedge
            \basexi{\indexI\backslash\{r\}}
            \wedge
            \basedualxi{\order{(\indexJp\backslash\{u^{'}\})\cup\{\indexjpc{l}\}}} 
        \end{pmatrix*}
    \end{pmatrix*}
    .
\label{EQ-prove LHS is equal to RHS, aim of the second part}
\end{align}
Comparing (\ref{EQ-prove LHS is equal to RHS, aim of the second part}) with (\ref{EQ-prove LHS is equal to RHS, the second part}), the two formulas are almost the same except for a minus sign difference and an extra term in the first summation of (\ref{EQ-prove LHS is equal to RHS, aim of the second part}) such that $\indexic{k}=\alpha$. We can conclude that \keyword{part II} is equal to
\begin{align}
    \begin{pmatrix*}[l]
        -
        \basexi{\alpha}
        \wedge
        \sqrt{-1}\partial^{*}(\basexi{\indexI}\wedge\basedualxi{\indexJp})
        \\
        +
        \\
        (-2\sqrt{-1})
        \displaystyle\sum_{r\in\indexI}
        \begin{matrix*}[l]
            \sgn
                {r,\indexI\backslash\{r\}}                    
                {\indexI}
            \sgn
                {r,\alpha}
                {\order{\alpha,r}}            
            \coefD{\alpha}{\order{\alpha,r}}
            \basexi{\alpha}
            \wedge
            \basexi{\indexI\backslash\{r\}}
            \wedge
            \basedualxi{\indexJp} 
        \end{matrix*}
    \end{pmatrix*}.
\label{EQ-prove LHS is equal to RHS, result of the second part}
\end{align}

Now, let us sum up \keyword{part I}, \keyword{part II}, and \keyword{part III}:
\begin{align*}
    &
    \begin{pmatrix*}[l]
        \sqrt{-1}
        (\partial^{*}\basexi{\alpha})
        \wedge
        (\basexi{\indexI}\wedge\basedualxi{\indexJp})
        \\
        +
        \\
        (-2\sqrt{-1})
        \begin{pmatrix*}[l]
            \displaystyle\sum_{k\in I}
            \begin{matrix*}[l]
                \sgn
                    {\order{k,\alpha}}
                    {k,\alpha}
                \coefD{k}{\order{k,\alpha}}
                \basexi{\indexI}\wedge\basedualxi{\indexJp}
            \end{matrix*}
            \\
            +
            \\
            \displaystyle\sum_{l\in\indexJp}
            \begin{matrix*}[l]
                \coefB{l^{'}}{l^{'} \alpha}
                \basexi{\indexI}\wedge\basedualxi{\indexJp}
            \end{matrix*}
        \end{pmatrix*}
    \end{pmatrix*}
    \\
    +&
    \\
    &
    \begin{pmatrix*}[l]
        -
        \basexi{\alpha}
        \wedge
        \sqrt{-1}\partial^{*}(\basexi{\indexI}\wedge\basedualxi{\indexJp})
        \\
        +
        \\
        (-2\sqrt{-1})
        \displaystyle\sum_{r\in\indexI}
        \begin{matrix*}[l]
            \sgn
                {r,\indexI\backslash\{r\}}                    
                {\indexI}
            \sgn
                {r,\alpha}
                {\order{\alpha,r}}            
            \coefD{\alpha}{\order{\alpha,r}}
            \basexi{\alpha}
            \wedge
            \basexi{\indexI\backslash\{r\}}
            \wedge
            \basedualxi{\indexJp} 
        \end{matrix*}
    \end{pmatrix*}
    \\
    +&
    \\
    &
    (-2\sqrt{-1})
        \begin{pmatrix*}[l]
            \displaystyle\sum_{\indexic{k}\in\indexIc\backslash\{\alpha\}}
            \displaystyle\sum_{r\in\indexI}
            \begin{pmatrix*}[l]
                \sgn
                    {r,\order{(\indexI\backslash\{r\})\cup\{\indexic{k}\}}}
                    {\indexic{k},\indexI}
                \sgn
                    {\order{\alpha,r}}
                    {\alpha,r}
                \\
                \coefD{\indexic{k}}{\order{r,\alpha}}
                \basexi{\order{(\indexI\backslash\{r\})\cup\{\indexic{k}\}}}
                \wedge
                \basedualxi{\indexJp}
            \end{pmatrix*}
            \\
            +
            \\   
            \displaystyle\sum_{\indexjpc{l}\in\indexJpc}
            \displaystyle\sum_{u^{'}\in\indexJp}
            \begin{pmatrix*}[l]
                (-1)
                \sgn
                    {u^{'},\indexI,\order{(\indexJp\backslash\{u^{'}\})\cup\{\indexjpc{l}\}}}
                    {\indexjpc{l},\indexI,\indexJp}
                \\
                \coefB{\indexjpc{l}}{u^{'} \alpha}
                \basexi{\indexI}
                \wedge
                \basedualxi{\order{(\indexJp\backslash\{u^{'}\})\cup\{\indexjpc{l}\}}}
            \end{pmatrix*}
        \end{pmatrix*}
    .
\end{align*}
Rearranging the previous sum, we obtain the expression of the right-hand side:
\begin{align*}
    \begin{pmatrix*}[l]
        (\sqrt{-1}\partial^{*}\basexi{\alpha})
        \wedge
        \basexi{\indexI}\wedge\basedualxi{\indexJp}
        -
        \basexi{\alpha}
        \wedge
        \sqrt{-1}\partial^{*}
        (\basexi{\indexI}\wedge\basedualxi{\indexJp})
        \\
        +
        \\
        (-2\sqrt{-1})
        \begin{pmatrix*}[l]
            \displaystyle\sum_{k\in\indexI}
            \begin{matrix*}[l]
                \sgn
                    {\order{k,\alpha}}
                    {k,\alpha}
                \coefD{k}{\order{k,\alpha}}
                \basexi{\indexI}
                \wedge
                \basedualxi{\indexJp}
            \end{matrix*}
            \\
            +
            \\
            \displaystyle\sum_{l^{'}\in\indexJp}
            \begin{matrix*}[l]
                \coefB{l^{'}}{l^{'} \alpha}
                \basexi{\indexI}
                \wedge
                \basedualxi{\indexJp}
            \end{matrix*}
            \\
            +
            \\
            \displaystyle\sum_{r\in\indexI}
            \begin{matrix*}[l]
                \sgn
                    {r,\alpha}
                    {\order{\alpha,r}}
                \sgn
                    {r,\indexI\backslash\{r\}}
                    {\indexI}
                \coefD{\alpha}{\order{\alpha,r}}
                \basexi{\alpha}
                \wedge
                \basexi{\indexI\backslash\{r\}}
                \wedge
                \basedualxi{\indexJp}
            \end{matrix*}
            \\
            +
            \\
            \displaystyle\sum_{\indexic{k}\in\indexIc\backslash\{\alpha\}}
            \displaystyle\sum_{r\in\indexI}
            \begin{matrix*}[l]
                \sgn
                    {r,\order{(\indexI\backslash\{r\})\cup\{\indexic{k}\}}}
                    {\indexic{k},\indexI}
                \sgn
                    {\order{\alpha,r}}
                    {\alpha,r}
                \\
                \coefD{\indexic{k}}{\order{\alpha, r}}
                \basexi{\order{(\indexI\backslash\{r\})\cup\{\indexic{k}\}}}
                \wedge 
                \basedualxi{\indexJp}
            \end{matrix*}
            \\
            +
            \\
            \displaystyle\sum_{\indexjpc{l}\in\indexJpc}
            \displaystyle\sum_{u^{'}\in\indexJp}
            \begin{matrix*}[l]
                (-1)
                \sgn
                    {u^{'},\indexI,\order{(\indexJp\backslash\{u^{'}\})\cup\{\indexjpc{l}\}}}
                    {\indexjpc{l},\indexI,\indexJp}
                \coefB{\indexjpc{l}}{u^{'} \alpha}
                \basexi{\indexI}
                \wedge
                \basedualxi{\order{(\indexJp\backslash\{u^{'}\})\cup\{\indexjpc{l}\}}}
            \end{matrix*}
        \end{pmatrix*}
    \end{pmatrix*}.
\end{align*}

We complete the proof by observing that the expression of the right-hand side is completely equal to the left-hand side (\ref{EQ-For (alpha, I, J), calculation for LHS-total summary}):
\begin{align*}
    \begin{pmatrix*}[l]
            [\partialbar,\iotaW]
            \wedge
            \basexi{\indexI}\wedge\basedualxi{\indexJp}
            -
            \basexi{\alpha}
            \wedge
            [\partialbar,\iotaW]
            (\basexi{\indexI}\wedge\basedualxi{\indexJp})
            \\
            +
            \\
            (-2\sqrt{-1})
            \begin{pmatrix*}[l]
                \displaystyle\sum_{\indexi{k}\in\indexI}
                \begin{matrix*}[l]
                    \sgn
                        {\order{\indexi{k},\alpha}}
                        {\indexi{k},\alpha}
                    \coefD{\indexi{k}}{\order{\indexi{k},\alpha}}
                    \basexi{\indexI}
                    \wedge
                    \basedualxi{\indexJp}
                \end{matrix*}
                \\
                +
                \\
                \displaystyle\sum_{\indexj{l}\in\indexJp}
                \begin{matrix*}[l]
                    \coefB{\indexjp{l}}{\indexjp{l} \alpha}
                    \basexi{\indexI}
                    \wedge
                    \basedualxi{\indexJp}
                \end{matrix*}
                \\
                +
                \\
                \displaystyle\sum_{\indexi{k}\in\indexI}
                \begin{matrix*}[l]
                    \sgn
                        {\indexI}
                        {\indexi{k},\indexI\backslash\{\indexi{k}\}}
                    \sgn
                        {\order{\indexi{k},\alpha}}
                        {\indexi{k},\alpha}
                    \\
                    \coefD{\alpha}{\order{\indexi{k},\alpha}}
                    \basexi{\alpha}
                    \wedge
                    \basexi{(\indexI\backslash\{\indexi{k}\})}
                    \wedge
                    \basedualxi{\indexJp}
                \end{matrix*}
                \\
                +
                \\
                \displaystyle\sum_{\indexi{k}\in\indexI}
                \displaystyle\sum_{s\in\indexIc\backslash\{\alpha\}}
                \begin{matrix*}[l]
                    \sgn
                        {s,\indexI}
                        {\indexi{k},\order{(\indexI\backslash\{\indexi{k}\})\cup\{s\}}}
                    \sgn
                        {\order{\alpha,\indexi{k}}}
                        {\alpha,\indexi{k}}  
                    \\
                    \coefD{s}{\order{\indexi{k},\alpha}}
                    \basexi{\order{(\indexI\backslash\{\indexi{k}\})\cup\{s\}}}
                    \wedge
                    \basedualxi{\indexJp}
                \end{matrix*}
                \\
                +
                \\
                \displaystyle\sum_{\indexj{l}\in\indexJp}
                \displaystyle\sum_{t^{'}\in\indexJpc}
                \begin{matrix*}[l]
                    \sgn
                        {t^{'},\indexI,\indexJp}
                        {\indexjp{l},\indexI,\order{(\indexJp\backslash\{\indexjp{l}\})\cup\{t^{'}\}}}
                    \\
                    (-1)
                    \coefB{t^{'}}{\indexjp{l} \alpha}
                    \basexi{\indexI}
                    \wedge
                    \basedualxi{\order{(\indexJp\backslash\{\indexjp{l}\})\cup\{t^{'}\}}}
                \end{matrix*}
            \end{pmatrix*}
        \end{pmatrix*}
    .
\end{align*}

\end{proof}

\section{Appendix II}
\label{sec:extra long proof for lemmas of Lie algebroid Kahler identity}

\subsection{Calculation of the coefficients in \texorpdfstring{
        $
        \sqrt{-1}\LieagbdPartial^{*}(\basexi{i}\wedge\basedualxi{j})
        $}{}
    }
\label{apex-simplification for RHS of (i,j)}
In this section, we simplify the following three formulas:
\begin{align}
    &
    \begin{pmatrix*}[l]
        \sgn
            {1,\cdots,n,1^{'},\cdots,n^{'}}
            {i,j^{'},1,\cdots,\widehat{i},\cdots,n,1^{'},\cdots,\widehat{j^{'}},\cdots,n^{'}}
        \sgn
            {1,\cdots,\widehat{i},\cdots,n}
            {k,1,\cdots,\widehat{i},\cdots,\widehat{k},\cdots,n}
        \sgn
            {1,\cdots,\widehat{i},\cdots,k-1,k,i,k+1,\cdots,n}
            {1,\cdots,n}
        \\
        \sgn
            {1^{'},\cdots,n^{'},1,\cdots,n}
            {j^{'},1,\cdots,n,1^{'},\cdots,\widehat{j^{'}},\cdots,n^{'}}
    \end{pmatrix*}
    ,
\label{EQ-simplification for RHS of (i,j)-target 1}
    \\
    &
    \begin{pmatrix*}[l]
        \sgn
            {1,\cdots,n,1^{'},\cdots,n^{'}}
            {i,j^{'},1,\cdots,\widehat{i},\cdots,n,1^{'},\cdots,\widehat{j^{'}},\cdots,n^{'}}
        \sgn
            {1,\cdots,\widehat{i},\cdots,n,1^{'},\cdots,\widehat{j^{'}},\cdots,n^{'}}
            {l^{'},1,\cdots,\widehat{i},\cdots,n,1^{'},\cdots,\widehat{j^{'}},\cdots,\widehat{l^{'}},\cdots,n^{'}}
        \\
        \sgn
            {1,\cdots,\widehat{i}\cdots,n,1^{'},\cdots,\widehat{j^{'}},\cdots,(l-1)^{'},l^{'},i,(l+1)^{'},\cdots,n^{'}}
            {1,\cdots,n,1^{'},\cdots,\widehat{j^{'}},\cdots,n^{'}}
        \sgn
            {1^{'},\cdots,n^{'},1,\cdots,n}
            {j^{'},1,\cdots,n,1^{'},\cdots,\widehat{j^{'}},\cdots,n^{'}}
    \end{pmatrix*}
    ,
\label{EQ-simplification for RHS of (i,j)-target 2}
    \\
    &
    \begin{pmatrix*}[l]
        \sgn
            {1,\cdots,n,1^{'},\cdots,n^{'}}
            {i,j^{'},1,\cdots,\widehat{i},\cdots,n,1^{'},\cdots,\widehat{j^{'}},\cdots,n^{'}}
        \sgn
            {1,\cdots,\widehat{i},\cdots,n,1^{'},\cdots,\widehat{j^{'}},\cdots,n^{'}}
            {l^{'},1,\cdots,\widehat{i},\cdots,n,1^{'},\cdots,\widehat{j^{'}},\cdots,\widehat{l^{'}},\cdots,n^{'}}
        \\
        \sgn
            {1,\cdots,\widehat{i}\cdots,n,1^{'},\cdots,\widehat{j^{'}},\cdots,(l-1)^{'},j^{'},i,(l+1)^{'},\cdots,n^{'}}
            {1,\cdots,n,1^{'},\cdots,\widehat{l^{'}},\cdots,n^{'}}
        \sgn
            {1^{'},\cdots,n^{'},1,\cdots,n}
            {l^{'},1,\cdots,n,1^{'},\cdots,\widehat{l^{'}},\cdots,n^{'}}
    \end{pmatrix*}
    ,
\label{EQ-simplification for RHS of (i,j)-target 3}
\end{align}
which are from (\ref{EQ-Kahler identity on Lie agbd-calculation for RHS of (i,j)-no5}) in Lemma \ref{Lemma-Kahler identity on Lie algebroid-the first lemma}.


\begin{lemma}
$
    \begin{pmatrix*}
    \sgn
        {1,\cdots,n,1^{'},\cdots,n^{'}}
        {i,j^{'},1,\cdots,\widehat{i},\cdots,n,1^{'},\cdots,\widehat{j^{'}},\cdots,n^{'}}
    \sgn
        {1,\cdots,\widehat{i},\cdots,n}
        {k,1,\cdots,\widehat{i},\cdots,\widehat{k},\cdots,n}
    \sgn
        {1,\cdots,\widehat{i},\cdots,k-1,k,i,k+1,\cdots,n}
        {1,\cdots,n}
    \\
    \sgn
        {1^{'},\cdots,n^{'},1,\cdots,n}
        {j^{'},1,\cdots,n,1^{'},\cdots,\widehat{j^{'}},\cdots,n^{'}}
    \end{pmatrix*}
    =
    (-1)^{n^2}.
$
\end{lemma}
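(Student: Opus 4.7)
The strategy is to compute each of the four signs $S_1, S_2, S_3, S_4$ (in the order they appear) directly by counting the number of adjacent transpositions needed to convert the top array into the bottom array, then multiply. The striking feature is that all four individual signs depend on $i, j$ (and two of them on $k$), while the claimed answer $(-1)^{n^2}$ depends on none of these; so the core of the proof will be a global cancellation of this dependence, with the only surviving contribution coming from swapping the holomorphic block $(1,\ldots,n)$ past the antiholomorphic block $(1',\ldots,n')$ inside $S_4$.

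For $S_1$, I would move $i$ from position $i$ to position $1$ (cost $(-1)^{i-1}$) and then move $j'$ from position $n+j$ to position $2$ (cost $(-1)^{n+j-2}$), giving $S_1 = (-1)^{i+j+n-1}$. For $S_4$, I would first swap the two length-$n$ blocks $(1',\ldots,n')$ and $(1,\ldots,n)$, which costs $(-1)^{n\cdot n}=(-1)^{n^2}$ adjacent transpositions (this is where the answer's $(-1)^{n^2}$ ultimately comes from), and then move $j'$ from position $n+j$ to position $1$ (cost $(-1)^{n+j-1}$), yielding $S_4 = (-1)^{n^2+n+j-1}$.

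The middle pair $S_2 \cdot S_3$ requires a case split on $k<i$ versus $k>i$, since both the position of $k$ in $(1,\ldots,\widehat{i},\ldots,n)$ and the position of $i$ in $(1,\ldots,\widehat{i},\ldots,k-1,k,i,k+1,\ldots,n)$ depend on the ordering. In the case $k<i$, I would get $S_2=(-1)^{k-1}$ and $S_3=(-1)^{i-k-1}$, so $S_2 S_3 = (-1)^{i-2} = (-1)^i$. In the case $k>i$, I would get $S_2=(-1)^{k-2}$ (because the removal of $i$ shifts $k$ one position to the left) and $S_3=(-1)^{k-i}$, so $S_2 S_3 = (-1)^{2k-i-2} = (-1)^i$. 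The case-independence $S_2 S_3 = (-1)^i$ is the main combinatorial content, and its $i$-dependence precisely cancels the $i$-dependence of $S_1$.

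Putting everything together yields $S_1 S_2 S_3 S_4 = (-1)^{i+j+n-1}\cdot (-1)^i\cdot (-1)^{n^2+n+j-1} = (-1)^{n^2+2i+2j+2n-2} = (-1)^{n^2}$, as claimed. The main obstacle is the careful bookkeeping of positions in the case split for $S_2 S_3$, in particular correctly tracking how omitting $i$ from $\{1,\ldots,n\}$ shifts the positions of the remaining indices; an alternative route would be to extend the arrays of $S_2$ and $S_3$ by reinserting $i$ (via the insertion rule, property 2 of the sign function) and then apply the merging rule (property 3) to collapse them into a single sign, but the direct counting above is the most transparent path.
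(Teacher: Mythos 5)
Your computation is correct, including the case split: for $k<i$ you get $S_2S_3=(-1)^{k-1}(-1)^{i-k-1}=(-1)^i$ and for $k>i$ you get $(-1)^{k-2}(-1)^{k-i}=(-1)^i$ (and $k\neq i$ throughout, so these two cases exhaust everything), after which the exponent $2i+2j+2n-2+n^2$ reduces to $n^2$ as claimed. However, your route is genuinely different from the paper's. The paper never computes any individual sign as an explicit power of $-1$; instead it works entirely with the symbolic rules it set up earlier (swapping two adjacent entries within a row costs a factor $-1$, moving an adjacent pair costs nothing, and two signs sharing a common row merge into one). Concretely, it rewrites $S_1$ with $j',i$ in place of $i,j'$ at the cost of one $-1$, rewrites $S_3$ by sliding the adjacent pair $k,i$ to the front and swapping it to $i,k$ at the cost of another $-1$, merges the result with $S_2$ into the single sign taking $i,1,\dots,\widehat{i},\dots,n$ to $1,\dots,n$, absorbs that into the modified $S_1$, and finally merges with $S_4$ to land directly on the sign of the block swap $(1,\dots,n,1',\dots,n')\mapsto(1',\dots,n',1,\dots,n)=(-1)^{n^2}$, the two stray $-1$'s cancelling. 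The paper's approach buys uniformity (no case analysis on the relative order of $k$ and $i$, since the merging rules are order-agnostic) and consistency with the dozens of similar simplifications in the appendices; your approach buys transparency, since every factor is an explicit count of adjacent transpositions, at the price of the $k<i$ versus $k>i$ bookkeeping that you correctly identify as the delicate point. You even anticipate the paper's method in your closing remark about reinserting $i$ and applying the merging rule — that alternative route is essentially what the paper does.
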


\begin{proof}
We use the following properties of the sign of permutation:
\begin{itemize}
    \item
$
    \sgn
        {1,\cdots,n,1^{'},\cdots,n^{'}}
        {i,j^{'},1,\cdots,\widehat{i},\cdots,n,1^{'},\cdots,\widehat{j^{'}},\cdots,n^{'}}
    =
    -
    \sgn
        {1,\cdots,n,1^{'},\cdots,n^{'}}
        {j^{'},i,1,\cdots,\widehat{i},\cdots,n,1^{'},\cdots,\widehat{j^{'}},\cdots,n^{'}},
$
    \item 
$
    \sgn
        {1,\cdots,\widehat{i},\cdots,k-1,k,i,k+1,\cdots,n}
        {1,\cdots,n}
    =
    \sgn
        {k,i,1,\cdots,\widehat{i},\cdots,k-1,k+1,\cdots,n}
        {1,\cdots,n}
    =
    -\sgn
        {i,k,1,\cdots,\widehat{i},\cdots,k-1,k+1,\cdots,n}
        {1,\cdots,n},
$
    \item
$
    \sgn
        {1,\cdots,\widehat{i},\cdots,n}
        {k,1,\cdots,\widehat{i},\cdots,\widehat{k},\cdots,n}
    \sgn
        {i,k,1,\cdots,\widehat{i},\cdots,k-1,k+1,\cdots,n}
        {1,\cdots,n}
    =
    \sgn
        {i,1,\cdots,\widehat{i},\cdots,n}
        {1,\cdots,n}.
$
\end{itemize}
Expression (\ref{EQ-simplification for RHS of (i,j)-target 1}) is simplified to
\begin{align}
    (-1)
    \sgn
        {1,\cdots,n,1^{'},\cdots,n^{'}}
        {j^{'},i,1,\cdots,\widehat{i},\cdots,n,1^{'},\cdots,\widehat{j^{'}},\cdots,n^{'}}
    (-1)
    \sgn
        {i,1,\cdots,\widehat{i},\cdots,n}
        {1,\cdots,n}
    \sgn
        {1^{'},\cdots,n^{'},1,\cdots,n}
        {j^{'},1,\cdots,n,1^{'},\cdots,\widehat{j^{'}},\cdots,n^{'}}.
    \label{EQ-simplification for RHS of (i,j)-simplify of target 1-no1}
\end{align}
We continue the computation with the following properties of the sign of permutation:
\begin{itemize}
    \item 
$
    \sgn
        {1,\cdots,n,1^{'},\cdots,n^{'}}
        {j^{'},i,1,\cdots,\widehat{i},\cdots,n,1^{'},\cdots,\widehat{j^{'}},\cdots,n^{'}}
    \sgn
        {i,1,\cdots,\widehat{i},\cdots,n}
        {1,\cdots,n}
    =
     \sgn
        {1,\cdots,n,1^{'},\cdots,n^{'}}
        {j^{'},1,\cdots,n,1^{'},\cdots,\widehat{j^{'}},\cdots,n^{'}},
$
    \item
$
    \sgn
        {1,\cdots,n,1^{'},\cdots,n^{'}}
        {j^{'},1,\cdots,n,1^{'},\cdots,\widehat{j^{'}},\cdots,n^{'}}
    \sgn
        {1^{'},\cdots,n^{'},1,\cdots,n}
        {j^{'},1,\cdots,n,1^{'},\cdots,\widehat{j^{'}},\cdots,n^{'}}
    =
    \sgn
        {1,\cdots,n,1^{'},\cdots,n^{'}}
        {1^{'},\cdots,n^{'},1,\cdots,n}.
$
\end{itemize}
Expression (\ref{EQ-simplification for RHS of (i,j)-simplify of target 1-no1}) is then simplified to
\begin{align*}
    \sgn
        {1,\cdots,n,1^{'},\cdots,n^{'}}
        {1^{'},\cdots,n^{'},1,\cdots,n}
    =
    (-1)^{n^2}.
\end{align*}
\end{proof}


\begin{lemma}
    $
    \begin{pmatrix*}[l]
        \sgn
            {1,\cdots,n,1^{'},\cdots,n^{'}}
            {i,j^{'},1,\cdots,\widehat{i},\cdots,n,1^{'},\cdots,\widehat{j^{'}},\cdots,n^{'}}
        \sgn
            {1,\cdots,\widehat{i},\cdots,n,1^{'},\cdots,\widehat{j^{'}},\cdots,n^{'}}
            {l^{'},1,\cdots,\widehat{i},\cdots,n,1^{'},\cdots,\widehat{j^{'}},\cdots,\widehat{l^{'}},\cdots,n^{'}}
        \\
        \sgn
            {1,\cdots,\widehat{i}\cdots,n,1^{'},\cdots,\widehat{j^{'}},\cdots,(l-1)^{'},l^{'},i,(l+1)^{'},\cdots,n^{'}}
            {1,\cdots,n,1^{'},\cdots,\widehat{j^{'}},\cdots,n^{'}}
        \sgn
            {1^{'},\cdots,n^{'},1,\cdots,n}
            {j^{'},1,\cdots,n,1^{'},\cdots,\widehat{j^{'}},\cdots,n^{'}}
    \end{pmatrix*}
    =
    (-1)^{n^2}.
    $
\end{lemma}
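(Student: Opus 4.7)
The strategy mirrors the preceding lemma: iteratively merge the four factors using the basic manipulation rules for signs of permutations (row-swap symmetry $\sgn{A}{B}=\sgn{B}{A}$, cancellation $\sgn{A}{B}\sgn{B}{C}=\sgn{A}{C}$, insertion of a common subarray without sign change, and the adjacency transposition rule $\sgn{\cdots,\alpha,\beta,\cdots}{\cdots}=(-1)^{|\alpha||\beta|}\sgn{\cdots,\beta,\alpha,\cdots}{\cdots}$), so that the product collapses to a single sign whose value can be read off directly. The target value is the sign of the permutation $(1,\cdots,n,1^{'},\cdots,n^{'})\mapsto(1^{'},\cdots,n^{'},1,\cdots,n)$, which swaps the unprimed block of length $n$ with the primed block of length $n$ and therefore equals $(-1)^{n\cdot n} = (-1)^{n^2}$.

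Concretely, I would first swap the rows of the fourth factor to obtain $\sgn{j^{'},1,\cdots,n,1^{'},\cdots,\widehat{j^{'}},\cdots,n^{'}}{1^{'},\cdots,n^{'},1,\cdots,n}$, and then rewrite the first factor using the adjacency rule to bring $j^{'}$ to the front of its bottom row. Merging these two factors via the cancellation rule eliminates the common row and produces a single sign (with a controlled $\pm 1$ prefactor) relating $(1,\cdots,n,1^{'},\cdots,n^{'})$ to a permutation that still remembers the inserted $i$ and the deleted $\widehat{l^{'}}$.

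Next I would address the middle pair. The second factor brings $l^{'}$ to the front of its bottom row, whereas the third factor has $i$ inserted anomalously inside the primed block, between $l^{'}$ and $(l+1)^{'}$. The plan is to extract $i$ from that position via adjacency transpositions, moving it back to its natural unprimed position; this contributes a sign depending on the parity of the distance traversed. Once $i$ has been relocated, the second and third factors share the row $l^{'},1,\cdots,\widehat{i},\cdots,n,1^{'},\cdots,\widehat{j^{'}},\cdots,\widehat{l^{'}},\cdots,n^{'}$, and the cancellation rule merges them into a sign that can then be combined with the reduced pair from the previous step.

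The main obstacle is the bookkeeping around the interaction between $\widehat{j^{'}}$ and the inserted $i$ in the primed block of the third factor. Because $\widehat{j^{'}}$ creates a gap whose position relative to $l^{'}$ depends on the order of $j$ and $l$, the number of transpositions required to move $i$ out of the primed block is case-dependent; a naive count would produce a sign that depends on $j$ and $l$, contradicting the clean answer $(-1)^{n^2}$. I expect the $j,l$-dependence to cancel exactly against the corresponding dependence coming from the movement of $l^{'}$ in the second factor and from the extraction of $j^{'}$ in the first/fourth reduction, as happens in the previous lemma. Once that cancellation is verified, the final collapsed sign is the block swap described above, which gives $(-1)^{n^2}$.
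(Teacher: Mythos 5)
Your overall strategy is the same as the paper's: collapse the product of four signs into the single block-swap sign $\sgn{1,\cdots,n,1^{'},\cdots,n^{'}}{1^{'},\cdots,n^{'},1,\cdots,n}=(-1)^{n^2}$ by repeatedly applying the row-swap, cancellation, insertion, and transposition rules. However, your proposal leaves a genuine gap exactly at the step you yourself flag as ``the main obstacle'': you propose to extract the anomalously placed $i$ from the primed block of the third factor by moving it \emph{alone} through adjacency transpositions, which produces a sign depending on the distance traversed (hence on $j$ and $l$), and you then only \emph{expect} this dependence to cancel against contributions from the other factors. That cancellation is the entire content of the lemma; asserting it without verification is not a proof, and the naive single-element count is precisely the bookkeeping trap the paper avoids.

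The missing idea is that in the third factor's top row, $l^{'}$ and $i$ are \emph{adjacent}. By the paper's Proposition on signs (the ``particular application'' of rule 4), an adjacent pair may be relocated anywhere in a row at no sign cost; so the paper moves the pair $(l^{'},i)$ to the front for free and then pays a single $(-1)$ to reorder it as $(i,l^{'})$. This matches a second $(-1)$ obtained by swapping $i,j^{'}$ to $j^{'},i$ at the front of the first factor's bottom row, so no $j$- or $l$-dependent parity ever enters. After that, the modified third factor merges with the second factor by the cancellation rule (its top row is $i$ prepended to the second factor's bottom row) to give $\sgn{i,1,\cdots,\widehat{i},\cdots,n,1^{'},\cdots,\widehat{j^{'}},\cdots,n^{'}}{1,\cdots,n,1^{'},\cdots,\widehat{j^{'}},\cdots,n^{'}}$, and the remaining merges with the first and fourth factors yield the block swap. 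If you replace your distance-counting step with this adjacent-pair observation, your argument closes and coincides with the paper's.
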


\begin{proof}
We use the following properties of the sign of permutation:
\begin{itemize}
    \item
$
    \sgn
        {1,\cdots,n,1^{'},\cdots,n^{'}}
        {i,j^{'},1,\cdots,\widehat{i},\cdots,n,1^{'},\cdots,\widehat{j^{'}},\cdots,n^{'}}
    =
    (-1)
    \sgn
        {1,\cdots,n,1^{'},\cdots,n^{'}}
        {j^{'},i,1,\cdots,\widehat{i},\cdots,n,1^{'},\cdots,\widehat{j^{'}},\cdots,n^{'}}
$,
    \item 
$
    \sgn
        {1,\cdots,\widehat{i}\cdots,n,1^{'},\cdots,\widehat{j^{'}},\cdots,(l-1)^{'},l^{'},i,(l+1)^{'},\cdots,n^{'}}
        {1,\cdots,n,1^{'},\cdots,\widehat{j^{'}},\cdots,n^{'}}
    =
    \sgn
        {l^{'},i,1,\cdots,\widehat{i}\cdots,n,1^{'},\cdots,\widehat{j^{'}},\cdots,(l-1)^{'},(l+1)^{'},\cdots,n^{'}}
        {1,\cdots,n,1^{'},\cdots,\widehat{j^{'}},\cdots,n^{'}}
$
    \\
$
    =
    (-1)
    \sgn
        {i,l^{'},1,\cdots,\widehat{i}\cdots,n,1^{'},\cdots,\widehat{j^{'}},\cdots,(l-1)^{'},(l+1)^{'},\cdots,n^{'}}
        {1,\cdots,n,1^{'},\cdots,\widehat{j^{'}},\cdots,n^{'}}
$,
    \item
$
    \sgn
            {1,\cdots,\widehat{i},\cdots,n,1^{'},\cdots,\widehat{j^{'}},\cdots,n^{'}}
            {l^{'},1,\cdots,\widehat{i},\cdots,n,1^{'},\cdots,\widehat{j^{'}},\cdots,\widehat{l^{'}},\cdots,n^{'}}
    \sgn
        {i,l^{'},1,\cdots,\widehat{i}\cdots,n,1^{'},\cdots,\widehat{j^{'}},\cdots,\widehat{l^{'}},\cdots,n^{'}}
        {1,\cdots,n,1^{'},\cdots,\widehat{j^{'}},\cdots,n^{'}}
    =
    \sgn
        {i,1,\cdots,\widehat{i},\cdots,n,1^{'},\cdots,\widehat{j^{'}},\cdots,n^{'}}
        {1,\cdots,n,1^{'},\cdots,\widehat{j^{'}},\cdots,n^{'}}
$.
\end{itemize}
We simplify expression (\ref{EQ-simplification for RHS of (i,j)-target 2}) to
\begin{align}
    \sgn
        {1,\cdots,n,1^{'},\cdots,n^{'}}
        {j^{'},i,1,\cdots,\widehat{i},\cdots,n,1^{'},\cdots,\widehat{j^{'}},\cdots,n^{'}}
    \sgn
        {i,1,\cdots,\widehat{i},\cdots,n,1^{'},\cdots,\widehat{j^{'}},\cdots,n^{'}}
        {1,\cdots,n,1^{'},\cdots,\widehat{j^{'}},\cdots,n^{'}}
    \sgn
        {1^{'},\cdots,n^{'},1,\cdots,n}
        {j^{'},1,\cdots,n,1^{'},\cdots,\widehat{j^{'}},\cdots,n^{'}}.
    \label{EQ-simplification for RHS of (i,j)-simplify of target 2-no1}
\end{align}
We continue the computation with the following properties of the sign of permutation:
\begin{itemize}
    \item 
$
    \sgn
        {1,\cdots,n,1^{'},\cdots,n^{'}}
        {j^{'},i,1,\cdots,\widehat{i},\cdots,n,1^{'},\cdots,\widehat{j^{'}},\cdots,n^{'}}
    \sgn
        {i,1,\cdots,\widehat{i},\cdots,n,1^{'},\cdots,\widehat{j^{'}},\cdots,n^{'}}
        {1,\cdots,n,1^{'},\cdots,\widehat{j^{'}},\cdots,n^{'}}
    =
    \sgn
        {1,\cdots,n,1^{'},\cdots,n^{'}}
        {j^{'},1,\cdots,n,1^{'},\cdots,\widehat{j^{'}},\cdots,n^{'}},
$
    \item
$
    \sgn
        {1,\cdots,n,1^{'},\cdots,n^{'}}
        {j^{'},1,\cdots,n,1^{'},\cdots,\widehat{j^{'}},\cdots,n^{'}}
    \sgn
        {1^{'},\cdots,n^{'},1,\cdots,n}
        {j^{'},1,\cdots,n,1^{'},\cdots,\widehat{j^{'}},\cdots,n^{'}}
    =
    \sgn
        {1^{'},\cdots,n^{'},1,\cdots,n}
        {1,\cdots,n,1^{'},\cdots,n^{'}}
$.
\end{itemize}
We obtain the simplification of (\ref{EQ-simplification for RHS of (i,j)-simplify of target 2-no1}):
\begin{align*}
    \sgn
        {1^{'},\cdots,n^{'},1,\cdots,n}
        {1,\cdots,n,1^{'},\cdots,n^{'}}
    =
    (-1)^{n^2}.
\end{align*}
\end{proof}


\begin{lemma}
$
    \begin{pmatrix*}
        \sgn
            {1,\cdots,n,1^{'},\cdots,n^{'}}
            {i,j^{'},1,\cdots,\widehat{i},\cdots,n,1^{'},\cdots,\widehat{j^{'}},\cdots,n^{'}}
        \sgn
            {1,\cdots,\widehat{i},\cdots,n,1^{'},\cdots,\widehat{j^{'}},\cdots,n^{'}}
            {l^{'},1,\cdots,\widehat{i},\cdots,n,1^{'},\cdots,\widehat{j^{'}},\cdots,\widehat{l^{'}},\cdots,n^{'}}
        \\
        \sgn
            {1,\cdots,\widehat{i}\cdots,n,1^{'},\cdots,\widehat{j^{'}},\cdots,(l-1)^{'},j^{'},i,(l+1)^{'},\cdots,n^{'}}
            {1,\cdots,n,1^{'},\cdots,\widehat{l^{'}},\cdots,n^{'}}
        \sgn
            {1^{'},\cdots,n^{'},1,\cdots,n}
            {l^{'},1,\cdots,n,1^{'},\cdots,\widehat{l^{'}},\cdots,n^{'}}
    \end{pmatrix*}
    =
    -(-1)^{n^2}.
$
\end{lemma}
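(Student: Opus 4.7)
The strategy mirrors the two preceding lemmas, reducing the product of four signs of permutation to a single sign of the form $\sgn{1,\cdots,n,1^{'},\cdots,n^{'}}{1^{'},\cdots,n^{'},1,\cdots,n}=(-1)^{n^2}$, while tracking one additional factor of $-1$ produced along the way. The key structural difference from Lemma 2 is that the third sign in this statement contains the adjacent pair $j^{'},i$ (instead of $l^{'},i$) in the upper row, while the third and fourth signs both have $\widehat{l^{'}}$ in their lower rows (instead of $\widehat{j^{'}}$). This asymmetric placement of $j^{'}$ is the source of the extra minus sign.

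First, I would swap the adjacent block $i,j^{'}$ at the front of the upper row of the first sign to $j^{'},i$, picking up a factor of $(-1)$. In the third sign, I would treat $j^{'},i$ as an adjacent block and move it to the head of the upper row using the "adjacent-pair" rule from Proposition 2, which introduces no extra sign. After this move, the upper row of the (rearranged) third sign becomes $j^{'},i,1,\cdots,\widehat{i},\cdots,n,1^{'},\cdots,\widehat{j^{'}},\cdots,\widehat{l^{'}},\cdots,n^{'}$.

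Next, I would compose the modified first sign with the rearranged third sign by transitivity, cancelling the common $j^{'},i,\cdots,\widehat{j^{'}},\cdots$ portion and producing a sign of the form $\sgn{1,\cdots,n,1^{'},\cdots,n^{'}}{l^{'},1,\cdots,n,1^{'},\cdots,\widehat{l^{'}},\cdots,n^{'}}$ after further absorbing the second sign, precisely paralleling the telescoping in Lemma 2 (with $l^{'}$ playing the role that $j^{'}$ played there). Multiplying this by the fourth sign, which is $\sgn{1^{'},\cdots,n^{'},1,\cdots,n}{l^{'},1,\cdots,n,1^{'},\cdots,\widehat{l^{'}},\cdots,n^{'}}$, yields $\sgn{1,\cdots,n,1^{'},\cdots,n^{'}}{1^{'},\cdots,n^{'},1,\cdots,n}=(-1)^{n^2}$. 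Combined with the leading $(-1)$ from the initial swap, the total product is $-(-1)^{n^2}$, as claimed.

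The main obstacle will be the careful bookkeeping when moving the block $j^{'},i$ past the intervening primed indices in the upper row of the third sign: because that row already has $\widehat{j^{'}}$ deleted from its interior and reinserts $j^{'}$ at position $l$, one must verify that the reordering genuinely reduces to the template used in Lemma 2 after a relabelling of $j^{'}$ and $l^{'}$, and that the bottom-row discrepancy $\widehat{l^{'}}$ versus $\widehat{j^{'}}$ does not introduce an additional sign beyond the single $(-1)$ already accounted for at the outset. Once these two cross-checks are made, the calculation follows the template of Lemmas 1 and 2 line by line.
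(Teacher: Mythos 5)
Your proposal is correct and follows essentially the same route as the paper: swap $i,j^{'}$ in the first sign for a factor of $(-1)$, move the adjacent block $j^{'},i$ to the front of the third sign at no cost, telescope the four signs down to $\sgn{1,\cdots,n,1^{'},\cdots,n^{'}}{1^{'},\cdots,n^{'},1,\cdots,n}=(-1)^{n^2}$, and combine with the leading $(-1)$. The only cosmetic difference is the order in which you merge the factors (the paper first fuses the second and third signs after inserting $l^{'}$ into both rows of the third), and your diagnosis that the surviving $(-1)$ comes from the third sign already carrying the pair in the order $j^{'},i$ — so that, unlike in the preceding lemma, no compensating swap occurs there — matches the paper's computation.
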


\begin{proof}
We use the following properties of the sign of permutation:
\begin{itemize}
    \item 
$
    \sgn
        {1,\cdots,n,1^{'},\cdots,n^{'}}
        {i,j^{'},1,\cdots,\widehat{i},\cdots,n,1^{'},\cdots,\widehat{j^{'}},\cdots,n^{'}}
    =
    (-1)
    \sgn
        {1,\cdots,n,1^{'},\cdots,n^{'}}
        {j^{'},i,1,\cdots,\widehat{i},\cdots,n,1^{'},\cdots,\widehat{j^{'}},\cdots,n^{'}}
$,
    \item
$
    \begin{matrix*}[l]
    \sgn
        {1,\cdots,\widehat{i}\cdots,n,1^{'},\cdots,\widehat{j^{'}},\cdots,(l-1)^{'},j^{'},i,(l+1)^{'},\cdots,n^{'}}
        {1,\cdots,n,1^{'},\cdots,\widehat{l^{'}},\cdots,n^{'}}
    &=
    \sgn
        {j^{'},i,1,\cdots,\widehat{i}\cdots,n,1^{'},\cdots,\widehat{j^{'}},\cdots,\widehat{l^{'}},\cdots,n^{'}}
        {1,\cdots,n,1^{'},\cdots,\widehat{l^{'}},\cdots,n^{'}}
    \\
    &=
    \sgn
        {j^{'},i,l^{'},1,\cdots,\widehat{i}\cdots,n,1^{'},\cdots,\widehat{j^{'}},\cdots,\widehat{l^{'}},\cdots,n^{'}}
        {l^{'},1,\cdots,n,1^{'},\cdots,\widehat{l^{'}},\cdots,n^{'}}
    \end{matrix*}
$,
    \item 
$
    \sgn
        {1,\cdots,\widehat{i},\cdots,n,1^{'},\cdots,\widehat{j^{'}},\cdots,n^{'}}
        {l^{'},1,\cdots,\widehat{i},\cdots,n,1^{'},\cdots,\widehat{j^{'}},\cdots,\widehat{l^{'}},\cdots,n^{'}}
    \sgn
        {j^{'},i,l^{'},1,\cdots,\widehat{i}\cdots,n,1^{'},\cdots,\widehat{j^{'}},\cdots,\widehat{l^{'}},\cdots,n^{'}}
        {l^{'},1,\cdots,n,1^{'},\cdots,\widehat{l^{'}},\cdots,n^{'}}
    =
    \sgn
        {j^{'},i,1,\cdots,\widehat{i}\cdots,n,1^{'},\cdots,\widehat{j^{'}},\cdots,n^{'}}
        {l^{'},1,\cdots,n,1^{'},\cdots,\widehat{l^{'}},\cdots,n^{'}}
$.
\end{itemize}
Expression (\ref{EQ-simplification for RHS of (i,j)-target 3}) is then simplified to
\begin{align}
    (-1)
    \sgn
        {1,\cdots,n,1^{'},\cdots,n^{'}}
        {j^{'},i,1,\cdots,\widehat{i},\cdots,n,1^{'},\cdots,\widehat{j^{'}},\cdots,n^{'}}
    \sgn
        {j^{'},i,1,\cdots,\widehat{i},\cdots,n,1^{'},\cdots,\widehat{j^{'}},\cdots,n^{'}}
        {l^{'},1,\cdots,n,1^{'},\cdots,\widehat{l^{'}},\cdots,n^{'}}
    \sgn
        {1^{'},\cdots,n^{'},1,\cdots,n}
        {l^{'},1,\cdots,n,1^{'},\cdots,\widehat{l^{'}},\cdots,n^{'}}.
\label{EQ-simplification for RHS of (i,j)-simplify of target 3-no1}
\end{align}
We continue the computation with the following properties:
\begin{itemize}
    \item 
$
    \sgn
        {j^{'},i,1,\cdots,\widehat{i},\cdots,n,1^{'},\cdots,\widehat{j^{'}},\cdots,n^{'}}
        {l^{'},1,\cdots,n,1^{'},\cdots,\widehat{l^{'}},\cdots,n^{'}}
    \sgn
        {1^{'},\cdots,n^{'},1,\cdots,n}
        {l^{'},1,\cdots,n,1^{'},\cdots,\widehat{l^{'}},\cdots,n^{'}}
    =
    \sgn
        {j^{'},i,1,\cdots,\widehat{i},\cdots,n,1^{'},\cdots,\widehat{j^{'}},\cdots,n^{'}}
        {1^{'},\cdots,n^{'},1,\cdots,n}
$,
    \item 
$
    \sgn
        {1,\cdots,n,1^{'},\cdots,n^{'}}
        {j^{'},i,1,\cdots,\widehat{i},\cdots,n,1^{'},\cdots,\widehat{j^{'}},\cdots,n^{'}}
    \sgn
        {j^{'},i,1,\cdots,\widehat{i},\cdots,n,1^{'},\cdots,\widehat{j^{'}},\cdots,n^{'}}
        {1^{'},\cdots,n^{'},1,\cdots,n}
    =
     \sgn
        {1,\cdots,n,1^{'},\cdots,n^{'}}
        {1^{'},\cdots,n^{'},1,\cdots,n}
$.
\end{itemize}
We then simplify (\ref{EQ-simplification for RHS of (i,j)-simplify of target 3-no1}) as follows:
\begin{align*}
    (-1)
    \sgn
        {1,\cdots,n,1^{'},\cdots,n^{'}}
        {1^{'},\cdots,n^{'},1,\cdots,n}
    =
    -(-1)^{n^2}
    .
\end{align*}
\end{proof}

\subsection{Calculation of the coefficients in \texorpdfstring{$
        \sqrt{-1}\LieagbdPartial^{*}
        (\basexi{\alpha}\wedge\basexi{\indexI}\wedge\basedualxi{\indexJp})
        $}{}
    }
\label{apex-simplification for RHs of (alpha,I,J)}
In this section we calculate the following formulas:
\begin{itemize}
    \item 
$
    \begin{pmatrix*}[l]
        \sgn
            {1,\cdots,n,1^{'},\cdots,n^{'}}
            {\alpha,\indexI,\indexJp,\indexIc\backslash\{\alpha\},\indexJpc}
        \sgn
            {\indexIc\backslash\{\alpha\}}
            {\indexic{k},\indexIc\backslash\{\alpha,\indexic{k}\}}
        \\
        \sgn
            {\indexic{1},\cdots,\indexic{k-1},\order{\indexic{k},\alpha},\indexic{k+1},\cdots,\indexic{n-p}}
            {\indexIc}
        \sgn
            {1^{'},\cdots,n^{'},1,\cdots,n}
            {\indexIc,\indexJpc,\indexI,\indexJp}
    \end{pmatrix*}
$,
    \item 
$
    \begin{pmatrix*}[l]
    \sgn
        {1,\cdots,n,1^{'},\cdots,n^{'}}
        {\alpha,\indexI,\indexJp,\indexIc\backslash\{\alpha\},\indexJpc}
    \sgn
        {\indexIc\backslash\{\alpha\}}
        {\indexic{k},\indexIc\backslash\{\alpha,\indexic{k}\}}
    \\
    \sgn
        {\indexic{1},\cdots,\indexic{k-1},\order{\indexic{k},r},\indexic{k+1},\cdots,\widehat{\alpha},\cdots,\indexic{n-p}}
        {\order{(\indexIc\backslash\{\alpha\})\cup\{r\}}}
    \\
    \sgn
        {1^{'},\cdots,n^{'},1,\cdots,n}
        {\order{(\indexIc\backslash\{\alpha\})\cup\{r\}},\indexJpc,\alpha,\indexI\backslash\{r\},\indexJp}
    \end{pmatrix*}
$,
    \item 
$
    \begin{pmatrix*}[l]
        \sgn
            {1,\cdots,n,1^{'},\cdots,n^{'}}
            {\alpha,\indexI,\indexJp,\indexIc\backslash\{\alpha\},\indexJpc}
        \sgn
            {\indexIc\backslash\{\alpha\}}
            {\indexic{k},\indexIc\backslash\{\alpha,\indexic{k}\}}
        \\
        \sgn
            {\indexic{1},\cdots,\indexic{k-1},\order{\alpha,r},\indexic{k+1},\cdots,\widehat{\alpha},\cdots,\indexic{n-p}}
            {\order{(\indexIc\backslash\{\indexic{k}\})\cup\{r\}}}
        \\
        \sgn
            {1^{'},\cdots,n^{'},1,\cdots,n}
            {\order{(\indexIc\backslash\{\indexic{k}\})\cup\{r\}},\indexJpc,\order{(\indexI\backslash\{r\})\cup\{\indexic{k}\}},\indexJp}
    \end{pmatrix*}
$,
    \item 
$
    \begin{pmatrix*}[l]
        \sgn
            {1,\cdots,n,1^{'},\cdots,n^{'}}
            {\alpha,\indexI,\indexJp,\indexIc\backslash\{\alpha\},\indexJpc}
        \sgn
            {\indexIc\backslash\{\alpha\}}
            {\indexic{k},\indexIc\backslash\{\alpha,\indexic{k}\}}
        \\
        \sgn
            {\indexic{1},\cdots,\indexic{k-1},\order{r_1,r_2},\indexic{k+1},\cdots,\widehat{\alpha},\cdots,\indexic{n-p}}
            {\order{(\indexIc\backslash\{\indexic{k},\alpha\})\cup\{r_1,r_2\}}}
        \\
        \sgn
            {1^{'},\cdots,n^{'},1,\cdots,n}
            {\order{(\indexIc\backslash\{\indexic{k},\alpha\})\cup\{r_1,r_2\}},\indexJpc,\alpha,\order{(\indexI\backslash\{r_1,r_2\})\cup\{\indexic{k}\}},\indexJp}
    \end{pmatrix*}    
$,
    \item 
$
    \begin{pmatrix*}[l]
        \sgn
            {1,\cdots,n,1^{'},\cdots,n^{'}}
            {\alpha,\indexI,\indexJp,\indexIc\backslash\{\alpha\},\indexJpc}
        \sgn
            {\indexIc\backslash\{\alpha\},\indexJpc}
            {\indexjpc{l},\indexIc\backslash\{\alpha\},\indexJpc\backslash\{\indexjpc{l}\}}
        \\
        \sgn
            {\indexIc\backslash\{\alpha\},\indexjpc{1},\cdots,\indexjpc{l-1},\indexjpc{l},\alpha,\indexjpc{l+1},\cdots,\indexjpc{n-q}}
            {\indexIc,\indexJpc}
        \sgn
            {1^{'},\cdots,n^{'},1,\cdots,n}
            {\indexIc,\indexJpc,\indexI,\indexJp}
    \end{pmatrix*}
$,
    \item 
$
    \begin{pmatrix*}[l]
        \sgn
            {1,\cdots,n,1^{'},\cdots,n^{'}}
            {\alpha,\indexI,\indexJp,\indexIc\backslash\{\alpha\},\indexJpc}
        \sgn
            {\indexIc\backslash\{\alpha\},\indexJpc}
            {\indexjpc{l},\indexIc\backslash\{\alpha\},\indexJpc\backslash\{\indexjpc{l}\}}
        \\
        \sgn
            {\indexIc\backslash\{\alpha\},\indexjpc{1},\cdots,\indexjpc{l-1},\indexjpc{l},r,\indexjpc{l+1},\cdots,\indexjpc{n-q}}
            {\order{(\indexIc\backslash\{\alpha\})\cup\{r\}},\indexJpc}
        \\
        \sgn
            {1^{'},\cdots,n^{'},1,\cdots,n}
            {\order{(\indexIc\backslash\{\alpha\})\cup\{r\}},\indexJpc,\alpha,\indexI\backslash\{r\},\indexJp}
    \end{pmatrix*}
$,
    \item 
$
    \begin{pmatrix*}[l]
        \sgn
            {1,\cdots,n,1^{'},\cdots,n^{'}}
            {\alpha,\indexI,\indexJp,\indexIc\backslash\{\alpha\},\indexJpc}
        \sgn
            {\indexIc\backslash\{\alpha\},\indexJpc}
            {\indexjpc{l},\indexIc\backslash\{\alpha\},\indexJpc\backslash\{\indexjpc{l}\}}
        \\
        \sgn
            {\indexIc\backslash\{\alpha\},\indexjpc{1},\cdots,\indexjpc{l-1},u^{'},\alpha,\indexjpc{l+1},\cdots,\indexjpc{n-q}}
            {\indexIc,\order{(\indexJpc\backslash\{\indexjpc{l}\})\cup\{u^{'}\}}}
        \\
        \sgn
            {1^{'},\cdots,n^{'},1,\cdots,n}
            {\indexIc,\order{(\indexJpc\backslash\{\indexjpc{l}\})\cup\{u^{'}\}},\indexI,\order{(\indexJp\backslash\{u^{'}\})\cup\{\indexjpc{l}\}}}
    \end{pmatrix*}
$,
    \item 
$
    \begin{pmatrix*}[l]
        \sgn
            {1,\cdots,n,1^{'},\cdots,n^{'}}
            {\alpha,\indexI,\indexJp,\indexIc\backslash\{\alpha\},\indexJpc}
        \sgn
            {\indexIc\backslash\{\alpha\},\indexJpc}
            {\indexjpc{l},\indexIc\backslash\{\alpha\},\indexJpc\backslash\{\indexjpc{l}\}}
        \\
        \sgn
            {\indexIc\backslash\{\alpha\},\indexjpc{1},\cdots,\indexjpc{l-1},u^{'},r,\indexjpc{l+1},\cdots,\indexjpc{n-q}}
            {\order{(\indexIc\backslash\{\alpha\})\cup\{r\}},\order{(\indexJpc\backslash\{\indexjpc{l}\})\cup\{u^{'}\}}}
        \\
        \sgn
            {1^{'},\cdots,n^{'},1,\cdots,n}
            {\order{(\indexIc\backslash\{\alpha\})\cup\{r\}},\order{(\indexJpc\backslash\{\indexjpc{l}\})\cup\{u^{'}\}},\alpha,\indexI\backslash\{r\},\order{(\indexJp\backslash\{u^{'}\})\cup\{\indexjpc{l}\}}}
    \end{pmatrix*}
$,
\end{itemize}
which are from (\ref{EQ-For (alpha, I, J), calculation for RHS-Step 3, unsimplfied term 1}) to (\ref{EQ-For (alpha, I, J), calculation for RHS-Step 3, unsimplfied term 8}) in Section \ref{subsection-For (alpha, I, J), calculation for RHS}. The purpose is to simplify the expression of $
\sqrt{-1}\LieagbdPartial^{*}
(\basexi{\alpha}\wedge\basexi{\indexI}\wedge\basedualxi{\indexJp})
$.


\begin{lemma}
\label{Lemma-simplification for RHS of (alpha,I,J)-lemma 1}
\begin{align*}
    \begin{pmatrix*}[l]
        \sgn
            {1,\cdots,n,1^{'},\cdots,n^{'}}
            {\alpha,\indexI,\indexJp,\indexIc\backslash\{\alpha\},\indexJpc}
        \sgn
            {\indexIc\backslash\{\alpha\}}
            {\indexic{k},\indexIc\backslash\{\alpha,\indexic{k}\}}
        \\
        \sgn
            {\indexic{1},\cdots,\indexic{k-1},\order{\indexic{k},\alpha},\indexic{k+1},\cdots,\indexic{n-p}}
            {\indexIc}
        \sgn
            {1^{'},\cdots,n^{'},1,\cdots,n}
            {\indexIc,\indexJpc,\indexI,\indexJp}
    \end{pmatrix*}
    =
    (-1)^{n^2}
    \sgn
        {\order{\indexic{k},\alpha}}
        {\alpha,\indexic{k}}
    .
\end{align*}
\end{lemma}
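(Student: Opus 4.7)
The strategy is to collapse the four-factor product of signs of permutations to the claimed two-factor expression by repeated application of the four manipulation rules for $\sgn{}{}$ stated in the Proposition just after Definition~2: row swap, insertion, merge/cancellation, and adjacent-block swap. This follows the exact template already used in Appendix~I to handle the $(i,j)$ case, with the singletons $i,j^{'}$ there replaced here by the index blocks $\indexI,\indexJp$.

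First I would extract the factor $(-1)^{n^2}$. Signs one and four have reference rows $(1,\cdots,n,1^{'},\cdots,n^{'})$ and $(1^{'},\cdots,n^{'},1,\cdots,n)$ respectively; applying the row-swap rule to the fourth sign produces the overall coefficient $\sgn{1^{'},\cdots,n^{'},1,\cdots,n}{1,\cdots,n,1^{'},\cdots,n^{'}}=(-1)^{n^2}$ and brings both factors to a common reference row, where the merge rule collapses them into a single sign of the form $\sgn{\indexIc,\indexJpc,\indexI,\indexJp}{\alpha,\indexI,\indexJp,\indexIc\backslash\{\alpha\},\indexJpc}$.

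Next I would combine this merged sign with signs two and three, which act only on the $\indexIc$ positions. Sign three replaces the consecutive pair $\order{\indexic{k},\alpha}$ inside $\indexIc$ by $(\indexic{k},\alpha)$, while sign two pulls $\indexic{k}$ to the front of $\indexIc\backslash\{\alpha\}$. The merge rule in its partial-row form absorbs both of these permutations into the $\indexIc$-block of the merged sign, after which the adjacent-block swap rule handles the interchange of the $(\indexI,\indexJp)$ block of total length $p+q$ with the $(\indexIc\backslash\{\alpha\},\indexJpc)$ block of total length $2n-p-q-1$; the parity $(p+q)(2n-p-q-1)\equiv 0\pmod 2$ renders this block swap trivial. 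What survives is precisely the factor $\sgn{\order{\indexic{k},\alpha}}{\alpha,\indexic{k}}$, yielding the claimed identity $(-1)^{n^2}\sgn{\order{\indexic{k},\alpha}}{\alpha,\indexic{k}}$.

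The main obstacle is bookkeeping. Because $\alpha$ can sit anywhere in the sorted list $\indexIc$, several intermediate signs pick up rank-dependent contributions that must cancel in pairs. To avoid sign errors, I would perform each merge in its own equation, verify at each step that the top row of the next factor matches the bottom row of the previous one before invoking the merge rule, and defer the block-swap parity count to the very end. The prototype for this computation is already worked out in Section~\ref{apex-simplification for RHS of (i,j)} for the simpler $(i,j)$ case, and the present lemma is the block generalization of that argument.
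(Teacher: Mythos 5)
Your proposal is correct and uses essentially the same argument as the paper's proof in Lemma \ref{Lemma-simplification for RHS of (alpha,I,J)-lemma 1}: the same four $\sgn{}{}$ manipulation rules, the same extraction of $(-1)^{n^2}$ from the mismatch of the reference rows $(1,\cdots,n,1^{'},\cdots,n^{'})$ and $(1^{'},\cdots,n^{'},1,\cdots,n)$, and the same even-parity block-swap count (your $(p+q)(2n-p-q-1)\equiv 0$ is the paper's $(p+q)(p+q+1)\equiv 0$ in disguise). The only difference is the order of the merges — you pair the first and fourth signs before absorbing the two $\indexIc$-internal signs, whereas the paper merges the first and second signs first — which does not change the substance of the computation.
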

\begin{proof}
We use the following properties of the sign of permutation:
\begin{itemize}
    \item 
$
    \sgn
        {\indexic{1},\cdots,\indexic{k-1},\order{\indexic{k},\alpha},\indexic{k+1},\cdots,\indexic{n-p}}
        {\indexIc}
    =
    \sgn
        {\order{\indexic{k},\alpha},\indexIc\backslash\{\indexic{k},\alpha\}}
        {\indexIc}
$,
    \item 
$
    \sgn
        {1,\cdots,n,1^{'},\cdots,n^{'}}
        {\alpha,\indexI,\indexJp,\indexIc\backslash\{\alpha\},\indexJpc}
    \sgn
        {\indexIc\backslash\{\alpha\}}
        {\indexic{k},\indexIc\backslash\{\alpha,\indexic{k}\}}
    =
    \sgn
        {1,\cdots,n,1^{'},\cdots,n^{'}}
        {\alpha,\indexI,\indexJp,\indexic{k},\indexIc\backslash\{\alpha,\indexic{k}\},\indexJpc}
$.
\end{itemize}
We simplify the left-hand side of the equation in Lemma \ref{Lemma-simplification for RHS of (alpha,I,J)-lemma 1} to
\begin{align}
    \sgn
        {1,\cdots,n,1^{'},\cdots,n^{'}}
        {\alpha,\indexI,\indexJp,\indexic{k},\indexIc\backslash\{\alpha,\indexic{k}\},\indexJpc}
    \sgn
        {\order{\indexic{k},\alpha},\indexIc\backslash\{\indexic{k}\}}
        {\indexIc}
    \sgn
        {1^{'},\cdots,n^{'},1,\cdots,n}
        {\indexIc,\indexJpc,\indexI,\indexJp}.
\label{EQ-simplification for RHS of (alpha,I,J)-lemma 1-no1}
\end{align}
We use the following properties of the sign of permutation:
\begin{itemize}
    \item 
$
    \sgn
        {1,\cdots,n,1^{'},\cdots,n^{'}}
        {\alpha,\indexI,\indexJp,\indexic{k},\indexIc\backslash\{\alpha,\indexic{k}\},\indexJpc}
    =
    (-1)^{p+q}
    \sgn
        {1,\cdots,n,1^{'},\cdots,n^{'}}
        {\indexI,\indexJp,\alpha,\indexic{k},\indexIc\backslash\{\alpha,\indexic{k}\},\indexJpc}
$,
    \item 
$
    \sgn
        {1,\cdots,n,1^{'},\cdots,n^{'}}
        {\indexI,\indexJp,\alpha,\indexic{k},\indexIc\backslash\{\alpha,\indexic{k}\},\indexJpc}
    \sgn
        {\alpha,\indexic{k},\indexIc\backslash\{\indexic{k}\}}
        {\indexIc}
    =
    \sgn
        {1,\cdots,n,1^{'},\cdots,n^{'}}
        {\indexI,\indexJp,\indexIc,\indexJpc}
$,
    \item
$
    \sgn
        {\order{\indexic{k},\alpha},\indexIc\backslash\{\indexic{k}\}}
        {\indexIc}
    =
    \sgn
        {\alpha,\indexic{k},\indexIc\backslash\{\indexic{k}\}}
        {\indexIc}
    \sgn
        {\order{\indexic{k},\alpha}}
        {\alpha,\indexic{k}}
$.
\end{itemize}
We simplify (\ref{EQ-simplification for RHS of (alpha,I,J)-lemma 1-no1}) to
\begin{align}
    (-1)^{p+q}
    \sgn
        {1,\cdots,n,1^{'},\cdots,n^{'}}
        {\indexI,\indexJp,\indexIc,\indexJpc}
    \sgn
        {1^{'},\cdots,n^{'},1,\cdots,n}
        {\indexIc,\indexJpc,\indexI,\indexJp}
    \sgn
        {\order{\indexic{k},\alpha}}
        {\alpha,\indexic{k}}
    .
\label{EQ-simplification for RHS of (alpha,I,J)-lemma 1-no2}
\end{align}
We use the following properties of the sign of permutation:
\begin{itemize}
    \item 
$
    \sgn
        {1^{'},\cdots,n^{'},1,\cdots,n}
        {\indexIc,\indexJpc,\indexI,\indexJp}
    =
    (-1)^{(p+q)(n-p+n-q)}
    \sgn
        {1^{'},\cdots,n^{'},1,\cdots,n}
        {\indexI,\indexJp,\indexIc,\indexJpc}
    \\
    =
    (-1)^{(p+q)^2}
    \sgn
        {1^{'},\cdots,n^{'},1,\cdots,n}
        {\indexI,\indexJp,\indexIc,\indexJpc}
$,
    \item 
$
    \sgn
        {1,\cdots,n,1^{'},\cdots,n^{'}}
        {\indexI,\indexJp,\indexIc,\indexJpc}
    \sgn
        {1^{'},\cdots,n^{'},1,\cdots,n}
        {\indexI,\indexJp,\indexIc,\indexJpc}
    =
    \sgn
        {1,\cdots,n,1^{'},\cdots,n^{'}}
        {1^{'},\cdots,n^{'},1,\cdots,n}
    =
    (-1)^{n^2}
$.
\end{itemize}
We simplify (\ref{EQ-simplification for RHS of (alpha,I,J)-lemma 1-no2}) to
\begin{align*}
    (-1)^{(p+q)(p+q+1)}
    (-1)^{n^2}
    \sgn
        {\order{\indexic{k},\alpha}}
        {\alpha,\indexic{k}}
=
    (-1)^{n^2}
    \sgn
        {\order{\indexic{k},\alpha}}
        {\alpha,\indexic{k}}
    ,
\end{align*}
which completes the proof of Lemma \ref{Lemma-simplification for RHS of (alpha,I,J)-lemma 1}.
\end{proof}

\begin{lemma}
\label{Lemma-simplification for RHS of (alpha,I,J)-lemma 2}
\begin{align*} 
    \begin{pmatrix*}[l]
    \sgn
        {1,\cdots,n,1^{'},\cdots,n^{'}}
        {\alpha,\indexI,\indexJp,\indexIc\backslash\{\alpha\},\indexJpc}
    \sgn
        {\indexIc\backslash\{\alpha\}}
        {\indexic{k},\indexIc\backslash\{\alpha,\indexic{k}\}}
    \\
    \sgn
        {\indexic{1},\cdots,\indexic{k-1},\order{\indexic{k},r},\indexic{k+1},\cdots,\widehat{\alpha},\cdots,\indexic{n-p}}
        {\order{(\indexIc\backslash\{\alpha\})\cup\{r\}}}
    \\
    \sgn
        {1^{'},\cdots,n^{'},1,\cdots,n}
        {\order{(\indexIc\backslash\{\alpha\})\cup\{r\}},\indexJpc,\alpha,\indexI\backslash\{r\},\indexJp}
    \end{pmatrix*}
    =
    -(-1)^{n^2}
    \sgn
        {r,\indexI\backslash\{r\}}
        {\indexI}
    \sgn
        {\order{\indexic{k},r}}
        {r,\indexic{k}}
    .
\end{align*}
\end{lemma}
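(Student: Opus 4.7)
The proof will closely parallel Lemma \ref{Lemma-simplification for RHS of (alpha,I,J)-lemma 1} with one extra bookkeeping step to track how the index $r$ migrates between the $\indexI$-block and the $\indexIc$-block. The plan is to first factor out the ordering sign, then collapse the remaining signs pairwise using the same cancellation pattern that produced the $(-1)^{n^2}$ in the previous lemma.

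First I would split the third sign using the splitting property for ordered arrays: since $\order{\indexic{k},r}$ appears in the first row, one writes the third sign as a product of $\sgn{\order{\indexic{k},r}}{r,\indexic{k}}$ with a residual sign whose first row has $(r,\indexic{k})$ in place of $\order{\indexic{k},r}$. This immediately exposes the claimed factor $\sgn{\order{\indexic{k},r}}{r,\indexic{k}}$, so the remaining task is to show that the product of the residual third sign with the first, second, and fourth signs equals $-(-1)^{n^2}\sgn{r,\indexI\backslash\{r\}}{\indexI}$.

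Next, I would combine the first and second signs by absorbing the second into the first, exactly as in the proof of Lemma \ref{Lemma-simplification for RHS of (alpha,I,J)-lemma 1}, obtaining a single sign whose bottom row is $(\alpha,\indexI,\indexJp,\indexic{k},\indexIc\backslash\{\alpha,\indexic{k}\},\indexJpc)$. I would then relocate $r$ from inside the $\indexI$-block to sit immediately after $\indexic{k}$ in the $\indexIc$-block position: this picks up the factor $\sgn{r,\indexI\backslash\{r\}}{\indexI}$ together with a parity $(-1)^{q+1}$ from crossing the $\indexJp$-block and the single entry $\indexic{k}$. Simultaneously, $\alpha$ (which was in the $\indexIc$-position at the head) can be repositioned into the "$\indexI$-block" slot adjacent to $\indexI\backslash\{r\}$ by the same block-manipulation identity that handled $\alpha$ in Lemma \ref{Lemma-simplification for RHS of (alpha,I,J)-lemma 1}. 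After these moves, the first three signs collapse into $\sgn{1,\ldots,n,1',\ldots,n'}{\indexI\backslash\{r\},\alpha,\indexJp,(\indexIc\backslash\{\alpha\})\cup\{r\},\indexJpc}$ times the extracted $\sgn{r,\indexI\backslash\{r\}}{\indexI}$ and the accumulated signs from the crossings.

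Finally, I would pair the collapsed sign with the fourth (reversal) sign $\sgn{1',\ldots,n',1,\ldots,n}{\order{(\indexIc\backslash\{\alpha\})\cup\{r\}},\indexJpc,\alpha,\indexI\backslash\{r\},\indexJp}$. As in Lemma \ref{Lemma-simplification for RHS of (alpha,I,J)-lemma 1}, the product telescopes to $\sgn{1,\ldots,n,1',\ldots,n'}{1',\ldots,n',1,\ldots,n}=(-1)^{n^2}$, up to the block-parity factor $(-1)^{(p+q)(p+q+1)}=1$ that also appeared in the previous lemma. Collecting all accumulated parities yields exactly one surplus minus sign, producing the claimed coefficient $-(-1)^{n^2}\sgn{r,\indexI\backslash\{r\}}{\indexI}\sgn{\order{\indexic{k},r}}{r,\indexic{k}}$.

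The main obstacle will be the parity bookkeeping of the middle step: when $r$ moves from the $\indexI$-block across $\indexJp$ into the $\indexIc$-block, and simultaneously $\alpha$ moves in the opposite direction, both migrations cross the length-$q$ block $\indexJp$, and one of them also crosses the $\indexic{k}$-block. These crossings must combine precisely so that the net parity differs by $-1$ from the corresponding calculation in Lemma \ref{Lemma-simplification for RHS of (alpha,I,J)-lemma 1}, which is what produces the extra minus sign distinguishing this lemma from the previous one. The rest of the argument is entirely mechanical application of the sign-manipulation identities already established.
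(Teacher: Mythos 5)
Your overall route is the same as the paper's: extract $\sgn{\order{\indexic{k},r}}{r,\indexic{k}}$ from the third sign, pull $\sgn{r,\indexI\backslash\{r\}}{\indexI}$ out of the first sign, collapse the first three signs into a single permutation sign, and telescope it against the fourth (reversal) sign to obtain $(-1)^{n^2}$. The difficulty is that the entire content of the lemma is the residual parity, and that is precisely the part you do not carry out. Your one concrete parity claim does not survive scrutiny: once $r$ has been extracted to the front of the $\indexI$-block (which is what the factor $\sgn{r,\indexI\backslash\{r\}}{\indexI}$ records), transporting it into the $\indexIc$-position requires crossing $\indexI\backslash\{r\}$ in addition to $\indexJp$, so the crossing parity for that move is $(-1)^{p-1+q}$, not $(-1)^{q+1}$. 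One can only reach $(-1)^{q+1}$ by simultaneously displacing $\alpha$ to the right of $\indexI\backslash\{r\}$, which is what your intermediate arrangement $(\indexI\backslash\{r\},\alpha,\indexJp,\dots)$ does; but the fourth sign has the blocks in the order $(\dots,\alpha,\indexI\backslash\{r\},\indexJp)$, and restoring that order before telescoping costs a further $(-1)^{p-1}$ that you never record. Similarly, placing $r$ ``immediately after $\indexic{k}$'' yields the pair $(\indexic{k},r)$, whereas the residual of the factor $\sgn{\order{\indexic{k},r}}{r,\indexic{k}}$ carries the pair in the order $(r,\indexic{k})$, which is another unaccounted $-1$. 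Your closing assertion that the crossings ``must combine precisely so that the net parity differs by $-1$ from Lemma~\ref{Lemma-simplification for RHS of (alpha,I,J)-lemma 1}'' assumes the conclusion rather than deriving it.

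To close the gap, fix one definite intermediate arrangement and count every crossing. Following the paper's choice: the first three signs collapse to
\begin{align*}
    \sgn{r,\indexI\backslash\{r\}}{\indexI}
    \;
    \sgn{\order{\indexic{k},r}}{r,\indexic{k}}
    \;
    (-1)^{p-1+q}
    \;
    \sgn
        {1,\cdots,n,1^{'},\cdots,n^{'}}
        {\alpha,\indexI\backslash\{r\},\indexJp,\order{(\indexIc\backslash\{\alpha\})\cup\{r\}},\indexJpc}
    ,
\end{align*}
the block swap needed to align with the fourth sign contributes $(-1)^{(p+q)^2}$, and the telescoping contributes $(-1)^{n^2}$; the product $(-1)^{p-1+q}(-1)^{(p+q)^2}(-1)^{n^2}=-(-1)^{n^2}$ then gives the stated coefficient. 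Until a computation of this kind is actually performed, your argument establishes the identity only up to an undetermined sign, which is exactly what the lemma is supposed to pin down.
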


\begin{proof}
We use the following properties of the sign of permutation:
\begin{itemize}
    \item 
$
    \sgn
        {\indexic{1},\cdots,\indexic{k-1},\order{\indexic{k},r},\indexic{k+1},\cdots,\widehat{\alpha},\cdots,\indexic{n-p}}
        {\order{(\indexIc\backslash\{\alpha\})\cup\{r\}}}
    =
    \sgn
        {\order{\indexic{k},r},\indexIc\backslash\{\indexic{k},\alpha\}}
        {\order{(\indexIc\backslash\{\alpha\})\cup\{r\}}}
$,
    \item 
$
    \sgn
        {\order{\indexic{k},r},\indexIc\backslash\{\indexic{k},\alpha\}}
        {\order{(\indexIc\backslash\{\alpha\})\cup\{r\}}}
    =
    \sgn
        {r,\indexic{k},\indexIc\backslash\{\indexic{k},\alpha\}}
        {\order{(\indexIc\backslash\{\alpha\})\cup\{r\}}}
    \sgn
        {\order{\indexic{k},r}}
        {r,\indexic{k}}
$,
    \item
$
    \sgn
        {\indexIc\backslash\{\alpha\}}
        {\indexic{k},\indexIc\backslash\{\alpha,\indexic{k}\}}
    \sgn
        {r,\indexic{k},\indexIc\backslash\{\indexic{k},\alpha\}}
        {\order{(\indexIc\backslash\{\alpha\})\cup\{r\}}}
    =
    \sgn
        {r,\indexIc\backslash\{\alpha\}}
        {\order{(\indexIc\backslash\{\alpha\})\cup\{r\}}}
$.
\end{itemize}
We simplify the left-hand side of the equation in Lemma \ref{Lemma-simplification for RHS of (alpha,I,J)-lemma 2} to
\begin{align}
    \begin{pmatrix*}[l]
        \sgn
        {1,\cdots,n,1^{'},\cdots,n^{'}}
        {\alpha,\indexI,\indexJp,\indexIc\backslash\{\alpha\},\indexJpc}
    \sgn
        {r,\indexIc\backslash\{\alpha\}}
        {\order{(\indexIc\backslash\{\alpha\})\cup\{r\}}}
    \\
    \sgn
        {\order{\indexic{k},r}}
        {r,\indexic{k}}
    \sgn
        {1^{'},\cdots,n^{'},1,\cdots,n}
        {\order{(\indexIc\backslash\{\alpha\})\cup\{r\}},\indexJpc,\alpha,\indexI\backslash\{r\},\indexJp}
    \end{pmatrix*}
    .
\label{EQ-simplification for RHS of (alpha,I,J)-lemma 2-no1}
\end{align}
We use the following properties of the sign of permutation:
\begin{itemize}
    \item 
$
    \sgn
        {1,\cdots,n,1^{'},\cdots,n^{'}}
        {\alpha,\indexI,\indexJp,\indexIc\backslash\{\alpha\},\indexJpc}
    =
    \sgn
        {r,\indexI\backslash\{r\}}
        {\indexI}
    \sgn
        {1,\cdots,n,1^{'},\cdots,n^{'}}
        {\alpha,r,\indexI\backslash\{r\},\indexJp,\indexIc\backslash\{\alpha\},\indexJpc}
$,
    \item 
$
    \sgn
        {1,\cdots,n,1^{'},\cdots,n^{'}}
        {\alpha,r,\indexI\backslash\{r\},\indexJp,\indexIc\backslash\{\alpha\},\indexJpc}
    =
    (-1)^{p-1+q}
    \sgn
        {1,\cdots,n,1^{'},\cdots,n^{'}}
        {\alpha,\indexI\backslash\{r\},\indexJp,r,\indexIc\backslash\{\alpha\},\indexJpc}
    \\
    =
    -(-1)^{p+q}
    \sgn
        {1,\cdots,n,1^{'},\cdots,n^{'}}
        {\alpha,\indexI\backslash\{r\},\indexJp,r,\indexIc\backslash\{\alpha\},\indexJpc}
$,
    \item 
$
    \sgn
        {1,\cdots,n,1^{'},\cdots,n^{'}}
        {\alpha,\indexI\backslash\{r\},\indexJp,r,\indexIc\backslash\{\alpha\},\indexJpc}
    \sgn
        {r,\indexIc\backslash\{\alpha\}}
        {\order{(\indexIc\backslash\{\alpha\})\cup\{r\}}}
    =
    \sgn
        {1,\cdots,n,1^{'},\cdots,n^{'}}
        {\alpha,\indexI\backslash\{r\},\indexJp,\order{(\indexIc\backslash\{\alpha\})\cup\{r\}},\indexJpc}
$.
\end{itemize}
We simplify (\ref{EQ-simplification for RHS of (alpha,I,J)-lemma 2-no1}) to
\begin{align}
    -(-1)^{p+q}
    \begin{pmatrix*}[l]
        \sgn
            {r,\indexI\backslash\{r\}}
            {\indexI}
        \sgn
            {1,\cdots,n,1^{'},\cdots,n^{'}}
            {\alpha,\indexI\backslash\{r\},\indexJp,\order{(\indexIc\backslash\{\alpha\})\cup\{r\}},\indexJpc}
        \\
        \sgn
            {\order{\indexic{k},r}}
            {r,\indexic{k}}
        \sgn
            {1^{'},\cdots,n^{'},1,\cdots,n}
            {\order{(\indexIc\backslash\{\alpha\})\cup\{r\}},\indexJpc,\alpha,\indexI\backslash\{r\},\indexJp}
    \end{pmatrix*}
    .
\label{EQ-simplification for RHS of (alpha,I,J)-lemma 2-no2}
\end{align}
We use the following properties of the sign of permutation:
\begin{itemize}
    \item 
$  
    \sgn
        {1,\cdots,n,1^{'},\cdots,n^{'}}
        {\alpha,\indexI\backslash\{r\},\indexJp,\order{(\indexIc\backslash\{\alpha\})\cup\{r\}},\indexJpc}
    \\
    =
    (-1)^{(p+q)(n-p+n-q)}
    \sgn
        {1,\cdots,n,1^{'},\cdots,n^{'}}
        {\order{(\indexIc\backslash\{\alpha\})\cup\{r\}},\indexJpc,\alpha,\indexI\backslash\{r\},\indexJp}
    \\
    =
    (-1)^{(p+q)^2}
    \sgn
        {1,\cdots,n,1^{'},\cdots,n^{'}}
        {\order{(\indexIc\backslash\{\alpha\})\cup\{r\}},\indexJpc,\alpha,\indexI\backslash\{r\},\indexJp}
$,
    \item 
$
    \sgn
        {1,\cdots,n,1^{'},\cdots,n^{'}}
        {\order{(\indexIc\backslash\{\alpha\})\cup\{r\}},\indexJpc,\alpha,\indexI\backslash\{r\},\indexJp}
    \sgn
        {1^{'},\cdots,n^{'},1,\cdots,n}
        {\order{(\indexIc\backslash\{\alpha\})\cup\{r\}},\indexJpc,\alpha,\indexI\backslash\{r\},\indexJp}
    \\
    =
    \sgn
        {1,\cdots,n,1^{'},\cdots,n^{'}}
        {1^{'},\cdots,n^{'},1,\cdots,n}
    =(-1)^{n^2}
$.
\end{itemize}
We simplify (\ref{EQ-simplification for RHS of (alpha,I,J)-lemma 2-no2}) to
\begin{align*}
    &
        -(-1)^{p+q}
        (-1)^{(p+q)^2}
        (-1)^{n^2}
        \sgn
            {r,\indexI\backslash\{r\}}
            {\indexI}
        \sgn
            {\order{\indexic{k},r}}
            {r,\indexic{k}}
    \\
    =&
        -(-1)^{n^2}
        \sgn
            {r,\indexI\backslash\{r\}}
            {\indexI}
        \sgn
            {\order{\indexic{k},r}}
            {r,\indexic{k}}
    ,
\end{align*}
which completes the proof of Lemma \ref{Lemma-simplification for RHS of (alpha,I,J)-lemma 2}.
\end{proof}

\begin{lemma}
\label{Lemma-simplification for RHS of (alpha,I,J)-lemma 3}
\begin{align*}
    \begin{pmatrix*}[l]
        \sgn
            {1,\cdots,n,1^{'},\cdots,n^{'}}
            {\alpha,\indexI,\indexJp,\indexIc\backslash\{\alpha\},\indexJpc}
        \sgn
            {\indexIc\backslash\{\alpha\}}
            {\indexic{k},\indexIc\backslash\{\alpha,\indexic{k}\}}
        \\
        \sgn
            {\indexic{1},\cdots,\indexic{k-1},\order{\alpha,r},\indexic{k+1},\cdots,\widehat{\alpha},\cdots,\indexic{n-p}}
            {\order{(\indexIc\backslash\{\indexic{k}\})\cup\{r\}}}
        \\
        \sgn
            {1^{'},\cdots,n^{'},1,\cdots,n}
            {\order{(\indexIc\backslash\{\indexic{k}\})\cup\{r\}},\indexJpc,\order{(\indexI\backslash\{r\})\cup\{\indexic{k}\}},\indexJp}
    \end{pmatrix*}
    =
    (-1)^{n^2}
    \begin{pmatrix*}[l]
        \sgn
            {r,\order{(\indexI\backslash\{r\})\cup\{\indexic{k}\}}}
            {\indexic{k},\indexI}
        \\
        \sgn
            {\order{\alpha,r}}
            {\alpha,r}
    \end{pmatrix*}
    .
\end{align*}
\end{lemma}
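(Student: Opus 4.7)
The approach parallels the proofs of Lemmas \ref{Lemma-simplification for RHS of (alpha,I,J)-lemma 1} and \ref{Lemma-simplification for RHS of (alpha,I,J)-lemma 2}. The plan is to apply repeatedly the four elementary identities for the sign of a permutation (row swap, insertion of a disjoint block, merging of two signs sharing a common row, and swapping of two adjacent blocks with a parity factor $(-1)^{|\indexI||\indexJ|}$) in order to peel off the two factors on the right-hand side and reduce what remains to the universal sign $\sgn{1,\cdots,n,1^{'},\cdots,n^{'}}{1^{'},\cdots,n^{'},1,\cdots,n}=(-1)^{n^2}$.

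Concretely, the first step is to factor $\sgn{\order{\alpha,r}}{\alpha,r}$ out of the third sign via the "unorder" identity
\begin{align*}
\sgn{\indexic{1},\cdots,\indexic{k-1},\order{\alpha,r},\indexic{k+1},\cdots,\widehat{\alpha},\cdots,\indexic{n-p}}{\order{(\indexIc\backslash\{\indexic{k}\})\cup\{r\}}}=\sgn{\order{\alpha,r}}{\alpha,r}\cdot\sgn{\indexic{1},\cdots,\indexic{k-1},\alpha,r,\indexic{k+1},\cdots,\widehat{\alpha},\cdots,\indexic{n-p}}{\order{(\indexIc\backslash\{\indexic{k}\})\cup\{r\}}}.
\end{align*}
I would then absorb the resulting residue together with the second sign $\sgn{\indexIc\backslash\{\alpha\}}{\indexic{k},\indexIc\backslash\{\alpha,\indexic{k}\}}$ and the leading $\alpha$ of the first sign, compressing the product into a single permutation that rewrites $\indexIc$ by replacing $\indexic{k}$ with $r$. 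This is the direct analogue of the compression opening the proof of Lemma \ref{Lemma-simplification for RHS of (alpha,I,J)-lemma 2}.

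The second major step is to extract $\sgn{r,\indexI\backslash\{r\}}{\indexI}$ from the first sign, swap the two mixed blocks so that the top row reads $\order{(\indexIc\backslash\{\indexic{k}\})\cup\{r\}},\indexJpc,\order{(\indexI\backslash\{r\})\cup\{\indexic{k}\}},\indexJp$ (matching the fourth sign's bottom row), and recognize that the remaining $r\leftrightarrow\indexic{k}$ rewrite combines with $\sgn{r,\indexI\backslash\{r\}}{\indexI}$ to reassemble exactly $\sgn{r,\order{(\indexI\backslash\{r\})\cup\{\indexic{k}\}}}{\indexic{k},\indexI}$. The block swap contributes the parity $(-1)^{(p+q)(n-p+n-q)}=(-1)^{(p+q)^2}=1$, and the product of the first and fourth signs collapses to $(-1)^{n^2}$ exactly as at the end of Lemma \ref{Lemma-simplification for RHS of (alpha,I,J)-lemma 2}. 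The main obstacle will be the parity bookkeeping induced by the assumption $\alpha<\indexi{1}$: $\alpha$ always sits at the head of the ordered global list, whereas $\indexic{k}$ may land anywhere inside $\order{(\indexI\backslash\{r\})\cup\{\indexic{k}\}}$, so the intermediate compressions are not perfectly symmetric between the $\indexI$- and $\indexIc$-sides and the element $r$ must be positioned next to $\indexic{k}$ with care before the block-swap identity is applied. Since the target sign $(-1)^{n^2}$ agrees with the one produced in Lemma \ref{Lemma-simplification for RHS of (alpha,I,J)-lemma 2} and the two extracted factors differ only in which pair of elements is swapped, I expect all intermediate parities to cancel precisely, matching the pattern of the preceding lemma.
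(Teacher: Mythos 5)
Your route is the same as the paper's --- iterated use of the elementary sign identities to peel off $\sgn{\order{\alpha,r}}{\alpha,r}$ and the $\indexI$-side factor, then collapse the residual product of the first and fourth signs to $(-1)^{n^2}$ --- but the one parity you compute explicitly is wrong, and in a lemma that consists of nothing but parity bookkeeping that is a real gap. You assert that the block swap contributes $(-1)^{(p+q)(n-p+n-q)}=(-1)^{(p+q)^2}=1$. Since $m^2\equiv m\pmod 2$, in fact $(-1)^{(p+q)^2}=(-1)^{p+q}$, which is $-1$ whenever $p+q$ is odd. The swap parity here is $+1$ for a different reason and with different blocks: by the time the two mixed blocks are interchanged, $\alpha$, $r$ and $\indexic{k}$ have already been pulled to the head of the row, so the blocks have sizes $p+q-1$ and $2n-p-q-2$, and the exponent $(p+q-1)(2n-p-q-2)\equiv(p+q)(p+q-1)\equiv 0\pmod 2$ is even because it is a product of consecutive integers.

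Beyond that, several residual signs that you do not track must be shown to cancel rather than assumed to: repositioning $r$ next to $\indexic{k}$ before the swap produces factors $(-1)^{p-1+q}$ and $(-1)^{2n-p-q-2}$, and the final reassembly is not sign-free, since
\begin{align*}
    \sgn
        {r,\indexI\backslash\{r\}}
        {\indexI}
    \,
    \sgn
        {\order{(\indexI\backslash\{r\})\cup\{\indexic{k}\}}}
        {\indexic{k},\indexI\backslash\{r\}}
    =
    -\,
    \sgn
        {r,\order{(\indexI\backslash\{r\})\cup\{\indexic{k}\}}}
        {\indexic{k},\indexI}
    .
\end{align*}
These factors do cancel, but verifying that is the entire content of the lemma. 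Your closing appeal to ``the pattern of the preceding lemma'' is also not a safe guide: Lemma \ref{Lemma-simplification for RHS of (alpha,I,J)-lemma 2} comes out with an overall $-(-1)^{n^2}$ whereas the present statement carries $+(-1)^{n^2}$, so the two computations do not match sign for sign, and the cancellation must be carried out explicitly as in the paper's Section \ref{apex-simplification for RHs of (alpha,I,J)}.
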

\begin{proof}
We use the following properties of the sign of permutation:
\begin{itemize}
    \item 
$
    \sgn
        {1,\cdots,n,1^{'},\cdots,n^{'}}
        {\alpha,\indexI,\indexJp,\indexIc\backslash\{\alpha\},\indexJpc}
    \sgn
        {\indexIc\backslash\{\alpha\}}
        {\indexic{k},\indexIc\backslash\{\alpha,\indexic{k}\}}
    =
    \sgn
        {1,\cdots,n,1^{'},\cdots,n^{'}}
        {\alpha,\indexI,\indexJp,\indexic{k},\indexIc\backslash\{\alpha,\indexic{k}\},\indexJpc}
$,
    \item 
$
    \sgn
        {\indexic{1},\cdots,\indexic{k-1},\order{\alpha,r},\indexic{k+1},\cdots,\widehat{\alpha},\cdots,\indexic{n-p}}
        {\order{(\indexIc\backslash\{\indexic{k}\})\cup\{r\}}}
    =
    \sgn
        {\order{\alpha,r},\indexIc\backslash\{\indexic{k},\alpha\}}
        {\order{(\indexIc\backslash\{\indexic{k}\})\cup\{r\}}}
$,
    \item 
$
    \begin{pmatrix*}[l]
        \sgn
        {\order{\alpha,r},\indexIc\backslash\{\indexic{k},\alpha\}}
        {\order{(\indexIc\backslash\{\indexic{k}\})\cup\{r\}}}
    \\
    \sgn
        {1^{'},\cdots,n^{'},1,\cdots,n}
        {\order{(\indexIc\backslash\{\indexic{k}\})\cup\{r\}},\indexJpc,\order{(\indexI\backslash\{r\})\cup\{\indexic{k}\}},\indexJp}
    \end{pmatrix*}
    \\
    =
    \sgn
        {1^{'},\cdots,n^{'},1,\cdots,n}
        {\order{\alpha,r},\indexIc\backslash\{\indexic{k},\alpha\},\indexJpc,\order{(\indexI\backslash\{r\})\cup\{\indexic{k}\}},\indexJp}
$.
\end{itemize}
We simplify the left-hand side of the equation in Lemma \ref{Lemma-simplification for RHS of (alpha,I,J)-lemma 3} to
\begin{align}
    \sgn
        {1,\cdots,n,1^{'},\cdots,n^{'}}
        {\alpha,\indexI,\indexJp,\indexic{k},\indexIc\backslash\{\alpha,\indexic{k}\},\indexJpc}
    \sgn
        {1^{'},\cdots,n^{'},1,\cdots,n}
        {\order{\alpha,r},\indexIc\backslash\{\indexic{k},\alpha\},\indexJpc,\order{(\indexI\backslash\{r\})\cup\{\indexic{k}\}},\indexJp}
    .
\label{EQ-simplification for RHS of (alpha,I,J)-lemma 3-no1}
\end{align}

We use the following properties of the sign of permutation:
\begin{itemize}
    \item 
$
    \sgn
        {1,\cdots,n,1^{'},\cdots,n^{'}}
        {\alpha,\indexI,\indexJp,\indexic{k},\indexIc\backslash\{\alpha,\indexic{k}\},\indexJpc}
    =
    \sgn
        {r,\indexI\backslash\{r\}}
        {\indexI}
    \sgn
        {1,\cdots,n,1^{'},\cdots,n^{'}}
        {\alpha,r,\indexI\backslash\{r\},\indexJp,\indexic{k},\indexIc\backslash\{\alpha,\indexic{k}\},\indexJpc}
$,
    \item 
$
    \sgn
        {1^{'},\cdots,n^{'},1,\cdots,n}
        {\order{\alpha,r},\indexIc\backslash\{\indexic{k},\alpha\},\indexJpc,\order{(\indexI\backslash\{r\})\cup\{\indexic{k}\}},\indexJp}
    \\
    =
    \sgn
        {1^{'},\cdots,n^{'},1,\cdots,n}
        {\alpha,r,\indexIc\backslash\{\indexic{k},\alpha\},\indexJpc,\order{(\indexI\backslash\{r\})\cup\{\indexic{k}\}},\indexJp}
    \sgn
        {\order{\alpha,r}}
        {\alpha,r}
$.
\end{itemize}
We simplify (\ref{EQ-simplification for RHS of (alpha,I,J)-lemma 3-no1}) to
\begin{align}
    \begin{pmatrix*}[l]
        \sgn
            {r,\indexI\backslash\{r\}}
            {\indexI}
        \sgn
            {1,\cdots,n,1^{'},\cdots,n^{'}}
            {\alpha,r,\indexI\backslash\{r\},\indexJp,\indexic{k},\indexIc\backslash\{\alpha,\indexic{k}\},\indexJpc}
        \\
        \sgn
            {1^{'},\cdots,n^{'},1,\cdots,n}
            {\alpha,r,\indexIc\backslash\{\indexic{k},\alpha\},\indexJpc,\order{(\indexI\backslash\{r\})\cup\{\indexic{k}\}},\indexJp}
        \sgn
            {\order{\alpha,r}}
            {\alpha,r}
    \end{pmatrix*}
    .
\label{EQ-simplification for RHS of (alpha,I,J)-lemma 3-no2}
\end{align}

We use the following properties of the sign of permutation:
\begin{itemize}
    \item 
$
    \sgn
        {1,\cdots,n,1^{'},\cdots,n^{'}}
        {\alpha,r,\indexI\backslash\{r\},\indexJp,\indexic{k},\indexIc\backslash\{\alpha,\indexic{k}\},\indexJpc}
    =
    (-1)^{p-1+q}
    \sgn
        {1,\cdots,n,1^{'},\cdots,n^{'}}
        {\alpha,r,\indexic{k},\indexI\backslash\{r\},\indexJp,\indexIc\backslash\{\alpha,\indexic{k}\},\indexJpc}
$,
    \item 
$
    \sgn
        {1^{'},\cdots,n^{'},1,\cdots,n}
        {\alpha,r,\indexIc\backslash\{\indexic{k},\alpha\},\indexJpc,\order{(\indexI\backslash\{r\})\cup\{\indexic{k}\}},\indexJp}
    \\
    =
    \sgn
        {1^{'},\cdots,n^{'},1,\cdots,n}
        {\alpha,r,\indexIc\backslash\{\indexic{k},\alpha\},\indexJpc,\indexic{k},\indexI\backslash\{r\},\indexJp}
    \sgn
        {\order{(\indexI\backslash\{r\})\cup\{\indexic{k}\}}}
        {\indexic{k},\indexI\backslash\{r\}}
$,
    \item 
$
    \sgn
        {1^{'},\cdots,n^{'},1,\cdots,n}
        {\alpha,r,\indexIc\backslash\{\indexic{k},\alpha\},\indexJpc,\indexic{k},\indexI\backslash\{r\},\indexJp}
    =
    (-1)^{n-p-2+n-q}
    \sgn
        {1^{'},\cdots,n^{'},1,\cdots,n}
        {\alpha,r,\indexic{k},\indexIc\backslash\{\indexic{k},\alpha\},\indexJpc,\indexI\backslash\{r\},\indexJp}
$.
\end{itemize}
We simplify (\ref{EQ-simplification for RHS of (alpha,I,J)-lemma 3-no2}) to
\begin{align}
    \begin{pmatrix*}[l]
        (-1)
        \sgn
            {r,\indexI\backslash\{r\}}
            {\indexI}
        \sgn
            {1,\cdots,n,1^{'},\cdots,n^{'}}
            {\alpha,r,\indexic{k},\indexI\backslash\{r\},\indexJp,\indexIc\backslash\{\alpha,\indexic{k}\},\indexJpc}
        \sgn
            {1^{'},\cdots,n^{'},1,\cdots,n}
            {\alpha,r,\indexic{k},\indexIc\backslash\{\indexic{k},\alpha\},\indexJpc,\indexI\backslash\{r\},\indexJp}
        \\
        \sgn
            {\order{(\indexI\backslash\{r\})\cup\{\indexic{k}\}}}
            {\indexic{k},\indexI\backslash\{r\}}
        \sgn
            {\order{\alpha,r}}
            {\alpha,r}
    \end{pmatrix*}
    .
\label{EQ-simplification for RHS of (alpha,I,J)-lemma 3-no3}
\end{align}
We use the following properties of the sign of permutation:
\begin{itemize}
    \item 
$
    \sgn
        {1^{'},\cdots,n^{'},1,\cdots,n}
        {\alpha,r,\indexic{k},\indexIc\backslash\{\indexic{k},\alpha\},\indexJpc,\indexI\backslash\{r\},\indexJp}
    =
        (-1)^{(n-p-2+n-q)(p-1+q)}
        \sgn
            {1^{'},\cdots,n^{'},1,\cdots,n}
            {\alpha,r,\indexic{k},\indexI\backslash\{r\},\indexJp,\indexIc\backslash\{\indexic{k},\alpha\},\indexJpc}
    \\
    =
        (-1)^{(p+q)(p+q+1)}
        \sgn
            {1^{'},\cdots,n^{'},1,\cdots,n}
            {\alpha,r,\indexic{k},\indexI\backslash\{r\},\indexJp,\indexIc\backslash\{\indexic{k},\alpha\},\indexJpc}
    =
        \sgn
            {1^{'},\cdots,n^{'},1,\cdots,n}
            {\alpha,r,\indexic{k},\indexI\backslash\{r\},\indexJp,\indexIc\backslash\{\indexic{k},\alpha\},\indexJpc}
$,
    \item 
$
    \sgn
        {1,\cdots,n,1^{'},\cdots,n^{'}}
        {\alpha,r,\indexic{k},\indexI\backslash\{r\},\indexJp,\indexIc\backslash\{\alpha,\indexic{k}\},\indexJpc}
    \sgn
            {1^{'},\cdots,n^{'},1,\cdots,n}
            {\alpha,r,\indexic{k},\indexI\backslash\{r\},\indexJp,\indexIc\backslash\{\indexic{k},\alpha\},\indexJpc}
    \\
    =
    \sgn
        {1,\cdots,n,1^{'},\cdots,n^{'}}
        {1^{'},\cdots,n^{'},1,\cdots,n}
    =
    (-1)^{n^2}
$.
\end{itemize}
We simplify (\ref{EQ-simplification for RHS of (alpha,I,J)-lemma 3-no3}) to
\begin{align}
    -(-1)^{n^2}
    \sgn
        {r,\indexI\backslash\{r\}}
        {\indexI}
    \sgn
        {\order{(\indexI\backslash\{r\})\cup\{\indexic{k}\}}}
        {\indexic{k},\indexI\backslash\{r\}}
    \sgn
        {\order{\alpha,r}}
        {\alpha,r}
    .
\label{EQ-simplification for RHS of (alpha,I,J)-lemma 3-no4}
\end{align}

We use the following properties of the sign of permutation:
\begin{itemize}
    \item 
$
    \sgn
        {\order{(\indexI\backslash\{r\})\cup\{\indexic{k}\}}}
        {\indexic{k},\indexI\backslash\{r\}}
    =
    \sgn
        {r,\order{(\indexI\backslash\{r\})\cup\{\indexic{k}\}}}
        {r,\indexic{k},\indexI\backslash\{r\}}
    =
    (-1)
    \sgn
        {r,\order{(\indexI\backslash\{r\})\cup\{\indexic{k}\}}}
        {\indexic{k},r,\indexI\backslash\{r\}}
$,
    \item 
$
    \sgn
        {r,\indexI\backslash\{r\}}
        {\indexI}
    \sgn
        {r,\order{(\indexI\backslash\{r\})\cup\{\indexic{k}\}}}
        {\indexic{k},r,\indexI\backslash\{r\}}
    =
    \sgn
        {r,\order{(\indexI\backslash\{r\})\cup\{\indexic{k}\}}}
        {\indexic{k},\indexI}
$.
\end{itemize}
We simplify (\ref{EQ-simplification for RHS of (alpha,I,J)-lemma 3-no4}) to
\begin{align*}
    &
        -(-1)^{n^2}
        (-1)
        \sgn
            {r,\order{(\indexI\backslash\{r\})\cup\{\indexic{k}\}}}
            {\indexic{k},\indexI}
        \sgn
            {\order{\alpha,r}}
            {\alpha,r}
    \\
    =&
        (-1)^{n^2}
        \sgn
            {r,\order{(\indexI\backslash\{r\})\cup\{\indexic{k}\}}}
            {\indexic{k},\indexI}
        \sgn
            {\order{\alpha,r}}
            {\alpha,r}
    ,
\end{align*}
which completes the proof of Lemma \ref{Lemma-simplification for RHS of (alpha,I,J)-lemma 3}.
\end{proof}

\begin{lemma}
\label{Lemma-simplification for RHS of (alpha,I,J)-lemma 4}
\begin{align*}
    &
    \begin{pmatrix*}[l]
        \sgn
            {1,\cdots,n,1^{'},\cdots,n^{'}}
            {\alpha,\indexI,\indexJp,\indexIc\backslash\{\alpha\},\indexJpc}
        \sgn
            {\indexIc\backslash\{\alpha\}}
            {\indexic{k},\indexIc\backslash\{\alpha,\indexic{k}\}}
        \\
        \sgn
            {\indexic{1},\cdots,\indexic{k-1},\order{r_1,r_2},\indexic{k+1},\cdots,\widehat{\alpha},\cdots,\indexic{n-p}}
            {\order{(\indexIc\backslash\{\indexic{k},\alpha\})\cup\{r_1,r_2\}}}
        \\
        \sgn
            {1^{'},\cdots,n^{'},1,\cdots,n}
            {\order{(\indexIc\backslash\{\indexic{k},\alpha\})\cup\{r_1,r_2\}},\indexJpc,\alpha,\order{(\indexI\backslash\{r_1,r_2\})\cup\{\indexic{k}\}},\indexJp}
    \end{pmatrix*}   
    \\
    =&
    (-1)^{n^2}
    \begin{pmatrix*}[l]
        \sgn
            {\order{r_1,r_2},\indexI\backslash\{r_1,r_2\}}
            {\indexI}
        \\
        \sgn
            {\order{(\indexI\backslash\{r_1,r_2\})\cup\{\indexic{k}\}}}
            {\indexic{k},(\indexI\backslash\{r_1,r_2\})}
    \end{pmatrix*}
    .
\end{align*}
\end{lemma}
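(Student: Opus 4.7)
The plan is to imitate the strategy used in the proofs of Lemmas~\ref{Lemma-simplification for RHS of (alpha,I,J)-lemma 1}--\ref{Lemma-simplification for RHS of (alpha,I,J)-lemma 3} above: reduce the product of four signs to the single identity $\sgn{1,\cdots,n,1^{'},\cdots,n^{'}}{1^{'},\cdots,n^{'},1,\cdots,n}=(-1)^{n^2}$, multiplied by corrections arising from (i) extracting the two $\order$ functions and (ii) swapping adjacent blocks of indices. The first step will be to merge the first two signs by inserting $\indexic{k}$ into the bottom row at the appropriate position, producing a single sign with top row $1,\cdots,n,1^{'},\cdots,n^{'}$ and bottom row $\alpha,\indexI,\indexJp,\indexic{k},\indexIc\backslash\{\alpha,\indexic{k}\},\indexJpc$.

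The second step will be to unpack the two $\order$ factors. In the third sign, I would first rewrite the top row as $\order{r_1,r_2},\indexIc\backslash\{\indexic{k},\alpha\}$ (using that the non-ordered complement indices already appear in the canonical order), and then extract $\sgn{\order{r_1,r_2}}{r_1,r_2}$, leaving the sign $\sgn{r_1,r_2,\indexIc\backslash\{\indexic{k},\alpha\}}{\order{(\indexIc\backslash\{\indexic{k},\alpha\})\cup\{r_1,r_2\}}}$. In the fourth sign, I would symmetrically convert the bottom-row block $\order{(\indexI\backslash\{r_1,r_2\})\cup\{\indexic{k}\}}$ into $\indexic{k},\indexI\backslash\{r_1,r_2\}$, picking up the second of the two claimed $\order$-corrections, and pair the top-row block $\order{(\indexIc\backslash\{\indexic{k},\alpha\})\cup\{r_1,r_2\}}$ with $r_1,r_2,\indexIc\backslash\{\indexic{k},\alpha\}$ so that it cancels against the matching block just produced by the third sign. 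Multiplying the rewritten third and fourth signs then yields a single sign of the form $\sgn{1^{'},\cdots,n^{'},1,\cdots,n}{r_1,r_2,\indexIc\backslash\{\indexic{k},\alpha\},\indexJpc,\alpha,\indexic{k},\indexI\backslash\{r_1,r_2\},\indexJp}$.

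The third step will be to extract $\sgn{\order{r_1,r_2},\indexI\backslash\{r_1,r_2\}}{\indexI}$ from the merged first sign; the accompanying $\sgn{\order{r_1,r_2}}{r_1,r_2}$ cancels the factor extracted in Step~2, and the remaining sign has bottom row $\alpha,r_1,r_2,\indexI\backslash\{r_1,r_2\},\indexJp,\indexic{k},\indexIc\backslash\{\alpha,\indexic{k}\},\indexJpc$. After one block-swap of $\indexJp$ past $\indexic{k}$ in this sign and one block-swap of $\indexJpc$ past $\indexI\backslash\{r_1,r_2\}$ in the combined third--fourth sign (the two parities combining to $(-1)^{(p+q)(p+q+1)}=1$), the two bottom rows coincide, so the product collapses to $\sgn{1,\cdots,n,1^{'},\cdots,n^{'}}{1^{'},\cdots,n^{'},1,\cdots,n}=(-1)^{n^2}$.

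The main obstacle will be parity bookkeeping: one must verify that the block-swap factors arising in Steps~2 and~3 combine to $+1$, so that only $(-1)^{n^2}$ together with the two explicit $\order$-corrections $\sgn{\order{r_1,r_2},\indexI\backslash\{r_1,r_2\}}{\indexI}$ and $\sgn{\order{(\indexI\backslash\{r_1,r_2\})\cup\{\indexic{k}\}}}{\indexic{k},\indexI\backslash\{r_1,r_2\}}$ survive. No new idea beyond those employed in Lemma~\ref{Lemma-simplification for RHS of (alpha,I,J)-lemma 3} is needed; the present lemma is a mechanical extension in which the single index $r$ is replaced by the pair $r_1,r_2$ and the $\coefD$-extraction introduces one additional $\order$ factor on each side.
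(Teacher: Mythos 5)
Your plan follows the paper's proof essentially step for step: merge the first two signs by inserting $\indexic{k}$, absorb the third sign into the fourth, extract the two surviving $\order$-corrections $\sgn{\order{r_1,r_2},\indexI\backslash\{r_1,r_2\}}{\indexI}$ and $\sgn{\order{(\indexI\backslash\{r_1,r_2\})\cup\{\indexic{k}\}}}{\indexic{k},\indexI\backslash\{r_1,r_2\}}$, and collapse the remaining product to $(-1)^{n^2}$; the only cosmetic difference is that you break $\order{r_1,r_2}$ into $r_1,r_2$ and cancel the resulting pair of $\sgn{\order{r_1,r_2}}{r_1,r_2}$ factors, whereas the paper carries $\order{r_1,r_2}$ as an intact even-length block and extracts both $\order$-corrections from the combined third--fourth sign. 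One point needs repair, though: the two block swaps you name in the last step (moving $\indexJp$ past $\indexic{k}$, and $\indexJpc$ past $\indexI\backslash\{r_1,r_2\}$) neither align the two bottom rows nor have parities multiplying to $(-1)^{(p+q)(p+q+1)}$. The rows $\alpha,r_1,r_2,\indexI\backslash\{r_1,r_2\},\indexJp,\indexic{k},\indexIc\backslash\{\alpha,\indexic{k}\},\indexJpc$ and $r_1,r_2,\indexIc\backslash\{\indexic{k},\alpha\},\indexJpc,\alpha,\indexic{k},\indexI\backslash\{r_1,r_2\},\indexJp$ differ by a large rearrangement: you must move $\alpha$ past the $p+q$ entries $r_1,r_2,\indexI\backslash\{r_1,r_2\},\indexJp$ (parity $(-1)^{p+q}$), then swap the $(p+q-2)$-block $\indexI\backslash\{r_1,r_2\},\indexJp$ with the $(2n-p-q)$-block $\alpha,\indexic{k},\indexIc\backslash\{\alpha,\indexic{k}\},\indexJpc$ (parity $(-1)^{(p+q-2)(2n-p-q)}=(-1)^{p+q}$), the even-length relocation of $\alpha,\indexic{k}$ being free. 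These do combine to $+1$, so your stated conclusion is correct, but the bookkeeping as written would not compile into a proof without this substitution.
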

\begin{proof}
We use the following properties of the sign of permutation:
\begin{itemize}
    \item 
$
    \sgn
        {1,\cdots,n,1^{'},\cdots,n^{'}}
        {\alpha,\indexI,\indexJp,\indexIc\backslash\{\alpha\},\indexJpc}
    \sgn
        {\indexIc\backslash\{\alpha\}}
        {\indexic{k},\indexIc\backslash\{\alpha,\indexic{k}\}}
    =
    \sgn
        {1,\cdots,n,1^{'},\cdots,n^{'}}
        {\alpha,\indexI,\indexJp,\indexic{k},\indexIc\backslash\{\alpha,\indexic{k}\},\indexJpc}
$,
    \item 
$
    \sgn
        {\indexic{1},\cdots,\indexic{k-1},\order{r_1,r_2},\indexic{k+1},\cdots,\widehat{\alpha},\cdots,\indexic{n-p}}
        {\order{(\indexIc\backslash\{\indexic{k},\alpha\})\cup\{r_1,r_2\}}}
    =
    \sgn
        {\order{r_1,r_2},\indexIc\backslash\{\indexic{k},\alpha\}}
        {\order{(\indexIc\backslash\{\indexic{k},\alpha\})\cup\{r_1,r_2\}}}
$,
    \item 
$
    \sgn
        {\order{r_1,r_2},\indexIc\backslash\{\indexic{k},\alpha\}}
        {\order{(\indexIc\backslash\{\indexic{k},\alpha\})\cup\{r_1,r_2\}}}
    \sgn
        {1^{'},\cdots,n^{'},1,\cdots,n}
        {\order{(\indexIc\backslash\{\indexic{k},\alpha\})\cup\{r_1,r_2\}},\indexJpc,\alpha,\order{(\indexI\backslash\{r_1,r_2\})\cup\{\indexic{k}\}},\indexJp}
    \\
    =
    \sgn
        {1^{'},\cdots,n^{'},1,\cdots,n}
        {\order{r_1,r_2},\indexIc\backslash\{\indexic{k},\alpha\},\indexJpc,\alpha,\order{(\indexI\backslash\{r_1,r_2\})\cup\{\indexic{k}\}},\indexJp}
$.
\end{itemize}
We simplify the left-hand side of the equation in Lemma \ref{Lemma-simplification for RHS of (alpha,I,J)-lemma 4} to
\begin{align}
    \sgn
        {1,\cdots,n,1^{'},\cdots,n^{'}}
        {\alpha,\indexI,\indexJp,\indexic{k},\indexIc\backslash\{\alpha,\indexic{k}\},\indexJpc}
    \sgn
        {1^{'},\cdots,n^{'},1,\cdots,n}
        {\order{r_1,r_2},\indexIc\backslash\{\indexic{k},\alpha\},\indexJpc,\alpha,\order{(\indexI\backslash\{r_1,r_2\})\cup\{\indexic{k}\}},\indexJp}
    .
\label{EQ-simplification for RHS of (alpha,I,J)-lemma 4-no1}
\end{align}

We use the following properties of the sign of permutation:
\begin{itemize}
    \item 
$
    \sgn
    {1^{'},\cdots,n^{'},1,\cdots,n}
    {\order{r_1,r_2},\indexIc\backslash\{\indexic{k},\alpha\},\indexJpc,\alpha,\order{(\indexI\backslash\{r_1,r_2\})\cup\{\indexic{k}\}},\indexJp}
    \\
    =
    \begin{pmatrix*}[l]
        \sgn
            {1^{'},\cdots,n^{'},1,\cdots,n}
            {\order{r_1,r_2},\indexIc\backslash\{\indexic{k},\alpha\},\indexJpc,\alpha,\indexic{k},\indexI\backslash\{r_1,r_2\},\indexJp}
        \\
        \sgn
            {\order{(\indexI\backslash\{r_1,r_2\})\cup\{\indexic{k}\}}}
            {\indexic{k},\indexI\backslash\{r_1,r_2\}}
    \end{pmatrix*}
$,
    \item 
$
        \sgn
            {1^{'},\cdots,n^{'},1,\cdots,n}
            {\order{r_1,r_2},\indexIc\backslash\{\indexic{k},\alpha\},\indexJpc,\alpha,\indexic{k},\indexI\backslash\{r_1,r_2\},\indexJp}
        =
        \sgn
            {1^{'},\cdots,n^{'},1,\cdots,n}
            {\indexIc\backslash\{\indexic{k},\alpha\},\indexJpc,\alpha,\indexic{k},\order{r_1,r_2},\indexI\backslash\{r_1,r_2\},\indexJp}
        \\
        =
        \sgn
            {1^{'},\cdots,n^{'},1,\cdots,n}
            {\indexIc\backslash\{\indexic{k},\alpha\},\indexJpc,\alpha,\indexic{k},\indexI,\indexJp}
        \sgn
            {\order{r_1,r_2},\indexI\backslash\{r_1,r_2\}}
            {\indexI}  
$.
\end{itemize}
We simplify (\ref{EQ-simplification for RHS of (alpha,I,J)-lemma 4-no1}) to
\begin{align}
    \begin{pmatrix*}[l]
        \sgn
            {1,\cdots,n,1^{'},\cdots,n^{'}}
            {\alpha,\indexI,\indexJp,\indexic{k},\indexIc\backslash\{\alpha,\indexic{k}\},\indexJpc}
        \sgn
            {1^{'},\cdots,n^{'},1,\cdots,n}
            {\indexIc\backslash\{\indexic{k},\alpha\},\indexJpc,\alpha,\indexic{k},\indexI,\indexJp}
        \\
        \sgn
            {\order{r_1,r_2},\indexI\backslash\{r_1,r_2\}}
            {\indexI}
        \sgn
            {\order{(\indexI\backslash\{r_1,r_2\})\cup\{\indexic{k}\}}}
            {\indexic{k},(\indexI\backslash\{r_1,r_2\})}
    \end{pmatrix*}
    .
\label{EQ-simplification for RHS of (alpha,I,J)-lemma 4-no2}
\end{align}

We use the following properties of the sign of permutation:
\begin{itemize}
    \item 
$
    \sgn
        {1,\cdots,n,1^{'},\cdots,n^{'}}
        {\alpha,\indexI,\indexJp,\indexic{k},\indexIc\backslash\{\alpha,\indexic{k}\},\indexJpc}
    =
    (-1)^{p+q}
    \sgn
        {1,\cdots,n,1^{'},\cdots,n^{'}}
        {\alpha,\indexic{k},\indexI,\indexJp,\indexIc\backslash\{\alpha,\indexic{k}\},\indexJpc}
$,
    \item 
$
        \sgn
        {1^{'},\cdots,n^{'},1,\cdots,n}
        {\indexIc\backslash\{\indexic{k},\alpha\},\indexJpc,\alpha,\indexic{k},\indexI,\indexJp}
        =
        (-1)^{(p+q+2)(n-p-2+n-q)}
        \sgn
            {1^{'},\cdots,n^{'},1,\cdots,n}
            {\alpha,\indexic{k},\indexI,\indexJp,\indexIc\backslash\{\indexic{k},\alpha\},\indexJpc}
        \\
        =
        (-1)^{(p+q)}
        \sgn
            {1^{'},\cdots,n^{'},1,\cdots,n}
            {\alpha,\indexic{k},\indexI,\indexJp,\indexIc\backslash\{\indexic{k},\alpha\},\indexJpc}
$,
    \item
$
    \sgn
        {1,\cdots,n,1^{'},\cdots,n^{'}}
        {\alpha,\indexic{k},\indexI,\indexJp,\indexIc\backslash\{\alpha,\indexic{k}\},\indexJpc}
    \sgn
        {1^{'},\cdots,n^{'},1,\cdots,n}
        {\alpha,\indexic{k},\indexI,\indexJp,\indexIc\backslash\{\indexic{k},\alpha\},\indexJpc}
    =
    \sgn
        {1,\cdots,n,1^{'},\cdots,n^{'}}
        {1^{'},\cdots,n^{'},1,\cdots,n}
    =
    (-1)^{n^2}
$.
\end{itemize}
We simplify (\ref{EQ-simplification for RHS of (alpha,I,J)-lemma 4-no2}) to
\begin{align*}
    (-1)^{n^2}
    \sgn
        {\order{r_1,r_2},\indexI\backslash\{r_1,r_2\}}
        {\indexI}
    \sgn
        {\order{(\indexI\backslash\{r_1,r_2\})\cup\{\indexic{k}\}}}
        {\indexic{k},(\indexI\backslash\{r_1,r_2\})}
    ,
\end{align*}
which completes the proof of Lemma \ref{Lemma-simplification for RHS of (alpha,I,J)-lemma 4}.
\end{proof}

\begin{lemma}
\label{Lemma-simplification for RHS of (alpha,I,J)-lemma 5}
\begin{align*}
    \begin{pmatrix*}[l]
        \sgn
            {1,\cdots,n,1^{'},\cdots,n^{'}}
            {\alpha,\indexI,\indexJp,\indexIc\backslash\{\alpha\},\indexJpc}
        \sgn
            {\indexIc\backslash\{\alpha\},\indexJpc}
            {\indexjpc{l},\indexIc\backslash\{\alpha\},\indexJpc\backslash\{\indexjpc{l}\}}
        \\
        \sgn
            {\indexIc\backslash\{\alpha\},\indexjpc{1},\cdots,\indexjpc{l-1},\indexjpc{l},\alpha,\indexjpc{l+1},\cdots,\indexjpc{n-q}}
            {\indexIc,\indexJpc}
        \sgn
            {1^{'},\cdots,n^{'},1,\cdots,n}
            {\indexIc,\indexJpc,\indexI,\indexJp}
    \end{pmatrix*}
    =
    -(-1)^{n^2}
    .
\end{align*}
\end{lemma}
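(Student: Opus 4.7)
The plan is to mimic the strategy of Lemmas \ref{Lemma-simplification for RHS of (alpha,I,J)-lemma 1} through \ref{Lemma-simplification for RHS of (alpha,I,J)-lemma 4}: apply the merging rule and the adjacent-block swap rule from the Proposition on signs of permutation repeatedly, reducing the product of the four factors (call them $S_1, S_2, S_3, S_4$ in order) to a single instance of $\sgn{1,\cdots,n,1^{'},\cdots,n^{'}}{1^{'},\cdots,n^{'},1,\cdots,n}=(-1)^{n^2}$, collecting all auxiliary $\pm 1$ factors along the way. The extra minus sign appearing in $-(-1)^{n^2}$ is expected to come from the fact that in the top row of $S_3$, the index $\alpha$ is interleaved with primed indices (sitting between $\indexjpc{l}$ and $\indexjpc{l+1}$), which forces one additional transposition compared to the analogous manipulation in Lemma \ref{Lemma-simplification for RHS of (alpha,I,J)-lemma 1}.

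First I would merge $S_1$ and $S_2$: the top row $\indexIc\backslash\{\alpha\},\indexJpc$ of $S_2$ matches the last two blocks of the bottom row of $S_1$, so by the merging rule the product becomes
\[
S_1\,S_2 \;=\; \sgn{1,\cdots,n,1^{'},\cdots,n^{'}}{\alpha,\indexI,\indexJp,\indexjpc{l},\indexIc\backslash\{\alpha\},\indexJpc\backslash\{\indexjpc{l}\}}.
\]
Separately, I would reduce $S_3$ by pulling $\alpha$ leftward in its top row through $\indexjpc{l},\indexjpc{l-1},\ldots,\indexjpc{1}$, which costs $(-1)^l$ and lands $\alpha$ immediately after $\indexIc\backslash\{\alpha\}$, yielding $S_3 = (-1)^l\,\sgn{\indexIc\backslash\{\alpha\},\alpha,\indexJpc}{\indexIc,\indexJpc}$. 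The remaining factor is a pure $\indexIc$-internal permutation that repositions $\alpha$ to its ordered slot inside $\indexIc$.

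Next I would combine the reduced $S_1S_2$ with $S_3$: move $\indexjpc{l}$ in the bottom of $S_1S_2$ past the block $\indexIc\backslash\{\alpha\}$ and into its ordered position within $\indexJpc$, paying the corresponding block-swap and within-$\indexJpc$ repositioning signs; the $\indexIc$-internal reshuffling from $S_3$ should then cancel the analogous reshuffling on the $S_1S_2$ side, leaving something of the shape $\pm\sgn{1,\cdots,n,1^{'},\cdots,n^{'}}{\indexI,\indexJp,\indexIc,\indexJpc}$. Multiplying by $S_4$ after a block swap of $(\indexI,\indexJp)$ against $(\indexIc,\indexJpc)$ in one row (costing $(-1)^{(p+q)(2n-p-q)}=1$) collapses the whole product via
\[
\sgn{1,\cdots,n,1^{'},\cdots,n^{'}}{\indexI,\indexJp,\indexIc,\indexJpc}\,\sgn{1^{'},\cdots,n^{'},1,\cdots,n}{\indexI,\indexJp,\indexIc,\indexJpc} \;=\; (-1)^{n^2}.
\]

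The main obstacle will be the sign bookkeeping in the last two steps: the factor $(-1)^l$ from reducing $S_3$, the transposition cost of moving $\indexjpc{l}$ and $\alpha$ back to their ordered positions within $\indexJpc$ and $\indexIc$, and the repositioning cost that converts $\indexIc\backslash\{\alpha\},\alpha$ into the ordered $\indexIc$ must all combine into exactly one extra $-1$ rather than $+1$. I expect to isolate the source of this extra minus by tracing the single stray transposition of $\alpha$ with $\indexjpc{l}$ that occurs in $S_3$ but has no counterpart in the manipulation of Lemma \ref{Lemma-simplification for RHS of (alpha,I,J)-lemma 1}; once that parity is pinned down, the final equality $-(-1)^{n^2}$ follows.
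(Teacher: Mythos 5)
Your plan follows essentially the same toolkit as the paper's proof, just in a different merge order: the paper first combines the second and third factors by prepending $\alpha$ to both rows of the second one and transposing it with $\indexjpc{l}$, which isolates the single stray $(-1)$ at once and leaves $(-1)\,\sgn{\alpha,\indexIc\backslash\{\alpha\},\indexJpc}{\indexIc,\indexJpc}$; after that everything collapses by merging, the residual reordering costs cancelling as $(-1)^{(p+q)(p+q+1)}=1$. Your order (merge the first two factors, reduce the third separately) is equally workable, and your diagnosis that the extra minus sign traces back to the transposition of $\alpha$ with $\indexjpc{l}$ agrees with what actually happens in the paper's computation.

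There is, however, one concrete error in your sign accounting: $(-1)^{(p+q)(2n-p-q)}$ is not $1$. Writing $m=p+q$, the exponent is $m(2n-m)\equiv m^{2}\equiv m \pmod 2$, so this block swap costs $(-1)^{p+q}$. This is not a harmless slip here, because if you carry out the earlier steps of your plan carefully, the residue they leave in front of $\sgn{1,\cdots,n,1^{'},\cdots,n^{'}}{\indexI,\indexJp,\indexIc,\indexJpc}$ is $-(-1)^{p+q}$ rather than $-1$: restoring $\indexjpc{l}$ to its slot in $\indexJpc$ costs $(-1)^{n-p+l}$, the reduction of the third factor contributes $(-1)^{l}$, and moving $\alpha$ from the front of the bottom row to the position after $\indexIc\backslash\{\alpha\}$ costs $(-1)^{n+q-1}$, and these multiply to $(-1)^{p+q+1}$. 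The final answer $-(-1)^{n^{2}}$ only emerges because that $(-1)^{p+q}$ cancels against the $(-1)^{p+q}$ from the block swap you declared to be trivial. Since the entire content of the lemma is a sign, this step must be repaired before the argument is complete.
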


\begin{proof}
We use the following properties of the sign of permutation:
\begin{itemize}
    \item 
$
        \sgn
            {\indexIc\backslash\{\alpha\},\indexJpc}
            {\indexjpc{l},\indexIc\backslash\{\alpha\},\indexJpc\backslash\{\indexjpc{l}\}}
        =
        \sgn
            {\alpha,\indexIc\backslash\{\alpha\},\indexJpc}
            {\alpha,\indexjpc{l},\indexIc\backslash\{\alpha\},\indexJpc\backslash\{\indexjpc{l}\}}
        \\
        =
        (-1)
        \sgn
            {\alpha,\indexIc\backslash\{\alpha\},\indexJpc}
            {\indexjpc{l},\alpha,\indexIc\backslash\{\alpha\},\indexJpc\backslash\{\indexjpc{l}\}}
        =
        (-1)
        \sgn
            {\alpha,\indexIc\backslash\{\alpha\},\indexJpc}
            {\indexIc\backslash\{\alpha\},\indexjpc{1},\cdots,\indexjpc{l-1},\indexjpc{l},\alpha,\indexjpc{l+1},\cdots,\indexjpc{n-q}}
$,
    \item 
$
        \sgn
            {\alpha,\indexIc\backslash\{\alpha\},\indexJpc}
            {\indexIc\backslash\{\alpha\},\indexjpc{1},\cdots,\indexjpc{l-1},\indexjpc{l},\alpha,\indexjpc{l+1},\cdots,\indexjpc{n-q}}
        \sgn
            {\indexIc\backslash\{\alpha\},\indexjpc{1},\cdots,\indexjpc{l-1},\indexjpc{l},\alpha,\indexjpc{l+1},\cdots,\indexjpc{n-q}}
            {\indexIc,\indexJpc}
        \\
        =
        \sgn
            {\alpha,\indexIc\backslash\{\alpha\},\indexJpc}
            {\indexIc,\indexJpc}
$.
\end{itemize}
We simplify the left-hand side of the equation in Lemma \ref{Lemma-simplification for RHS of (alpha,I,J)-lemma 5} to
\begin{align}
    \sgn
        {1,\cdots,n,1^{'},\cdots,n^{'}}
        {\alpha,\indexI,\indexJp,\indexIc\backslash\{\alpha\},\indexJpc}
    (-1)
    \sgn
        {\alpha,\indexIc\backslash\{\alpha\},\indexJpc}
        {\indexIc,\indexJpc}
    \sgn
        {1^{'},\cdots,n^{'},1,\cdots,n}
        {\indexIc,\indexJpc,\indexI,\indexJp}
    .
\label{EQ-simplification for RHS of (alpha,I,J)-lemma 5-no1}
\end{align}

We use the following properties of the sign of permutation:
\begin{itemize}
    \item 
$
    \sgn
        {1,\cdots,n,1^{'},\cdots,n^{'}}
        {\alpha,\indexI,\indexJp,\indexIc\backslash\{\alpha\},\indexJpc}
    =
    (-1)^{p+q}
    \sgn
        {1,\cdots,n,1^{'},\cdots,n^{'}}
        {\indexI,\indexJp,\alpha,\indexIc\backslash\{\alpha\},\indexJpc}
$,
    \item 
$
    \sgn
        {1,\cdots,n,1^{'},\cdots,n^{'}}
        {\indexI,\indexJp,\alpha,\indexIc\backslash\{\alpha\},\indexJpc}
    \sgn
        {\alpha,\indexIc\backslash\{\alpha\},\indexJpc}
        {\indexIc,\indexJpc}
    =
    \sgn
        {1,\cdots,n,1^{'},\cdots,n^{'}}
        {\indexI,\indexJp,\indexIc,\indexJpc}
    \\
    =
    (-1)^{(p+q)(n-p+n-q)}
    \sgn
        {1,\cdots,n,1^{'},\cdots,n^{'}}
        {\indexIc,\indexJpc,\indexI,\indexJp}
$,
    \item 
$
    \sgn
        {1,\cdots,n,1^{'},\cdots,n^{'}}
        {\indexIc,\indexJpc,\indexI,\indexJp}
    \sgn
        {1^{'},\cdots,n^{'},1,\cdots,n}
        {\indexIc,\indexJpc,\indexI,\indexJp}
    =
    \sgn
        {1,\cdots,n,1^{'},\cdots,n^{'}}
        {1^{'},\cdots,n^{'},1,\cdots,n}
    =
    (-1)^{n^2}
$.
\end{itemize}
We simplify (\ref{EQ-simplification for RHS of (alpha,I,J)-lemma 5-no1}) to
\begin{align*}
    (-1)^{p+q}
    (-1)^{(p+q)(n-p+n-q)}
    (-1)^{n^2}
    =
    (-1)^{(p+q)(p+q+1)}
    (-1)^{n^2}
    =
    -(-1)^{n^2}
    ,
\end{align*}
which completes the proof of Lemma \ref{Lemma-simplification for RHS of (alpha,I,J)-lemma 5}.
\end{proof}

\begin{lemma}
\label{Lemma-simplification for RHS of (alpha,I,J)-lemma 6}
\begin{align*}
    \begin{pmatrix*}[l]
        \sgn
            {1,\cdots,n,1^{'},\cdots,n^{'}}
            {\alpha,\indexI,\indexJp,\indexIc\backslash\{\alpha\},\indexJpc}
        \sgn
            {\indexIc\backslash\{\alpha\},\indexJpc}
            {\indexjpc{l},\indexIc\backslash\{\alpha\},\indexJpc\backslash\{\indexjpc{l}\}}
        \\
        \sgn
            {\indexIc\backslash\{\alpha\},\indexjpc{1},\cdots,\indexjpc{l-1},\indexjpc{l},r,\indexjpc{l+1},\cdots,\indexjpc{n-q}}
            {\order{(\indexIc\backslash\{\alpha\})\cup\{r\}},\indexJpc}
        \\
        \sgn
            {1^{'},\cdots,n^{'},1,\cdots,n}
            {\order{(\indexIc\backslash\{\alpha\})\cup\{r\}},\indexJpc,\alpha,\indexI\backslash\{r\},\indexJp}
    \end{pmatrix*}
    =
    (-1)^{n^2}
    \sgn
        {r,\indexI\backslash\{r\}}
        {\indexI}
    .
\end{align*}
\end{lemma}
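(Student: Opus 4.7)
The plan is to follow the exact template of Lemmas \ref{Lemma-simplification for RHS of (alpha,I,J)-lemma 2} and \ref{Lemma-simplification for RHS of (alpha,I,J)-lemma 3}: first collapse the three ``internal'' sign factors on the anti-holomorphic side into a single, simple permutation sign; then pair the remaining piece against $\sgn{1^{'},\cdots,n^{'},1,\cdots,n}{\order{(\indexIc\backslash\{\alpha\})\cup\{r\}},\indexJpc,\alpha,\indexI\backslash\{r\},\indexJp}$ via the fundamental identity $\sgn{1,\cdots,n,1^{'},\cdots,n^{'}}{1^{'},\cdots,n^{'},1,\cdots,n}=(-1)^{n^2}$, accumulating whatever parity $(-1)^{a(p,q)}$ appears and checking it collapses to $+1$.

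\textbf{Step 1.} I would start by combining the second and third sign factors, which involve the array $\indexJpc$ and the insertion of $r$. Using the inserting-a-block rule plus the splitting rule from the sign-of-permutation toolkit in Section \ref{sec:preliminaries}, one sees
\begin{align*}
    \sgn{\indexIc\backslash\{\alpha\},\indexJpc}{\indexjpc{l},\indexIc\backslash\{\alpha\},\indexJpc\backslash\{\indexjpc{l}\}}
    \cdot
    \sgn{\indexIc\backslash\{\alpha\},\indexjpc{1},\cdots,\indexjpc{l-1},\indexjpc{l},r,\indexjpc{l+1},\cdots,\indexjpc{n-q}}{\order{(\indexIc\backslash\{\alpha\})\cup\{r\}},\indexJpc}
\end{align*}
collapses to $\sgn{r,\indexIc\backslash\{\alpha\},\indexJpc}{\order{(\indexIc\backslash\{\alpha\})\cup\{r\}},\indexJpc}$ (up to a controlled sign from moving $r$ past the already-isolated $\indexjpc{l}$). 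This mirrors exactly the simplification carried out in Lemma \ref{Lemma-simplification for RHS of (alpha,I,J)-lemma 2}, except that here $r$ is inserted into the \emph{anti-holomorphic} column rather than the holomorphic one, so I expect one sign of the form $\sgn{r,\indexIc\backslash\{\alpha\}}{\order{(\indexIc\backslash\{\alpha\})\cup\{r\}}}$ to remain.

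\textbf{Step 2.} Next I would reorganize the overall sign $\sgn{1,\cdots,n,1^{'},\cdots,n^{'}}{\alpha,\indexI,\indexJp,\indexIc\backslash\{\alpha\},\indexJpc}$ by extracting $r$ from $\indexI$ using $\sgn{r,\indexI\backslash\{r\}}{\indexI}$, moving it past $\indexJp$ to the $\indexJpc$-slot (this produces a $(-1)^{q}$ parity from passing across $\indexJp$, plus possibly $(-1)^{p-1}$ from crossing $\indexI\backslash\{r\}$), and then absorbing the result produced in Step 1 to arrive at $\sgn{1,\cdots,n,1^{'},\cdots,n^{'}}{\alpha,\indexI\backslash\{r\},\indexJp,\order{(\indexIc\backslash\{\alpha\})\cup\{r\}},\indexJpc}$, carrying along an overall factor $\pm\sgn{r,\indexI\backslash\{r\}}{\indexI}$. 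This is the same pattern that produced the $-(-1)^{n^{2}}\sgn{r,\indexI\backslash\{r\}}{\indexI}\sgn{\order{\indexic{k},r}}{r,\indexic{k}}$ answer in Lemma \ref{Lemma-simplification for RHS of (alpha,I,J)-lemma 2}, with the crucial difference that \emph{no} factor like $\sgn{\order{\indexic{k},r}}{r,\indexic{k}}$ should appear because here $r$ is combined with an \emph{anti-holomorphic} index $\indexjpc{l}$, and the two live in different rows of the target arrays $\order{(\indexIc\backslash\{\alpha\})\cup\{r\}}$ and $\order{(\indexJpc\backslash\{\indexjpc{l}\})\cup\cdots}$; in fact, since in this lemma no anti-holomorphic reshuffling takes place (the target wedge $\basedualxi{\indexJp}$ is untouched), this ordering sign simply drops out.

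\textbf{Step 3.} Finally I would pair the reorganized overall sign against $\sgn{1^{'},\cdots,n^{'},1,\cdots,n}{\order{(\indexIc\backslash\{\alpha\})\cup\{r\}},\indexJpc,\alpha,\indexI\backslash\{r\},\indexJp}$. Swapping the two block halves of the second row (which introduces a factor $(-1)^{(p+q)(n-p+n-q)}=(-1)^{(p+q)^2}$) aligns this with the reorganized sign, so that their product collapses to $\sgn{1,\cdots,n,1^{'},\cdots,n^{'}}{1^{'},\cdots,n^{'},1,\cdots,n}=(-1)^{n^2}$. Tallying all accumulated $\pm$ factors from Steps 1--2 together with $(-1)^{p+q}$ from the $\alpha$-transport and $(-1)^{(p+q)^2}$ from the block swap gives the exponent $(p+q)(p+q+1)$, which is always even, leaving exactly the claimed $(-1)^{n^2}\sgn{r,\indexI\backslash\{r\}}{\indexI}$.

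\textbf{Main obstacle.} The computation itself is entirely mechanical once one commits to the bookkeeping, so the real pitfall is tracking signs that arise from the asymmetric positioning of $r$ (originally a holomorphic index of $\indexI$ being moved next to $\indexjpc{l}$ in an anti-holomorphic block) and of $\alpha$ (which lives at the very front of the source array but in the middle of the target array). I would therefore carry out the parity counts symbolically in terms of $|\indexI|=p$, $|\indexJp|=q$, $|\indexIc|=n-p$, $|\indexJpc|=n-q$, and only at the last stage verify that all of the mixed parities $(-1)^{p-1+q}$, $(-1)^{(p+q)^{2}}$, $(-1)^{p+q}$ combine to the trivial sign, exactly as they did in Lemmas \ref{Lemma-simplification for RHS of (alpha,I,J)-lemma 2}--\ref{Lemma-simplification for RHS of (alpha,I,J)-lemma 5}.
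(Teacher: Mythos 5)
Your proposal is correct and takes essentially the same route as the paper's proof: collapse the two middle factors into $(-1)\,\sgn{r,\indexIc\backslash\{\alpha\},\indexJpc}{\order{(\indexIc\backslash\{\alpha\})\cup\{r\}},\indexJpc}$, peel off $\sgn{r,\indexI\backslash\{r\}}{\indexI}$, and pair the two global signs through a block swap worth $(-1)^{(p+q)^2}$ and a final $(-1)^{n^2}$; the only difference is that you absorb the intermediate ordering sign into the first global sign (in the style of Lemma \ref{Lemma-simplification for RHS of (alpha,I,J)-lemma 2}) while the paper absorbs it into the last one, which is immaterial. One bookkeeping caution: in a clean execution there is no separate ``$\alpha$-transport'' factor ($\alpha$ simply rides inside the block of size $p+q$ during the swap), and the correct total exponent is $1+(p-1+q)+(p+q)^2=(p+q)(p+q+1)$, even as you claim --- retaining the extra $(-1)^{p+q}$ you list would spoil the sign whenever $p+q$ is odd.
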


\begin{proof}
We use the following properties of the sign of permutation:
\begin{itemize}
    \item 
$
    \sgn
        {\indexIc\backslash\{\alpha\},\indexjpc{1},\cdots,\indexjpc{l-1},\indexjpc{l},r,\indexjpc{l+1},\cdots,\indexjpc{n-q}}
        {\order{(\indexIc\backslash\{\alpha\})\cup\{r\}},\indexJpc}
    =
    \sgn
        {\indexjpc{l},r,\indexIc\backslash\{\alpha\},\indexJpc\backslash\{\indexjpc{l}\}}
        {\order{(\indexIc\backslash\{\alpha\})\cup\{r\}},\indexJpc}
    \\
    =
    (-1)
    \sgn
        {r,\indexjpc{l},\indexIc\backslash\{\alpha\},\indexJpc\backslash\{\indexjpc{l}\}}
        {\order{(\indexIc\backslash\{\alpha\})\cup\{r\}},\indexJpc}
$,
    \item 
$
    \sgn
        {\indexIc\backslash\{\alpha\},\indexJpc}
        {\indexjpc{l},\indexIc\backslash\{\alpha\},\indexJpc\backslash\{\indexjpc{l}\}}
    \sgn
        {r,\indexjpc{l},\indexIc\backslash\{\alpha\},\indexJpc\backslash\{\indexjpc{l}\}}
        {\order{(\indexIc\backslash\{\alpha\})\cup\{r\}},\indexJpc}
    =
    \sgn
        {r,\indexIc\backslash\{\alpha\},\indexJpc}
        {\order{(\indexIc\backslash\{\alpha\})\cup\{r\}},\indexJpc}
$.
\end{itemize}
We simplify the left-hand side of the equation in Lemma \ref{Lemma-simplification for RHS of (alpha,I,J)-lemma 6} to
\begin{align}
    \sgn
        {1,\cdots,n,1^{'},\cdots,n^{'}}
        {\alpha,\indexI,\indexJp,\indexIc\backslash\{\alpha\},\indexJpc}
    (-1)
    \sgn
        {r,\indexIc\backslash\{\alpha\},\indexJpc}
        {\order{(\indexIc\backslash\{\alpha\})\cup\{r\}},\indexJpc}
    \sgn
        {1^{'},\cdots,n^{'},1,\cdots,n}
        {\order{(\indexIc\backslash\{\alpha\})\cup\{r\}},\indexJpc,\alpha,\indexI\backslash\{r\},\indexJp}
    .
\label{EQ-simplification for RHS of (alpha,I,J)-lemma 6-no1}
\end{align}

We use the following properties of the sign of permutation:
\begin{itemize}
    \item 
$
    \sgn
        {r,\indexIc\backslash\{\alpha\},\indexJpc}
        {\order{(\indexIc\backslash\{\alpha\})\cup\{r\}},\indexJpc}
    \sgn
        {1^{'},\cdots,n^{'},1,\cdots,n}
        {\order{(\indexIc\backslash\{\alpha\})\cup\{r\}},\indexJpc,\alpha,\indexI\backslash\{r\},\indexJp}
    \\
    =
    \sgn
        {1^{'},\cdots,n^{'},1,\cdots,n}
        {r,\indexIc\backslash\{\alpha\},\indexJpc,\alpha,\indexI\backslash\{r\},\indexJp}
$,
    \item 
$
    \sgn
        {1^{'},\cdots,n^{'},1,\cdots,n}
        {r,\indexIc\backslash\{\alpha\},\indexJpc,\alpha,\indexI\backslash\{r\},\indexJp}
    =
    (-1)
    \sgn
        {1^{'},\cdots,n^{'},1,\cdots,n}
        {\alpha,\indexIc\backslash\{\alpha\},\indexJpc,r,\indexI\backslash\{r\},\indexJp}
$,
    \item 
$
    \sgn
        {1^{'},\cdots,n^{'},1,\cdots,n}
        {\alpha,\indexIc\backslash\{\alpha\},\indexJpc,r,\indexI\backslash\{r\},\indexJp}
    =
    \sgn
        {1^{'},\cdots,n^{'},1,\cdots,n}
        {\alpha,\indexIc\backslash\{\alpha\},\indexJpc,\indexI,\indexJp}
    \sgn
        {r,\indexI\backslash\{r\}}
        {\indexI}
$.
\end{itemize}
We simplify (\ref{EQ-simplification for RHS of (alpha,I,J)-lemma 6-no1}) to
\begin{align}
    &
    \sgn
        {1,\cdots,n,1^{'},\cdots,n^{'}}
        {\alpha,\indexI,\indexJp,\indexIc\backslash\{\alpha\},\indexJpc}
    (-1)(-1)
    \sgn
        {1^{'},\cdots,n^{'},1,\cdots,n}
        {\alpha,\indexIc\backslash\{\alpha\},\indexJpc,\indexI,\indexJp}
    \sgn
        {r,\indexI\backslash\{r\}}
        {\indexI}
    \notag
    \\
    =&
    \sgn
        {1,\cdots,n,1^{'},\cdots,n^{'}}
        {\alpha,\indexI,\indexJp,\indexIc\backslash\{\alpha\},\indexJpc}
    \sgn
        {1^{'},\cdots,n^{'},1,\cdots,n}
        {\alpha,\indexIc\backslash\{\alpha\},\indexJpc,\indexI,\indexJp}
    \sgn
        {r,\indexI\backslash\{r\}}
        {\indexI}
    .
\label{EQ-simplification for RHS of (alpha,I,J)-lemma 6-no2}
\end{align}

We use the following properties of the sign of permutation:
\begin{itemize}
    \item 
$
    \sgn
        {1,\cdots,n,1^{'},\cdots,n^{'}}
        {\alpha,\indexI,\indexJp,\indexIc\backslash\{\alpha\},\indexJpc}
    =
    (-1)^{(p+q)(n-p-1+n-q)}
    \sgn
        {1,\cdots,n,1^{'},\cdots,n^{'}}
        {\alpha,\indexIc\backslash\{\alpha\},\indexJpc,\indexI,\indexJp}
$,
    \item 
$
    \sgn
        {1,\cdots,n,1^{'},\cdots,n^{'}}
        {\alpha,\indexIc\backslash\{\alpha\},\indexJpc,\indexI,\indexJp}
    \sgn
        {1^{'},\cdots,n^{'},1,\cdots,n}
        {\alpha,\indexIc\backslash\{\alpha\},\indexJpc,\indexI,\indexJp}
    =
    \sgn
        {1,\cdots,n,1^{'},\cdots,n^{'}}
        {1^{'},\cdots,n^{'},1,\cdots,n}
    =
    (-1)^{n^2}
$.
\end{itemize}
We simplify (\ref{EQ-simplification for RHS of (alpha,I,J)-lemma 6-no2}) to
\begin{align*}
    &
    (-1)^{(p+q)(n-p-1+n-q)}
    (-1)^{n^2}
    \sgn
        {1,\cdots,n,1^{'},\cdots,n^{'}}
        {\alpha,\indexIc\backslash\{\alpha\},\indexJpc,\indexI,\indexJp}
    \\
    =&
    (-1)^{n^2}
    \sgn
        {1,\cdots,n,1^{'},\cdots,n^{'}}
        {\alpha,\indexIc\backslash\{\alpha\},\indexJpc,\indexI,\indexJp}
    ,
\end{align*}
which completes the proof of Lemma \ref{Lemma-simplification for RHS of (alpha,I,J)-lemma 6}.
\end{proof}

\begin{lemma}
\label{Lemma-simplification for RHS of (alpha,I,J)-lemma 7}
\begin{align*}
    \begin{pmatrix*}[l]
        \sgn
            {1,\cdots,n,1^{'},\cdots,n^{'}}
            {\alpha,\indexI,\indexJp,\indexIc\backslash\{\alpha\},\indexJpc}
        \sgn
            {\indexIc\backslash\{\alpha\},\indexJpc}
            {\indexjpc{l},\indexIc\backslash\{\alpha\},\indexJpc\backslash\{\indexjpc{l}\}}
        \\
        \sgn
            {\indexIc\backslash\{\alpha\},\indexjpc{1},\cdots,\indexjpc{l-1},u^{'},\alpha,\indexjpc{l+1},\cdots,\indexjpc{n-q}}
            {\indexIc,\order{(\indexJpc\backslash\{\indexjpc{l}\})\cup\{u^{'}\}}}
        \\
        \sgn
            {1^{'},\cdots,n^{'},1,\cdots,n}
            {\indexIc,\order{(\indexJpc\backslash\{\indexjpc{l}\})\cup\{u^{'}\}},\indexI,\order{(\indexJp\backslash\{u^{'}\})\cup\{\indexjpc{l}\}}}
    \end{pmatrix*}
    =
    -(-1)^{n^2}
    \sgn
        {u^{'},\indexI,\order{(\indexJp\backslash\{u^{'}\})\cup\{\indexjpc{l}\}}}
        {\indexjpc{l},\indexI,\indexJp}
    .
\end{align*}
\end{lemma}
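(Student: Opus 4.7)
The plan is to mimic the strategy used for Lemmas \ref{Lemma-simplification for RHS of (alpha,I,J)-lemma 5} and \ref{Lemma-simplification for RHS of (alpha,I,J)-lemma 6}: combine the two inner signs of permutation into a single sign, collect the resulting array so that it matches the outer sign on the right, and then use the key identity
\begin{align*}
    \sgn{1,\cdots,n,1^{'},\cdots,n^{'}}{\Lambda}
    \sgn{1^{'},\cdots,n^{'},1,\cdots,n}{\Lambda}
    =
    \sgn{1,\cdots,n,1^{'},\cdots,n^{'}}{1^{'},\cdots,n^{'},1,\cdots,n}
    =(-1)^{n^{2}}
\end{align*}
for any common array $\Lambda$ obtained by permuting $(1,\cdots,n,1',\cdots,n')$.

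First, I would rewrite the third sign by pulling the pair $u^{'},\alpha$ in the top row to the front, producing
\begin{align*}
    \sgn
        {\indexIc\backslash\{\alpha\},\indexjpc{1},\cdots,\indexjpc{l-1},u^{'},\alpha,\indexjpc{l+1},\cdots,\indexjpc{n-q}}
        {\indexIc,\order{(\indexJpc\backslash\{\indexjpc{l}\})\cup\{u^{'}\}}}
    =
    (-1)
    \sgn
        {\alpha,u^{'},\indexIc\backslash\{\alpha\},\indexJpc\backslash\{\indexjpc{l}\}}
        {\indexIc,\order{(\indexJpc\backslash\{\indexjpc{l}\})\cup\{u^{'}\}}},
\end{align*}
where the sign $-1$ absorbs the swap of $u',\alpha$ together with the reshuffling. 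Next, merging this with the second sign (which moves $\indexjpc{l}$ from inside $\indexJpc$ to the head of $\indexIc\backslash\{\alpha\},\indexJpc\backslash\{\indexjpc{l}\}$ and is insensitive to the $u'$ position) collapses the inner two signs into a single sign of the form
\begin{align*}
    \sgn
        {\alpha,\indexIc\backslash\{\alpha\},\indexJpc}
        {\indexIc,\order{(\indexJpc\backslash\{\indexjpc{l}\})\cup\{u^{'}\}},\indexjpc{l}\text{ reinserted}}
\end{align*}
up to the $-1$ collected above. The cleanest way to execute this is to insert the common block $\alpha$ on both sides of the second sign and then cancel rows with the third, exactly as done in Lemmas \ref{Lemma-simplification for RHS of (alpha,I,J)-lemma 6} and \ref{Lemma-simplification for RHS of (alpha,I,J)-lemma 7}.

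At this point I would have a product of three signs: the outer sign $\sgn{1,\cdots,n,1^{'},\cdots,n^{'}}{\alpha,\indexI,\indexJp,\indexIc\backslash\{\alpha\},\indexJpc}$, the collapsed inner sign, and the last sign $\sgn{1^{'},\cdots,n^{'},1,\cdots,n}{\indexIc,\order{(\indexJpc\backslash\{\indexjpc{l}\})\cup\{u^{'}\}},\indexI,\order{(\indexJp\backslash\{u^{'}\})\cup\{\indexjpc{l}\}}}$. The plan is then to rearrange the outer sign's target row by commuting $(\indexI,\indexJp)$ past $(\indexIc\backslash\{\alpha\},\indexJpc)$, producing the Koszul sign $(-1)^{(p+q)(n-p-1+n-q)}=(-1)^{p+q+1}$, and simultaneously swap $(u',\indexjpc{l})$ between the $\indexJp$-block and the $\indexJpc$-block to match the target row of the last sign. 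The $u',\indexjpc{l}$ swap is precisely what produces the residual factor $\sgn{u^{'},\indexI,\order{(\indexJp\backslash\{u^{'}\})\cup\{\indexjpc{l}\}}}{\indexjpc{l},\indexI,\indexJp}$ on the right-hand side.

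The main obstacle I anticipate is bookkeeping: keeping the Koszul sign $(-1)^{p+q+1}$ consistent with the transposition sign $-1$ from the $u',\alpha$ swap, and ensuring that after merging the middle two signs and aligning the two outer arrays, the total parity is $-(-1)^{n^2}$ rather than $+(-1)^{n^2}$. I would verify the parity by checking the small case $p=q=0,|\indexI|=|\indexJp|=0$, which reduces the formula to a direct computation involving only $\alpha$, $u'$, and $\indexjpc{l}$, and then propagate the parity through the general case. Apart from this parity check, every manipulation is one of the four elementary operations on signs of permutations recorded in Proposition 2, so no deeper combinatorial idea is needed.
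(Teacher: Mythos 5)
Your overall strategy --- telescope the four signs, peel off the residual factor $\sgn{u^{'},\indexI,\order{(\indexJp\backslash\{u^{'}\})\cup\{\indexjpc{l}\}}}{\indexjpc{l},\indexI,\indexJp}$, and finish with $\sgn{1,\cdots,n,1^{'},\cdots,n^{'}}{1^{'},\cdots,n^{'},1,\cdots,n}=(-1)^{n^{2}}$ --- is the right one and is what the paper does. But your execution plan hits a concrete obstruction at the merge step. You propose to collapse the second and third signs into one, yet those two signs are permutations of \emph{different} index sets: the second sign permutes $(\indexIc\backslash\{\alpha\})\cup\indexJpc$ (so it contains $\indexjpc{l}$ but neither $\alpha$ nor $u^{'}$), while the third permutes $\indexIc\cup\{u^{'}\}\cup(\indexJpc\backslash\{\indexjpc{l}\})$. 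No row of one agrees with part of a row of the other, so the row-cancellation rule does not apply, and the ``collapsed'' sign you write down is ill-formed: $u^{'}$ appears in its bottom row but not in its top row. The pairing that actually telescopes is the other one. The second sign cancels against the block $\indexIc\backslash\{\alpha\},\indexJpc$ sitting verbatim inside the bottom row of the \emph{first} sign, giving $\sgn{1,\cdots,n,1^{'},\cdots,n^{'}}{\alpha,\indexI,\indexJp,\indexjpc{l},\indexIc\backslash\{\alpha\},\indexJpc\backslash\{\indexjpc{l}\}}$, and the third sign cancels against the top row of the \emph{fourth} sign, which is literally its own bottom row $\indexIc,\order{(\indexJpc\backslash\{\indexjpc{l}\})\cup\{u^{'}\}}$. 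After those two merges you are left with a product of exactly two full-length signs, and the rest of your outline (commute blocks, split the residual factor off the tail, apply the $(-1)^{n^{2}}$ identity) goes through.

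Two further bookkeeping points would sink the parity even if the merge were repaired. First, $(-1)^{(p+q)(n-p-1+n-q)}$ is not $(-1)^{p+q+1}$: the exponent is $(p+q)(2n-p-q-1)\equiv(p+q)(p+q+1)\equiv 0 \pmod 2$, so that Koszul sign is always $+1$. The global $-1$ in the statement comes instead from $(-1)^{p+q}$ (moving $\indexjpc{l}$ forward past $\indexI,\indexJp$ in the first merged sign) times $(-1)^{n-p+n-q-1}$ (moving $u^{'}$ past $\alpha,\indexIc\backslash\{\alpha\},\indexJpc\backslash\{\indexjpc{l}\}$ in the second), which equals $(-1)^{2n-1}=-1$. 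Second, your proposed sanity check at $p=q=0$ is vacuous: the lemma requires $u^{'}\in\indexJp$, which is empty when $q=0$, so that special case cannot detect the sign.
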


\begin{proof}
We use the following properties of the sign of permutation:
\begin{itemize}
    \item 
$
    \sgn
        {1,\cdots,n,1^{'},\cdots,n^{'}}
        {\alpha,\indexI,\indexJp,\indexIc\backslash\{\alpha\},\indexJpc}
    \sgn
        {\indexIc\backslash\{\alpha\},\indexJpc}
        {\indexjpc{l},\indexIc\backslash\{\alpha\},\indexJpc\backslash\{\indexjpc{l}\}}
    =
    \sgn
        {1,\cdots,n,1^{'},\cdots,n^{'}}
        {\alpha,\indexI,\indexJp,\indexjpc{l},\indexIc\backslash\{\alpha\},\indexJpc\backslash\{\indexjpc{l}\}}
$,
    \item 
$
    \sgn
        {\indexIc\backslash\{\alpha\},\indexjpc{1},\cdots,\indexjpc{l-1},u^{'},\alpha,\indexjpc{l+1},\cdots,\indexjpc{n-q}}
        {\indexIc,\order{(\indexJpc\backslash\{\indexjpc{l}\})\cup\{u^{'}\}}}
    =
    \sgn
        {u^{'},\alpha,\indexIc\backslash\{\alpha\},\indexJpc\backslash\{\indexjpc{l}\}}
        {\indexIc,\order{(\indexJpc\backslash\{\indexjpc{l}\})\cup\{u^{'}\}}}
$,
    \item 
$
    \sgn
        {u^{'},\alpha,\indexIc\backslash\{\alpha\},\indexJpc\backslash\{\indexjpc{l}\}}
        {\indexIc,\order{(\indexJpc\backslash\{\indexjpc{l}\})\cup\{u^{'}\}}}
    \sgn
        {1^{'},\cdots,n^{'},1,\cdots,n}
        {\indexIc,\order{(\indexJpc\backslash\{\indexjpc{l}\})\cup\{u^{'}\}},\indexI,\order{(\indexJp\backslash\{u^{'}\})\cup\{\indexjpc{l}\}}}
    \\
    =
    \sgn
        {1^{'},\cdots,n^{'},1,\cdots,n}
        {u^{'},\alpha,\indexIc\backslash\{\alpha\},\indexJpc\backslash\{\indexjpc{l}\},\indexI,\order{(\indexJp\backslash\{u^{'}\})\cup\{\indexjpc{l}\}}}
$.
\end{itemize}
We simplify the left-hand side of the equation in Lemma \ref{Lemma-simplification for RHS of (alpha,I,J)-lemma 7} to
\begin{align}
    \sgn
        {1,\cdots,n,1^{'},\cdots,n^{'}}
        {\alpha,\indexI,\indexJp,\indexjpc{l},\indexIc\backslash\{\alpha\},\indexJpc\backslash\{\indexjpc{l}\}}
    \sgn
        {1^{'},\cdots,n^{'},1,\cdots,n}
        {u^{'},\alpha,\indexIc\backslash\{\alpha\},\indexJpc\backslash\{\indexjpc{l}\},\indexI,\order{(\indexJp\backslash\{u^{'}\})\cup\{\indexjpc{l}\}}}
    .
\label{EQ-simplification for RHS of (alpha,I,J)-lemma 7-no1}
\end{align}

We use the following properties of the sign of permutation:
\begin{itemize}
    \item 
$
    \sgn
        {1,\cdots,n,1^{'},\cdots,n^{'}}
        {\alpha,\indexI,\indexJp,\indexjpc{l},\indexIc\backslash\{\alpha\},\indexJpc\backslash\{\indexjpc{l}\}}
    =
    (-1)^{p+q}
    \sgn
        {1,\cdots,n,1^{'},\cdots,n^{'}}
        {\alpha,\indexjpc{l},\indexI,\indexJp,\indexIc\backslash\{\alpha\},\indexJpc\backslash\{\indexjpc{l}\}}
$,
    \item 
$
    \sgn
        {1^{'},\cdots,n^{'},1,\cdots,n}
        {u^{'},\alpha,\indexIc\backslash\{\alpha\},\indexJpc\backslash\{\indexjpc{l}\},\indexI,\order{(\indexJp\backslash\{u^{'}\})\cup\{\indexjpc{l}\}}}
    \\
    =
    (-1)^{n-p+n-q-1}
    \sgn
        {1^{'},\cdots,n^{'},1,\cdots,n}
        {\alpha,\indexIc\backslash\{\alpha\},\indexJpc\backslash\{\indexjpc{l}\},u^{'},\indexI,\order{(\indexJp\backslash\{u^{'}\})\cup\{\indexjpc{l}\}}}
$,
    \item 
$
    \sgn
        {1^{'},\cdots,n^{'},1,\cdots,n}
        {\alpha,\indexIc\backslash\{\alpha\},\indexJpc\backslash\{\indexjpc{l}\},u^{'},\indexI,\order{(\indexJp\backslash\{u^{'}\})\cup\{\indexjpc{l}\}}}
    \\
    =
    \sgn
        {1^{'},\cdots,n^{'},1,\cdots,n}
        {\alpha,\indexIc\backslash\{\alpha\},\indexJpc\backslash\{\indexjpc{l}\},\indexjpc{l},\indexI,\indexJp}
    \sgn
        {u^{'},\indexI,\order{(\indexJp\backslash\{u^{'}\})\cup\{\indexjpc{l}\}}}
        {\indexjpc{l},\indexI,\indexJp}
$.
\end{itemize}
We simplify (\ref{EQ-simplification for RHS of (alpha,I,J)-lemma 7-no1}) to
\begin{align}
    &
    \begin{pmatrix*}
        (-1)^{p+q}
    \sgn
        {1,\cdots,n,1^{'},\cdots,n^{'}}
        {\alpha,\indexjpc{l},\indexI,\indexJp,\indexIc\backslash\{\alpha\},\indexJpc\backslash\{\indexjpc{l}\}}
    \\
    (-1)^{n-p+n-q-1}
    \sgn
        {1^{'},\cdots,n^{'},1,\cdots,n}
        {\alpha,\indexIc\backslash\{\alpha\},\indexJpc\backslash\{\indexjpc{l}\},\indexjpc{l},\indexI,\indexJp}
    \sgn
        {u^{'},\indexI,\order{(\indexJp\backslash\{u^{'}\})\cup\{\indexjpc{l}\}}}
        {\indexjpc{l},\indexI,\indexJp}
    \end{pmatrix*}
    \notag
    \\
    =&
    (-1)
    \sgn
        {1,\cdots,n,1^{'},\cdots,n^{'}}
        {\alpha,\indexjpc{l},\indexI,\indexJp,\indexIc\backslash\{\alpha\},\indexJpc\backslash\{\indexjpc{l}\}}
    \sgn
        {1^{'},\cdots,n^{'},1,\cdots,n}
        {\alpha,\indexIc\backslash\{\alpha\},\indexJpc\backslash\{\indexjpc{l}\},\indexjpc{l},\indexI,\indexJp}
    \sgn
        {u^{'},\indexI,\order{(\indexJp\backslash\{u^{'}\})\cup\{\indexjpc{l}\}}}
        {\indexjpc{l},\indexI,\indexJp}    
    .
\label{EQ-simplification for RHS of (alpha,I,J)-lemma 7-no2}
\end{align}

We use the following properties of the sign of permutation:
\begin{itemize}
    \item 
$
    \sgn
        {1^{'},\cdots,n^{'},1,\cdots,n}
        {\alpha,\indexIc\backslash\{\alpha\},\indexJpc\backslash\{\indexjpc{l}\},\indexjpc{l},\indexI,\indexJp}
    =
    (-1)^{(n-p-1+n-q-1)(p+q+1)}
    \sgn
        {1^{'},\cdots,n^{'},1,\cdots,n}
        {\alpha,\indexjpc{l},\indexI,\indexJp,\indexIc\backslash\{\alpha\},\indexJpc\backslash\{\indexjpc{l}\}}
$,
    \item
$
    \sgn
        {1,\cdots,n,1^{'},\cdots,n^{'}}
        {\alpha,\indexjpc{l},\indexI,\indexJp,\indexIc\backslash\{\alpha\},\indexJpc\backslash\{\indexjpc{l}\}}
    \sgn
        {1^{'},\cdots,n^{'},1,\cdots,n}
        {\alpha,\indexjpc{l},\indexI,\indexJp,\indexIc\backslash\{\alpha\},\indexJpc\backslash\{\indexjpc{l}\}}
    \\
    =
    \sgn
        {1,\cdots,n,1^{'},\cdots,n^{'}}
        {1^{'},\cdots,n^{'},1,\cdots,n}
    =
    (-1)^{n^2}
$.
\end{itemize}
We simplify (\ref{EQ-simplification for RHS of (alpha,I,J)-lemma 7-no2}) to
\begin{align*}
    &
    (-1)
    (-1)^{(p+q)(p+q+1)}
    (-1)^{n^2}
    \sgn
        {u^{'},\indexI,\order{(\indexJp\backslash\{u^{'}\})\cup\{\indexjpc{l}\}}}
        {\indexjpc{l},\indexI,\indexJp}
    \\
    =&
    -(-1)^{n^2}
    \sgn
        {u^{'},\indexI,\order{(\indexJp\backslash\{u^{'}\})\cup\{\indexjpc{l}\}}}
        {\indexjpc{l},\indexI,\indexJp}
    ,
\end{align*}
which completes the proof of Lemma \ref{Lemma-simplification for RHS of (alpha,I,J)-lemma 7}.
\end{proof}

\begin{lemma}
\label{Lemma-simplification for RHS of (alpha,I,J)-lemma 8}
\begin{align*}
    &
    \begin{pmatrix*}[l]
        \sgn
            {1,\cdots,n,1^{'},\cdots,n^{'}}
            {\alpha,\indexI,\indexJp,\indexIc\backslash\{\alpha\},\indexJpc}
        \sgn
            {\indexIc\backslash\{\alpha\},\indexJpc}
            {\indexjpc{l},\indexIc\backslash\{\alpha\},\indexJpc\backslash\{\indexjpc{l}\}}
        \\
        \sgn
            {\indexIc\backslash\{\alpha\},\indexjpc{1},\cdots,\indexjpc{l-1},u^{'},r,\indexjpc{l+1},\cdots,\indexjpc{n-q}}
            {\order{(\indexIc\backslash\{\alpha\})\cup\{r\}},\order{(\indexJpc\backslash\{\indexjpc{l}\})\cup\{u^{'}\}}}
        \\
        \sgn
            {1^{'},\cdots,n^{'},1,\cdots,n}
            {\order{(\indexIc\backslash\{\alpha\})\cup\{r\}},\order{(\indexJpc\backslash\{\indexjpc{l}\})\cup\{u^{'}\}},\alpha,\indexI\backslash\{r\},\order{(\indexJp\backslash\{u^{'}\})\cup\{\indexjpc{l}\}}}
    \end{pmatrix*}
    \\
    =&
    \begin{pmatrix*}
        (-1)^{p+1}
        (-1)^{n^2}
        \\
        \sgn
            {u^{'},r,\indexI\backslash\{r\},\indexJp\backslash\{u^{'}\}}
            {\indexI,\indexJp}
        \\
        \sgn
            {\order{(\indexJp\backslash\{u^{'}\})\cup\{\indexjpc{l}\}}}
            {\indexjpc{l},\indexJp\backslash\{u^{'}\}}
    \end{pmatrix*}
    .
\end{align*}
\end{lemma}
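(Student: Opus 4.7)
The plan is to follow exactly the same bookkeeping strategy used in Lemmas \ref{Lemma-simplification for RHS of (alpha,I,J)-lemma 5}, \ref{Lemma-simplification for RHS of (alpha,I,J)-lemma 6}, and \ref{Lemma-simplification for RHS of (alpha,I,J)-lemma 7}: rewrite each ``order'' symbol as an explicit array (introducing an extra sign factor), merge adjacent signs of permutations by cancelling common rows, and move individual indices inside the remaining rows using the commutation rule $\sgn{K,\alpha,\beta,L}{\cdot}=(-1)^{|\alpha||\beta|}\sgn{K,\beta,\alpha,L}{\cdot}$. After enough rearranging, the two surviving signs of permutation will have identical second rows (up to a single global reordering), and their product will collapse to $\sgn{1,\cdots,n,1',\cdots,n'}{1',\cdots,n',1,\cdots,n}=(-1)^{n^2}$.

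Concretely, I would proceed in the following steps. First, I would combine the first two sign factors exactly as in Lemma \ref{Lemma-simplification for RHS of (alpha,I,J)-lemma 7} to get
\[
\sgn{1,\cdots,n,1',\cdots,n'}{\alpha,\indexI,\indexJp,\indexjpc{l},\indexIc\backslash\{\alpha\},\indexJpc\backslash\{\indexjpc{l}\}},
\]
since the two rows of the second sign share the block $\indexIc\backslash\{\alpha\}$. Second, I would expand the third sign factor by pulling $u'$ and $r$ to the front of the top row, which produces $\sgn{u',r,\indexIc\backslash\{\alpha\},\indexJpc\backslash\{\indexjpc{l}\}}{\order{(\indexIc\backslash\{\alpha\})\cup\{r\}},\order{(\indexJpc\backslash\{\indexjpc{l}\})\cup\{u'\}}}$, and then peel off the two $\order{\cdot}$ rewrites on the bottom to create the factor $\sgn{\order{(\indexJp\backslash\{u'\})\cup\{\indexjpc{l}\}}}{\indexjpc{l},\indexJp\backslash\{u'\}}$ (the second holomorphic-side rewrite $\order{(\indexIc\backslash\{\alpha\})\cup\{r\}}$ will later cancel against a matching factor coming from the fourth sign). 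Third, combine the resulting simplified third factor with the fourth sign by cancelling the common block to arrive at an expression of the form $\sgn{1',\cdots,n',1,\cdots,n}{u',r,\indexIc\backslash\{\alpha\},\indexJpc\backslash\{\indexjpc{l}\},\alpha,\indexI\backslash\{r\},\indexJp\backslash\{u'\},\indexjpc{l}}$ (after pushing $\indexjpc{l}$ back into $\indexJp$).

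Finally, I would align this with the first sign. Using the commutation rule, move $\alpha$, $\indexjpc{l}$, $u'$, $r$ into positions that match the first sign's bottom row $\alpha,\indexI,\indexJp,\indexjpc{l},\indexIc\backslash\{\alpha\},\indexJpc\backslash\{\indexjpc{l}\}$, picking up the expected factor $\sgn{u',r,\indexI\backslash\{r\},\indexJp\backslash\{u'\}}{\indexI,\indexJp}$ together with a power of $-1$ coming from (a) swapping the $\indexI$-block past the $\indexJp$-block (contributing $(-1)^{pq}$-type terms), (b) swapping the $\indexIc\backslash\{\alpha\}$-block past $\indexJpc\backslash\{\indexjpc{l}\}$ on the other sign, and (c) the two single transpositions involving $\alpha$ and $\indexjpc{l}$. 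At that point the two signs become $\sgn{1,\cdots,n,1',\cdots,n'}{\cdot}\sgn{1',\cdots,n',1,\cdots,n}{\cdot}=(-1)^{n^2}$, and the accumulated parity simplifies via $(-1)^{(p+q)(p+q+1)}=1$ to $(-1)^{p+1}$.

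The main obstacle will be the careful bookkeeping of the parity, since this lemma differs from Lemmas \ref{Lemma-simplification for RHS of (alpha,I,J)-lemma 5}--\ref{Lemma-simplification for RHS of (alpha,I,J)-lemma 7} in that \emph{both} $r\in\indexI$ and $u'\in\indexJp$ are being exchanged with elements of $\indexIc$ and $\indexJpc$ respectively, so two $\order{\cdot}$ rewrites occur on each side instead of one. In the previous lemmas the anomalous $(-1)$ factors came out to $\pm 1$ or $\pm\sgn{\order{\cdot}}{\cdot}$, but here both holomorphic and anti-holomorphic rewrites produce separate sign contributions and one of the two must be absorbed into $\sgn{u',r,\indexI\backslash\{r\},\indexJp\backslash\{u'\}}{\indexI,\indexJp}$ while the other survives as the explicit $\sgn{\order{(\indexJp\backslash\{u'\})\cup\{\indexjpc{l}\}}}{\indexjpc{l},\indexJp\backslash\{u'\}}$ in the target. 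Tracking which of the two $\order{\cdot}$ rewrites on the holomorphic side is absorbed into $\sgn{u',r,\indexI\backslash\{r\},\indexJp\backslash\{u'\}}{\indexI,\indexJp}$ (and verifying that the anti-holomorphic rewrite $\order{(\indexIc\backslash\{\alpha\})\cup\{r\}}$ really does cancel rather than remain as extra junk) is where the bulk of the case-checking will occur, and is what I expect to require the most care.
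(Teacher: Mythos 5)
Your proposal follows essentially the same route as the paper's proof: the same merge-and-commute manipulations of signs of permutations, with the anti-holomorphic order-rewrites absorbed by merging into the fourth sign, the rewrite $\order{(\indexJp\backslash\{u^{'}\})\cup\{\indexjpc{l}\}}$ peeled off as the surviving explicit factor, and the final collapse to $(-1)^{n^2}$ with the parity bookkeeping $(-1)^{(p+q)(p+q+1)}=1$ yielding $(-1)^{p+1}$. The only difference is the order in which the merges are performed (you fold the second sign into the first, while the paper folds it into the combined third-and-fourth sign), which is immaterial since the product of signs is commutative.
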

\begin{proof}
We use the following properties of the sign of permutation:
\begin{itemize}
    \item 
$
    \sgn
        {\indexIc\backslash\{\alpha\},\indexjpc{1},\cdots,\indexjpc{l-1},u^{'},r,\indexjpc{l+1},\cdots,\indexjpc{n-q}}
        {\order{(\indexIc\backslash\{\alpha\})\cup\{r\}},\order{(\indexJpc\backslash\{\indexjpc{l}\})\cup\{u^{'}\}}}
    =
    \sgn
        {u^{'},r,\indexIc\backslash\{\alpha\},\indexJpc\backslash\{\indexjpc{l}\}}
        {\order{(\indexIc\backslash\{\alpha\})\cup\{r\}},\order{(\indexJpc\backslash\{\indexjpc{l}\})\cup\{u^{'}\}}}
$,
    \item 
$
    \begin{pmatrix*}
        \sgn
            {u^{'},r,\indexIc\backslash\{\alpha\},\indexJpc\backslash\{\indexjpc{l}\}}
            {\order{(\indexIc\backslash\{\alpha\})\cup\{r\}},\order{(\indexJpc\backslash\{\indexjpc{l}\})\cup\{u^{'}\}}}
        \\
        \sgn
            {1^{'},\cdots,n^{'},1,\cdots,n}
            {\order{(\indexIc\backslash\{\alpha\})\cup\{r\}},\order{(\indexJpc\backslash\{\indexjpc{l}\})\cup\{u^{'}\}},\alpha,\indexI\backslash\{r\},\order{(\indexJp\backslash\{u^{'}\})\cup\{\indexjpc{l}\}}}
    \end{pmatrix*}
    \\
    =
    \sgn
        {1^{'},\cdots,n^{'},1,\cdots,n}
        {u^{'},r,\indexIc\backslash\{\alpha\},\indexJpc\backslash\{\indexjpc{l}\},\alpha,\indexI\backslash\{r\},\order{(\indexJp\backslash\{u^{'}\})\cup\{\indexjpc{l}\}}}
$.
\end{itemize}
We simplify the left-hand side of the equation in Lemma \ref{Lemma-simplification for RHS of (alpha,I,J)-lemma 8} to
\begin{align}
    \sgn
        {1,\cdots,n,1^{'},\cdots,n^{'}}
        {\alpha,\indexI,\indexJp,\indexIc\backslash\{\alpha\},\indexJpc}
    \sgn
        {\indexIc\backslash\{\alpha\},\indexJpc}
        {\indexjpc{l},\indexIc\backslash\{\alpha\},\indexJpc\backslash\{\indexjpc{l}\}}
    \sgn
        {1^{'},\cdots,n^{'},1,\cdots,n}
        {u^{'},r,\indexIc\backslash\{\alpha\},\indexJpc\backslash\{\indexjpc{l}\},\alpha,\indexI\backslash\{r\},\order{(\indexJp\backslash\{u^{'}\})\cup\{\indexjpc{l}\}}}
    .
\label{EQ-simplification for RHS of (alpha,I,J)-lemma 8-no1}
\end{align}

We use the following properties of the sign of permutation:
\begin{itemize}
    \item 
$
    \sgn
        {1^{'},\cdots,n^{'},1,\cdots,n}
        {u^{'},r,\indexIc\backslash\{\alpha\},\indexJpc\backslash\{\indexjpc{l}\},\alpha,\indexI\backslash\{r\},\order{(\indexJp\backslash\{u^{'}\})\cup\{\indexjpc{l}\}}}
    \\
    =
    \begin{pmatrix*}[l]
        \sgn
            {1^{'},\cdots,n^{'},1,\cdots,n}
            {u^{'},r,\indexIc\backslash\{\alpha\},\indexJpc\backslash\{\indexjpc{l}\},\alpha,\indexI\backslash\{r\},\indexjpc{l},\indexJp\backslash\{u^{'}\}}
        \\
        \sgn
            {\order{(\indexJp\backslash\{u^{'}\})\cup\{\indexjpc{l}\}}}
            {\indexjpc{l},\indexJp\backslash\{u^{'}\}}
    \end{pmatrix*}
$,
    \item 
$
    \sgn
        {1^{'},\cdots,n^{'},1,\cdots,n}
        {u^{'},r,\indexIc\backslash\{\alpha\},\indexJpc\backslash\{\indexjpc{l}\},\alpha,\indexI\backslash\{r\},\indexjpc{l},\indexJp\backslash\{u^{'}\}}
    \\
    =
    (-1)^{p+n-q-1+n-p-1}
    \sgn
        {1^{'},\cdots,n^{'},1,\cdots,n}
        {u^{'},r,\indexIc\backslash\{\alpha\},\indexjpc{l},\indexJpc\backslash\{\indexjpc{l}\},\alpha,\indexI\backslash\{r\},\indexJp\backslash\{u^{'}\}}
$,
    \item 
$
    \sgn
        {\indexIc\backslash\{\alpha\},\indexJpc}
        {\indexjpc{l},\indexIc\backslash\{\alpha\},\indexJpc\backslash\{\indexjpc{l}\}}
    \sgn
        {1^{'},\cdots,n^{'},1,\cdots,n}
        {u^{'},r,\indexIc\backslash\{\alpha\},\indexjpc{l},\indexJpc\backslash\{\indexjpc{l}\},\alpha,\indexI\backslash\{r\},\indexJp\backslash\{u^{'}\}}
    \\
    =
    \sgn
        {1^{'},\cdots,n^{'},1,\cdots,n}
        {u^{'},r,\indexIc\backslash\{\alpha\},\indexJpc,\alpha,\indexI\backslash\{r\},\indexJp\backslash\{u^{'}\}}
$.
\end{itemize}
We simplify (\ref{EQ-simplification for RHS of (alpha,I,J)-lemma 8-no1}) to
\begin{align}
    &
    \begin{pmatrix*}[l]
        \sgn
            {1,\cdots,n,1^{'},\cdots,n^{'}}
            {\alpha,\indexI,\indexJp,\indexIc\backslash\{\alpha\},\indexJpc}
        (-1)^{p+n-q-1+n-p-1}
        \\
        \sgn
            {1^{'},\cdots,n^{'},1,\cdots,n}
            {u^{'},r,\indexIc\backslash\{\alpha\},\indexJpc,\alpha,\indexI\backslash\{r\},\indexJp\backslash\{u^{'}\}}
        \sgn
            {\order{(\indexJp\backslash\{u^{'}\})\cup\{\indexjpc{l}\}}}
            {\indexjpc{l},\indexJp\backslash\{u^{'}\}}
    \end{pmatrix*}
    \notag
    \\
    =&
    (-1)^{q}
    \sgn
        {1,\cdots,n,1^{'},\cdots,n^{'}}
        {\alpha,\indexI,\indexJp,\indexIc\backslash\{\alpha\},\indexJpc}  
    \sgn
        {1^{'},\cdots,n^{'},1,\cdots,n}
        {u^{'},r,\indexIc\backslash\{\alpha\},\indexJpc,\alpha,\indexI\backslash\{r\},\indexJp\backslash\{u^{'}\}}
    \sgn
        {\order{(\indexJp\backslash\{u^{'}\})\cup\{\indexjpc{l}\}}}
        {\indexjpc{l},\indexJp\backslash\{u^{'}\}}
    .
\label{EQ-simplification for RHS of (alpha,I,J)-lemma 8-no2}
\end{align}

We use the following properties of the sign of permutation:
\begin{itemize}
    \item 
$
    \sgn
        {1^{'},\cdots,n^{'},1,\cdots,n}
        {u^{'},r,\indexIc\backslash\{\alpha\},\indexJpc,\alpha,\indexI\backslash\{r\},\indexJp\backslash\{u^{'}\}}
    =
    \sgn
        {1^{'},\cdots,n^{'},1,\cdots,n}
        {\indexIc\backslash\{\alpha\},\indexJpc,\alpha,u^{'},r,\indexI\backslash\{r\},\indexJp\backslash\{u^{'}\}}
$,
    \item 
$
    \sgn
        {1^{'},\cdots,n^{'},1,\cdots,n}
        {\indexIc\backslash\{\alpha\},\indexJpc,\alpha,u^{'},r,\indexI\backslash\{r\},\indexJp\backslash\{u^{'}\}}
    =
    (-1)^{n-p-1+n-q}
    \sgn
        {1^{'},\cdots,n^{'},1,\cdots,n}
        {\alpha,\indexIc\backslash\{\alpha\},\indexJpc,u^{'},r,\indexI\backslash\{r\},\indexJp\backslash\{u^{'}\}} 
$,
    \item 
$
    \sgn
        {1^{'},\cdots,n^{'},1,\cdots,n}
        {\alpha,\indexIc\backslash\{\alpha\},\indexJpc,u^{'},r,\indexI\backslash\{r\},\indexJp\backslash\{u^{'}\}} 
    =
    \sgn
        {1^{'},\cdots,n^{'},1,\cdots,n}
        {\alpha,\indexIc\backslash\{\alpha\},\indexJpc,\indexI,\indexJp}
    \sgn
        {u^{'},r,\indexI\backslash\{r\},\indexJp\backslash\{u^{'}\}}
        {\indexI,\indexJp}
$.
\end{itemize}
We simplify (\ref{EQ-simplification for RHS of (alpha,I,J)-lemma 8-no2}) to
\begin{align}
    &
        \begin{pmatrix*}[l]
            \sgn
            {1,\cdots,n,1^{'},\cdots,n^{'}}
            {\alpha,\indexI,\indexJp,\indexIc\backslash\{\alpha\},\indexJpc}
        (-1)^{q}
        (-1)^{n-p-1+n-q}
        \\
        \sgn
            {1^{'},\cdots,n^{'},1,\cdots,n}
            {\alpha,\indexIc\backslash\{\alpha\},\indexJpc,\indexI,\indexJp}
        \sgn
            {u^{'},r,\indexI\backslash\{r\},\indexJp\backslash\{u^{'}\}}
            {\indexI,\indexJp}
        \sgn
            {\order{(\indexJp\backslash\{u^{'}\})\cup\{\indexjpc{l}\}}}
            {\indexjpc{l},\indexJp\backslash\{u^{'}\}}
        \end{pmatrix*}
    \notag
    \\
    =&
        \begin{pmatrix*}[l]
            \sgn
                {1,\cdots,n,1^{'},\cdots,n^{'}}
                {\alpha,\indexI,\indexJp,\indexIc\backslash\{\alpha\},\indexJpc}
            (-1)^{p+1}
            \sgn
                {1^{'},\cdots,n^{'},1,\cdots,n}
                {\alpha,\indexIc\backslash\{\alpha\},\indexJpc,\indexI,\indexJp}
            \\
            \sgn
                {u^{'},r,\indexI\backslash\{r\},\indexJp\backslash\{u^{'}\}}
                {\indexI,\indexJp}
            \sgn
                {\order{(\indexJp\backslash\{u^{'}\})\cup\{\indexjpc{l}\}}}
                {\indexjpc{l},\indexJp\backslash\{u^{'}\}}
        \end{pmatrix*}
    .
\label{EQ-simplification for RHS of (alpha,I,J)-lemma 8-no3}
\end{align}

We use the following properties of the sign of permutation:
\begin{itemize}
    \item 
$
    \sgn
        {1^{'},\cdots,n^{'},1,\cdots,n}
        {\alpha,\indexIc\backslash\{\alpha\},\indexJpc,\indexI,\indexJp}
    =
    (-1)^{(n-p-1+n-q)(p+q)}
    \sgn
        {1^{'},\cdots,n^{'},1,\cdots,n}
        {\alpha,\indexI,\indexJp,\indexIc\backslash\{\alpha\},\indexJpc}
    \\
    =
    \sgn
        {1^{'},\cdots,n^{'},1,\cdots,n}
        {\alpha,\indexI,\indexJp,\indexIc\backslash\{\alpha\},\indexJpc}
$,
    \item 
$
    \sgn
        {1,\cdots,n,1^{'},\cdots,n^{'}}
        {\alpha,\indexI,\indexJp,\indexIc\backslash\{\alpha\},\indexJpc}
    \sgn
        {1^{'},\cdots,n^{'},1,\cdots,n}
        {\alpha,\indexI,\indexJp,\indexIc\backslash\{\alpha\},\indexJpc}
    =
    \sgn
        {1,\cdots,n,1^{'},\cdots,n^{'}}
        {1^{'},\cdots,n^{'},1,\cdots,n}
    =
    (-1)^{n^2}
$.
\end{itemize}
We simplify (\ref{EQ-simplification for RHS of (alpha,I,J)-lemma 8-no3}) to
\begin{align*}
    (-1)^{p+1}
    (-1)^{n^2}
    \sgn
        {u^{'},r,\indexI\backslash\{r\},\indexJp\backslash\{u^{'}\}}
        {\indexI,\indexJp}
    \sgn
        {\order{(\indexJp\backslash\{u^{'}\})\cup\{\indexjpc{l}\}}}
        {\indexjpc{l},\indexJp\backslash\{u^{'}\}}
    ,
\end{align*}
which completes the proof of Lemma \ref{Lemma-simplification for RHS of (alpha,I,J)-lemma 8}.
\end{proof}

\subsection{Calculation of the coefficients in \texorpdfstring{$
    \basexi{\alpha}\wedge
    \sqrt{-1}\LieagbdPartial^{*}
    (\basexi{\indexI}\wedge\basedualxi{\indexJp})
    $}{}
}
\label{apex-simplification for RHS of alpha,(I,J)}
In this section, we calculate the following signs:
\begin{itemize}
    \item 
$
    \begin{pmatrix*} 
        \sgn
            {1,\cdots,n,1^{'},\cdots,n^{'}}
            {\indexI,\indexJp,\indexIc,\indexJpc}
        \sgn
            {\indexIc}
            {\indexic{k},\indexIc\backslash\{\indexic{k}\}}
        \sgn
            {\indexic{1},\cdots,\indexic{k-1},\order{\indexic{k},r},\indexic{k+1},\cdots,\indexic{n-p}}
            {\order{\indexIc\cup\{r\}}}
        \\
        \sgn
            {1^{'},\cdots,n^{'},1,\cdots,n}
            {\order{\indexIc\cup\{r\}},\indexJpc,\indexI\backslash\{r\},\indexJp}
    \end{pmatrix*} 
$,
    \item 
$
    \begin{pmatrix*}
        \sgn
            {1,\cdots,n,1^{'},\cdots,n^{'}}
            {\indexI,\indexJp,\indexIc,\indexJpc}
        \sgn
            {\indexIc}
            {\indexic{k},\indexIc\backslash\{\indexic{k}\}}
        \sgn
            {\indexic{1},\cdots,\indexic{k-1},\order{r_1,r_2},\indexic{k+1},\cdots,\indexic{n-p}}
            {\order{(\indexIc\backslash\{\indexic{k}\})\cup\{r_1,r_2\}}}
        \\
        \sgn
            {1^{'},\cdots,n^{'},1,\cdots,n}
            {\order{(\indexIc\backslash\{\indexic{k}\})\cup\{r_1,r_2\}},\indexJpc,\order{(\indexI\backslash\{r_1,r_2\})\cup\{\indexic{k}\}},\indexJp}
    \end{pmatrix*}
$,
    \item 
$
    \begin{pmatrix*}
            \sgn
                {1,\cdots,n,1^{'},\cdots,n^{'}}
                {\indexI,\indexJp,\indexIc,\indexJpc}
            \sgn
                {\indexIc,\indexJpc}
                {\indexjpc{l},\indexIc,\indexJpc\backslash\{\indexjpc{l}\}}
            \\
            \sgn
                {\indexIc,\indexjpc{1},\cdots,\indexjpc{l-1},\indexjpc{l},r,\indexjpc{l+1},\cdots,\indexjpc{n-1}}
                {\order{\indexIc\cup\{r\}},\indexJpc}
            \sgn
                {1^{'},\cdots,n^{'},1,\cdots,n}
                {\order{\indexIc\cup\{r\}},\indexJpc,\indexI\backslash\{r\},\indexJp}
        \end{pmatrix*}
$,
    \item 
$
    \begin{pmatrix*}
            \sgn
                {1,\cdots,n,1^{'},\cdots,n^{'}}
                {\indexI,\indexJp,\indexIc,\indexJpc}
            \sgn
                {\indexIc,\indexJpc}
                {\indexjpc{l},\indexIc,\indexJpc\backslash\{\indexjpc{l}\}}
            \sgn
                {\indexIc,\indexjpc{1},\cdots,\indexjpc{l-1},u^{'},r,\indexjpc{l+1},\cdots,\indexjpc{n-1}}
                {\order{\indexIc\cup\{r\}},\order{(\indexJpc\backslash\{\indexjpc{l}\})\cup\{u^{'}\}}}
            \\
            \sgn
                {1^{'},\cdots,n^{'},1,\cdots,n}
                {\order{\indexIc\cup\{r\}},\order{(\indexJpc\backslash\{\indexjpc{l}\})\cup\{u^{'}\}},\indexI\backslash\{r\},\order{(\indexJp\backslash\{u^{'}\})\cup\{\indexjpc{l}\}}}
        \end{pmatrix*}
$,
\end{itemize}
which are from (\ref{EQ-For alpha, (I ,J), calculation for RHS-step 3,unsimplified close to the target}) in Section \ref{subsection-For (alpha, I, J), calculation for RHS}. We need these signs to simplify the expression of $
\star\,\LieagbdPartialbar\,\star
(\basexi{\indexI}\wedge\basedualxi{\indexJp})
$ and compute $
\basexi{\alpha}\wedge
\sqrt{-1}\LieagbdPartial^{*}
(\basexi{\indexI}\wedge\basedualxi{\indexJp})
$.

\begin{lemma}
\label{Lemma-simplification for RHS of alpha,(I,J)-lemma 1}
    \begin{align*}
        \begin{pmatrix*} 
            \sgn
                {1,\cdots,n,1^{'},\cdots,n^{'}}
                {\indexI,\indexJp,\indexIc,\indexJpc}
            \sgn
                {\indexIc}
                {\indexic{k},\indexIc\backslash\{\indexic{k}\}}
            \\
            \sgn
                {\indexic{1},\cdots,\indexic{k-1},\order{\indexic{k},r},\indexic{k+1},\cdots,\indexic{n-p}}
                {\order{\indexIc\cup\{r\}}}
            \\
            \sgn
                {1^{'},\cdots,n^{'},1,\cdots,n}
                {\order{\indexIc\cup\{r\}},\indexJpc,\indexI\backslash\{r\},\indexJp}
        \end{pmatrix*} 
        =
        (-1)^{n^2}
        \sgn
            {\order{\indexic{k},r}}
            {r,\indexic{k}}
        \sgn
            {r,\indexI\backslash\{r\}}
            {\indexI}.
    \end{align*}
\end{lemma}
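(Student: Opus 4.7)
The plan is to proceed exactly in the style of Lemmas \ref{Lemma-simplification for RHS of (alpha,I,J)-lemma 1}--\ref{Lemma-simplification for RHS of (alpha,I,J)-lemma 8} in Section \ref{apex-simplification for RHs of (alpha,I,J)}, using only the four manipulation rules for the sign of permutation (row swap, common-subsequence insertion, multiplicative merge, and adjacent-block swap with sign $(-1)^{|\indexI||\indexJ|}$) from the proposition in Section \ref{sec:preliminaries}. No new algebraic ingredient is needed; the task is bookkeeping.

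First I would amalgamate the second and third signs. The second sign moves $\indexic{k}$ to the front of $\indexIc$, while the third sign expresses the ordering of $\order{\indexIc\cup\{r\}}$ with $\order{\indexic{k},r}$ in the $k$-th slot. Splitting the $\order{\indexic{k},r}$ block as $\sgn{\order{\indexic{k},r}}{r,\indexic{k}}\,\sgn{r,\indexic{k}}{\text{same}}$ and then merging with the second sign collapses the intermediate array $\indexic{k},\indexIc\backslash\{\indexic{k}\}$, producing the single sign
\[
\sgn{r,\indexIc}{\order{\indexIc\cup\{r\}}}\cdot \sgn{\order{\indexic{k},r}}{r,\indexic{k}}.
\]

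Next I would absorb the first sign. Extracting $r$ from $\indexI$ via $\sgn{\indexI}{r,\indexI\backslash\{r\}}=\sgn{r,\indexI\backslash\{r\}}{\indexI}$ rewrites the first sign as $\sgn{r,\indexI\backslash\{r\}}{\indexI}\cdot\sgn{1,\ldots,n,1',\ldots,n'}{r,\indexI\backslash\{r\},\indexJp,\indexIc,\indexJpc}$. Then the adjacent-block swap rule (with parity $(p-1)+q+1 = p+q$ but cancelling against the corresponding parity picked up when moving $r$ through the second row of the last sign) allows me to align $r,\indexIc$ in the source of the combined sign with $\order{\indexIc\cup\{r\}}$ in the target of the fourth sign.

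At this stage the computation reduces to
\[
(-1)^{?}\,\sgn{r,\indexI\backslash\{r\}}{\indexI}\,\sgn{\order{\indexic{k},r}}{r,\indexic{k}}\,\sgn{1,\ldots,n,1',\ldots,n'}{\indexI,\indexJp,\indexIc,\indexJpc}\,\sgn{1',\ldots,n',1,\ldots,n}{\indexI,\indexJp,\indexIc,\indexJpc},
\]
where the last product collapses to $\sgn{1,\ldots,n,1',\ldots,n'}{1',\ldots,n',1,\ldots,n}=(-1)^{n^2}$ by the merge rule, exactly as in Lemma \ref{Lemma-simplification for RHS of (alpha,I,J)-lemma 1}. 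The parity prefactor $(-1)^{?}$ is a product of terms of the form $(-1)^{(p+q)(n-p+n-q)}=(-1)^{(p+q)^{2}}=(-1)^{p+q}$ coming from swapping the unprimed and primed blocks, which combines with any $(-1)^{p+q}$ accrued from the block swaps in the previous paragraph to give $+1$.

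The main obstacle, as in all the analogous lemmas, is tracking the parity contributions from the block swaps; getting them to cancel to $+1$ (rather than to $-1$, as happens in Lemma \ref{Lemma-simplification for RHS of (alpha,I,J)-lemma 2}) is what distinguishes this case. The sign asymmetry arises because $r$ here enters an \emph{unprimed} slot (so $r$ traverses only the $\indexJp$ block of length $q$, contributing $(-1)^{q}$ pairs balanced against the $(-1)^{q}$ from one of the block swaps), whereas in Lemma \ref{Lemma-simplification for RHS of (alpha,I,J)-lemma 2} the extra $\alpha$ in the source spoils this cancellation by one.
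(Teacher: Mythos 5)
Your proposal is correct and follows essentially the same route as the paper's proof: both peel off $\sgn{\order{\indexic{k},r}}{r,\indexic{k}}$ from the third sign, merge the remaining permutation data into the big primed/unprimed signs, and observe that the two block-swap parities $(-1)^{p+q}$ and $(-1)^{(p+q)^2}$ cancel, leaving $(-1)^{n^2}\sgn{r,\indexI\backslash\{r\}}{\indexI}$. The only (immaterial) difference is that you extract $\sgn{r,\indexI\backslash\{r\}}{\indexI}$ from the first sign rather than from the fourth and reorder some of the merges.
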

\begin{proof}
We use the following properties of the sign of permutation:
\begin{itemize}
    \item 
$
        \sgn
            {\indexic{1},\cdots,\indexic{k-1},\order{\indexic{k},r},\indexic{k+1},\cdots,\indexic{n-p}}
            {\order{\indexIc\cup\{r\}}}
        =
        \sgn
            {\order{\indexic{k},r},\indexIc\backslash\{\indexic{k}\}}
            {\order{\indexIc\cup\{r\}}}    
$,
    \item 
$
        \sgn
            {\order{\indexic{k},r},\indexIc\backslash\{\indexic{k}\}}
            {\order{\indexIc\cup\{r\}}}
        =
        \sgn
            {\order{\indexic{k},r}}
            {r,\indexic{k}}
        \sgn
            {r,\indexic{k},\indexIc\backslash\{\indexic{k}\}}
            {\order{\indexIc\cup\{r\}}}
$,
    \item 
$
        \sgn
            {r,\indexic{k},\indexIc\backslash\{\indexic{k}\}}
            {\order{\indexIc\cup\{r\}}}
        \sgn
            {1^{'},\cdots,n^{'},1,\cdots,n}
            {\order{\indexIc\cup\{r\}},\indexJpc,\indexI\backslash\{r\},\indexJp}
        =
        \sgn
            {1^{'},\cdots,n^{'},1,\cdots,n}
            {r,\indexic{k},\indexIc\backslash\{\indexic{k}\},\indexJpc,\indexI\backslash\{r\},\indexJp}
$.
\end{itemize}
We simplify the left-hand side of the equation in Lemma \ref{Lemma-simplification for RHS of alpha,(I,J)-lemma 1} to
\begin{align}
    \sgn
        {1,\cdots,n,1^{'},\cdots,n^{'}}
        {\indexI,\indexJp,\indexIc,\indexJpc}
    \sgn
        {\indexIc}
        {\indexic{k},\indexIc\backslash\{\indexic{k}\}}
    \sgn
        {\order{\indexic{k},r}}
        {r,\indexic{k}}
    \sgn
        {1^{'},\cdots,n^{'},1,\cdots,n}
        {r,\indexic{k},\indexIc\backslash\{\indexic{k}\},\indexJpc,\indexI\backslash\{r\},\indexJp}
    .
    \label{EQ-simplification for RHS of alpha,(I,J)-lemma 1-no1}
\end{align}

We use the following properties of the sign of permutation:
\begin{itemize}
    \item 
$
        \sgn
            {\indexIc}
            {\indexic{k},\indexIc\backslash\{\indexic{k}\}}
        \sgn
            {1^{'},\cdots,n^{'},1,\cdots,n}
            {r,\indexic{k},\indexIc\backslash\{\indexic{k}\},\indexJpc,\indexI\backslash\{r\},\indexJp}
        =
        \sgn
            {1^{'},\cdots,n^{'},1,\cdots,n}
            {r,\indexIc,\indexJpc,\indexI\backslash\{r\},\indexJp}
$,
    \item
$
        \sgn
            {1^{'},\cdots,n^{'},1,\cdots,n}
            {r,\indexIc,\indexJpc,\indexI\backslash\{r\},\indexJp}
        =
        (-1)^{n-p+n-q}
        \sgn
            {1^{'},\cdots,n^{'},1,\cdots,n}
            {\indexIc,\indexJpc,r,\indexI\backslash\{r\},\indexJp}
        =
        (-1)^{p+q}
        \sgn
            {1^{'},\cdots,n^{'},1,\cdots,n}
            {\indexIc,\indexJpc,r,\indexI\backslash\{r\},\indexJp},
$
    \item 
$
        \sgn
            {1^{'},\cdots,n^{'},1,\cdots,n}
            {\indexIc,\indexJpc,r,\indexI\backslash\{r\},\indexJp}
        =
        \sgn
            {r,\indexI\backslash\{r\}}
            {\indexI}
        \sgn
            {1^{'},\cdots,n^{'},1,\cdots,n}
            {\indexIc,\indexJpc,\indexI,\indexJp}
$.
\end{itemize}
We simplify (\ref{EQ-simplification for RHS of alpha,(I,J)-lemma 1-no1}) to
\begin{align}
    \sgn
        {1,\cdots,n,1^{'},\cdots,n^{'}}
        {\indexI,\indexJp,\indexIc,\indexJpc}
    \sgn
        {\order{\indexic{k},r}}
        {r,\indexic{k}}
    (-1)^{p+q}
    \sgn
        {r,\indexI\backslash\{r\}}
        {\indexI}
    \sgn
        {1^{'},\cdots,n^{'},1,\cdots,n}
        {\indexIc,\indexJpc,\indexI,\indexJp}
    .
\label{EQ-simplification for RHS of alpha,(I,J)-lemma 1-no2}
\end{align}

We use the following properties of the sign of permutation:
\begin{itemize}
    \item 
$
        \sgn
            {1^{'},\cdots,n^{'},1,\cdots,n}
            {\indexIc,\indexJpc,\indexI,\indexJp}
        =
        (-1)^{(n-p+n-q)(p+q)}
        \sgn
            {1^{'},\cdots,n^{'},1,\cdots,n}
            {\indexI,\indexJp,\indexIc,\indexJpc}
        =
        (-1)^{(p+q)^2}
        \sgn
            {1^{'},\cdots,n^{'},1,\cdots,n}
            {\indexI,\indexJp,\indexIc,\indexJpc}
$,
    \item 
$
        \sgn
            {1,\cdots,n,1^{'},\cdots,n^{'}}
            {\indexI,\indexJp,\indexIc,\indexJpc}
        \sgn
            {1^{'},\cdots,n^{'},1,\cdots,n}
            {\indexI,\indexJp,\indexIc,\indexJpc}
        =
        \sgn
            {1,\cdots,n,1^{'},\cdots,n^{'}}
            {1^{'},\cdots,n^{'},1,\cdots,n}
        =
        (-1)^{n^2}
$.
\end{itemize}
We simplify (\ref{EQ-simplification for RHS of alpha,(I,J)-lemma 1-no2}) to
\begin{align*}
    (-1)^{(p+q)}
    (-1)^{(p+q)^2}
    (-1)^{n^2}
    \sgn
        {\order{\indexic{k},r}}
        {r,\indexic{k}}
    \sgn
        {r,\indexI\backslash\{r\}}
        {\indexI}
    =
    (-1)^{n^2}
    \sgn
        {\order{\indexic{k},r}}
        {r,\indexic{k}}
    \sgn
        {r,\indexI\backslash\{r\}}
        {\indexI}
    ,
\end{align*}
which completes the proof of Lemma \ref{Lemma-simplification for RHS of alpha,(I,J)-lemma 1}.

\end{proof}


\begin{lemma}
\label{Lemma-simplification for RHS of alpha,(I,J)-lemma 2}
    \begin{align*}
        \begin{pmatrix*}
            \sgn
                {1,\cdots,n,1^{'},\cdots,n^{'}}
                {\indexI,\indexJp,\indexIc,\indexJpc}
            \sgn
                {\indexIc}
                {\indexic{k},\indexIc\backslash\{\indexic{k}\}}
            \\
            \sgn
                {\indexic{1},\cdots,\indexic{k-1},\order{r_1,r_2},\indexic{k+1},\cdots,\indexic{n-p}}
                {\order{(\indexIc\backslash\{\indexic{k}\})\cup\{r_1,r_2\}}}
            \\
            \sgn
                {1^{'},\cdots,n^{'},1,\cdots,n}
                {\order{(\indexIc\backslash\{\indexic{k}\})\cup\{r_1,r_2\}},\indexJpc,\order{(\indexI\backslash\{r_1,r_2\})\cup\{\indexic{k}\}},\indexJp}
        \end{pmatrix*}
        =
        \begin{pmatrix*}[l]
            -(-1)^{n^2}
            \\
            \sgn
                {\order{r_1,r_2},\indexI\backslash\{r_1,r_2\}}
                {\indexI}
            \\
            \sgn
                {\order{(\indexI\backslash\{r_1,r_2\})\cup\{\indexic{k}\}}}
                {\indexic{k},\indexI\backslash\{r_1,r_2\}}
        \end{pmatrix*}
        .
    \end{align*}
\end{lemma}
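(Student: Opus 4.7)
The plan is to reduce this identity to the basic permutation toolkit used throughout the appendix: (i) the sign is invariant under simultaneous rearrangement of the two rows, (ii) each adjacent swap within a single row contributes $(-1)$, (iii) commuting a block of length $a$ past a block of length $b$ contributes $(-1)^{ab}$, and (iv) $\sgn{1,\cdots,n,1^{'},\cdots,n^{'}}{1^{'},\cdots,n^{'},1,\cdots,n}=(-1)^{n^2}$. The target has essentially the same combinatorial shape as Lemma \ref{Lemma-simplification for RHS of (alpha,I,J)-lemma 4} in Appendix I, but with no $\alpha$ block present in the source row. So the strategy is to mimic that proof and then carefully account for the parity discrepancy coming from the missing $\alpha$, which is what will turn the $+(-1)^{n^{2}}$ there into the $-(-1)^{n^{2}}$ here.

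First I would absorb the factor $\sgn{\indexIc}{\indexic{k},\indexIc\backslash\{\indexic{k}\}}$ into the first factor, converting it to $\sgn{1,\cdots,n,1^{'},\cdots,n^{'}}{\indexI,\indexJp,\indexic{k},\indexIc\backslash\{\indexic{k}\},\indexJpc}$. Next, in the third factor, I would use the fact that $\order{(\indexIc\backslash\{\indexic{k}\})\cup\{r_1,r_2\}}$ is reached from the concatenation $\order{r_1,r_2},\indexIc\backslash\{\indexic{k}\}$ by a reordering identical (up to the $r_1,r_2$ placement) to the one used inside the fourth factor; this gives the identification $\sgn{\indexic{1},\cdots,\indexic{k-1},\order{r_1,r_2},\indexic{k+1},\cdots,\indexic{n-p}}{\order{(\indexIc\backslash\{\indexic{k}\})\cup\{r_1,r_2\}}}=\sgn{\order{r_1,r_2},\indexIc\backslash\{\indexic{k}\}}{\order{(\indexIc\backslash\{\indexic{k}\})\cup\{r_1,r_2\}}}$. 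Then I would pull the factor $\sgn{\order{(\indexI\backslash\{r_1,r_2\})\cup\{\indexic{k}\}}}{\indexic{k},\indexI\backslash\{r_1,r_2\}}$ out of the fourth factor and merge the remaining sign with the third, reducing the problem to computing $\sgn{1^{'},\cdots,n^{'},1,\cdots,n}{\order{r_1,r_2},\indexIc\backslash\{\indexic{k}\},\indexJpc,\indexic{k},\indexI\backslash\{r_1,r_2\},\indexJp}$ paired with $\sgn{1,\cdots,n,1^{'},\cdots,n^{'}}{\indexI,\indexJp,\indexic{k},\indexIc\backslash\{\indexic{k}\},\indexJpc}$.

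From there, the endgame is to move blocks so that the second-row arrays in these two signs agree, at which point the product collapses to $(-1)^{n^{2}}$. Specifically, inside the first sign I would extract $\sgn{\order{r_1,r_2},\indexI\backslash\{r_1,r_2\}}{\indexI}$ (this is exactly the factor appearing on the right-hand side) and then commute $\order{r_1,r_2}$ across $\indexJp,\indexic{k},\indexIc\backslash\{\indexic{k}\},\indexJpc$ to bring the two sign-of-permutation patterns into alignment. The block-commutation parities in play are $(-1)^{2(q+1+(n-p-1)+(n-q))}=1$ for that move, combined with the block swap $(-1)^{(n-p-1+n-q)(p-2+q)}$ needed to interchange the two halves of one of the second rows; modulo $2$, the exponent simplifies using $p+q\equiv (p+q)^{2}\pmod 2$ to an odd number, explaining the surplus minus sign.

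The main obstacle is purely bookkeeping: the calculation involves five heterogeneous blocks of lengths $2,\,p-2,\,q,\,n-p-1,\,n-q$, and an isolated element $\indexic{k}$, so a single miscount changes the final sign. The cleanest way to avoid a mistake is to record, after each step, the current second row of each sign factor as an ordered tuple of blocks and track a running parity. Once both sign factors share the same second-row block pattern, multiplication by the $(-1)^{n^{2}}$ identity closes out the computation, and the accumulated parity is exactly $-1$, yielding $-(-1)^{n^{2}}$ as claimed.
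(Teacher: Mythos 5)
Your overall strategy is the paper's own: merge the auxiliary signs into two full-row permutation signs, extract the two factors $\sgn{\order{r_1,r_2},\indexI\backslash\{r_1,r_2\}}{\indexI}$ and $\sgn{\order{(\indexI\backslash\{r_1,r_2\})\cup\{\indexic{k}\}}}{\indexic{k},\indexI\backslash\{r_1,r_2\}}$, align the two second rows by block commutation, and collapse the remaining product to $\sgn{1,\cdots,n,1^{'},\cdots,n^{'}}{1^{'},\cdots,n^{'},1,\cdots,n}=(-1)^{n^2}$. The reductions in your first two paragraphs are correct. The gap is in the final parity count, which is precisely the step that separates this lemma (prefactor $-(-1)^{n^2}$) from the analogous Lemma~\ref{Lemma-simplification for RHS of (alpha,I,J)-lemma 4} (prefactor $+(-1)^{n^2}$), and your count gives the wrong answer.

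You attribute the surplus minus sign to the block swap $(-1)^{(n-p-1+n-q)(p-2+q)}$ and assert that its exponent is odd. It is not: modulo $2$ one has $n-p-1+n-q\equiv p+q+1$ and $p-2+q\equiv p+q$, so the exponent is congruent to $(p+q+1)(p+q)$, a product of consecutive integers, hence always even; that factor is identically $+1$. Since your only other contribution is $(-1)^{2(\cdots)}=1$, the bookkeeping as written yields $+(-1)^{n^2}$, i.e.\ the wrong sign. What you have not charged for is the transport of the odd-length singleton $\indexic{k}$, which your reduction deliberately keeps outside of $\indexIc\backslash\{\indexic{k}\}$: aligning $\indexIc\backslash\{\indexic{k}\},\indexJpc,\indexic{k},\indexI,\indexJp$ with $\indexI,\indexJp,\indexic{k},\indexIc\backslash\{\indexic{k}\},\indexJpc$ costs $(-1)^{(n-p-1+n-q)(p+q+1)}\cdot(-1)^{p+q}=(-1)^{(p+q+1)+(p+q)}=-1$, the second factor coming from sliding $\indexic{k}$ past $\indexI,\indexJp$. (The paper handles the same issue by first moving $\indexic{k}$ to the head of the second row at cost $(-1)^{n-p+1+n-q}$ and reabsorbing it into $\indexIc$, then swapping halves at cost $(-1)^{(n-p+n-q)(p-2+q)}$; the product is again $-1$.) Once the singleton is accounted for your argument closes; without it the proof does not establish the stated sign.
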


\begin{proof}
We use the following properties of the sign of permutation:
\begin{itemize}
    \item 
$
        \sgn
            {\indexic{1},\cdots,\indexic{k-1},\order{r_1,r_2},\indexic{k+1},\cdots,\indexic{n-p}}
            {\order{(\indexIc\backslash\{\indexic{k}\})\cup\{r_1,r_2\}}}
        =
        \sgn
            {\order{r_1,r_2},\indexIc\backslash\{\indexic{k}\}}
            {\order{(\indexIc\backslash\{\indexic{k}\})\cup\{r_1,r_2\}}}
$,
    \item 
$
        \sgn
            {\order{r_1,r_2},\indexIc\backslash\{\indexic{k}\}}
            {\order{(\indexIc\backslash\{\indexic{k}\})\cup\{r_1,r_2\}}}
        =
        \sgn
            {\indexic{k},\order{r_1,r_2},\indexIc\backslash\{\indexic{k}\}}
            {\indexic{k},\order{(\indexIc\backslash\{\indexic{k}\})\cup\{r_1,r_2\}}}
        \\
        =
        \sgn
            {\order{r_1,r_2},\indexic{k},\indexIc\backslash\{\indexic{k}\}}
            {\indexic{k},\order{(\indexIc\backslash\{\indexic{k}\})\cup\{r_1,r_2\}}}
$,
    \item 
$
        \sgn
            {\indexIc}
            {\indexic{k},\indexIc\backslash\{\indexic{k}\}}
        \sgn
            {\order{r_1,r_2},\indexic{k},\indexIc\backslash\{\indexic{k}\}}
            {\indexic{k},\order{(\indexIc\backslash\{\indexic{k}\})\cup\{r_1,r_2\}}}
        =
        \sgn
            {\order{r_1,r_2},\indexIc}
            {\indexic{k},\order{(\indexIc\backslash\{\indexic{k}\})\cup\{r_1,r_2\}}}
$.
\end{itemize}
We simplify the left-hand side of the equation in Lemma \ref{Lemma-simplification for RHS of alpha,(I,J)-lemma 2} to
\begin{align}
    \begin{pmatrix*}[l]
        \sgn
            {1,\cdots,n,1^{'},\cdots,n^{'}}
            {\indexI,\indexJp,\indexIc,\indexJpc}
        \sgn
            {\order{r_1,r_2},\indexIc}
            {\indexic{k},\order{(\indexIc\backslash\{\indexic{k}\})\cup\{r_1,r_2\}}}
        \\
        \sgn
            {1^{'},\cdots,n^{'},1,\cdots,n}
            {\order{(\indexIc\backslash\{\indexic{k}\})\cup\{r_1,r_2\}},\indexJpc,\order{(\indexI\backslash\{r_1,r_2\})\cup\{\indexic{k}\}},\indexJp}
    \end{pmatrix*}.
\label{EQ-simplification for RHS of alpha,(I,J)-lemma 2-no1}
\end{align}

We use the following properties of the sign of permutation:
\begin{itemize}
    \item 
$
        \sgn
            {1^{'},\cdots,n^{'},1,\cdots,n}
            {\order{(\indexIc\backslash\{\indexic{k}\})\cup\{r_1,r_2\}},\indexJpc,\order{(\indexI\backslash\{r_1,r_2\})\cup\{\indexic{k}\}},\indexJp}
        \\
        =
        \sgn
            {1^{'},\cdots,n^{'},1,\cdots,n}
            {\order{(\indexIc\backslash\{\indexic{k}\})\cup\{r_1,r_2\}},\indexJpc,\indexic{k},\indexI\backslash\{r_1,r_2\},\indexJp}
        \sgn
            {\order{(\indexI\backslash\{r_1,r_2\})\cup\{\indexic{k}\}}}
            {\indexic{k},\indexI\backslash\{r_1,r_2\}}
$,
    \item 
$
        \sgn
            {1^{'},\cdots,n^{'},1,\cdots,n}
            {\order{(\indexIc\backslash\{\indexic{k}\})\cup\{r_1,r_2\}},\indexJpc,\indexic{k},\indexI\backslash\{r_1,r_2\},\indexJp}
        \\
        =
        (-1)^{n-p+1+n-q}
        \sgn
            {1^{'},\cdots,n^{'},1,\cdots,n}
            {\indexic{k},\order{(\indexIc\backslash\{\indexic{k}\})\cup\{r_1,r_2\}},\indexJpc,\indexI\backslash\{r_1,r_2\},\indexJp}
$,
    \item 
$
        \sgn
            {\order{r_1,r_2},\indexIc}
            {\indexic{k},\order{(\indexIc\backslash\{\indexic{k}\})\cup\{r_1,r_2\}}}
        \sgn
            {1^{'},\cdots,n^{'},1,\cdots,n}
            {\indexic{k},\order{(\indexIc\backslash\{\indexic{k}\})\cup\{r_1,r_2\}},\indexJpc,\indexI\backslash\{r_1,r_2\},\indexJp}
        \\
        =
        \sgn
            {1^{'},\cdots,n^{'},1,\cdots,n}
            {\order{r_1,r_2},\indexIc,\indexJpc,\indexI\backslash\{r_1,r_2\},\indexJp}
$.
\end{itemize}
We simplify (\ref{EQ-simplification for RHS of alpha,(I,J)-lemma 2-no1}) to
\begin{align}
    &
        \sgn
            {1,\cdots,n,1^{'},\cdots,n^{'}}
            {\indexI,\indexJp,\indexIc,\indexJpc}
        (-1)^{n-p+1+n-q}
        \sgn
            {1^{'},\cdots,n^{'},1,\cdots,n}
            {\order{r_1,r_2},\indexIc,\indexJpc,\indexI\backslash\{r_1,r_2\},\indexJp}
        \sgn
            {\order{(\indexI\backslash\{r_1,r_2\})\cup\{\indexic{k}\}}}
            {\indexic{k},\indexI\backslash\{r_1,r_2\}}
    \notag
    \\
    =&
        (-1)^{p+q+1}
        \sgn
            {1,\cdots,n,1^{'},\cdots,n^{'}}
            {\indexI,\indexJp,\indexIc,\indexJpc}
        \sgn
            {1^{'},\cdots,n^{'},1,\cdots,n}
            {\order{r_1,r_2},\indexIc,\indexJpc,\indexI\backslash\{r_1,r_2\},\indexJp}
        \sgn
            {\order{(\indexI\backslash\{r_1,r_2\})\cup\{\indexic{k}\}}}
            {\indexic{k},\indexI\backslash\{r_1,r_2\}}
    .
\label{EQ-simplification for RHS of alpha,(I,J)-lemma 2-no2}
\end{align}

We use the following properties of the sign of permutation:
\begin{itemize}
    \item 
$
        \sgn
            {1^{'},\cdots,n^{'},1,\cdots,n}
            {\order{r_1,r_2},\indexIc,\indexJpc,\indexI\backslash\{r_1,r_2\},\indexJp}
        =
        (-1)^{(n-p+n-q)(p-2+q)}
        \sgn
            {1^{'},\cdots,n^{'},1,\cdots,n}
            {\order{r_1,r_2},\indexI\backslash\{r_1,r_2\},\indexJp,\indexIc,\indexJpc}
$,
    \item 
$
        \sgn
            {1^{'},\cdots,n^{'},1,\cdots,n}
            {\order{r_1,r_2},\indexI\backslash\{r_1,r_2\},\indexJp,\indexIc,\indexJpc}
        =
        \sgn
            {1^{'},\cdots,n^{'},1,\cdots,n}
            {\indexI,\indexJp,\indexIc,\indexJpc}
        \sgn
            {\order{r_1,r_2},\indexI\backslash\{r_1,r_2\}}
            {\indexI}
$,
    \item 
$
        \sgn
            {1,\cdots,n,1^{'},\cdots,n^{'}}
            {\indexI,\indexJp,\indexIc,\indexJpc}  
        \sgn
            {1^{'},\cdots,n^{'},1,\cdots,n}
            {\indexI,\indexJp,\indexIc,\indexJpc}
        =
        \sgn
            {1,\cdots,n,1^{'},\cdots,n^{'}}
            {1^{'},\cdots,n^{'},1,\cdots,n}
        =
        (-1)^{n^2}
$.
\end{itemize}
We simplify (\ref{EQ-simplification for RHS of alpha,(I,J)-lemma 2-no2}) to
\begin{align*}
    &
        (-1)^{p+q+1}
        (-1)^{(n-p+n-q)(p-2+q)}
        (-1)^{n^2}
        \sgn
                {\order{r_1,r_2},\indexI\backslash\{r_1,r_2\}}
                {\indexI}
        \sgn
            {\order{(\indexI\backslash\{r_1,r_2\})\cup\{\indexic{k}\}}}
            {\indexic{k},\indexI\backslash\{r_1,r_2\}}
    \\
    =&
        -(-1)^{p+q}
        (-1)^{(p+q)^2}
        (-1)^{n^2}
        \sgn
                {\order{r_1,r_2},\indexI\backslash\{r_1,r_2\}}
                {\indexI}
        \sgn
            {\order{(\indexI\backslash\{r_1,r_2\})\cup\{\indexic{k}\}}}
            {\indexic{k},\indexI\backslash\{r_1,r_2\}}
    \\
    =&
        -(-1)^{(p+q)(p+q+1)}
        (-1)^{n^2}
        \sgn
                {\order{r_1,r_2},\indexI\backslash\{r_1,r_2\}}
                {\indexI}
        \sgn
            {\order{(\indexI\backslash\{r_1,r_2\})\cup\{\indexic{k}\}}}
            {\indexic{k},\indexI\backslash\{r_1,r_2\}}
    \\
    =&
        -(-1)^{n^2}
        \sgn
                {\order{r_1,r_2},\indexI\backslash\{r_1,r_2\}}
                {\indexI}
        \sgn
            {\order{(\indexI\backslash\{r_1,r_2\})\cup\{\indexic{k}\}}}
            {\indexic{k},\indexI\backslash\{r_1,r_2\}}
    ,
\end{align*}
which completes the proof of Lemma \ref{Lemma-simplification for RHS of alpha,(I,J)-lemma 2}.
\end{proof}


\begin{lemma}
\label{Lemma-simplification for RHS of alpha,(I,J)-lemma 3}
    \begin{align*}
        \begin{pmatrix*}
            \sgn
                {1,\cdots,n,1^{'},\cdots,n^{'}}
                {\indexI,\indexJp,\indexIc,\indexJpc}
            \sgn
                {\indexIc,\indexJpc}
                {\indexjpc{l},\indexIc,\indexJpc\backslash\{\indexjpc{l}\}}
            \\
            \sgn
                {\indexIc,\indexjpc{1},\cdots,\indexjpc{l-1},\indexjpc{l},r,\indexjpc{l+1},\cdots,\indexjpc{n-1}}
                {\order{\indexIc\cup\{r\}},\indexJpc}
            \sgn
                {1^{'},\cdots,n^{'},1,\cdots,n}
                {\order{\indexIc\cup\{r\}},\indexJpc,\indexI\backslash\{r\},\indexJp}
        \end{pmatrix*}
        =
        -(-1)^{n^2}
        \sgn
            {r,\indexI\backslash\{r\}}
            {\indexI}.
    \end{align*}
\end{lemma}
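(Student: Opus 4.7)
The plan is to mimic the reductions performed in Lemma \ref{Lemma-simplification for RHS of alpha,(I,J)-lemma 1} and Lemma \ref{Lemma-simplification for RHS of alpha,(I,J)-lemma 2}: apply the elementary permutation-sign identities (row-swap invariance, insertion of a disjoint block, merging of matching rows, and the $(-1)^{|\indexI||\indexJ|}$ factor for swapping two adjacent blocks) to collapse the product of four signs step by step until one reaches a scalar multiple of $\sgn{1,\cdots,n,1^{'},\cdots,n^{'}}{1^{'},\cdots,n^{'},1,\cdots,n}=(-1)^{n^{2}}$.

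First, I would simplify the middle factor by sliding $r$ leftward in the top row past $\indexjpc{l},\indexjpc{l-1},\ldots,\indexjpc{1}$, contributing an overall factor of $(-1)^{l}$ and rewriting the top row as $(\indexIc,r,\indexJpc)$. After cancelling the common tail $\indexJpc$, the remaining piece reduces to a sign whose rows are $(r,\indexIc)$ and $\order{\indexIc\cup\{r\}}$. In contrast to Lemma \ref{Lemma-simplification for RHS of alpha,(I,J)-lemma 1}, there is no $\order{\indexic{k},r}$ splitting here since $r$ does not sit next to a distinguished complement index; this asymmetry is the source of the extra $-1$ appearing in the target.

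Next, I would merge the result with the second factor $\sgn{\indexIc,\indexJpc}{\indexjpc{l},\indexIc,\indexJpc\backslash\{\indexjpc{l}\}}$, whose effect is to bring $\indexjpc{l}$ to the front; combined with the $r$-move this produces a transposition contributing $\sgn{r,\indexjpc{l}}{\indexjpc{l},r}=-1$. I would then combine with the last factor $\sgn{1^{'},\cdots,n^{'},1,\cdots,n}{\order{\indexIc\cup\{r\}},\indexJpc,\indexI\backslash\{r\},\indexJp}$ to eliminate $\order{\indexIc\cup\{r\}}$ in favour of the ordered pair $(r,\indexIc)$, and finally pull the factor $\sgn{r,\indexI\backslash\{r\}}{\indexI}$ out of the bottom row.

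Closing the argument, I would combine the remaining sign with the first factor via $\sgn{1,\cdots,n,1^{'},\cdots,n^{'}}{\indexI,\indexJp,\indexIc,\indexJpc}\sgn{1^{'},\cdots,n^{'},1,\cdots,n}{\indexI,\indexJp,\indexIc,\indexJpc}=(-1)^{n^{2}}$, after accounting for the block-swap parity $(-1)^{(p+q)(n-p+n-q)}=(-1)^{(p+q)^{2}}=(-1)^{p+q}$ needed to align the primed-first and unprimed-first orderings of the two ambient signs. The main obstacle I expect is the careful parity bookkeeping across three independent sources of signs, namely the position $l$ of $\indexjpc{l}$ inside $\indexJpc$, the block swap between unprimed and primed halves, and the single transposition of $r$ past $\indexjpc{l}$; as in Lemma \ref{Lemma-simplification for RHS of alpha,(I,J)-lemma 2}, these should reconcile cleanly once every bottom row is written in the canonical order $(\indexI,\indexJp,\indexIc,\indexJpc)$ or its primed-first counterpart, at which point the target value $-(-1)^{n^{2}}\sgn{r,\indexI\backslash\{r\}}{\indexI}$ emerges.
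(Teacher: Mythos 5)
Your proposal is correct and follows essentially the same route as the paper's proof: the paper also evaluates the second and third factors by moving $\indexjpc{l}$ and $r$ to the front (obtaining $(-1)^{n-p+l-1}$ and $(-1)^{n-p+l}$, whose product is exactly the single transposition of $r$ past $\indexjpc{l}$ that you identify as the source of the overall $-1$), then merges the remainder into the last factor, extracts $\sgn{r,\indexI\backslash\{r\}}{\indexI}$, and pairs the two ambient signs to give $(-1)^{n^2}$. The only point to watch is that your block-swap parity $(-1)^{(p+q)(n-p+n-q)}$ must be combined with the additional $(-1)^{p+q}$ incurred when carrying $r$ across $\indexIc,\indexJpc$ to sit next to $\indexI\backslash\{r\}$; these two factors cancel, matching the paper's $(-1)^{(p+q)(p+q+1)}=1$.
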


\begin{proof}
We use the following properties of the sign of permutation:
\begin{itemize}
    \item 
$
        \sgn
            {\indexIc,\indexJpc}
            {\indexjpc{l},\indexIc,\indexJpc\backslash\{\indexjpc{l}\}}
        =
        (-1)^{n-p+l-1}
$,
    \item 
$
        \sgn
            {\indexIc,\indexjpc{1},\cdots,\indexjpc{l-1},\indexjpc{l},r,\indexjpc{l+1},\cdots,\indexjpc{n-1}}
            {\order{\indexIc\cup\{r\}},\indexJpc}
        =
        (-1)^{n-p+l}
        \sgn
            {r,\indexIc,\indexJpc}
            {\order{\indexIc\cup\{r\}},\indexJpc}
$,
    \item 
$
        \sgn
            {r,\indexIc,\indexJpc}
            {\order{\indexIc\cup\{r\}},\indexJpc}
        \sgn
            {1^{'},\cdots,n^{'},1,\cdots,n}
            {\order{\indexIc\cup\{r\}},\indexJpc,\indexI\backslash\{r\},\indexJp}
        =
        \sgn
            {1^{'},\cdots,n^{'},1,\cdots,n}
            {r,\indexIc,\indexJpc,\indexI\backslash\{r\},\indexJp}
$.
\end{itemize}
We simplify the left-hand side of the equation in Lemma \ref{Lemma-simplification for RHS of alpha,(I,J)-lemma 3} to
\begin{align}
    &
        (-1)^{n-p+l-1}
        (-1)^{n-p+l}
        \sgn
            {1,\cdots,n,1^{'},\cdots,n^{'}}
            {\indexI,\indexJp,\indexIc,\indexJpc}
        \sgn
            {1^{'},\cdots,n^{'},1,\cdots,n}
            {r,\indexIc,\indexJpc,\indexI\backslash\{r\},\indexJp}
    \notag
    \\
    =&
        (-1)
        \sgn
            {1,\cdots,n,1^{'},\cdots,n^{'}}
            {\indexI,\indexJp,\indexIc,\indexJpc}
        \sgn
            {1^{'},\cdots,n^{'},1,\cdots,n}
            {r,\indexIc,\indexJpc,\indexI\backslash\{r\},\indexJp}
    .
\label{EQ-simplification for RHS of alpha,(I,J)-lemma 3-no1}
\end{align}

We use the following properties of the sign of permutation:
\begin{itemize}
    \item 
$
        \sgn
            {1^{'},\cdots,n^{'},1,\cdots,n}
            {r,\indexIc,\indexJpc,\indexI\backslash\{r\},\indexJp}
        =
        (-1)^{(n-p+n-q)(p-q+1)}
        \sgn
            {1^{'},\cdots,n^{'},1,\cdots,n}
            {r,\indexI\backslash\{r\},\indexJp,\indexIc,\indexJpc},
$
    \item 
$
        \sgn
            {1^{'},\cdots,n^{'},1,\cdots,n}
            {r,\indexI\backslash\{r\},\indexJp,\indexIc,\indexJpc}
        =
        (-1)^{(n-p+n-q)(p-q+1)}
        \sgn
            {1^{'},\cdots,n^{'},1,\cdots,n}
            {r,\indexI\backslash\{r\},\indexJp,\indexIc,\indexJpc}
$,
    \item 
$
        \sgn
            {1,\cdots,n,1^{'},\cdots,n^{'}}
            {\indexI,\indexJp,\indexIc,\indexJpc}
        \sgn
            {1^{'},\cdots,n^{'},1,\cdots,n}
            {\indexI,\indexJp,\indexIc,\indexJpc}
        =
        \sgn
            {1,\cdots,n,1^{'},\cdots,n^{'}}
            {1^{'},\cdots,n^{'},1,\cdots,n}
        =
        (-1)^{n^2}
$.
\end{itemize}
We simplify (\ref{EQ-simplification for RHS of alpha,(I,J)-lemma 3-no1}) to
\begin{align*}
    &
        (-1)
        (-1)^{n^2}
        (-1)^{(n-p+n-q)(p-q+1)}
        \sgn
            {r,\indexI\backslash\{r\}}
            {\indexI}
    \\
    =&
        (-1)
        (-1)^{n^2}
        (-1)^{(p+q)(p+q+1)}
        \sgn
            {r,\indexI\backslash\{r\}}
            {\indexI}
    \\
        =&
        -(-1)^{n^2}
        \sgn
            {r,\indexI\backslash\{r\}}
            {\indexI}
    ,
\end{align*}
which completes the proof of Lemma \ref{Lemma-simplification for RHS of alpha,(I,J)-lemma 3}.
\end{proof}

\begin{lemma}
\label{Lemma-simplification for RHS of alpha,(I,J)-lemma 4}
    \begin{align*}
        &
        \begin{pmatrix*}
            \sgn
                {1,\cdots,n,1^{'},\cdots,n^{'}}
                {\indexI,\indexJp,\indexIc,\indexJpc}
            \sgn
                {\indexIc,\indexJpc}
                {\indexjpc{l},\indexIc,\indexJpc\backslash\{\indexjpc{l}\}}
            \sgn
                {\indexIc,\indexjpc{1},\cdots,\indexjpc{l-1},u^{'},r,\indexjpc{l+1},\cdots,\indexjpc{n-1}}
                {\order{\indexIc\cup\{r\}},\order{(\indexJpc\backslash\{\indexjpc{l}\})\cup\{u^{'}\}}}
            \\
            \sgn
                {1^{'},\cdots,n^{'},1,\cdots,n}
                {\order{\indexIc\cup\{r\}},\order{(\indexJpc\backslash\{\indexjpc{l}\})\cup\{u^{'}\}},\indexI\backslash\{r\},\order{(\indexJp\backslash\{u^{'}\})\cup\{\indexjpc{l}\}}}
        \end{pmatrix*}
        \\
        =&
            (-1)^{p}
            (-1)^{n^2}
            \sgn
                {u^{'},r,\indexI\backslash\{r\},\indexJpc\backslash\{u^{'}\}}
                {\indexI,\indexJp}
            \sgn
                {\order{(\indexJp\backslash\{u^{'}\})\cup\{\indexjpc{l}\}}}
                {\indexjpc{l},\indexJpc\backslash\{u^{'}\}}.
    \end{align*}
\end{lemma}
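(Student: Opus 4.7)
The plan is to follow exactly the pattern of Lemmas \ref{Lemma-simplification for RHS of alpha,(I,J)-lemma 1}--\ref{Lemma-simplification for RHS of alpha,(I,J)-lemma 3}: the strategy is to strip away the $\order{\cdot}$ functions by factoring out pure ordering signs, fuse all the remaining permutation signs into a single permutation on $\{1,\dots,n,1',\dots,n'\}$, and then pair it against $\sgn{1,\cdots,n,1',\cdots,n'}{\indexI,\indexJp,\indexIc,\indexJpc}$ to produce the factor $(-1)^{n^2}$.

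First I would rewrite the third factor. Observing that the top row $\indexIc,\indexjpc{1},\dots,\indexjpc{l-1},u',r,\indexjpc{l+1},\dots,\indexjpc{n-1}$ is the concatenation of $\indexIc$ with a re-indexing of $\indexJpc$ where $\indexjpc{l}$ is replaced by the adjacent pair $u',r$, I will first write
\[
\sgn{\indexIc,\indexjpc{1},\cdots,\indexjpc{l-1},u^{'},r,\indexjpc{l+1},\cdots,\indexjpc{n-1}}{\order{\indexIc\cup\{r\}},\order{(\indexJpc\backslash\{\indexjpc{l}\})\cup\{u^{'}\}}} = \sgn{\indexIc,u',r,\indexJpc\backslash\{\indexjpc{l}\}}{\order{\indexIc\cup\{r\}},\order{(\indexJpc\backslash\{\indexjpc{l}\})\cup\{u^{'}\}}}
\]
up to a sign obtained by moving $u',r$ past $\indexjpc{1},\dots,\indexjpc{l-1}$, which cancels against $\sgn{\indexIc,\indexJpc}{\indexjpc{l},\indexIc,\indexJpc\backslash\{\indexjpc{l}\}}$ after an analogous unpacking of the second factor (exactly the mechanism used in Lemma \ref{Lemma-simplification for RHS of alpha,(I,J)-lemma 3}).

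Next, I would combine the reduced third sign with the fourth. Using the rule $\sgn{A,B}{C,D}\sgn{1',\dots,1,\dots}{C,D,E,F}=\sgn{1',\dots,1,\dots}{A,B,E,F}$ (swapping the first row of the fourth sign), the whole product collapses to a single sign
\[
\sgn{1^{'},\cdots,n^{'},1,\cdots,n}{u^{'},r,\indexIc,\indexJpc\backslash\{\indexjpc{l}\},\indexI\backslash\{r\},\order{(\indexJp\backslash\{u^{'}\})\cup\{\indexjpc{l}\}}}
\]
times the ordering sign $\sgn{\order{(\indexJp\backslash\{u'\})\cup\{\indexjpc{l}\}}}{\indexjpc{l},\indexJp\backslash\{u'\}}$, which is already in the target form. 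Then I would expand the last ordered block as $\indexjpc{l},\indexJp\backslash\{u'\}$, move $\indexjpc{l}$ back into $\indexJpc\backslash\{\indexjpc{l}\}$ at a cost of $(-1)^{n-p+n-q-1}=(-1)^{p+q-1}$, and move the pair $u',r$ across $\indexIc,\indexJpc$ at a cost of $(-1)^{n-p+n-q}=(-1)^{p+q}$; combined this yields an overall sign of $-1$ from the rearrangement.

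Finally, I would separate out $\sgn{u',r,\indexI\backslash\{r\},\indexJp\backslash\{u'\}}{\indexI,\indexJp}$ by bringing $u',r$ to the positions of $r$ (in $\indexI$) and $u'$ (in $\indexJp$) respectively, which introduces an additional $(-1)^{p+1}$ from hopping $r$ over the $p$ entries of $\indexJp$ that precede its target, and an adjacent swap. The residual permutation becomes $\sgn{1',\dots,n',1,\dots,n}{\indexI,\indexJp,\indexIc,\indexJpc}$, whose product with $\sgn{1,\dots,n,1',\dots,n'}{\indexI,\indexJp,\indexIc,\indexJpc}$ gives $(-1)^{n^2}$. Assembling all the parities $-1\cdot(-1)^{p+1}\cdot(-1)^{n^2}$ together with the $(-1)^{(p+q)(p+q+1)}=1$ factors that recur from any $\indexI\leftrightarrow\indexIc$ transpositions produces exactly $(-1)^p(-1)^{n^2}$ times the claimed signs.

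The main obstacle is purely bookkeeping of the parity exponents: the two factors $(-1)^{p+q-1}$ and $(-1)^{p+q}$ arising from the $\indexjpc{l}$-reinsertion and the $(u',r)$-hop, combined with the $(-1)^{p+1}$ from breaking the $(u',r)$ pair across the $\indexI/\indexJp$ boundary, must be tracked very carefully, since miscounting by a single parity flips the sign of the whole identity. I expect this to be the only delicate step; the topological moves among ordered/unordered tuples are routine given the identities already exploited in Lemmas \ref{Lemma-simplification for RHS of alpha,(I,J)-lemma 1}--\ref{Lemma-simplification for RHS of alpha,(I,J)-lemma 3}.
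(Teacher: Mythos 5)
Your overall strategy is the same as the paper's (collapse the $\order{\cdot}$'s into explicit ordering signs, fuse the three permutation signs with the fourth into a single permutation of $1',\dots,n',1,\dots,n$, and pair it against $\sgn{1,\cdots,n,1^{'},\cdots,n^{'}}{\indexI,\indexJp,\indexIc,\indexJpc}$ to produce $(-1)^{n^2}$), and your final sign $(-1)^p(-1)^{n^2}$ is correct. But two of your individual parity claims are wrong, and they only give the right total because they cancel each other. First, you assert that the sign from sliding the adjacent pair $u^{'},r$ past $\indexjpc{1},\dots,\indexjpc{l-1}$ cancels $\sgn{\indexIc,\indexJpc}{\indexjpc{l},\indexIc,\indexJpc\backslash\{\indexjpc{l}\}}$. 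Moving an adjacent pair past $l-1$ entries costs $(-1)^{2(l-1)}=+1$, while the factor you want to cancel equals $(-1)^{(n-p)+(l-1)}$ and depends on $l$; no cancellation happens there. The $l$-dependence can only be killed by the $l$-dependent cost $(-1)^{(p-1)+(n-q-l)}$ of reinserting $\indexjpc{l}$ into its correct slot of $\indexJpc$ later, and the combined cost of the second factor plus that reinsertion is $(-1)^{q}$ — not the $l$-independent $(-1)^{p+q-1}$ you assign to the reinsertion alone. Second, the closing factorization $\sgn{1^{'},\cdots,n^{'},1,\cdots,n}{u^{'},r,\indexI\backslash\{r\},\indexJp\backslash\{u^{'}\},\indexIc,\indexJpc}=\sgn{1^{'},\cdots,n^{'},1,\cdots,n}{\indexI,\indexJp,\indexIc,\indexJpc}\sgn{u^{'},r,\indexI\backslash\{r\},\indexJp\backslash\{u^{'}\}}{\indexI,\indexJp}$ carries no extra sign, so your additional $(-1)^{p+1}$ for ``separating $u^{'},r$'' is spurious. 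Since $(-1)^{p+q-1}\cdot(-1)^{p+1}=(-1)^{q}$, the two errors compensate exactly — but in a lemma whose entire content is parity bookkeeping, that is a genuine gap, not a stylistic difference. (Your $(-1)^{n-p+n-q}$ for the middle move is defensible only if read as commuting the whole block $\indexI\backslash\{r\},\indexJp\backslash\{u^{'}\}$ of size $p+q-2$ past $\indexIc,\indexJpc$ of size $2n-p-q$, which is $\equiv(-1)^{p+q}$; literally moving the pair $u^{'},r$ across a block is free.)

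The paper's route avoids the first trap by never reinserting $\indexjpc{l}$ into $\indexJpc$: it prepends $\indexjpc{l}$ to both rows of the compacted third sign, absorbs $\sgn{\indexIc,\indexJpc}{\indexjpc{l},\indexIc,\indexJpc\backslash\{\indexjpc{l}\}}$ there to get $\sgn{u^{'},r,\indexIc,\indexJpc}{\indexjpc{l},\order{\indexIc\cup\{r\}},\order{(\indexJpc\backslash\{\indexjpc{l}\})\cup\{u^{'}\}}}$, and then pulls the $\indexjpc{l}$ produced by unpacking $\order{(\indexJp\backslash\{u^{'}\})\cup\{\indexjpc{l}\}}$ to the front of the fourth sign's bottom row at cost $(-1)^{(p-1)+(n-q)+(n-p+1)}=(-1)^{q}$, so the merge is clean. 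If you rewrite your argument keeping $\indexjpc{l}$ at the head in this way (or, alternatively, carefully carry the $l$-dependent reinsertion cost until it meets the second factor), your approach becomes a correct proof identical in substance to the paper's.
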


\begin{proof}
We use the following properties of the sign of permutation:
\begin{itemize}
    \item 
$
        \sgn
            {\indexIc,\indexjpc{1},\cdots,\indexjpc{l-1},u^{'},r,\indexjpc{l+1},\cdots,\indexjpc{n-1}}
            {\order{\indexIc\cup\{r\}},\order{(\indexJpc\backslash\{\indexjpc{l}\})\cup\{u^{'}\}}}
        =
        \sgn
            {u^{'},r,\indexIc,\indexJpc\backslash\{\
            \indexjpc{l}\}}
            {\order{\indexIc\cup\{r\}},\order{(\indexJpc\backslash\{\indexjpc{l}\})\cup\{u^{'}\}}}
$,
    \item
$
        \sgn
            {u^{'},r,\indexIc,\indexJpc\backslash\{\
            \indexjpc{l}\}}
            {\order{\indexIc\cup\{r\}},\order{(\indexJpc\backslash\{\indexjpc{l}\})\cup\{u^{'}\}}}
        =
        \sgn
            {u^{'},r,\indexjpc{l},\indexIc,\indexJpc\backslash\{\
            \indexjpc{l}\}}
            {\indexjpc{l},\order{\indexIc\cup\{r\}},\order{(\indexJpc\backslash\{\indexjpc{l}\})\cup\{u^{'}\}}}
$,
    \item 
$
        \sgn
            {\indexIc,\indexJpc}
            {\indexjpc{l},\indexIc,\indexJpc\backslash\{\indexjpc{l}\}}
        \sgn
            {u^{'},r,\indexjpc{l},\indexIc,\indexJpc\backslash\{\
            \indexjpc{l}\}}
            {\indexjpc{l},\order{\indexIc\cup\{r\}},\order{(\indexJpc\backslash\{\indexjpc{l}\})\cup\{u^{'}\}}}
        \\
        =
        \sgn
            {u^{'},r,\indexIc,\indexJpc}
            {\indexjpc{l},\order{\indexIc\cup\{r\}},\order{(\indexJpc\backslash\{\indexjpc{l}\})\cup\{u^{'}\}}}
$.
\end{itemize}
We simplify the left-hand side of the equation in Lemma \ref{Lemma-simplification for RHS of alpha,(I,J)-lemma 4} to
\begin{align}
    \begin{pmatrix*}
        \sgn
            {1,\cdots,n,1^{'},\cdots,n^{'}}
            {\indexI,\indexJp,\indexIc,\indexJpc}
        \sgn
            {u^{'},r,\indexIc,\indexJpc}
            {\indexjpc{l},\order{\indexIc\cup\{r\}},\order{(\indexJpc\backslash\{\indexjpc{l}\})\cup\{u^{'}\}}}
        \\
        \sgn
            {1^{'},\cdots,n^{'},1,\cdots,n}
            {\order{\indexIc\cup\{r\}},\order{(\indexJpc\backslash\{\indexjpc{l}\})\cup\{u^{'}\}},\indexI\backslash\{r\},\order{(\indexJp\backslash\{u^{'}\})\cup\{\indexjpc{l}\}}}
    \end{pmatrix*}
    .
\label{EQ-simplification for RHS of alpha,(I,J)-lemma 4-no1}
\end{align}

We use the following properties of the sign of permutation:
\begin{itemize}
    \item 
$
        \sgn
            {1^{'},\cdots,n^{'},1,\cdots,n}
            {\order{\indexIc\cup\{r\}},\order{(\indexJpc\backslash\{\indexjpc{l}\})\cup\{u^{'}\}},\indexI\backslash\{r\},\order{(\indexJp\backslash\{u^{'}\})\cup\{\indexjpc{l}\}}}
        \\
        =
        \sgn
            {1^{'},\cdots,n^{'},1,\cdots,n}
            {\order{\indexIc\cup\{r\}},\order{(\indexJpc\backslash\{\indexjpc{l}\})\cup\{u^{'}\}},\indexI\backslash\{r\},\indexjpc{l},\indexJpc\backslash\{u^{'}\}}
        \sgn
            {\order{(\indexJp\backslash\{u^{'}\})\cup\{\indexjpc{l}\}}}
            {\indexjpc{l},\indexJpc\backslash\{u^{'}\}}
$,
    \item 
$
        \sgn
            {1^{'},\cdots,n^{'},1,\cdots,n}
            {\order{\indexIc\cup\{r\}},\order{(\indexJpc\backslash\{\indexjpc{l}\})\cup\{u^{'}\}},\indexI\backslash\{r\},\indexjpc{l},\indexJpc\backslash\{u^{'}\}}
        \\
        =
        (-1)^{p-1+n-q+n-p+1}
        \sgn
            {1^{'},\cdots,n^{'},1,\cdots,n}
            {\indexjpc{l},\order{\indexIc\cup\{r\}},\order{(\indexJpc\backslash\{\indexjpc{l}\})\cup\{u^{'}\}},\indexI\backslash\{r\},\indexJpc\backslash\{u^{'}\}}
$,
    \item 
$
        \begin{pmatrix*}[l]
            \sgn
            {u^{'},r,\indexIc,\indexJpc}
            {\indexjpc{l},\order{\indexIc\cup\{r\}},\order{(\indexJpc\backslash\{\indexjpc{l}\})\cup\{u^{'}\}}}
            \\
            \sgn
                {1^{'},\cdots,n^{'},1,\cdots,n}
                {\indexjpc{l},\order{\indexIc\cup\{r\}},\order{(\indexJpc\backslash\{\indexjpc{l}\})\cup\{u^{'}\}},\indexI\backslash\{r\},\indexJpc\backslash\{u^{'}\}}
        \end{pmatrix*}
        \\
        =
        \sgn
            {1^{'},\cdots,n^{'},1,\cdots,n}
            {u^{'},r,\indexIc,\indexJpc,\indexI\backslash\{r\},\indexJpc\backslash\{u^{'}\}}
$.
\end{itemize}
We simplify (\ref{EQ-simplification for RHS of alpha,(I,J)-lemma 4-no1}) to
\begin{align}
    &
        \sgn
            {1,\cdots,n,1^{'},\cdots,n^{'}}
            {\indexI,\indexJp,\indexIc,\indexJpc}
        (-1)^{p-1+n-q+n-p+1}
        \sgn
            {1^{'},\cdots,n^{'},1,\cdots,n}
            {u^{'},r,\indexIc,\indexJpc,\indexI\backslash\{r\},\indexJpc\backslash\{u^{'}\}}
        \sgn
            {\order{(\indexJp\backslash\{u^{'}\})\cup\{\indexjpc{l}\}}}
            {\indexjpc{l},\indexJpc\backslash\{u^{'}\}} 
    \notag
    \\
    =&
        \sgn
            {1,\cdots,n,1^{'},\cdots,n^{'}}
            {\indexI,\indexJp,\indexIc,\indexJpc}
        (-1)^{q}
        \sgn
            {1^{'},\cdots,n^{'},1,\cdots,n}
            {u^{'},r,\indexIc,\indexJpc,\indexI\backslash\{r\},\indexJpc\backslash\{u^{'}\}}
        \sgn
            {\order{(\indexJp\backslash\{u^{'}\})\cup\{\indexjpc{l}\}}}
            {\indexjpc{l},\indexJpc\backslash\{u^{'}\}} 
    .
\label{EQ-simplification for RHS of alpha,(I,J)-lemma 4-no2}
\end{align}

We use the following properties of the sign of permutation:
\begin{itemize}
    \item 
$
        \sgn
            {1^{'},\cdots,n^{'},1,\cdots,n}
            {u^{'},r,\indexIc,\indexJpc,\indexI\backslash\{r\},\indexJpc\backslash\{u^{'}\}}
        =
        (-1)^{(n-p+n-q)(p-1+q-1)}
        \sgn
            {1^{'},\cdots,n^{'},1,\cdots,n}
            {u^{'},r,\indexI\backslash\{r\},\indexJpc\backslash\{u^{'}\},\indexIc,\indexJpc}
$,
    \item 
$
        \sgn
            {1^{'},\cdots,n^{'},1,\cdots,n}
            {u^{'},r,\indexI\backslash\{r\},\indexJpc\backslash\{u^{'}\},\indexIc,\indexJpc}
        =
        \sgn
            {1^{'},\cdots,n^{'},1,\cdots,n}
            {\indexI,\indexJp,\indexIc,\indexJpc}
        \sgn
            {u^{'},r,\indexI\backslash\{r\},\indexJpc\backslash\{u^{'}\}}
            {\indexI,\indexJp}
$,
    \item 
$
        \sgn
            {1,\cdots,n,1^{'},\cdots,n^{'}}
            {\indexI,\indexJp,\indexIc,\indexJpc}
        \sgn
            {1^{'},\cdots,n^{'},1,\cdots,n}
            {\indexI,\indexJp,\indexIc,\indexJpc}
        =
        \sgn
            {1,\cdots,n,1^{'},\cdots,n^{'}}
            {1^{'},\cdots,n^{'},1,\cdots,n}
        =
        (-1)^{n^2}
$.
\end{itemize}
We simplify (\ref{EQ-simplification for RHS of alpha,(I,J)-lemma 4-no2}) to
\begin{align*}
    &
        (-1)^{q}
        (-1)^{(n-p+n-q)(p-1+q-1)}
        (-1)^{n^2}
        \sgn
            {u^{'},r,\indexI\backslash\{r\},\indexJpc\backslash\{u^{'}\}}
            {\indexI,\indexJp}
        \sgn
            {\order{(\indexJp\backslash\{u^{'}\})\cup\{\indexjpc{l}\}}}
            {\indexjpc{l},\indexJpc\backslash\{u^{'}\}}
    \\
    =&
        (-1)^{q}
        (-1)^{(p+q)^2}
        (-1)^{n^2}
        \sgn
            {u^{'},r,\indexI\backslash\{r\},\indexJpc\backslash\{u^{'}\}}
            {\indexI,\indexJp}
        \sgn
            {\order{(\indexJp\backslash\{u^{'}\})\cup\{\indexjpc{l}\}}}
            {\indexjpc{l},\indexJpc\backslash\{u^{'}\}}
    .
\end{align*}
Since $(-1)^{(p+q)^2}=(-1)^{p+q}$, we can simplify $(-1)^{q}(-1)^{p+q}=(-1)^{p+2q}=(-1)^{p}$, which completes the proof of Lemma \ref{Lemma-simplification for RHS of alpha,(I,J)-lemma 4}.
\end{proof}

\subsection{Calculation of the coefficients for the expression of \texorpdfstring{$
\iota_{\basedualxi{\alpha}}\circ\LieagbdPartialbar
(\basexi{\indexI} \wedge \basedualxi{\indexJp})$}{}
}
\label{apex-simplification for LHS, extra 1, part 1}
In this section, we first calculate 
\begin{itemize}
    \item 
$
        \sgn
            {\indexI}
            {\indexi{k},\indexI\backslash\{\indexi{k}\}}
        \sgn
            {\indexi{1},\cdots,\indexi{k},t^{'},\indexi{k+1},\cdots,\indexi{p},\indexJp}
            {\indexI,\order{\indexJp\cup\{t^{'}\}}}
        \sgn
            {\indexI,\order{\indexJp\cup\{t^{'}\}}}
            {\alpha,\indexI,\order{(\indexJp\backslash\{\alpha^{'}\})\cup\{t^{'}\}}}
$,
    \item 
$
        \sgn
            {\indexI}
            {\indexi{k},\indexI\backslash\{\indexi{k}\}}
        \sgn
            {\indexi{1},\cdots,\indexi{k-1},s,t^{'},\indexi{k+1},\cdots,\indexi{p},\indexJp}
            {\order{(\indexI\backslash\{\indexi{k}\})\cup\{s\}},\order{\indexJp\cup\{t^{'}\}}}
$
\\
$
        \sgn
            {\order{(\indexI\backslash\{\indexi{k}\})\cup\{s\}},\order{\indexJp\cup\{t^{'}\}}}
            {\alpha^{'},\order{(\indexI\backslash\{\indexi{k}\})\cup\{s\}},\order{(\indexJp\backslash\{\alpha^{'}\})\cup\{t^{'}\}}}
$,
    \item 
$
        \sgn
            {\indexI,\indexJp}
            {\indexjp{l},\indexI,\indexJp\backslash\{\indexjp{l}\}}
        \sgn
            {\indexjp{1},\cdots,\indexjp{l-1},\order{\indexjp{l},t^{'}},\indexjp{l+1},\cdots,\indexjp{q}}
            {\order{\indexJp\cup\{t^{'}\}}}
        \sgn
            {\indexI,\order{\indexJp\cup\{t^{'}\}}}
            {\alpha^{'},\indexI,\order{(\indexJp\backslash\{\alpha^{'}\})\cup\{t^{'}\}}}    
$,
    \item 
$
        \sgn
            {\indexI,\indexJp}
            {\indexjp{l},\indexI,\indexJp\backslash\{\indexjp{l}\}}
        \sgn
            {\indexjp{1},\cdots,\indexjp{l-1},\order{t^{'},u^{'}},\indexjp{l+1},\cdots,\indexjp{q}}
            {\order{(\indexJp\backslash\{\indexjp{l}\})\cup\{t^{'},u^{'}\}}}
        \sgn
            {\indexI,\order{(\indexJp\backslash\{\indexjp{l}\})\cup\{t^{'},u^{'}\}}}
            {\alpha^{'},\indexI,\order{(\indexJp\backslash\{\indexjp{l},\alpha^{'}\})\cup\{t^{'},u^{'}\}}}
$,
\end{itemize}
in Lemmas \ref{lemma-simplification for LHS, extra 1, part 1-lemma 1}–\ref{lemma-simplification for LHS, extra 1, part 1-lemma 3}, which are needed for (\ref{EQ-For (alpha, I, J), calculation for LHS, extra 1-unsimplified the first half, alpha in J}), the expression of $
\iota_{\basedualxi{\alpha}}\circ\LieagbdPartialbar
(\basexi{\indexI} \wedge \basedualxi{\indexJp})$ such that $\alpha\in\indexJ$.

We then simplify
\begin{itemize}
    \item 
$   
    \sgn
        {\indexI}
        {\indexi{k},\indexI\backslash\{\indexi{k}\}}
    \sgn
        {\indexi{1},\cdots,\indexi{k},\alpha^{'},\indexi{k+1},\cdots,\indexi{p},\indexJp}
        {\indexI,\order{\indexJp\cup\{\alpha^{'}\}}}
    \sgn
        {\indexI,\order{\indexJp\cup\{\alpha^{'}\}}}
        {\alpha^{'},\indexI,\indexJp}
$,
    \item 
$
    \sgn
        {\indexI}
        {\indexi{k},\indexI\backslash\{\indexi{k}\}}
    \sgn
        {\indexi{1},\cdots,\indexi{k-1},s,\alpha^{'},\indexi{k+1},\cdots,\indexi{p},\indexJp}
        {\order{(\indexI\backslash\{\indexi{k}\})\cup\{s\}},\order{\indexJp\cup\{\alpha^{'}\}}}
    \sgn
        {\order{(\indexI\backslash\{\indexi{k}\})\cup\{s\}},\order{\indexJp\cup\{\alpha^{'}\}}}
        {\alpha^{'},\order{(\indexI\backslash\{\indexi{k}\})\cup\{s\}},\indexJp}
$,
    \item 
$
    \sgn
        {\indexI,\indexJp}
        {\indexjp{l},\indexI,\indexJp\backslash\{\indexjp{l}\}}
    \sgn
        {\indexjp{1},\cdots,\indexjp{l-1},\order{\indexjp{l},\alpha^{'}},\indexjp{l+1},\cdots,\indexjp{q}}
        {\order{\indexJp\cup\{\alpha^{'}\}}}
    \sgn
        {\indexI,\order{\indexJp\cup\{\alpha^{'}\}}}
        {\alpha^{'},\indexI,\indexJp}
$,
\item 
$
    \sgn
        {\indexI,\indexJp}
        {\indexjp{l},\indexI,\indexJp\backslash\{\indexjp{l}\}}
    \sgn
        {\indexjp{1},\cdots,\indexjp{l-1},\order{t^{'},\alpha^{'}},\indexjp{l+1},\cdots,\indexjp{q}}
        {\order{(\indexJp\backslash\{\indexjp{l}\})\cup\{t^{'},\alpha^{'}\}}}
    \sgn
        {\indexI,\order{(\indexJp\backslash\{\indexjp{l}\})\cup\{t^{'},\alpha^{'}\}}}
        {\alpha^{'},\indexI,\order{(\indexJp\backslash\{\indexjp{l}\})\cup\{t^{'}\}}}
$,
\end{itemize}
in Lemmas \ref{lemma-simplification for LHS, extra 1, part 1-lemma 5}–\ref{lemma-simplification for LHS, extra 1, part 1-lemma 8}, which are needed to simplify (\ref{EQ-For (alpha, I, J), calculation for LHS, extra 1-unsimplified the first half, alpha not in J}), the expression of $
\iota_{\basedualxi{\alpha}}\circ\LieagbdPartialbar
(\basexi{\indexI} \wedge \basedualxi{\indexJp})$ such that $\alpha\notin\indexJ$.


\begin{lemma}
\begin{align*}
    \begin{pmatrix*}[l]
        \sgn
            {\indexI}
            {\indexi{k},\indexI\backslash\{\indexi{k}\}}
        \\
        \sgn
            {\indexi{1},\cdots,\indexi{k},t^{'},\indexi{k+1},\cdots,\indexi{p},\indexJp}
            {\indexI,\order{\indexJp\cup\{t^{'}\}}}
        \\
        \sgn
            {\indexI,\order{\indexJp\cup\{t^{'}\}}}
            {\alpha,\indexI,\order{(\indexJp\backslash\{\alpha^{'}\})\cup\{t^{'}\}}}
    \end{pmatrix*}
    =
    \begin{pmatrix*}
        (-1)^{k}
        \\
        \sgn
            {\indexI}
            {\indexi{k},\indexI\backslash\{\indexi{k}\}}
        \\
        \sgn
                {t^{'},\indexI,\indexJp}
                {\alpha,\indexI,\order{(\indexJp\backslash\{\alpha^{'}\})\cup\{t^{'}\}}}
    \end{pmatrix*}.
\end{align*}
\label{lemma-simplification for LHS, extra 1, part 1-lemma 1}
\end{lemma}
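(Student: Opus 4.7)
The plan is pure bookkeeping with the sign-of-permutation identities stated earlier: (a) a single index swept across $k$ neighbors in one row contributes $(-1)^{k}$; (b) two sign-of-permutation factors merge whenever the bottom row of one coincides with the top row of the other; (c) the first factor on the left already appears verbatim on the right and can be carried along untouched. So I would isolate the second factor, relocate $t^{'}$ inside its top row, then merge with the third factor.

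First I would address the second factor $\sgn{\indexi{1},\cdots,\indexi{k},t^{'},\indexi{k+1},\cdots,\indexi{p},\indexJp}{\indexI,\order{\indexJp\cup\{t^{'}\}}}$. Here $t^{'}$ occupies position $k+1$ in the top row. Sliding it leftward past $\indexi{k},\indexi{k-1},\ldots,\indexi{1}$ one transposition at a time pulls out $(-1)^{k}$ and turns the top row into $t^{'},\indexI,\indexJp$. This rewrites the second factor as $(-1)^{k}\sgn{t^{'},\indexI,\indexJp}{\indexI,\order{\indexJp\cup\{t^{'}\}}}$.

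Next I would merge this rewritten second factor with the third factor $\sgn{\indexI,\order{\indexJp\cup\{t^{'}\}}}{\alpha,\indexI,\order{(\indexJp\backslash\{\alpha^{'}\})\cup\{t^{'}\}}}$. Their shared middle array $\indexI,\order{\indexJp\cup\{t^{'}\}}$ cancels by the merge rule, producing $(-1)^{k}\sgn{t^{'},\indexI,\indexJp}{\alpha,\indexI,\order{(\indexJp\backslash\{\alpha^{'}\})\cup\{t^{'}\}}}$. Multiplying back the untouched first factor $\sgn{\indexI}{\indexi{k},\indexI\backslash\{\indexi{k}\}}$ exactly reproduces the right-hand side.

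No step is an obstacle; the identity is mechanical. The only care needed is ensuring the ordered array $\order{\indexJp\cup\{t^{'}\}}$ appears literally identically in the middle of the merge, which it does by construction, so property (b) applies without further rearrangement.
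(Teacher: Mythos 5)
Your proposal is correct and follows essentially the same route as the paper's own proof: extract $(-1)^{k}$ by sliding $t^{'}$ past $\indexi{k},\ldots,\indexi{1}$ to rewrite the second factor as $(-1)^{k}\sgn{t^{'},\indexI,\indexJp}{\indexI,\order{\indexJp\cup\{t^{'}\}}}$, then merge it with the third factor along the shared array $\indexI,\order{\indexJp\cup\{t^{'}\}}$, carrying the first factor along unchanged. No gaps.
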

\begin{proof}
We use the following properties of the sign of permutation:
\begin{itemize}
    \item 
$
        \sgn
            {\indexi{1},\cdots,\indexi{k},t^{'},\indexi{k+1},\cdots,\indexi{p},\indexJp}
            {\indexI,\order{\indexJp\cup\{t^{'}\}}}
        =
        (-1)^{k}
        \sgn
            {t^{'},\indexI,\indexJp}
            {\indexI,\order{\indexJp\cup\{t^{'}\}}}
$,
    \item 
$
        \sgn
            {t^{'},\indexI,\indexJp}
            {\indexI,\order{\indexJp\cup\{t^{'}\}}}
        \sgn
            {\indexI,\order{\indexJp\cup\{t^{'}\}}}
            {\alpha,\indexI,\order{(\indexJp\backslash\{\alpha^{'}\})\cup\{t^{'}\}}}
        =
        \sgn
            {t^{'},\indexI,\indexJp}
            {\alpha,\indexI,\order{(\indexJp\backslash\{\alpha^{'}\})\cup\{t^{'}\}}}
$.
\end{itemize}
We simplify the left-hand side of the equation in Lemma \ref{lemma-simplification for LHS, extra 1, part 1-lemma 1} to
\begin{align*}
    \sgn
        {\indexI}
        {\indexi{k},\indexI\backslash\{\indexi{k}\}}
    (-1)^{k}
    \sgn
            {t^{'},\indexI,\indexJp}
            {\alpha,\indexI,\order{(\indexJp\backslash\{\alpha^{'}\})\cup\{t^{'}\}}}
    ,
\end{align*}
which completes the proof of Lemma \ref{lemma-simplification for LHS, extra 1, part 1-lemma 1}.
\end{proof}


\begin{lemma}
\begin{align*}
    &
    \begin{pmatrix*}[l]
        \sgn
            {\indexI}
            {\indexi{k},\indexI\backslash\{\indexi{k}\}}
        \sgn
            {\indexi{1},\cdots,\indexi{k-1},s,t^{'},\indexi{k+1},\cdots,\indexi{p},\indexJp}
            {\order{(\indexI\backslash\{\indexi{k}\})\cup\{s\}},\order{\indexJp\cup\{t^{'}\}}}
        \\
        \sgn
            {\order{(\indexI\backslash\{\indexi{k}\})\cup\{s\}},\order{\indexJp\cup\{t^{'}\}}}
            {\alpha^{'},\order{(\indexI\backslash\{\indexi{k}\})\cup\{s\}},\order{(\indexJp\backslash\{\alpha^{'}\})\cup\{t^{'}\}}}
    \end{pmatrix*}  
    \\
    =&
        \sgn
        {s,t^{'},\indexI,\indexJp}
        {\indexi{k},\alpha^{'},\order{(\indexI\backslash\{\indexi{k}\})\cup\{s\}},\order{(\indexJp\backslash\{\alpha^{'}\})\cup\{t^{'}\}}}.
\end{align*}
\label{lemma-simplification for LHS, extra 1, part 1-lemma 2}
\end{lemma}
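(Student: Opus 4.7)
The statement is a purely combinatorial identity among four sign-of-permutation symbols, so the plan is to reduce it to a single sign by iteratively applying the four formal rules collected in the Proposition on signs of permutations (swap of rows, common-block insertion, merging/splitting via a shared row, and block transposition). No content beyond bookkeeping is involved; the goal is simply to arrange so that each intermediate sign shares a full row with the next one, so the merge rule collapses them.

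My first step would be to isolate the insertion sign $\sgn{\indexI}{\indexi{k},\indexI\backslash\{\indexi{k}\}}$. Using rule~(2) (insert the block $s,t',\indexJp$ at the right of both rows, noting it is disjoint from $\indexI$ after we treat $s$ as a symbol distinct from $\indexI\setminus\{\indexi{k}\}$) and then rule~(4) to bring $s$ and $t'$ next to $\indexi{k}$, this factor can be rewritten as a sign of a permutation between two rows that both contain the block $s,t',\indexI\setminus\{\indexi{k}\},\indexJp$ in some order. After this rearrangement, its second row matches the first row of the second input sign $\sgn{\indexi{1},\cdots,\indexi{k-1},s,t',\indexi{k+1},\cdots,\indexi{p},\indexJp}{\order{(\indexI\backslash\{\indexi{k}\})\cup\{s\}},\order{\indexJp\cup\{t'\}}}$, so the merge rule~(3) collapses the two into a single sign whose top row is $s,t',\indexI,\indexJp$ (after one more application of rule~(4) to move $\indexi{k}$ into position) and whose bottom row is $\indexi{k},\order{(\indexI\setminus\{\indexi{k}\})\cup\{s\}},\order{\indexJp\cup\{t'\}}$.

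The second step is to merge this intermediate sign with the third input sign $\sgn{\order{(\indexI\backslash\{\indexi{k}\})\cup\{s\}},\order{\indexJp\cup\{t'\}}}{\alpha',\order{(\indexI\backslash\{\indexi{k}\})\cup\{s\}},\order{(\indexJp\backslash\{\alpha'\})\cup\{t'\}}}$. Its top row, after prepending $\indexi{k}$ by rule~(2), agrees with the bottom row of the intermediate sign, so rule~(3) again gives a single sign. The result has top row $\indexi{k},s,t',\indexI,\indexJp$ (here I use rule~(2) to insert $\indexi{k}$ on both sides, and rule~(4) to slide it to the front) and bottom row $\indexi{k},\alpha',\order{(\indexI\setminus\{\indexi{k}\})\cup\{s\}},\order{(\indexJp\setminus\{\alpha'\})\cup\{t'\}}$. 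Deleting the common leading $\indexi{k}$ by rule~(2) yields exactly the claimed right-hand side $\sgn{s,t',\indexI,\indexJp}{\indexi{k},\alpha',\order{(\indexI\setminus\{\indexi{k}\})\cup\{s\}},\order{(\indexJp\setminus\{\alpha'\})\cup\{t'\}}}$.

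The only place I expect any real care is tracking that the parities introduced by rule~(4) when repositioning $s$, $t'$, and $\indexi{k}$ cancel correctly, since each block move contributes a factor of $(-1)^{|X||Y|}$; the key identity here is $(-1)^{1\cdot|X|}(-1)^{1\cdot|X|}=1$ for moving a single index past a block twice. Because every reordering I need involves moving one index at a time, or a pair of adjacent indices past $\indexI\setminus\{\indexi{k}\}$ and back, the parities vanish and no overall sign is picked up, matching the fact that the claimed identity has no extra $(-1)$-factor on the right. This makes the argument essentially mechanical once the block structure is set up.
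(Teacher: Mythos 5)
Your proposal is correct in substance and follows essentially the same route as the paper: both reduce the product to a single sign by iterating the formal rules (pair moves cost nothing, insert a common block, merge along a shared row), the only difference being order — the paper first merges the second and third factors and folds in $\sgn{\indexI}{\indexi{k},\indexI\backslash\{\indexi{k}\}}$ at the end, whereas you fold that factor in first; your intermediate sign $\sgn{s,t^{'},\indexI,\indexJp}{\indexi{k},\order{(\indexI\backslash\{\indexi{k}\})\cup\{s\}},\order{\indexJp\cup\{t^{'}\}}}$ is right. One bookkeeping slip in your last step: after prepending $\indexi{k}$ to both rows of the third factor and merging, the top row is simply $s,t^{'},\indexI,\indexJp$ (it cannot be $\indexi{k},s,t^{'},\indexI,\indexJp$, since $\indexi{k}$ already sits inside $\indexI$), and "deleting the common leading $\indexi{k}$" would also strip it from the bottom row, where the right-hand side needs it — the merge already lands on the claimed sign with no further deletion.
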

\begin{proof}
We use the following properties of the sign of permutation:
\begin{itemize}
    \item 
$
        \sgn
            {\indexi{1},\cdots,\indexi{k-1},s,t^{'},\indexi{k+1},\cdots,\indexi{p},\indexJp}
            {\order{(\indexI\backslash\{\indexi{k}\})\cup\{s\}},\order{\indexJp\cup\{t^{'}\}}}
        =
        \sgn
            {s,t^{'},\indexI\backslash\{\indexi{k}\},\indexJp}
            {\order{(\indexI\backslash\{\indexi{k}\})\cup\{s\}},\order{\indexJp\cup\{t^{'}\}}}
$,
    \item 
$   
        \qquad
        \sgn
            {s,t^{'},\indexI\backslash\{\indexi{k}\},\indexJp}
            {\order{(\indexI\backslash\{\indexi{k}\})\cup\{s\}},\order{\indexJp\cup\{t^{'}\}}}
        \sgn
            {\order{(\indexI\backslash\{\indexi{k}\})\cup\{s\}},\order{\indexJp\cup\{t^{'}\}}}
            {\alpha^{'},\order{(\indexI\backslash\{\indexi{k}\})\cup\{s\}},\order{(\indexJp\backslash\{\alpha^{'}\})\cup\{t^{'}\}}}
$
\\
$
        =
        \sgn
            {s,t^{'},\indexI\backslash\{\indexi{k}\},\indexJp}
            {\alpha^{'},\order{(\indexI\backslash\{\indexi{k}\})\cup\{s\}},\order{(\indexJp\backslash\{\alpha^{'}\})\cup\{t^{'}\}}}
$.
\end{itemize}
We simplify the left-hand side of the equation in Lemma \ref{lemma-simplification for LHS, extra 1, part 1-lemma 2} to
\begin{align}
    \sgn
        {\indexI}
        {\indexi{k},\indexI\backslash\{\indexi{k}\}}
    \sgn
        {s,t^{'},\indexI\backslash\{\indexi{k}\},\indexJp}
        {\alpha^{'},\order{(\indexI\backslash\{\indexi{k}\})\cup\{s\}},\order{(\indexJp\backslash\{\alpha^{'}\})\cup\{t^{'}\}}}
    .
\label{EQ-simplification for LHS, extra 1, part 1-lemma 2-no1}
\end{align}

We use the following properties of the sign of permutation:
\begin{itemize}
    \item 
$
    \qquad
    \sgn
        {s,t^{'},\indexI\backslash\{\indexi{k}\},\indexJp}
        {\alpha^{'},\order{(\indexI\backslash\{\indexi{k}\})\cup\{s\}},\order{(\indexJp\backslash\{\alpha^{'}\})\cup\{t^{'}\}}}
$
\\
$
    =
    \sgn
        {\indexi{k},s,t^{'},\indexI\backslash\{\indexi{k}\},\indexJp}
        {\indexi{k},\alpha^{'},\order{(\indexI\backslash\{\indexi{k}\})\cup\{s\}},\order{(\indexJp\backslash\{\alpha^{'}\})\cup\{t^{'}\}}}
$
\\
$
    =
    \sgn
        {s,t^{'},\indexi{k},\indexI\backslash\{\indexi{k}\},\indexJp}
        {\indexi{k},\alpha^{'},\order{(\indexI\backslash\{\indexi{k}\})\cup\{s\}},\order{(\indexJp\backslash\{\alpha^{'}\})\cup\{t^{'}\}}}
$,
\item 
$
    \qquad
    \sgn
        {\indexI}
        {\indexi{k},\indexI\backslash\{\indexi{k}\}}
    \sgn
        {s,t^{'},\indexi{k},\indexI\backslash\{\indexi{k}\},\indexJp}
        {\indexi{k},\alpha^{'},\order{(\indexI\backslash\{\indexi{k}\})\cup\{s\}},\order{(\indexJp\backslash\{\alpha^{'}\})\cup\{t^{'}\}}}
$
\\
$
    =
    \sgn
        {s,t^{'},\indexI,\indexJp}
        {\indexi{k},\alpha^{'},\order{(\indexI\backslash\{\indexi{k}\})\cup\{s\}},\order{(\indexJp\backslash\{\alpha^{'}\})\cup\{t^{'}\}}}
$,
\end{itemize}
We simplify (\ref{EQ-simplification for LHS, extra 1, part 1-lemma 2-no1}) to
\begin{align*}
    \sgn
        {s,t^{'},\indexI,\indexJp}
        {\indexi{k},\alpha^{'},\order{(\indexI\backslash\{\indexi{k}\})\cup\{s\}},\order{(\indexJp\backslash\{\alpha^{'}\})\cup\{t^{'}\}}}
    ,
\end{align*}
which completes the proof of Lemma \ref{lemma-simplification for LHS, extra 1, part 1-lemma 2}.
\end{proof}

\begin{lemma}
\begin{align*}
    \begin{pmatrix*}[l]
        \sgn
            {\indexI,\indexJp}
            {\indexjp{l},\indexI,\indexJp\backslash\{\indexjp{l}\}}
        \\
        \sgn
            {\indexjp{1},\cdots,\indexjp{l-1},\order{\indexjp{l},t^{'}},\indexjp{l+1},\cdots,\indexjp{q}}
            {\order{\indexJp\cup\{t^{'}\}}}
        \\
        \sgn
            {\indexI,\order{\indexJp\cup\{t^{'}\}}}
            {\alpha^{'},\indexI,\order{(\indexJp\backslash\{\alpha^{'}\})\cup\{t^{'}\}}}
    \end{pmatrix*}
    =
        \sgn
            {\order{\indexjp{l},t^{'}}}
            {t^{'},\indexjp{l}}
        \sgn
            {t^{'},\indexI,\indexJp}
            {\alpha^{'},\indexI,\order{(\indexJp\backslash\{\alpha^{'}\})\cup\    {t^{'}\}}}}.
\end{align*}
\label{lemma-simplification for LHS, extra 1, part 1-lemma 3}
\end{lemma}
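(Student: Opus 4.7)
The approach I will take mirrors the symbolic shuffle used in the proofs of Lemmas \ref{lemma-simplification for LHS, extra 1, part 1-lemma 1} and \ref{lemma-simplification for LHS, extra 1, part 1-lemma 2}. Each factor is a sign of a permutation, so I can repeatedly apply the splitting, insertion, transposition, and row-cancellation rules from the proposition on signs of permutations to merge factors whose rows share a common array and collapse adjacent transpositions of disjoint blocks, until the product reduces to a single sign matching the target right-hand side, with the isolated factor $\sgn{\order{\indexjp{l},t^{'}}}{t^{'},\indexjp{l}}$ peeled off at the start.

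First, I will unpack the middle sign by pulling the ordered pair $\order{\indexjp{l},t^{'}}$ out of its slot. The splitting rule gives
\begin{align*}
    \sgn
        {\indexjp{1},\cdots,\indexjp{l-1},\order{\indexjp{l},t^{'}},\indexjp{l+1},\cdots,\indexjp{q}}
        {\order{\indexJp\cup\{t^{'}\}}}
    =
    \sgn
        {\order{\indexjp{l},t^{'}}}
        {t^{'},\indexjp{l}}
    \sgn
        {\indexjp{1},\cdots,\indexjp{l-1},t^{'},\indexjp{l},\indexjp{l+1},\cdots,\indexjp{q}}
        {\order{\indexJp\cup\{t^{'}\}}}.
\end{align*}
The first factor on the right is exactly the sign that appears in the target, so it can be set aside. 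Using the adjacent-block-swap rule, the residual sign rearranges to $\sgn{t^{'},\indexjp{l},\indexJp\backslash\{\indexjp{l}\}}{\order{\indexJp\cup\{t^{'}\}}}$, which is now in a form ready to be combined with the first factor.

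Next, I will prepend $\indexI$ to both rows of this residual by the insertion rule, and then merge it with $\sgn{\indexI,\indexJp}{\indexjp{l},\indexI,\indexJp\backslash\{\indexjp{l}\}}$ via the row-cancellation rule: after bringing $\indexjp{l},\indexI,\indexJp\backslash\{\indexjp{l}\}$ into a matching position in both factors, the combined expression collapses to $\sgn{t^{'},\indexI,\indexJp}{\indexI,\order{\indexJp\cup\{t^{'}\}}}$. Finally, merging this with the third factor $\sgn{\indexI,\order{\indexJp\cup\{t^{'}\}}}{\alpha^{'},\indexI,\order{(\indexJp\backslash\{\alpha^{'}\})\cup\{t^{'}\}}}$ by the same cancellation yields $\sgn{t^{'},\indexI,\indexJp}{\alpha^{'},\indexI,\order{(\indexJp\backslash\{\alpha^{'}\})\cup\{t^{'}\}}}$, which together with the earlier peeled-off factor reproduces the claimed right-hand side.

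The main bookkeeping obstacle will be ensuring that each insertion and merge step does not introduce a hidden sign from how $t^{'}$ sits relative to $\indexjp{l}$, or from the placement of $\indexI$ and the ordered set $\order{\indexJp\cup\{t^{'}\}}$ when they are concatenated as rows. The assumption $\alpha^{'}\in\indexJp$ and $t^{'}\in\indexJpc$ present in the ambient computation guarantees that the index blocks are pairwise disjoint, so the insertion rule applies cleanly; once that disjointness is verified, the chain of identities is deterministic and produces the stated equality.
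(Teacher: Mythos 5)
Your proposal is correct and follows essentially the same route as the paper's proof: both reduce the product by peeling off the factor $\sgn{\order{\indexjp{l},t^{'}}}{t^{'},\indexjp{l}}$, moving the adjacent pair to the front, and then chaining the remaining signs together via the insertion and row-cancellation rules. The only difference is the order in which the three factors are merged (you combine the middle factor with the first before the third, the paper does the reverse), which does not change the argument in any substantive way.
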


\begin{proof}
    We use the following properties of the sign of permutation:
\begin{itemize}
    \item 
$
        \sgn
            {\indexjp{1},\cdots,\indexjp{l-1},\order{\indexjp{l},t^{'}},\indexjp{l+1},\cdots,\indexjp{q}}
            {\order{\indexJp\cup\{t^{'}\}}}
        =
        \sgn
            {\order{\indexjp{l},t^{'}},\indexJp\backslash\{\indexjp{l}\}}
            {\order{\indexJp\cup\{t^{'}\}}}
$,
    \item 
$
        \sgn
            {\order{\indexjp{l},t^{'}},\indexJp\backslash\{\indexjp{l}\}}
            {\order{\indexJp\cup\{t^{'}\}}}
        \sgn
            {\indexI,\order{\indexJp\cup\{t^{'}\}}}
            {\alpha^{'},\indexI,\order{(\indexJp\backslash\{\alpha^{'}\})\cup\{t^{'}\}}}
        =
        \sgn
            {\indexI,\order{\indexjp{l},t^{'}},\indexJp\backslash\{\indexjp{l}\}}
            {\alpha^{'},\indexI,\order{(\indexJp\backslash\{\alpha^{'}\})\cup\{t^{'}\}}}
$.
\end{itemize}
We simplify the left-hand side of the equation in Lemma \ref{lemma-simplification for LHS, extra 1, part 1-lemma 3} to
\begin{align}
    \sgn
        {\indexI,\indexJp}
        {\indexjp{l},\indexI,\indexJp\backslash\{\indexjp{l}\}}
    \sgn
        {\indexI,\order{\indexjp{l},t^{'}},\indexJp\backslash\{\indexjp{l}\}}
        {\alpha^{'},\indexI,\order{(\indexJp\backslash\{\alpha^{'}\})\cup\{t^{'}\}}}
    .
\label{EQ-simplification for LHS, extra 1, part 1-lemma 3-no1}
\end{align}

We use the following properties of the sign of permutation:
\begin{itemize}
    \item 
$
        \sgn
            {\indexI,\order{\indexjp{l},t^{'}},\indexJp\backslash\{\indexjp{l}\}}
            {\alpha^{'},\indexI,\order{(\indexJp\backslash\{\alpha^{'}\})\cup\{t^{'}\}}}
        =
        \sgn
            {\order{\indexjp{l},t^{'}},\indexI,\indexJp\backslash\{\indexjp{l}\}}
            {\alpha^{'},\indexI,\order{(\indexJp\backslash\{\alpha^{'}\})\cup\{t^{'}\}}}
$,
\item 
$
        \sgn
            {\order{\indexjp{l},t^{'}},\indexI,\indexJp\backslash\{\indexjp{l}\}}
            {\alpha^{'},\indexI,\order{(\indexJp\backslash\{\alpha^{'}\})\cup\{t^{'}\}}}
        =
        \sgn
            {\order{\indexjp{l},t^{'}}}
            {t^{'},\indexjp{l}}
        \sgn
            {t^{'},\indexjp{l},\indexI,\indexJp\backslash\{\indexjp{l}\}}
            {\alpha^{'},\indexI,\order{(\indexJp\backslash\{\alpha^{'}\})\cup\{t^{'}\}}}
$,
\item 
$
        \sgn
            {\indexI,\indexJp}
            {\indexjp{l},\indexI,\indexJp\backslash\{\indexjp{l}\}}
        \sgn
            {t^{'},\indexjp{l},\indexI,\indexJp\backslash\{\indexjp{l}\}}
            {\alpha^{'},\indexI,\order{(\indexJp\backslash\{\alpha^{'}\})\cup\{t^{'}\}}}
        =
        \sgn
            {t^{'},\indexI,\indexJp}
            {\alpha^{'},\indexI,\order{(\indexJp\backslash\{\alpha^{'}\})\cup\{t^{'}\}}}
$.
\end{itemize}
We simplify (\ref{EQ-simplification for LHS, extra 1, part 1-lemma 3-no1}) to
\begin{align*}
    \sgn
        {\order{\indexjp{l},t^{'}}}
        {t^{'},\indexjp{l}}
    \sgn
        {t^{'},\indexI,\indexJp}
        {\alpha^{'},\indexI,\order{(\indexJp\backslash\{\alpha^{'}\})\cup\{t^{'}\}}}
    ,
\end{align*}
which completes the proof of Lemma \ref{lemma-simplification for LHS, extra 1, part 1-lemma 3}.
\end{proof}

\begin{lemma}
\begin{align*}
    \begin{pmatrix*}[l]
        \sgn
            {\indexI,\indexJp}
            {\indexjp{l},\indexI,\indexJp\backslash\{\indexjp{l}\}}
        \\
        \sgn
            {\indexjp{1},\cdots,\indexjp{l-1},\order{t^{'},u^{'}},\indexjp{l+1},\cdots,\indexjp{q}}
            {\order{(\indexJp\backslash\{\indexjp{l}\})\cup\{t^{'},u^{'}\}}}
        \\
        \sgn
            {\indexI,\order{(\indexJp\backslash\{\indexjp{l}\})\cup\{t^{'},u^{'}\}}}
            {\alpha^{'},\indexI,\order{(\indexJp\backslash\{\indexjp{l},\alpha^{'}\})\cup\{t^{'},u^{'}\}}}
    \end{pmatrix*}
    =
    \sgn
        {\order{t^{'},u^{'}},\indexI,\indexJp}
        {\indexjp{l},\alpha^{'},\indexI,\order{(\indexJp\backslash\{\indexjp{l},\alpha^{'}\})\cup\{t^{'},u^{'}\}}}.
\end{align*}
\label{lemma-simplification for LHS, extra 1, part 1-lemma 4}
\end{lemma}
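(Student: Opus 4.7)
The plan is to reduce the triple-product of signs on the left to the single sign on the right by applying the manipulation rules for $\mathrm{sgn}$ collected in the proposition stated right after Definition of $\sgn{}{}$, namely row-swapping, inserting a common block into both rows, merging two signs whose rows share a common array, and commuting adjacent blocks of indices with the appropriate $(-1)^{|\cdot||\cdot|}$ factor. Every factor $(-1)^{\text{something}\cdot 2}$ that appears along the way is $+1$ because $|\order{t',u'}|=2$, so no signs are produced by the commutations; the whole argument is bookkeeping.

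First I would normalize the second factor. Rewrite
\[
  \sgn{\indexjp{1},\cdots,\indexjp{l-1},\order{t',u'},\indexjp{l+1},\cdots,\indexjp{q}}{\order{(\indexJp\backslash\{\indexjp{l}\})\cup\{t',u'\}}}
  \;=\;
  \sgn{\order{t',u'},\indexJp\backslash\{\indexjp{l}\}}{\order{(\indexJp\backslash\{\indexjp{l}\})\cup\{t',u'\}}},
\]
where I used that $|\order{t',u'}|=2$ is even, so pulling the pair $\order{t',u'}$ to the front of the top row costs nothing. Next, insert the common block $\indexI$ at the head of both rows (this is allowed since $\indexI$ is disjoint from both arrays) and then merge the result with the third factor, which already starts with $\indexI,\order{(\indexJp\backslash\{\indexjp{l}\})\cup\{t',u'\}}$ in its top row. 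The merge collapses two signs into one, giving
\[
  \sgn{\indexI,\order{t',u'},\indexJp\backslash\{\indexjp{l}\}}{\alpha',\indexI,\order{(\indexJp\backslash\{\indexjp{l},\alpha'\})\cup\{t',u'\}}}.
\]
Commute $\order{t',u'}$ past $\indexI$ in the top row at no cost (the $(-1)^{2p}=1$ step), producing the same sign with $\order{t',u'},\indexI,\indexJp\backslash\{\indexjp{l}\}$ on top.

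Finally, I would handle the first factor. Insert $\order{t',u'}$ at the head of both its rows, then commute $\order{t',u'}$ past the single index $\indexjp{l}$ in the bottom row (cost $(-1)^{2}=1$), obtaining
\[
  \sgn{\order{t',u'},\indexI,\indexJp}{\indexjp{l},\order{t',u'},\indexI,\indexJp\backslash\{\indexjp{l}\}}.
\]
The bottom row here matches the top row of the previous partial result after prepending $\indexjp{l}$ to it (again a legal common-block insertion). Merging the two signs now cancels the shared array $\indexjp{l},\order{t',u'},\indexI,\indexJp\backslash\{\indexjp{l}\}$ and produces exactly
\[
  \sgn{\order{t',u'},\indexI,\indexJp}{\indexjp{l},\alpha',\indexI,\order{(\indexJp\backslash\{\indexjp{l},\alpha'\})\cup\{t',u'\}}},
\]
which is the right-hand side of the lemma. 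The only mild obstacle is keeping the insertions and merges straight — in particular verifying at each step that the arrays claimed to be common really are identical as ordered tuples, not merely equal as sets — but since all the movements of $\order{t',u'}$ are across blocks of length that leave the parity unchanged, no auxiliary sign is generated and the calculation is purely symbolic.
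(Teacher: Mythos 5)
Your proposal is correct and follows essentially the same route as the paper's proof: normalize the second factor by pulling the even-length block $\order{t',u'}$ to the front at no cost, merge it with the third factor, and then combine with the first factor via common-block insertions and commutations that are all sign-free because $|\order{t',u'}|=2$. The only difference is cosmetic — you insert $\order{t',u'}$ into both rows of the first factor before the final merge, whereas the paper instead prepends $\indexjp{l}$ to the merged second-and-third-factor sign — and both bookkeeping variants yield the stated right-hand side.
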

\begin{proof}
    We use the following properties of the sign of permutation:
\begin{itemize}
    \item 
$
        \sgn
            {\indexjp{1},\cdots,\indexjp{l-1},\order{t^{'},u^{'}},\indexjp{l+1},\cdots,\indexjp{q}}
            {\order{(\indexJp\backslash\{\indexjp{l}\})\cup\{t^{'},u^{'}\}}}
        =
        \sgn
            {\order{t^{'},u^{'}},\indexJp\backslash\{\indexjp{l}\}}
            {\order{(\indexJp\backslash\{\indexjp{l}\})\cup\{t^{'},u^{'}\}}}
$,
    \item 
$
        \begin{pmatrix*}
            \sgn
                {\order{t^{'},u^{'}},\indexJp\backslash\{\indexjp{l}\}}
                {\order{(\indexJp\backslash\{\indexjp{l}\})\cup\{t^{'},u^{'}\}}}
            \\
            \sgn
                {\indexI,\order{(\indexJp\backslash\{\indexjp{l}\})\cup\{t^{'},u^{'}\}}}
                {\alpha^{'},\indexI,\order{(\indexJp\backslash\{\indexjp{l},\alpha^{'}\})\cup\{t^{'},u^{'}\}}}
        \end{pmatrix*}
        =
        \sgn
            {\indexI,\order{t^{'},u^{'}},\indexJp\backslash\{\indexjp{l}\}}
            {\alpha^{'},\indexI,\order{(\indexJp\backslash\{\indexjp{l},\alpha^{'}\})\cup\{t^{'},u^{'}\}}}
$.
\end{itemize}
We simplify the left-hand side of the equation in Lemma \ref{lemma-simplification for LHS, extra 1, part 1-lemma 4} to
\begin{align}
    \sgn
        {\indexI,\indexJp}
        {\indexjp{l},\indexI,\indexJp\backslash\{\indexjp{l}\}}
    \sgn
        {\indexI,\order{t^{'},u^{'}},\indexJp\backslash\{\indexjp{l}\}}
        {\alpha^{'},\indexI,\order{(\indexJp\backslash\{\indexjp{l},\alpha^{'}\})\cup\{t^{'},u^{'}\}}}
    .
\label{EQ-simplification for LHS, extra 1, part 1-lemma 4-no1}
\end{align}

We use the following properties of the sign of permutation:
\begin{itemize}
    \item 
$
        \sgn
            {\indexI,\order{t^{'},u^{'}},\indexJp\backslash\{\indexjp{l}\}}
            {\alpha^{'},\indexI,\order{(\indexJp\backslash\{\indexjp{l},\alpha^{'}\})\cup\{t^{'},u^{'}\}}}
        =
        \sgn
            {\indexjp{l},\order{t^{'},u^{'}},\indexI,\indexJp\backslash\{\indexjp{l}\}}
            {\indexjp{l},\alpha^{'},\indexI,\order{(\indexJp\backslash\{\indexjp{l},\alpha^{'}\})\cup\{t^{'},u^{'}\}}}
$,
    \item 
$
        \sgn
            {\indexjp{l},\order{t^{'},u^{'}},\indexI,\indexJp\backslash\{\indexjp{l}\}}
            {\indexjp{l},\alpha^{'},\indexI,\order{(\indexJp\backslash\{\indexjp{l},\alpha^{'}\})\cup\{t^{'},u^{'}\}}}
        =
        \sgn
            {\order{t^{'},u^{'}},\indexjp{l},\indexI,\indexJp\backslash\{\indexjp{l}\}}
            {\indexjp{l},\alpha^{'},\indexI,\order{(\indexJp\backslash\{\indexjp{l},\alpha^{'}\})\cup\{t^{'},u^{'}\}}}
$,
    \item 
$
        \sgn
            {\indexI,\indexJp}
            {\indexjp{l},\indexI,\indexJp\backslash\{\indexjp{l}\}}
        \sgn
            {\order{t^{'},u^{'}},\indexjp{l},\indexI,\indexJp\backslash\{\indexjp{l}\}}
            {\indexjp{l},\alpha^{'},\indexI,\order{(\indexJp\backslash\{\indexjp{l},\alpha^{'}\})\cup\{t^{'},u^{'}\}}}
        =
        \sgn
            {\order{t^{'},u^{'}},\indexI,\indexJp}
            {\indexjp{l},\alpha^{'},\indexI,\order{(\indexJp\backslash\{\indexjp{l},\alpha^{'}\})\cup\{t^{'},u^{'}\}}}
$
    .
\end{itemize}
We simplify (\ref{EQ-simplification for LHS, extra 1, part 1-lemma 4-no1}) to
\begin{align*}
    \sgn
        {\order{t^{'},u^{'}},\indexI,\indexJp}
        {\indexjp{l},\alpha^{'},\indexI,\order{(\indexJp\backslash\{\indexjp{l},\alpha^{'}\})\cup\{t^{'},u^{'}\}}}
    ,
\end{align*}
which completes the proof of Lemma \ref{lemma-simplification for LHS, extra 1, part 1-lemma 4}.
\end{proof}




We then simplify the coefficient in $
\iota_{\basedualxi{\alpha}}\circ\LieagbdPartialbar
(\basexi{\indexI} \wedge \basedualxi{\indexJp})$ such that $\alpha\notin\indexJ$.

\begin{lemma}
\begin{align*}
    \begin{pmatrix*}
        \sgn
            {\indexI}
            {\indexi{k},\indexI\backslash\{\indexi{k}\}}
        \\
        \sgn
            {\indexi{1},\cdots,\indexi{k},\alpha^{'},\indexi{k+1},\cdots,\indexi{p},\indexJp}
            {\indexI,\order{\indexJp\cup\{\alpha^{'}\}}}
        \\
        \sgn
            {\indexI,\order{\indexJp\cup\{\alpha^{'}\}}}
            {\alpha^{'},\indexI,\indexJp}
    \end{pmatrix*}
    =
    (-1)
    .
\end{align*}
\label{lemma-simplification for LHS, extra 1, part 1-lemma 5}
\end{lemma}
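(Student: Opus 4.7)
The plan is to collapse the three signs of permutations in the stated identity into a single known quantity by using the cancellation rule $\sgn{A}{B}\cdot \sgn{B}{C}=\sgn{A}{C}$, which is a basic property of signs of permutations invoked many times in the paper (see the third item of the proposition on signs of permutations in Section \ref{sec:preliminaries}).

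First I would combine the second and third factors. Since the bottom row of the second sign equals the top row of the third sign, namely $\indexI,\order{\indexJp\cup\{\alpha^{'}\}}$, the cancellation property gives
\begin{align*}
    \sgn
        {\indexi{1},\cdots,\indexi{k},\alpha^{'},\indexi{k+1},\cdots,\indexi{p},\indexJp}
        {\indexI,\order{\indexJp\cup\{\alpha^{'}\}}}
    \sgn
        {\indexI,\order{\indexJp\cup\{\alpha^{'}\}}}
        {\alpha^{'},\indexI,\indexJp}
    =
    \sgn
        {\indexi{1},\cdots,\indexi{k},\alpha^{'},\indexi{k+1},\cdots,\indexi{p},\indexJp}
        {\alpha^{'},\indexI,\indexJp}.
\end{align*}
The resulting single sign measures the permutation that moves $\alpha^{'}$ from position $k+1$ to position $1$, while leaving $\indexi{1},\ldots,\indexi{p},\indexJp$ otherwise fixed; this is $k$ adjacent transpositions, so the value is $(-1)^k$.

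Next I would evaluate the remaining factor. The sign $\sgn{\indexI}{\indexi{k},\indexI\backslash\{\indexi{k}\}}$ records the permutation that moves $\indexi{k}$ from position $k$ to position $1$ in the array $\indexI$, which is $k-1$ adjacent transpositions, so this factor equals $(-1)^{k-1}$. Multiplying the two contributions yields
\begin{align*}
    (-1)^{k-1}\cdot (-1)^{k}
    =
    (-1)^{2k-1}
    =
    -1,
\end{align*}
which is the claimed value.

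No serious obstacle is expected: the proof is purely combinatorial bookkeeping about signs of permutations. The only point that requires mild care is correctly counting the number of adjacent transpositions in each of the two elementary moves (shifting $\alpha^{'}$ leftward across $k$ positions, and shifting $\indexi{k}$ leftward across $k-1$ positions), but the cancellation identity makes it unnecessary to keep track of the positions of indices in $\indexI\backslash\{\indexi{k}\}$ or $\indexJp$.
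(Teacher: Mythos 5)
Your proof is correct and follows essentially the same route as the paper: both arguments evaluate $\sgn{\indexI}{\indexi{k},\indexI\backslash\{\indexi{k}\}}=(-1)^{k-1}$, extract the factor $(-1)^{k}$ from carrying $\alpha^{'}$ across the $k$ entries $\indexi{1},\dots,\indexi{k}$, and use the merge rule for signs of permutations to collapse the remaining pair to $1$, yielding $(-1)^{2k-1}=-1$. The only (immaterial) difference is the order of operations — you merge the second and third factors before counting transpositions, while the paper counts first and then merges.
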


\begin{proof}
We use the following properties of the sign of permutation:
\begin{itemize}
    \item 
$
    \sgn
            {\indexI}
            {\indexi{k},\indexI\backslash\{\indexi{k}\}}
    =
    (-1)^{k-1}
$,
    \item 
$
    \sgn
        {\indexi{1},\cdots,\indexi{k},\alpha^{'},\indexi{k+1},\cdots,\indexi{p},\indexJp}
        {\indexI,\order{\indexJp\cup\{\alpha^{'}\}}}
    =
    (-1)^{k}
    \sgn
        {\indexI,\indexJp}
        {\indexI,\order{\indexJp\cup\{\alpha^{'}\}}}
$,
    \item 
$
    \sgn
        {\indexI,\indexJp}
        {\indexI,\order{\indexJp\cup\{\alpha^{'}\}}}
    \sgn
            {\indexI,\order{\indexJp\cup\{\alpha^{'}\}}}
            {\alpha^{'},\indexI,\indexJp}
    =1
$.
\end{itemize}
We simplify the left-hand side of the equation in Lemma \ref{lemma-simplification for LHS, extra 1, part 1-lemma 5} to
\begin{align*}
    (-1)^{k-1}(-1)^{k}
    =
    (-1)
    ,
\end{align*}
which completes the proof of Lemma \ref{lemma-simplification for LHS, extra 1, part 1-lemma 5}.
\end{proof}


\begin{lemma}
\begin{align*}
    \begin{pmatrix*}[l]
        \sgn
            {\indexI}
            {\indexi{k},\indexI\backslash\{\indexi{k}\}}
        \\
        \sgn
            {\indexi{1},\cdots,\indexi{k-1},s,\alpha^{'},\indexi{k+1},\cdots,\indexi{p},\indexJp}
            {\order{(\indexI\backslash\{\indexi{k}\})\cup\{s\}},\order{\indexJp\cup\{\alpha^{'}\}}}
        \\
        \sgn
            {\order{(\indexI\backslash\{\indexi{k}\})\cup\{s\}},\order{\indexJp\cup\{\alpha^{'}\}}}
            {\alpha^{'},\order{(\indexI\backslash\{\indexi{k}\})\cup\{s\}},\indexJp}
    \end{pmatrix*}  
    =
    \sgn
        {s,\indexI,\indexJp}
        {\indexi{k},\order{(\indexI\backslash\{\indexi{k}\})\cup\{s\}},\indexJp}.
\end{align*}
\label{lemma-simplification for LHS, extra 1, part 1-lemma 6}
\end{lemma}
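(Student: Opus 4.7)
The plan is to prove this sign identity by the same bookkeeping strategy used in Lemma~\ref{lemma-simplification for LHS, extra 1, part 1-lemma 2}, applying the permutation-sign manipulation rules (row swap, common-array insertion, middle-row cancellation, block commutation) introduced in the preliminaries. The left-hand side contains three sign factors; the aim is to combine them into a single sign by successively cancelling the rows that coincide, and then to reindex so that the result matches the right-hand side.

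First, I would unfold the second factor by noting that the top row $\indexi{1},\cdots,\indexi{k-1},s,\alpha^{'},\indexi{k+1},\cdots,\indexi{p},\indexJp$ is simply $s,\alpha^{'},\indexI\backslash\{\indexi{k}\},\indexJp$ after a commutation across the $(k{-}1)$-long prefix, which introduces an explicit factor $(-1)^{k-1}$ (or more precisely the sign $\sgn{\indexI}{\indexi{k},\indexI\backslash\{\indexi{k}\}}$ that will cancel against the first factor on the left). Next, I would cancel the middle row $\order{(\indexI\backslash\{\indexi{k}\})\cup\{s\}},\order{\indexJp\cup\{\alpha^{'}\}}$ that appears both as the bottom row of the second sign and as the top row of the third sign, merging the second and third signs into a single sign whose top row is $s,\alpha^{'},\indexI\backslash\{\indexi{k}\},\indexJp$ and bottom row is $\alpha^{'},\order{(\indexI\backslash\{\indexi{k}\})\cup\{s\}},\indexJp$.

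Then, I would cancel $\alpha^{'}$ from the front by the common-prefix insertion rule (in reverse), reducing to $\sgn{s,\indexI\backslash\{\indexi{k}\},\indexJp}{\order{(\indexI\backslash\{\indexi{k}\})\cup\{s\}},\indexJp}$. At this point I would pull the common block $\indexJp$ out of the tail on both sides and insert $\indexi{k}$ into both rows in the natural position; combined with the outstanding factor $\sgn{\indexI}{\indexi{k},\indexI\backslash\{\indexi{k}\}}$ coming from the first sign and from the unfolding step above, the identity will reduce to $\sgn{s,\indexI,\indexJp}{\indexi{k},\order{(\indexI\backslash\{\indexi{k}\})\cup\{s\}},\indexJp}$ as claimed.

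The only real obstacle here is bookkeeping: one has to be careful that the factor $\sgn{\indexI}{\indexi{k},\indexI\backslash\{\indexi{k}\}}$ produced by rewriting the second factor is exactly the one needed to cancel the leading factor on the left-hand side, and that no extra $(-1)$ appears from moving $\alpha^{'}$ past $s$ (it should not, since $\alpha^{'}$ is anti-holomorphic and $s$ is holomorphic, and they stay in the same relative order throughout). Because each step is a direct application of the sign-manipulation rules and the exponents collected from block commutations all match, no unexpected sign will emerge, and the computation closes in a few lines exactly parallel to Lemma~\ref{lemma-simplification for LHS, extra 1, part 1-lemma 2}.
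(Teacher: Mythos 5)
Your overall strategy --- unfold the second factor, cancel the shared middle row, strip the common $\alpha^{'}$, then reinsert $\indexi{k}$ and merge with the first factor --- is exactly the route the paper takes, but your sign bookkeeping is wrong in three places, and since this lemma \emph{is} the bookkeeping, that is a genuine gap rather than a stylistic difference. First, rewriting the top row $\indexi{1},\cdots,\indexi{k-1},s,\alpha^{'},\indexi{k+1},\cdots,\indexi{p},\indexJp$ as $s,\alpha^{'},\indexI\backslash\{\indexi{k}\},\indexJp$ moves the \emph{adjacent pair} $(s,\alpha^{'})$ across the prefix, which costs $2(k-1)$ transpositions and hence no sign at all; it does not produce a $(-1)^{k-1}$, so nothing cancels the leading factor $\sgn{\indexI}{\indexi{k},\indexI\backslash\{\indexi{k}\}}$ at this stage. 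That factor must survive to the very end, where it is precisely what converts $\indexi{k},\indexI\backslash\{\indexi{k}\}$ back into $\indexI$ in the top row of the answer.

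Second, stripping $\alpha^{'}$ \emph{does} cost a sign: after the middle-row cancellation you are holding $\sgn{s,\alpha^{'},\indexI\backslash\{\indexi{k}\},\indexJp}{\alpha^{'},\order{(\indexI\backslash\{\indexi{k}\})\cup\{s\}},\indexJp}$, and to apply the common-prefix rule you must first commute $\alpha^{'}$ past $s$ in the top row, i.e.
\begin{align*}
\sgn{s,\alpha^{'},\indexI\backslash\{\indexi{k}\},\indexJp}{\alpha^{'},\order{(\indexI\backslash\{\indexi{k}\})\cup\{s\}},\indexJp}
=(-1)\,
\sgn{s,\indexI\backslash\{\indexi{k}\},\indexJp}{\order{(\indexI\backslash\{\indexi{k}\})\cup\{s\}},\indexJp}.
\end{align*}
Your justification that no sign appears because $\alpha^{'}$ is anti-holomorphic and $s$ is holomorphic is not a valid argument: the sign of a permutation is blind to the type of the symbols, and the two do \emph{not} keep their relative order ($\alpha^{'}$ follows $s$ in the top row but precedes the entire holomorphic block in the bottom row). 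Third, the final merge produces a compensating $(-1)$ that you also omit: inserting $\indexi{k}$ at the head of both rows and commuting it past $s$ gives
\begin{align*}
\sgn{s,\indexI\backslash\{\indexi{k}\},\indexJp}{\order{(\indexI\backslash\{\indexi{k}\})\cup\{s\}},\indexJp}
=(-1)\,
\sgn{\indexI}{\indexi{k},\indexI\backslash\{\indexi{k}\}}\,
\sgn{s,\indexI,\indexJp}{\indexi{k},\order{(\indexI\backslash\{\indexi{k}\})\cup\{s\}},\indexJp},
\end{align*}
and it is the product of these two hidden $(-1)$'s, together with the surviving first factor, that yields the right-hand side with no residual sign. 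As written, your accounting either double-uses the first factor (you cancel it in the unfolding step and then invoke it again as an ``outstanding factor'' at the end) or leaves an uncancelled $(-1)$; either way the computation does not close as described.
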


\begin{proof}
We use the following properties of the sign of permutation:
\begin{itemize}
    \item 
$
    \sgn
        {\indexi{1},\cdots,\indexi{k-1},s,\alpha^{'},\indexi{k+1},\cdots,\indexi{p},\indexJp}
        {\order{(\indexI\backslash\{\indexi{k}\})\cup\{s\}},\order{\indexJp\cup\{\alpha^{'}\}}}
    =
    \sgn
        {s,\alpha^{'},\indexI\backslash\{\indexi{k}\},\indexJp}
        {\order{(\indexI\backslash\{\indexi{k}\})\cup\{s\}},\order{\indexJp\cup\{\alpha^{'}\}}}
$,
    \item 
$
    \quad
    \sgn
        {s,\alpha^{'},\indexI\backslash\{\indexi{k}\},\indexJp}
        {\order{(\indexI\backslash\{\indexi{k}\})\cup\{s\}},\order{\indexJp\cup\{\alpha^{'}\}}}
    \sgn
            {\order{(\indexI\backslash\{\indexi{k}\})\cup\{s\}},\order{\indexJp\cup\{\alpha^{'}\}}}
            {\alpha^{'},\order{(\indexI\backslash\{\indexi{k}\})\cup\{s\}},\indexJp}
    \\
    =
    \sgn
        {s,\alpha^{'},\indexI\backslash\{\indexi{k}\},\indexJp}
        {\alpha^{'},\order{(\indexI\backslash\{\indexi{k}\})\cup\{s\}},\indexJp}
    =
    (-1)
    \sgn
        {\alpha^{'},s,\indexI\backslash\{\indexi{k}\},\indexJp}
        {\alpha^{'},\order{(\indexI\backslash\{\indexi{k}\})\cup\{s\}},\indexJp}
    \\
    =
    (-1)
    \sgn
        {s,\indexI\backslash\{\indexi{k}\},\indexJp}
        {\order{(\indexI\backslash\{\indexi{k}\})\cup\{s\}},\indexJp}
$.
\end{itemize}
We simplify the left-hand side of the equation in Lemma \ref{lemma-simplification for LHS, extra 1, part 1-lemma 6} to
\begin{align}
    \sgn
        {\indexI}
        {\indexi{k},\indexI\backslash\{\indexi{k}\}}
    (-1)
    \sgn
        {s,\indexI\backslash\{\indexi{k}\},\indexJp}
        {\order{(\indexI\backslash\{\indexi{k}\})\cup\{s\}},\indexJp}
    .
\label{EQ-simplification for LHS, extra 1, part 1-lemma 6-no1}
\end{align}

We use the following properties of the sign of permutation:
\begin{itemize}
    \item 
$
    \quad
    \sgn
        {s,\indexI\backslash\{\indexi{k}\},\indexJp}
        {\order{(\indexI\backslash\{\indexi{k}\})\cup\{s\}},\indexJp}
    =
    \sgn
        {\indexi{k},s,\indexI\backslash\{\indexi{k}\},\indexJp}
        {\indexi{k},\order{(\indexI\backslash\{\indexi{k}\})\cup\{s\}},\indexJp}
    \\
    =
    (-1)
    \sgn
        {s,\indexi{k},\indexI\backslash\{\indexi{k}\},\indexJp}
        {\indexi{k},\order{(\indexI\backslash\{\indexi{k}\})\cup\{s\}},\indexJp}
$,
    \item 
$
    \sgn
        {\indexI}
        {\indexi{k},\indexI\backslash\{\indexi{k}\}}
    \sgn
        {s,\indexi{k},\indexI\backslash\{\indexi{k}\},\indexJp}
        {\indexi{k},\order{(\indexI\backslash\{\indexi{k}\})\cup\{s\}},\indexJp}
    =
    \sgn
        {s,\indexI,\indexJp}
        {\indexi{k},\order{(\indexI\backslash\{\indexi{k}\})\cup\{s\}},\indexJp}
$.
\end{itemize}
We simplify (\ref{EQ-simplification for LHS, extra 1, part 1-lemma 6-no1}) to
\begin{align*}
    &
    (-1)(-1)
    \sgn
        {s,\indexI,\indexJp}
        {\indexi{k},\order{(\indexI\backslash\{\indexi{k}\})\cup\{s\}},\indexJp}
    \\
    =&
    \sgn
        {s,\indexI,\indexJp}
        {\indexi{k},\order{(\indexI\backslash\{\indexi{k}\})\cup\{s\}},\indexJp}
    ,
\end{align*}
which completes the proof of Lemma \ref{lemma-simplification for LHS, extra 1, part 1-lemma 6}.
\end{proof}



\begin{lemma}
\begin{align*}
    \begin{pmatrix*}[l]
        \sgn
            {\indexI,\indexJp}
            {\indexjp{l},\indexI,\indexJp\backslash\{\indexjp{l}\}}
        \\
        \sgn
            {\indexjp{1},\cdots,\indexjp{l-1},\order{\indexjp{l},\alpha^{'}},\indexjp{l+1},\cdots,\indexjp{q}}
            {\order{\indexJp\cup\{\alpha^{'}\}}}
        \\
        \sgn
            {\indexI,\order{\indexJp\cup\{\alpha^{'}\}}}
            {\alpha^{'},\indexI,\indexJp}
    \end{pmatrix*}
    =
    \sgn
        {\order{\indexjp{l},\alpha^{'}}}
        {\alpha^{'},\indexjp{l}}
    .
\end{align*}
\label{lemma-simplification for LHS, extra 1, part 1-lemma 7}
\end{lemma}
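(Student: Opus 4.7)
The plan is to combine the three signs of permutation step by step using the manipulation rules for $\sgn{}{}$ stated earlier in the preliminaries (inserting a common block into both rows leaves the sign unchanged; the product of two signs with a shared row collapses via cancellation; moving two adjacent entries as a block costs nothing). The target on the right is a single sign expressing the reordering $\order{\indexjp{l},\alpha^{'}} \mapsto (\alpha^{'},\indexjp{l})$, so the goal is to peel off exactly that factor and see everything else cancel.

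First I would rewrite the middle sign as
\begin{align*}
    \sgn
        {\indexjp{1},\cdots,\indexjp{l-1},\order{\indexjp{l},\alpha^{'}},\indexjp{l+1},\cdots,\indexjp{q}}
        {\order{\indexJp\cup\{\alpha^{'}\}}}
    =
    \sgn
        {\order{\indexjp{l},\alpha^{'}},\indexJp\backslash\{\indexjp{l}\}}
        {\order{\indexJp\cup\{\alpha^{'}\}}},
\end{align*}
which is legitimate because the top row is a rearrangement of the same multiset and we are free to reorder entries on a single side provided we track signs; here the pair $\order{\indexjp{l},\alpha^{'}}$ is moved to the front as a length-two block, which costs $(-1)^{2(l-1)}=1$. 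Next I would multiply this against the third sign using the merger rule: inserting $\indexI$ in front of both rows of the second sign and then cancelling the shared row $\order{\indexJp\cup\{\alpha^{'}\}}$ yields
\begin{align*}
    \sgn
        {\order{\indexjp{l},\alpha^{'}},\indexJp\backslash\{\indexjp{l}\}}
        {\order{\indexJp\cup\{\alpha^{'}\}}}
    \,
    \sgn
        {\indexI,\order{\indexJp\cup\{\alpha^{'}\}}}
        {\alpha^{'},\indexI,\indexJp}
    =
    \sgn
        {\indexI,\order{\indexjp{l},\alpha^{'}},\indexJp\backslash\{\indexjp{l}\}}
        {\alpha^{'},\indexI,\indexJp}.
\end{align*}

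Then I would move the adjacent pair $\order{\indexjp{l},\alpha^{'}}$ past $\indexI$ on the top row (no sign change, since a two-block shift across anything is even), split off the factor $\sgn{\order{\indexjp{l},\alpha^{'}}}{\alpha^{'},\indexjp{l}}$, and cancel an $\alpha^{'}$ common to both rows to obtain
\begin{align*}
    \sgn
        {\order{\indexjp{l},\alpha^{'}}}
        {\alpha^{'},\indexjp{l}}
    \,
    \sgn
        {\indexjp{l},\indexI,\indexJp\backslash\{\indexjp{l}\}}
        {\indexI,\indexJp}.
\end{align*}
The rightmost factor above is exactly the inverse of the first sign in the original product, namely $\sgn{\indexI,\indexJp}{\indexjp{l},\indexI,\indexJp\backslash\{\indexjp{l}\}}$, so multiplying them gives $1$ and we are left with $\sgn{\order{\indexjp{l},\alpha^{'}}}{\alpha^{'},\indexjp{l}}$, which is the claim.

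The proof is entirely mechanical once the manipulation rules are applied in the right order, so there is no conceptual obstacle; the only thing to be careful about is bookkeeping the parity of the block-shifts (especially when moving a length-two ordered block, since that contributes an even sign regardless of the length it is shifted across). I expect the trickiest line to be the insertion/cancellation in the merger step, because one must verify that the shared row appears literally on both sides of the product before invoking cancellation — this is why rewriting the middle sign first, so that $\order{\indexJp\cup\{\alpha^{'}\}}$ appears as the bottom row in the standard form, is the right opening move.
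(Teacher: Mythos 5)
Your proposal is correct and follows essentially the same route as the paper's own proof: rewrite the middle sign by pulling the block $\order{\indexjp{l},\alpha^{'}}$ to the front (an even move), merge with the third sign by inserting $\indexI$ and cancelling the shared row, split off the factor $\sgn{\order{\indexjp{l},\alpha^{'}}}{\alpha^{'},\indexjp{l}}$, and observe that what remains annihilates the first sign. The only cosmetic difference is that you cancel the leading $\alpha^{'}$ from both rows before pairing the residue with the first sign, whereas the paper multiplies the two directly to get $\sgn{\alpha^{'},\indexI,\indexJp}{\alpha^{'},\indexI,\indexJp}=1$; these are the same computation.
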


\begin{proof}
We use the following properties of the sign of permutation:
\begin{itemize}
    \item 
$
    \sgn
        {\indexjp{1},\cdots,\indexjp{l-1},\order{\indexjp{l},\alpha^{'}},\indexjp{l+1},\cdots,\indexjp{q}}
        {\order{\indexJp\cup\{\alpha^{'}\}}}
    =
    \sgn
        {\order{\indexjp{l},\alpha^{'}},\indexJp\backslash\{\indexjp{l}\}}
        {\order{\indexJp\cup\{\alpha^{'}\}}}
$,
    \item 
$
    \quad
    \sgn
        {\order{\indexjp{l},\alpha^{'}},\indexJp\backslash\{\indexjp{l}\}}
        {\order{\indexJp\cup\{\alpha^{'}\}}}
    \sgn
        {\indexI,\order{\indexJp\cup\{\alpha^{'}\}}}
        {\alpha^{'},\indexI,\indexJp}
    =
    \sgn
        {\indexI,\order{\indexjp{l},\alpha^{'}},\indexJp\backslash\{\indexjp{l}\}}
        {\alpha^{'},\indexI,\indexJp}
    \\
    =\sgn
        {\order{\indexjp{l},\alpha^{'}},\indexI,\indexJp\backslash\{\indexjp{l}\}}
        {\alpha^{'},\indexI,\indexJp},
$
    \item
$
    \sgn
        {\order{\indexjp{l},\alpha^{'}},\indexI,\indexJp\backslash\{\indexjp{l}\}}
        {\alpha^{'},\indexI,\indexJp}
    =
    \sgn
        {\order{\indexjp{l},\alpha^{'}}}
        {\alpha^{'},\indexjp{l}}
    \sgn
        {\alpha^{'},\indexjp{l},\indexI,\indexJp\backslash\{\indexjp{l}\}}
        {\alpha^{'},\indexI,\indexJp}
$,
    \item 
$
    \sgn
        {\indexI,\indexJp}
        {\indexjp{l},\indexI,\indexJp\backslash\{\indexjp{l}\}}
    \sgn
        {\alpha^{'},\indexjp{l},\indexI,\indexJp\backslash\{\indexjp{l}\}}
        {\alpha^{'},\indexI,\indexJp}
    =
    \sgn
        {\alpha^{'},\indexI,\indexJp}
        {\alpha^{'},\indexI,\indexJp}
    =
    1
$.
\end{itemize}
We simplify the left-hand side of the equation in Lemma \ref{lemma-simplification for LHS, extra 1, part 1-lemma 7} to
\begin{align*}
    \sgn
        {\order{\indexjp{l},\alpha^{'}}}
        {\alpha^{'},\indexjp{l}}
    ,
\end{align*}
which completes the proof of Lemma \ref{lemma-simplification for LHS, extra 1, part 1-lemma 7}.
\end{proof}



\begin{lemma}
\begin{align*}
    \begin{pmatrix*}
        \sgn
            {\indexI,\indexJp}
            {\indexjp{l},\indexI,\indexJp\backslash\{\indexjp{l}\}}
        \\
        \sgn
            {\indexjp{1},\cdots,\indexjp{l-1},\order{t^{'},\alpha^{'}},\indexjp{l+1},\cdots,\indexjp{q}}
            {\order{(\indexJp\backslash\{\indexjp{l}\})\cup\{t^{'},\alpha^{'}\}}}
        \\
        \sgn
            {\indexI,\order{(\indexJp\backslash\{\indexjp{l}\})\cup\{t^{'},\alpha^{'}\}}}
            {\alpha^{'},\indexI,\order{(\indexJp\backslash\{\indexjp{l}\})\cup\{t^{'}\}}}
    \end{pmatrix*}
    =
    \sgn
        {\order{\alpha^{'},t^{'}}}
        {t^{'},\alpha^{'}}
    \sgn
        {t^{'},\indexI,\indexJp}
        {\indexjp{l},\indexI,\order{(\indexJp\backslash\{\indexjp{l}\})\cup\{t^{'}\}}}
    .
\end{align*}
\label{lemma-simplification for LHS, extra 1, part 1-lemma 8}
\end{lemma}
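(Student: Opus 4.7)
The strategy mirrors that of Lemma \ref{lemma-simplification for LHS, extra 1, part 1-lemma 7}: we rewrite each of the three sign-of-permutation factors into a form where adjacent factors share a common row, then cancel using the merging property
\[
\sgn{A}{B}\sgn{B}{C}=\sgn{A}{C}.
\]
The only new wrinkle compared to Lemma \ref{lemma-simplification for LHS, extra 1, part 1-lemma 7} is that in the third factor the bottom row contains $\order{(\indexJp\backslash\{\indexjp{l}\})\cup\{t^{'}\}}$ rather than simply $\indexJp$, which will produce the residual factor $\sgn{t^{'},\indexI,\indexJp}{\indexjp{l},\indexI,\order{(\indexJp\backslash\{\indexjp{l}\})\cup\{t^{'}\}}}$ instead of the identity.

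First, I would replace the second factor by
\[
\sgn
   {\indexjp{1},\cdots,\indexjp{l-1},\order{t^{'},\alpha^{'}},\indexjp{l+1},\cdots,\indexjp{q}}
   {\order{(\indexJp\backslash\{\indexjp{l}\})\cup\{t^{'},\alpha^{'}\}}}
=
\sgn
   {\order{t^{'},\alpha^{'}},\indexJp\backslash\{\indexjp{l}\}}
   {\order{(\indexJp\backslash\{\indexjp{l}\})\cup\{t^{'},\alpha^{'}\}}},
\]
using the move-to-front property for the block $\order{t',\alpha'}$ inside the top row (recall that moving a single block of adjacent entries past other entries preserves the sign, because each entry of the block is moved the same number of positions, and these moves are symmetric). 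Then prepend $\indexI$ to both rows to match the next factor, giving
\[
\sgn
   {\indexI,\order{t^{'},\alpha^{'}},\indexJp\backslash\{\indexjp{l}\}}
   {\indexI,\order{(\indexJp\backslash\{\indexjp{l}\})\cup\{t^{'},\alpha^{'}\}}},
\]
and merge with the third factor to obtain
\[
\sgn
   {\indexI,\order{t^{'},\alpha^{'}},\indexJp\backslash\{\indexjp{l}\}}
   {\alpha^{'},\indexI,\order{(\indexJp\backslash\{\indexjp{l}\})\cup\{t^{'}\}}}.
\]

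Next, I would move $\order{t^{'},\alpha^{'}}$ to the front of the top row (an adjacent-block move, sign unchanged) and then extract $\sgn{\order{\alpha^{'},t^{'}}}{t^{'},\alpha^{'}}$ by replacing $\order{t^{'},\alpha^{'}}$ by $t^{'},\alpha^{'}$ in the correct order (here I use $\sgn{\order{\alpha^{'},t^{'}}}{t^{'},\alpha^{'}}=\sgn{\order{t^{'},\alpha^{'}}}{\alpha^{'},t^{'}}$; either sign expression equals $-1$ when $\alpha^{'}<t^{'}$ and $+1$ otherwise, and I would use whichever matches the right-hand side of the claim). The resulting expression becomes
\[
\sgn{\order{\alpha^{'},t^{'}}}{t^{'},\alpha^{'}}\;
\sgn
   {t^{'},\alpha^{'},\indexI,\indexJp\backslash\{\indexjp{l}\}}
   {\alpha^{'},\indexI,\order{(\indexJp\backslash\{\indexjp{l}\})\cup\{t^{'}\}}}.
\]
Now swap the two adjacent singletons $t^{'}$ and $\alpha^{'}$ in the top row (picking up a $-1$) so that $\alpha^{'}$ lines up with the first entry on the bottom, and cancel that common column to reduce to
\[
-\,\sgn{\order{\alpha^{'},t^{'}}}{t^{'},\alpha^{'}}\;
\sgn
   {t^{'},\indexI,\indexJp\backslash\{\indexjp{l}\}}
   {\indexI,\order{(\indexJp\backslash\{\indexjp{l}\})\cup\{t^{'}\}}}.
\]

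Finally, I would multiply by the first factor $\sgn{\indexI,\indexJp}{\indexjp{l},\indexI,\indexJp\backslash\{\indexjp{l}\}}$. Prepending $t^{'}$ to both of its rows (which does not change its value), it becomes $\sgn{t^{'},\indexI,\indexJp}{t^{'},\indexjp{l},\indexI,\indexJp\backslash\{\indexjp{l}\}}$, and after moving $t^{'}$ past $\indexjp{l}$ in the bottom row this equals $-\sgn{t^{'},\indexI,\indexJp}{\indexjp{l},t^{'},\indexI,\indexJp\backslash\{\indexjp{l}\}}$. Combining with the previous display and cancelling the common row $\indexjp{l},t^{'},\indexI,\indexJp\backslash\{\indexjp{l}\}$ (using the merge rule) yields
\[
\sgn{\order{\alpha^{'},t^{'}}}{t^{'},\alpha^{'}}\;
\sgn
   {t^{'},\indexI,\indexJp}
   {\indexjp{l},\indexI,\order{(\indexJp\backslash\{\indexjp{l}\})\cup\{t^{'}\}}},
\]
which is the right-hand side. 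The only obstacle is bookkeeping the two sign flips introduced by moving $t^{'}$ past $\alpha^{'}$ and past $\indexjp{l}$; these cancel, so no extra $-1$ survives. All steps are routine applications of the sign-of-permutation properties already stated in the preliminaries.
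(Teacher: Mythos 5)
Your proof is correct and takes essentially the same route as the paper's: both rewrite the second factor to isolate the block $\order{t^{'},\alpha^{'}}$, merge it with the third factor after prepending $\indexI$, and then fold in the first factor, differing only in whether the factor $\sgn{\order{\alpha^{'},t^{'}}}{t^{'},\alpha^{'}}$ is split off before or after that final merge (the two explicit $-1$'s you track, from swapping $t^{'}$ past $\alpha^{'}$ and past $\indexjp{l}$, do cancel exactly as in the paper). One caution: your parenthetical claim $\sgn{\order{\alpha^{'},t^{'}}}{t^{'},\alpha^{'}}=\sgn{\order{t^{'},\alpha^{'}}}{\alpha^{'},t^{'}}$ is false --- since $\order{\cdot,\cdot}$ is symmetric in its arguments, these two signs are negatives of each other --- but the slip is harmless here because the splitting rule produces precisely $\sgn{\order{\alpha^{'},t^{'}}}{t^{'},\alpha^{'}}$, which is the expression appearing on the right-hand side, and that is the one your computation actually uses throughout.
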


\begin{proof}
We use the following properties of the sign of permutation:
\begin{itemize}
    \item 
$
    \sgn
        {\indexjp{1},\cdots,\indexjp{l-1},\order{t^{'},\alpha^{'}},\indexjp{l+1},\cdots,\indexjp{q}}
        {\order{(\indexJp\backslash\{\indexjp{l}\})\cup\{t^{'},\alpha^{'}\}}}
    =
    \sgn
        {\order{t^{'},\alpha^{'}},\indexJp\backslash\{\indexjp{}\}}
        {\order{(\indexJp\backslash\{\indexjp{l}\})\cup\{t^{'},\alpha^{'}\}}}
$,
    \item 
$
    \sgn
        {\order{t^{'},\alpha^{'}},\indexJp\backslash\{\indexjp{}\}}
        {\order{(\indexJp\backslash\{\indexjp{l}\})\cup\{t^{'},\alpha^{'}\}}}
    \sgn
        {\indexI,\order{(\indexJp\backslash\{\indexjp{l}\})\cup\{t^{'},\alpha^{'}\}}}
        {\alpha^{'},\indexI,\order{(\indexJp\backslash\{\indexjp{l}\})\cup\{t^{'}\}}}
    =
    \sgn
        {\indexI,\order{t^{'},\alpha^{'}},\indexJp\backslash\{\indexjp{}\}}
        {\alpha^{'},\indexI,\order{(\indexJp\backslash\{\indexjp{l}\})\cup\{t^{'}\}}}
$.
\end{itemize}
We simplify the left-hand side of the equation in Lemma \ref{lemma-simplification for LHS, extra 1, part 1-lemma 8} to
\begin{align}
    \sgn
        {\indexI,\indexJp}
        {\indexjp{l},\indexI,\indexJp\backslash\{\indexjp{l}\}}
    \sgn
        {\indexI,\order{t^{'},\alpha^{'}},\indexJp\backslash\{\indexjp{}\}}
        {\alpha^{'},\indexI,\order{(\indexJp\backslash\{\indexjp{l}\})\cup\{t^{'}\}}}
    .
\label{EQ-simplification for LHS, extra 1, part 1-lemma 8-no1}
\end{align}

We use the following properties of the sign of permutation:
\begin{itemize}
    \item 
$
    \quad
    \sgn
        {\indexI,\order{t^{'},\alpha^{'}},\indexJp\backslash\{\indexjp{}\}}
        {\alpha^{'},\indexI,\order{(\indexJp\backslash\{\indexjp{l}\})\cup\{t^{'}\}}}
    =
    \sgn
        {\order{t^{'},\alpha^{'}},\indexI,\indexJp\backslash\{\indexjp{}\}}
        {\alpha^{'},\indexI,\order{(\indexJp\backslash\{\indexjp{l}\})\cup\{t^{'}\}}}
    \\
    =
    \sgn
        {\indexjp{l},\order{t^{'},\alpha^{'}},\indexI,\indexJp\backslash\{\indexjp{}\}}
        {\indexjp{l},\alpha^{'},\indexI,\order{(\indexJp\backslash\{\indexjp{l}\})\cup\{t^{'}\}}}
    =
    \sgn
        {\order{t^{'},\alpha^{'}},\indexjp{l},\indexI,\indexJp\backslash\{\indexjp{}\}}
        {\indexjp{l},\alpha^{'},\indexI,\order{(\indexJp\backslash\{\indexjp{l}\})\cup\{t^{'}\}}}
$,
    \item 
$
    \sgn
        {\indexI,\indexJp}
        {\indexjp{l},\indexI,\indexJp\backslash\{\indexjp{l}\}}
    \sgn
        {\order{t^{'},\alpha^{'}},\indexjp{l},\indexI,\indexJp\backslash\{\indexjp{}\}}
        {\indexjp{l},\alpha^{'},\indexI,\order{(\indexJp\backslash\{\indexjp{l}\})\cup\{t^{'}\}}}
    =
    \sgn
        {\order{t^{'},\alpha^{'}},\indexI,\indexJp}
        {\indexjp{l},\alpha^{'},\indexI,\order{(\indexJp\backslash\{\indexjp{l}\})\cup\{t^{'}\}}}
$,
    \item 
$
    \sgn
        {\order{t^{'},\alpha^{'}},\indexI,\indexJp}
        {\indexjp{l},\alpha^{'},\indexI,\order{(\indexJp\backslash\{\indexjp{l}\})\cup\{t^{'}\}}}
    =
    \sgn
        {\order{\alpha^{'},t^{'}}}
        {t^{'},\alpha^{'}}
    \sgn
        {t^{'},\indexI,\indexJp}
        {\indexjp{l},\indexI,\order{(\indexJp\backslash\{\indexjp{l}\})\cup\{t^{'}\}}}
$.
\end{itemize}
We simplify (\ref{EQ-simplification for LHS, extra 1, part 1-lemma 8-no1}) to 
\begin{align*}
    \sgn
        {\order{\alpha^{'},t^{'}}}
        {t^{'},\alpha^{'}}
    \sgn
        {t^{'},\indexI,\indexJp}
        {\indexjp{l},\indexI,\order{(\indexJp\backslash\{\indexjp{l}\})\cup\{t^{'}\}}}
    ,
\end{align*}
which completes the proof of Lemma \ref{lemma-simplification for LHS, extra 1, part 1-lemma 8}.
\end{proof}


\subsection{Calculation of the coefficients for the expression of \texorpdfstring{$
\LieagbdPartialbar\circ\iota_{\basedualxi{\alpha}}
(\basexi{\indexI} \wedge \basedualxi{\indexJp})
$ such that $\alpha\in\indexJ$}{}
}
\label{apex-simplification for LHS, extra 1, part 2}
In this section, we calculate
\begin{itemize}
    \item 
$
    \sgn
        {\indexI,\indexJp}
        {\alpha^{'},\indexI,\indexJp\backslash\{\alpha^{'}\}}
    \sgn
        {\indexI}
        {\indexi{k},\indexI\backslash\{\indexi{k}\}}
    \sgn
        {\indexi{1},\cdots,\indexi{k},\alpha^{'},\indexi{k+1},\cdots,\indexi{p},\indexJp\backslash\{\alpha^{'}\}}
        {\indexI,\indexJp}
$,
    \item 
$
    \sgn
        {\indexI,\indexJp}
        {\alpha^{'},\indexI,\indexJp\backslash\{\alpha^{'}\}}
    \sgn
        {\indexI}
        {\indexi{k},\indexI\backslash\{\indexi{k}\}}
    \sgn
        {\indexi{1},\cdots,\indexi{k},t^{'},\indexi{k+1},\cdots,\indexi{p},\indexJp\backslash\{\alpha^{'}\}}
        {\indexI,\order{(\indexJp\backslash\{\alpha^{'}\})\cup\{t^{'}\}}}
$,
    \item 
$
    \sgn
        {\indexI,\indexJp}
        {\alpha^{'},\indexI,\indexJp\backslash\{\alpha^{'}\}}
    \sgn
        {\indexI}
        {\indexi{k},\indexI\backslash\{\indexi{k}\}}
    \sgn
        {\indexi{1},\cdots,\indexi{k-1},s,\alpha^{'},\indexi{k+1},\cdots,\indexi{p},\indexJp\backslash\{\alpha^{'}\}}
        {\order{(\indexI\backslash\{\indexi{k}\})\cup\{s\}},\indexJp}
$,
    \item 
$
    \sgn
        {\indexI,\indexJp}
        {\alpha^{'},\indexI,\indexJp\backslash\{\alpha^{'}\}}
    \sgn
        {\indexI}
        {\indexi{k},\indexI\backslash\{\indexi{k}\}}
    \sgn
        {\indexi{1},\cdots,\indexi{k-1},s,t^{'},\indexi{k+1},\cdots,\indexi{p},\indexJp\backslash\{\alpha^{'}\}}
        {\order{(\indexI\backslash\{\indexi{k}\})\cup\{s\}},\order{(\indexJp\backslash\alpha^{'})\cup{t^{'}}}}
$,
    \item 
$
    \sgn
        {\indexI,\indexJp}
        {\alpha^{'},\indexI,\indexJp\backslash\{\alpha^{'}\}}
    \sgn
        {\indexI,\indexJp\backslash\{\alpha^{'}\}}
        {\indexjp{l},\indexI,\indexJp\backslash\{\alpha^{'},\indexjp{l}\}}
    \sgn
        {\indexjp{1},\cdots,\indexjp{l-1},\order{\indexjp{l},\alpha^{'}},\indexjp{l+1},\cdots,\widehat{\alpha^{'}},\cdots,\indexjp{q}}
        {\indexJp}
$,
    \item 
$
    \sgn
        {\indexI,\indexJp}
        {\alpha^{'},\indexI,\indexJp\backslash\{\alpha^{'}\}}
    \sgn
        {\indexI,\indexJp\backslash\{\alpha^{'}\}}
        {\indexjp{l},\indexI,\indexJp\backslash\{\alpha^{'},\indexjp{l}\}}
    \sgn
        {\indexjp{1},\cdots,\indexjp{l-1},\order{\indexjp{l},t^{'}},\indexjp{l+1},\cdots,\widehat{\alpha^{'}},\cdots,\indexjp{q}}
        {\order{(\indexJp\backslash\{\alpha^{'}\})\cup\{t^{'}\}}}
$,
    \item 
$
    \sgn
        {\indexI,\indexJp}
        {\alpha^{'},\indexI,\indexJp\backslash\{\alpha^{'}\}}
    \sgn
        {\indexI,\indexJp\backslash\{\alpha^{'}\}}
        {\indexjp{l},\indexI,\indexJp\backslash\{\alpha^{'},\indexjp{l}\}}
    \sgn
        {\indexjp{1},\cdots,\indexjp{l-1},\order{\alpha^{'},t^{'}},\indexjp{l+1},\cdots,\widehat{\alpha^{'}},\cdots,\indexjp{q}}
        {\order{(\indexJp\backslash\{\indexjp{l}\})\cup\{t^{'}\}}}
$,
    \item 
$
    \sgn
        {\indexI,\indexJp}
        {\alpha^{'},\indexI,\indexJp\backslash\{\alpha^{'}\}}
    \sgn
        {\indexI,\indexJp\backslash\{\alpha^{'}\}}
        {\indexjp{l},\indexI,\indexJp\backslash\{\alpha^{'},\indexjp{l}\}}
    \sgn
        {\indexjp{1},\cdots,\indexjp{l-1},\order{t^{'},u^{'}},\indexjp{l+1},\cdots,\widehat{\alpha^{'}},\cdots,\indexjp{q}}
        {\order{(\indexJp\backslash\{\indexjp{l},\alpha^{'}\})\cup\{t^{'},u^{'}\}}}
$,
\end{itemize}
in Lemmas \ref{lemma-simplification for LHS, extra 1, part 2-lemma 1}–\ref{lemma-simplification for LHS, extra 1, part 2-lemma 8}, which are needed in (\ref{EQ-For (alpha, I, J), calculation for LHS, extra 1-unsimplified the first half, alpha not in J}) to simplify the expression of $
\LieagbdPartialbar\circ\iota_{\basedualxi{\alpha}}
(\basexi{\indexI} \wedge \basedualxi{\indexJp})
$ such that $\alpha\in\indexJ$.


\begin{lemma}
\begin{align*}
    \begin{pmatrix*}[l]
        \sgn
            {\indexI,\indexJp}
            {\alpha^{'},\indexI,\indexJp\backslash\{\alpha^{'}\}}
        \\
        \sgn
            {\indexI}
            {\indexi{k},\indexI\backslash\{\indexi{k}\}}
        \\
        \sgn
            {\indexi{1},\cdots,\indexi{k},\alpha^{'},\indexi{k+1},\cdots,\indexi{p},\indexJp\backslash\{\alpha^{'}\}}
            {\indexI,\indexJp}
    \end{pmatrix*}
    =
    (-1).
\end{align*}
\label{lemma-simplification for LHS, extra 1, part 2-lemma 1}
\end{lemma}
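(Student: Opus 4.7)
The plan is to compute each of the three signs of permutations explicitly as a power of $-1$ and then add the exponents. To do this cleanly, let me first fix notation: write $\indexJp = (\indexjp{1},\dots,\indexjp{q})$ in increasing order, and suppose $\alpha^{'}$ appears at position $l$ in $\indexJp$, so that $\indexjp{l}=\alpha^{'}$. Similarly $\indexi{k}$ sits at position $k$ in $\indexI$.

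The first sign, $\sgn{\indexI,\indexJp}{\alpha^{'},\indexI,\indexJp\backslash\{\alpha^{'}\}}$, describes moving the entry $\alpha^{'}$ from position $p+l$ of $(\indexI,\indexJp)$ to the very front. This is a cyclic shift across $p$ entries of $\indexI$ and the $l-1$ entries $\indexjp{1},\dots,\indexjp{l-1}$, contributing $(-1)^{p+l-1}$. The second sign $\sgn{\indexI}{\indexi{k},\indexI\backslash\{\indexi{k}\}}$ is just the standard cyclic sign $(-1)^{k-1}$. For the third sign, in the source array $(\indexi{1},\dots,\indexi{k},\alpha^{'},\indexi{k+1},\dots,\indexi{p},\indexJp\backslash\{\alpha^{'}\})$ the entry $\alpha^{'}$ sits at position $k+1$, and must be moved past the $p-k$ entries $\indexi{k+1},\dots,\indexi{p}$ and then past the $l-1$ entries of $\indexJp\backslash\{\alpha^{'}\}$ which precede position $l$, giving $(-1)^{p-k+l-1}$. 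All remaining entries of the source array occur in the same order as in $(\indexI,\indexJp)$, so no further transpositions are required.

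Adding the three exponents, I get
\begin{align*}
    (p+l-1)+(k-1)+(p-k+l-1) = 2p + 2l + k - k - 3 \equiv 1 \pmod 2,
\end{align*}
so the product equals $(-1)$, as claimed.

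The only real obstacle is bookkeeping: in particular, making sure the third sign is counted from the correct starting position of $\alpha^{'}$ in the source array (position $k+1$, not $k$), and that its target position inside $\indexJp$ is $p+l$ inside $(\indexI,\indexJp)$. Once that indexing is pinned down, the remaining calculation is a direct parity count, and no further identities on permutations beyond the elementary cyclic-shift formula are required.
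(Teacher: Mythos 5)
Your proof is correct, but it takes a somewhat different route from the paper's. The paper never computes the first sign explicitly: it writes the second sign as $(-1)^{k-1}$, factors the third sign as
\begin{align*}
    \sgn
        {\indexi{1},\cdots,\indexi{k},\alpha^{'},\indexi{k+1},\cdots,\indexi{p},\indexJp\backslash\{\alpha^{'}\}}
        {\indexI,\indexJp}
    =
    (-1)^{k}
    \sgn
        {\alpha^{'},\indexI,\indexJp\backslash\{\alpha^{'}\}}
        {\indexI,\indexJp},
\end{align*}
and then observes that the remaining factor is the inverse of the first sign, so the two cancel to $1$, leaving $(-1)^{k-1}(-1)^{k}=-1$. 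You instead evaluate all three signs as explicit powers of $-1$, which forces you to introduce the position $l$ of $\alpha^{'}$ inside $\indexJp$; your counts $(-1)^{p+l-1}$, $(-1)^{k-1}$, $(-1)^{p-k+l-1}$ are each right, and the total exponent $2p+2l-3$ is odd, so the conclusion follows. The paper's cancellation argument is slightly cleaner in that the answer visibly cannot depend on $l$ (or on where $\alpha^{'}$ sits), whereas in your version that independence only emerges because $2l$ drops out mod $2$ at the end; on the other hand your fully explicit parity count is self-contained and does not require spotting the inverse-permutation pairing. Either argument is acceptable.
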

\begin{proof}
We use the following properties of the sign of permutation:
\begin{itemize}
    \item 
$
    \sgn
        {\indexI}
        {\indexi{k},\indexI\backslash\{\indexi{k}\}}
    =
    (-1)^{k-1}
$,
    \item 
$
    \sgn
            {\indexi{1},\cdots,\indexi{k},\alpha^{'},\indexi{k+1},\cdots,\indexi{p},\indexJp\backslash\{\alpha^{'}\}}
            {\indexI,\indexJp}
    =
    (-1)^{k}
    \sgn
            {\alpha^{'},\indexI,\indexJp\backslash\{\alpha^{'}\}}
            {\indexI,\indexJp}
$,
\item 
$
    \sgn
        {\indexI,\indexJp}
        {\alpha^{'},\indexI,\indexJp\backslash\{\alpha^{'}\}}
    \sgn
        {\alpha^{'},\indexI,\indexJp\backslash\{\alpha^{'}\}}
        {\indexI,\indexJp}
    =
    1
$.
\end{itemize}
We simplify the left-hand side of the equation in Lemma \ref{lemma-simplification for LHS, extra 1, part 2-lemma 1} to
\begin{align*}
    (-1)^{k-1}(-1)^{k}
    =
    -1
    ,
\end{align*}
which completes the proof of Lemma \ref{lemma-simplification for LHS, extra 1, part 2-lemma 1}.
\end{proof}


\begin{lemma}
\begin{align*}
    \begin{pmatrix*}[l]
        \sgn
            {\indexI,\indexJp}
            {\alpha^{'},\indexI,\indexJp\backslash\{\alpha^{'}\}}
        \\
        \sgn
            {\indexI}
            {\indexi{k},\indexI\backslash\{\indexi{k}\}}
        \\
        \sgn
            {\indexi{1},\cdots,\indexi{k},t^{'},\indexi{k+1},\cdots,\indexi{p},\indexJp\backslash\{\alpha^{'}\}}
            {\indexI,\order{(\indexJp\backslash\{\alpha^{'}\})\cup\{t^{'}\}}}
    \end{pmatrix*}
    =
    \begin{pmatrix*}[l]
        -(-1)^{k}
        \\
        \sgn
            {\indexI}
            {\indexi{k},\indexI\backslash\{\indexi{k}\}}
        \\
        \sgn
            {t^{'},\indexI,\indexJp}
            {\alpha,\indexI,\order{(\indexJp\backslash\{\alpha^{'}\})\cup\{t^{'}\}}}
    \end{pmatrix*}.
\end{align*}
\label{lemma-simplification for LHS, extra 1, part 2-lemma 2}
\end{lemma}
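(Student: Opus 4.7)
The plan is to reduce the identity, by elementary manipulations of permutation signs drawn from Proposition~2, to a single application of transitivity of signs, following exactly the template used in Lemma~\ref{lemma-simplification for LHS, extra 1, part 1-lemma 1}.

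First I would extract the factor $(-1)^k$ from the third sign on the left-hand side. The top row $(\indexi{1},\cdots,\indexi{k},t^{'},\indexi{k+1},\cdots,\indexi{p},\indexJp\backslash\{\alpha^{'}\})$ becomes $(t^{'},\indexI,\indexJp\backslash\{\alpha^{'}\})$ after moving $t^{'}$ from position $k+1$ to position $1$ through $k$ adjacent transpositions, giving
$$\sgn{\indexi{1},\cdots,\indexi{k},t^{'},\indexi{k+1},\cdots,\indexi{p},\indexJp\backslash\{\alpha^{'}\}}{\indexI,\order{(\indexJp\backslash\{\alpha^{'}\})\cup\{t^{'}\}}} = (-1)^k\,\sgn{t^{'},\indexI,\indexJp\backslash\{\alpha^{'}\}}{\indexI,\order{(\indexJp\backslash\{\alpha^{'}\})\cup\{t^{'}\}}}.$$
The factor $\sgn{\indexI}{\indexi{k},\indexI\backslash\{\indexi{k}\}}$ occurs identically on both sides, so the task reduces to showing that the product of the first sign and the reduced third sign equals $-\sgn{t^{'},\indexI,\indexJp}{\alpha^{'},\indexI,\order{(\indexJp\backslash\{\alpha^{'}\})\cup\{t^{'}\}}}$.

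Next, using item~2 of Proposition~2, I would insert $t^{'}$ into both rows of the first sign at position one, rewriting $\sgn{\indexI,\indexJp}{\alpha^{'},\indexI,\indexJp\backslash\{\alpha^{'}\}} = \sgn{t^{'},\indexI,\indexJp}{t^{'},\alpha^{'},\indexI,\indexJp\backslash\{\alpha^{'}\}}$, and similarly insert $\alpha^{'}$ into both rows of the reduced third sign. After these insertions, the bottom row of the first sign reads $t^{'},\alpha^{'},\indexI,\indexJp\backslash\{\alpha^{'}\}$ while the top row of the reduced third reads $\alpha^{'},t^{'},\indexI,\indexJp\backslash\{\alpha^{'}\}$; they differ by the single adjacent transposition of $t^{'}$ and $\alpha^{'}$, contributing an additional factor of $-1$. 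The transitivity rule $\sgn{A}{B}\sgn{B}{C}=\sgn{A}{C}$, equivalent to item~3 of Proposition~2, then collapses the aligned product into $\sgn{t^{'},\indexI,\indexJp}{\alpha^{'},\indexI,\order{(\indexJp\backslash\{\alpha^{'}\})\cup\{t^{'}\}}}$, and multiplying by the accumulated constants yields $-(-1)^k$ times this sign, matching the right-hand side after restoring $\sgn{\indexI}{\indexi{k},\indexI\backslash\{\indexi{k}\}}$.

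The main obstacle is the bookkeeping of the extra minus sign. A parity error in the $t^{'}/\alpha^{'}$ transposition, or in the direction of the initial move that produces $(-1)^k$, would silently flip the final answer. I would verify this against the proof of Lemma~\ref{lemma-simplification for LHS, extra 1, part 1-lemma 1}: there, the corresponding mechanism produces $+(-1)^k$ because no internal swap is needed, whereas here the required realignment of $t^{'}$ past $\alpha^{'}$ is precisely what accounts for the additional minus sign appearing in the statement of the present lemma.
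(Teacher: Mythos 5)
Your proposal is correct and follows essentially the same route as the paper's proof: both extract the factor $(-1)^{k}$ by moving $t^{'}$ to the front through $k$ adjacent transpositions, both account for the extra $-1$ via the single transposition of $t^{'}$ past $\alpha^{'}$ needed to align the rows, and both then collapse the product by the insertion-and-merge rules of Proposition~2. The only difference is cosmetic bookkeeping of where the insertion is performed, so no further comment is needed.
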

\begin{proof}
    We use the following properties of the sign of permutation:
\begin{itemize}
    \item 
$
    \sgn
        {\indexi{1},\cdots,\indexi{k},t^{'},\indexi{k+1},\cdots,\indexi{p},\indexJp\backslash\{\alpha^{'}\}}
        {\indexI,\order{(\indexJp\backslash\{\alpha^{'}\})\cup\{t^{'}\}}}
    =
    (-1)^{k}
    \sgn
        {t^{'},\indexI,\indexJp\backslash\{\alpha^{'}\}}
        {\indexI,\order{(\indexJp\backslash\{\alpha^{'}\})\cup\{t^{'}\}}}
$,
    \item 
$
    \sgn
        {t^{'},\indexI,\indexJp\backslash\{\alpha^{'}\}}
        {\indexI,\order{(\indexJp\backslash\{\alpha^{'}\})\cup\{t^{'}\}}}
    =
    (-1)
    \sgn
        {t^{'},\alpha,\indexI,\indexJp\backslash\{\alpha^{'}\}}
        {\alpha,\indexI,\order{(\indexJp\backslash\{\alpha^{'}\})\cup\{t^{'}\}}}
$,
    \item 
$
    \sgn
            {\indexI,\indexJp}
            {\alpha^{'},\indexI,\indexJp\backslash\{\alpha^{'}\}}
    \sgn
        {t^{'},\alpha,\indexI,\indexJp\backslash\{\alpha^{'}\}}
        {\alpha,\indexI,\order{(\indexJp\backslash\{\alpha^{'}\})\cup\{t^{'}\}}}
    =
    \sgn
        {t^{'},\indexI,\indexJp}
        {\alpha,\indexI,\order{(\indexJp\backslash\{\alpha^{'}\})\cup\{t^{'}\}}}
$.
\end{itemize}
We simplify the left-hand side of the equation in Lemma \ref{lemma-simplification for LHS, extra 1, part 2-lemma 2} to
\begin{align}
    -(-1)^{k}
    \sgn
        {\indexI}
        {\indexi{k},\indexI\backslash\{\indexi{k}\}}
    \sgn
        {t^{'},\indexI,\indexJp}
        {\alpha,\indexI,\order{(\indexJp\backslash\{\alpha^{'}\})\cup\{t^{'}\}}}
    ,
\end{align}
which completes the proof of Lemma \ref{lemma-simplification for LHS, extra 1, part 2-lemma 2}.
\end{proof}

\begin{lemma}
\begin{align*}
    \begin{pmatrix*}[l]
        \sgn
            {\indexI,\indexJp}
            {\alpha^{'},\indexI,\indexJp\backslash\{\alpha^{'}\}}
        \\
        \sgn
            {\indexI}
            {\indexi{k},\indexI\backslash\{\indexi{k}\}}
        \\
        \sgn
            {\indexi{1},\cdots,\indexi{k-1},s,\alpha^{'},\indexi{k+1},\cdots,\indexi{p},\indexJp\backslash\{\alpha^{'}\}}
            {\order{(\indexI\backslash\{\indexi{k}\})\cup\{s\}},\indexJp}
    \end{pmatrix*}
    =
    \sgn
        {s,\indexI,\indexJp}
        {\indexi{k},\order{(\indexI\backslash\{\indexi{k}\})\cup\{s\}},\indexJp}
    .
\end{align*}
\label{lemma-simplification for LHS, extra 1, part 2-lemma 3}
\end{lemma}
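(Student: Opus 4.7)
The statement is a purely combinatorial identity about signs of permutations, and the proof will be a sequence of applications of the four properties listed in the proposition on signs immediately after the definition (row swapping, common-array insertion, row-cancellation merging, and the adjacent-pair switch rule). The pattern mirrors exactly the proofs of Lemmas \ref{lemma-simplification for LHS, extra 1, part 2-lemma 1} and \ref{lemma-simplification for LHS, extra 1, part 2-lemma 2}, so the plan is to reduce the three-factor left-hand side to a single factor by successively cancelling shared sub-arrays.

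The first step is to rewrite the third sign $\sgn{\indexi{1},\cdots,\indexi{k-1},s,\alpha^{'},\indexi{k+1},\cdots,\indexi{p},\indexJp\backslash\{\alpha^{'}\}}{\order{(\indexI\backslash\{\indexi{k}\})\cup\{s\}},\indexJp}$ by pulling the pair $(s,\alpha^{'})$ to the front of the top row. Since $s$ sits in the $k$-th slot, this costs a factor of $(-1)^{k-1}$, after which the row becomes $s,\alpha^{'},\indexI\backslash\{\indexi{k}\},\indexJp\backslash\{\alpha^{'}\}$. I will then split off the leading $s$ using the splitting version of the merging property (item 3 of the proposition), isolating a factor $\sgn{s,\indexIc\text{ or similar}}{\cdot}$ that already matches the target RHS, and leaving a residual sign whose top row is $\alpha^{'},\indexI\backslash\{\indexi{k}\},\indexJp\backslash\{\alpha^{'}\}$ — precisely the row appearing in the bottom of the first sign.

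The second step is to merge this residual sign with the first sign $\sgn{\indexI,\indexJp}{\alpha^{'},\indexI,\indexJp\backslash\{\alpha^{'}\}}$ by property 3, which eliminates the $\alpha^{'}$ entirely and leaves a sign whose top row is $\indexI,\indexJp$ and whose bottom row involves $\indexi{k}$ together with $\order{(\indexI\backslash\{\indexi{k}\})\cup\{s\}}$ and $\indexJp$. The third step is to absorb the second sign $\sgn{\indexI}{\indexi{k},\indexI\backslash\{\indexi{k}\}}$, which converts the $\indexi{k},\indexI\backslash\{\indexi{k}\}$ structure into $\indexI$ and produces the sign $\sgn{s,\indexI,\indexJp}{\indexi{k},\order{(\indexI\backslash\{\indexi{k}\})\cup\{s\}},\indexJp}$. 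Finally I will verify that the collected signs $(-1)^{k-1}$, the splitting factor, and the two factors of $(-1)$ arising from moving $s$ past $\alpha^{'}$ combine to $+1$.

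The only real obstacle is bookkeeping: each application of property 4 on an adjacent pair and each insertion of a fixed common sub-array introduces a parity factor, and these must be tracked carefully to ensure no rogue sign survives in the final answer. Because $s\in\indexIc$ and $\alpha^{'}\in\indexJp$, there is no degeneracy to worry about (no repeated indices), and the ordering function $\order{\cdot}$ is applied consistently on both sides of each equality, so no further sign contribution comes from reordering. I expect the final parity count to be exactly zero, matching the pattern seen in the preceding Lemma \ref{lemma-simplification for LHS, extra 1, part 2-lemma 2}, where the same overall structure yielded a factor $-(-1)^k$ because the analog of $s$ was $t^{'}$ landing in $\indexJpc$; here, with $s\in\indexIc$ the extra transposition is absent, which explains why the right-hand side has coefficient $+1$.
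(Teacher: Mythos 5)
Your overall strategy---reduce the three-factor product to a single sign by repeated use of the four listed properties of $\sgn{\cdot}{\cdot}$---is the right one and is the same style as the paper's proof, but two of your concrete steps are wrong, so the proposal as written does not close.

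First, the factor $(-1)^{k-1}$ you attach to pulling the pair $(s,\alpha^{'})$ to the front of the top row is spurious. Since $s$ and $\alpha^{'}$ are adjacent and move together as a block of length two past the $k-1$ entries $\indexi{1},\cdots,\indexi{k-1}$, property 4 gives $(-1)^{2(k-1)}=+1$; this is exactly the ``particular application'' stated in the proposition, and it is the reason this lemma, unlike Lemma \ref{lemma-simplification for LHS, extra 1, part 2-lemma 2} where $t^{'}$ moves alone, carries no power of $(-1)^{k}$. Your closing remark about the absent extra transposition is the correct intuition, but it contradicts your own step 1. With a phantom $(-1)^{k-1}$ in play, your final accounting can only close by a compensating error elsewhere, and you never actually carry out the count---you only assert that it vanishes.

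Second, the ``split off the leading $s$'' step is not an operation the listed properties support, and the merge you propose afterwards fails. The element $s$ occurs in both rows of the third sign (at the front of the top row but buried inside $\order{(\indexI\backslash\{\indexi{k}\})\cup\{s\}}$ in the bottom row), so it cannot simply be removed to leave a residual sign on a smaller index set; and even granting a residual with top row $\alpha^{'},\indexI\backslash\{\indexi{k}\},\indexJp\backslash\{\alpha^{'}\}$, that array has $p+q-1$ entries and is missing $\indexi{k}$, whereas the bottom row of the first sign, $\alpha^{'},\indexI,\indexJp\backslash\{\alpha^{'}\}$, has $p+q$ entries and contains the full $\indexI$. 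Property 3 cannot merge two signs over different underlying sets. The repair is the paper's order of operations: move the pair $(s,\alpha^{'})$ to the front (free), insert $\indexi{k}$ at the head of both rows by property 2 (free), slide the pair past $\indexi{k}$ by property 4 (free), merge with $\sgn{\indexI}{\indexi{k},\indexI\backslash\{\indexi{k}\}}$ to reconstitute $\indexI$ in the top row, and only then merge with the first sign to replace $\alpha^{'},\indexI,\indexJp\backslash\{\alpha^{'}\}$ by $\indexI,\indexJp$. Every step is manifestly sign-free, so no parity bookkeeping is needed at all.
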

\begin{proof}
We use the following properties of the sign of permutation:
\begin{itemize}
    \item 
$
    \quad
    \sgn
        {\indexi{1},\cdots,\indexi{k-1},s,\alpha^{'},\indexi{k+1},\cdots,\indexi{p},\indexJp\backslash\{\alpha^{'}\}}
        {\order{(\indexI\backslash\{\indexi{k}\})\cup\{s\}},\indexJp}
    =
    \sgn
        {s,\alpha^{'},\indexI\backslash\{\indexi{k}\},\indexJp\backslash\{\alpha^{'}\}}
        {\order{(\indexI\backslash\{\indexi{k}\})\cup\{s\}},\indexJp}
    \\
    =
    \sgn
        {\indexi{k},s,\alpha^{'},\indexI\backslash\{\indexi{k}\},\indexJp\backslash\{\alpha^{'}\}}
        {\indexi{k},\order{(\indexI\backslash\{\indexi{k}\})\cup\{s\}},\indexJp}
    =
    \sgn
        {s,\alpha^{'},\indexi{k},\indexI\backslash\{\indexi{k}\},\indexJp\backslash\{\alpha^{'}\}}
        {\indexi{k},\order{(\indexI\backslash\{\indexi{k}\})\cup\{s\}},\indexJp}
$,
    \item 
$
    \sgn
        {\indexI}
        {\indexi{k},\indexI\backslash\{\indexi{k}\}}
    \sgn
        {s,\alpha^{'},\indexi{k},\indexI\backslash\{\indexi{k}\},\indexJp\backslash\{\alpha^{'}\}}
        {\indexi{k},\order{(\indexI\backslash\{\indexi{k}\})\cup\{s\}},\indexJp}
    =
    \sgn
        {s,\alpha^{'},\indexI,\indexJp\backslash\{\alpha^{'}\}}
        {\indexi{k},\order{(\indexI\backslash\{\indexi{k}\})\cup\{s\}},\indexJp}
$,
    \item 
$
    \sgn
        {\indexI,\indexJp}
        {\alpha^{'},\indexI,\indexJp\backslash\{\alpha^{'}\}}
    \sgn
        {s,\alpha^{'},\indexI,\indexJp\backslash\{\alpha^{'}\}}
        {\indexi{k},\order{(\indexI\backslash\{\indexi{k}\})\cup\{s\}},\indexJp}
    =
    \sgn
        {s,\indexI,\indexJp}
        {\indexi{k},\order{(\indexI\backslash\{\indexi{k}\})\cup\{s\}},\indexJp}
$.
\end{itemize}
We simplify the left-hand side of the equation in Lemma \ref{lemma-simplification for LHS, extra 1, part 2-lemma 3} to
\begin{align*}
    \sgn
        {s,\indexI,\indexJp}
        {\indexi{k},\order{(\indexI\backslash\{\indexi{k}\})\cup\{s\}},\indexJp}
    ,
\end{align*}
which completes the proof of Lemma \ref{lemma-simplification for LHS, extra 1, part 2-lemma 3}.
\end{proof}


\begin{lemma}
\begin{align*}
    \begin{pmatrix*}[l]
        \sgn
            {\indexI,\indexJp}
            {\alpha^{'},\indexI,\indexJp\backslash\{\alpha^{'}\}}
        \\
        \sgn
            {\indexI}
            {\indexi{k},\indexI\backslash\{\indexi{k}\}}
        \\
        \sgn
            {\indexi{1},\cdots,\indexi{k-1},s,t^{'},\indexi{k+1},\cdots,\indexi{p},\indexJp\backslash\{\alpha^{'}\}}
            {\order{(\indexI\backslash\{\indexi{k}\})\cup\{s\}},\order{(\indexJp\backslash\alpha^{'})\cup{t^{'}}}}
    \end{pmatrix*}
    =
        -
        \sgn
        {s,t^{'},\indexI,\indexJp}
        {\indexi{k},\alpha^{'},\order{(\indexI\backslash\{\indexi{k}\})\cup\{s\}},\order{(\indexJp\backslash\alpha^{'})\cup{t^{'}}}}.
\end{align*}
\label{lemma-simplification for LHS, extra 1, part 2-lemma 4}
\end{lemma}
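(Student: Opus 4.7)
The plan is to mimic the structure of the preceding lemmas in this subsection, particularly Lemmas \ref{lemma-simplification for LHS, extra 1, part 2-lemma 2} and \ref{lemma-simplification for LHS, extra 1, part 2-lemma 3}. Since this is a purely combinatorial identity about signs of permutations, the proof proceeds by applying the elementary rules (properties 1--4 of the sign-of-permutation proposition) in sequence; the key numerical input is simply that a length-$2$ block passing through a length-$(k-1)$ or another length-$2$ block contributes a sign $(-1)^{2(k-1)}=(-1)^{2\cdot 2}=1$.

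The first step is to rewrite the third factor by relocating the pair $(s,t^{'})$ as a single block of length $2$ from the interior of the top row to its head. This costs $(-1)^{2(k-1)}=1$ by property 4, producing
\[
\sgn{s,t^{'},\indexI\backslash\{\indexi{k}\},\indexJp\backslash\{\alpha^{'}\}}{\order{(\indexI\backslash\{\indexi{k}\})\cup\{s\}},\order{(\indexJp\backslash\alpha^{'})\cup{t^{'}}}}.
\]
Next, I insert the block $(\indexi{k},\alpha^{'})$ simultaneously at the head of both rows (property 2, no sign change) and then swap $(s,t^{'})$ past $(\indexi{k},\alpha^{'})$ on the top row; since both blocks have length $2$, this contributes $(-1)^{2\cdot 2}=1$. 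The transformed third factor is
\[
\sgn{s,t^{'},\indexi{k},\alpha^{'},\indexI\backslash\{\indexi{k}\},\indexJp\backslash\{\alpha^{'}\}}{\indexi{k},\alpha^{'},\order{(\indexI\backslash\{\indexi{k}\})\cup\{s\}},\order{(\indexJp\backslash\alpha^{'})\cup{t^{'}}}}.
\]

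Next I would combine the first two factors. Inserting $\indexJp$ on both rows of $\sgn{\indexI}{\indexi{k},\indexI\backslash\{\indexi{k}\}}$ by property 2 yields $\sgn{\indexI,\indexJp}{\indexi{k},\indexI\backslash\{\indexi{k}\},\indexJp}$, and merging with $\sgn{\indexI,\indexJp}{\alpha^{'},\indexI,\indexJp\backslash\{\alpha^{'}\}}$ via property 3 produces $\sgn{\indexi{k},\indexI\backslash\{\indexi{k}\},\indexJp}{\alpha^{'},\indexI,\indexJp\backslash\{\alpha^{'}\}}$. A direct transposition count (or equivalently a single swap of the adjacent entries $\alpha^{'}$ and $\indexi{k}$ after bringing them to consecutive positions on either side) shows this equals $-\sgn{\indexI,\indexJp}{\indexi{k},\alpha^{'},\indexI\backslash\{\indexi{k}\},\indexJp\backslash\{\alpha^{'}\}}$. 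Inserting $s,t^{'}$ on both rows (property 2) and merging once more with the transformed third factor via property 3 collapses everything into
\[
-\sgn{s,t^{'},\indexI,\indexJp}{\indexi{k},\alpha^{'},\order{(\indexI\backslash\{\indexi{k}\})\cup\{s\}},\order{(\indexJp\backslash\alpha^{'})\cup{t^{'}}}},
\]
which is the right-hand side.

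The only obstacle is bookkeeping: the overall factor $-1$ must come from exactly one odd transposition among the many moves, and the challenge is to isolate it. The strategic observation is that this $-1$ arises entirely from the single swap of $\alpha^{'}$ past $\indexi{k}$ that occurs when combining the first two factors; every other move involves blocks of even length passing through blocks of even length, or simultaneous insertions on both rows, and so contributes trivially. Once this is identified the remaining manipulations are mechanical and exactly parallel those in Lemmas \ref{lemma-simplification for LHS, extra 1, part 2-lemma 2} and \ref{lemma-simplification for LHS, extra 1, part 2-lemma 3}.
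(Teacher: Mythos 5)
Your argument is correct and follows essentially the same route as the paper's own proof: both reduce the identity to the elementary properties of the sign of permutation by first moving the block $(s,t^{'})$ to the head of the third factor and then absorbing the remaining two factors through insertions and merges, with a single odd transposition of $\alpha^{'}$ past $\indexi{k}$ producing the overall $-1$. The only cosmetic difference is the order of the merges — the paper absorbs $\sgn{\indexI}{\indexi{k},\indexI\backslash\{\indexi{k}\}}$ and then $\sgn{\indexI,\indexJp}{\alpha^{'},\indexI,\indexJp\backslash\{\alpha^{'}\}}$ one at a time into the third factor and extracts the $-1$ from a final swap in the bottom row, whereas you combine the first two factors before touching the third and localize the $-1$ there; both bookkeepings are valid.
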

\begin{proof}
We use the following properties of the sign of permutation:
\begin{itemize}
    \item 
$
    \sgn
            {\indexi{1},\cdots,\indexi{k-1},s,t^{'},\indexi{k+1},\cdots,\indexi{p},\indexJp\backslash\{\alpha^{'}\}}
            {\order{(\indexI\backslash\{\indexi{k}\})\cup\{s\}},\order{(\indexJp\backslash\alpha^{'})\cup{t^{'}}}}
    =
    \sgn
            {s,t^{'},\indexI\backslash\{\indexi{k}\},\indexJp\backslash\{\alpha^{'}\}}
            {\order{(\indexI\backslash\{\indexi{k}\})\cup\{s\}},\order{(\indexJp\backslash\alpha^{'})\cup{t^{'}}}},
$
    \item 
$
    \sgn
            {s,t^{'},\indexI\backslash\{\indexi{k}\},\indexJp\backslash\{\alpha^{'}\}}
            {\order{(\indexI\backslash\{\indexi{k}\})\cup\{s\}},\order{(\indexJp\backslash\alpha^{'})\cup{t^{'}}}}
    =
    \sgn
            {s,t^{'},\indexi{k},\indexI\backslash\{\indexi{k}\},\indexJp\backslash\{\alpha^{'}\}}
            {\indexi{k},\order{(\indexI\backslash\{\indexi{k}\})\cup\{s\}},\order{(\indexJp\backslash\alpha^{'})\cup{t^{'}}}}
$,
    \item 
$
    \quad
    \sgn
            {\indexI}
            {\indexi{k},\indexI\backslash\{\indexi{k}\}}
    \sgn
            {s,t^{'},\indexi{k},\indexI\backslash\{\indexi{k}\},\indexJp\backslash\{\alpha^{'}\}}
            {\indexi{k},\order{(\indexI\backslash\{\indexi{k}\})\cup\{s\}},\order{(\indexJp\backslash\alpha^{'})\cup{t^{'}}}}
    \\
    =
    \sgn
            {s,t^{'},\indexI,\indexJp\backslash\{\alpha^{'}\}}
            {\indexi{k},\order{(\indexI\backslash\{\indexi{k}\})\cup\{s\}},\order{(\indexJp\backslash\alpha^{'})\cup{t^{'}}}}
$.
\end{itemize}
We simplify the left-hand side of the equation in Lemma \ref{lemma-simplification for LHS, extra 1, part 2-lemma 4} to
\begin{align}
    \sgn
        {\indexI,\indexJp}
        {\alpha^{'},\indexI,\indexJp\backslash\{\alpha^{'}\}}
    \sgn
        {s,t^{'},\indexI,\indexJp\backslash\{\alpha^{'}\}}
        {\indexi{k},\order{(\indexI\backslash\{\indexi{k}\})\cup\{s\}},\order{(\indexJp\backslash\alpha^{'})\cup{t^{'}}}}
    .
\label{EQ-simplification for LHS, extra 1, part 2-lemma 3-equation 1}
\end{align}

We use the following properties of the sign of permutation:
\begin{itemize}
    \item 
$
    \sgn
        {s,t^{'},\indexI,\indexJp\backslash\{\alpha^{'}\}}
        {\indexi{k},\order{(\indexI\backslash\{\indexi{k}\})\cup\{s\}},\order{(\indexJp\backslash\alpha^{'})\cup{t^{'}}}}
    =
    \sgn
        {s,t^{'},\alpha^{'},\indexI,\indexJp\backslash\{\alpha^{'}\}}
        {\alpha^{'},\indexi{k},\order{(\indexI\backslash\{\indexi{k}\})\cup\{s\}},\order{(\indexJp\backslash\alpha^{'})\cup{t^{'}}}}
$,
    \item 
$
    \quad
    \sgn
        {\indexI,\indexJp}
        {\alpha^{'},\indexI,\indexJp\backslash\{\alpha^{'}\}}
    \sgn
        {s,t^{'},\alpha^{'},\indexI,\indexJp\backslash\{\alpha^{'}\}}
        {\alpha^{'},\indexi{k},\order{(\indexI\backslash\{\indexi{k}\})\cup\{s\}},\order{(\indexJp\backslash\alpha^{'})\cup{t^{'}}}}
    \\
    =
    \sgn
        {s,t^{'},\indexI,\indexJp}
        {\alpha^{'},\indexi{k},\order{(\indexI\backslash\{\indexi{k}\})\cup\{s\}},\order{(\indexJp\backslash\alpha^{'})\cup{t^{'}}}}
$.
\end{itemize}
We simplify (\ref{EQ-simplification for LHS, extra 1, part 2-lemma 3-equation 1}) to
\begin{align*}
    &
    \sgn
        {s,t^{'},\indexI,\indexJp}
        {\alpha^{'},\indexi{k},\order{(\indexI\backslash\{\indexi{k}\})\cup\{s\}},\order{(\indexJp\backslash\alpha^{'})\cup{t^{'}}}}
    \\
    =&
    (-1)
    \sgn
        {s,t^{'},\indexI,\indexJp}
        {\indexi{k},\alpha^{'},\order{(\indexI\backslash\{\indexi{k}\})\cup\{s\}},\order{(\indexJp\backslash\alpha^{'})\cup{t^{'}}}}
    ,
\end{align*}
which completes the proof of Lemma \ref{lemma-simplification for LHS, extra 1, part 2-lemma 4}.

\end{proof}

\begin{lemma}
\begin{align*}
    \begin{pmatrix*}[l]
        \sgn
            {\indexI,\indexJp}
            {\alpha^{'},\indexI,\indexJp\backslash\{\alpha^{'}\}}
        \\
        \sgn
            {\indexI,\indexJp\backslash\{\alpha^{'}\}}
            {\indexjp{l},\indexI,\indexJp\backslash\{\alpha^{'},\indexjp{l}\}}
        \\
        \sgn
            {\indexjp{1},\cdots,\indexjp{l-1},\order{\indexjp{l},\alpha^{'}},\indexjp{l+1},\cdots,\widehat{\alpha^{'}},\cdots,\indexjp{q}}
            {\indexJp}
    \end{pmatrix*}
    =
    \sgn
        {\order{\indexjp{l},\alpha^{'}}}
        {\alpha^{'},\indexjp{l}}.
\end{align*}
\label{lemma-simplification for LHS, extra 1, part 2-lemma 5}
\end{lemma}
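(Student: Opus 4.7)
The plan is to follow exactly the template established in Lemmas \ref{lemma-simplification for LHS, extra 1, part 1-lemma 7} and \ref{lemma-simplification for LHS, extra 1, part 2-lemma 1}: peel off the desired factor $\sgn{\order{\indexjp{l},\alpha^{'}}}{\alpha^{'},\indexjp{l}}$ from the third sign, and then show that the remaining product of three permutation signs collapses to $+1$ by repeated application of the row-insertion, row-cancellation, and adjacency-swap rules for signs of permutations.

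First I would rewrite the third factor. Since $\order{\indexjp{l},\alpha^{'}}$ is an adjacent pair sitting at position $l$ inside the top row, the adjacency rule (move two adjacent indices freely) gives
\begin{align*}
    \sgn{\indexjp{1},\cdots,\indexjp{l-1},\order{\indexjp{l},\alpha^{'}},\indexjp{l+1},\cdots,\widehat{\alpha^{'}},\cdots,\indexjp{q}}{\indexJp}
    =
    \sgn{\order{\indexjp{l},\alpha^{'}},\indexJp\backslash\{\indexjp{l},\alpha^{'}\}}{\indexJp}.
\end{align*}
The definition of $\order{\cdot}$ then lets me peel off the scalar $\sgn{\order{\indexjp{l},\alpha^{'}}}{\alpha^{'},\indexjp{l}}$, leaving the factor $\sgn{\alpha^{'},\indexjp{l},\indexJp\backslash\{\indexjp{l},\alpha^{'}\}}{\indexJp}$.

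Next I would prepend the common block $\indexI$ to both rows of this leftover sign (which does not change its value) to obtain $\sgn{\indexI,\alpha^{'},\indexjp{l},\indexJp\backslash\{\indexjp{l},\alpha^{'}\}}{\indexI,\indexJp}$. Using the adjacency rule to move the adjacent pair $\alpha^{'},\indexjp{l}$ past $\indexI$ on the left, and moving $\indexjp{l}$ to the far left on the right, I can align this with the second given sign. Then I would merge with the second factor $\sgn{\indexI,\indexJp\backslash\{\alpha^{'}\}}{\indexjp{l},\indexI,\indexJp\backslash\{\alpha^{'},\indexjp{l}\}}$ via row cancellation; after inserting $\alpha^{'}$ into a common position in both rows and some bookkeeping, the product of these two signs reduces to $\sgn{\alpha^{'},\indexI,\indexJp\backslash\{\alpha^{'}\}}{\indexI,\indexJp}$. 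Finally, combining with the first factor $\sgn{\indexI,\indexJp}{\alpha^{'},\indexI,\indexJp\backslash\{\alpha^{'}\}}$ (which is the inverse of what remains) yields $+1$, giving the claimed identity.

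The main obstacle will be purely bookkeeping: keeping track of where $\alpha^{'}$ and $\indexjp{l}$ sit relative to $\indexjp{1},\ldots,\indexjp{q}$ throughout the manipulations, and making sure that when I insert $\indexI$ or the missing index into both rows of a sign, I insert it into the same position on each side. No sign contribution from swapping distant pairs should appear because each adjacency swap involves adjacent entries, and the final two signs are literally inverses of one another so they contribute $+1$ exactly. Consequently the proof reduces, as in Lemma \ref{lemma-simplification for LHS, extra 1, part 1-lemma 7}, to a short bulleted sequence of identities among signs of permutations, the net effect of which is to isolate the unique surviving factor $\sgn{\order{\indexjp{l},\alpha^{'}}}{\alpha^{'},\indexjp{l}}$.
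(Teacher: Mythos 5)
Your proposal is correct and follows essentially the same route as the paper's proof: both reduce the third factor to $\sgn{\order{\indexjp{l},\alpha^{'}},\indexJp\backslash\{\indexjp{l},\alpha^{'}\}}{\indexJp}$ by the adjacency rule, peel off $\sgn{\order{\indexjp{l},\alpha^{'}}}{\alpha^{'},\indexjp{l}}$, insert the common blocks $\indexI$ and $\alpha^{'}$, and cancel the remaining three signs pairwise to $+1$. The only difference is the order in which the three factors are merged, which is immaterial.
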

\begin{proof}
We use the following properties of the sign of permutation:
\begin{itemize}
    \item 
$
    \sgn
        {\indexI,\indexJp\backslash\{\alpha^{'}\}}
        {\indexjp{l},\indexI,\indexJp\backslash\{\alpha^{'},\indexjp{l}\}}
    =
    \sgn
        {\alpha^{'},\indexI,\indexJp\backslash\{\alpha^{'}\}}
        {\alpha^{'},\indexjp{l},\indexI,\indexJp\backslash\{\alpha^{'},\indexjp{l}\}}
$,
    \item 
$
    \sgn
        {\indexI,\indexJp}
        {\alpha^{'},\indexI,\indexJp\backslash\{\alpha^{'}\}}
    \sgn
        {\alpha^{'},\indexI,\indexJp\backslash\{\alpha^{'}\}}
        {\alpha^{'},\indexjp{l},\indexI,\indexJp\backslash\{\alpha^{'},\indexjp{l}\}}
    =
    \sgn
        {\indexI,\indexJp}
        {\alpha^{'},\indexjp{l},\indexI,\indexJp\backslash\{\alpha^{'},\indexjp{l}\}}
$,
    \item 
$
    \sgn
        {\indexjp{1},\cdots,\indexjp{l-1},\order{\indexjp{l},\alpha^{'}},\indexjp{l+1},\cdots,\widehat{\alpha^{'}},\cdots,\indexjp{q}}
        {\indexJp}
    =
    \sgn
        {\order{\indexjp{l},\alpha^{'}},\indexJp\backslash\{\indexjp{l},\alpha^{'}\}}
        {\indexJp}
$.
\end{itemize}
We simplify the left-hand side of the equation in Lemma \ref{lemma-simplification for LHS, extra 1, part 2-lemma 5} to
\begin{align}
    \sgn
        {\indexI,\indexJp}
        {\alpha^{'},\indexjp{l},\indexI,\indexJp\backslash\{\alpha^{'},\indexjp{l}\}}
    \sgn
        {\order{\indexjp{l},\alpha^{'}},\indexJp\backslash\{\indexjp{l},\alpha^{'}\}}
        {\indexJp}
    .
\label{EQ-simplification for LHS, extra 1, part 2-lemma 5-equation 1}
\end{align}

We use the following properties of the sign of permutation:
\begin{itemize}
    \item 
$
    \quad
    \sgn
        {\order{\indexjp{l},\alpha^{'}},\indexJp\backslash\{\indexjp{l},\alpha^{'}\}}
        {\indexJp}
    =
    \sgn
        {\indexI,\order{\indexjp{l},\alpha^{'}},\indexJp\backslash\{\indexjp{l},\alpha^{'}\}}
        {\indexI,\indexJp}
    =
    \sgn
        {\order{\indexjp{l},\alpha^{'}},\indexI,\indexJp\backslash\{\indexjp{l},\alpha^{'}\}}
        {\indexI,\indexJp}
    \\
    =
    \sgn
        {\order{\indexjp{l},\alpha^{'}}}
        {\alpha^{'},\indexjp{l}}
    \sgn
        {\alpha^{'},\indexjp{l},\indexI,\indexJp\backslash\{\indexjp{l},\alpha^{'}\}}
        {\indexI,\indexJp},
$
    \item 
$
    \sgn
        {\indexI,\indexJp}
        {\alpha^{'},\indexjp{l},\indexI,\indexJp\backslash\{\alpha^{'},\indexjp{l}\}}
    \sgn
        {\alpha^{'},\indexjp{l},\indexI,\indexJp\backslash\{\indexjp{l},\alpha^{'}\}}
        {\indexI,\indexJp}
    =
    1
$.
\end{itemize}
We simplify (\ref{EQ-simplification for LHS, extra 1, part 2-lemma 5-equation 1}) to
\begin{align*}
    \sgn
        {\order{\indexjp{l},\alpha^{'}}}
        {\alpha^{'},\indexjp{l}}
    ,
\end{align*}
which completes the proof of Lemma \ref{lemma-simplification for LHS, extra 1, part 2-lemma 5}.
\end{proof}


\begin{lemma}
\begin{align*}
    \begin{pmatrix*}[l]
        \sgn
            {\indexI,\indexJp}
            {\alpha^{'},\indexI,\indexJp\backslash\{\alpha^{'}\}}
        \\
        \sgn
            {\indexI,\indexJp\backslash\{\alpha^{'}\}}
            {\indexjp{l},\indexI,\indexJp\backslash\{\alpha^{'},\indexjp{l}\}}
        \\
        \sgn
            {\indexjp{1},\cdots,\indexjp{l-1},\order{\indexjp{l},t^{'}},\indexjp{l+1},\cdots,\widehat{\alpha^{'}},\cdots,\indexjp{q}}
            {\order{(\indexJp\backslash\{\alpha^{'}\})\cup\{t^{'}\}}}
    \end{pmatrix*}
    =
    (-1)
    \sgn
        {\order{\indexjp{l},t^{'}}}
        {t^{'},\indexjp{l}}
    \sgn
        {t^{'},\indexI,\indexJp}
        {\alpha^{'},\indexI,\order{(\indexJp\backslash\{\alpha^{'}\})\cup\{t^{'}\}}}
    .
\end{align*}
\label{lemma-simplification for LHS, extra 1, part 2-lemma 6}
\end{lemma}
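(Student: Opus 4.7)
The plan is to reduce this identity to a chain of elementary moves on signs of permutations, in the same style as Lemmas \ref{lemma-simplification for LHS, extra 1, part 2-lemma 3} and \ref{lemma-simplification for LHS, extra 1, part 2-lemma 5}. The overarching strategy is to extract the factor $\sgn{\order{\indexjp{l},t^{'}}}{t^{'},\indexjp{l}}$ from the third sign so that the ordered pair $\order{\indexjp{l},t^{'}}$ gets replaced by $t^{'},\indexjp{l}$, and then to merge the three resulting signs with the first two signs in the product by cancelling matching rows until only the target sign remains.

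First I would rewrite the third factor by moving the adjacent pair $\order{\indexjp{l},t^{'}}$ to the front of its row (no sign change, since adjacent indices can be translated together), obtaining
\[
\sgn{\indexjp{1},\cdots,\order{\indexjp{l},t^{'}},\cdots,\widehat{\alpha^{'}},\cdots,\indexjp{q}}{\order{(\indexJp\backslash\{\alpha^{'}\})\cup\{t^{'}\}}}
=
\sgn{\order{\indexjp{l},t^{'}},\indexJp\backslash\{\indexjp{l},\alpha^{'}\}}{\order{(\indexJp\backslash\{\alpha^{'}\})\cup\{t^{'}\}}}.
\]
Then I would split off $\sgn{\order{\indexjp{l},t^{'}}}{t^{'},\indexjp{l}}$ using the standard row-splitting identity, leaving a factor of $\sgn{t^{'},\indexjp{l},\indexJp\backslash\{\indexjp{l},\alpha^{'}\}}{\order{(\indexJp\backslash\{\alpha^{'}\})\cup\{t^{'}\}}}$. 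Next I would pad both rows of this remaining sign with $\indexI$ in front, turning it into $\sgn{t^{'},\indexjp{l},\indexI,\indexJp\backslash\{\indexjp{l},\alpha^{'}\}}{\indexI,\order{(\indexJp\backslash\{\alpha^{'}\})\cup\{t^{'}\}}}$; after a swap of $t^{'}$ past $\indexjp{l}$ (producing a factor of $-1$), I can attempt to absorb the second sign $\sgn{\indexI,\indexJp\backslash\{\alpha^{'}\}}{\indexjp{l},\indexI,\indexJp\backslash\{\alpha^{'},\indexjp{l}\}}$ into the chain.

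The next move is to combine the second factor with the reshaped third factor by matching the row $\indexjp{l},\indexI,\indexJp\backslash\{\alpha^{'},\indexjp{l}\}$, producing a cleaner sign whose top row contains $t^{'},\indexI,\indexJp\backslash\{\alpha^{'}\}$. Finally, I would absorb the first sign $\sgn{\indexI,\indexJp}{\alpha^{'},\indexI,\indexJp\backslash\{\alpha^{'}\}}$ by inserting $\alpha^{'}$ at the appropriate location in both rows of the merged sign, which transforms the top row into $t^{'},\indexI,\indexJp$ and the bottom into $\alpha^{'},\indexI,\order{(\indexJp\backslash\{\alpha^{'}\})\cup\{t^{'}\}}$. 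Carefully tracking the accumulated $\pm 1$ from the one transposition of $t^{'}$ with $\indexjp{l}$ should give exactly the overall sign $-1$ claimed in the statement.

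The main obstacle will be bookkeeping: several row-padding and row-merging steps can introduce or cancel signs depending on parities of $|\indexI|=p$ and $|\indexJp|=q$, and the bare pair swaps inside arrays like $\order{(\indexJp\backslash\{\alpha^{'}\})\cup\{t^{'}\}}$ must be handled without conflating the two different ordering conventions for indices containing primes. To avoid errors I would mimic the exact pattern of splits used in Lemma \ref{lemma-simplification for LHS, extra 1, part 2-lemma 5} (where the analogous computation is carried out with $\alpha^{'}$ in place of $t^{'}$) so that every intermediate sign has a clear antecedent in the preceding lemmas of Section \ref{apex-simplification for LHS, extra 1, part 2}. The only genuinely new feature here is the presence of $t^{'}\neq\alpha^{'}$, which is precisely what produces the extra $(-1)\,\sgn{\order{\indexjp{l},t^{'}}}{t^{'},\indexjp{l}}$ factor on the right-hand side.
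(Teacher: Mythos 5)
Your proposal is correct and follows essentially the same route as the paper's own proof: both arguments consist of the same elementary splitting, padding, and merging moves on signs of permutations, differing only in the order of combination (the paper first merges the first two factors via an $\alpha^{'}$-padding and extracts $\sgn{\order{\indexjp{l},t^{'}}}{\indexjp{l},t^{'}}$, converting it to $(-1)\sgn{\order{\indexjp{l},t^{'}}}{t^{'},\indexjp{l}}$ only at the very end, whereas you extract the $t^{'},\indexjp{l}$ factor immediately and pick up the $(-1)$ from an intermediate transposition). Your sign accounting does close up: the two additional transpositions hidden in the padding and merging of the second and first factors cancel each other, leaving exactly the single $(-1)$ you attribute to swapping $t^{'}$ past $\indexjp{l}$.
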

\begin{proof}
We use the following properties of the sign of permutation:
\begin{itemize}
    \item 
$
    \sgn
        {\indexI,\indexJp\backslash\{\alpha^{'}\}}
        {\indexjp{l},\indexI,\indexJp\backslash\{\alpha^{'},\indexjp{l}\}}
    =
    \sgn
        {\alpha^{'},\indexI,\indexJp\backslash\{\alpha^{'}\}}
        {\alpha^{'},\indexjp{l},\indexI,\indexJp\backslash\{\alpha^{'},\indexjp{l}\}}
$,
    \item 
$
    \sgn
        {\indexI,\indexJp}
        {\alpha^{'},\indexI,\indexJp\backslash\{\alpha^{'}\}}
    \sgn
        {\alpha^{'},\indexI,\indexJp\backslash\{\alpha^{'}\}}
        {\alpha^{'},\indexjp{l},\indexI,\indexJp\backslash\{\alpha^{'},\indexjp{l}\}}
    =
    \sgn
        {\indexI,\indexJp}
        {\alpha^{'},\indexjp{l},\indexI,\indexJp\backslash\{\alpha^{'},\indexjp{l}\}}
$,
    \item 
$
    \sgn
        {\indexjp{1},\cdots,\indexjp{l-1},\order{\indexjp{l},t^{'}},\indexjp{l+1},\cdots,\widehat{\alpha^{'}},\cdots,\indexjp{q}}
        {\order{(\indexJp\backslash\{\alpha^{'}\})\cup\{t^{'}\}}}
    =
    \sgn
        {\order{\indexjp{l},t^{'}},\indexJp\backslash\{\alpha^{'},\indexjp{l}\}}
        {\order{(\indexJp\backslash\{\alpha^{'}\})\cup\{t^{'}\}}}
$,
    \item 
$
    \sgn
        {\order{\indexjp{l},t^{'}},\indexJp\backslash\{\alpha^{'},\indexjp{l}\}}
        {\order{(\indexJp\backslash\{\alpha^{'}\})\cup\{t^{'}\}}}
    =
    \sgn
        {\indexI,\order{\indexjp{l},t^{'}},\indexJp\backslash\{\alpha^{'},\indexjp{l}\}}
        {\indexI,\order{(\indexJp\backslash\{\alpha^{'}\})\cup\{t^{'}\}}}
    =
    \sgn
        {\order{\indexjp{l},t^{'}},\indexI,\indexJp\backslash\{\alpha^{'},\indexjp{l}\}}
        {\indexI,\order{(\indexJp\backslash\{\alpha^{'}\})\cup\{t^{'}\}}}
$.
\end{itemize}
We simplify the left-hand side of the equation in Lemma \ref{lemma-simplification for LHS, extra 1, part 2-lemma 6} to
\begin{align}
    \sgn
        {\indexI,\indexJp}
        {\alpha^{'},\indexjp{l},\indexI,\indexJp\backslash\{\alpha^{'},\indexjp{l}\}}
    \sgn
        {\order{\indexjp{l},t^{'}},\indexI,\indexJp\backslash\{\alpha^{'},\indexjp{l}\}}
        {\indexI,\order{(\indexJp\backslash\{\alpha^{'}\})\cup\{t^{'}\}}}
    .
\label{EQ-simplification for LHS, extra 1, part 2-lemma 6-equation 1}
\end{align}

We use the following properties of the sign of permutation:
\begin{itemize}
    \item 
$
    \sgn
        {\order{\indexjp{l},t^{'}},\indexI,\indexJp\backslash\{\alpha^{'},\indexjp{l}\}}
        {\indexI,\order{(\indexJp\backslash\{\alpha^{'}\})\cup\{t^{'}\}}}
    =
    \sgn
        {\indexjp{l},t^{'},\indexI,\indexJp\backslash\{\alpha^{'},\indexjp{l}\}}
        {\indexI,\order{(\indexJp\backslash\{\alpha^{'}\})\cup\{t^{'}\}}}
    \sgn
        {\order{\indexjp{l},t^{'}}}
        {\indexjp{l},t^{'}}
$,
    \item 
$
    \sgn
        {\indexjp{l},t^{'},\indexI,\indexJp\backslash\{\alpha^{'},\indexjp{l}\}}
        {\indexI,\order{(\indexJp\backslash\{\alpha^{'}\})\cup\{t^{'}\}}}
    =
    \sgn
        {\alpha^{'},\indexjp{l},t^{'},\indexI,\indexJp\backslash\{\alpha^{'},\indexjp{l}\}}
        {\alpha^{'},\indexI,\order{(\indexJp\backslash\{\alpha^{'}\})\cup\{t^{'}\}}}
    \\
    =
    \sgn
        {t^{'},\alpha^{'},\indexjp{l},\indexI,\indexJp\backslash\{\alpha^{'},\indexjp{l}\}}
        {\alpha^{'},\indexI,\order{(\indexJp\backslash\{\alpha^{'}\})\cup\{t^{'}\}}}
$,
    \item 
$
    \sgn
        {\indexI,\indexJp}
        {\alpha^{'},\indexjp{l},\indexI,\indexJp\backslash\{\alpha^{'},\indexjp{l}\}}
    \sgn
        {t^{'},\alpha^{'},\indexjp{l},\indexI,\indexJp\backslash\{\alpha^{'},\indexjp{l}\}}
        {\alpha^{'},\indexI,\order{(\indexJp\backslash\{\alpha^{'}\})\cup\{t^{'}\}}}
    =
    \sgn
        {t^{'},\indexI,\indexJp}
        {\alpha^{'},\indexI,\order{(\indexJp\backslash\{\alpha^{'}\})\cup\{t^{'}\}}}
$.
\end{itemize}
We simplify (\ref{EQ-simplification for LHS, extra 1, part 2-lemma 6-equation 1}) to 
\begin{align}
    &
    \sgn
        {\order{\indexjp{l},t^{'}}}
        {\indexjp{l},t^{'}}
    \sgn
        {t^{'},\indexI,\indexJp}
        {\alpha^{'},\indexI,\order{(\indexJp\backslash\{\alpha^{'}\})\cup\{t^{'}\}}}
    \\
    =&
    (-1)
    \sgn
        {\order{\indexjp{l},t^{'}}}
        {t^{'},\indexjp{l}}
    \sgn
        {t^{'},\indexI,\indexJp}
        {\alpha^{'},\indexI,\order{(\indexJp\backslash\{\alpha^{'}\})\cup\{t^{'}\}}}
    ,
\end{align}
which completes the proof of Lemma \ref{lemma-simplification for LHS, extra 1, part 2-lemma 6}.
\end{proof}


\begin{lemma}
\begin{align*}
    \begin{pmatrix*}[l]
        \sgn
            {\indexI,\indexJp}
            {\alpha^{'},\indexI,\indexJp\backslash\{\alpha^{'}\}}
        \\
        \sgn
            {\indexI,\indexJp\backslash\{\alpha^{'}\}}
            {\indexjp{l},\indexI,\indexJp\backslash\{\alpha^{'},\indexjp{l}\}}
        \\
        \sgn
            {\indexjp{1},\cdots,\indexjp{l-1},\order{\alpha^{'},t^{'}},\indexjp{l+1},\cdots,\widehat{\alpha^{'}},\cdots,\indexjp{q}}
            {\order{(\indexJp\backslash\{\indexjp{l}\})\cup\{t^{'}\}}}
    \end{pmatrix*}
    =
    \sgn
        {\order{\alpha^{'},t^{'}}}
        {t^{'},\alpha^{'}}
    \sgn
        {t^{'},\indexI,\indexJp}
        {\indexjp{l},\order{(\indexJp\backslash\{\indexjp{l}\})\cup\{t^{'}\}}}
    .
\end{align*}
\label{lemma-simplification for LHS, extra 1, part 2-lemma 7}
\end{lemma}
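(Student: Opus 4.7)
The plan is to prove this sign identity by mimicking verbatim the calculation used in Lemma \ref{lemma-simplification for LHS, extra 1, part 2-lemma 6}, with only superficial modifications to reflect that here the ordered pair is $\order{\alpha',t'}$ (with $\alpha'$ in the first slot) rather than $\order{\indexjp{l},t'}$. The whole argument is a mechanical manipulation of three sign-of-permutation factors using the four basic rules stated in the proposition on signs of permutations: row-swap invariance, insertion of a common block into both rows, merger by cancellation of coinciding rows, and the $(-1)^{|\indexI||\indexJ|}$ factor for swapping adjacent blocks inside a single row.

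First, I combine the first two signs. Since $\alpha'$ does not occur in either row of the second factor, I insert $\alpha'$ at the head of each row to rewrite it as $\sgn{\alpha',\indexI,\indexJp\backslash\{\alpha'\}}{\alpha',\indexjp{l},\indexI,\indexJp\backslash\{\alpha',\indexjp{l}\}}$, which then merges with the first factor by the cancellation rule to give $\sgn{\indexI,\indexJp}{\alpha',\indexjp{l},\indexI,\indexJp\backslash\{\alpha',\indexjp{l}\}}$. Next, I process the third factor: using the definition of $\order{\cdot}$, it becomes $\sgn{\order{\alpha',t'},\indexJp\backslash\{\indexjp{l},\alpha'\}}{\order{(\indexJp\backslash\{\indexjp{l}\})\cup\{t'\}}}$. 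Inserting $\indexI$ into both rows and moving it past the two-element block $\order{\alpha',t'}$ in the top row (an even-parity swap that costs no sign), I rewrite this as $\sgn{\order{\alpha',t'},\indexI,\indexJp\backslash\{\indexjp{l},\alpha'\}}{\indexI,\order{(\indexJp\backslash\{\indexjp{l}\})\cup\{t'\}}}$.

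Then I factor out $\sgn{\order{\alpha',t'}}{t',\alpha'}$, leaving the residual factor $\sgn{t',\alpha',\indexI,\indexJp\backslash\{\indexjp{l},\alpha'\}}{\indexI,\order{(\indexJp\backslash\{\indexjp{l}\})\cup\{t'\}}}$. I insert $\indexjp{l}$ at the head of both rows, then swap the adjacent pair $t',\alpha'$ with $\indexjp{l}$ in the top row (producing the necessary sign flips whose parities cancel against those already present in the merged first-two-signs factor), and apply the merger rule once more. The combined product collapses to $\sgn{t',\indexI,\indexJp}{\indexjp{l},\indexI,\order{(\indexJp\backslash\{\indexjp{l}\})\cup\{t'\}}}$, which together with the extracted factor $\sgn{\order{\alpha',t'}}{t',\alpha'}$ is the right-hand side.

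The only real obstacle is book-keeping: the rearrangements produce several $(-1)^{k}$ factors from block-swaps of odd length, and one must verify that these parities cancel cleanly. This is exactly the phenomenon handled at the end of the proof of Lemma \ref{lemma-simplification for LHS, extra 1, part 2-lemma 6}, where the explicit $(-1)$ surviving from the $\order{\indexjp{l},t'}$ vs $t',\indexjp{l}$ swap appeared; here the analogous parity trace works out without any residual sign, because $\order{\alpha',t'}$ is related to $t',\alpha'$ by a single transposition whose sign is already recorded in the factor $\sgn{\order{\alpha',t'}}{t',\alpha'}$ on the right-hand side. Once the parity bookkeeping is carried out step by step as above, the identity of Lemma \ref{lemma-simplification for LHS, extra 1, part 2-lemma 7} follows.
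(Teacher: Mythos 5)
Your proposal follows essentially the same route as the paper's own proof: merge the first two signs by inserting $\alpha^{'}$ and cancelling the common row, rewrite the third sign with the block $\order{\alpha^{'},t^{'}}$ moved to the front, extract the factor $\sgn{\order{\alpha^{'},t^{'}}}{t^{'},\alpha^{'}}$, insert $\indexjp{l}$ (and $\indexI$), and merge; the parity bookkeeping works out exactly as you describe. The only remark is that your final form correctly carries $\indexI$ in the bottom row of the last sign, whereas the lemma as printed drops it (a typo, since the two rows must have equal length), so your version is in fact the one used elsewhere in the paper.
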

\begin{proof}
We use the following properties of the sign of permutation:
\begin{itemize}
    \item 
$
    \sgn
        {\indexI,\indexJp\backslash\{\alpha^{'}\}}
        {\indexjp{l},\indexI,\indexJp\backslash\{\alpha^{'},\indexjp{l}\}}
    =
    \sgn
        {\alpha^{'},\indexI,\indexJp\backslash\{\alpha^{'}\}}
        {\alpha^{'},\indexjp{l},\indexI,\indexJp\backslash\{\alpha^{'},\indexjp{l}\}}
$,
    \item 
$
    \sgn
        {\indexI,\indexJp}
        {\alpha^{'},\indexI,\indexJp\backslash\{\alpha^{'}\}}
    \sgn
        {\alpha^{'},\indexI,\indexJp\backslash\{\alpha^{'}\}}
        {\alpha^{'},\indexjp{l},\indexI,\indexJp\backslash\{\alpha^{'},\indexjp{l}\}}
    =
    \sgn
        {\indexI,\indexJp}
        {\alpha^{'},\indexjp{l},\indexI,\indexJp\backslash\{\alpha^{'},\indexjp{l}\}}
$,
    \item 
$
    \sgn
        {\indexjp{1},\cdots,\indexjp{l-1},\order{\alpha^{'},t^{'}},\indexjp{l+1},\cdots,\widehat{\alpha^{'}},\cdots,\indexjp{q}}
        {\order{(\indexJp\backslash\{\indexjp{l}\})\cup\{t^{'}\}}}
    =
    \sgn
        {\order{\alpha^{'},t^{'}},\indexJp\backslash \{\indexjp{l},\alpha^{'} \}}
        {\order{(\indexJp\backslash\{\indexjp{l}\})\cup\{t^{'}\}}}
$.
\end{itemize}
We simplify the left-hand side of the equation in Lemma \ref{lemma-simplification for LHS, extra 1, part 2-lemma 7} to
\begin{align}
    \sgn
        {\indexI,\indexJp}
        {\alpha^{'},\indexjp{l},\indexI,\indexJp\backslash\{\alpha^{'},\indexjp{l}\}}
    \sgn
        {\order{\alpha^{'},t^{'}},\indexJp\backslash \{\indexjp{l},\alpha^{'} \}}
        {\order{(\indexJp\backslash\{\indexjp{l}\})\cup\{t^{'}\}}}
    .
\label{EQ-simplification for LHS, extra 1, part 2-lemma 7-equation 1}
\end{align}

We use the following properties of the sign of permutation:
\begin{itemize}
    \item 
$
    \sgn
        {\order{\alpha^{'},t^{'}},\indexJp\backslash \{\indexjp{l},\alpha^{'} \}}
        {\order{(\indexJp\backslash\{\indexjp{l}\})\cup\{t^{'}\}}}
    =
    \sgn
        {\order{\alpha^{'},t^{'}}}
        {t^{'},\alpha^{'}}
    \sgn
        {t^{'},\alpha^{'},\indexJp\backslash \{\indexjp{l},\alpha^{'} \}}
        {\order{(\indexJp\backslash\{\indexjp{l}\})\cup\{t^{'}\}}}
$,
    \item 
$
    \sgn
        {t^{'},\alpha^{'},\indexJp\backslash \{\indexjp{l},\alpha^{'} \}}
        {\order{(\indexJp\backslash\{\indexjp{l}\})\cup\{t^{'}\}}}
    =
    \sgn
        {\indexjp{l},t^{'},\alpha^{'},\indexJp\backslash \{\indexjp{l},\alpha^{'} \}}
        {\indexjp{l},\order{(\indexJp\backslash\{\indexjp{l}\})\cup\{t^{'}\}}}
    =
    \sgn
        {t^{'},\alpha^{'},\indexjp{l},\indexJp\backslash \{\indexjp{l},\alpha^{'} \}}
        {\indexjp{l},\order{(\indexJp\backslash\{\indexjp{l}\})\cup\{t^{'}\}}}
$,
    \item 
$
    \sgn
        {\indexI,\indexJp}
        {\alpha^{'},\indexjp{l},\indexI,\indexJp\backslash\{\alpha^{'},\indexjp{l}\}}
    \sgn
        {t^{'},\alpha^{'},\indexjp{l},\indexJp\backslash \{\indexjp{l},\alpha^{'} \}}
        {\indexjp{l},\order{(\indexJp\backslash\{\indexjp{l}\})\cup\{t^{'}\}}}
    =
    \sgn
        {t^{'},\indexI,\indexJp}
        {\indexjp{l},\order{(\indexJp\backslash\{\indexjp{l}\})\cup\{t^{'}\}}}
$.
\end{itemize}
We simplify (\ref{EQ-simplification for LHS, extra 1, part 2-lemma 7-equation 1}) to
\begin{align*}
    \sgn
        {\order{\alpha^{'},t^{'}}}
        {t^{'},\alpha^{'}}
    \sgn
        {t^{'},\indexI,\indexJp}
        {\indexjp{l},\order{(\indexJp\backslash\{\indexjp{l}\})\cup\{t^{'}\}}}
    ,
\end{align*}
which completes the proof of Lemma \ref{lemma-simplification for LHS, extra 1, part 2-lemma 7}.
\end{proof}


\begin{lemma}
\begin{align*}
    \begin{pmatrix*}[l]
        \sgn
            {\indexI,\indexJp}
            {\alpha^{'},\indexI,\indexJp\backslash\{\alpha^{'}\}}
        \\
        \sgn
            {\indexI,\indexJp\backslash\{\alpha^{'}\}}
            {\indexjp{l},\indexI,\indexJp\backslash\{\alpha^{'},\indexjp{l}\}}
        \\
        \sgn
            {\indexjp{1},\cdots,\indexjp{l-1},\order{t^{'},u^{'}},\indexjp{l+1},\cdots,\widehat{\alpha^{'}},\cdots,\indexjp{q}}
            {\order{(\indexJp\backslash\{\indexjp{l},\alpha^{'}\})\cup\{t^{'},u^{'}\}}}
    \end{pmatrix*}
    =
    -
    \sgn
        {\order{t^{'},u^{'}},\indexI,\indexJp}
        {\indexjp{l},\alpha^{'},\indexI,\order{(\indexJp\backslash\{\indexjp{l},\alpha^{'}\})\cup\{t^{'},u^{'}\}}}.
\end{align*}
\label{lemma-simplification for LHS, extra 1, part 2-lemma 8}
\end{lemma}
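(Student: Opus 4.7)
The plan is to follow the exact template of Lemmas~\ref{lemma-simplification for LHS, extra 1, part 2-lemma 1} through~\ref{lemma-simplification for LHS, extra 1, part 2-lemma 7} in this subsection: repeatedly apply the permutation identities of Proposition~2 (insertion of a common block into both rows, chain composition along a shared row, the $(-1)^{|A||B|}$ rule for swapping adjacent blocks, and the conversion between $\order$-notation and explicit sign factors) to collapse the product of three signs on the left-hand side into a single sign, then compare with the right-hand side.

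First, I would merge the first two signs. Prepending $\alpha'$ to both rows of the second factor (a valid insertion since $\alpha'$ does not appear in either original row) rewrites it as $\sgn{\alpha',\indexI,\indexJp\backslash\{\alpha'\}}{\alpha',\indexjp{l},\indexI,\indexJp\backslash\{\alpha',\indexjp{l}\}}$, whose top row matches the bottom row of the first factor; the chain rule then collapses their product to $\sgn{\indexI,\indexJp}{\alpha',\indexjp{l},\indexI,\indexJp\backslash\{\alpha',\indexjp{l}\}}$.

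Next, I would simplify the third sign. Since $\order{t',u'}$ occupies position $l$ of the top row and the remaining entries form the ordered complement, it reduces to $\sgn{\order{t',u'},\indexJp\backslash\{\indexjp{l},\alpha'\}}{\order{(\indexJp\backslash\{\indexjp{l},\alpha'\})\cup\{t',u'\}}}$. I would then prepend the block $\indexjp{l},\alpha',\indexI$ to both rows, transport the length-two block $\indexjp{l},\alpha'$ past $\order{t',u'}$ in the top row (a trivial $(-1)^{2\cdot 2}=1$ swap), and arrive at a sign whose top row reads $\order{t',u'},\indexjp{l},\alpha',\indexI,\indexJp\backslash\{\indexjp{l},\alpha'\}$.

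To conclude, I would prepend $\order{t',u'}$ to both rows of the merged first sign so that its bottom row differs from the top row of the rewritten third sign only by transposing the adjacent entries $\alpha'$ and $\indexjp{l}$; this single transposition contributes a factor of $-1$, after which chain composition produces $\sgn{\order{t',u'},\indexI,\indexJp}{\indexjp{l},\alpha',\indexI,\order{(\indexJp\backslash\{\indexjp{l},\alpha'\})\cup\{t',u'\}}}$, matching the right-hand side. The main obstacle, aside from the sheer length of the bookkeeping, is verifying that the many block-move signs of the form $(-1)^{|A||B|}$ produced along the way all cancel, leaving only the transposition of $\alpha'$ with $\indexjp{l}$ to account for the $-1$.
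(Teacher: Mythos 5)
Your proposal is correct and follows essentially the same route as the paper's proof: merge the first two signs by inserting $\alpha'$ and chaining, rewrite the third sign by inserting the blocks $\indexI$ and $\indexjp{l},\alpha'$ (all block moves past the even-length block $\order{t^{'},u^{'}}$ being sign-free), and extract the overall $-1$ from the single transposition of $\alpha'$ with $\indexjp{l}$. The only cosmetic difference is that the paper performs that transposition in the bottom row of the final merged sign rather than when aligning the two rows before the last chain step.
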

\begin{proof}
We use the following properties of the sign of permutation:
\begin{itemize}
    \item 
$
    \sgn
        {\indexI,\indexJp\backslash\{\alpha^{'}\}}
        {\indexjp{l},\indexI,\indexJp\backslash\{\alpha^{'},\indexjp{l}\}}
    =
    \sgn
        {\alpha^{'},\indexI,\indexJp\backslash\{\alpha^{'}\}}
        {\alpha^{'},\indexjp{l},\indexI,\indexJp\backslash\{\alpha^{'},\indexjp{l}\}}
$,
    \item 
$
    \sgn
        {\indexI,\indexJp}
        {\alpha^{'},\indexI,\indexJp\backslash\{\alpha^{'}\}}
    \sgn
        {\alpha^{'},\indexI,\indexJp\backslash\{\alpha^{'}\}}
        {\alpha^{'},\indexjp{l},\indexI,\indexJp\backslash\{\alpha^{'},\indexjp{l}\}}
    =
    \sgn
        {\indexI,\indexJp}
        {\alpha^{'},\indexjp{l},\indexI,\indexJp\backslash\{\alpha^{'},\indexjp{l}\}}
$,
    \item 
$
    \sgn
        {\indexjp{1},\cdots,\indexjp{l-1},\order{t^{'},u^{'}},\indexjp{l+1},\cdots,\widehat{\alpha^{'}},\cdots,\indexjp{q}}
        {\order{(\indexJp\backslash\{\indexjp{l},\alpha^{'}\})\cup\{t^{'},u^{'}\}}}
    =
    \sgn
        {\order{t^{'},u^{'}},\indexJp\backslash\{\indexjp{l},\alpha^{'}\}}
        {\order{(\indexJp\backslash\{\indexjp{l},\alpha^{'}\})\cup\{t^{'},u^{'}\}}}
    \\
    =
    \sgn
        {\indexI,\order{t^{'},u^{'}},\indexJp\backslash\{\indexjp{l},\alpha^{'}\}}
        {\indexI,\order{(\indexJp\backslash\{\indexjp{l},\alpha^{'}\})\cup\{t^{'},u^{'}\}}}
    =
    \sgn
        {\order{t^{'},u^{'}},\indexI,\indexJp\backslash\{\indexjp{l},\alpha^{'}\}}
        {\indexI,\order{(\indexJp\backslash\{\indexjp{l},\alpha^{'}\})\cup\{t^{'},u^{'}\}}}
$.
\end{itemize}
We simplify the left-hand side of the equation in Lemma \ref{lemma-simplification for LHS, extra 1, part 2-lemma 8} to
\begin{align}
    \sgn
        {\indexI,\indexJp}
        {\alpha^{'},\indexjp{l},\indexI,\indexJp\backslash\{\alpha^{'},\indexjp{l}\}}
    \sgn
        {\order{t^{'},u^{'}},\indexI,\indexJp\backslash\{\indexjp{l},\alpha^{'}\}}
        {\indexI,\order{(\indexJp\backslash\{\indexjp{l},\alpha^{'}\})\cup\{t^{'},u^{'}\}}}
    .
\label{EQ-simplification for LHS, extra 1, part 2-lemma 8-equation 1}
\end{align}

We use the following properties of the sign of permutation:
\begin{itemize}
    \item
$
    \sgn
        {\order{t^{'},u^{'}},\indexI,\indexJp\backslash\{\indexjp{l},\alpha^{'}\}}
        {\indexI,\order{(\indexJp\backslash\{\indexjp{l},\alpha^{'}\})\cup\{t^{'},u^{'}\}}}
    =
    \sgn
        {\alpha^{'},\indexjp{l},\order{t^{'},u^{'}},\indexI,\indexJp\backslash\{\indexjp{l},\alpha^{'}\}}
        {\alpha^{'},\indexjp{l},\indexI,\order{(\indexJp\backslash\{\indexjp{l},\alpha^{'}\})\cup\{t^{'},u^{'}\}}}
    \\
    =
    \sgn
        {\order{t^{'},u^{'}},\alpha^{'},\indexjp{l},\indexI,\indexJp\backslash\{\indexjp{l},\alpha^{'}\}}
        {\alpha^{'},\indexjp{l},\indexI,\order{(\indexJp\backslash\{\indexjp{l},\alpha^{'}\})\cup\{t^{'},u^{'}\}}}
$,
    \item 
$
    \sgn
        {\indexI,\indexJp}
        {\alpha^{'},\indexjp{l},\indexI,\indexJp\backslash\{\alpha^{'},\indexjp{l}\}}
    \sgn
        {\order{t^{'},u^{'}},\alpha^{'},\indexjp{l},\indexI,\indexJp\backslash\{\indexjp{l},\alpha^{'}\}}
        {\alpha^{'},\indexjp{l},\indexI,\order{(\indexJp\backslash\{\indexjp{l},\alpha^{'}\})\cup\{t^{'},u^{'}\}}}
    \\
    =
    \sgn
        {\order{t^{'},u^{'}},\indexI,\indexJp}
        {\alpha^{'},\indexjp{l},\indexI,\order{(\indexJp\backslash\{\indexjp{l},\alpha^{'}\})\cup\{t^{'},u^{'}\}}}
$.
\end{itemize}
We simplify (\ref{EQ-simplification for LHS, extra 1, part 2-lemma 8-equation 1}) to
\begin{align*}
    &
    \sgn
        {\order{t^{'},u^{'}},\indexI,\indexJp}
        {\alpha^{'},\indexjp{l},\indexI,\order{(\indexJp\backslash\{\indexjp{l},\alpha^{'}\})\cup\{t^{'},u^{'}\}}}
    \\
    =&
    (-1)
    \sgn
        {\order{t^{'},u^{'}},\indexI,\indexJp}
        {\indexjp{l},\alpha^{'},\indexI,\order{(\indexJp\backslash\{\indexjp{l},\alpha^{'}\})\cup\{t^{'},u^{'}\}}}
    ,
\end{align*}
which completes the proof of Lemma \ref{lemma-simplification for LHS, extra 1, part 2-lemma 8}.
\end{proof}


\section{Appendix III}
\label{sec:proofs of some short properties}
The proof of proposition \ref{prop: a list of prop for Hodge star and Lie algebroid diff} is as follows:
\subsection{The proof of 1.}
\begin{proof}
    The expression on the left-hand side is
\begin{align*}
    \star(f\,\basexi{\indexI}\wedge\basedualxi{\indexJp})
    =&
        2^{p+q-n} f
        \sgn
            {1,\cdots,n,1',\cdots,n'}
            {\indexI,\indexJp,\indexIc,\indexJpc}
        \basedualxi{\indexIc}
        \wedge
        \basexi{\indexJpc}
    \\
    =&
        2^{p+q-n} \bar{f}
        \sgn
            {1,\cdots,n,1',\cdots,n'}
            {\indexI,\indexJp,\indexIc,\indexJpc}
        \basexi{\indexIc}
        \wedge
        \basedualxi{\indexJpc}
    . 
\end{align*}
The expression on the right-hand side is
\begin{align*}
    \star
        (\overline{f\,\basexi{\indexI}\wedge\basedualxi{\indexJp}})
    =&
    \star
        (\bar{f}\,\basedualxi{\indexI}\wedge\basexi{\indexJp})
    \\
    =&
    \star
        (
        \bar{f}
        \sgn
            {\indexI,\indexJp}
            {\indexJp,\indexI}
        \basexi{\indexJp}\wedge\basedualxi{\indexI}
        )
    \\
    =&
        2^{p+q-n} \bar{f}
        \sgn    
            {\indexI,\indexJp}
            {\indexJp,\indexI}
        \sgn
            {1,\cdots,n,1',\cdots,n'}
            {\indexJp,\indexI,\indexJpc,\indexIc} 
        \basedualxi{\indexJpc}\wedge\basexi{\indexIc}
    \\
    =&
        2^{p+q-n} \bar{f}
        \sgn    
            {\indexI,\indexJp}
            {\indexJp,\indexI}
        \sgn
            {1,\cdots,n,1',\cdots,n'}
            {\indexJp,\indexI,\indexJpc,\indexIc} 
        \sgn
            {\indexJpc,\indexIc}
            {\indexIc,\indexJpc}
        \basexi{\indexIc}\wedge\basedualxi{\indexJpc}
    \\
    =&
        2^{p+q-n} \bar{f}
        \sgn
            {1,\cdots,n,1',\cdots,n'}
            {\indexI,\indexJp,\indexIc,\indexJpc}
        \basexi{\indexIc}
        \wedge
        \basedualxi{\indexJpc}
    .
\end{align*}
\end{proof}

\subsection{The proof of 2.}
\begin{proof}
    Because of 
\begin{align*}
    \LieagbdDiff 
    (f\,\zeta_{\indexi{1}}\wedge\cdots\wedge\zeta_{\indexi{k}})
    =
    (\LieagbdDiff f)
    \wedge\zeta_{\indexi{1}}\wedge\cdots\wedge\zeta_{\indexi{k}}
    +
    \displaystyle\sum_{j=1}^{k}
    (-1)^{j}
    f\, \zeta_{\indexi{1}}
    \wedge\cdots\wedge
    (\LieagbdDiff \zeta_{\indexi{j}})
    \wedge\cdots\wedge
    \zeta_{\indexi{k}}
    ,
\end{align*}
it is sufficient to complete the proof on the neighborhood chart through two parts. The first part is to show $\overline{\LieagbdDiff\,f}=\LieagbdDiff\,\overline{f}$; the second part is to show $\overline{\LieagbdDiff \basexi{}}=\LieagbdDiff \overline{\basedualxi{}}=\LieagbdDiff \basedualxi{}$. We achieve this using the local frame $\{\basev{i},\basedualv{i}\}_{i=1}^{n}$ spanning $\gC|_{U}$ and the corresponding dual frame $\{\basexi{i},\basedualxi{i}\}_{i=1}^{n}$ spanning $\gstarC|_{U}$.

\textbf{Step 1:} Let us look at the first part. Suppose $f=g+\sqrt{-1} h$ where $u,v$ are real-valued functions. We take the Lie algebroid differential
\begin{align*}
    \LieagbdDiff f
    =&
    \displaystyle\sum_{i=1}^{n}
    \big( \rho(\basev{i}) f \big) \basexi{i}
    +
    \big( \rho(\basedualv{i}) f \big) \basedualxi{i}
\notag
    \\
    =&
    \displaystyle\sum_{i=1}^{n}
    \big(
        \rho(\basev{i}) g + \sqrt{-1} \rho(\basev{i}) h
    \big) \basexi{i}
    +
    \big(
        \rho(\basedualv{i}) g + \sqrt{-1} \rho(\basedualv{i}) h
    \big) \basedualxi{i}
    .
\end{align*}
Setting $\alpha_{i}:=\basev{i}+\basedualv{i}\in\secsp{U}{\g|_{U}}$ and $\beta_{i}:=\sqrt{-1}(\basev{i}-\basedualv{i})\in\secsp{U}{\g|_{U}}$, we can rewrite $\basev{i}=\frac{1}{2}(\alpha_{i}-\sqrt{-1}\beta_{i})$ and $\basedualv{i}=\frac{1}{2}(\alpha_{i}+\sqrt{-1}\beta_{i})$ and plug these expressions into the expression of $\LieagbdDiff f$:
\begin{align*}
    &
    \frac{1}{2}
    \displaystyle\sum_{i=1}^{n}
    \begin{pmatrix*}
        \big(
            \rho(\alpha_{i}-\sqrt{-1}\beta_{i}) g 
            +
            \sqrt{-1} \rho(\alpha_{i}-\sqrt{-1}\beta_{i}) h
        \big) \basexi{i}
    \\
        +
    \\
        \big(
            \rho(\alpha_{i}+\sqrt{-1}\beta_{i}) g 
            + 
            \sqrt{-1} \rho(\alpha_{i}+\sqrt{-1}\beta_{i}) h
        \big) \basedualxi{i}
    \end{pmatrix*}
\\
    =&
    \frac{1}{2}
    \displaystyle\sum_{i=1}^{n}
    \begin{pmatrix*}
        \bigg(
            \big(
                \rho(\alpha_i) g + \rho(\beta_i) h
            \big) 
            -
            \sqrt{-1}
            \big(
                \rho(\beta_i)g - \rho(\alpha_i) h
            \big) 
        \bigg)
        \basexi{i}
        \\
        +
        \\
        \bigg(
            \big(
                \rho(\alpha_i) g -\rho(\beta_i) h
            \big) \basedualxi{i}
            +
            \sqrt{-1}
            \big(
                \rho(\beta_i) g + \rho(\alpha_i) h
            \big) 
        \bigg)
        \basedualxi{i}
    \end{pmatrix*}
    .
\end{align*}
Taking the conjugation, we obtain the expression of $\overline{\LieagbdDiff f}$:
\begin{align}
    \overline{\LieagbdDiff f}
    =
    \frac{1}{2}
    \displaystyle\sum_{i=1}^{n}
    \begin{pmatrix*}
        \bigg(
            \big(
                \rho(\alpha_i) g + \rho(\beta_i) h
            \big) 
            +
            \sqrt{-1}
            \big(
                \rho(\beta_i)g - \rho(\alpha_i) h
            \big) 
        \bigg)
        \basedualxi{i}
        \\
        +
        \\
        \bigg(
            \big(
                \rho(\alpha_i) g -\rho(\beta_i) h
            \big) \basedualxi{i}
            -
            \sqrt{-1}
            \big(
                \rho(\beta_i) g + \rho(\alpha_i) h
            \big) 
        \bigg)
        \basexi{i}
    \end{pmatrix*}
    ,
\label{eq-kahler Lie agbd- -overline of df}
\end{align}
where the action of $\rho(\alpha_i)$ and $\rho(\beta_i)$ on $g$ and $h$ are real-valued functions.

On the other side, the Lie algebroid differential on the conjugation of $f$ is
\begin{align}
    \LieagbdDiff \bar{f}
    =&
    \displaystyle\sum_{i=1}^{n}
    \big( \rho(\basev{i}) \bar{f} \big) \basexi{i}
    +
    \big( \rho(\basedualv{i}) \bar{f} \big) \basedualxi{i}
\notag
\\
    =&
    \displaystyle\sum_{i=1}^{n}
    \big(
        \rho(\basev{i}) g - \sqrt{-1} \rho(\basev{i}) h
    \big) \basexi{i}
    +
    \big(
        \rho(\basedualv{i}) g - \sqrt{-1} \rho(\basedualv{i}) h
    \big) \basedualxi{i}
\notag
\\
    =&
    \frac{1}{2}
    \displaystyle\sum_{i=1}^{n}
    \begin{pmatrix*}
        \big(
            \rho(\alpha_{i}-\sqrt{-1}\beta_{i}) g 
            -
            \sqrt{-1} \rho(\alpha_{i}-\sqrt{-1}\beta_{i}) h
        \big) \basexi{i}
    \\
        +
    \\
        \big(
            \rho(\alpha_{i}+\sqrt{-1}\beta_{i}) g 
            - 
            \sqrt{-1} \rho(\alpha_{i}+\sqrt{-1}\beta_{i}) h
        \big) \basedualxi{i}
    \end{pmatrix*}
\notag
\\
    =&
    \frac{1}{2}
    \displaystyle\sum_{i=1}^{n}
    \begin{pmatrix*}
        \bigg(
            \big(
                \rho(\alpha_i)g -\rho(\beta_i)h
            \big) 
            -
            \sqrt{-1}
            \big(
                \rho(\beta_i)g + \rho(\alpha_i) h
            \big) 
        \bigg)
        \basexi{i}
    \\
        +
    \\
        \bigg(
            \big(
                \rho(\alpha_i) g + \rho(\beta_i) h
            \big) \basedualxi{i}
            +
            \sqrt{-1}
            \big(
                \rho(\beta_i)g - \rho(\alpha_i) h
            \big) 
        \bigg)
        \basedualxi{i}
    \end{pmatrix*}
    .
\label{eq-kahler Lie agbd- -d on overline of f}
\end{align}
Comparing (\ref{eq-kahler Lie agbd- -overline of df}) to (\ref{eq-kahler Lie agbd- -d on overline of f}), we prove the equality $\overline{\LieagbdDiff f}=\LieagbdDiff \bar{f}$.

\textbf{Step 2:} Recalling (\ref{eq-kahler Lie agbd-d xi i}) and (\ref{eq-kahler Lie agbd-d xi bar i}), which are the Lie algebroid differentials on the frames $\basexi{i}$ and $\basedualxi{i}$, we calculate
\begin{align*}
    \overline{\LieagbdDiff \basexi{i}}
    =&
        \sum_{j<k}^n
        \overline{
            \coefA{i}{jk}
            \basexi{j} \wedge \basexi{k}
        }
        +
        \sum_{j,k}^n
        \overline{
            \coefB{i}{jk}
            \basexi{j} \wedge \basedualxi{k}
        }
\\
    =&
        \sum_{j<k}^n
        \overline{\coefA{i}{jk}}
        \basedualxi{j} \wedge \basedualxi{k}
        +
        \sum_{j,k}^n
        \overline{\coefB{i}{jk}}
        \basedualxi{j} \wedge \basexi{k}
\\
    =&
        \sum_{j<k}^n
        \overline{\coefA{i}{jk}}
        \basedualxi{j} \wedge \basedualxi{k}
        +
        \sum_{j,k}^n
        (-\overline{\coefB{i}{jk}})
        \basexi{k} \wedge \basedualxi{j} 
\\
    =&
        \sum_{j<k}^n
        \overline{\coefA{i}{jk}}
        \basedualxi{j} \wedge \basedualxi{k}
        +
        \sum_{j,k}^n
        (-\overline{\coefB{i}{kj}})
        \basexi{j} \wedge \basedualxi{k} 
\end{align*}
and
\begin{align*}
    \LieagbdDiff \basedualxi{}
    =&
        \sum_{j,k}^n
        \coefC{\indexi{}}{j k}
        \xi_j \wedge \basedualxi{k}
        +
        \sum_{j<k}^n
        \coefD{i}{jk}
        \basedualxi{j} \wedge \basedualxi{k}
    .
\end{align*}
According to (\ref{eq-kahler lie agbd-bar Akij= Dkij}) and (\ref{eq-kahler lie agbd-bar Bkij=-Ckji}), the coefficients $A,B,C,$ and $D$ are constrained:
\begin{align*}
    \overline{\coefA{k}{i j}}=\coefD{k}{i j}
    \qquad\text{and}\qquad
    \overline{\coefB{k}{i j}}=-\coefC{k}{j i}
    .
\end{align*}
Therefore, we can conclude that $\overline{\LieagbdDiff \basexi{i}}=\LieagbdDiff \basedualxi{}$.

\end{proof}

\subsection{The proof of 3.}
\begin{proof}
    It is enough to verify the identity on a local chart $U$. The local expression of the Hodge star on $f\,\basexi{\indexI}\wedge\basedualxi{\indexJp}\in\Apq{p}{q}(U)$ is
    \begin{align*}
        \star
        (f\,\basexi{\indexI}\wedge\basedualxi{\indexJp})
        =
        2^{p+q-n}
        \sgn
            {1,\cdots,n,1^{'},\cdots,n^{'}}
            {\indexI,\indexJp,\indexIc,\indexJpc}
        f\,
        \basedualxi{\indexIc}\wedge\basexi{\indexJpc}.
    \end{align*}
    Applying the Hodge star to $\basedualxi{\indexIc}\wedge\basexi{\indexJpc}$, we obtain
    \begin{align}
        \star
        (\basedualxi{\indexIc}\wedge\basexi{\indexJpc})
        &=
            \star
            \bigg(
            \sgn
                {\indexIc,\indexJpc}
                {\indexJpc,\indexIc}
            \basexi{\indexJpc}\wedge\basedualxi{\indexIc}
            \bigg)
        \nonumber
        \\
        &=
            \sgn
                {\indexIc,\indexJpc}
                {\indexJpc,\indexIc}
            2^{n-q+n-p-n}
            \sgn
                {1^{'},\cdots,n^{'},1,\cdots,n,}
                {\indexJpc,\indexIc,\indexJp,\indexI}
            \basedualxi{\indexJp}\wedge\basexi{\indexI}
        \nonumber
        \\
        &=
            \sgn
                {\indexIc,\indexJpc}
                {\indexJpc,\indexIc}
            2^{n-q+n-p-n}
            \sgn
                {1^{'},\cdots,n^{'},1,\cdots,n,}
                {\indexJpc,\indexIc,\indexJp,\indexI}
            \sgn 
                {\indexJp,\indexI}
                {\indexI,\indexJp}
            \basexi{\indexI}\wedge\basedualxi{\indexJp}
        \nonumber
        \\
        &=
            \sgn
                {\indexIc,\indexJpc}
                {\indexJpc,\indexIc}
            2^{n-q+n-p-n}
            \sgn
                {1^{'},\cdots,n^{'},1,\cdots,n,}
                {\indexJpc,\indexIc,\indexJp,\indexI}
            \sgn 
                {\indexJp,\indexI}
                {\indexI,\indexJp}
            \basexi{\indexI}\wedge\basedualxi{\indexJp}
        \nonumber
        \\
        &=
            2^{n-q+n-p-n}
            \sgn
                {1^{'},\cdots,n^{'},1,\cdots,n,}
                {\indexIc,\indexJpc,\indexI,\indexJp}
            \basexi{\indexI}\wedge\basedualxi{\indexJp}.
    \label{EQ-Part 1-(1)-star on Ic Jpc}
    \end{align}
    Observe that $\star^{2}(f\,\basexi{\indexI}\wedge\basedualxi{\indexJp})$ is equal to
    \begin{align}
        2^{p+q-n}
        \sgn
            {1,\cdots,n,1^{'},\cdots,n^{'}}
            {\indexI,\indexJp,\indexIc,\indexJpc}
        f\,
        \star
        (\basedualxi{\indexIc}\wedge\basexi{\indexJpc}).
    \label{EQ-Part 1-(1)-the second time apply star}
    \end{align}
    We plug (\ref{EQ-Part 1-(1)-star on Ic Jpc}) into (\ref{EQ-Part 1-(1)-the second time apply star}) and obtain
    \begin{align*}
        &
            2^{p+q-n}
            \sgn
                {1,\cdots,n,1^{'},\cdots,n^{'}}
                {\indexI,\indexJp,\indexIc,\indexJpc}
            2^{n-q+n-p-n}
            \sgn
                {1^{'},\cdots,n^{'},1,\cdots,n,}
                {\indexIc,\indexJpc,\indexI,\indexJp}
            f\,
            \basexi{\indexI}\wedge\basedualxi{\indexJp}
        \\
        =&
            \sgn
                {1,\cdots,n,1^{'},\cdots,n^{'}}
                {\indexI,\indexJp,\indexIc,\indexJpc}
            \sgn
                {1^{'},\cdots,n^{'},1,\cdots,n,}
                {\indexI,\indexJp,\indexIc,\indexJpc}
            \sgn
                {\indexIc,\indexJpc,\indexI,\indexJp}
                {\indexI,\indexJp,\indexIc,\indexJpc}
            f\,
            \basexi{\indexI}\wedge\basedualxi{\indexJp}
        \\
        =&
            \sgn
                {1,\cdots,n,1^{'},\cdots,n^{'}}
                {1^{'},\cdots,n^{'},1,\cdots,n,}
            (-1)^{(p+q)(n-p+n-q)}
            f\,
            \basexi{\indexI}\wedge\basedualxi{\indexJp}
        \\
        =&
            (-1)^{n^2}
            (-1)^{(p+q)^2}
            f\,
            \basexi{\indexI}\wedge\basedualxi{\indexJp},
    \end{align*}
    where $(-1)^{(p+q)^2}=(-1)^{p+q}$.
\end{proof}

\subsection{The proof of 4.}
See reference \cite{ELW1999}.

\subsection{The proof of 5.}
\begin{proof}
    Suppose  $\psi\in\Apq{p}{q}\subset\Ak{p+q}(M)$ and $\phi\in\Ak{p+q-1}(M)$, then we can use Stoke's theorem on $\phi\wedge\overline{\star\psi}$, which is a $(2n-1)$-Lie algebroid form, and obtain
\begin{align}
    \int_{M}
    \innerprod{\LieagbdDiff (\phi\wedge\overline{\star\psi})}{\Omega}{}
    =0.
    \label{EQ-Part 1-(1)-use stock on phi wedge bar star psi}
\end{align}
Plugging the equality $\LieagbdDiff (\phi\wedge\overline{\star\psi})=
    (\LieagbdDiff \phi)
    \wedge
    \overline{\star\psi}
    +
    (-1)^{p+q-1}
    \phi
    \wedge
    \LieagbdDiff(\overline{\star\psi})
$ into (\ref{EQ-Part 1-(1)-use stock on phi wedge bar star psi}), we obtain
\begin{align*}
    0=&
    \int_{M}
    \innerprod
        {(\LieagbdDiff \phi)\wedge\overline{\star\psi}}
        {\Omega}
        {}
    +
    (-1)^{p+q-1}
    \int_{M}
    \innerprod
        {\phi\wedge\LieagbdDiff(\overline{\star\psi})}
        {\Omega}
        {}
    \\
    =&
    \pinnerprod
        {\LieagbdDiff \phi}
        {\psi}
        {\Ak{p+q}(M)}
    +
    (-1)^{p+q-1}
    \int_{M}
    \innerprod
        {\phi\wedge\LieagbdDiff(\overline{\star\psi})}
        {\Omega}
        {}
    \\
    =&
    \pinnerprod
        {\phi}
        {\LieagbdDiff^{*}\psi}
        {\Ak{p+q}(M)}
    +
    (-1)^{p+q-1}
    \int_{M}
    \innerprod
        {\phi\wedge\LieagbdDiff(\overline{\star\psi})}
        {\Omega}
        {}.
\end{align*}
We plug $(-1)^{n^2}(-1)^{p+q}\star\star=\id$ in front of the second $\LieagbdDiff$ and obtain
\begin{align*}
    0=&
        \pinnerprod
            {\phi}
            {\LieagbdDiff^{*}\psi}
            {\Ak{p+q}(M)}
        +
        (-1)^{p+q-1}
        \int_{M}
        \innerprod
            {
                \phi\wedge
                (-1)^{n^2}(-1)^{p+q}\star\star
                \LieagbdDiff(\overline{\star\psi})
            }
            {\Omega}
            {}
    \\
    =&
        \pinnerprod
            {\phi}
            {\LieagbdDiff^{*}\psi}
            {\Ak{p+q}(M)}
        +
        (-1)^{p+q-1}
        \int_{M}
        \innerprod
            {
                \phi\wedge
                (-1)^{n^2}(-1)^{p+q}\star
                \overline{\star\LieagbdDiff\star\psi}
            }
            {\Omega}
            {}
    \text{potential problem}
    \\
    =&
        \pinnerprod
            {\phi}
            {\LieagbdDiff^{*}\psi}
            {\Ak{p+q}(M)}
        +
        (-1)
        (-1)^{n^2}
        \pinnerprod
            {\phi}
            {\star\LieagbdDiff\star\psi}
            {},
\end{align*}
which implies $\LieagbdDiff^{*}=(-1)^{n^2}\star\LieagbdDiff\star$.

\end{proof}

\bibliographystyle{unsrtnat}
\bibliography{references}  

\begin{thebibliography}{25}
\providecommand{\natexlab}[1]{#1}
\providecommand{\url}[1]{\texttt{#1}}
\expandafter\ifx\csname urlstyle\endcsname\relax
  \providecommand{\doi}[1]{doi: #1}\else
  \providecommand{\doi}{doi: \begingroup \urlstyle{rm}\Url}\fi

\bibitem[Ehresmann(1959)]{E1959}
C.~Ehresmann.
\newblock Catégories topologiques et categories différentiables.
\newblock \emph{Colloque de Géométrie Différentielle Globale}, pages 137--150, 1959.

\bibitem[Ehresmann(1963)]{E1963}
C.~Ehresmann.
\newblock Catégories structurées.
\newblock \emph{Annales Scientifiques de l'École Normale Supérieure}, 80\penalty0 (4):\penalty0 349--426, 1963.

\bibitem[Pradines(1963)]{P1963}
J.~Pradines.
\newblock Théorie de lie pour les groupoïdes différentiables. calcul différentiel dans la categorie des groupoïdes infinitésimaux.
\newblock \emph{C.R.Acad. Sci. Paris}, 264:\penalty0 245--248, 1963.

\bibitem[Lichnerowicz(1977)]{L1977}
A.~Lichnerowicz.
\newblock Les variétés de poisson et leurs algèbres de lie associées.
\newblock \emph{Journal of Differential Geometry}, 12:\penalty0 253--300, 1977.

\bibitem[A.~Coste(1987)]{CDW1987}
A.~Weinstein A.~Coste, P.~Dazord.
\newblock Groupoïdes symplectiques.
\newblock \emph{Publications du Département de Mathématiques}, pages 1--62, 1987.

\bibitem[Mackenzie(1987)]{M1987}
K.~Mackenzie.
\newblock \emph{Lie groupoids and Lie algebroids in differential geometry}.
\newblock Cambridge University Press, Cambridge, 1987.

\bibitem[Mackenzie(2005)]{M2005}
K.~Mackenzie.
\newblock \emph{General theory of Lie groupoids and Lie algebroids}.
\newblock Cambridge University Press, Cambridge, 2005.

\bibitem[Moerdijk(2005)]{M2005-2}
I.~Moerdijk.
\newblock \emph{Introduction to foliations and Lie groupoids}.
\newblock Cambridge University Press, Cambridge, 2005.

\bibitem[Chemla(1999)]{C1999}
S.~Chemla.
\newblock A duality property for complex lie algebroids.
\newblock \emph{Mathematische Zeitschrift}, 232:\penalty0 367--388, 1999.

\bibitem[A.~Cannas~da Silva(1999)]{CW1999}
A.~Weinstein A.~Cannas~da Silva.
\newblock \emph{Geometric models for noncommutative algebras}.
\newblock Berkeley Math. Lecture Notes, 1999.

\bibitem[E.~Leichtnam(2007)]{LTW2007}
A.~Weinstein E.~Leichtnam, X.~Tang.
\newblock Poisson geometry and deformation quantization near a strictly pseudoconvex boundary.
\newblock \emph{Journal of the European Mathematical Society}, 9\penalty0 (4), 2007.

\bibitem[S.~Benayadi(2016)]{BB2016}
M.~Boucetta S.~Benayadi.
\newblock On para-kähler lie algebroids and generalized pseudo-hessian structures.
\newblock \emph{arXiv}, 2016.

\bibitem[E.~Peyghan(2018)]{PNM2018}
A.~Makhlouf E.~Peyghan, L.~Nourmohammadifar.
\newblock Para-kahler hom-lie algebroids.
\newblock \emph{arXiv}, 2018.

\bibitem[Z.~Pirbodaghi(2019)]{PR2019}
M.~M.~Rezaii Z.~Pirbodaghi.
\newblock Forms and chern classes on hermitian lie algebroids.
\newblock \emph{Bulletin of the Iranian Mathematical Society}, 46:\penalty0 19--36, 2019.

\bibitem[Z.~Pirbodaghi(2020)]{PR2020}
M.~M.~Rezaii Z.~Pirbodaghi.
\newblock Vanishing theorems on k\"{a}hler lie algebroids.
\newblock \emph{International Journal of Geometric Methods in Modern Physics}, 17\penalty0 (4), 2020.

\bibitem[Weinstein(2006)]{W2006}
A.~Weinstein.
\newblock The integration problem for complex lie algebroids.
\newblock \emph{arXiv}, 2006.

\bibitem[C.~Ida(2016)]{IP2016}
P.~Popescu C.~Ida.
\newblock On almost complex lie algebroids.
\newblock \emph{Mediterranean Journal of Mathematics}, 13:\penalty0 803--824, 2016.

\bibitem[P.~Griffiths(1994)]{GH1994}
J.~Harris P.~Griffiths.
\newblock \emph{Principles of algebraic geometry}.
\newblock Wiley-Interscience, 1994.

\bibitem[Moroianu(2007)]{M2007}
A.~Moroianu.
\newblock \emph{Lectures on K\"{a}hler geometry}.
\newblock Cambridge University Press, 2007.

\bibitem[N.~Berline(2004)]{BGV}
M.~Vergne N.~Berline, E.~Getzler.
\newblock \emph{Heat kernels and Dirac operators}.
\newblock Springer, 2004.

\bibitem[Melrose(1993)]{M1993}
R.~Melrose.
\newblock \emph{The Atiyah-Patodi-Singer index theorem}.
\newblock CRC Press, 1993.

\bibitem[V.~Guillemin(2014)]{GMP2014}
A.~R.~Pires V.~Guillemin, E.~Miranda.
\newblock Symplectic and poisson geometry on b-manifolds.
\newblock \emph{Advances in Mathematics}, 264\penalty0 (20):\penalty0 864--896, 2014.

\bibitem[S.~Geudens(2020)]{GZ2020}
M.~Zambon S.~Geudens.
\newblock Coisotropic submanifolds in b-symplectic geometry.
\newblock \emph{arXiv}, 2020.

\bibitem[Mendoza(2013)]{M2013}
G.~A. Mendoza.
\newblock Complex b-manifolds.
\newblock \emph{arXiv}, 2013.

\bibitem[S.~Evens(1999)]{ELW1999}
A.~Weinstein S.~Evens, J.-H.~Lu.
\newblock Transverse measures, the modular class and a cohomology pairing for lie algebroids.
\newblock \emph{The Quarterly Journal of Mathematics}, 50\penalty0 (200):\penalty0 417--436, 1999.

\end{thebibliography}






\end{document}